\DeclareSymbolFont{cmarrows}{OMS}{cmsy}{m}{n}
\DeclareMathSymbol{\mapstochar}{\mathrel}{cmarrows}{"37}
\DeclareMathOperator{\colim}{colim}
\newcommand*{\cat}[1]{\ensuremath{\mathcal{#1}}} 
\newcommand{\mathscale}[2]{\scalebox{#1}{\mbox{\ensuremath{\displaystyle #2}}}}
\newcommand*{\id}{\mathrm{id}}
\renewcommand{\colon}{\!\nobreak\mskip2mu\mathpunct{}\nonscript%
\mkern-\thinmuskip{:}\mskip6muplus1mu\relax%
}
\newcommand{\from}{\colon}
\newcommand*{\defeq}{\mathrel{\vcenter{\baselineskip0.5ex \lineskiplimit0pt
    \hbox{\scriptsize.}\hbox{\scriptsize.}}}%
=}
\newcommand*{\inj}[1]{\ensuremath{#1\text{\rm -}\mathrm{inj}}}
\newcommand*{\proj}[1]{\ensuremath{#1\text{\rm -}\mathrm{proj}}}
\newcommand*{\lmod}[1]{\ensuremath{#1\text{\rm -}\mathrm{mod}_{\mathrm{fd}}}}
\newcommand*{\modc}[2][\cat{C}]{\ensuremath{\mathrm{mod}_{#1}(#2)}}
\newcommand*{\fdcomodc}[2][\cat{C}]{\ensuremath{\mathrm{fdcomod}_{#1}(#2)}}
\newcommand*{\comodc}[2][\cat{C}]{\ensuremath{\mathrm{comod}_{#1}(#2)}}
\newcommand*{\rmod}[1]{\ensuremath{\mathrm{mod}\text{\rm -}#1}}
\newcommand*{\lMod}[1]{\ensuremath{#1\text{\rm -}\mathrm{Mod}}}
\newcommand*{\lComod}[1]{\ensuremath{#1\text{\rm -}\mathrm{Comod}}}
\newcommand*{\rComod}[1]{\ensuremath{\mathrm{Comod}\text{\rm -}#1}}
\newcommand*{\biComod}[2]{\ensuremath{#1\text{\rm -}\mathrm{Bicomod}\text{\rm -}#2}}
\newcommand*{\trimod}[1][B]{\tetramodfd[#1][#1][][#1]}
\newcommand*{\Trimod}[1][B]{\tetramod[#1][#1][][#1]}
\newcommand*{\termod}[1][B]{\tetramodfd[#1][#1][#1][]}
\newcommand*{\Termod}[1][B]{\tetramod[#1][#1][#1][]}
\newcommand*{\bcomod}[1][B]{\tetramodfd[#1]}
\newcommand*{\bbcomod}[1][B]{\tetramod[#1]}
\newcommand*{\bmodule}[1][B]{\tetramodfd[][#1][][]}
\NewDocumentCommand{\tetramod}{O{} O{} O{} O{}}{%
  \leftidx{^{#1}_{#2}}{\mathbf{Vec}}{_{#3}^{#4}}%
}
\NewDocumentCommand{\tetramodfd}{O{} O{} O{} O{}}{%
  \leftidx{^{#1}_{#2}}{\mathbf{vecfd}}{_{#3}^{#4}}%
}
\newcommand*{\blank}{{-}}
\newcommand*{\bblank}{{=}}
\newcommand*{\Id}{\mathrm{Id}}
\newcommand*{\kVect}{\tetramod{}}
\newcommand*{\kvect}{\tetramodfd{}}
\newcommand*{\op}{\ensuremath{\mathrm{op}}}
\newcommand*{\adj}[4]{\ensuremath{#1 \colon #3 \rightleftarrows #4 \cocolon #2}}
\newcommand*{\stdadj}{\adj{F}{U}{\cat{C}}{\cat{D}}}
\newcommand*{\ld}[1]{\leftidx{^\vee}{\!#1}{}}     
\newcommand*{\rd}[1]{{#1}^\vee}                   
\newcommand*{\ev}{\mathrm{ev}}
\newcommand*{\coev}{\mathrm{coev}}
\newcommand*{\Cocts}[1]{\ensuremath{#1\text{\rm -}\mathbf{Cocts}}}
\newcommand*{\nt}{\Longrightarrow}
\renewcommand*{\to}{\longrightarrow}
\renewcommand*{\mapsto}{\longmapsto}
\newcommand*{\cocolon}{%
  \nobreak\mskip6mu plus1mu%
  \mathpunct{}\nonscript\mkern-\thinmuskip{:\!}%
  \mskip2mu\relax%
}
\renewcommand*{\hom}[1]{\ensuremath{\lfloor#1\rfloor}}
\newcommand*{\cohom}[1]{\ensuremath{\lceil#1\rceil}}
\let\@tl\triangleleft
\let\@tr\triangleright
\newcommand*{\@smallTriangle}[2]{\vcenter{\hbox{\scalebox{0.75}{\ensuremath{#1#2}}}}}
\newcommand*{\@medTriangle}[2]{\vcenter{\hbox{\scalebox{0.90}{\ensuremath{#1#2}}}}}
\newcommand*{\lact}{\mathbin{\mathpalette\@smallTriangle\@tr}}
\newcommand*{\ract}{\mathbin{\mathpalette\@smallTriangle\@tl}}
\let\@btl\blacktriangleleft
\let\@btr\blacktriangleright
\newcommand*{\blact}{\mathbin{\mathpalette\@medTriangle\@btr}}
\newcommand*{\bract}{\mathbin{\mathpalette\@medTriangle\@btl}}
\renewcommand*{\triangleright}{\lact}
\renewcommand*{\triangleleft}{\ract}
\renewcommand*{\blacktriangleright}{\blact}
\renewcommand*{\blacktriangleleft}{\bract}
\newcommand*{\cotens}{\mathrel{\raisebox{0.07em}{\ensuremath{\Box}}}}
\renewcommand*{\square}{\cotens}
\newcommand{\biact}[2]{#1\text{\rm-}\mathbf{Biact}\text{\rm-}#2}
\newcommand{\biactC}[3][\cat{C}]{#2\text{\rm-}\mathbf{Biact}_{#1}\text{\rm-}#3}
\DeclareRobustCommand{\SkipTocEntry}[5]{}
\numberwithin{equation}{section}
\numberwithin{figure}{section}
\definecolor{blue}{rgb}{0.38, 0.51, 0.71}
\definecolor{red}{RGB}{175, 49, 39}
\definecolor{green}{RGB}{146, 227, 95}
\newcommand{\xtwoheadrightarrow}[2][]{%
  \xrightarrow[#1]{#2}\mathrel{\mkern-14mu}\rightarrow%
}
\def\slashedarrowfill@#1#2#3#4#5{%
$\m@th\thickmuskip0mu\medmuskip\thickmuskip\thinmuskip\thickmuskip
  \relax#5#1\mkern-7mu%
  \cleaders\hbox{$#5\mkern-2mu#2\mkern-2mu$}\hfill
  \mathclap{#3}\mathclap{#2}%
  \cleaders\hbox{$#5\mkern-2mu#2\mkern-2mu$}\hfill
  \mkern-7mu#4$%
}
\def\rightslashedarrowfill@{%
\slashedarrowfill@\relbar\relbar\mapstochar\rightarrow}
\newcommand\xslashedrightarrow[2][]{%
\ext@arrow 0055{\rightslashedarrowfill@}{#1}{#2}}
\newcommand{\ostar}{\mathbin{\mathpalette\make@circled\star}}
\newcommand{\make@circled}[2]{%
\ooalign{$\m@th#1\smallbigcirc{#1}$\cr\hidewidth$\m@th#1#2$\hidewidth\cr}%
}
\newcommand{\smallbigcirc}[1]{%
\vcenter{\hbox{\scalebox{0.77778}{$\m@th#1\bigcirc$}}}%
}
\newcommand*{\yo}{\text{\begin{CJK}{UTF8}{min}よ\end{CJK}}}
\tikzset{curve/.style={settings={#1},to path={(\tikztostart)
  .. controls ($(\tikztostart)!\pv{pos}!(\tikztotarget)!\pv{height}!270:(\tikztotarget)$)
  and ($(\tikztostart)!1-\pv{pos}!(\tikztotarget)!\pv{height}!270:(\tikztotarget)$)
  .. (\tikztotarget)\tikztonodes}},
  settings/.code={\tikzset{quiver/.cd,#1}
      \def\pv##1{\pgfkeysvalueof{/tikz/quiver/##1}}},
  quiver/.cd,pos/.initial=0.35,height/.initial=0}
\tikzset{tail reversed/.code={\pgfsetarrowsstart{tikzcd to}}}
\tikzset{2tail/.code={\pgfsetarrowsstart{Implies[reversed]}}}
\tikzset{2tail reversed/.code={\pgfsetarrowsstart{Implies}}}
\tikzset{no body/.style={/tikz/dash pattern=on 0 off 1mm}}
\definecolor{blue(pigment)}{rgb}{0.2, 0.2, 0.6}
\definecolor{americanrose}{rgb}{1.0, 0.01, 0.24}
\definecolor{nicegreen}{rgb}{0.0, 0.5, 0.0}
\definecolor{deepmagenta}{rgb}{0.8, 0.0, 0.8}
\definecolor{deepcarrotorange}{rgb}{0.91, 0.41, 0.17}
\definecolor{cadetgrey}{rgb}{0.57, 0.64, 0.69}
\newtheorem{theoremm}{Theorem}[section]
\newtheorem{theoremmm}{Theorem}
\newtheorem{theoremmmm}{Theorem}
\declaretheorem[style=plain,name=Theorem,numberlike=theoremmm]{theoremat}
\declaretheorem[style=plain,name=Theorem,numberlike=theoremmmm]{theorematic}
\declaretheorem[style=plain,name=Theorem,numberlike=theoremm]{theorem}
\declaretheorem[style=plain,name=Theorem,numbered=no]{theorem*}
\declaretheorem[style=plain,name=Lemma,numberlike=theoremm]{lemma}
\declaretheorem[style=plain,name=Proposition,numberlike=theoremm]{proposition}
\declaretheorem[style=plain,name=Corollary,numberlike=theoremm]{corollary}
\declaretheorem[style=definition,name=Definition,numberlike=theorem]{definition}
\declaretheorem[style=remark,name=Example,numberlike=theorem]{example}
\declaretheorem[style=remark,name=Remark,numberlike=theorem]{remark}
\declaretheorem[style=plain,name=Porism,numberlike=theorem]{porism}
\newcommand{\on}[1]{\operatorname{#1}}
\newcommand{\setj}[1]{\left\{ #1 \right\}}
\newcommand{\hcomp}{\circ_{h}}
\newcommand*{\xiso}{%
\overset{\smash{\raisebox{-0.65ex}{\ensuremath{\scriptstyle\sim}}}%
                \,}%
        {\to}%
}
\newcommand*{\xIso}{%
\overset{\smash{\raisebox{-0.65ex}{\ensuremath{\scriptstyle\sim}}}%
                \,}%
        {\nt}%
}
\DeclareFontFamily{U}{DSSerif}{\skewchar \font =45}
\DeclareFontShape{U}{DSSerif}{m}{n}{<-> s*[1]  DSSerif}{}
\DeclareMathAlphabet{\mathbbbb}{U}{DSSerif}{m}{n}
\newcommand{\kotimes}{\otimes_{\Bbbk}}
\newcommand{\leftmod}[2]{(#1, #2)\text{\rm-Mod}}
\newcommand{\modma}{\leftmod{A}{\mathcal{M}}}
\newcommand{\leftcomod}[2]{(#1, #2)\text{\rm-Comod}}
\newcommand{\comodmc}{\leftcomod{C}{\mathcal{M}}}
\newcommand{\freeleftmod}[2]{(#1, #2)\text{\rm-Free}}
\newcommand{\freeleftcomod}[2]{{(#1, #2)}\text{\rm-Cofree}}
\newcommand{\forgetleftmod}[2]{{(#1, #2)}\text{\rm-Forget}}
\newcommand{\forgetleftcomod}[2]{{(#1, #2)}\text{\rm-Coforget}}
\newcommand{\fincolimit}[1]{\mathbf{Fin}_{\mathbf{co}}(#1)}
\tikzstyle{tikzfig}=[baseline=-0.25em,scale=0.5]
\tikzstyle{none}=[inner sep=0mm]
\tikzstyle{every loop}=[]
\tikzstyle{whitedot}=[fill=white, draw, shape=circle, scale=0.3, tikzit draw=black, tikzit shape=circle, tikzit fill=white]
\tikzstyle{blackdot}=[fill=black, draw, shape=circle, scale=0.3, tikzit draw=black, tikzit shape=circle, tikzit fill=black]
\tikzstyle{box}=[fill=white, draw=black, shape=rectangle, tikzit fill=white]
\tikzstyle{BL}=[draw=black, shape=circle, fill=black, scale=0.3]
\tikzstyle{PP}=[draw={rgb,255:red,102;green,41;blue,163}, shape=circle, fill={rgb,255:red,102;green,41;blue,163}, scale=0.3]
\tikzstyle{morphism-edge}=[-, draw=black, thick]
\tikzstyle{cotensor}=[-, draw=gray]
\tikzstyle{braid-over}=[-, draw=white, thick, double=black, double distance=0.8pt, tikzit draw={rgb,255: red,128; green,0; blue,128}]
\tikzstyle{purple-over}=[-, draw=white, thick, double={rgb,255:red,102;green,41;blue,163}, double distance=0.8pt, tikzit draw={rgb,255:red,102;green,41;blue,163}]
\tikzstyle{purple}=[-, draw={rgb,255:red,102;green,41;blue,163}, thick]
\tikzstyle{blue-under}=[-, draw={rgb,255:red,0;green,128;blue,128}, thick]
\tikzstyle{ddd}=[-, draw=black, dash dot dot]
\tikzstyle{unit}=[-, draw=black, densely dotted]
\tikzstyle{Front}=[-, draw=black, fill={rgb,255; red,255; green,255; blue,255}, opacity=0.8]
\tikzstyle{Hidden}=[-, draw=black, fill={rgb,255; red,255; green,255; blue,255}, opacity=0.2]
\tikzstyle{directed}=[-, thick, black, decoration={markings, mark=at position 0.5 with {\arrow{>}}}, postaction=decorate]
\def\temp{&} \catcode`&=\active \let&=\temp
\newcommand{\mytikzcdcontext}[2]{
  \begin{tikzpicture}[baseline=(maintikzcdnode.base)]
      \node (maintikzcdnode) [inner sep=0, outer sep=0] {\begin{tikzcd}[#2]
            #1
        \end{tikzcd}};
  \end{tikzpicture}}
  \def\myargs{#1}%
  \edef\mydiagram{
    \noexpand\mytikzcdcontext{\expandonce\BODY}{\expandonce\myargs}
  }%
\begin{document}

\title{Reconstruction of module categories in the infinite and non-rigid settings}

\author{Mateusz Stroiński}
\address{M.S., Department of Mathematics, Uppsala University, Box. 480, SE-75106, Uppsala, Sweden}
\email{mateusz.stroinski@math.uu.se}

\author{Tony Zorman}
\address{T.Z., TU Dresden, Institut für Geometrie, Zellescher Weg 12--14, 01062 Dresden, Germany}
\email{tony.zorman@tu-dresden.de}

\begin{abstract}
  By building on the notions of internal projective and injective objects in a module category introduced by Douglas, Schommer-Pries, and Snyder, we extend the reconstruction theory for module categories of Etingof and Ostrik.
  More explicitly, instead of algebra objects in finite tensor categories,
  we consider quasi-finite coalgebra objects in locally finite tensor categories.
  Moreover, we show that module categories over non-rigid monoidal categories can be reconstructed via lax module monads, which generalize algebra objects.
  For the monoidal category of finite-dimensional comodules over a (non-Hopf) bialgebra, we give this result a more concrete form, realizing module categories as categories of contramodules over Hopf trimodule algebras---this specializes to our tensor-categorical results in the Hopf case.
  In this context, we also give a precise Morita theorem, as well as an analogue of the Eilenberg--Watts theorem for lax module monads and, as a consequence, for Hopf trimodule algebras.
  Using lax module functors we give a categorical proof of the variant of the fundamental theorem of Hopf modules which applies to Hopf trimodules. We also give a characterization of fusion operators for a Hopf monad as coherence cells for a module functor structure, using which we similarly reinterpret and reprove the Hopf-monadic fundamental theorem of Hopf modules due to Bruguières, Lack, and Virelizier.
\end{abstract}

\maketitle

\vspace{-0.5cm}
\tableofcontents

\setlength{\parskip}{0.25em}
\section{Introduction}

Monoidal categories are abundant in various areas of mathematics.
Module categories---categories with an action of a monoidal category---also feature in a wide variety of settings.
Viewing monoidal and module categories as natural categorifications of algebras and modules, this is analogous to one of the motivating observations in classical representation theory: every algebra admits many interesting representations, and every explicit realization of an algebra gives rise to a representation, and possibly meaningful realization, of other representations.

The areas in which the language and theory of module categories has been applied include
the theory of subfactors~\cite{We,EP,EY},
field theories~\cite{FRS},
categorification in representation theory~\cite{BSW,MMMTZ},
symplectic and algebraic geometry~\cite{BZFN,BZBJ,Pas},
and, particularly prominently, many aspects of the theory of Hopf algebras and tensor categories.
The latter includes classical results,
such as those describing the relation between the Drinfeld center, Hopf modules, and Yetter--Drinfeld modules---%
see~\cite[Chapter~7]{EGNO} for a textbook account---%
as well as emerging theories, such as that of partial modules~\cite{Alvez15:partial,batista22:como-part}
and of Serre functors for comodule algebras~\cite{fuchs22:spher-morit-serre,shimizu23:relat-serre}.
For a survey on the role module categories in applied category theory and computer science,
better known as \emph{actegories} in these contexts,
see~\cite{CG}.

In the category-theoretic setting, a deep connection between $\mathcal{C}$-module categories, for a monoidal category $\mathcal{C}$, and enrichments in $\mathcal{C}$ was described in~\cite{BCSW,GoPo}.
Closely related, and fundamentally important in representation theory, is the process of recovering an algebra object from a module category.
More precisely, given an algebra object $A \in \mathcal{C}$, the category $\modc{A}$ is naturally a left $\mathcal{C}$-module category. Conversely, when considering a given module category $\mathcal{M}$, it is often very useful to find and study an algebra object $A$ in $\mathcal{C}$ such that there is an equivalence $\mathcal{M} \simeq \modc{A}$ of\, $\mathcal{C}$-module categories.

An early reconstruction result of this kind is~\cite[Theorem~1]{Os},
and many generalizations and variants have appeared in the literature since;
for instance~\cite[Theorem~7.10.1]{EGNO},~\cite[Theorem~2.24]{DSPS},~\cite[Theorem~4.7]{MMMT}, and~\cite[Theorem~4.6]{BZBJ}.
The following theorem is a slight reformulation of~\cite[Theorem~7.10.1]{EGNO}, which exemplifies results of this kind.

\begin{theoremat}[{\cite[Theorem~1]{Os},~\cite[Theorem~7.10.1]{EGNO}}]\label{finiterecon}
  Let $\mathcal{C}$ be a finite tensor category and let $\mathcal{M}$ be a finite abelian $\mathcal{C}$-module category such that, for any $M \in \mathcal{M}$, the evaluation functor $\blank \triangleright M\from \mathcal{C} \to \mathcal{M}$ is exact.
  Then there exists an algebra object $A \in \mathcal{C}$ such that there is an equivalence of\, \(\cat{C}\)-module categories $\modc{A} \simeq \mathcal{M}$.
\end{theoremat}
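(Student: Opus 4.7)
The plan is to realize $A$ as an internal endomorphism algebra of a generator in $\cat{M}$ and then identify $\cat{M}$ with the Eilenberg--Moore category of the induced monad on $\cat{C}$. Since $\cat{M}$ is a finite abelian category it admits a projective generator $M$. Finiteness of $\cat{C}$ and $\cat{M}$ together with exactness of $\blank \triangleright M \from \cat{C} \to \cat{M}$ guarantee a right adjoint $\hom{M, \blank} \from \cat{M} \to \cat{C}$, either by a suitable adjoint functor theorem or by a direct construction via duals. I would set $A \defeq \hom{M, M}$, with the multiplication extracted from the counit of the adjunction.

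The technical core is the natural isomorphism $\hom{M, X \triangleright N} \cong X \otimes \hom{M, N}$ for $X \in \cat{C}$ and $N \in \cat{M}$, which I would verify by the Yoneda-style computation
\begin{align*}
  \on{Hom}_{\cat{C}}(Y, \hom{M, X \triangleright N})
  &\cong \on{Hom}_{\cat{M}}(Y \triangleright M, X \triangleright N) \\
  &\cong \on{Hom}_{\cat{M}}((\ld{X} \otimes Y) \triangleright M, N) \\
  &\cong \on{Hom}_{\cat{C}}(\ld{X} \otimes Y, \hom{M, N}) \\
  &\cong \on{Hom}_{\cat{C}}(Y, X \otimes \hom{M, N}),
\end{align*}
in which the second and fourth isomorphisms use the adjunction $\ld{X} \otimes \blank \dashv X \otimes \blank$ afforded by the rigidity of $\cat{C}$ (once at the level of the action and once at the level of $\otimes$), together with the coherence isomorphism $\ld{X} \triangleright (Y \triangleright M) \cong (\ld{X} \otimes Y) \triangleright M$. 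It follows that the monad $T \defeq \hom{M, \blank \triangleright M}$ on $\cat{C}$ is naturally isomorphic to $\blank \otimes A$, with matching monad structures, so the Eilenberg--Moore category of $T$ coincides with $\modc{A}$.

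What remains is to verify the hypotheses of Beck's monadicity theorem for the adjoint pair $\blank \triangleright M \dashv \hom{M, \blank}$. Conservativity of $\hom{M, \blank}$ comes from $M$ being a generator. For exactness of $\hom{M, \blank}$, the hypothesis makes $\rd{Y} \triangleright \blank$ exact, so $\on{Hom}_{\cat{M}}(Y \triangleright M, \blank) \cong \on{Hom}_{\cat{M}}(M, \rd{Y} \triangleright \blank)$ is exact in the second argument, meaning $Y \triangleright M$ is projective for every $Y \in \cat{C}$; the defining adjunction then forces $\hom{M, \blank}$ itself to be exact. Monadicity thus produces an equivalence $\cat{M} \simeq \modc{A}$, and the natural isomorphism of the previous paragraph supplies the $\cat{C}$-module coherence data.

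The crucial step, and the main obstacle, is the identification $\hom{M, X \triangleright N} \cong X \otimes \hom{M, N}$: it depends on both the existence of the internal Hom and the compatibility of the action with duals, both of which rely on the rigidity of $\cat{C}$. It is precisely the failure of this identification in the non-rigid or infinite setting that will motivate, later in the paper, the replacement of algebra objects by quasi-finite coalgebras and of module monads by lax module monads.
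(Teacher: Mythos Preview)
Your proposal is correct and follows the paper's own second proof sketch (the monadicity-based one in the introduction) quite faithfully; the paper does not give a standalone proof of this cited theorem beyond those two sketches, and your write-up is essentially a fleshed-out version of the second one, with the Yoneda computation replacing the paper's appeal to the abstract fact that lax module functors over a rigid base are automatically strong.

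One small bookkeeping error: when you write ``the hypothesis makes $\rd{Y} \triangleright \blank$ exact,'' this misattributes the source of exactness. The stated hypothesis concerns exactness of $\blank \triangleright N$ in the \emph{first} variable, whereas $\rd{Y} \triangleright \blank$ is exactness in the second. The latter is automatic from rigidity: $\rd{Y} \triangleright \blank$ has both a left adjoint $Y \triangleright \blank$ and a right adjoint $\rrd{Y} \triangleright \blank$, hence is exact. The hypothesis on $\blank \triangleright M$ is in fact only needed (in right-exact form) to produce the right adjoint $\hom{M,\blank}$ via the Eilenberg--Watts theorem for finite abelian categories. This does not affect the correctness of your argument, only the accounting of which assumption is doing which job---a point the paper itself is at pains to make clear when it later separates the roles of finiteness, exactness, and rigidity.
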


Even this very general result makes many assumptions on the monoidal category $\mathcal{C}$, the module category $\mathcal{M}$, and on the action functor $\blank \triangleright \bblank$.
Three different kinds of assumptions can be identified in Theorem~\ref{finiterecon}, and similarly in many other results in the literature:
\begin{enumerate}
  \item Finiteness assumptions on the categories $\mathcal{C}$ and \(\cat{M}\)---in the case of Theorem~\ref{finiterecon}, both categories are finite abelian, and thus have enough projectives and injectives, finite-dimensional $\on{Hom}$-spaces, and finitely many isomorphism classes of simple objects.
  \item Exactness assumptions on the monoidal structure $\blank \otimes \bblank$ of\, $\mathcal{C}$ and on the action functor $\blank \triangleright \bblank\from \mathcal{C} \kotimes \mathcal{M} \to \mathcal{M}$. In Theorem~\ref{finiterecon}, these are exact in both variables.
  \item Rigidity assumptions on $\mathcal{C}$; i.e., that all of its objects admit left and right duals with respect to its monoidal structure.
\end{enumerate}

The aim of the first part of this article is to generalize these reconstruction results in a way that greatly relaxes the first two kinds of assumptions, and, crucially, removes the third one altogether.

In~\cite[Example~2.20]{DSPS} it is shown that, in the absence of rigidity, it may be impossible to reconstruct a given module category using algebra objects.
As such, we will have to use an appropriate generalization thereof.

In order to illustrate the role of the above assumptions, we briefly sketch two different proofs of Theorem~\ref{finiterecon}.

\begin{enumerate}
  \item For $M \in \mathcal{M}$, the exactness of\, $\blank \triangleright M$, together with the finiteness  of\, $\mathcal{C}$ and $\mathcal{M}$, ensure that the presheaf $\on{Hom}_{\mathcal{M}}(\blank \triangleright M,M)\from \mathcal{C}^{\on{op}} \to \tetramod$ is representable.
  It is easy to show that this functor is lax monoidal, so that the representing object, denoted by $\hom{M,M}$, is an algebra object in $\mathcal{C}$.
  \item Similarly, for any $N \in \mathcal{M}$, the presheaf $\on{Hom}_{\mathcal{M}}(\blank \triangleright M,N)$ is a module under the functor $\on{Hom}_{\mathcal{M}}(\blank \triangleright M,M)$ in the sense of~\cite[Definition~39]{Ye};
  it is represented by an $\hom{M,M}$-module object $\hom{M,N}$.
  This defines a left exact functor $\hom{M,\blank}\from \mathcal{M} \to \modc{\hom{M,M}}$.
  \item Finiteness of $\mathcal{M}$ and rigidity of\, $\mathcal{C}$ imply the existence of $X \in \mathcal{M}$ such that $\hom{X,\blank}$ is also right exact and faithful.
  \item Finiteness of $\mathcal{M}$ and rigidity of\, $\mathcal{C}$ imply that for such $X$, the functor $\hom{X,\blank}$ is an equivalence of\, $\mathcal{C}$-module categories.
\end{enumerate}

However, as we show in Example~\ref{coreps}, if\, $\mathcal{C}$ is not rigid,
then even in the presence of an object $X$ such that a faithful and exact $\hom{X,\blank}$ exists, we may have $\mathcal{M} \not\simeq \modc{\hom{X,X}}$.
A proof strategy that relies more on categorical tools, such as Beck's monadicity theorem, gives a clearer understanding of this failure.
The following sketch mirrors the proof of a generalization of Theorem~\ref{finiterecon} given in~\cite{BZBJ}:

\begin{enumerate}
  \item For $M \in \mathcal{M}$, the exactness of\, $\blank \triangleright M$, together with finiteness of\, $\mathcal{C}$ and $\mathcal{M}$, ensure that $\blank \triangleright M$ has a right adjoint, which we denote by $\hom{M,\blank}$.
  \item The rigidity of\, $\mathcal{C}$ implies that $\hom{M,\blank}$ is a $\mathcal{C}$-module functor, and so the composite monad $\hom{M,\blank \triangleright M}$ is a $\mathcal{C}$-module endofunctor of\, $\mathcal{C}$.
  \item The Yoneda lemma provides a monoidal equivalence $\mathcal{C} \simeq \lMod{\mathcal{C}}(\mathcal{C,C})$, under which $\hom{M,\blank \triangleright M}$ corresponds to an object $\hom{M,M} \in \mathcal{C}$, the monad structure on $\hom{M,\blank \triangleright M}$ corresponds to an algebra object structure on $\hom{M,M}$, and the Eilenberg--Moore category $\mathbf{EM}(\hom{M,\blank \triangleright M})$ is canonically equivalent to $\modc{\hom{M,M}}$ as a $\mathcal{C}$-module category.
  \item The finiteness of $\mathcal{M}$ and $\mathcal{C}$ imply the existence of $X \in \mathcal{M}$ such that $\hom{X,\blank}$ is exact and conservative, and thus, by Beck's monadicity theorem, the comparison functor $\modc{\hom{X,X}} \simeq \mathbf{EM}(\hom{X,\blank \triangleright X}) \leftarrow \mathcal{M}$ is an equivalence.
\end{enumerate}

From this sketch it is clear that the finiteness and exactness assumptions are used to establish the monadicity of $\mathcal{M}$ over $\mathcal{C}$, and the rigidity of\, $\mathcal{C}$ is used to identify the obtained monad with one given by an algebra object in $\mathcal{C}$.

Given an object $M \in \mathcal{M}$ such that the adjoint $\hom{M,\blank}$ exists, in Section~\ref{sec:internal-projectives-and-injectives} we determine sufficient---and in many cases necessary---conditions for $\hom{X,\blank}$ to be exact and conservative in the presence of projective objects.
This allows us to sharpen the finiteness and exactness conditions above to the following, significantly more general form.

\begin{theorematic}[Theorems~\ref{thm:main-result-rigid-projective-case} and~\ref{thm:main-result-rigid-injective-case}]\label{rigged}
  Assume that \(\mathcal{C}\) is rigid, that \(\mathcal{C}\) and \(\cat{M}\) have enough projectives (injectives), and that there is an object \(X \in \mathcal{M}\) such that:
  \begin{enumerate}
    \item there is a right adjoint \(\hom{X,\blank}\) (left adjoint \(\cohom{X,\blank}\)) to \(\blank \triangleright X\);
    \item for \(P \in \mathcal{C}\) projective (injective), the object \(P \triangleright X\) is projective (injective);
    \item any projective (injective) object \(N\) of \(\mathcal{M}\) is a direct summand of an object of the form \(P \triangleright X\), for \(P\) projective (injective).
  \end{enumerate}
  Then \(\mathcal{M}\) is, as a \(\cat{C}\)-module category, equivalent to \(\modc{\hom{X,X}}\) \((\comodc{\cohom{X,X}})\).
\end{theorematic}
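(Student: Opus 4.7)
The plan is to instantiate the Beck monadicity argument from the second proof sketch of Theorem~\ref{finiterecon} in the introduction, with conditions~(2) and~(3) replacing the finiteness-based verifications of exactness and conservativity. Rigidity of \(\cat{C}\) endows \(\hom{X,\blank}\) with a canonical \(\cat{C}\)-module functor structure, so the induced monad \(T \defeq \hom{X, \blank \triangleright X}\) is a \(\cat{C}\)-module endofunctor of \(\cat{C}\); Yoneda then identifies it with \(\blank \otimes A\) for \(A \defeq \hom{X,X}\) (equipped with a canonical algebra structure) and identifies \(\mathbf{EM}(T)\) with \(\modc{A}\) as \(\cat{C}\)-module categories. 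Let \(K \from \cat{M} \to \modc{A}\) be the comparison functor \(M \mapsto \hom{X,M}\) with its canonical \(A\)-action. By Beck's monadicity theorem, \(K\) is an equivalence as soon as \(\hom{X,\blank}\) is conservative and preserves coequalizers of \(\hom{X,\blank}\)-split pairs; as both categories are abelian and \(\hom{X,\blank}\) is a right adjoint, hence left exact, it suffices to prove it is exact and faithful, since an exact faithful functor between abelian categories is automatically conservative.

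For \emph{right exactness}, given an epimorphism \(\pi \from M \twoheadrightarrow N\) in \(\cat{M}\), the fact that \(\cat{C}\) has enough projectives means \(\hom{X,\pi}\) is epi iff \(\on{Hom}_{\cat{C}}(P, \hom{X,M}) \to \on{Hom}_{\cat{C}}(P, \hom{X,N})\) is surjective for every projective \(P \in \cat{C}\). Via the adjunction, this map is identified with \(\on{Hom}_{\cat{M}}(P \triangleright X, M) \to \on{Hom}_{\cat{M}}(P \triangleright X, N)\), which is surjective because \(P \triangleright X\) is projective by hypothesis~(2). For \emph{faithfulness}, suppose \(\hom{X,f} = 0\) for some \(f \from M \to N\); it suffices to show \(fg = 0\) for every \(g \from Q \to M\) with \(Q\) a projective of \(\cat{M}\), since then \(fp = 0\) for a projective cover \(p \from P \twoheadrightarrow M\), whence \(f = 0\) by epi-ness of \(p\). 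Given such \(g\), hypothesis~(3) exhibits \(Q\) as a retract of some \(P' \triangleright X\) with \(P' \in \cat{C}\) projective, so \(g\) factors as \(h \circ i\) for some \(h \from P' \triangleright X \to M\). Transposing \(fh\) across the adjunction yields the composite \(P' \to \hom{X,M} \xrightarrow{\hom{X,f}} \hom{X,N}\), which is zero by assumption, hence \(fh = 0\) and so \(fg = 0\).

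Combining these, Beck's theorem applies and \(K\) is an equivalence; inspection of the construction confirms it is a \(\cat{C}\)-module equivalence. The injective case is obtained by the dual argument: one uses the \emph{left} adjoint \(\cohom{X,\blank}\) of \(\blank \triangleright X\) to build the coalgebra \(\cohom{X,X}\) via the comonad \(\cohom{X,\blank \triangleright X}\), applies Beck's comonadicity theorem to \(\cohom{X,\blank} \dashv (\blank \triangleright X)\), and reduces to left exactness and faithfulness of \(\cohom{X,\blank}\), which follow from the injective versions of~(2) and~(3) respectively. The main conceptual point, and the only place where the argument genuinely departs from the classical literature, is the observation that (2) suffices for exactness of \(\hom{X,\blank}\) via the ``enough projectives in \(\cat{C}\)'' test, and that (3) suffices for faithfulness via the ``enough projectives in \(\cat{M}\)'' test; neither finiteness of the categories nor biexactness of the action is invoked, so the argument applies well outside the finite tensor category setting.
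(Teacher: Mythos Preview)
Your proof is correct and follows essentially the same approach as the paper: the paper packages your exactness argument as Proposition~\ref{Cprojectives} and your faithfulness argument (phrased instead as ``reflects zero objects'') as Proposition~\ref{ProjReflect}, then invokes the abelian Beck variant (Theorem~\ref{thm:SmallBeck}) and uses rigidity via Proposition~\ref{prop:lax-is-strong-when-rigid} to identify the monad with an algebra. One small terminological slip: you write ``projective cover'' where you only need (and only have) an epimorphism from a projective.
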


In the case of tensor categories, this statement may on the surface seem inapplicable, since a tensor category which is not finite will generally not have projectives or injectives.
However, since the category underlying a tensor category is equivalent to the category of finite-dimensional comodules for a coalgebra, the $\mathbf{Ind}$-completion of a tensor category has enough injectives.
Further, since multitensor categories can be realized as categories of compact objects in locally finitely presentable categories, we may use an appropriate variant of the special adjoint functor theorem, so that we may characterize when the internal cohom $\cohom{M,\blank}$ exists for an object \(M \in \mathbf{Ind}(\mathcal{M})\).
We obtain the following result.

\begin{theorematic}\label{multitensorintroduction}
  Let $\mathcal{C}$ be a multitensor category, and let $\mathcal{M}$ be a locally finite abelian $\mathcal{C}$-module category, such that there is an object $X \in \mathbf{Ind}(\mathcal{M})$ satisfying the following properties:
  \begin{enumerate}
    \item $\blank \triangleright X$ is left exact;
    \item for an injective $I \in \mathbf{Ind}(\mathcal{C})$, the object $I \triangleright X$ is injective and any injective in $\mathbf{Ind}(\mathcal{M})$ is a direct summand of an object of this form;
    \item the space $\on{Hom}_{\mathbf{Ind}(\mathcal{M})}(M, I \triangleright X)$ is finite-dimensional for any $M \in \mathcal{M}$ and finitely cogenerated injective $I \in \mathbf{Ind}(\mathcal{M})$.
  \end{enumerate}

  Then $\cohom{X,\blank}$ exists and $\mathbf{Ind}(\mathcal{M}) \simeq \comodc[\mathbf{Ind}(\mathcal{C})]{\cohom{X,X}}$ as $\mathcal{C}$-module categories. This restricts to a $\mathcal{C}$-module equivalence $\mathcal{M} \simeq \comodc{X,X}$.
\end{theorematic}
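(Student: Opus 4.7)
The strategy is to extend everything to the $\mathbf{Ind}$-completions, apply the injective half of Theorem~\ref{rigged} there, and then descend the resulting equivalence to the compact subcategories $\mathcal{C}$ and $\mathcal{M}$. First, since $\mathcal{C}$ is multitensor, $\mathbf{Ind}(\mathcal{C})$ is a locally finitely presentable Grothendieck abelian monoidal category with cocontinuous tensor product in each variable and enough injectives; its compact objects are precisely the objects of $\mathcal{C}$, all of which are dualizable. Analogously, $\mathbf{Ind}(\mathcal{M})$ is a Grothendieck abelian $\mathbf{Ind}(\mathcal{C})$-module category whose action is cocontinuous in each variable and whose compact objects are those of $\mathcal{M}$.

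The main technical step is to produce the internal cohom $\cohom{X,\blank}$ as a left adjoint to $\blank \triangleright X \colon \mathbf{Ind}(\mathcal{C}) \to \mathbf{Ind}(\mathcal{M})$. Condition~(1) gives left exactness. Combining this with condition~(3), one shows that for compact $M \in \mathcal{M}$ the presheaf $\mathrm{Hom}_{\mathbf{Ind}(\mathcal{M})}(M, \blank \triangleright X) \colon \mathbf{Ind}(\mathcal{C}) \to \mathbf{Vec}$ factors through finite-dimensional vector spaces on a cogenerating class of injectives. A suitable variant of the special adjoint functor theorem for locally finitely presentable categories then produces the representing object $\cohom{X,M} \in \mathbf{Ind}(\mathcal{C})$; the assignment extends by cocontinuity to all $M \in \mathbf{Ind}(\mathcal{M})$.

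With $\cohom{X,\blank}$ in hand, the three hypotheses of the injective half of Theorem~\ref{rigged} are met for the action of $\mathbf{Ind}(\mathcal{C})$ on $\mathbf{Ind}(\mathcal{M})$: conditions~(ii) and~(iii) of that theorem are precisely our assumption~(2), while the module structure on $\cohom{X,\blank}$ is verified on the rigid subcategory $\mathcal{C}$ of compact objects and extended cocontinuously. The theorem then endows $\cohom{X,X}$ with a coalgebra structure and yields an equivalence of $\mathbf{Ind}(\mathcal{C})$-module categories $\mathbf{Ind}(\mathcal{M}) \simeq \comodc[\mathbf{Ind}(\mathcal{C})]{\cohom{X,X}}$, which is in particular one of $\mathcal{C}$-module categories. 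To obtain the asserted restriction $\mathcal{M} \simeq \comodc{\cohom{X,X}}$, match compact objects on both sides: condition~(3) ensures that $\cohom{X,M}$ lies in $\mathcal{C}$ whenever $M \in \mathcal{M}$, so compact objects of $\mathbf{Ind}(\mathcal{M})$ correspond exactly to finite-dimensional $\cohom{X,X}$-comodules.

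\textbf{Main obstacle.} The central difficulty is the second step, the construction of $\cohom{X,\blank}$. Without rigidity of $\mathbf{Ind}(\mathcal{C})$, this adjoint is not supplied by dualization and must be obtained via an adjoint functor theorem whose hypotheses are in turn not automatic: it is precisely here that condition~(3) earns its keep, bridging the gap between the finite left exactness of~(1) and the full preservation of limits needed for representability. A secondary subtlety is propagating the module-functor structure of $\cohom{X,\blank}$ from the rigid compacts to all of $\mathbf{Ind}(\mathcal{C})$ by cocontinuity, so that Theorem~\ref{rigged} genuinely applies in this Grothendieck setting.
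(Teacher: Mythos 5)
Your proposal is correct and follows essentially the same route as the paper: the left adjoint $\cohom{X,\blank}$ is produced from conditions (1) and (3) via the Takeuchi-style adjoint functor theorem for locally finite categories, the general (non-rigid) injective reconstruction theorem is applied to $\mathbf{Ind}(\mathcal{M})$ over $\mathbf{Ind}(\mathcal{C})$, strongness of the resulting oplax module comonad is obtained by restricting to the rigid subcategory of compact objects and extending finitarily, and the final statement is recovered by matching compacts. One small imprecision: the fact that $\cohom{X,M}$ is compact for compact $M$ follows not from condition (3) but from $\blank \triangleright X$ being finitary, so that its left adjoint preserves compact objects (Lemma~\ref{leftadjointssmallobjects}).
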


Consider the special case where a tensor category $\mathcal{C}$ admits a fiber functor to the category $\tetramodfd$ of finite-dimensional vector spaces, and is thus monoidally equivalent to the category $\tetramodfd[H]$ of finite-dimensional comodules over a finite-dimensional Hopf algebra $H$.
Theorem~\ref{finiterecon} realizes a finite $\mathcal{C}$-module category $\mathcal{M}$ as the category of finite-dimensional modules over a finite-dimensional $H$-comodule algebra.
Comodule algebras arise naturally in many settings, and so Theorem~\ref{finiterecon} captures an important class of module categories.

Since $H$ and $A$ are finite-dimensional, the category of modules for an $H$-comodule algebra $A$ is equivalent, as an $\tetramodfd[H]$-module category, to the category of comodules for the $H$-comodule coalgebra $A^{\ast}$. In this sense, the following Hopf-theoretic corollary of Theorem~\ref{multitensorintroduction} is an immediate generalization of Theorem~\ref{finiterecon}.

\begin{theorematic}\label{hopfuming}
  Let $H$ be a Hopf algebra, and let $\mathcal{M}$ be a $\tetramodfd[H]$-module category satisfying the assumptions of Theorem~\ref{finiterecon} for $\mathcal{C} = \tetramodfd[H]$.
  Then there is an $H$-comodule coalgebra $C$, which is possibly infinite-dimensional, such that $\mathcal{M} \simeq \fdcomodc[H]{C}$.
\end{theorematic}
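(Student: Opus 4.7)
The plan is to derive Theorem~\ref{hopfuming} as an application of Theorem~\ref{multitensorintroduction} to $\mathcal{C} = \tetramodfd[H]$. First, I would verify that $\tetramodfd[H]$ fits the framework of that theorem: it is locally finite abelian as the category of finite-dimensional comodules over the coalgebra underlying $H$, and rigid by virtue of the antipode of $H$, hence a multitensor category. Its $\mathbf{Ind}$-completion is naturally equivalent to the full category of $H$-comodules, since every $H$-comodule is the filtered union of its finite-dimensional subcomodules; under this identification, coalgebra objects in $\mathbf{Ind}(\tetramodfd[H])$ are precisely $H$-comodule coalgebras, possibly infinite-dimensional.

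Next, I would produce an object $X \in \mathbf{Ind}(\mathcal{M})$ satisfying the hypotheses of Theorem~\ref{multitensorintroduction}. Since $\mathcal{M}$ is finite abelian, it has only finitely many isomorphism classes of simple objects with finite-dimensional injective hulls, and I would take $X$ to be their direct sum---an injective cogenerator of $\mathcal{M}$ that already lies in $\mathcal{M} \subseteq \mathbf{Ind}(\mathcal{M})$. Hypothesis~(1) is immediate from the assumed exactness of $\blank \triangleright X$ inherited from the hypotheses of Theorem~\ref{finiterecon}. Hypothesis~(3)---finite-dimensionality of $\on{Hom}_{\mathbf{Ind}(\mathcal{M})}(M, I \triangleright X)$ for $M \in \mathcal{M}$ and finitely cogenerated injective $I \in \mathbf{Ind}(\tetramodfd[H])$---holds because any such $I$ is finite-dimensional, whence $I \triangleright X$ lies in $\mathcal{M}$ and the Hom-space is finite-dimensional by the finiteness of $\mathcal{M}$.

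The principal obstacle is verifying Hypothesis~(2). To show that $I \triangleright X$ is injective in $\mathbf{Ind}(\mathcal{M})$ for every injective $I \in \mathbf{Ind}(\tetramodfd[H])$, I would exploit rigidity of $\tetramodfd[H]$: for each finite-dimensional subcomodule $I_{\alpha} \subseteq I$, the adjunction $\on{Hom}(Y, I_{\alpha} \triangleright X) \cong \on{Hom}(\ld{I_{\alpha}} \triangleright Y, X)$ combined with the injectivity of $X$ and the exactness of $\ld{I_{\alpha}} \triangleright \blank$ gives that $I_{\alpha} \triangleright X$ is injective, and one must then argue that the resulting filtered colimit remains injective---the most delicate step, using the locally finite structure of both Ind-completions. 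For the cogenerating condition, I would show that every injective of $\mathbf{Ind}(\mathcal{M})$ embeds into some $I \triangleright X$ with $I$ a copower of the injective cogenerator $H \in \mathbf{Ind}(\tetramodfd[H])$, and split this embedding using the injectivity of the target. Once Hypothesis~(2) is verified, Theorem~\ref{multitensorintroduction} produces an $H$-comodule coalgebra $C = \cohom{X,X}$ together with an equivalence of $\tetramodfd[H]$-module categories $\mathbf{Ind}(\mathcal{M}) \simeq \comodc[{\mathbf{Ind}(\tetramodfd[H])}]{C}$ that restricts to $\mathcal{M} \simeq \fdcomodc[H]{C}$, as required.
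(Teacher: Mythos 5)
Your overall route is the paper's: Theorem~\ref{hopfuming} is the specialization of Theorem~\ref{multitensorintroduction} (in the body, Corollary~\ref{infinitecomodulecoalgebras}) to $\mathcal{C} = \tetramodfd[H]$, identifying $\mathbf{Ind}(\tetramodfd[H])$ with $\tetramod[H]$ and coalgebra objects therein with $H$-comodule coalgebras, with $X$ an injective cogenerator of $\mathcal{M}$. Your treatment of hypothesis~(1) and of hypothesis~(2) is essentially fine; in particular the step you flag as delicate is standard, since $\mathbf{Ind}(\mathcal{M})$ is locally noetherian (every object is the union of its finite-length subobjects), so filtered colimits of injectives are injective.

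The genuine gap is in hypothesis~(3). You justify quasi-finiteness by asserting that every finitely cogenerated injective $I$ in $\mathbf{Ind}(\tetramodfd[H])$ is finite-dimensional. By the paper's own characterization, a comodule is finitely cogenerated iff it embeds into $H^{\oplus m}$; thus $H$ itself is a finitely cogenerated injective, and it is infinite-dimensional whenever $H$ is. So your argument only covers finite-dimensional $H$ --- exactly the case in which the ``possibly infinite-dimensional $C$'' clause is vacuous, since there $C = {\cohom{X,X}}^{\phantom{1}}$ is an object of a finite category. Moreover, quasi-finiteness is not recoverable by some other argument: by Proposition~\ref{locallyfiniteadjoints} it is equivalent to the existence of $\cohom{X,\blank}$, and it can genuinely fail under the stated hypotheses. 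Take $H = \Bbbk[G]$ for an infinite group $G$ and $\mathcal{M} = \kvect$ with the action through the fiber functor; then $X = \Bbbk$ and $\on{Hom}_{\mathbf{Ind}(\mathcal{M})}(\Bbbk,\, H \triangleright \Bbbk) \cong \bigoplus_{g \in G} \Bbbk$ is infinite-dimensional, so $\blank \triangleright X$ is not quasi-finite and the internal cohom does not exist, even though $\mathcal{M}$ is finite abelian with exact evaluation functors. This is precisely why the paper's rigorous version, Corollary~\ref{infinitecomodulecoalgebras}, assumes outright that $\mathbf{Ind}(\mathcal{M})$ admits a coclosed $\mathbf{Ind}(\mathcal{C})$-injective cogenerator instead of deriving it from the hypotheses of Theorem~\ref{finiterecon}. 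To make your argument correct you must either restrict to finite-dimensional $H$, or add quasi-finiteness of $\blank \triangleright X$ (equivalently, coclosedness of $X$) as an explicit hypothesis.
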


As previously stated,~\cite[Example~2.20]{DSPS} shows that (co)algebra objects in $\mathcal{C}$ do not suffice for an exhaustive reconstruction process in the non-rigid case.
This is in fact also the case in some settings where rigidity is assumed, and one instead considers (co)algebra objects in a suitable (co)completion, as is the case in e.g.~\cite[Theorem~4.7]{MMMT}, or Theorem~\ref{multitensorintroduction} above.
In the absence of rigidity, we invoke the bicategorical Yoneda lemma to realize the objects of\, $\mathcal{C}$ as the $\mathcal{C}$-module endofunctors of\, $\mathcal{C}$ itself, and view the \emph{lax} $\mathcal{C}$-module endofunctors of\, $\mathcal{C}$ as the correct generalization of objects in $\mathcal{C}$, and lax $\mathcal{C}$-module monads as the correct generalization of algebra objects in $\mathcal{C}$.
Indeed, by doctrinal adjunction (see~\cite{Ke1} and~\cite{HZ2} for the particular case of module categories), the right adjoint of a (strong) $\mathcal{C}$-module functor is canonically lax, and thus the (co)monads we study are canonically (op)lax $\mathcal{C}$-module functors.

An additional difficulty we encounter is that while the Kleisli category of a lax $\mathcal{C}$-module monad is naturally a $\mathcal{C}$-module category, the same is not true for the Eilenberg--Moore category.
However, we show that in general there is (up to equivalence) at most one $\mathcal{C}$-module structure on the Eilenberg--Moore category \emph{extending} that on the Kleisli category, and in Theorem~\ref{extendingalways} we show its existence under the assumption of right exactness (left exactness for comonads) of the (co)monad, using so-called Linton coequalizers, and multicategorical techniques similar to those in~\cite{aguiar18:monad}.

More precisely, we show that the structure defined by Linton coequalizers on the Eilenberg--Moore category can be described as a multiactegory, in the sense of~\cite{arkor24}, and, in Proposition~\ref{hillbility}, we prove that this multiactegory is \emph{representable}, in a similar sense to representable multicategories considered in~\cite{hermida00:repres,leinster04:higher}. The module category structure is then well-defined, since we also show that, analogously to multicategories and monoidal categories, representable multiactegories yield module categories.

The following is our most general reconstruction result.

\begin{theorematic}[{Theorems~\ref{mainnonsenseprojective},~\ref{mainnonsenseinjective}, and~\ref{projcorrespondence}}]\label{jennydeathwhen}
  Assume that $\mathcal{C}$ and \(\cat{M}\) have enough projectives (injectives) and that there is an object $X \in \mathcal{M}$ such that:
  \begin{enumerate}
    \item there is a right adjoint $\hom{X,\blank}$ (left adjoint $\cohom{X,\blank}$) to $\blank \triangleright X$;
    \item for $P \in \mathcal{C}$ projective (injective), the object $P \triangleright X$ is projective (injective);
    \item any projective (injective) object $N$ of $\mathcal{M}$ is a direct summand of an object of the form $P \triangleright X$, for $P$ projective (injective).
  \end{enumerate}
  Then $\mathcal{M} \simeq \mathbf{EM}(\hom{X,\blank \triangleright X})$, and the $\mathcal{C}$-module structure of the category of \(T\)-modules is extended from the Kleisli category. Furthermore, this extends to a bijection
   \begin{equation}\label{Level4OnWednesday}
 \begin{aligned}
  \setj{\,(\mathcal{M},X) \text{ as above}\,}/(\mathcal{M} \simeq \mathcal{N}) &\xleftrightarrow{1:1} \setj{\,\text{Lax }\mathcal{C}\text{-module monads on } \mathcal{C}\,}/(\mathbf{EM}(T) \simeq \mathbf{EM}(S)). \\
  (\mathcal{M},X) &\longmapsto\, \hom{X,-\triangleright X} \\
  (\mathbf{EM}(T), T(\mathbb{1})) &\longmapsfrom\, T
 \end{aligned}
   \end{equation}
\end{theorematic}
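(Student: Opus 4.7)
The plan is to apply Beck's monadicity theorem to the adjunction $\blank \triangleright X \dashv \hom{X,\blank}$ provided by hypothesis~(1), and then to deploy the extension mechanism described in the introduction to promote the resulting Eilenberg--Moore equivalence to one of $\cat{C}$-module categories. The first step is to show that $\hom{X,\blank}$ is exact and conservative. Right exactness uses condition~(2): given an epi $f\from M \twoheadrightarrow N$ in $\cat{M}$ and a projective cover $\pi\from P \twoheadrightarrow \hom{X,N}$ in $\cat{C}$, the morphism in $\cat{M}$ adjoint to $\pi$ lifts through $f$ by projectivity of $P \triangleright X$, and re-adjoining produces a factorization of $\pi$ through $\hom{X,f}$, forcing $\hom{X,f}$ to be epi; left exactness is automatic as $\hom{X,\blank}$ is a right adjoint. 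Conservativity uses condition~(3): any projective $N \in \cat{M}$ is a direct summand of some $P \triangleright X$, so that $\on{Hom}_{\cat{M}}(N,\blank)$ is a direct summand of $\on{Hom}_{\cat{C}}(P,\hom{X,\blank})$, and, since $\cat{M}$ has enough projectives, $\hom{X,\blank}$ reflects isomorphisms. These are precisely the computations developed in Section~\ref{sec:internal-projectives-and-injectives}. Beck's theorem then yields an equivalence $\cat{M} \simeq \mathbf{EM}(T)$ for the monad $T = \hom{X,\blank \triangleright X}$ of the adjunction.

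The main obstacle—and the heart of the theorem—is upgrading this to an equivalence of $\cat{C}$-module categories in the absence of rigidity. Doctrinal adjunction equips the right adjoint $\hom{X,\blank}$ with a canonical lax $\cat{C}$-module structure, so $T$ is a lax $\cat{C}$-module monad and the Kleisli category $\mathbf{Kl}(T)$ carries the evident $\cat{C}$-module structure. Right exactness of $T$, which is an immediate consequence of the right exactness of $\hom{X,\blank}$ established above, is exactly the hypothesis required by Theorem~\ref{extendingalways}; its Linton-coequalizer and representable-multiactegory machinery produces a $\cat{C}$-module structure on $\mathbf{EM}(T)$ extending the Kleisli one, and this extension is essentially unique. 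The equivalence $\cat{M} \simeq \mathbf{EM}(T)$ is compatible with the free-module inclusions from $\cat{C}$, hence its restriction to free objects is automatically $\cat{C}$-linear; by the uniqueness part of the extension, it must then be a $\cat{C}$-module equivalence throughout. The dual case, where $\cat{M}$ and $\cat{C}$ have enough injectives and one works with the comonad $\cohom{X,\blank \triangleright X}$, is treated by the formally dual argument, replacing Beck monadicity with comonadicity and lax module monads with oplax module comonads.

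For the bijection~(\ref{Level4OnWednesday}), one direction is the assignment $(\cat{M},X) \mapsto T = \hom{X,\blank \triangleright X}$ just constructed. For the reverse, given a lax $\cat{C}$-module monad $T$ on $\cat{C}$, I would set $X \defeq T(\mathbb{1}) \in \mathbf{EM}(T)$; under the $\cat{C}$-module structure provided by Theorem~\ref{extendingalways}, the functor $\blank \triangleright T(\mathbb{1})$ coincides with the free $T$-module functor $\cat{C} \to \mathbf{EM}(T)$, whose right adjoint is the forgetful functor, so the composite monad recovers $T$ on the nose. That $T(\mathbb{1})$ verifies hypotheses~(1)--(3) inside $\mathbf{EM}(T)$ is the content of Theorem~\ref{projcorrespondence}, which matches projectives (respectively, injectives) in $\cat{C}$ with those in $\mathbf{EM}(T)$ via the free--forgetful adjunction. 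Combining the two constructions and invoking uniqueness of the extended $\cat{C}$-module structure yields mutually inverse assignments on the indicated equivalence classes, completing the bijection.
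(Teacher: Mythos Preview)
Your proposal is correct and follows essentially the same route as the paper: exactness and zero-reflection of $\hom{X,\blank}$ via Propositions~\ref{Cprojectives} and~\ref{ProjReflect}, Beck monadicity (Theorem~\ref{thm:SmallBeck}), doctrinal adjunction for the lax module monad structure, the Linton-coequalizer extension (Theorem~\ref{extendingalways}), and uniqueness (Theorem~\ref{thm:one-module-structure-on-EM}) to pin down the module equivalence, with the converse via $T(\mathbb{1})$ handled exactly as in Theorem~\ref{projcorrespondence}. The paper's only cosmetic difference is that it transports the $\cat{C}$-module structure from $\cat{M}$ along the bare comparison equivalence and then invokes uniqueness to identify it with the Linton structure, rather than arguing directly that $\cat{C}$-linearity on free objects propagates---but these are two phrasings of the same argument.
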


The formulation of the bijection~\eqref{Level4OnWednesday} indicates a Morita aspect to the reconstruction theory for module categories, as the equivalence relation imposed on monads strongly resembles, and can be specialized to, Morita equivalence of algebras. In Theorem~\ref{TurboEilenbergWatts}, we characterize the right exact lax $\mathcal{C}$-module functors between Eilenberg--Moore categories for lax $\mathcal{C}$-module monads in terms of suitable bimodule objects in the category of endofunctors of $\mathcal{C}$ - this gives a precise meaning to the notion of Morita equivalence of monads, hence the bijection~\eqref{Level4OnWednesday} can be rephrased as
\[
 \setj{\,(\mathcal{M},X) \text{ as above}\,}/(\mathcal{M} \simeq \mathcal{N}) \xleftrightarrow{1:1} \setj{\,\text{Lax }\mathcal{C}\text{-module monads on } \mathcal{C}\,}/\simeq_{\text{Morita}},
\]
see Theorem~\ref{injectivebijection}.

While Theorem~\ref{jennydeathwhen} gives a general reconstruction result in the non-rigid setting,
which has recently seen increased interest in multiple areas
(see e.g.~\cite{DSPS,allen21:dualit-feigin-fuchs} or~\cite[Corollary~10.11]{stroinski24:modul-tambar}),
lax $\mathcal{C}$-module monads on $\mathcal{C}$ are less accessible than mere objects of\, $\mathcal{C}$, and may indeed appear not very accessible in general.
To show that this need not be the case, we again turn to the Hopf-algebraic setting.
Recall that a ring category (the non-rigid counterpart of a tensor category, see~\cite[Sections~4.2 and~5.4]{EGNO}) which admits a fiber functor to $\tetramodfd$ is monoidally equivalent to the category $\tetramodfd[B]$ of finite-dimensional left $B$-comodules over a (not necessarily Hopf) bialgebra $B$.

In Section~\ref{sec:Hopf-trimodules}, we show that the category of left exact, finitary lax $\tetramod[B][][][]$-module endofunctors of\, $\tetramod[B][][][]$ is monoidally equivalent to the category $\tetramod[B][B][][B]$ of \emph{Hopf trimodules}; i.e., $B$-$B$-bicomodules with an additional left $B$-action that is a $B$-$B$-bicomodule morphism.
Such structures feature prominently in the quasi-bialgebraic generalization of the fundamental theorem of Hopf modules, see~\cite{hausser99:integ-theor-quasi-hopf-algeb,saracco17:hopf}.
This equivalence matches a lax $\tetramod[B][][][]$-module monad, such as that featuring in Theorem~\ref{jennydeathwhen}, with a Hopf trimodule algebra:
a Hopf trimodule $A$ together with maps $A \cotens A \to A$ and $B \to A$ satisfying the usual associativity and unitality axioms for an algebra object.

If $B$ is infinite-dimensional, then Theorem~\ref{jennydeathwhen} should yield reconstruction in terms of an \emph{oplax} $\tetramod[B][][][]$-module comonad $\cohom{X,\blank \triangleright X}$, using the existence of injective objects in $\tetramod[B][][][]$.
Since our result gives an algebraic realization only for the \emph{lax} $\tetramod[B][][][]$-module functors, we restrict ourselves to the case where $\blank \triangleright X$ admits both a left and a right adjoint, by assuming $\blank \triangleright X$ to be exact.
In that case, we obtain an adjunction $\cohom{X,\blank \triangleright X} \dashv \hom{X,\blank \triangleright X}$.

However, since the monad is right, and not left, adjoint, the category $\mathbf{EM}(\cohom{X,\blank \triangleright X})$ is not equivalent to the category $\mathbf{EM}(\hom{X,\blank \triangleright X})$ of modules over the associated Hopf trimodule algebra $\hom{X,X}$, but rather to the category of its \emph{contramodules}---structures extensively studied in the setting of so-called semi-infinite homological algebra, see~\cite{Pos}.
On the other hand, while the Eilenberg--Moore categories are not equivalent, the Kleisli categories are, and it is again the Kleisli categories that control the $\tetramod[B][][][]$-module structure, which can thus be read off directly from the Hopf trimodule algebra.
We obtain the following result.

\begin{theorematic}\label{parkingtickets}
  Let $\mathcal{M}$ be a locally finite abelian $\tetramodfd[B][][][]$-module category satisfying the assumptions of Theorem~\ref{jennydeathwhen}.
  Then there is a Hopf trimodule algebra $A \in \tetramodfd[B][B][][B]$ such that there is an equivalence $\mathbf{Ind}(\mathcal{M}) \simeq A\text{\rm-Contramod}$ of\, $\tetramodfd[B]$-module categories, where the $\tetramodfd[B]$-module structure on the right-hand side is extended from the category of free $A$-module.
  This equivalence restricts to $\mathcal{M} \simeq A\text{\rm-contramod}_{\on{f.d.}}$.
\end{theorematic}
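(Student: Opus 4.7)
The plan is to combine the general reconstruction of Theorem~\ref{jennydeathwhen} with the algebraic description of lax $\tetramodfd[B]$-module monads on $\tetramodfd[B]$ as Hopf trimodule algebras developed in Section~\ref{sec:Hopf-trimodules}, and then to translate between the comonadic Eilenberg--Moore category which arises naturally on the module category side and the contramodule category which arises naturally on the algebra side.

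First, pass to $\mathbf{Ind}(\mathcal{M})$, which is a Grothendieck abelian $\tetramod[B]$-module category with enough injectives. Since $\blank \triangleright X$ is assumed exact, it admits both a left and a right adjoint, producing an adjunction $S \dashv T$ of endofunctors of $\tetramod[B]$ in which $S \defeq \cohom{X,\blank \triangleright X}$ is an oplax $\tetramodfd[B]$-module comonad and $T \defeq \hom{X,\blank \triangleright X}$ is a lax $\tetramodfd[B]$-module monad. The injective version of Theorem~\ref{jennydeathwhen}, applied to $\mathbf{Ind}(\mathcal{M})$, then yields a $\tetramodfd[B]$-module equivalence $\mathbf{Ind}(\mathcal{M}) \simeq \mathbf{EM}(S)$ in which the module structure is extended from the Kleisli category of $S$. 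The equivalence between left exact, finitary lax $\tetramodfd[B]$-module monads and Hopf trimodule algebras established in Section~\ref{sec:Hopf-trimodules} then matches $T$ with a Hopf trimodule algebra $A$ whose underlying trimodule is $\hom{X,X}$, realizing $\mathbf{EM}(T)$ as $A\text{\rm-Mod}$.

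The heart of the proof is the identification $\mathbf{EM}(S) \simeq A\text{\rm-Contramod}$. Under the adjunction $S \dashv T$, an $S$-coaction $N \to S(N)$ transposes to a morphism $T(N) \to N$, which is exactly what defines a contra-action of $A$ on $N$ in Positselski's sense; expressing the comonad structure maps of $S$ in terms of the Hopf trimodule algebra data of $A$ then matches the comonad coherences with the contramodule axioms. The $\tetramodfd[B]$-module structure on $A\text{\rm-Contramod}$ is ``extended from the category of free $A$-modules'' because the Kleisli categories of $S$ and $T$ are identified as $\tetramodfd[B]$-module categories via the adjunction $S \dashv T$, and the Kleisli category of $T$ is, after applying the equivalence from Section~\ref{sec:Hopf-trimodules}, the category of free $A$-modules. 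The restriction $\mathcal{M} \simeq A\text{\rm-contramod}_{\on{f.d.}}$ is then obtained by passing to compact (equivalently, finite-length) objects on both sides of the equivalence.

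I expect the main obstacle to be precisely the translation $\mathbf{EM}(S) \simeq A\text{\rm-Contramod}$: while the adjoint transposition of coactions and actions is classical in the vector space setting, here it must be performed compatibly with the oplax $\tetramodfd[B]$-module coherence on $S$ and the lax $\tetramodfd[B]$-module coherence on $T$, and, as emphasized in~\cite{Pos}, contramodules behave quite differently from comodules with respect to tensor products and colimits, so tracking this coherence through the adjunction and matching it against the Hopf trimodule algebra structure of $A$ requires careful bookkeeping.
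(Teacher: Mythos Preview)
Your overall architecture is correct and matches the paper's: pass to $\mathbf{Ind}(\mathcal{M})$, use the injective version of Theorem~\ref{jennydeathwhen} to get $\mathbf{Ind}(\mathcal{M})\simeq\mathbf{EM}(S)$ with $S=\cohom{X,\blank\triangleright X}$, identify the right adjoint monad $T=\hom{X,\blank\triangleright X}$ with $A\cotens\blank$ via Theorem~\ref{thm:hopf-trimodules-are-lax-monoidal-functors}, and handle the module structure through Kleisli categories.

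However, your ``heart of the proof'' contains a genuine error. You write that under $S\dashv T$ an $S$-coaction $N\to S(N)$ transposes to a map $T(N)\to N$. This is false: with $S$ left adjoint to $T$ one has $\mathrm{Hom}(S(N),N)\cong\mathrm{Hom}(N,T(N))$, not $\mathrm{Hom}(N,S(N))\cong\mathrm{Hom}(T(N),N)$. The transposition you describe would require $T\dashv S$, which is exactly the situation of Proposition~\ref{pyramidscheme}, and the paper explicitly warns (just after Proposition~\ref{einerKleinerKleisliÄquivalenz}) that for a monad left adjoint to a comonad the Eilenberg--Moore categories are \emph{not} equivalent. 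Moreover, even if the transposition worked, a map $T(N)=A\cotens N\to N$ is an $A$-\emph{module} structure, not a contra-action.

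The actual argument is simpler than you think and requires no transposition at all. By Definition~\ref{contramodules}, $A\text{-Contramod}$ is \emph{defined} as $\mathbf{EM}(\mathrm{cohom}_{B\blank}(A,\blank))$, where $\mathrm{cohom}_{B\blank}(A,\blank)$ is the left adjoint of $A\cotens\blank$. Since both $S$ and $\mathrm{cohom}_{B\blank}(A,\blank)$ are left adjoint to $T\cong A\cotens\blank$, uniqueness of adjoints gives $S\cong\mathrm{cohom}_{B\blank}(A,\blank)$, and this isomorphism is automatically one of oplax module comonads because both comonad structures are induced from the same monad structure on $T$. Hence $\mathbf{EM}(S)\simeq\mathbf{EM}(\mathrm{cohom}_{B\blank}(A,\blank))=A\text{-Contramod}$ directly. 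The detour through $\mathbf{EM}(T)\simeq A\text{-Mod}$ that you sketch is correct but unused.
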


Since this algebraic realization of our reconstruction results may at first glance seem difficult to apply in calculations, we give two explicit examples in which we determine a trimodule algebra for a given module category. The first is Example~\ref{coreps}, where $B$ is the semigroup algebra for the unique two-element monoid which is not a group. This example is intended to be very similar to~\cite[Example~2.20]{DSPS}, in that a simple object of\, $\mathcal{C}$ acts as zero on an indecomposable object.

We also show that applying the ordinary, ``rigid'' reconstruction procedure on this example yields the same algebra object in $\tetramodfd[B][][][]$ as that corresponding to the regular action of $\tetramodfd[B][][][]$ on itself, and thus fails to classify module categories.
For our second example, we let $B$ be arbitrary and determine the Hopf trimodule algebra underlying the module functor given by the fiber functor $\tetramodfd[B] \to \tetramodfd$.

Finally, the Morita-theoretic results of Theorem~\ref{injectivebijection} give us a precise notion of Morita equivalence for Hopf trimodule algebras with respect to their contramodules in Definition~\ref{contramoritadef}, resulting in a Morita Theorem for $\tetramodfd[B][][][]$-module categories, Theorem~\ref{contramodulereuters}.

As a more theoretic application, we use the trimodule reconstruction to give a categorical interpretation of a variant of the Hausser--Nill theorem.

\begin{theorematic}\label{illmatic}
  The functor $\tetramod[B] \to \tetramod[B][B][][B]$ corresponds to the inclusion of strong $\tetramod[B]$-module endofunctors in the category of lax $\tetramod[B]$-module endofunctors, under the Yoneda lemma and Theorem~\ref{thm:hopf-trimodules-are-lax-monoidal-functors}.
  The latter---and hence also the former---functor is an equivalence if and only if\, $\tetramodfd[B]$ is left rigid, which is the case if and only if $B$ has a twisted antipode.
\end{theorematic}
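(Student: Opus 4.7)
The plan is threefold: (i) identify the paper's functor $\tetramod[B] \to \tetramod[B][B][][B]$ with the strong-to-lax inclusion under the two equivalences in the statement; (ii) recognize this as the free-trimodule construction and invoke the fundamental theorem of Hopf trimodules over a bialgebra to characterize when it is essentially surjective; (iii) use the classical equivalence between twisted antipodes and left rigidity of $\tetramodfd[B]$ to conclude.

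For step (i), I would chase the Yoneda embedding, which identifies a left $B$-comodule $V$ with the strong $\tetramod[B]$-module endofunctor $V \otimes (\blank)$ of $\tetramod[B]$. Regarding it as a lax module endofunctor and applying the monoidal equivalence of Theorem~\ref{thm:hopf-trimodules-are-lax-monoidal-functors} yields a specific Hopf trimodule built from $V$ and $B$, whose structures come respectively from the left $B$-coaction on $V$ and from the regular left $B$-action and right $B$-coaction on $B$. Using monoidality of the equivalence and naturality of the Yoneda correspondence, the resulting assignment is canonically isomorphic to the paper's functor $\tetramod[B] \to \tetramod[B][B][][B]$.

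For step (ii), the inclusion of strong into lax module endofunctors is fully faithful as a subcategory inclusion, so its conjugate $\tetramod[B] \to \tetramod[B][B][][B]$ is as well. Hence this functor is an equivalence iff it is essentially surjective, iff every Hopf trimodule arises from a $B$-comodule via the free construction. This is precisely the bialgebraic Hausser--Nill-type theorem, which holds iff $B$ admits a twisted antipode: for the ``if'' direction, $\overline{S}$ is used to construct a quasi-inverse taking a Hopf trimodule $M$ to its space of trimodule coinvariants; for the ``only if'' direction, one applies essential surjectivity to a canonical Hopf trimodule structure on $B \otimes B$ to extract a convolution inverse to $\mathrm{id}_B$ satisfying the twisted-antipode axiom.

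For step (iii), the equivalence between left rigidity of $\tetramodfd[B]$ and existence of a twisted antipode is classical: $\overline{S}$ equips $V^*$, for any finite-dimensional $(V, \delta_V) \in \tetramodfd[B]$, with a left dual structure via the coaction $(\overline{S} \otimes \mathrm{id}_{V^*}) \circ (\delta_V)^*$, and conversely the left duality data for the finite-dimensional subcomodules of $B$ assemble into $\overline{S}$ along the filtered colimit $B = \colim V_\alpha$. The main obstacle is the ``only if'' half of step (ii), which demands an explicit algebraic extraction of the twisted antipode from the abstract equivalence; the trimodule perspective of Theorem~\ref{thm:hopf-trimodules-are-lax-monoidal-functors} reduces this to the classical coinvariants-versus-free-module argument at the heart of the fundamental theorem of Hopf modules.
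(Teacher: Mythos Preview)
Your proposal is broadly correct and shares much of the paper's structure, but the ``only if'' direction of step~(ii) takes a genuinely different route. The paper does \emph{not} extract the twisted antipode by applying essential surjectivity to a specific trimodule on $B \otimes B$. Instead, it proves a general categorical lemma (Proposition~\ref{LaxIsStrongThenLeftRigid}): if $\mathcal{C}$ is left closed and every lax $\mathcal{C}$-module endofunctor of $\mathcal{C}$ is strong, then $\mathcal{C}$ is left rigid. The argument is that for $M \in \tetramodfd[B]$, the right adjoint $\hom{M,\blank}$ to $\blank \otimes M$ is lax by doctrinal adjunction, hence strong by hypothesis, hence of the form $\blank \otimes \ld{M}$ by Yoneda; the adjunction $\blank \otimes M \dashv \blank \otimes \ld{M}$ of strong module functors then forces $(\ld{M}, M)$ to be a dual pair. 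Left rigidity of $\tetramodfd[B]$ then yields the twisted antipode via the classical Lemma~\ref{antipodeleftrigid}, which you also cite.

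Your approach is the classical Hopf-algebraic one (applying essential surjectivity to a distinguished trimodule and reading off $\overline{S}$ from the resulting isomorphism), and it works---it is essentially Schauenburg's argument adapted to the trimodule setting. The paper's route is more in keeping with its module-categorical framework: Proposition~\ref{LaxIsStrongThenLeftRigid} is stated for arbitrary left closed monoidal categories and is of independent interest as a converse to ``lax over rigid is strong''. Your route is more direct but requires specifying the correct trimodule structure on $B \otimes B$ and an explicit computation. A minor point: under the bicategorical Yoneda lemma for \emph{left} module categories, $V$ corresponds to $\blank \otimes V$, not $V \otimes \blank$; this does not affect the argument but should be corrected.
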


Finally, in Proposition~\ref{diffusion}, we show that the fusion operators of a Hopf monad $T$ on a monoidal category $\mathcal{W}$ can be interpreted as coherence cells for the canonical oplax $\mathbf{EM}(T)$-module structure on $T$. This gives a strong converse to the fact that (op)lax module functors over a rigid category are automatically strong, by providing a distinguished module functor which is strong if and only if the category is rigid. Theorem~\ref{illmatic} can also be viewed as such a converse. This result, interpreting additional structure of a monad on a monoidal category as morphisms for an (op)lax module functor structure on said monad, is similar to the results of~\cite[Theorems~3.17 and~3.18]{flake24:froben-drinf}, which investigates Frobenius monoidal functors rather than Hopf monads.

The article is structured as follows.
In Section~\ref{callingallcars}, we establish notation and collect the necessary preliminaries.
In Section~\ref{sec:module-monads} we describe the canonical module category structures on the Kleisli and Eilenberg--Moore categories associated to (op)lax module (co)monads,
the module category structures on Eilenberg Moore categories defined by Linton coequalizers for (left) right exact (co)monads,
the relations between Kleisli respectively Eilenberg--Moore categories for adjunctions between lax module monads and oplax module comonads, and finally we give a brief account of Beck's monadicity theorem.
In Section~\ref{sec:internal-projectives-and-injectives}, we describe sufficient conditions for exactness and conservativity of internal Homs, coHoms and internal projectives and injectives.
In Section~\ref{sec:lax-module-reconstruction}, we state and show our main results regarding reconstruction in terms of (op)lax module (co)monads: Theorems~\ref{rigged},~\ref{multitensorintroduction},~\ref{hopfuming}, and~\ref{jennydeathwhen}.
Section~\ref{Takyon} is devoted to the proof of the generalization of the Eilenberg--Watts theorem to the setting of lax module monads, as well as its Morita-theoretic implications for the results of Section~\ref{sec:lax-module-reconstruction}.
In Section~\ref{sec:Hopf-trimodules}, we establish the equivalence between lax module endofunctors of\, $\tetramodfd[B]$ and Hopf trimodules, state and prove the Hopf trimodule reconstruction of Theorem~\ref{parkingtickets}, and show a simple application of this result in Example~\ref{coreps}.
In Section~\ref{fiberfunctorthings}, we determine the Hopf trimodule algebra underlying the fiber functor.
In Section~\ref{sec:Hausser-Nill}, we state our categorical variant of the Hausser--Nill theorem; i.e., Theorem~\ref{illmatic}.

\addtocontents{toc}{\SkipTocEntry}
\subsection*{Acknowledgements}\label{sec:acknowledgements}

The authors would like to thank Ulrich Krähmer, Volodymyr Mazorchuk, and Victor Ostrik for many useful comments and discussions, as well as Marcelo Aguiar for explaining the use of Linton coequalizers and multicategories in the study of monoidal structures on Eilenberg--Moore categories for lax monoidal monads in~\cite{aguiar18:monad}.
T.Z.~is supported by \textsc{dfg} grant \textsc{kr} \oldstylenums{5036/2--1}.

\section{Preliminaries}\label{callingallcars}

\subsection{Monoidal and module categories}

For a more extensive account of the notions of monoidal categories and strong monoidal functors (therein referred to simply as \emph{monoidal functors}), we refer the reader to~\cite[Chapter~2]{EGNO}.

\begin{definition}\label{def:lax-monoidal-monoidal-functor}
  A \emph{lax monoidal functor} between two monoidal categories \((\cat{C}, \otimes_{\cat{C}}, \mathbb{1}_{\cat{C}})\) and \((\cat{D}, \otimes_{\cat{D}}, \mathbb{1}_{\cat{D}})\)
  comprises a functor \(F \from \cat{C} \to \cat{D}\),
  together with a morphism $F_{\mathsf{u}} \from \mathbb{1}_{\cat{D}} \to F(\mathbb{1}_{\cat{C}})$ and a natural transformation
  \[
    {(F_{\mathsf{m}})}_{V,W} \from F(V) \otimes_{\cat{D}} F(W) \to F(V \otimes_{\cat{C}} W), \qquad \text{for all \(V, W \in \cat{C}\)},
  \]
  satisfying \emph{associativity} and \emph{unitality} conditions specified in~\cite[Definition~2.4.1]{EGNO}.

  Analogously, an \emph{oplax monoidal functor} is a functor $G\from \cat{C} \to \cat{D}$ together with morphisms $G_{\mathsf{u}}\from G(\mathbb{1}_{\cat{C}}) \to \mathbb{1}_{\cat{D}}$ and
  \[
    {(G_{\mathsf{m}})}_{V,W} \from G(V \otimes_{\cat{C}} W) \to G(V) \otimes_{\cat{D}} G(W), \qquad \text{for all \(V, W \in \cat{C}\)},
  \]
  making \(G^{\op}\from \cat{C}^{\op} \to \cat{D}^{\op}\) lax monoidal.

  We call \(F\) \emph{strong monoidal}
  if \(F_{\mathsf{m}}\) and \(F_{\mathsf{u}}\) and invertible,
  and \emph{strict monoidal} if they are identities.
\end{definition}

Observe that an oplax monoidal functor with invertible coherence morphisms canonically defines a strong monoidal functor.

\begin{definition}\label{def:monoidal-nat-trafo}
  Let \(F, F' \from (\cat{C}, \otimes, \mathbb{1}) \to (\cat{D}, \otimes, \mathbb{1})\) be monoidal functors.
  A \emph{monoidal natural transformation} is a natural transformation \(\varphi\from F \nt F'\) from \(F\) to \(F'\),
  such that for all \(V, W \in \cat{C}\)
  \[
    {(F_{\mathsf{m}})}_{V, W} \circ \varphi_{V \otimes W} = (\varphi_{V} \otimes \varphi_{W}) \circ {(F'_{\mathsf{m}})}_{V, W},
  \]
  and such that $\varphi_{\mathbb{1}}$ is an isomorphism.
\end{definition}

We denote the category of strong monoidal functors from \(\mathcal{C}\) to \(\mathcal{D}\) by \(\mathbf{StrMonCat}(\mathcal{C,D})\),
the category of lax monoidal functors from \(\mathcal{C}\) to \(\mathcal{D}\) by \(\mathbf{LaxMonCat}(\mathcal{C,D})\), and
the category of oplax monoidal functors from \(\mathcal{C}\) to \(\mathcal{D}\) by \(\mathbf{OplaxMonCat}(\mathcal{C,D})\).
More generally, composing lax, oplax, or strong monoidal functors again yields a monoidal functor of the same kind,
so we obtain \(2\)-categories \(\mathbf{StrMonCat}\), \(\mathbf{LaxMonCat}\), and \(\mathbf{OplaxMonCat}\).

\begin{definition}\label{def:right-module-category}
  Let \(\cat{C}\) be a monoidal category.
  A \emph{left \(\cat{C}\)-module category} comprises a category \(\cat{M}\)
  together with a functor \(\lact \from \cat{C} \times \cat{M} \to \cat{M}\)
  and isomorphisms
  \[
    {(\cat{M}_{\mathsf{a}})}_{V, W, M} \from (V \otimes W) \lact M \xIso V \lact (W \lact M),
  \]
  natural in \(M \in \cat{M}\) and \(V,W \in \cat{C}\),
  satisfying \emph{coherence} axioms similar to those of a monoidal category;
  see~\cite[Definition~7.1.1]{EGNO}.
\end{definition}

In an analogous way to Definition~\ref{def:right-module-category},
one can define \emph{right} module categories over \(\cat{C}\),
which involve a functor \(\ract \from \cat{M} \times \cat{C} \to \cat{M}\) and isomorphisms
\(M \ract (V \otimes W) \xIso (V \ract W) \ract M\).

\begin{definition}
  Let \(\cat{C}\) and \(\cat{D}\) be monoidal categories.
  A \emph{\((\cat{C}, \cat{D})\)-bimodule category} comprises a left \(\cat{C}\)-module category \(\cat{M}\) that
  is simultaneously a right \(\cat{D}\)-module category
  and which, for all \(V, W \in \cat{C}\) and \(M \in \cat{M}\), comes equipped with a natural isomorphism
  \[
    (V \lact M) \ract W \xIso V \lact (M \ract W),
  \]
  called the \emph{middle interchange},
  and satisfying \emph{associativity} constraints;
  see~\cite[Definition~7.1.7]{EGNO}.
\end{definition}

\begin{example}\label{ex:copower}
  Let \(\Bbbk\) be a field.
  If \(\cat{A}\) is an additive \(\Bbbk\)-linear category,
  then the category \(\kvect\) of finite-dimensional \(\Bbbk\)-vector spaces
  acts on \(\mathcal{A}\) via \emph{copowers} (also known as \emph{tensoring}): given a finite-dimensional vector space $V$ over $\Bbbk$ and an object $A \in \mathcal{A}$, the object $V \triangleright A$ is defined by the universal property
  \[
    \mathcal{A}(V \triangleright A, A') \simeq \kvect(V, \mathcal{A}(A,A')).
  \]
  In particular, $V \triangleright A \cong A^{\oplus \dim{V}}$.

  A particularly well-behaved example of this is the category of right \(A\)-modules for a \(\Bbbk\)-algebra \(A\): here, given a module $M \in \rmod{A}$, we have $V \triangleright M = V \kotimes M$, with the right $A$-module structure inherited from $M$.
\end{example}

\begin{definition}\label{def:lax-module-functor}
  Let \(\cat{M}\) and \(\cat{N}\) be left module categories over a monoidal category \(\cat{C}\).
  A \emph{lax \(\cat{C}\)-module functor} from \(\cat{M}\) to \(\cat{N}\) is a functor
  \(F \from \cat{M} \to \cat{N}\), together with a collection of morphisms
  \[
    {(F_{\mathsf{a}})}_{V,M} \from V \lact F(M) \to F(V \lact M), \qquad \text{natural for all \(V \in \mathcal{C}\) and \(M \in \mathcal{M}\)},
  \]
  satisfying \emph{associativity} and \emph{unitality} conditions; see~\cite[Definition~7.2.1]{EGNO}.

  \emph{Oplax} and \emph{strong} \(\cat{C}\)-module functors are defined similarly;
  in the former case,
  one considers arrows \({(F_{\mathsf{a}})}_{V, M} \from F(V \lact M) \to V \lact F(M)\),
  while for the latter \(F_{\mathsf{a}}\) should be invertible.
\end{definition}

\begin{definition}\label{def:module-transformation}
  Let \(F, G \from \cat{M} \to \cat{N}\) be \(\cat{C}\)-module functors between the left \(\cat{C}\)-module categories \(\cat{M}\) and \(\cat{N}\).
  A natural transformation \(\phi \from F \nt G\) is called a
  \emph{\(\cat{C}\)-module transformation}
  if
  \[
    {(G_{\mathsf{a}})}_{V, M} \circ (\phi_V \lact M) = \phi_{V \lact M} \circ {(F_{\mathsf{a}})}_{V, M}, \qquad\qquad
    \text{for all \(V \in \cat{C}\) and \(M \in \cat{M}\).}
  \]
\end{definition}

For $\mathcal{C}$-module categories $\mathcal{M,N}$, we obtain the category $\cat{C}\text{-}\mathbf{Mod}(\cat{M},\cat{N}) \defeq \mathbf{Str}\mathcal{C}\text{-}\mathbf{Mod}(\mathcal{M,N})$ of strong $\mathcal{C}$-module functors from $\mathcal{M}$ to $\mathcal{N}$, the category $\mathbf{Lax}\mathcal{C}\text{-}\mathbf{Mod}(\mathcal{M,N})$ of lax $\mathcal{C}$-module functors from $\mathcal{M}$ to $\mathcal{N}$, and the category $\mathbf{Oplax}\mathcal{C}\text{-}\mathbf{Mod}(\mathcal{M,N})$ of oplax $\mathcal{C}$-module functors from $\mathcal{M}$ to $\mathcal{N}$.

\begin{example}\label{ex:regularend}
  For \(V \in \mathcal{C}\), the functor \(F \defeq \blank \otimes V\) is a left \(\mathcal{C}\)-module functor.
  The transformation \(F_{\mathsf{a}}\) is given by the associator of\, \(\cat{C}\).
  Further, a morphism \(f\from V \to W\) in \(\cat{C}\) yields a \(\mathcal{C}\)-module transformation \(\blank \otimes f\from \blank \otimes V \nt \blank \otimes W\).
\end{example}

In fact, Example~\ref{ex:regularend} generalizes and completely describes module functors from the regular module.
We emphasize that this result is a consequence of the \emph{bicategorical Yoneda lemma}.

\begin{proposition}\label{prop:bicategorical-yoneda-correspondence-monads-and-algebras}
  Let \(\cat{M}\) be a left module category over the monoidal category \(\cat{C}\).
  Then there is an equivalence of module categories
  \[
    \lMod{\cat{C}}(\mathcal{C}, \mathcal{M}) \xiso \mathcal{M}, \qquad \qquad
    \Phi \mapsto \Phi(\mathbb{1}), \qquad \qquad
    \blank \lact M \longmapsfrom M.
  \]
\end{proposition}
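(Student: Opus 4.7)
The statement is a version of the bicategorical Yoneda lemma, applied to the bicategory of left $\cat{C}$-module categories and evaluated at the regular module $\cat{C}$. The plan is to exhibit mutually quasi-inverse functors
\[
\mathrm{ev}_{\mathbb{1}} \colon \lMod{\cat{C}}(\cat{C},\cat{M}) \rightleftarrows \cat{M} \cocolon (\blank \lact \bblank),
\]
where the left-to-right direction sends $\Phi$ to $\Phi(\mathbb{1})$ and a $\cat{C}$-module transformation $\varphi\from \Phi \nt \Psi$ to $\varphi_{\mathbb{1}}$, and the right-to-left direction sends $M$ to the strong module functor $\blank \lact M$ of Example~\ref{ex:regularend} and a morphism $f \from M \to N$ to the module transformation $\blank \lact f$.

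The first step is to verify that $M \mapsto \blank \lact M$ is well-defined: the coherence axiom for a strong $\cat{C}$-module functor with structure cell $(\cat{M}_{\mathsf{a}})_{V,W,M}$ is the pentagon for $\cat{M}$, and naturality together with the module transformation axiom for $\blank \lact f$ reduces to naturality of the associator of\, $\cat{M}$. Likewise one checks functoriality of $\mathrm{ev}_{\mathbb{1}}$, which is immediate.

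Next I would construct the two natural isomorphisms witnessing the equivalence. On one side, $\mathrm{ev}_{\mathbb{1}}(\blank \lact M) = \mathbb{1} \lact M$, which is canonically isomorphic to $M$ via the left unitor of the action. On the other side, for a strong $\cat{C}$-module functor $\Phi$, one defines the candidate isomorphism $\blank \lact \Phi(\mathbb{1}) \xIso \Phi$ componentwise by
\[
V \lact \Phi(\mathbb{1}) \xrightarrow{(\Phi_{\mathsf{a}})_{V,\mathbb{1}}} \Phi(V \lact \mathbb{1}) \xrightarrow{\Phi(\ell_{V})} \Phi(V),
\]
where $\ell_{V} \from V \lact \mathbb{1} \xIso V$ comes from the unit constraint of\, $\cat{C}$. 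Naturality in $V$ is immediate from naturality of $\Phi_{\mathsf{a}}$, and naturality in $\Phi$ is built into the definition of a module transformation.

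The main obstacle is the verification that the above isomorphism is a $\cat{C}$-module transformation from the strong module functor $\blank \lact \Phi(\mathbb{1})$—whose coherence cell is the associator of\, $\cat{M}$—to $\Phi$. This is a diagram chase combining the associativity coherence of\, $\Phi$ (which relates $(\Phi_{\mathsf{a}})_{V \otimes W, \mathbb{1}}$ to $(\Phi_{\mathsf{a}})_{V, W \lact \mathbb{1}} \circ (V \lact (\Phi_{\mathsf{a}})_{W, \mathbb{1}})$) with the pentagon/triangle axioms for $\cat{M}$ and the unit coherence for the monoidal structure of\, $\cat{C}$. Once this is done, to upgrade the equivalence to one of $\cat{C}$-module categories one endows $\lMod{\cat{C}}(\cat{C}, \cat{M})$ with the action $(V \lact \Phi)(W) \defeq \Phi(W \otimes V)$ transported from the right regular action of\, $\cat{C}$ on itself; compatibility with $\mathrm{ev}_{\mathbb{1}}$ is then a direct consequence of the already-established coherence computation.
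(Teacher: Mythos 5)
Your proposal is correct and is essentially the paper's approach made explicit: the paper simply invokes the bicategorical Yoneda lemma without proof, and your argument is exactly the unfolding of that lemma at the regular module, including the correct choice of $\cat{C}$-action on $\lMod{\cat{C}}(\cat{C},\cat{M})$ by precomposition with $\bblank \otimes V$ (which is what makes the paper's subsequent claim $\lMod{\cat{C}}(\cat{C},\cat{C}) \simeq \cat{C}^{\otimes\mathrm{rev}}$ come out). Two cosmetic points only: the structure cell of $\blank \lact M$ goes $V \lact (W \lact M) \to (V\otimes W)\lact M$, i.e.\ it is the \emph{inverse} of $\cat{M}_{\mathsf{a}}$ in the paper's conventions, and the unit constraint $V \lact \mathbb{1} \xiso V$ you use is the right unitor of $\cat{C}$; neither affects the argument.
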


In particular, Proposition~\ref{prop:bicategorical-yoneda-correspondence-monads-and-algebras}
yields a monoidal equivalence \(\lMod{\cat{C}}(\mathcal{C},\mathcal{C}) \simeq \mathcal{C}^{\otimes\on{rev}}\).

\begin{definition}
  We say that an object $M$ of a $\mathcal{C}$-module category $\mathcal{M}$ is \emph{closed} if the functor $\blank \triangleright M\from \mathcal{C} \to \mathcal{M}$ has a right adjoint,
  \(\hom{M, \blank}\from \cat{M}\to \cat{C}\), called the \emph{internal hom from \(M\)}.

  Similarly, we say that $M$ is \emph{coclosed} if the functor $\blank \triangleright M\from \mathcal{C} \to \mathcal{M}$ has a left adjoint;
  we denote this \emph{internal cohom from \(M\)} by \(\cohom{M, \blank}\from \cat{M}\to \cat{C}\).

  If every object of $\mathcal{M}$ is closed, we say that $\mathcal{M}$ is a \emph{closed $\mathcal{C}$-module category}.
\end{definition}

\begin{definition}\label{def:left-closed-monoidal-category}
  A monoidal category \(\cat{C}\) is called \emph{left closed} if the left regular $\mathcal{C}$-module category ${}_{\mathcal{C}}\mathcal{C}$ is closed,
  \emph{right closed} if the right regular $\mathcal{C}$-module category $\mathcal{C}_{\mathcal{C}}$ is closed,
  and \emph{closed} if it is both left and right closed.
\end{definition}

For example, a symmetric monoidal category is left closed if and only if it is right closed if and only if it is closed.

\begin{definition}\label{def:left-rigid-category}
  A \emph{dual pair} in a monoidal category \(\cat{C}\)
  is a tuple
  \[
    (V, W,\, \ev\from V \otimes W \to \mathbb{1},\, \coev \from \mathbb{1} \to W \otimes V),
  \]
  such that the \emph{snake equations} are satisfied:
  \[
    \id_W = (W \otimes \ev) \circ (\coev \otimes W), \qquad
    \id_{V} = (\ev \otimes V) \circ (V \otimes \coev).
  \]

  The object \(W\) is called the \emph{right dual} of \(V\);
  similarly, \(V\) is called the \emph{left dual} of \(W\).

  We say that \(\cat{C}\) is \emph{left rigid} if all of its objects have a left dual,
  and similarly for \emph{right rigidity}.
\end{definition}

It is customary to write \(\ld{V}\) for the left dual of \(W\),
and \(\rd{W}\) for the right dual of \(V\).
Taking left duals in a left rigid category assembles to a fully faithful functor \(\ld{(\blank)} \from \cat{C}^{\op, \mathrm{rev}} \to \cat{C}\),
and similarly we obtain \(\rd{(\blank)} \from \cat{C}^{\op, \mathrm{rev}} \to \cat{C}\).

\begin{remark}\label{rmk:rigid-is-closed}
  If\, \(\cat{C}\) is a left rigid category,
  then it is automatically left closed;
  the left internal hom is given by \({[\blank, \bblank]}_{\ell} \;\defeq\; \ld{\blank} \otimes \bblank\).
  Similar statements hold for right rigid and right closed categories.
\end{remark}

Importantly,
given a monoidal category \(\cat{C}\),
the mere presence of adjunctions
\[
  \blank \otimes V \,\dashv\, \blank \otimes LV \qquad\qquad\text{and}\qquad\qquad
  V \otimes \blank \,\dashv\, RV \otimes \blank
\]
for objects \(LV, RV \in \cat{C}\) does not lead to rigidity,
but to the weaker notion of \emph{tensor representability};
see~\cite{HZ1}.

\begin{example}\label{laxmoduleexample}
  The \(\Bbbk\)-linear symmetric monoidal category \(\kVect\) of vector spaces is closed,
  with internal hom given by \(\mathrm{Hom}_{\Bbbk}(\blank, \bblank)\).
  The functor \(\mathrm{Hom}_{\Bbbk}(V, \blank)\from \kVect \to \kVect\) is a lax \(\kVect\)-module functor;
  its module functor structure is given by
  \begin{align*}
    W \otimes \mathrm{Hom}_{\Bbbk}(V, U) &\to \mathrm{Hom}_{\Bbbk}(V, W \otimes U) \\
    w \otimes f &\mapsto (v \mapsto w \otimes f(v)).
  \end{align*}
  This is an isomorphism if and only if \(V\) is finite-dimensional, if and only if \(V\) admits a dual.
\end{example}

As illustrated by Proposition~\ref{prop:lax-is-strong-when-rigid} below, non-rigidity of \(\kVect\) is crucial in providing examples of non-strong module functors.
We remark that~\cite[Remark~4]{Os} and~\cite[Lemma~2.10]{DSPS} state Proposition~\ref{prop:lax-is-strong-when-rigid} only in the rigid (hence, both left and right rigid at the same time) case;
however, the proof of~\cite[Lemma~2.10]{DSPS} also shows the more specific statement we give.

\begin{proposition}[{\cite[Remark~4]{Os},~\cite[Lemma~2.10]{DSPS}}]\label{prop:lax-is-strong-when-rigid}
  Let \(\mathcal{C}\) be a left rigid monoidal category. Then every lax \(\mathcal{C}\)-module functor is strong.

  Dually, if\, $\mathcal{C}$ is a right rigid monoidal category, then every oplax $\mathcal{C}$-module functor is strong.
\end{proposition}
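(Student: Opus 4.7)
The plan is to construct an explicit two-sided inverse to the coherence cell $(F_{\mathsf{a}})_{V,M}\from V \lact F(M) \to F(V \lact M)$ for any $V \in \mathcal{C}$ and $M \in \mathcal{M}$, using the left dual $\ld{V}$ together with the evaluation $\ev\from \ld{V} \otimes V \to \mathbb{1}$ and coevaluation $\coev\from \mathbb{1} \to V \otimes \ld{V}$. Concretely, I propose to define a candidate inverse $\psi_{V,M}\from F(V \lact M) \to V \lact F(M)$ as the composite
\[
F(V \lact M) \xrightarrow{\coev} (V \otimes \ld{V}) \lact F(V \lact M) \xrightarrow{\cat{M}_{\mathsf{a}}} V \lact (\ld{V} \lact F(V \lact M)) \xrightarrow{(F_{\mathsf{a}})} V \lact F(\ld{V} \lact (V \lact M)) \xrightarrow{F(\ev)} V \lact F(M),
\]
where I have suppressed the unitors $\mathbb{1} \lact (\blank) \simeq (\blank)$ and used the module associator and lax coherence of $F$; the final map uses the action $F$ applies to $\ev \lact M$ after reassociating $\ld{V} \lact (V \lact M) \simeq (\ld{V} \otimes V) \lact M$.

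The first main step is to check that $\psi_{V,M} \circ (F_{\mathsf{a}})_{V,M} = \id_{V \lact F(M)}$. I would insert the definition, use naturality of $(F_{\mathsf{a}})$ to push the coevaluation past $F$, and then apply the associativity axiom of Definition~\ref{def:lax-module-functor} to combine the two occurrences of $F_{\mathsf{a}}$ into a single one along $(V \otimes \ld{V}) \otimes V$. After this rearrangement, the snake identity $(V \otimes \ev) \circ (\coev \otimes V) = \id_V$ from Definition~\ref{def:left-rigid-category} collapses everything inside $F(\blank)$ to the identity on $V \lact M$, and the unitality axiom eliminates the remaining unitors. The second step, $(F_{\mathsf{a}})_{V,M} \circ \psi_{V,M} = \id_{F(V \lact M)}$, is formally identical: naturality lets me move $(F_{\mathsf{a}})$ inside, a single application of the associativity axiom of $F$ combines the two coherence cells, and the same snake identity (applied now outside $F$ after collapsing the inner composite) concludes.

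I expect the main obstacle to be purely bookkeeping: keeping track of the module associator $\cat{M}_{\mathsf{a}}$ and the unit isomorphism carefully enough that the lax associativity pentagon and the unitality triangle of Definition~\ref{def:lax-module-functor} apply verbatim. No new idea beyond the standard ``zig-zag collapse'' is needed; the argument is really the proof that any left adjoint of $\blank \otimes V$ must be given by $\blank \otimes \ld V$, carried out one categorical level up. Once the two triangle identities are verified, $(F_{\mathsf{a}})_{V,M}$ is invertible for every $V$ and $M$, hence $F$ is strong.

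Finally, the dual statement for oplax module functors over a right rigid $\mathcal{C}$ follows by the same construction with roles reversed: an oplax $\mathcal{C}$-module functor $G\from \cat{M} \to \cat{N}$ gives a lax $\cat{C}^{\op}$-module functor $G^{\op}\from \cat{M}^{\op} \to \cat{N}^{\op}$, and $\cat{C}$ is right rigid precisely when $\cat{C}^{\op}$ is left rigid, so the first part applies and shows the coherence cells of $G$ are invertible as well.
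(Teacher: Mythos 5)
Your construction is correct and is exactly the standard zig-zag argument underlying the cited proof of \cite[Lemma~2.10]{DSPS} (the paper itself defers to that reference rather than reproving the statement): the candidate inverse built from $\coev$, the associativity and unitality axioms of the lax structure, and a single snake identity does yield a two-sided inverse to $(F_{\mathsf{a}})_{V,M}$, and the passage to opposite categories correctly reduces the oplax/right-rigid case to the lax/left-rigid one under the paper's duality conventions. No gaps.
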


\subsection{Doctrinal adjunctions}

\begin{definition}\label{def:lift-to-monoidal-adjunction}
  Let \(\adj{F}{G}{\cat{C}}{\cat{D}}\) be an ordinary adjunction between monoidal categories.
  A \emph{lift of \(F \dashv G\)} to a lax monoidal adjunction comprises choices of lax monoidal structures on $F$ and $G$,
  together with a unit of adjunction $\eta$ and a counit of adjunction $\varepsilon$ which are both lax monoidal natural transformations.
  In other words, it is the structure necessary for a lax monoidal adjunction, whose left adjoint is $F$ and right adjoint is $G$.

  Similarly one can define a lift of \(F \dashv G\) to an oplax monoidal adjunction.
\end{definition}

The following statement is a special case of~\cite[Theorem~1.2]{Ke1}.

\begin{proposition}\label{prop:doctrinaladjunction}
  Given monoidal categories \(\cat{C}\) and \(\cat{D}\),
  as well as an adjunction \(\adj{F}{G}{\cat{C}}{\cat{D}}\),
  there is a bijection between oplax monoidal structures on \(F\)
  and lax monoidal structures on \(G\).
\end{proposition}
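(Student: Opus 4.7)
The approach is to construct the bijection explicitly via the calculus of mates for the adjunction $F \dashv G$ with unit $\eta$ and counit $\varepsilon$. Given an oplax monoidal structure $(F_{\mathsf{m}}, F_{\mathsf{u}})$ on $F$, I define the candidate lax monoidal structure on $G$ by taking mates:
\[
{(G_{\mathsf{m}})}_{A,B} \defeq G(\varepsilon_A \otimes \varepsilon_B) \circ G({(F_{\mathsf{m}})}_{GA,GB}) \circ \eta_{GA \otimes GB},
\qquad
G_{\mathsf{u}} \defeq G(F_{\mathsf{u}}) \circ \eta_{\mathbb{1}_{\cat{C}}}.
\]
In the reverse direction, a lax monoidal structure on $G$ yields an oplax structure on $F$ via
\[
{(F_{\mathsf{m}})}_{X,Y} \defeq \varepsilon_{FX \otimes FY} \circ F({(G_{\mathsf{m}})}_{FX, FY}) \circ F(\eta_X \otimes \eta_Y),
\qquad
F_{\mathsf{u}} \defeq \varepsilon_{\mathbb{1}_{\cat{D}}} \circ F(G_{\mathsf{u}}).
\]

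The first step is to verify, using the triangle identities together with naturality of $\eta$ and $\varepsilon$, that these two assignments are mutually inverse on the level of the underlying coherence data. This is the standard computation establishing the mates bijection and uses no feature specific to the monoidal setting; concretely, each composite telescopes after a single application of $\varepsilon F \circ F\eta = \id_F$ or $G\varepsilon \circ \eta G = \id_G$.

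The main obstacle is to show that the associativity and unitality axioms for the oplax structure on $F$ correspond, under the assignments above, to those of the lax structure on $G$. Concretely, the hexagonal coherence diagram for ${(F_{\mathsf{m}})}_{\blank,\blank}$ must be shown equivalent to the hexagonal diagram for ${(G_{\mathsf{m}})}_{\blank,\blank}$, and the two unit triangles for $F$ must correspond to the two unit triangles for $G$. The cleanest approach is to view each axiom as an equality of pasting diagrams in $\mathbf{Cat}$ and to invoke the general fact that the mates construction is compatible with horizontal and vertical pasting of $2$-cells: an axiom for one structure is converted to the corresponding axiom for its mate by pasting in $\eta$ and $\varepsilon$ along the appropriate boundary and then collapsing them via the triangle identities. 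I would work this out in detail for the associativity hexagon; the unit triangles are analogous and simpler. The whole argument then reassembles into the statement that the mates bijection, which a priori exchanges only raw coherence data, restricts to a bijection between genuine (op)lax monoidal structures, which is precisely the content of the proposition.
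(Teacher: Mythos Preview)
Your approach is correct and is precisely the one the paper invokes: the paper does not prove the proposition itself but cites Kelly's doctrinal adjunction theorem and remarks that the proof is ``given constructively in terms of \emph{mates},'' displaying the very formula you wrote for $G_{\mathsf{m}}$. Your outline of why the coherence axioms transfer---via the compatibility of mates with pasting---is exactly the content of Kelly's argument, so there is nothing to add.
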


The proof of~\cite[Theorem~1.2]{Ke1} is given constructively in terms of \emph{mates}.
In our situation, Proposition~\ref{prop:doctrinaladjunction} says that,
given an oplax monoidal structure \((F_{\mathsf{m}}, F_{\mathsf{u}})\) on \(F\),
the multiplicative part of the corresponding lax monoidal structure on \(G\) is,
for all \(V, W \in \cat{C}\),
given by
\[
  V \otimes G(W)
  \xrightarrow{\eta_{V \otimes G(W)}} GF\big( V \otimes G(W) \big)
  \xrightarrow{{(F_{\mathsf{m}})}_{V, G(W)}} G\big(V \otimes FG(W)\big)
  \xrightarrow{G(V \otimes \varepsilon_{W})} G(V \otimes W),
\]
where \(\eta\) and \(\varepsilon\) are the unit and counit of the adjunction.
Constructing an oplax monoidal structure on \(F\) from a lax monoidal structure on \(G\) is similar.

The following proposition shows that this correspondence extends to module categories.

\begin{proposition}[{\cite[Theorem~4.13]{HZ2}}]\label{prop:special-doctrinal-module-adjunctions}
  Let \(\cat{M}\) and \(\cat{N}\) be right \(\cat{C}\)- and \(\cat{D}\)-module categories, respectively.
  Suppose that \(\adj{F}{G}{\cat{C}}{\cat{D}}\) is a monoidal adjunction,
  and \(\adj{L}{R}{\cat{M}}{\cat{N}}\) is any adjunction.
  There is a bijection between
  lifts of\, \(L \dashv R\) to a comodule adjunction
  and
  strong \(\cat{C}\)-module functor structures on \(R\).
\end{proposition}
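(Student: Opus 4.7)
The plan is to adapt Kelly's proof of Proposition~\ref{prop:doctrinaladjunction} to the module-categorical setting, using $2$-categorical mate calculus. Because $F$ is strong monoidal, the right $\cat{D}$-module category $\cat{N}$ inherits a canonical right $\cat{C}$-module structure via $n \ract v \defeq n \ract F(v)$, with associator assembled from that of $\cat{N}$ and the monoidal coherence of $F$. Under this convention, both $L$ and $R$ become functors between right $\cat{C}$-module categories, and a lift of the adjunction $L \dashv R$ to a comodule adjunction amounts to specifying an oplax $\cat{C}$-module structure on $L$, a (to be shown, strong) $\cat{C}$-module structure on $R$, and the requirement that the unit and counit be $\cat{C}$-module natural transformations.

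The bijection is produced by mate calculus. Given a strong $\cat{C}$-module structure $\rho_{n,v}\from R(n \ract F(v)) \xIso R(n) \ract v$ on $R$, I would define the corresponding oplax $\cat{C}$-module structure on $L$ as the composite
\[
  L(m \ract v) \xrightarrow{L(\eta_{m} \ract v)} L\big(RL(m) \ract v\big) \xrightarrow{L(\rho^{-1}_{L(m),v})} LR\big(L(m) \ract F(v)\big) \xrightarrow{\varepsilon_{L(m) \ract F(v)}} L(m) \ract F(v),
\]
where $\eta$ and $\varepsilon$ are the unit and counit of $L \dashv R$; this is manifestly the mate of $\rho$ across the two adjunctions. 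Conversely, given a comodule adjunction lift, I would extract $\rho$ from $R$ directly and observe that the requirement that $\eta$ and $\varepsilon$ be $\cat{C}$-module natural transformations forces $L_{\mathsf{a}}$ to be the mate of $\rho$ via the very formula displayed above. Thus the strong $\cat{C}$-module structure on $R$ determines the oplax $\cat{C}$-module structure on $L$ uniquely, and by reversing the roles (building $\rho$ from $L_{\mathsf{a}}$ by the analogous mate construction using $\eta$ and $\varepsilon$) one checks that the two operations are mutually inverse at the level of data.

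The remaining task is to verify that the associativity and unitality axioms for a strong $\cat{C}$-module functor structure on $R$ translate, under this mate correspondence, into the corresponding axioms for an oplax $\cat{C}$-module functor structure on $L$. I expect this step to be the main obstacle, but it reduces to the familiar observation that mate calculus is compatible with whiskering and pasting of $2$-cells; concretely, each module-functor axiom for $\rho$ can be transported to the axiom for $L_{\mathsf{a}}$ by pasting the adjunction triangles for $L \dashv R$. This is precisely Kelly's original argument from~\cite{Ke1} carried out inside the $2$-category of right $\cat{C}$-module categories, lax $\cat{C}$-module functors, and $\cat{C}$-module natural transformations, with the strong monoidality of $F$ ensuring that the induced $\cat{C}$-module structure on $\cat{N}$ is well-defined so that the pasting diagrams make sense. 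Once these axioms are matched, the bijection is complete.
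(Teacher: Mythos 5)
Your argument is correct and follows essentially the same route as the cited proof of \cite[Theorem~4.13]{HZ2} (which the paper does not reproduce but only invokes): pull the $\cat{D}$-action on $\cat{N}$ back along the strong monoidal $F$, take mates across $L \dashv R$ and $F \dashv G$, and transport the coherence axioms using compatibility of the mate correspondence with pasting --- your displayed formula for $L_{\mathsf{a}}$ is exactly the mate formula the paper records after Proposition~\ref{prop:module-adjunctions}. The only bookkeeping point worth making explicit is that in a comodule (oplax module) adjunction $R$ carries an a priori oplax structure whose invertibility is what yields the strong structure $\rho$, but you flag this, so nothing is missing.
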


In fact, from the proof of~\cite[Theorem~4.13]{HZ2},
one obtains a version of Proposition~\ref{prop:doctrinaladjunction}.

\begin{porism}\label{porism:doctrinal-module-adjunctions}
  Let \(\cat{M}\) and \(\cat{N}\) be right \(\cat{C}\)- and \(\cat{D}\)-module categories, respectively.
  Suppose that \(\adj{F}{G}{\cat{C}}{\cat{D}}\) is a monoidal adjunction,
  and \(\adj{L}{R}{\cat{M}}{\cat{N}}\) is any adjunction.
  Then there is a bijection between oplax \(\cat{C}\)-module structures on \(L\) over \(F\),
  and lax \(\cat{C}\)-module structure on \(R\) over \(G\).
\end{porism}

\subsection{Finite and locally finite abelian categories}

Let \(\Bbbk\) be a field.
In this section, we assume all categories and functors to be \(\Bbbk\)-linear.
We call a functor \(F\from \cat{A} \to \cat{B}\) between abelian categories
\emph{left exact} if it preserves finite limits,
\emph{right exact} if it preserves finite colimits,
and \emph{exact} if it is right and left exact.
The category of right exact functors from \(\cat{A}\) to \(\cat{B}\) will be denoted by \(\mathbf{Rex}(\cat{A}, \cat{B})\),
and we write \(\mathbf{Lex}(\cat{A}, \cat{B})\) for the category of left exact functors.
Further, let \(\proj{\cat{A}}\) denote the full subcategory of \(\cat{A}\) consisting of its projective objects,
and similarly for \(\inj{\cat{A}}\).

\begin{proposition}[{\cite[Lemma~3.3]{GKKP}}]\label{prop:right exact-is-enough-on-projectives}
  Let \(\mathcal{A}\) and \(\mathcal{B}\) be abelian categories,
  and assume that \(\mathcal{B}\) has enough projectives.
  Let \(\iota\from \proj{\mathcal{A}} \hookrightarrow \mathcal{A}\) be the inclusion functor
  from the category of projective objects in \(\mathcal{A}\) to \(\mathcal{A}\).
  Then the restriction functor\, \(\mathbf{Rex}(\mathcal{A}, \mathcal{B}) \xrightarrow{\blank \circ \iota} [\proj{\mathcal{A}}, \mathcal{B}]\) is an equivalence.
\end{proposition}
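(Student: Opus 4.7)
The plan is to exploit the fact that a right exact functor between abelian categories is determined, up to unique natural isomorphism, by its values on a generating class of projectives. Since $\cat{A}$ admits enough projectives, every $A \in \cat{A}$ has a projective presentation $P_{1} \to P_{0} \to A \to 0$, and right exactness forces $F(A) \cong \on{coker}\bigl(F(P_{1}) \to F(P_{0})\bigr)$ for any right exact $F$ and any such presentation. The target category $\cat{B}$ is abelian and therefore has all cokernels, which is exactly what the construction below needs.

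For essential surjectivity of $\blank \circ \iota$, I would begin with a functor $G \in [\proj{\cat{A}}, \cat{B}]$, fix for every $A \in \cat{A}$ a projective presentation as above, and define $\widetilde{G}(A) \defeq \on{coker}\bigl(G(P_{1}) \to G(P_{0})\bigr)$. To extend $\widetilde{G}$ to morphisms, I would lift any $f\from A \to A'$ to a chain map between the chosen presentations using projectivity of $P_{0}$ and $P_{1}$, apply $G$, and pass to the induced map on cokernels. The classical homotopy argument makes this independent of the chosen lift, so $\widetilde{G}$ is functorial; and $\widetilde{G} \circ \iota$ recovers $G$ up to a canonical natural isomorphism, since each projective $P$ admits the trivial presentation $0 \to P \xrightarrow{\id} P \to 0$.

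For fully faithfulness, I would take right exact functors $F, F'\from \cat{A} \to \cat{B}$ together with a natural transformation $\alpha\from F \circ \iota \nt F' \circ \iota$. Given $A \in \cat{A}$ with a chosen presentation, the components $\alpha_{P_{0}}$ and $\alpha_{P_{1}}$ assemble into a commutative square, and the universal property of the cokernel in $\cat{B}$ yields a unique morphism $\widetilde{\alpha}_{A}\from F(A) \to F'(A)$ extending the two given components. Naturality of $\widetilde{\alpha}$ in $A$, and its uniqueness as an extension of $\alpha$, both reduce to routine diagram chases using the universal property of the cokernel and the functoriality of $F$ and $F'$.

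The main obstacle is the verification of right exactness of $\widetilde{G}$. Given a short exact sequence $0 \to A' \to A \to A'' \to 0$ in $\cat{A}$, I would use a horseshoe-type construction to produce a short exact sequence of length-two projective presentations above it; applying $G$ componentwise and invoking the snake lemma then shows that $\widetilde{G}$ sends the original sequence to a right exact sequence in $\cat{B}$. This is the one place where care is needed in coordinating the choices of presentation across a diagram, but all other steps are forced by the universal property of cokernels in the abelian category $\cat{B}$.
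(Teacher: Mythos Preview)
The paper does not supply its own proof here; the proposition is simply cited from \cite[Lemma~3.3]{GKKP}. So there is no in-paper argument to compare your approach against, and your sketch is the standard one.

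That said, there is a discrepancy you should flag rather than silently repair. The stated hypothesis is that $\mathcal{B}$ has enough projectives, but your entire construction rests on $\mathcal{A}$ having enough projectives (``Since $\mathcal{A}$ admits enough projectives, every $A \in \mathcal{A}$ has a projective presentation\ldots''). The proposition as literally written is false: take $\mathcal{A}$ to be any abelian category with $\proj{\mathcal{A}} = \{0\}$ (e.g.\ finite abelian groups) and $\mathcal{B}$ any abelian category with enough projectives; then $[\proj{\mathcal{A}}, \mathcal{B}]$ is trivial while $\mathbf{Rex}(\mathcal{A}, \mathcal{B})$ is not. This is almost certainly a typo in the paper---the hypothesis on $\mathcal{B}$ is never needed, since the only thing your construction requires of $\mathcal{B}$ is the existence of cokernels, which comes for free from $\mathcal{B}$ being abelian. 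You have implicitly corrected the hypothesis; make that correction explicit.

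With the hypothesis on $\mathcal{A}$ in place, your argument is correct. One small point worth making explicit: in the right-exactness step, the horseshoe presentation of $A$ you build need not agree with the presentation you fixed when defining $\widetilde{G}(A)$, so you are tacitly invoking the presentation-independence you already established via the homotopy argument. It is worth saying so.
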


\begin{proposition}[{\cite[Lemma~3.3]{GKKP}}]\label{prop:left exact-is-enough-on-injectives}
  Let \(\mathcal{A}\) and \(\mathcal{B}\) be abelian categories,
  and assume that \(\mathcal{B}\) has enough injectives.
  Let \(\iota\from \inj{\mathcal{A}} \hookrightarrow \mathcal{A}\) be the inclusion functor from the category of injective objects in \(\mathcal{A}\) to \(\mathcal{A}\).
  Then the restriction functor \(\mathbf{Lex}(\mathcal{A},\mathcal{B}) \xrightarrow{\blank \circ \iota} [\inj{\mathcal{A}}, \mathcal{B}]\) is an equivalence.
\end{proposition}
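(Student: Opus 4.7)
The plan is to deduce this from Proposition~\ref{prop:right exact-is-enough-on-projectives} by formal duality on opposite categories. A functor $F\from \cat{A} \to \cat{B}$ is left exact if and only if its opposite $F^{\op}\from \cat{A}^{\op} \to \cat{B}^{\op}$ is right exact, and an object of $\cat{A}$ is injective precisely when it is projective as an object of $\cat{A}^{\op}$, yielding a canonical isomorphism $(\inj{\cat{A}})^{\op} \cong \proj{\cat{A}^{\op}}$. Under these identifications, the restriction functor $\mathbf{Lex}(\cat{A}, \cat{B}) \to [\inj{\cat{A}}, \cat{B}]$ becomes the opposite of the restriction functor $\mathbf{Rex}(\cat{A}^{\op}, \cat{B}^{\op}) \to [\proj{\cat{A}^{\op}}, \cat{B}^{\op}]$. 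Since enough injectives in $\cat{A}$ translates to enough projectives in $\cat{A}^{\op}$, Proposition~\ref{prop:right exact-is-enough-on-projectives} applied to the opposite categories delivers an equivalence, which gives the desired statement after taking opposites back.

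Alternatively, I would give a direct proof mirroring the projective case. For essential surjectivity, given $G\from \inj{\cat{A}} \to \cat{B}$ and $X \in \cat{A}$, I pick a partial injective coresolution $0 \to X \to I^0 \xrightarrow{d} I^1$ and set $\widetilde{G}(X) \defeq \ker\bigl(G(d)\bigr)$, which exists since $\cat{B}$ is abelian. I then verify that different choices of partial coresolution yield canonically isomorphic values---any two coresolutions admit a common refinement produced by the injective-extension property---so that $\widetilde{G}$ upgrades to a left exact functor whose restriction to $\inj{\cat{A}}$ recovers $G$ via the trivial coresolution $0 \to I \to I \to 0$. For full faithfulness, I use that any left exact $F$ satisfies $F(X) \cong \ker\bigl(F(d)\bigr)$, so a natural transformation $\alpha\from F|_{\inj{\cat{A}}} \to F'|_{\inj{\cat{A}}}$ extends uniquely to a natural transformation $F \to F'$ by the universal property of kernels.

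The main obstacle in the direct approach is the independence of $\widetilde{G}(X)$ from the choice of coresolution and the attendant functoriality, a step which genuinely relies on $\cat{A}$ having enough injectives. The duality route sidesteps this delicate point by reusing the analogous verification already performed in Proposition~\ref{prop:right exact-is-enough-on-projectives}, and I therefore prefer it as the cleaner path.
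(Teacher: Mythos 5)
The paper does not actually prove this statement: both it and its projective companion, Proposition~\ref{prop:right exact-is-enough-on-projectives}, are quoted from \cite[Lemma~3.3]{GKKP}, so there is no in-paper argument to compare against. Your duality reduction is precisely the relationship the paper implicitly relies on by stating the two propositions in parallel, and it is formally sound: \(\mathbf{Lex}(\cat{A},\cat{B})\cong\mathbf{Rex}(\cat{A}^{\op},\cat{B}^{\op})^{\op}\), \([\inj{\cat{A}},\cat{B}]\cong[\proj{\cat{A}^{\op}},\cat{B}^{\op}]^{\op}\), the two restriction functors correspond under these identifications, and an equivalence of opposite categories yields an equivalence of the originals. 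Your direct sketch is the standard ``zeroth derived functor'' extension and is also fine; the one point worth writing out there is that homotopic lifts between copresentations induce the same map on \(\ker(G(d))\), because the only surviving homotopy term is \(G(s)\circ G(d)\), which vanishes on that kernel.

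The one thing you should repair is which hypothesis you dualize. As printed, both propositions place the ``enough projectives/injectives'' assumption on the \emph{target} \(\cat{B}\), so the hypothesis you are entitled to after passing to opposites is that \(\cat{B}^{\op}\) has enough projectives --- not, as you write, that \(\cat{A}^{\op}\) does. Meanwhile your argument (in both versions) genuinely uses that the \emph{source} \(\cat{A}\) has enough injectives, since you need the copresentation \(0\to X\to I^{0}\to I^{1}\) in \(\cat{A}\); and indeed the statement is false without that assumption (take \(\cat{A}\) to be finitely generated \(\Bbbk[x]\)-modules, which has no nonzero injective objects, so \([\inj{\cat{A}},\cat{B}]\) is trivial while \(\mathbf{Lex}(\cat{A},\cat{B})\) is not). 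This is in all likelihood a typo in the quoted statement rather than an error on your part, but you should say explicitly that your proof assumes \(\cat{A}\) has enough injectives and flag the discrepancy with the hypothesis as printed.
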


We refer to~\cite[Chapter~1]{EGNO} for the following definitions and results.
An object of a $\Bbbk$-linear category is said to be \emph{simple} if any non-zero endomorphism thereof is an isomorphism.
A category $\mathcal{C}$ is said to be \emph{semisimple} if any of its objects is a direct sum of simple objects.

\begin{proposition}[{\cite[Theorem~C.6]{Turaev2017}}]\label{toomuchTV}
  A semisimple category is abelian.
  Any epimorphism or monomorphism in a semisimple category splits.
  In particular, any functor from a semisimple category to an abelian category is exact.
\end{proposition}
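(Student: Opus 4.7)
The plan is to verify the abelian axioms by an explicit construction using Schur's lemma, derive the splitting property as a structural corollary, and then obtain the exactness of additive functors essentially for free.

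For the first claim, given a morphism $f \colon X \to Y$ in a semisimple category $\cat{A}$, I would decompose both source and target into isotypic components $X \cong \bigoplus_{i} S_i^{\oplus n_i}$ and $Y \cong \bigoplus_{j} T_j^{\oplus m_j}$, indexed by pairwise non-isomorphic simples. Schur's lemma forces $\mathrm{Hom}(S_i, T_j) = 0$ unless $S_i \cong T_j$, while the nonzero blocks are matrices over the division rings $D_i \defeq \mathrm{End}(S_i)$. Rank-nullity over $D_i$ then yields direct sum decompositions $X \cong K \oplus X'$ and $Y \cong Y' \oplus C$, with $f$ restricting to an isomorphism $X' \to Y'$; this exhibits $K$ as a kernel and $C$ as a cokernel of $f$, and simultaneously realizes every monomorphism as the kernel of its cokernel and every epimorphism as the cokernel of its kernel. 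The zero object and finite biproducts required for the abelian structure are supplied by the $\Bbbk$-linear hypothesis.

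For the splitting claim, the same decomposition shows that a monomorphism has $K = 0$, so $f \colon X \to X' \hookrightarrow X' \oplus C = Y$ is the inclusion of a direct summand and admits a retraction; the dual argument splits epimorphisms. Consequently every short exact sequence in $\cat{A}$ splits, and since additive functors preserve split short exact sequences, any such functor from $\cat{A}$ to an abelian category is exact.

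The main technical obstacle is the division-ring linear algebra needed to produce kernels and cokernels: one cannot work with matrices over $\Bbbk$ directly, since $\mathrm{End}(S)$ may be a proper division algebra extension. Once one is comfortable treating each isotypic component as a free $D_i$-module, the remaining parts of the argument are essentially formal, and indeed the whole statement is classical; the reference to~\cite[Theorem~C.6]{Turaev2017} carries out exactly this reasoning in detail.
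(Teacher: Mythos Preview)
The paper does not prove this proposition; it only states it with a citation to~\cite[Theorem~C.6]{Turaev2017}. Your sketch is correct and, as you yourself note at the end, is the standard argument that the cited reference carries out; there is nothing in the paper to compare it against.
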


Let \(\cat{A}\) be an abelian category.
An object \(V \in \cat{A}\) has \emph{finite length} if there exists a filtration
\[
  0 = V_0 \subseteq V_1 \subseteq \dots \subseteq V_n = V,
\]
such that \(V_i/V_{i-1}\) is simple, for all \(i \in \{\, 1, \dots, n \,\}\).

A category \(\cat{C}\) is said to be \emph{finite abelian} if
it is abelian;
\emph{hom-finite}, i.e.~\(\cat{C}(x, y)\) is finite-dimensional for all \(x, y \in \cat{C}\);
has enough projectives;
has only finitely many isomorphism classes of simple objects; and
all of its objects are of finite length.

\begin{lemma}
  A category \(\mathcal{A}\) is finite abelian
  if and only if
  there is a finite-dimensional \(\Bbbk\)-algebra \(A\) such that \(\mathcal{A} \simeq \lmod{A}\).
\end{lemma}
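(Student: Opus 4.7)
The plan is to handle the two implications separately: the $(\Leftarrow)$ direction is a direct verification, while the $(\Rightarrow)$ direction is a Morita-theoretic reconstruction via a projective generator, which is the substantive part.

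For $(\Leftarrow)$, given a finite-dimensional $\Bbbk$-algebra $A$, I would check each defining property of a finite abelian category for $\lmod{A}$ in turn. Abelianness is inherited from $\lMod{A}$ since kernels and cokernels of maps between finite-dimensional modules remain finite-dimensional; Hom-finiteness is immediate from the inclusion $\mathrm{Hom}_A(M,N) \subseteq \mathrm{Hom}_{\Bbbk}(M,N)$; the regular module $A$ is a finitely generated projective generator, so every $M \in \lmod{A}$ is a quotient of some $A^n$; simple $A$-modules correspond to simples of the finite-dimensional semisimple quotient $A/\mathrm{Jac}(A)$, of which there are finitely many; and finite length follows by induction on $\dim_{\Bbbk} M$.

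For $(\Rightarrow)$, let $S_1,\dots,S_n$ be representatives of the isomorphism classes of simple objects of $\mathcal{A}$. Because $\mathcal{A}$ has enough projectives and its objects have finite length, each $S_i$ admits a projective cover $P_i \twoheadrightarrow S_i$. Set $P \defeq \bigoplus_{i=1}^n P_i$; this is a projective generator of finite length. Define $A \defeq \mathrm{End}_{\mathcal{A}}(P)^{\op}$, which is finite-dimensional by Hom-finiteness combined with finite length. The candidate equivalence is the standard functor $\mathrm{Hom}_{\mathcal{A}}(P,-)\from \mathcal{A} \to \lmod{A}$; it lands in finite-dimensional modules because, for any $X$ of finite length, $\mathrm{Hom}_{\mathcal{A}}(P,X)$ is finite-dimensional.

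The main obstacle---and the only part requiring real argument---is showing that $\mathrm{Hom}_{\mathcal{A}}(P,-)$ is an equivalence. I would proceed along the usual Morita lines: projectivity of $P$ yields exactness of the functor, while the generator property gives faithfulness and reflection of zero objects. Full faithfulness follows from Yoneda-type identifications on direct summands of powers of $P$, propagated along projective presentations $P^m \to P^n \to X \to 0$ that exist for every $X \in \mathcal{A}$ (using enough projectives, finite length, and that $P$ is a generator) via a five-lemma argument. Essential surjectivity is dual: any $M \in \lmod{A}$ is finitely presented, say $A^m \to A^n \to M \to 0$, and under the identification $\mathrm{Hom}_{\mathcal{A}}(P,P^k) \cong A^k$ this presentation lifts to $P^m \to P^n \to Q \to 0$ in $\mathcal{A}$, whose image under the exact functor $\mathrm{Hom}_{\mathcal{A}}(P,-)$ is canonically identified with $M$. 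This is the classical Morita reconstruction applied to the finite abelian setting; I would cite a standard reference rather than spell it out in detail.
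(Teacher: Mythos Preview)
Your proposal is correct and follows the standard Morita-theoretic approach; the paper itself does not prove this lemma but states it as a known characterisation (it is a standard result, essentially~\cite[Section~1.8]{EGNO}), so there is no alternative proof in the paper to compare against.
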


\begin{definition}
  A category is said to be \emph{locally finite abelian} if
  it is abelian,
  hom-finite,
  and all of its objects are of finite length.
\end{definition}

\begin{lemma}\label{lem:campus-bar}
  A category \(\mathcal{C}\) is locally finite abelian
  if and only if
  there is a \(\Bbbk\)-coalgebra \(C\) such that \(\mathcal{C} \simeq \tetramodfd[C]{}\).
\end{lemma}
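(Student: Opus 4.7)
The plan is to prove both implications. The direction $(\Leftarrow)$ is straightforward: the category $\tetramodfd[C]$ sits inside the Grothendieck category of all left $C$-comodules as the full subcategory on those whose underlying $\Bbbk$-vector space is finite-dimensional. Being closed under finite biproducts, kernels, and cokernels, it is abelian; hom-finiteness follows from the inclusion $\mathrm{Hom}_{C}(V,W) \subseteq \mathrm{Hom}_{\Bbbk}(V,W)$; and any strict filtration of a finite-dimensional comodule has length bounded by its $\Bbbk$-dimension.

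For the substantive direction $(\Rightarrow)$, I would first pass to the ind-completion $\widehat{\mathcal{C}} \defeq \ind{\mathcal{C}}$. Since every object of $\mathcal{C}$ has finite length and $\mathcal{C}$ is hom-finite, $\widehat{\mathcal{C}}$ is a locally finite Grothendieck category in which $\mathcal{C}$ is recovered as the full subcategory of compact objects, equivalently the objects of finite length. As a locally finite Grothendieck category, $\widehat{\mathcal{C}}$ has enough injectives, and, choosing a complete set of representatives $\{S_{i}\}_{i \in I}$ of isomorphism classes of simple objects of $\mathcal{C}$ together with injective hulls $E(S_{i}) \in \widehat{\mathcal{C}}$, the direct sum $E \defeq \bigoplus_{i} E(S_{i})$ is an injective cogenerator.

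To extract a coalgebra, I would organise $\mathcal{C}$ as a filtered union of finite abelian subcategories. For each finite subset $\Sigma \subseteq I$, let $\mathcal{C}_{\Sigma}$ be the full subcategory of $\mathcal{C}$ spanned by objects whose composition factors lie in $\{S_{i}\}_{i \in \Sigma}$, and, for each $i \in \Sigma$, let $E_{\Sigma}(S_{i})$ denote the largest subobject of $E(S_{i})$ whose composition factors lie in $\Sigma$ (the truncated injective hull). The object $\bigoplus_{i \in \Sigma} E_{\Sigma}(S_{i})$ is an injective cogenerator of $\mathcal{C}_{\Sigma}$, which renders $\mathcal{C}_{\Sigma}$ finite abelian and hence equivalent to $\tetramodfd[C_{\Sigma}]$ for the finite-dimensional coalgebra $C_{\Sigma} \defeq \mathrm{End}_{\widehat{\mathcal{C}}}\bigl(\bigoplus_{i \in \Sigma} E_{\Sigma}(S_{i})\bigr)^{\ast}$. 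These assemble into a directed system whose colimit $C \defeq \colim_{\Sigma} C_{\Sigma}$ is the desired coalgebra; the fundamental theorem of comodules, asserting that every finite-dimensional $C$-comodule factors through a finite-dimensional subcoalgebra, ensures that $\tetramodfd[C]$ is precisely the filtered union $\bigcup_{\Sigma} \tetramodfd[C_{\Sigma}]$, matching $\mathcal{C} = \bigcup_{\Sigma} \mathcal{C}_{\Sigma}$. The main obstacle is verifying coherence of the transition coalgebra morphisms $C_{\Sigma} \to C_{\Sigma'}$ for $\Sigma \subseteq \Sigma'$, together with the required compatibilities of the individual equivalences $\mathcal{C}_{\Sigma} \simeq \tetramodfd[C_{\Sigma}]$; this is essentially the classical content of Gabriel's and Takeuchi's work on locally finite Grothendieck categories and pseudo-compact endomorphism rings of injective cogenerators.
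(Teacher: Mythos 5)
The paper does not prove this lemma: it is quoted as a classical result of Gabriel--Takeuchi type, with~\cite[Chapter~1]{EGNO} given as the reference for the surrounding definitions and results, so there is no in-paper argument to compare against. Your backward direction is fine. The forward direction, however, has a genuine gap: the claim that $\mathcal{C}_{\Sigma}$, the full subcategory of objects whose composition factors lie in a finite set $\Sigma$ of simples, is finite abelian, together with the attendant claims that the ``truncated injective hull'' $E_{\Sigma}(S_{i})$ lies in $\mathcal{C}_{\Sigma}$ and has finite-dimensional endomorphism algebra. All three fail already for the category of finite-dimensional nilpotent representations of the one-loop quiver, i.e.\ $\tetramodfd[C]{}$ for $C$ the divided-power coalgebra $\bigoplus_{n\geq 0}\Bbbk t_{n}$ with $\Delta(t_{n})=\sum_{i+j=n}t_{i}\otimes t_{j}$. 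Here there is a single simple $S$, so $\mathcal{C}_{\Sigma}=\mathcal{C}$ for $\Sigma=\{S\}$, yet this category has no nonzero projective or injective objects and is not finite abelian; the injective hull $E(S)$ in the ind-completion is the infinite Jordan block, which has infinite length and all of whose composition factors equal $S$, so $E_{\Sigma}(S)=E(S)\notin\mathcal{C}_{\Sigma}$; and $\on{End}(E(S))\cong\Bbbk[[x]]$ is infinite-dimensional, so its full linear dual is not a coalgebra. In short, bounding the set of composition factors does not bound the coalgebra, and your directed system of finite-dimensional coalgebras never gets off the ground.

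There are two standard repairs. One is to filter $\mathcal{C}$ by the subcategories $\langle X\rangle$ of subquotients of the finite powers $X^{\oplus n}$, indexed by \emph{objects} $X$ rather than by finite sets of simples (in the example above, $\langle J_{n}\rangle\simeq\lmod{\Bbbk[x]/(x^{n})}$ is finite abelian even though $\mathcal{C}_{\{S\}}$ is not); proving that each $\langle X\rangle$ is the category of finite-dimensional comodules over a finite-dimensional coalgebra is precisely the technical heart of the argument in~\cite[Sections~1.9--1.10]{EGNO}, and $C$ is then the filtered colimit of these coalgebras. The other is the route you gesture at in your final sentence but do not actually implement: take $A=\on{End}(E)$ for $E$ the injective cogenerator, equip it with its natural pseudocompact topology, use hom-finiteness of $\mathcal{C}$ to see that the open ideals are cofinite-dimensional, and set $C=\colim_{I}{(A/I)}^{\ast}$, the \emph{continuous} (not full) dual, so that discrete $A$-modules become $C$-comodules. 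Either way, the finite-dimensional pieces must be cut out of the endomorphism ring, or out of single objects, rather than out of the set of simples.
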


\subsection{(Co)completions}

We refer to~\cite{KS} for generalities on (co)limits and (co)completions.
In this subsection we give a brief account of the results regarding the monoidal (pseudo)functoriality of cocompletions and the resulting (co)completion operations for monoidal and module categories.
We again implicitly assume all categories and functors to be \(\Bbbk\)-linear.

First, let us introduce some terminology.
Let $\Phi$ be a class of diagrams.
We say that a category is \emph{$\Phi$-cocomplete} if it admits colimits of functors with domain in $\Phi$, and we say that a functor is \emph{$\Phi$-cocontinuous} if it preserves such colimits.

\begin{definition}\label{smoothcats}
  A monoidal category $\mathcal{C}$ is called \emph{separately $\Phi$-cocontinuous} if\, $\mathcal{C}$ is $\Phi$-cocomplete and $\blank \otimes_{\cat{C}} \bblank$ is separately $\Phi$-cocontinuous.

  Similarly, for a $\Phi$-cocomplete monoidal category $\cat{D}$, a $\cat{D}$-module category $\mathcal{M}$ is said to be separately $\Phi$-cocontinuous if $\mathcal{M}$ is $\Phi$-cocomplete and $\blank \triangleright_{\mathcal{M}} \bblank$ is separately $\Phi$-cocontinuous.
\end{definition}

Let $\mathbf{Cat}_{\Cocts{\Phi}}$ denote the $2$-category of $\Phi$-cocomplete categories, $\Phi$-cocontinuous functors, and transformations between them.
Let $\mathcal{C}$ be a small category.
By~\cite[Section~5.7]{Ke2}, there is a category $\Phi(\mathcal{C})$, the \emph{$\Phi$-cocompletion} of $\mathcal{C}$, endowed with an embedding $\iota\from \mathcal{C} \hookrightarrow \Phi(\mathcal{C})$, such that, for any $\Phi$-cocomplete category $\mathcal{D}$ the restriction functor $\mathbf{Cat}_{\Cocts{\Phi}}(\mathcal{C},\mathcal{D}) \xrightarrow{\blank \circ \iota} \mathbf{Cat}(\mathcal{C},\mathcal{D})$ is an equivalence.
Furthermore, by the main results of~\cite{Ko,Zo}, the inclusion $2$-functor $\mathbf{Cat}_{\Cocts{\Phi}} \to \mathbf{Cat}$ is $2$-monadic, with a left adjoint given by the pseudofunctor of $\Phi$-cocompletion, $\Phi(\blank)\from \mathbf{Cat} \to \mathbf{Cat}_{\Cocts{\Phi}}$.
This $2$-monad is a lax-idempotent $2$-monad in the sense of~\cite{KeLa1}.
By~\cite{KeLa2}, there is a similar $2$-monad $\Phi_{c}$ on $\mathbf{Cat}$, whose algebras are categories with a \emph{prescribed choice} of $\Phi$-colimits, strict morphisms are functors strictly preserving the chosen $\Phi$-colimits, and pseudomorphisms are functors preserving $\Phi$-colimits in the ordinary sense.
By~\cite[Theorem~6.2]{LF2}, this yields a pseudo-closed structure the $2$-category $\mathbf{Cat}_{\Cocts{\Phi_{c}}}$ of categories with a choice of $\Phi$-colimits, $\Phi$-cocontinuous functors, and transformations between them, where the category underlying the internal hom from $\mathcal{A}$ to $\mathcal{B}$ is given by $\mathbf{Cat}_{\Cocts{\Phi}}(\mathcal{A,B})$, and the functors comprising the $2$-monad are closed.
Further, in some cases, for instance if $\Phi$ is the class of finite colimits, the pseudo-closed structure becomes pseudo-closed monoidal.
Since monoidal categories and module categories can be formulated internally to a pseudo-closed (monoidal) structure as  pseudomonoids, we find the following.

\begin{proposition}\label{cocompletingnonsense}
  Let $\mathcal{C}$ be a monoidal category.
  Then $\Phi(\mathcal{C})$ is separately $\Phi$-cocontinuous monoidal, the inclusion $\iota\from \mathcal{C} \hookrightarrow \Phi(\mathcal{C})$ is strong monoidal and
  induces an equivalence
  \[
    \mathbf{StrMon}_{\Cocts{\Phi}}(\Phi(\mathcal{C}),\mathcal{D}) \xrightarrow[\sim]{\blank \circ \iota} \mathbf{StrMon}(\mathcal{C,D})
  \]
  for any separately $\Phi$-cocontinuous monoidal category $\mathcal{D}$.
  Similar equivalences are induced for lax and oplax monoidal functors.

  Likewise, given a $\mathcal{C}$-module category $\mathcal{M}$, the $\Phi$-cocompletion $\Phi(\mathcal{M})$ is a separately $\Phi$-cocontinuous $\Phi(\mathcal{C})$-module category, the inclusion $\iota\from \mathcal{M} \hookrightarrow \Phi(\mathcal{M})$ is a strong $\mathcal{C}$-module functor, and the functor
  \begin{equation}\label{cocompletingmodules}
    \mathbf{Str}\Phi(\mathcal{C})\text{-}\mathbf{Mod}_{\Cocts{\Phi}}(\Phi(\mathcal{M}),\mathcal{N})
    \xrightarrow[\sim]{\blank \circ \iota}
    \mathbf{Str}\mathcal{C}\text{-}\mathbf{Mod}(\mathcal{M,N}),
  \end{equation}
  is an equivalence;
  similar equivalences exist for lax and oplax module functors.
\end{proposition}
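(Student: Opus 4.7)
The plan is to reduce this statement to a formal application of the framework of pseudomonoids and pseudoactions internal to the pseudo-closed monoidal $2$-category $\mathbf{Cat}_{\Cocts{\Phi_{c}}}$ assembled in the preceding paragraph from~\cite{Ko,Zo,KeLa1,KeLa2,LF2}. First I would invoke the standard identifications: a monoidal category is precisely a pseudomonoid in $(\mathbf{Cat}, \times)$, and strong, lax, and oplax monoidal functors correspond exactly to pseudo, lax, and colax morphisms of pseudomonoids; analogously, a $\mathcal{C}$-module category is a pseudoaction of the pseudomonoid $\mathcal{C}$, with the three flavours of module functor matching the three flavours of morphism of pseudoactions. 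For the pseudo-closed monoidal tensor $\boxtimes$ on $\mathbf{Cat}_{\Cocts{\Phi_{c}}}$ classifying separately $\Phi$-cocontinuous functors of two variables, a pseudomonoid is exactly a separately $\Phi$-cocontinuous monoidal category in the sense of Definition~\ref{smoothcats}, and a pseudoaction of one is exactly a separately $\Phi$-cocontinuous module category.

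Next I would observe that, since $\Phi_{c}$ is lax-idempotent, the free pseudofunctor $\Phi\from \mathbf{Cat} \to \mathbf{Cat}_{\Cocts{\Phi_{c}}}$ is strong monoidal from $(\mathbf{Cat}, \times)$ to $(\mathbf{Cat}_{\Cocts{\Phi_{c}}}, \boxtimes)$: both $\Phi(\mathcal{A} \times \mathcal{B})$ and $\Phi(\mathcal{A}) \boxtimes \Phi(\mathcal{B})$ classify separately $\Phi$-cocontinuous functors out of $\mathcal{A} \times \mathcal{B}$, hence are equivalent via the respective universal properties. Strong monoidal pseudofunctors between pseudo-closed monoidal $2$-categories preserve pseudomonoids, pseudoactions, and all three variants of morphism. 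Applying $\Phi$ to the pseudomonoid structure on $\mathcal{C}$ and the pseudoaction structure on $\mathcal{M}$ therefore produces separately $\Phi$-cocontinuous monoidal and module structures on $\Phi(\mathcal{C})$ and $\Phi(\mathcal{M})$, respectively; the inclusion $\iota$ is automatically strong monoidal (respectively a strong module functor) as the corresponding component of the unit of the lax-idempotent $2$-adjunction, which is a strong morphism of pseudomonoids.

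For the universal properties, I would restrict the underlying $2$-adjunction
\[
  \mathbf{Cat}_{\Cocts{\Phi}}(\Phi(\mathcal{C}), \mathcal{D}) \xrightarrow[\sim]{\blank \circ \iota} \mathbf{Cat}(\mathcal{C}, \mathcal{D})
\]
to the appropriate morphism $2$-categories of pseudomonoids. Since $\Phi$ is strong monoidal and $\iota$ is a strong monoidal morphism, restriction along $\iota$ carries strong (respectively lax, oplax) monoidal functors on the left-hand side bijectively, up to equivalence, to strong (respectively lax, oplax) monoidal functors on the right-hand side. This yields the three claimed equivalences at once. The module case is formally identical, using pseudoactions and their strong, lax, and oplax morphisms in place of pseudomonoids, and delivers~\eqref{cocompletingmodules} along with its lax and oplax variants.

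The main obstacle is to check that the pseudo-closed structure on $\mathbf{Cat}_{\Cocts{\Phi_{c}}}$ from~\cite{LF2} is genuinely pseudo-closed \emph{monoidal} for the classes $\Phi$ relevant to us: as noted immediately before the proposition, this is not automatic for arbitrary $\Phi$, but it does hold in particular for $\Phi$ the class of finite colimits, which is the main case needed in the sequel. Once this point is settled, the remainder of the argument is a purely formal consequence of doctrinal $2$-categorical machinery, and no genuinely computational step is required.
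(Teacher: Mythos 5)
Your proposal is correct and follows essentially the same route as the paper: the paper gives no separate proof, but derives the proposition from exactly the machinery you describe — the lax-idempotent $2$-monad $\Phi$, the pseudo-closed (monoidal) structure on $\mathbf{Cat}_{\Cocts{\Phi_{c}}}$ from López Franco, and the identification of (separately $\Phi$-cocontinuous) monoidal and module categories with pseudomonoids and pseudoactions therein, with the universal properties obtained by restricting the underlying $2$-adjunction along $\iota$. Your explicit remarks that $\Phi$ is strong monoidal for $\boxtimes$ and that the caveat about pseudo-closedness versus pseudo-closed monoidality must be checked for the relevant classes $\Phi$ match the paper's own caveats.
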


We remark that explicit constructions of the monoidal and module structures obtained in Proposition~\ref{cocompletingnonsense} and direct proofs of their universal properties are proven in many cases and many ways in the literature, see for instance~\cite[Section~3]{MMMT} and~\cite[Example~3.2.9]{CG}.

\begin{proposition}
  Let $\mathcal{A}$ be an additive category.
  The action of\, \(\kvect\) on \(\mathcal{A}\) via copowering, as described in Example~\ref{ex:copower} is the essentially unique \(\tetramodfd\)-module structure on \(\mathcal{A}\).
\end{proposition}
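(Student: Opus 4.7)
My plan is to recognize \(\kvect\) as the free additive \(\Bbbk\)-linear monoidal category on a single object --- equivalently, the finite-biproduct cocompletion of the trivial \(\Bbbk\)-linear monoidal category \(\mathbf{pt}_{\Bbbk}\) whose endomorphism algebra is \(\Bbbk\) --- and then to apply Proposition~\ref{cocompletingnonsense} with \(\mathcal{C} = \mathbf{pt}_{\Bbbk}\) and \(\Phi\) the class of finite \(\Bbbk\)-linear colimits. Since \(\mathcal{A}\) is additive \(\Bbbk\)-linear, it is separately \(\Phi\)-cocomplete, so the equivalence~\eqref{cocompletingmodules} identifies \(\kvect\)-module structures on \(\mathcal{A}\) with \(\mathbf{pt}_{\Bbbk}\)-module structures on \(\mathcal{A}\).

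The reduction is useful because \(\mathbf{pt}_{\Bbbk}\)-module structures are essentially unique: by unitality the action of the unique object \(\ast\) must be naturally isomorphic to \(\Id_{\mathcal{A}}\), and by \(\Bbbk\)-linearity the action of any scalar \(\lambda \in \on{End}(\ast) = \Bbbk\) on \(A \in \mathcal{A}\) is forced to be \(\lambda \cdot \id_{A}\). Transported along~\eqref{cocompletingmodules}, this yields the claimed essential uniqueness of \(\kvect\)-module structures. Concretely, for any such structure and any \(A \in \mathcal{A}\), the functor \(\blank \triangleright A\from \kvect \to \mathcal{A}\) is \(\Bbbk\)-linear, hence additive, and satisfies \(\Bbbk \triangleright A \cong A\); by the universal property of \(\kvect\) as a cocompletion it must then be naturally isomorphic to the copower functor. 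The copower action manifestly satisfies both conditions, so it is indeed the essentially unique such structure.

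The main subtlety I anticipate is justifying the application of Proposition~\ref{cocompletingnonsense} in the \(\Bbbk\)-linear additive setting. In particular, one should verify that the endomorphism monoidal category \(\on{End}_{\Bbbk}(\mathcal{A})\) --- in which a \(\kvect\)-module structure is a strong monoidal functor --- is separately cocontinuous for finite biproducts, and that the inclusion \(\mathbf{pt}_{\Bbbk} \hookrightarrow \kvect\) genuinely realises \(\kvect\) as the \(\Phi\)-cocompletion in the enriched sense. Both facts are standard: finite biproducts are absolute, \(\Bbbk\)-linear functors between additive categories are automatically additive, and composition of \(\Bbbk\)-linear endofunctors distributes over pointwise direct sums. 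Once these are in place, the remaining module-categorical coherences of the equivalence are automatic from the \(2\)-categorical universal property of the cocompletion.
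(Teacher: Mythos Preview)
Your proposal is correct and takes essentially the same approach as the paper: both recognize \(\kvect\) as the finite-biproduct cocompletion of the terminal \(\Bbbk\)-linear monoidal category and invoke Proposition~\ref{cocompletingnonsense} to reduce to the trivial case. The only cosmetic difference is that the paper phrases the conclusion via strong monoidal functors \(\kvect \to \mathbf{Cat}_{\mathsf{add}}(\mathcal{A},\mathcal{A})\), whereas you use the module-category equivalence~\eqref{cocompletingmodules} directly; these are equivalent formulations of the same universal property.
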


\begin{proof}
  Let $\mathsf{add}$ denote the collection of finite discrete categories.
  An $\mathsf{add}$-cocomplete category is simply an additive category, and an $\mathsf{add}$-cocontinuous functor is an additive functor.

  Since any $\Bbbk$-linear functor preserves direct sums, an additive $\kvect$-module category is necessarily separately additive in the sense of Definition~\ref{smoothcats}.
  The result then follows by observing that $\kvect \simeq \mathsf{add}(\setj{\oast})$, where $\setj{\oast}$ is the terminal monoidal category, so
  \[
    \mathbf{StrMon}_{\mathsf{add}}(\kvect, \mathbf{Cat}_{\mathsf{add}}(\mathcal{A,A})) \simeq \mathbf{StrMon}(\setj{\ast},\mathbf{Cat}_{\mathsf{add}}(\mathcal{A,A})) \simeq \setj{\ast},
  \]
  where $\setj{\ast}$ is the terminal category and $\mathbf{StrMon}_{\mathsf{add}}$ denotes the category of strong monoidal additive functors.
\end{proof}

\begin{proposition}\label{FinitaryAction}
  Let $\mathcal{A}$ be an additive category admitting filtered colimits.
  The action of $\kvect$ on $\mathcal{A}$ extends uniquely to a separately finitary $\kVect$-module category structure on $\mathcal{A}$.
\end{proposition}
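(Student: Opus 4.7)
The plan is to realise the $\kvect$-action as a strong monoidal functor into a suitable category of endofunctors, and then invoke the universal property of the filtered cocompletion furnished by Proposition~\ref{cocompletingnonsense}. Let $\Phi$ denote the class of small filtered diagrams, so that being $\Phi$-cocontinuous coincides with the usual notion of being \emph{finitary}. By Proposition~\ref{prop:bicategorical-yoneda-correspondence-monads-and-algebras}, a $\kvect$-module structure on $\mathcal{A}$ is the same data as a strong monoidal functor $F\from \kvect \to \mathbf{Cat}(\mathcal{A},\mathcal{A})$. Since $F(V)$ is (naturally isomorphic to) the $\dim V$-fold direct sum endofunctor of $\mathcal{A}$, it preserves all small colimits, so $F$ factors through the full monoidal subcategory $\mathbf{End}_{\Phi}(\mathcal{A}) \subseteq \mathbf{Cat}(\mathcal{A},\mathcal{A})$ of finitary endofunctors.

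Next I would verify that $\mathbf{End}_{\Phi}(\mathcal{A})$ is a separately $\Phi$-cocontinuous monoidal category in the sense of Definition~\ref{smoothcats}. Because $\mathcal{A}$ admits filtered colimits, filtered colimits of functors into $\mathcal{A}$ exist and are computed pointwise, and a pointwise-filtered colimit of finitary endofunctors is again finitary; hence $\mathbf{End}_{\Phi}(\mathcal{A})$ has filtered colimits computed as in $\mathbf{Cat}(\mathcal{A},\mathcal{A})$. Composition preserves these filtered colimits in the first variable because pointwise colimits are automatically preserved by precomposition, and in the second variable exactly by the finitary hypothesis on its objects. In parallel, the monoidal category $\kVect$ is naturally equivalent to the $\Phi$-cocompletion of $\kvect$: every vector space is the canonical filtered colimit of its finite-dimensional subspaces, and the tensor product of vector spaces commutes with filtered colimits in each argument, which yields the necessary universal property.

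Applying the monoidal half of Proposition~\ref{cocompletingnonsense} to $\mathcal{C} = \kvect$ and $\mathcal{D} = \mathbf{End}_{\Phi}(\mathcal{A})$, the strong monoidal functor $F$ extends essentially uniquely to a strong monoidal $\Phi$-cocontinuous functor $\widetilde{F}\from \kVect \to \mathbf{End}_{\Phi}(\mathcal{A})$. Translating back through Proposition~\ref{prop:bicategorical-yoneda-correspondence-monads-and-algebras}, $\widetilde{F}$ corresponds to a $\kVect$-module structure on $\mathcal{A}$ extending the given $\kvect$-action; separate finitarity holds because $\widetilde{F}$ takes values in finitary endofunctors and is itself $\Phi$-cocontinuous; and essential uniqueness is precisely the uniqueness clause of the universal property.

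The main technical point to pin down is the identification $\kVect \simeq \Phi(\kvect)$ as \emph{monoidal} categories, namely that the induced monoidal structure on the cocompletion (via Day convolution) agrees with the ordinary tensor product of vector spaces. This reduces to checking that tensor in $\kVect$ is finitary in each argument, which is standard, but it is the only step where the proof depends on something beyond the general cocompletion machinery.
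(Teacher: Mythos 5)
Your proof is correct and follows essentially the same route as the paper: both rest on recognising $\kVect$ as a free cocompletion and invoking the universal property of Proposition~\ref{cocompletingnonsense} to transport strong monoidal functors into the (separately finitary) endofunctor category. The only cosmetic difference is that the paper cocompletes the terminal monoidal category $\setj{\oast}$ under finite biproducts and filtered colimits simultaneously and concludes that the whole category of such structures is terminal, whereas you extend along $\kvect \hookrightarrow \mathbf{Ind}(\kvect) \simeq \kVect$ in a second step; the two arguments are interchangeable.
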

\begin{proof}
  Let \(\setj{\mathsf{add},\mathsf{filt}}\) denote the collection of diagrams which are filtered or finite discrete.
  Then we have \(\kVect \simeq \setj{\mathsf{add},\mathsf{filt}}(\setj{\oast})\), and
  \[
    \begin{aligned}
      &\mathbf{StrMon}_{\mathsf{filt}}(\kVect, \mathbf{Cat}_{\on{add,filt}}(\mathcal{A,A})) = \mathbf{StrMon}_{\on{add,filt}}(\kVect, \mathbf{Cat}_{\on{add,filt}}(\mathcal{A,A}))\\
      &\simeq \mathbf{StrMon}(\setj{\oast},\mathbf{Cat}_{\on{add,filt}}(\mathcal{A,A})) = \mathbf{StrMon}(\setj{\oast},\mathbf{Cat}_{\mathsf{filt}}(\mathcal{A,A})) \simeq \setj{\ast}.
    \end{aligned}
  \]
\end{proof}

Filtered colimits and cocompletion under them, which we denote by \(\mathbf{Ind}(\blank)\),
are---in view of Lemma~\ref{lem:campus-bar}---particularly important in the study of locally finite abelian categories.
One reason for this is because \(\mathbf{Ind}(\tetramodfd[][][][C]) \simeq \tetramod[][][][C]\),
and categories of the latter form have desirable properties, in particular in terms of characterizing adjoint functors.
This is partly because categories of comodules are locally finitely presentable.

\begin{proposition}\label{locoinjectives}
  Let $\mathcal{C}$ be a locally finite abelian category. Then $\mathbf{Ind}(\mathcal{C})$ has enough injectives.
\end{proposition}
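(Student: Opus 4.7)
The plan is to reduce to the case of comodules over a coalgebra via Lemma~\ref{lem:campus-bar}, and then to construct enough injectives by hand using the standard cofree comodule construction.

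More precisely, the first step will be to invoke Lemma~\ref{lem:campus-bar} to find a $\Bbbk$-coalgebra $C$ with $\mathcal{C} \simeq \tetramodfd[C]$. Since $\mathbf{Ind}$ commutes with this equivalence, and since every $C$-comodule is the filtered union (hence a filtered colimit) of its finite-dimensional subcomodules by the fundamental theorem of comodules, we obtain an equivalence $\mathbf{Ind}(\mathcal{C}) \simeq \tetramod[C]$, where the latter denotes the category of all (not necessarily finite-dimensional) left $C$-comodules. It therefore suffices to exhibit enough injectives in $\tetramod[C]$.

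Next, I would observe that the forgetful functor $U \from \tetramod[C] \to \kVect$ admits a right adjoint $R \from \kVect \to \tetramod[C]$, given on objects by $R(V) = V \otimes_{\Bbbk} C$, equipped with the coaction $\mathrm{id}_{V} \otimes \Delta_{C}$. The adjunction $U \dashv R$ follows directly from the definition of a comodule: a $\Bbbk$-linear map $M \to V \otimes C$ is a comodule morphism precisely when it arises from a $\Bbbk$-linear map $M \to V$ by postcomposition with $\mathrm{id}_{V} \otimes \Delta_{C}$ and the coaction of $M$. Since $U$ is exact, $R$ preserves injective objects; as every vector space is injective in $\kVect$, every object of the form $R(V) = V \otimes C$ is then injective in $\tetramod[C]$.

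Finally, given any $M \in \tetramod[C]$, the unit of the adjunction $\eta_{M} \from M \to R(U(M)) = M \otimes C$ is precisely the coaction $\delta_{M}$. Counitality of the comodule structure ($(\mathrm{id}_{M} \otimes \varepsilon) \circ \delta_{M} = \mathrm{id}_{M}$) shows that $\delta_{M}$ is split by $\mathrm{id}_{M} \otimes \varepsilon$, and hence in particular it is a monomorphism in $\tetramod[C]$. This realizes $M$ as a subobject of the injective $M \otimes C$, giving the required enough injectives. No step is really an obstacle: the only point to be careful about is the identification $\mathbf{Ind}(\tetramodfd[C]) \simeq \tetramod[C]$, which is standard but rests on the fact that finite-dimensional subcomodules form a filtered system exhausting every comodule.
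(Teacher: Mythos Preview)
Your argument is correct and is the standard proof of this fact. The paper itself states Proposition~\ref{locoinjectives} without proof, treating it as a known result; your reduction via Lemma~\ref{lem:campus-bar} to the category of comodules over a coalgebra, followed by the cofree comodule adjunction $U \dashv (\blank \otimes C)$ and the observation that the coaction $\delta_M \from M \to M \otimes C$ is a split monomorphism, is exactly how one would fill in the details.
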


\begin{proposition}[Special adjoint functor theorem for right adjoints]\label{saftrex}
  If\, $\mathcal{C}$ and \(\cat{D}\) are locally finite abelian categories, then there are equivalences
  \[
    \mathbf{Rex}(\mathcal{C,D}) \xrightarrow[\sim]{\mathbf{Ind}(-)} \mathbf{Cocont}(\mathbf{Ind}(\mathcal{C}), \mathbf{Ind}(\mathcal{D})) \simeq \mathbf{Map}(\mathbf{Ind}(\mathcal{C}), \mathbf{Ind}(\mathcal{D})),
  \]
  where $\mathbf{Map}(\mathcal{A,B})$ denotes the category of functors admitting a right adjoint.
  In particular, a functor from \(\mathbf{Ind}(\mathcal{C})\) to \(\mathbf{Ind}(\mathcal{D})\) admits a right adjoint if and only if it is cocontinuous,
  and the extension of a functor \(F\from \mathcal{C} \to \mathcal{D}\) to \(\mathbf{Ind}\)-cocompletions is cocontinuous if and only if\, \(F\) is right exact.
\end{proposition}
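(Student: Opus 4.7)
The plan is to establish the two equivalences in sequence, after which the two ``in particular'' claims follow by tracing through the definitions of $\mathbf{Ind}(-)$. Throughout, we use that a locally finite abelian category is, in particular, finitely cocomplete, so that $\mathbf{Rex}(\mathcal{C}, \mathcal{D})$ is meaningful.

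For the first equivalence $\mathbf{Rex}(\mathcal{C}, \mathcal{D}) \simeq \mathbf{Cocont}(\mathbf{Ind}(\mathcal{C}), \mathbf{Ind}(\mathcal{D}))$, I would invoke the universal property of the $\mathbf{Ind}$-cocompletion recalled in the discussion preceding Proposition~\ref{cocompletingnonsense}. Writing $\mathsf{filt}$ for the class of filtered diagrams, restriction along $\iota\from \mathcal{C} \hookrightarrow \mathbf{Ind}(\mathcal{C})$ yields an equivalence $\mathbf{Cat}_{\Cocts{\mathsf{filt}}}(\mathbf{Ind}(\mathcal{C}), \mathcal{E}) \simeq \mathbf{Cat}(\mathcal{C}, \mathcal{E})$ for every category $\mathcal{E}$ admitting filtered colimits, with pseudo-inverse given by $\mathbf{Ind}(-)$. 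I would then cut down to functors preserving finite colimits: the inclusion $\iota$ preserves finite colimits (it identifies $\mathcal{C}$ with the finitely presentable objects of $\mathbf{Ind}(\mathcal{C})$), so any cocontinuous functor on $\mathbf{Ind}(\mathcal{C})$ restricts to a right exact functor on $\mathcal{C}$; conversely, given a right exact $F\from \mathcal{C} \to \mathcal{D}$, the filtered-cocontinuous extension $\mathbf{Ind}(F)$ preserves finite colimits because finite and filtered colimits commute in $\mathbf{Ind}(\mathcal{C})$, which reduces any finite colimit in $\mathbf{Ind}(\mathcal{C})$ to a filtered colimit of finite colimits computed inside $\mathcal{C}$, where $F$ preserves them. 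Justifying this interchange is the main technical point here, but it is a standard property of categories of Ind-objects.

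For the second equivalence $\mathbf{Cocont}(\mathbf{Ind}(\mathcal{C}), \mathbf{Ind}(\mathcal{D})) \simeq \mathbf{Map}(\mathbf{Ind}(\mathcal{C}), \mathbf{Ind}(\mathcal{D}))$, one inclusion is automatic since any left adjoint preserves colimits; the substantive content is that every cocontinuous functor between these categories admits a right adjoint. By Lemma~\ref{lem:campus-bar}, $\mathcal{C} \simeq \tetramodfd[C]$ for a coalgebra $C$, and the fundamental theorem of coalgebras (every comodule is the filtered union of its finite-dimensional subcomodules) identifies $\mathbf{Ind}(\mathcal{C}) \simeq \tetramod[C]$; likewise for $\mathcal{D}$. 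These comodule categories are Grothendieck abelian, hence locally finitely presentable, well-powered, and cogenerated by a product of injective envelopes of the simples. The special adjoint functor theorem therefore produces a right adjoint to any cocontinuous $F\from \mathbf{Ind}(\mathcal{C}) \to \mathbf{Ind}(\mathcal{D})$; this is the main external input of the proof, and it is the step I expect to cost the most, though only in the sense of having to cite the correct form of SAFT. The final two statements of the proposition are then read off directly: a functor on the $\mathbf{Ind}$-categories admits a right adjoint iff it is cocontinuous, by the second equivalence, and $\mathbf{Ind}(F)$ is cocontinuous iff $F$ is right exact, by the first.
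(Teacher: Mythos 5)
Your proof is correct and takes essentially the same route as the paper's (only sketched) justification: the paper likewise notes that $\mathbf{Ind}(F)$ preserves finite and filtered, hence all, colimits, and delegates the existence of right adjoints to the standard theory of locally finitely presentable (Grothendieck) comodule categories. One minor caveat: the hypotheses you quote (well-powered, injective cogenerator) are those of the special adjoint functor theorem for producing \emph{left} adjoints to continuous functors, whereas here you want the dual conditions (co-well-powered, a generating set) or, most cleanly, the fact that a functor between locally presentable categories admits a right adjoint if and only if it preserves all small colimits --- either applies to the comodule categories in question, so the conclusion stands.
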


\begin{proposition}[Special adjoint functor theorem for left adjoints]\label{saftlex}
  Let \(\mathcal{C}\) and \(\cat{D}\) be locally finite abelian categories.
  If a functor \(G\from \mathbf{Ind}(\mathcal{C}) \to \mathbf{Ind}(\mathcal{D})\) is continuous and preserves filtered colimits, then it admits a left adjoint.

  If a functor \(G\from \mathbf{Ind}(\mathcal{C}) \to \mathbf{Ind}(\mathcal{D})\) admits a left adjoint \(F\), then \(G\) preserves filtered colimits if and only if \(F\) preserves compact objects in \(\mathcal{C}\).
\end{proposition}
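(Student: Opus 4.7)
The plan is to exploit that \(\mathbf{Ind}(\mathcal{C})\) and \(\mathbf{Ind}(\mathcal{D})\) are locally finitely presentable categories whose compact objects are precisely \(\mathcal{C}\) and \(\mathcal{D}\), respectively. Everything beyond this will be formal manipulation of adjunctions and the Yoneda lemma.

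For the existence part, I would construct the putative left adjoint in two stages. First, for each \(d \in \mathcal{D}\), consider the presheaf \(\on{Hom}_{\mathbf{Ind}(\mathcal{D})}(d, G(\blank))\from \mathbf{Ind}(\mathcal{C}) \to \mathbf{Set}\). It preserves all limits---because \(G\) does and \(\on{Hom}(d,\blank)\) always does---and all filtered colimits, because \(d\) is compact in \(\mathbf{Ind}(\mathcal{D})\) and \(G\) is finitary by hypothesis. Since \(\mathbf{Ind}(\mathcal{C})\) is locally finitely presentable, such a continuous finitary presheaf is representable, yielding \(F_0(d) \in \mathbf{Ind}(\mathcal{C})\) and a natural isomorphism \(\on{Hom}(F_0(d), \blank) \cong \on{Hom}(d, G(\blank))\); functoriality of \(F_0\) in \(d\) is then a formal consequence of Yoneda. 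Second, I would extend \(F_0\) to all of \(\mathbf{Ind}(\mathcal{D})\) using the universal property of the \(\mathbf{Ind}\)-completion, obtaining a filtered-colimit-preserving \(F\from \mathbf{Ind}(\mathcal{D}) \to \mathbf{Ind}(\mathcal{C})\). The adjunction \(F \dashv G\) then propagates by continuity: for \(y = \colim_i d_i \in \mathbf{Ind}(\mathcal{D})\) and \(c \in \mathbf{Ind}(\mathcal{C})\),
\[
  \on{Hom}(F(y), c) \cong \lim_i \on{Hom}(F_0(d_i), c) \cong \lim_i \on{Hom}(d_i, G(c)) \cong \on{Hom}(y, G(c)).
\]

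For the characterization, assume now \(F \dashv G\). If \(G\) preserves filtered colimits, then for \(d \in \mathcal{D}\) and any filtered diagram \((c_i)\) in \(\mathbf{Ind}(\mathcal{C})\), the chain
\[
  \on{Hom}(F(d), \colim_i c_i) \cong \on{Hom}(d, G(\colim_i c_i)) \cong \on{Hom}(d, \colim_i G(c_i)) \cong \colim_i \on{Hom}(F(d), c_i)
\]
shows \(F(d)\) compact, so \(F(d) \in \mathcal{C}\). Conversely, if \(F(\mathcal{D}) \subseteq \mathcal{C}\), the same string read in reverse yields, naturally in \(d \in \mathcal{D}\), the isomorphism \(\on{Hom}(d, G(\colim_i c_i)) \cong \on{Hom}(d, \colim_i G(c_i))\); since \(\mathcal{D}\) is a compact generator of \(\mathbf{Ind}(\mathcal{D})\), this forces the canonical comparison \(\colim_i G(c_i) \to G(\colim_i c_i)\) to be invertible.

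The principal obstacle is the representability step underlying the existence half, which is where the locally finite abelian hypothesis enters essentially: it supplies both local finite presentability of the \(\mathbf{Ind}\)-completions and the identification of their compact objects with the original categories. To keep the argument self-contained I would use the description \(\mathbf{Ind}(\mathcal{C}) \simeq \mathbf{Lex}(\mathcal{C}^{\op}, \mathbf{Set})\), in which limits and filtered colimits are computed pointwise, and observe that the restriction of \(\on{Hom}(d, G(\blank))\) along \(\mathcal{C} \hookrightarrow \mathbf{Ind}(\mathcal{C})\) is itself the lex presheaf on \(\mathcal{C}^{\op}\) that represents it.
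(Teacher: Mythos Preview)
The paper does not prove this proposition: it states both halves as standard background, and only the forward implication of the second half reappears later as Lemma~\ref{leftadjointssmallobjects}, with the same one-line adjunction argument you give. So there is no ``paper's proof'' to compare against for the existence statement; your outline is a perfectly reasonable way to supply one, and your treatment of the second half is correct and matches the paper's Lemma~\ref{leftadjointssmallobjects} verbatim in spirit.

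One genuine slip in your final paragraph: the restriction of \(\on{Hom}_{\mathbf{Ind}(\mathcal{D})}(d, G(\blank))\) to \(\mathcal{C}\) is a \emph{covariant} finite-limit-preserving functor \(\mathcal{C} \to \mathbf{Set}\), whereas objects of \(\mathbf{Ind}(\mathcal{C}) \simeq \mathbf{Lex}(\mathcal{C}^{\op},\mathbf{Set})\) are \emph{contravariant} on \(\mathcal{C}\). So this restriction is not itself an object of \(\mathbf{Ind}(\mathcal{C})\), and the sentence ``is itself the lex presheaf on \(\mathcal{C}^{\op}\) that represents it'' does not typecheck. The representability claim you need is still true, but it requires a different justification---e.g., invoke the accessible adjoint functor theorem for locally presentable categories directly, or use that \(\mathbf{Ind}(\mathcal{C})\) is a Grothendieck abelian category (via Lemma~\ref{lem:campus-bar}) and hence well-powered with an injective cogenerator, so the special adjoint functor theorem applies to any continuous functor out of it. Either route closes the gap; the rest of your argument then goes through as written.
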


The latter part of Proposition~\ref{saftrex} follows from $\mathbf{Ind}(F)$ preserving both finite and filtered colimits, and thus preserving all colimits.
Since filtered colimits in a locally finitely presentable category are exact, the extension $\mathbf{Ind}(G)$ of a left exact functor $G$ is left exact.
However, it is not clear that $\mathbf{Ind}(G)$ preserves arbitrary products.

\begin{definition}
  Let $\mathcal{C}$ be a locally finite abelian category.
  An object $X \in \mathbf{Ind}(\mathcal{C})$ is \emph{finitely cogenerated} if the functor $\on{Hom}_{\mathbf{Ind}(\mathcal{C})}(\blank,X)$ preserves arbitrary products.
\end{definition}

One can verify that for a coalgebra $D$ such that $\mathbf{Ind}(\mathcal{C}) \simeq \rComod{D}$,
a comodule $M$ is finitely cogenerated if and only if $M$ embeds into a comodule of the form $D^{\oplus m}$ for some $m \in \mathbb{Z}_{\geq 0}$.
Further, observe that an object $M \in \mathbf{Ind}(\mathcal{D})$ is compact if and only if it lies in the essential image of the embedding $\mathcal{D} \hookrightarrow \mathbf{Ind}(\mathcal{D})$.

\begin{definition}\label{qfFunctor}
  Let $\mathcal{C,D}$ be locally finite abelian categories.
  A functor $F\from \mathbf{Ind}(\mathcal{C}) \to \mathbf{Ind}(\mathcal{D})$ is called \emph{quasi-finite} if
  for any finitely cogenerated injective object $I$ in $\mathbf{Ind}(\mathcal{C})$ and compact object $M$ of $\mathcal{D}$,
  the \(\Bbbk\)-vector space $\mathbf{Ind}(\mathcal{D})(M, F(I))$ is finite-dimensional.
\end{definition}

Using the equivalence $\mathcal{C} \simeq \tetramodfd[C]{}$ of Lemma~\ref{lem:campus-bar}, the following result is a direct consequence of~\cite[Proposition~1.3]{Tak}.

\begin{proposition}\label{locallyfiniteadjoints}
  Let \(G\from \mathcal{C} \to \mathcal{D}\) be a left exact functor of locally finite abelian categories.
  The functor $\mathbf{Ind}(G)$ has a left adjoint if and only if it is quasi-finite.
\end{proposition}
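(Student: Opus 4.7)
The plan is to reduce the statement to Takeuchi's theorem via Lemma~\ref{lem:campus-bar}, which translates the problem to the categories of comodules over coalgebras. First I would fix coalgebras \(C\) and \(D\) together with equivalences \(\mathcal{C} \simeq \tetramodfd[C]\) and \(\mathcal{D} \simeq \tetramodfd[D]\); applying the pseudofunctor \(\mathbf{Ind}(-)\) and using that \(\mathbf{Ind}(\tetramodfd[C]) \simeq \rComod{C}\) transports \(\mathbf{Ind}(G)\) to a left exact, filtered-colimit preserving functor \(F\from \rComod{C} \to \rComod{D}\). The equivalence preserves the universal property of \(\mathbf{Ind}\), so \(\mathbf{Ind}(G)\) has a left adjoint if and only if the corresponding \(F\) does.

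Next, I would check that Definition~\ref{qfFunctor} matches Takeuchi's notion of quasi-finiteness for \(F\). Two dictionary entries are needed: compact objects of \(\rComod{D}\) are precisely the finite-dimensional \(D\)-comodules (i.e.\ the essential image of \(\tetramodfd[D] \hookrightarrow \rComod{D}\)); and a \(C\)-comodule is a finitely cogenerated injective exactly when it is a direct summand of some \(C^{\oplus m}\), as noted in the remark following Definition~\ref{qfFunctor}. Takeuchi's quasi-finiteness condition asks that \(\mathrm{Hom}_{D}(M, F(N))\) be finite-dimensional for every finite-dimensional \(M\) and every \(N\) lying in a set of injective cogenerators (e.g.\ the summands of \(C^{\oplus m}\)); by additivity of Hom this is equivalent to requiring finite-dimensionality of \(\rComod{D}(M, F(I))\) for all compact \(M\) in \(\cat{D}\) and all finitely cogenerated injectives \(I\), which is exactly Definition~\ref{qfFunctor}.

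With these identifications, \cite[Proposition~1.3]{Tak} gives directly that \(F\) admits a left adjoint if and only if \(F\) is quasi-finite. Transporting back across the equivalences then proves the proposition. The main obstacle---really the only nontrivial point---is the translation of quasi-finiteness: one must verify carefully that testing the finite-dimensionality condition against all finitely cogenerated injectives is equivalent to testing it only against the injective cogenerators \(C^{\oplus m}\) that appear in Takeuchi's formulation, which follows because both classes generate the same replete additive closure under direct summands and because \(\mathrm{Hom}_{D}(M, F(-))\) is additive and left exact on injectives when \(G\) is left exact.
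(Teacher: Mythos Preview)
Your proposal is correct and follows essentially the same approach as the paper: the paper's proof is a single sentence noting that, via the equivalence $\mathcal{C} \simeq \tetramodfd[C]$ of Lemma~\ref{lem:campus-bar}, the result is a direct consequence of~\cite[Proposition~1.3]{Tak}. You have simply fleshed out the translation step (matching Definition~\ref{qfFunctor} with Takeuchi's quasi-finiteness) that the paper leaves implicit.
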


A stronger result holds for finite abelian categories, as a direct consequence of the Eilenberg--Watts theorem for categories of bimodules, see e.g.~\cite[Lemma~2.1]{FSS}.

\begin{proposition}\label{finiteadjoints}
  A functor \(F\from \mathcal{A} \to \mathcal{B}\) between finite abelian categories
  is right exact if and only if it has a right adjoint,
  and left exact if and only if it has a left adjoint.
\end{proposition}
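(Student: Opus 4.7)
The ``only if'' direction is the purely formal content of adjoint functor theory: any left adjoint preserves all colimits and is therefore in particular right exact, and dually any right adjoint preserves all limits and is therefore in particular left exact. This half requires neither finiteness of the categories nor of the $\mathrm{Hom}$-spaces.

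For the converse, the plan is to reduce to the classical Eilenberg--Watts theorem via the equivalence $\mathcal{A} \simeq \lmod{A}$ and $\mathcal{B} \simeq \lmod{B}$ for finite-dimensional algebras $A$ and $B$, stated earlier in the section. Given a right exact $F\from \lmod{A} \to \lmod{B}$, the object $M \defeq F(A)$ carries a natural $(B, A)$-bimodule structure: the left $B$-action comes from $F(A) \in \lmod{B}$, while the right $A$-action is induced from the right $A$-action on $A$ itself via the functoriality of $F$, using that right multiplication by $a \in A$ is a morphism of left $A$-modules. Finite-dimensionality of $A$ and hom-finiteness of $\lmod{B}$ force $M$ to be finite-dimensional. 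Proposition~\ref{prop:right exact-is-enough-on-projectives} then identifies $F$ with the restriction of $M \otimes_{A} \blank$ to $\proj{\lmod{A}}$, since both functors agree on the progenerator $A$; as both are right exact, this extends to a natural isomorphism $F \cong M \otimes_{A} \blank$ on all of $\lmod{A}$. The standard tensor-hom adjunction then provides the required right adjoint $\mathrm{Hom}_{B}(M, \blank)$, whose values land in $\lmod{A}$ precisely because $M$ is finite-dimensional as a $B$-module.

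For the left exact case, the cleanest route is dualization: the equivalence $\lmod{A}^{\op} \simeq \lmod{A^{\op}}$ afforded by $\Bbbk$-linear duality $\mathrm{Hom}_{\Bbbk}(\blank, \Bbbk)$ shows that the opposite of a finite abelian category is again finite abelian. A left exact $F\from \mathcal{A} \to \mathcal{B}$ thus induces a right exact $F^{\op}\from \mathcal{A}^{\op} \to \mathcal{B}^{\op}$ between finite abelian categories, which admits a right adjoint by the case just treated, equivalently yielding a left adjoint to $F$.

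I expect no genuine obstacle; the main technical point is the identification $F \cong F(A) \otimes_{A} \blank$, which is standard but requires the explicit use of Proposition~\ref{prop:right exact-is-enough-on-projectives} to extend the isomorphism from the progenerator to all of $\lmod{A}$, as well as care in checking that the bimodule $F(A)$ is finite-dimensional, so that $\mathrm{Hom}_{B}(F(A), \blank)$ really lands in the subcategory of finite-dimensional modules rather than merely in all modules.
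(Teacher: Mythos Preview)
Your approach is correct and is precisely what the paper intends: the paper does not give a proof but simply states the result as a direct consequence of the Eilenberg--Watts theorem for bimodules. One small slip: you have the labels ``if'' and ``only if'' reversed---in ``right exact if and only if it has a right adjoint'', the formal direction (a left adjoint is right exact) is the ``if'' direction, while the substantive Eilenberg--Watts direction is ``only if''.
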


Recall that an object \(A\) of a category \(\cat{A}\) that admits \(\Phi\)-colimits is said to be \emph{\(\Phi\)-small} if the functor \(\cat{A}(A, \blank)\) preserves them.
We need the following fact about these objects.

\begin{lemma}\label{leftadjointssmallobjects}
  Let $G\from \mathcal{C} \to \mathcal{D}$ be a $\Phi$-cocontinuous functor,
  and assume $G$ has a left adjoint $F$.
  Then $F$ sends $\Phi$-small objects to $\Phi$-small objects.
\end{lemma}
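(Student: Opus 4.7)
The plan is to unwind the definition of $\Phi$-smallness through the adjunction $F \dashv G$ and the hypothesis that $G$ preserves $\Phi$-colimits. Let $A$ be a $\Phi$-small object of $\mathcal{C}$, and let $D \from J \to \mathcal{D}$ be a diagram with $J \in \Phi$. We need to verify that the canonical comparison map
\[
  \colim_{j \in J} \mathcal{D}(F(A), D(j)) \to \mathcal{D}(F(A), \colim_{j \in J} D(j))
\]
is an isomorphism.

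The argument proceeds by a straightforward chain of natural isomorphisms. First I would apply the adjunction $F \dashv G$ to rewrite the right-hand side as $\mathcal{C}(A, G(\colim_{j} D(j)))$. Next, by the $\Phi$-cocontinuity of $G$, this is naturally isomorphic to $\mathcal{C}(A, \colim_{j} G(D(j)))$. Then, because $A$ is $\Phi$-small, the functor $\mathcal{C}(A, \blank)$ preserves this colimit, giving $\colim_{j} \mathcal{C}(A, G(D(j)))$. Finally, applying the adjunction once more in the other direction converts each term back to $\mathcal{D}(F(A), D(j))$. Chasing through the naturality of these isomorphisms confirms that their composite is the canonical comparison map, so $\mathcal{D}(F(A), \blank)$ preserves $\colim D$, witnessing that $F(A)$ is $\Phi$-small.

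There is no genuine obstacle here: the proof is purely formal, combining the hom-adjunction with preservation of $\Phi$-colimits by $G$ and by $\mathcal{C}(A, \blank)$. The only thing to be mildly careful about is checking that one is chasing the \emph{canonical} comparison map, which is automatic since every step in the chain is natural in the diagram $D$.
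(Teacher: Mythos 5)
Your argument is correct and is essentially the paper's own proof: both reduce the claim to the natural isomorphism $\mathcal{D}(F(A),\blank) \cong \mathcal{C}(A, G(\blank))$, which exhibits the represented functor as a composite of $G$ with a $\Phi$-cocontinuous hom-functor. The only (cosmetic) slip is that with the lemma's conventions $F$ goes from $\mathcal{D}$ to $\mathcal{C}$, so the $\Phi$-small object should be taken in $\mathcal{D}$; your proof silently swaps the roles of the two categories but is otherwise internally consistent.
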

\begin{proof}
  Let $X \in \mathcal{D}$ be a $\Phi$-small object.
  Then $\mathcal{C}(F(X),\blank) \cong \mathcal{D}(X,G(\blank)) = \mathcal{D}(X,\blank) \circ G(\blank)$.
  The functor $\mathcal{C}(F(X),\blank)$ is thus naturally isomorphic to a composition of two $\Phi$-cocontinuous functors,
  and hence it itself is a $\Phi$-cocontinuous functor.
\end{proof}

\subsection{Algebra and module objects}\label{sec:monoids}

Let \(\mathcal{C}\) be a monoidal category.
An \emph{algebra object in \(\mathcal{C}\)} comprises
an object \(A \in \mathcal{C}\)
together with a \emph{multiplication} \(\mu\from A \otimes A \to A\)
and a \emph{unit} \(\eta\from \mathbb{1} \to A\),
satisfying associativity and unitality axioms analogous to those for a \(\Bbbk\)-algebra.
In fact, a \(\Bbbk\)-algebra is the same as an algebra object in \(\kVect\).

A \emph{coalgebra object} in \(\mathcal{C}\) is
an object \(C \in \mathcal{C}\)
with a comultiplication \(\Delta\from C \to C \otimes C\)
and a counit morphism \(\varepsilon\from C \to \mathbb{1}\),
satisfying coassociativity and counitality axioms,
such that \(C\) becomes an algebra object in \(\mathcal{C}^{\op}\).

\begin{definition}\label{ModuleInModule}
  Let \((\mathcal{M}, \lact)\) be a left \(\mathcal{C}\)-module category
  and suppose that \((A, \mu, \eta)\) is an algebra object in \(\cat{C}\).
  A \emph{left \(A\)-module in \(\mathcal{M}\)} is
  an object \(M \in \mathcal{M}\)
  together with an \emph{action} morphism \(\alpha\from A \lact M \to M\)
  such that the following diagrams commute
  \[
    \begin{mytikzcd}[ampersand replacement=\&]
      {(A \otimes A) \triangleright M} \& {A \triangleright (A \triangleright M)} \& {A \triangleright M} \& {\mathbb{1} \lact M} \& M \\
      {A \triangleright M} \&\& M \& {A \lact M}
      \arrow["{{{\mathcal{M}_{\mathrm{a}}}}}", from=1-1, to=1-2]
      \arrow["{{\mu \,\triangleright\, M}}"', from=1-1, to=2-1]
      \arrow["{{A \,\triangleright\, \alpha}}", from=1-2, to=1-3]
      \arrow["{{\alpha}}", from=1-3, to=2-3]
      \arrow["{{\cat{M}_{\mathrm{u}}}}", from=1-4, to=1-5]
      \arrow["{\eta \,\lact\, M}"', from=1-4, to=2-4]
      \arrow["{{\alpha}}"', from=2-1, to=2-3]
      \arrow["{\alpha}"', from=2-4, to=1-5]
    \end{mytikzcd}
  \]

  A \emph{morphism of modules} is an arrow \(f\from M \to N\) in \(\mathcal{M}\) commuting with the respective actions:
  \[
    \begin{mytikzcd}[ampersand replacement=\&]
      {A \triangleright M} \& M \\
      {A \triangleright N} \& N
      \arrow["{\alpha_M}", from=1-1, to=1-2]
      \arrow["{A \,\lact\, f}"', from=1-1, to=2-1]
      \arrow["f", from=1-2, to=2-2]
      \arrow["{\alpha_N}"', from=2-1, to=2-2]
    \end{mytikzcd}
  \]
\end{definition}

\begin{example}\label{MoreModules}
  Clearly, left \(A\)-modules in \(\mathcal{M}\) and their morphisms form a category,
  which we shall denote by \(\modma\).
  The obvious forgetful functor
  \(\forgetleftmod{A}{\mathcal{M}}\from \leftmod{A}{\mathcal{M}} \to \mathcal{M}\)
  admits a left adjoint
  \begin{align*}
    \freeleftmod{A}{\mathcal{M}}\from \mathcal{M} &\to \leftmod{A}{\mathcal{M}} \\[-0.25cm]
    M &\mapsto \big(A \triangleright M,\ A \lact (A \lact M) \xrightarrow{\mathcal{M}_{\mathrm{a}}^{-1}} (A \otimes A) \lact M \xrightarrow{\mu \,\triangleright\, M} A \lact M\big)
  \end{align*}
  The unit of this adjunction is given by \(M \xrightarrow{\mathcal{M}_{\mathrm{u}}} \mathbb{1} \triangleright M \xrightarrow{\eta \,\triangleright\, M} A \triangleright M\),
  and the counit is \(A \triangleright M \xrightarrow{\alpha_M} M\).
\end{example}

Given a coalgebra object \(C\) in \(\mathcal{C}\),
a \emph{left \(C\)-comodule in \(\mathcal{M}\)} is an object \(N \in \mathcal{C}\) satisfying axioms dual to those for a module in \(\mathcal{M}\),
so that a left \(C\)-comodule in \(\mathcal{M}\) is the same as a left \(C\)-module in the \(\mathcal{C}^{\op}\)-module category \(\mathcal{M}^{\op}\).
Comodule morphisms are defined similarly.
We denote the category of \(C\)-comodules in \(\mathcal{M}\) by \(\comodmc\).
There is an adjunction \(\forgetleftcomod{C}{\mathcal{M}} \dashv \freeleftcomod{C}{\mathcal{M}}\) analogous to that for modules.

Similarly, one can define right modules and comodules in a right module category,
and bimodules and bicomodules in a bimodule category.

\begin{example}\label{ex:lifting-functors-to-mod}
  Let \(A\) be an algebra object in \(\mathcal{C}\) and let \(\mathcal{M}\) and \(\cat{N}\) be left \(\mathcal{C}\)-module categories.
  A lax \(\mathcal{C}\)-module functor \(G\from \mathcal{N} \to \mathcal{M}\) induces a functor
  \[
    \leftmod{A}{G}\from \leftmod{A}{\mathcal{N}} \to \leftmod{A}{\mathcal{M}}
  \]
  by sending a left \(A\)-module \(N\) in \(\mathcal{N}\) to the left \(A\)-module \(G(N)\),
  with action \(\alpha_{G(N)}\) given by
  \[
    A \lact_{\mathcal{M}} G(N) \xrightarrow{G_{\mathrm{a};A,N}} G(A \lact_{\mathcal{N}} N) \xrightarrow{G \alpha_{N}} G(N).
  \]
  This functor satisfies the following relation:
  \[
    \begin{mytikzcd}[ampersand replacement=\&]
      {\leftmod{A}{\mathcal{N}}} \&\& {\leftmod{A}{\mathcal{M}}} \\
      {\mathcal{N}} \&\& {\mathcal{M}}
      \arrow["{\leftmod{A}{G}}", from=1-1, to=1-3]
      \arrow["{\forgetleftmod{A}{\mathcal{N}}}"', from=1-1, to=2-1]
      \arrow["{\forgetleftmod{A}{\mathcal{M}}}", from=1-3, to=2-3]
      \arrow["G"', from=2-1, to=2-3]
    \end{mytikzcd}
  \]

  Moreover, mate correspondence yields a canonical natural transformation
  \[
    \begin{mytikzcd}[ampersand replacement=\&]
      {\leftmod{A}{\mathcal{N}}} \&\& {\leftmod{A}{\mathcal{M}}} \\
      {\mathcal{N}} \&\& {\mathcal{M}}
      \arrow["{\leftmod{A}{G}}", from=1-1, to=1-3]
      \arrow["{\freeleftmod{A}{\mathcal{N}}}", from=2-1, to=1-1]
      \arrow["G"', from=2-1, to=2-3]
      \arrow[shorten <=37pt, shorten >=12pt, Rightarrow, from=2-3, to=1-1]
      \arrow["{\freeleftmod{A}{\mathcal{M}}}"', from=2-3, to=1-3]
    \end{mytikzcd}
  \]
  that is invertible if \(G\) is a strong \(\mathcal{C}\)-module functor.
\end{example}

Similarly to Example~\ref{ex:lifting-functors-to-mod},
given a coalgebra object \(C \in \mathcal{C}\),
an oplax module functor \(F\from \mathcal{C} \to \mathcal{D}\) induces a functor \(\leftcomod{C}{F}\from \leftcomod{C}{\mathcal{M}} \to \leftcomod{C}{\mathcal{N}}\)
that satisfies
\[
  \forgetleftcomod{C}{\mathcal{N}} \circ \leftmod{C}{F}
  \ =\ %
  F \circ \forgetleftcomod{C}{\mathcal{M}},
\]
and whose mate natural transformation
\(\leftcomod{C}{F} \circ \freeleftcomod{C}{M} \nt \freeleftcomod{C}{\mathcal{N}} \circ F\)
is invertible if \(F\) is a strong \(\mathcal{C}\)-module functor.

\begin{example}\label{ex:bicategory-of-bimodules}
  Let \(\cat{C}\) be a cocomplete symmetric monoidal category,
  such that its is right exact in both variables.
  Then one may define the bicategory \(\mathbf{Bimod}(\cat{C})\) of bimodules in \(\cat{C}\) as follows:
  \begin{itemize}
    \item Objects are algebra objects in \(\cat{C}\).
    \item For \(A\) and \(B\) in \(\cat{C}\), the hom-category \(\mathbf{Bimod}(\cat{C})(A, B)\)
    is given by \(B\)-\(A\)-bimodules in \(\cat{C}\) and their homomorphisms.
    Horizontal composition is given by the balanced tensor product of bimodules.
  \end{itemize}

  In fact, this category becomes a monoidal bicategory with respect to the underlying tensor product of\, \(\cat{C}\).
  Tensoring bimodules \({}_{B}M_{A}\) and \({}_{D}N_{C}\) is given by
  the \((B \otimes D)\)-\((A \otimes C)\)-bimodule
  \({}_{B}M_{A} \otimes {}_{D}N_{C}\),
  and this extends to bimodule homomorphisms in the obvious way.

  Analogously,
  there is a bicategory \(\mathbf{Bicomod}(\cat{C})\) of\, \(\cat{C}\)-bicomodules
  when \(\cat{C}\) is a complete symmetric monoidal category,
  and the tensor product is left exact in both variables.
\end{example}

\begin{example}\label{finitarythings}
  Let \(\mathcal{A}\) and \(\cat{B}\) be additive categories.
  Recall that the existence of finite biproducts endows \(\mathcal{A}\) and \(\cat{B}\) with the structure of \(\kvect\)-module categories.
  Any functor \(F\from \mathcal{A} \to \mathcal{B}\) is a \(\kvect\)-module functor with respect to these \(\kvect\)-actions.

  If \(\mathcal{B}\) admits filtered colimits,
  then \(F\) extends essentially uniquely to a finitary \(\kVect\)-module functor \(\mathbf{Ind}(\mathcal{A}) \to \mathcal{B}\),
  and any finitary \(\kVect\)-module functor is of this form.
\end{example}
\begin{proof}
  Let $\mathsf{add}(\mathcal{A})$ denote the cocompletion of $\mathcal{A}$ under finite biproducts---the so-called \emph{additive closure} of $\mathcal{A}$.
  Since finite biproducts are absolute colimits, the left adjoint $\mathsf{add}(\mathcal{A}) \to \mathcal{A}$ of the canonical inclusion $\mathcal{A} \hookrightarrow \mathsf{add}(\mathcal{A})$,
  coming from the universal property described in Equation~\eqref{cocompletingmodules},
  is an equivalence of $\kvect$-module categories.
  Hence, so is the inclusion $\mathcal{A} \hookrightarrow \mathsf{add}(\mathcal{A})$.
  We find the equivalences
  \[
    \mathbf{Cat}_{\Bbbk}(\mathcal{A},\blank) \simeq \kvect\text{\rm-Mod}(\mathsf{add}(\mathcal{A}),\blank) \simeq \lMod{\kvect}(\mathcal{A},\blank),
  \]
  establishing the first claim.
  The second claim follows similarly by using the equivalence
  \[
    \lMod{\kvect}(\mathcal{A},\mathcal{B}) \simeq \lMod{\mathbf{Ind}(\kvect)}_{\mathrm{filt}}(\mathbf{Ind}(\mathcal{A}), \mathcal{B}).
  \]
\end{proof}

Sometimes, the formalism of module categories, even those over the seemingly trivial monoidal categories $\kvect$ and $\kVect$, provides additional clarity to classical statements about algebraic and coalgebraic $\Bbbk$-linear structures. We give a brief proof of the result below as an example of this.

\begin{proposition}[{\cite[Proposition~2.1]{Tak}}]\label{TakeuchiEilenbergWatts}
  Let $\mathcal{D}$ be an abelian category admitting filtered colimits, and let $C$ be a coalgebra over $\Bbbk$. There is an equivalence
  \[
    \begin{aligned}
      \mathbf{Lexf}(\tetramod[C]{}, \mathcal{D}) &\xiso \rComod{(C,\mathcal{D})} \\
      F &\mapsto F(C) \\
      N \cotens \blank &\longmapsfrom N,
    \end{aligned}
  \]
  where the left-hand side is the category of finitary, left exact functors from $\rComod{C}$,
  and the right-hand side---following the notation of Example~\ref{MoreModules}---denotes the category of right $C$-comodules in $\mathcal{D}$.
  Here, $\mathcal{D}$ is endowed with the $\kVect$-structure described in Proposition~\ref{FinitaryAction}.

  In particular, for a coalgebra $D$ over $\Bbbk$, we have
  \[
    \mathbf{Lexf}(\tetramod[C]{}, \lComod{D}) \simeq \biComod{D}{C}.
  \]
\end{proposition}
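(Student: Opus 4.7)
The plan is to construct mutually inverse functors
\[
  \Theta \from \mathbf{Lexf}(\tetramod[C]{}, \mathcal{D}) \rightleftarrows \rComod{(C,\mathcal{D})} \cocolon \Psi,
\]
with $\Theta(F) = F(C)$ and $\Psi(N) = N \cotens \blank$. For $\Theta$, a finitary $\Bbbk$-linear $F$ is canonically a $\kVect$-module functor by Example~\ref{finitarythings}, so that $F(C \otimes V) \cong F(C) \otimes V$ naturally in $V \in \kVect$ when $C \otimes V$ is interpreted as the cofree $C$-comodule on $V$ (coaction $\Delta \otimes V$). The comultiplication $\Delta\from C \to C \otimes C$ is a morphism from $C$ to this cofree comodule by coassociativity, so applying $F$ together with the $\kVect$-module isomorphism yields a map $\rho_{F(C)} \from F(C) \to F(C) \otimes C$ whose comodule axioms follow from those of $C$ combined with the coherence of $F$.

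For $\Psi$, set $\Psi(N)(M)$ to be the equalizer in $\mathcal{D}$ of $\rho_N \otimes M$ and $N \otimes \rho_M$ from $N \otimes M$ to $N \otimes C \otimes M$. Left exactness in $M$ is immediate, as the equalizer is a finite limit of functors exact in $M$, and finitariness follows under the implicit assumption that filtered colimits in $\mathcal{D}$ commute with finite limits. For $\Theta \circ \Psi \cong \mathrm{id}$: the map $\rho_N \from N \to N \otimes C$ lands in $\Psi(N)(C) = N \cotens C$ by coassociativity, while the restriction of $N \otimes \varepsilon$ gives its inverse by counitality; tracing through the construction, the coaction $\Psi(N)(\Delta) \from N \to N \otimes C$ recovers $\rho_N$, so $\Theta(\Psi(N)) \cong N$ in $\rComod{(C, \mathcal{D})}$.

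The main step is $\Psi \circ \Theta \cong \mathrm{id}$. The key observation is that every $C$-comodule $M$ is canonically the equalizer, in $\tetramod[C]{}$, of the comodule morphisms $\Delta \otimes M$ and $C \otimes \rho_M$ from $C \otimes M$ to $C \otimes C \otimes M$ between cofree comodules: both are comodule morphisms by coassociativity, and they agree upon precomposition with $\rho_M$, again by coassociativity. Applying the left exact $F$ preserves this equalizer, and the $\kVect$-module isomorphism $F(C \otimes V) \cong F(C) \otimes V$ translates the two parallel maps into $\rho_{F(C)} \otimes M$ and $F(C) \otimes \rho_M$, thereby exhibiting $F(M) \cong F(C) \cotens M = \Psi(\Theta(F))(M)$. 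The main technical obstacle is verifying that this identification is natural in $M$ and compatible with morphisms $F \to F'$; this amounts to chasing the naturality of the $\kVect$-module isomorphism together with the very definition of $\rho_{F(C)}$. The ``in particular'' statement then follows at once by taking $\mathcal{D} = \lComod{D}$ and noting that a right $C$-comodule in $\lComod{D}$ is precisely a $D$-$C$-bicomodule.
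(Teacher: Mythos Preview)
Your proposal is correct and follows essentially the same approach as the paper: both use the bar resolution presenting each $M \in \tetramod[C]{}$ as an equalizer of cofree comodules, then apply left exactness of $F$ together with its $\kVect$-module structure (from Example~\ref{finitarythings}) to identify $F(M)$ with $F(C) \cotens M$. Your treatment is slightly more detailed in that you also spell out $\Theta \circ \Psi \cong \mathrm{id}$, whereas the paper only writes out the direction $F \cong F(C) \cotens \blank$.
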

\begin{proof}
  Since $\tetramod[C]{} \simeq \mathbf{Ind}(\tetramodfd[C]{})$,
  the functor $F$ can be seen as induced to filtered colimits from its restriction to $\tetramodfd[C]{}$,
  and as such is a $\kVect$-module functor, following Example~\ref{finitarythings}.
  Observe that $C$, being a bicomodule over itself, is a right $C$-comodule in $\tetramod[C]{}$.
  Thus, $F(C)$ is a right $C$-comodule in $\mathcal{D}$.
  To see that $F \cong F(C) \cotens \blank$, let $N \in \tetramod[C]{}$ and recall that the bar construction provides a functorial injective resolution:
  \tikzexternaldisable%
  $N \cong \on{eq}(\!\!
  \begin{tikzcd}[ampersand replacement=\&,sep=small]
    {C \otimes N} \& { C \otimes C \otimes N}
    \arrow["{\Delta_{C} \otimes N}", shift left, from=1-1, to=1-2]
    \arrow["{C \otimes \Delta_{N}}"', shift right, from=1-1, to=1-2]
  \end{tikzcd}
  \!\!)$.
  Thus,
  \[
    \begin{aligned}
      F(N)
      &\simeq F\big(\on{eq}(\!\!
        \begin{tikzcd}[ampersand replacement=\&,sep=small]
          {C \otimes N} \& { C \otimes C \otimes N}
          \arrow["{\Delta_{C} \otimes N}", shift left, from=1-1, to=1-2]
          \arrow["{C \otimes \Delta_{N}}"', shift right, from=1-1, to=1-2]
        \end{tikzcd}
        \!\!)\big)
        \simeq \on{eq}\big(\!\!
        \begin{tikzcd}[ampersand replacement=\&,sep=small]
          {F(C \otimes N)} \& {F(C \otimes C \otimes N)}
          \arrow["{F(\Delta_{C} \otimes N)}", shift left, from=1-1, to=1-2]
          \arrow["{F(C \otimes \Delta_{N})}"', shift right, from=1-1, to=1-2]
        \end{tikzcd}
        \!\!\big) \\
      &\simeq \on{eq}\big(\!\!
        \begin{tikzcd}[ampersand replacement=\&,sep=small]
          {F(C) \otimes N} \& {F(C) \otimes C \otimes N}
          \arrow["{F(\Delta_{C}) \otimes N}", shift left, from=1-1, to=1-2]
          \arrow["{F(C) \otimes \Delta_{N}}"', shift right, from=1-1, to=1-2]
        \end{tikzcd}
        \!\!\big)
        = F(C) \cotens N,
    \end{aligned}
  \]
  where the first isomorphism is using the bar construction,
  the second follows from left exactness of $F$,
  the third uses the fact that $F$ is a right $\kVect$-module functor,
  and the fourth follows by observing that $F(\Delta_{C}) = \Delta_{F(C)}$.
  \tikzexternalenable%
\end{proof}

\subsection{Tensor categories and ring categories}

In the sequel, some of our results will phrased in the language of \emph{tensor categories}.
We briefly recall the most important definitions here,
but refer the reader to~\cite[Chapter~4]{EGNO} for a comprehensive account.
All categories and functors in this section are again assumed to be \(\Bbbk\)-linear.

\begin{definition}
  A \emph{tensor category} is a locally finite abelian rigid monoidal category $\mathcal{C}$, such that $\on{End}_{\mathcal{C}}(\mathbb{1}) \cong \Bbbk$.

  A \emph{finite tensor category} is a finite abelian rigid monoidal category $\mathcal{C}$, such that $\on{End}_{\mathcal{C}}(\mathbb{1}) \cong \Bbbk$.

  A \emph{ring category} is a locally finite abelian separately exact monoidal category $\mathcal{C}$, such that $\on{End}_{\mathcal{C}}(\mathbb{1}) \cong \Bbbk$.

  A \emph{finite ring category} is a finite abelian separately exact monoidal category $\mathcal{C}$, such that $\on{End}_{\mathcal{C}}(\mathbb{1}) \cong \Bbbk$.
\end{definition}

We remark that despite apparent similarity in terminology,
the notion of a ring category is not related to that of a \emph{rig category} (also known as a bimonoidal category),
such as those considered in~\cite{JY}.

\begin{definition}
  Let $\mathcal{C}$ be a ring category. A \emph{fiber functor} for $\mathcal{C}$ is a faithful and exact monoidal functor $U\from \mathcal{C} \to \kvect$.
\end{definition}

The next theorem can be seen as a version of \emph{Tannaka--Krein reconstruction} for ring categories.

\begin{theorem}[{\cite[Theorem~5.4.1]{EGNO}}]
  Let $\mathcal{C}$ be a ring category, and assume it admits a fiber functor $U\from \mathcal{C} \to  \kvect$.
  Then there exists a bialgebra $B$ such that there is a monoidal equivalence $K_B\from \mathcal{C} \to \tetramodfd[B]{}$
  and a monoidal natural isomorphism $U_{B} \circ K_{B} \cong U$, where $U_{B}\from \tetramodfd[B]{} \to \kvect$ is the forgetful functor.
  Further, $B$ is unique up to isomorphism, Hopf if and only if\, $\mathcal{C}$ is a tensor category, and finite-dimensional if and only if\, $\mathcal{C}$ is a finite ring category.
\end{theorem}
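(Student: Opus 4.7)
The plan is a standard Tannakian reconstruction, proceeding via Beck's comonadicity theorem together with doctrinal adjunction, and then extracting the Hopf and finite-dimensionality refinements at the end.

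First, I would set up the fundamental equivalence. Since $\mathcal{C}$ is locally finite abelian and $U$ is exact, Proposition~\ref{saftrex} yields a cocontinuous extension $\widetilde{U}\colon \mathbf{Ind}(\mathcal{C}) \to \kVect$ admitting a right adjoint $R$. Exactness of $U$ translates to left exactness of $\widetilde{U}$, and faithfulness makes $\widetilde{U}$ conservative, so the comonadic form of Beck's theorem yields an equivalence $\mathbf{Ind}(\mathcal{C}) \simeq \kVect^{T}$ for $T \defeq \widetilde{U}R$. Setting $B \defeq T(\Bbbk)$, one checks, using that $\widetilde{U}$ preserves compact objects and hence $R$ preserves filtered colimits by Proposition~\ref{saftlex}, that $T$ is cocontinuous and therefore $T \cong B \otimes \blank$ as $\kVect$-linear functors. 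The comonad structure on $T$ transports to a $\Bbbk$-coalgebra structure on $B$, yielding $\mathbf{Ind}(\mathcal{C}) \simeq \rComod{B}$, and restricting to compact objects gives $K_{B}\colon \mathcal{C} \xrightarrow{\sim} \tetramodfd[B]$, with the isomorphism $U_{B} \circ K_{B} \cong U$ built into the construction.

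Second, I would upgrade this to a monoidal equivalence. The strong monoidal structure of $U$ lifts to $\widetilde{U}$, and, by Proposition~\ref{prop:doctrinaladjunction}, transfers to a lax monoidal structure on $R$. Evaluating the lax unit and multiplication on $\Bbbk$ and applying $\widetilde U$ produces a unit $\eta\colon \Bbbk \to B$ and a multiplication $\mu\colon B \otimes B \to B$; the doctrinal compatibilities encode their associativity and unitality together with the compatibility with the comultiplication $\Delta$ and counit $\varepsilon$ already obtained from the comonad, so $B$ is a bialgebra and $K_{B}$ is manifestly monoidal. For uniqueness, any bialgebra $B'$ with the same property yields a monoidal equivalence $\tetramodfd[B] \simeq \tetramodfd[B']$ intertwining the forgetful functors; extending to $\mathbf{Ind}$ and invoking the bicomodule form of Proposition~\ref{TakeuchiEilenbergWatts} identifies this equivalence with a $(B', B)$-bicomodule that is simultaneously a bialgebra intertwiner, forcing $B \cong B'$.

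For the last two refinements: if $\mathcal{C}$ is rigid, the left-dualization functor $\ld{(\blank)}$ on $\mathcal{C}$ transports across $K_{B}$ to a monoidal involution on $\tetramodfd[B]$ whose evaluation on the regular $B$-bicomodule $B$ produces an endomorphism that, by the image under $U$ of the snake equations, satisfies the antipode axioms; conversely, if $B$ is Hopf, the standard formula supplies duals in $\tetramodfd[B]$. For the finite-dimensional case: if $\mathcal{C}$ is a finite ring category, it has enough projectives, which translates through $K_{B}$ to a projective generator in $\tetramodfd[B]$, equivalent to $\dim B < \infty$ by classical coalgebra theory, while conversely $\dim B < \infty$ implies $\tetramodfd[B] \simeq \lmod{B^{\ast}}$ is finite abelian. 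The principal obstacle is the verification of the bialgebra axioms from doctrinal adjunction and, more subtly, the rigid-to-antipode step: writing the antipode explicitly requires tracing how the duals of $\mathcal{C}$ interact with the coend $\int^{X} U(X)^{\ast} \otimes U(X) \cong B$ implicit in the construction, and checking that the resulting map inverts the identity under convolution.
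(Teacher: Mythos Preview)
The paper does not give its own proof of this theorem; it is stated in the preliminaries with a citation to \cite[Theorem~5.4.1]{EGNO} and used as background. There is therefore nothing in the paper to compare your argument against.

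That said, your sketch follows the standard Tannakian reconstruction argument and is essentially correct in outline. A few minor remarks: in the uniqueness step, rather than invoking bicomodules via Proposition~\ref{TakeuchiEilenbergWatts}, it is cleaner to observe that any such $B'$ must represent the same coend $\int^{X} U(X)^{\ast} \otimes U(X)$, which pins it down up to isomorphism directly. In the rigid-to-antipode direction, your description via ``transporting $\ld{(\blank)}$ across $K_B$'' is a bit vague; the usual argument runs the antipode as the unique map making the dual $V^{\ast}$ of a $B$-comodule $V$ again a $B$-comodule compatibly with evaluation and coevaluation, and then checks the convolution-inverse identity by evaluating on the regular comodule. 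But these are refinements rather than gaps, and for a result the paper is merely quoting, your level of detail is more than adequate.
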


\subsection{Monads and Beck's monadicity theorem}\label{sec:monads}

As monads and comonads will play an important role in our future investigations,
in this section we will recall elementary facts about their theory.
For a more extensive account, see for example~\cite{MacL}.

\begin{definition}\label{def:monad}
  Let \(\cat{C}\) be a category.
  A \emph{monad on \(\mathcal{C}\)} is an algebra object in the monoidal category \(\mathbf{Cat}(\mathcal{C},\mathcal{C})\).
  More explicitly, it comprises a functor \(T\from \mathcal{C} \to \mathcal{C}\)
  together with natural transformations \(\mu\from T^2 \nt T\) and \(\eta\from \Id_{\mathcal{C}} \nt T\),
  subject to associativity and unitality axioms.
\end{definition}

Clearly, \(\mathcal{C}\) is a left \(\mathbf{Cat}(\mathcal{C},\mathcal{C})\)-module category under evaluation.
Given a monad \(T\) on \(\mathcal{C}\),
the \(T\)-modules in \(\mathcal{C}\) are often referred to as \emph{algebras over \(T\)}.
However,
since we prefer the term algebra object to monoid,
this would be unnecessarily confusing in our setting.
We will thus refer to them simply as \emph{\(T\)-modules} in \(\mathcal{C}\), which is compatible with the more general, bicategorical terminology for \(T\)-modules in categories of the form \(\mathbf{Cat}(\mathcal{C},\mathcal{D})\).
Explicitly, a \(T\)-module in \(\mathcal{C}\) is an object \(X \in \mathcal{C}\) together with a morphism \(\nabla_X\from T(X) \to X\) satisfying the axioms for a module object.
We will denote the category \(\leftmod{T}{\mathcal{C}}\) by \(\mathbf{EM}(T)\) and refer to it as the \emph{Eilenberg--Moore category} of \(T\).
Further, \(\freeleftmod{T}{\mathcal{C}}\) and \(\forgetleftmod{T}{\mathcal{C}}\) will be denoted by \(L_{\mathbf{EM}(T)}\) and \(R_{\mathbf{EM}(T)}\), respectively.

To any monad \(T\) on \(\cat{C}\),
we may also associate its \emph{Kleisli category} \(\mathbf{Kl}(T)\).
On objects, it is given by \(\mathbf{Ob}(\mathbf{Kl}(T)) \defeq \mathbf{Ob}(\mathcal{C})\).
For \(X, Y \in \mathbf{Kl}(T)\), we have \(\mathbf{Kl}(T)(X,Y) \defeq \mathcal{C}(X,T(Y))\);
composition is defined as follows:
\begin{align*}
  \circ \from \mathbf{Kl}(T)(Y, Z) \times \mathbf{Kl}(T)(X, Y) &\to \mathbf{Kl}(T)(X, Z) \\
  (g, f) &\mapsto X \xrightarrow{\ f\ } T(Y) \xrightarrow{\;Tg\;} T^2(Z) \xrightarrow{\mu_Z} T(Z)
\end{align*}

\begin{proposition}\label{prop:Kleisli-adjunction}
  Let \(T\) be a monad on a category \(\cat{C}\).
  There is an adjunction
  \[
    \begin{mytikzcd}[ampersand replacement=\&]
      {\cat{C}} \& {\mathbf{Kl}(T)}
      \arrow[""{name=0, anchor=center, inner sep=0}, "{L_{\mathbf{Kl}(T)}}", shift left=2, from=1-1, to=1-2]
      \arrow[""{name=1, anchor=center, inner sep=0}, "{R_{\mathbf{Kl}(T)}}", shift left=2, from=1-2, to=1-1]
      \arrow["\dashv"{anchor=center, rotate=-90}, draw=none, from=0, to=1]
    \end{mytikzcd}
  \]
  where \(L_{\mathbf{Kl}(T)}\) is identity on objects and sends \(f \in \cat{C}(X,Y)\) to \(\eta_{Y} \circ f\),
  and \(R_{\mathbf{Kl}(T)}\) sends \(X\) to \(T(X)\) and \(f \in \cat{C}(X, Y)\) to \(\mu_{Y} \circ T(f)\).
\end{proposition}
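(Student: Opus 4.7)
The strategy is to separate the claim into three routine checks: functoriality of $L_{\mathbf{Kl}(T)}$, functoriality of $R_{\mathbf{Kl}(T)}$, and the hom-set bijection together with its naturality. No deep results are needed---every step is a direct application of the unit and associativity axioms of the monad $T$, together with the naturality of $\eta$ and $\mu$.

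First, I would verify functoriality. For $L_{\mathbf{Kl}(T)}$, preservation of identities amounts to noting that $L(\id_X) = \eta_X$ is the identity of $X$ in $\mathbf{Kl}(T)$ by construction. For composition, given $f\from X \to Y$ and $g\from Y \to Z$ in $\cat{C}$, expanding $L(g) \circ_{\mathbf{Kl}} L(f)$ via the Kleisli composition formula produces $\mu_Z \circ T(\eta_Z) \circ T(g) \circ \eta_Y \circ f$; the left unit law collapses $\mu_Z \circ T(\eta_Z)$ to $\id_{T(Z)}$, and naturality of $\eta$ then replaces $T(g) \circ \eta_Y$ by $\eta_Z \circ g$, recovering $L(g \circ f)$. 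For $R_{\mathbf{Kl}(T)}$, acting on $f\in\mathbf{Kl}(T)(X,Y) = \cat{C}(X,T(Y))$ by $\mu_Y \circ T(f)$, the identity check uses the right unit law $\mu_X \circ T(\eta_X) = \id_{T(X)}$, while composition follows from naturality of $\mu$ (to move $\mu_Y$ past $T(g)$) and the associativity axiom $\mu_Z \circ T(\mu_Z) = \mu_Z \circ \mu_{TZ}$.

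Next, I would produce the adjunction. The crucial observation is that the putative hom-set bijection is not merely a bijection but an \emph{equality}:
\[
  \mathbf{Kl}(T)(L_{\mathbf{Kl}(T)}(X), Y) \;=\; \mathbf{Kl}(T)(X,Y) \;=\; \cat{C}(X, T(Y)) \;=\; \cat{C}(X, R_{\mathbf{Kl}(T)}(Y)),
\]
since $L_{\mathbf{Kl}(T)}$ is the identity on objects and $R_{\mathbf{Kl}(T)}(Y) = T(Y)$. Thus the only remaining content is naturality in both arguments. Naturality in $X$, under precomposition by $h\from X' \to X$ in $\cat{C}$, reduces after expansion to $\mu_Y \circ T(f) \circ \eta_X \circ h$; one application of the naturality of $\eta$ and one of the unit law yields $f \circ h$, matching the action on the right-hand side. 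Naturality in $Y$ is in fact tautological: postcomposing $f\from X \to T(Y)$ by $k \in \mathbf{Kl}(T)(Y, Y')$ in the Kleisli category produces $\mu_{Y'} \circ T(k) \circ f$, which is by definition $R_{\mathbf{Kl}(T)}(k) \circ f$ in $\cat{C}$.

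There is no genuine obstacle here; the entire proof is bookkeeping dictated by the definitions of Kleisli composition and the monad axioms. Conceptually, the cleanest way to phrase it is to read off the unit and counit of the adjunction directly: $\eta$ serves as the unit (since $R_{\mathbf{Kl}(T)} \circ L_{\mathbf{Kl}(T)} = T$), and the counit $\varepsilon_Y \in \mathbf{Kl}(T)(L_{\mathbf{Kl}(T)} R_{\mathbf{Kl}(T)}(Y), Y) = \cat{C}(T(Y), T(Y))$ is the identity $\id_{T(Y)}$; the triangle identities then unfold into the same two applications of the unit law for $T$ used above. If desired, one can alternatively cite this as a consequence of the general fact that every monad arises from an adjunction, with the Kleisli adjunction being the initial such resolution---but the direct verification is short enough to prefer.
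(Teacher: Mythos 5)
Your argument is correct and is the standard verification; the paper states this proposition without proof (it is classical, cf.\ Mac Lane), so there is nothing to diverge from. The only cosmetic quibble is that your ``left'' and ``right'' unit laws both refer to the same axiom $\mu \circ T\eta = \mathrm{id}$ in two of the three places you invoke a unit law (the third, in the naturality-in-$X$ check, correctly uses $\mu \circ \eta_T = \mathrm{id}$); the equations themselves are all applied correctly.
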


\begin{example}\label{ex:comonads}
  Dualising the above considerations, a \emph{comonad} may be defined as a coalgebra object in the monoidal category \(\mathbf{Cat}(\cat{C},\cat{C})\).
  For a comonad \(S\) on \(\cat{C}\),
  an \emph{\(S\)-comodule} is an object \(C \in \mathcal{C}\) together with a morphism \(C \to S(C)\) satisfying the axioms for a comodule object.
  We will denote the category of \(S\)-comodules by \(\mathbf{EM}(S)\) and the co-Kleisli category of \(S\) by \(\mathbf{Kl}(S)\).
  They have analogously defined adjunctions
  \[
    \begin{mytikzcd}[ampersand replacement=\&]
      {\mathbf{Kl}(S)} \& {\mathcal{C}} \& {\text{and}} \& {\mathbf{EM}(S)} \& {\mathcal{C}}
      \arrow[""{name=0, anchor=center, inner sep=0}, "{{L_{\mathbf{Kl}(S)}}}", shift left=2, from=1-1, to=1-2]
      \arrow[""{name=1, anchor=center, inner sep=0}, "{{R_{\mathbf{Kl}(S)}}}", shift left=2, from=1-2, to=1-1]
      \arrow[""{name=2, anchor=center, inner sep=0}, "{L_{\mathbf{EM}(S)}}", shift left=2, from=1-4, to=1-5]
      \arrow[""{name=3, anchor=center, inner sep=0}, "{R_{\mathbf{EM}(S)}}", shift left=2, from=1-5, to=1-4]
      \arrow["\dashv"{anchor=center, rotate=-90}, draw=none, from=0, to=1]
      \arrow["\dashv"{anchor=center, rotate=-90}, draw=none, from=2, to=3]
    \end{mytikzcd}
  \]
\end{example}

\begin{example}\label{ex:dual-pairs}
  A dual pair \((\ld{X}, X, \varepsilon, \eta)\) in a monoidal category \(\mathcal{C}\)
  induces an algebra object \((X \otimes \ld{X},\ X \otimes \varepsilon \otimes \ld{X},\ \eta)\),
  as well as a coalgebra object \((\ld{X} \otimes X,\ \ld{X} \otimes \eta \otimes X,\ \varepsilon)\),
  in \(\cat{C}\).

  In particular,
  given an adjunction \(\adj{F}{U}{\cat{C}}{\cat{D}}\),
  the functors \(U F\) and \(F U\) have canonical monad and comonad structures on \(\cat{C}\) and \(\cat{D}\), respectively.
\end{example}

For an adjunction \(\adj{F}{U}{\cat{C}}{\cat{D}}\) with unit \(\eta\) and counit \(\varepsilon\),
the monad \(T \defeq UF\) is the same as the monad induced by the associated Eilenberg--Moore and Kleisli adjunctions.
There exist canonical \emph{comparison functors}
\[
  \begin{aligned}
    K_{\mathbf{EM}(T)}\from \mathcal{D} &\to \mathbf{EM}(T) \\
    D &\mapsto \big(U(D),\ U\varepsilon_{D}\from UFU(D) \to U(D)\big) \\
    D \xrightarrow{\ f\ } D' &\mapsto (U(D), U\varepsilon_D) \xrightarrow{U(f)} (U(D'),\ U\varepsilon_{D'})
  \end{aligned}
\]
and
\[
  K_{\mathbf{Kl}(T)}\from \mathbf{Kl}(T) \to \mathcal{D}, \qquad X \mapsto F(X), \qquad f \in \mathcal{D}(X,T(Y)) \mapsto \varepsilon_{F(Y)} \circ F(f).
\]

\begin{remark}\label{rmk:initial-terminal-adjunction-categories}
  In fact, the Eilenberg--Moore and the Kleisli category are the terminal and initial objects in the suitably defined category of adjunctions producing the monad \(T\).
  Thus, the diagram
  \begin{equation}\label{EMKlDiagram}
    \begin{mytikzcd}[ampersand replacement=\&]
      {\mathbf{Kl}(T)} \&\& {\mathcal{D}} \&\& {\mathbf{EM}(T)} \\
      \\
      \&\& {\mathcal{C}}
      \arrow["{K_{\mathbf{Kl}(T)}}", from=1-1, to=1-3]
      \arrow[""{name=0, anchor=center, inner sep=0}, "{R_{\mathbf{Kl}(T)}}"{pos=0.4}, shift left, from=1-1, to=3-3]
      \arrow["{K_{\mathbf{EM}(T)}}", from=1-3, to=1-5]
      \arrow[""{name=1, anchor=center, inner sep=0}, "G", shift left=2, from=1-3, to=3-3]
      \arrow[""{name=2, anchor=center, inner sep=0}, "{R_{\mathbf{EM}(T)}}", shift left=3, from=1-5, to=3-3]
      \arrow[""{name=3, anchor=center, inner sep=0}, "{L_{\mathbf{Kl}(T)}}", shift left=3, from=3-3, to=1-1]
      \arrow[""{name=4, anchor=center, inner sep=0}, "F", shift left=2, from=3-3, to=1-3]
      \arrow[""{name=5, anchor=center, inner sep=0}, "{L_{\mathbf{EM}(T)}}"{pos=0.6}, shift left, from=3-3, to=1-5]
      \arrow["\dashv"{anchor=center}, draw=none, from=4, to=1]
      \arrow["\dashv"{anchor=center, rotate=49}, draw=none, from=3, to=0]
      \arrow["\dashv"{anchor=center, rotate=-50}, draw=none, from=5, to=2]
    \end{mytikzcd}
  \end{equation}
  commutes,
  and its commutativity characterizes \(K_{\mathbf{EM}(T)}\) and \(K_{\mathbf{Kl}(T)}\) completely.

  Further, the composite \(K_{\mathbf{EM}(T)} \circ K_{\mathbf{Kl}(T)}\) is fully faithful,
  its full image consisting of \emph{free modules}.
  We will denote the canonical inclusion \(\mathbf{Kl}(T) \hookrightarrow \mathbf{EM}(T)\) by
  \begin{equation}\label{denotedbyiota}
    \iota\from \mathbf{Kl}(T) \hookrightarrow \mathbf{EM}(T), \qquad X \mapsto (T(X), \mu_X), \qquad f \in \cat{C}(X, T(Y)) \mapsto \mu \circ T(f).
  \end{equation}
\end{remark}

Similarly, to Remark~\ref{rmk:initial-terminal-adjunction-categories},
the Eilenberg--Moore category and the Kleisli category for the comonad \(S \defeq FG\) can be characterized as a terminal and initial object, respectively.
This yields functors \(K_{\mathbf{Kl}(S)}\) and \(K_{\mathbf{EM}(S)}\) such that the following diagram commutes
\[
  \begin{mytikzcd}[ampersand replacement=\&]
    \&\& {\mathcal{D}} \\
    \\
    {\mathbf{Kl}(S)} \&\& {\mathcal{C}} \&\& {\mathbf{EM}(S)}
    \arrow[""{name=0, anchor=center, inner sep=0}, "{{R_{\mathbf{Kl}(S)}}}"{pos=0.6}, shift left, from=1-3, to=3-1]
    \arrow[""{name=1, anchor=center, inner sep=0}, "G", shift left=2, from=1-3, to=3-3]
    \arrow[""{name=2, anchor=center, inner sep=0}, "{{R_{\mathbf{EM}(S)}}}", shift left=3, from=1-3, to=3-5]
    \arrow[""{name=3, anchor=center, inner sep=0}, "{{L_{\mathbf{Kl}(S)}}}", shift left=3, from=3-1, to=1-3]
    \arrow["{{K_{\mathbf{Kl}(S)}}}"', from=3-1, to=3-3]
    \arrow[""{name=4, anchor=center, inner sep=0}, "F", shift left=2, from=3-3, to=1-3]
    \arrow["{{K_{\mathbf{EM}(S)}}}"', from=3-3, to=3-5]
    \arrow[""{name=5, anchor=center, inner sep=0}, "{{L_{\mathbf{EM}(S)}}}"{pos=0.4}, shift left, from=3-5, to=1-3]
    \arrow["\dashv"{anchor=center, rotate=-48}, draw=none, from=3, to=0]
    \arrow["\dashv"{anchor=center}, draw=none, from=4, to=1]
    \arrow["\dashv"{anchor=center, rotate=49}, draw=none, from=5, to=2]
  \end{mytikzcd}
\]

The following two results are the motivating observations for the theory of \emph{contramodules},
which will be relevant for us in Section~\ref{sec:Hopf-trimodules}.

\begin{proposition}[{\cite[Theorem~V.8.2]{MLM}}]\label{pyramidscheme}
  If\, \(T\) is a monad and \(G\) is a right adjoint to \(T\), then \(G\) is a comonad and there is a canonical isomorphism \(\mathbf{EM}(T) \cong \mathbf{EM}(G)\).
\end{proposition}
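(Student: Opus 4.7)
The plan is to invoke the mate correspondence (doctrinal adjunction) to transport the algebra-object structure on \(T\) in \(\mathbf{Cat}(\mathcal{C},\mathcal{C})\) to a coalgebra-object structure on \(G\), and then to use the bijection of hom-sets furnished by \(T \dashv G\) to transport \(T\)-module structures on an object \(X\) to \(G\)-comodule structures on \(X\), functorially.

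First, let \(\eta'\from \Id \nt GT\) and \(\varepsilon'\from TG \nt \Id\) denote the unit and counit of \(T \dashv G\). Taking mates of the monad structure \((\mu\from T^{2} \nt T,\, \eta\from \Id \nt T)\) under \(T \dashv G\) and \(T^{2} \dashv G^{2}\), I obtain the candidate comonad structure on \(G\): the comultiplication
\[
 \Delta \;\defeq\; G \xrightarrow{\,\eta'_{G}\,} GTG \xrightarrow{\,G\eta'_{TG}\,} GTGTG \xrightarrow{\,G\mu G^{2}\,} GTG^{2} \xrightarrow{\,G\varepsilon' G\,} G^{2},
\]
and the counit
\[
 \varepsilon_{\mathrm{cm}} \;\defeq\; G \xrightarrow{\,\eta G\,} TG \xrightarrow{\,\varepsilon'\,} \Id.
\]
Coassociativity and counitality of \((G,\Delta,\varepsilon_{\mathrm{cm}})\) follow from associativity and unitality of \((T,\mu,\eta)\) by the general fact that the mate correspondence sends pasting diagrams of \(2\)-cells to pasting diagrams of \(2\)-cells; i.e.\ it preserves the structure of algebra and coalgebra objects, exchanging the two. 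This is a direct instance of Proposition~\ref{prop:doctrinaladjunction} applied to the \(2\)-category \(\mathbf{Cat}(\mathcal{C},\mathcal{C})\) (rather than to a monoidal category).

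Next, I define the isomorphism of categories \(\Phi\from \mathbf{EM}(T) \to \mathbf{EM}(G)\) on objects by sending a \(T\)-module \((X,\alpha\from T(X) \to X)\) to \((X,\beta)\), where \(\beta\from X \to G(X)\) is the transpose of \(\alpha\) under \(T \dashv G\); explicitly, \(\beta = G(\alpha) \circ \eta'_{X}\). Conversely, given a \(G\)-comodule \((X,\beta)\), I set \(\alpha = \varepsilon'_{X} \circ T(\beta)\). On morphisms, \(\Phi\) acts as the identity on underlying maps \(X \to Y\); this is well-defined because the mate correspondence sends squares expressing \(T\)-linearity of \(f\) to squares expressing \(G\)-colinearity of \(f\), and conversely. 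Hence \(\Phi\) is clearly an isomorphism of categories once object-level compatibility of the axioms is verified.

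The heart of the argument, and the step I expect to be the main obstacle, is verifying that the monad module axioms for \(\alpha\) are equivalent to the comonad comodule axioms for \(\beta\). Unitality is immediate: the unit law \(\alpha \circ \eta_{X} = \id_{X}\) transposes to \(\varepsilon_{\mathrm{cm},X} \circ \beta = \id_{X}\) using the triangle identities for \(T \dashv G\) and the explicit formula for \(\varepsilon_{\mathrm{cm}}\). Associativity, \(\alpha \circ \mu_{X} = \alpha \circ T(\alpha)\), transposes to coassociativity, \(G(\beta) \circ \beta = \Delta_{X} \circ \beta\): this is where one must carry out the diagram chase, inserting the explicit formula for \(\Delta\) and cancelling \(\eta'\)/\(\varepsilon'\)-pairs via the triangle identities. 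Formally, both sides can be written as the mate (under \(T^{2} \dashv G^{2}\)) of the two sides of the associativity square for \((X,\alpha)\), so the equivalence is again an instance of the mate correspondence applied to a single \(2\)-cell with source \(T^{2}X\) and target \(X\). Once this chase is completed, \(\Phi\) is established as an isomorphism of categories, which is canonical in the sense that it is determined entirely by the adjunction data and the mate correspondence.
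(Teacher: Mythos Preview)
The paper does not give its own proof of this proposition; it is simply cited from \cite[Theorem~V.8.2]{MLM}. Your approach via the mate correspondence is the standard one and is correct. In particular, your formula \(\beta = G(\alpha)\circ\eta'_{X}\) for the object-level assignment \(\Phi\) agrees exactly with the description the paper later uses in the proof of Proposition~\ref{EMMonadCorrespondence}.

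One small correction: your explicit formula for \(\Delta\) has the intermediate objects and whiskerings misplaced. After \(G\eta'_{TG}\) one lands in \(G^{2}T^{2}G\), not \(GTGTG\), and the remaining steps should be \(G^{2}\mu_{G}\) followed by \(G^{2}\varepsilon'\), not \(G\mu G^{2}\) followed by \(G\varepsilon' G\). The corrected composite is
\[
 \Delta\colon\; G \xrightarrow{\eta'_{G}} GTG \xrightarrow{G\eta'_{TG}} G^{2}T^{2}G \xrightarrow{G^{2}\mu_{G}} G^{2}TG \xrightarrow{G^{2}\varepsilon'} G^{2}.
\]
This does not affect the validity of your argument, since you correctly identify \(\Delta\) as the mate of \(\mu\) and invoke the functoriality of the mate correspondence to transport the (co)associativity and (co)unitality axioms, as well as the module/comodule axioms for \((X,\alpha)\) and \((X,\beta)\).
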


\begin{proposition}[{\cite[Theorem~3]{Kl}}]\label{einerKleinerKleisliÄquivalenz}
  If\, \(T\) is a monad and \(K\) is a left adjoint to \(T\), then \(K\) is a comonad and there is a canonical isomorphism \(\mathbf{Kl}(T) \cong \mathbf{Kl}(K)\).
\end{proposition}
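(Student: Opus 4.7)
The plan is to view the adjunction $K \dashv T$ as taking place in the monoidal category $\mathbf{Cat}(\mathcal{C},\mathcal{C})$ of endofunctors under composition, and apply mate correspondence to transport the monad structure on $T$ to a comonad structure on $K$. Conceptually, this parallels the argument for Proposition~\ref{pyramidscheme}: there, $T$ being left adjoint to $G$ transfers the monad structure on $T$ into a comonad structure on $G$ compatibly with the \emph{terminal} adjunction resolution (Eilenberg--Moore); here, $T$ being right adjoint to $K$ will preserve the \emph{initial} resolution (Kleisli).

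Writing $\eta^{\dashv}\from \Id \to TK$ and $\epsilon^{\dashv}\from KT \to \Id$ for the unit and counit of $K \dashv T$, I would define the counit $\epsilon\from K \to \Id$ of the comonad $K$ as the mate of $\eta\from \Id \to T$, namely $\epsilon \defeq \epsilon^{\dashv} \circ K\eta$. The comultiplication $\delta\from K \to KK$ is the mate of $\mu\from TT \to T$ under the composite adjunction $KK \dashv TT$. Coassociativity and counitality of $(K,\delta,\epsilon)$ then follow from associativity and unitality of $(T,\mu,\eta)$ by the standard functoriality of the mate correspondence with respect to pasting, which is precisely the principle underlying the doctrinal adjunction result of Proposition~\ref{prop:doctrinaladjunction}, applied here to algebra objects in the monoidal category $\mathbf{Cat}(\mathcal{C},\mathcal{C})$.

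For the isomorphism of Kleisli categories, I would define $\Phi\from \mathbf{Kl}(T) \to \mathbf{Kl}(K)$ to be the identity on objects and, on morphisms, the adjunction bijection
\[
    \mathcal{C}(X,TY) \xiso \mathcal{C}(KX,Y), \qquad f \longmapsto \epsilon^{\dashv}_Y \circ K(f).
\]
Preservation of identities is immediate, since the identity of $X$ in $\mathbf{Kl}(T)$ is $\eta_X$, whose transpose is $\epsilon_X$ by construction, and this is the identity of $X$ in $\mathbf{Kl}(K)$. The main step, and the principal obstacle, is preservation of composition: for $f\from X \to TY$ and $g\from Y \to TZ$ one must show that the transpose of the Kleisli composite $\mu_Z \circ T(g) \circ f$ equals the $\mathbf{Kl}(K)$-composite $\widetilde{g} \circ K(\widetilde{f}) \circ \delta_X$ of the transposes. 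This is a direct diagram chase using naturality of $\eta^{\dashv}$ and $\epsilon^{\dashv}$, the triangle identities of $K \dashv T$, and the defining formula for $\delta$ as the mate of $\mu$; indeed, one can view the formula for $\delta$ as forced by exactly this compatibility. Since the adjunction bijection is an isomorphism on each hom-set and $\Phi$ is the identity on objects, $\Phi$ is automatically an isomorphism of categories.
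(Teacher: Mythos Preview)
The paper does not give its own proof of this proposition; it is stated as a quotation of \cite[Theorem~3]{Kl}. Your proposal is correct and is essentially the standard argument: transport the monad structure along the adjunction via mates to obtain the comonad structure on $K$, and use the hom-set bijection of $K \dashv T$ to define the identity-on-objects isomorphism $\mathbf{Kl}(T) \cong \mathbf{Kl}(K)$, with composition preservation following from the mate formula for $\delta$.
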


Importantly, the Eilenberg–Moore category of a monad left adjoint to a comonad is \emph{not} equivalent to the Eilenberg–Moore category of its right adjoint.

\section{Lax and oplax module monads and comonads}\label{sec:module-monads}

\subsection{Module categories associated to (op)lax module (co)monads}

The aim of this section is to study lax and oplax module monads,
as well as their comonadic counterparts. To that end, we use the results of~\cite{HZ2} and establish additional variants thereof.

In the following definition,
we need to be careful in specifying that the unit and the counit of the adjunction are $\mathcal{C}$-module transformations.
Otherwise, unexpected counterexamples may arise, see~\cite{HZ1}.

\begin{definition}\label{OplaxModuleAdjunctions}
  An \emph{oplax \(\mathcal{C}\)-module adjunction} consists of
  an adjoint pair of oplax \(\mathcal{C}\)-module functors \(\adj{F}{U}{\cat{M}}{\cat{N}}\),
  such that the unit and counit of the adjunction are transformations of oplax \(\mathcal{C}\)-module functors.
\end{definition}

\begin{definition}\label{def:oplax-module-monad}
  An \emph{oplax \(\mathcal{C}\)-module monad} is a monoid in the category
  \(\mathbf{Oplax}\mathcal{C}\text{-}\mathbf{Mod}(\cat{M},\cat{M})\)
  of oplax \(\cat{C}\)-module endofunctors on a \(\cat{C}\)-module category \(\cat{M}\).
  Explicitly,
  this comprises a monad \(T\) on \(\cat{M}\)
  that is also an oplax \(\mathcal{C}\)-module functor,
  such that the unit and multiplication maps are transformations of oplax \(\mathcal{C}\)-module functors.
\end{definition}

Analogously to
Definitions~\ref{OplaxModuleAdjunctions} and~\ref{def:oplax-module-monad},
one defines the notions lax \(\mathcal{C}\)-module adjunctions,
as well as lax \(\mathcal{C}\)-module comonads.

\begin{remark}\label{rmk:oplax-module-monad-as-comodule-monad}
  An oplax $\mathcal{C}$-module monad of Definition~\ref{def:oplax-module-monad} can be seen as a special case of a \emph{comodule monad} in the sense of~\cite[Definition~4.8]{HZ2}.
  For an opmonoidal monad $B$ on $\mathcal{C}$, a comodule monad is a monad on a right \(\cat{C}\)-module category \(\cat{M}\), equipped with a coherent coaction \(C(\blank \ract \bblank) \nt C(\blank) \ract B(\bblank)\).
  An oplax $\mathcal{C}$-module monad is%
  ---barring the fact that we study left rather than right $\mathcal{C}$-module categories---%
  a comodule monad for the identity functor on $\mathcal{C}$.
\end{remark}

\begin{lemma}
  The left adjoint in a lax module adjunction is a strong module functor. Similarly, the right adjoint in an oplax module adjunction is a strong module functor.
\end{lemma}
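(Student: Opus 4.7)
The plan is to invoke doctrinal adjunction (Porism~\ref{porism:doctrinal-module-adjunctions} applied over the trivial monoidal adjunction $\Id_{\cat{C}} \dashv \Id_{\cat{C}}$): the given lax $\cat{C}$-module structure $U_{\mathsf{a}}$ on the right adjoint of a lax $\cat{C}$-module adjunction $\adj{F}{U}{\cat{M}}{\cat{N}}$ canonically determines, via the mate construction, an oplax $\cat{C}$-module structure
\[
  \bar{F}_{\mathsf{a},V,M} \defeq \varepsilon_{V \lact F(M)} \circ F\bigl((U_{\mathsf{a}})_{V, F(M)}\bigr) \circ F(V \lact \eta_M) \from F(V \lact M) \to V \lact F(M)
\]
on $F$. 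Since the lax module adjunction already equips $F$ with its own lax structure $F_{\mathsf{a}}$, the claim reduces to showing that $F_{\mathsf{a}}$ and $\bar{F}_{\mathsf{a}}$ are mutually inverse, whence $F_{\mathsf{a}}$ is invertible and $F$ is strong.

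To verify $\bar{F}_{\mathsf{a}} \circ F_{\mathsf{a}} = \id$ I would, in order, (i) use naturality of $F_{\mathsf{a}}$ in its second variable with respect to $\eta_M \from M \to UF(M)$ to rewrite $F(V \lact \eta_M) \circ F_{\mathsf{a},V,M}$ as $F_{\mathsf{a},V,UF(M)} \circ (V \lact F(\eta_M))$; (ii) apply the condition that the counit $\varepsilon$ is a module transformation---namely $\varepsilon_{V \lact N} \circ F((U_{\mathsf{a}})_{V,N}) \circ F_{\mathsf{a},V,U(N)} = V \lact \varepsilon_N$---at $N = F(M)$; and (iii) conclude via the triangle identity $\varepsilon F \circ F\eta = \id_F$. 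The reverse composite $F_{\mathsf{a}} \circ \bar{F}_{\mathsf{a}} = \id$ is established dually, using naturality of $\varepsilon$ to move $F_{\mathsf{a}}$ past it, the condition that the unit $\eta$ is a module transformation---namely $U(F_{\mathsf{a}}) \circ U_{\mathsf{a}} \circ (V \lact \eta_M) = \eta_{V \lact M}$---and the other triangle identity $U\varepsilon \circ \eta U = \id_U$.

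The second claim, that the right adjoint in an oplax module adjunction is strong, is the formal dual of the first, and admits the same mate-based argument with the roles of lax and oplax exchanged; alternatively, one can deduce it by applying the first claim to the opposite adjunction $\adj{U^{\op}}{F^{\op}}{\cat{N}^{\op}}{\cat{M}^{\op}}$, regarded as a lax $\cat{C}^{\op,\mathrm{rev}}$-module adjunction. The only real obstacle is careful bookkeeping of the three successive steps---naturality, the module-transformation axiom, and a triangle identity---in each direction; no idea beyond doctrinal adjunction and the triangle identities is required.
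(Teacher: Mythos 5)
Your proof is correct and takes essentially the same route as the paper: both exhibit the mate of the coherence cell of the other adjoint (via the unit, the given lax/oplax structure, and the counit) as an explicit two-sided inverse, the paper merely asserting the verification that you spell out via naturality, the module-transformation axioms for $\eta$ and $\varepsilon$, and the triangle identities. The only cosmetic difference is that the paper writes out the oplax case and declares the lax case similar, whereas you do the reverse.
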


\begin{proof}
  If \(F \dashv G\) is an oplax \(\cat{C}\)-module adjunction, it is easy to verify that
  the inverse of \(G_{\mathsf{a}}\) is,
  for all \(V \in \cat{C}\) and \(N \in \cat{N}\),
  given by
  \begin{equation}\label{rightadjointoplaxisstrong}
    V \lact G(N)
    \xrightarrow{\eta_{V \lact G(N)}} GF(V \lact G(N))
    \xrightarrow{G F_{\mathsf{a}}} G(V \lact FG(N))
    \xrightarrow{G(V \lact \varepsilon_N)} G(V \lact N).
  \end{equation}
  The case of lax \(\cat{C}\)-module adjunctions is similar.
\end{proof}

Similarly to Definition~\ref{def:lift-to-monoidal-adjunction},
one defines the concept of a \emph{lift} of \(F \dashv G\) to a lax or oplax \(\cat{C}\)-module adjunction.
The following results are simplified versions of~\cite[Theorems~4.13 and~2]{HZ2}.

\begin{proposition}\label{prop:module-adjunctions}
  Let\, \(\cat{C}\) be a monoidal category,
  and suppose that \(\cat{M}\) and \(\cat{N}\) are left \(\cat{C}\)-module categories.
  Given an adjunction \(\adj{F}{U}{\cat{M}}{\cat{N}}\),
  there is a one-to-one correspondence between
  strong \(\cat{C}\)-module functor structures on \(U\),
  and lifts of\, \(F \dashv U\) to an oplax \(\cat{C}\)-module adjunction.
\end{proposition}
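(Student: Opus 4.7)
The plan is to build on Porism~\ref{porism:doctrinal-module-adjunctions}, which already establishes a bijection between oplax $\cat{C}$-module structures on $F$ and lax $\cat{C}$-module structures on $U$. It will then suffice to verify that, under this bijection, the condition ``the lax structure $U_{\mathsf{a}}$ on $U$ is invertible (i.e.\ strong)'' corresponds precisely to the condition ``the unit and counit of $F \dashv U$ are oplax $\cat{C}$-module natural transformations''.

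Concretely, starting from a strong $\cat{C}$-module structure $U_{\mathsf{a}} \colon V \lact U(N) \xiso U(V \lact N)$, I would define the corresponding oplax module structure on $F$ by the mate formula
\[
F_{\mathsf{a}; V, M} \colon F(V \lact M) \xrightarrow{F(V \lact \eta_M)} F(V \lact UF(M)) \xrightarrow{F U_{\mathsf{a}}^{-1}} FU(V \lact F(M)) \xrightarrow{\varepsilon_{V \lact F(M)}} V \lact F(M),
\]
in which the invertibility of $U_{\mathsf{a}}$ is essential. I would then check that $F_{\mathsf{a}}$ satisfies the oplax module functor associativity and unitality axioms by a diagram chase using the corresponding axioms for $U_{\mathsf{a}}$ together with the triangle identities and naturality. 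A second diagram chase shows that $\eta$ and $\varepsilon$ become oplax $\cat{C}$-module transformations; this step is essentially built into the mate construction.

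Conversely, given any oplax $\cat{C}$-module adjunction, the lemma immediately preceding the proposition (via formula~\eqref{rightadjointoplaxisstrong}) provides an explicit inverse to $U_{\mathsf{a}}$, so $U$ is automatically strong. To finish, one verifies that the two constructions are mutually inverse, which is a standard consequence of mate calculus: applying~\eqref{rightadjointoplaxisstrong} to the oplax $F_{\mathsf{a}}$ constructed above recovers $U_{\mathsf{a}}$ using one triangle identity, and the analogous computation in the other direction recovers $F_{\mathsf{a}}$ using the other.

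The main technical obstacle is the associativity axiom for the candidate $F_{\mathsf{a}}$: one must commute several instances of $U_{\mathsf{a}}^{-1}$ past naturality squares and past the action's associator, which crucially uses that $U_{\mathsf{a}}$ is invertible. Without invertibility the corresponding identity would only hold in a one-sided form, which is precisely the reason that a merely lax structure on $U$ does not produce an honest oplax module adjunction. In practice, much of this work is already subsumed by Porism~\ref{porism:doctrinal-module-adjunctions}, so the genuinely new content of the proposition is the matching between invertibility of $U_{\mathsf{a}}$ and the module-theoretic compatibility of $\eta$ and $\varepsilon$, which reduces to a short calculation with the triangle identities.
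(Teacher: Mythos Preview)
Your proposal is correct and follows essentially the same route as the paper. The paper does not prove Proposition~\ref{prop:module-adjunctions} directly but cites \cite[Theorems~4.13 and~2]{HZ2} and then records exactly the mate formula you wrote for $F_{\mathsf{a}}$, together with the reference to Equation~\eqref{rightadjointoplaxisstrong} for the converse; the dual Proposition~\ref{prop:dual-module-adjunctions} is proved in precisely the manner you outline, with the coherence verifications deferred to the string-diagram argument in \cite{HZ2}.
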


Explicitly, Proposition~\ref{prop:module-adjunctions} yields the following correspondence:
given a strong \(\cat{C}\)-module functor structure on \(U\),
for all \(V \in \cat{C}\) and \(M \in \cat{M}\)
one may define \(F_{\mathsf{a}} \from F(V \lact M) \nt V \lact F(M)\) by
\[
  F(V \lact M)
  \xrightarrow{F(V \lact \eta_M)} F(V \lact UF(M))
  \xrightarrow{F U_{\mathsf{a}}^{-1}} FU(V \lact F(M))
  \xrightarrow{\varepsilon_{V \lact F(M)}} V \lact F(M).
\]
The strong \(\cat{C}\)-module functor structure on \(U\) obtained from a lift of \(F \dashv U\) to an oplax \(\cat{C}\)-module adjunction is described in Equation~\ref{rightadjointoplaxisstrong}.

\begin{corollary}\label{oplaxEMcorrespondence}
  Let \(\mathcal{M}\) be a \(\mathcal{C}\)-module category,
  and suppose \(T\) to be a monad on \(\mathcal{M}\).
  Define \(L \defeq L_{\mathbf{EM}(T)}\)
  and \(R \defeq R_{\mathbf{EM}(T)}\).
  There is a bijection
  \begin{align*}
    \big\{
    \text{oplax \(\mathcal{C}\)-module monad structures on \(T\)}
    \big\}
    \xiso
    &\,\Bigg\{
      {\begin{aligned}
        &\text{\(\mathcal{C}\)-module category structures on \(\mathbf{EM}(T)\)}\\
        &\text{such that \(R\) is a strict module functor}
      \end{aligned}}
      \Bigg\} \\
    \setj{T(\blank \triangleright \bblank) \xrightarrow{\;\;\sigma\;\;} \blank \triangleright T(\bblank)}
    \mapsto &\setj{T(\blank \triangleright \bblank) \xrightarrow{\;\;\sigma\;\;} \blank \triangleright T(\bblank) \xrightarrow{\id \,\triangleright\, \mathrm{act}} \blank \triangleright \bblank}\\
    \setj{R L(\blank \lact \bblank) \xrightarrow{\delta^{(R)} \,\circ\, R \delta^{(L)}} \blank \lact R L(\bblank)}
    \longmapsfrom& \setj{R(\blank \lact \bblank) \xrightarrow{\;\delta^{(R)}\;} \blank \lact R(\bblank)}
  \end{align*}
\end{corollary}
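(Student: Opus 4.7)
The plan is to reduce the corollary to Proposition~\ref{prop:module-adjunctions} applied to the Eilenberg--Moore adjunction $L = L_{\mathbf{EM}(T)} \dashv R_{\mathbf{EM}(T)} = R$, with the additional observation that the monad structure on $T = RL$ can be transported through this adjunction.

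\textbf{Forward map.} Given an oplax $\mathcal{C}$-module monad structure $\sigma$ on $T$, I define an action of $\mathcal{C}$ on $\mathbf{EM}(T)$ by
\[
  V \triangleright (M, \mathrm{act}) \;\defeq\; \bigl( V \triangleright M,\ T(V \triangleright M) \xrightarrow{\sigma_{V,M}} V \triangleright T(M) \xrightarrow{V \,\triangleright\, \mathrm{act}} V \triangleright M \bigr),
\]
with the obvious extension to morphisms of $T$-modules. The axioms defining an oplax $\mathcal{C}$-module monad split into compatibility of $\sigma$ with $\mu$ and $\eta$ (which guarantee that the composite above is a valid $T$-action), and the oplax module functor axioms for $\sigma$ (which, together with the module category axioms of $\mathcal{M}$, yield the module category axioms for $\mathbf{EM}(T)$). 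By construction, $R$ preserves both the action on objects and on morphisms strictly.

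\textbf{Backward map.} A $\mathcal{C}$-module structure on $\mathbf{EM}(T)$ for which $R$ is strict is in particular a strong module structure on $R$. Proposition~\ref{prop:module-adjunctions} then equips $L$ with an oplax $\mathcal{C}$-module structure $\delta^{(L)}$ and ensures that the unit $\eta^{L \dashv R}$ and counit $\varepsilon^{L \dashv R}$ are oplax module transformations. Since $\delta^{(R)} = \id$, the composite $T = RL$ acquires the oplax module coherence $R\delta^{(L)}$. The monad unit $\eta = \eta^{L \dashv R}$ and multiplication $\mu = R\varepsilon^{L\dashv R}L$ are then automatically oplax module transformations, making $T$ an oplax $\mathcal{C}$-module monad.

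\textbf{Inverse check and main obstacle.} To see that the two maps are mutually inverse, I would trace through the mate correspondence underlying Proposition~\ref{prop:module-adjunctions}. Starting from $\sigma$, the explicit formula for $\delta^{(L)}$ as a mate (involving the unit of $L \dashv R$ and the strong module structure on $R$, which on free modules is recorded exactly by $\sigma$) gives $R\delta^{(L)} = \sigma$ after a short diagram chase. Conversely, starting from a module structure on $\mathbf{EM}(T)$ with $R$ strict, the $T$-action on $V \triangleright (M, \mathrm{act})$ produced by the forward map from $R\delta^{(L)}$ coincides with the original one; this uses that $\varepsilon^{L\dashv R}$ is a module transformation (which recovers $\mathrm{act}$ in the evaluation) and that $R$ is faithful, so a $T$-module structure on $V \triangleright M$ is determined by its image under $R$. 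The main technical obstacle is verifying that the module category axioms on $\mathbf{EM}(T)$ correspond exactly---without redundancy---to the oplax module functor axioms on $\sigma$ once the monad-compatibility conditions are in place; this amounts to tracking the pentagon and triangle through the strong/oplax mate correspondence, which is essentially bookkeeping after Proposition~\ref{prop:module-adjunctions} is invoked.
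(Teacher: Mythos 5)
Your proposal is correct and follows the same route the paper intends: the corollary is stated as a direct consequence of Proposition~\ref{prop:module-adjunctions} applied to the Eilenberg--Moore adjunction $L \dashv R$, with the transport of the monad structure and the mutual-inverse check being exactly the bookkeeping you describe. The explicit forward and backward formulas you give match those in the statement, so no further comment is needed.
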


Proposition~\ref{prop:dual-module-adjunctions} is the dual variant of Proposition~\ref{prop:module-adjunctions}, where strong $\mathcal{C}$-module structures on the left%
---rather than the right---%
adjoint are studied.

\begin{proposition}\label{prop:dual-module-adjunctions}
  Let \(\cat{C}\) be a monoidal category,
  and suppose that \(\cat{M}\) and \(\cat{N}\) are left
  \(\cat{C}\)-module categories.
  Given an adjunction \(\adj{F}{U}{\cat{M}}{\cat{N}}\),
  there is a one-to-one correspondence between
  lifts of\, \(F\) to a strong \(\cat{C}\)-module functor,
  and lifts of\, \(F \dashv U\) to a lax \(\cat{C}\)-module adjunction.
\end{proposition}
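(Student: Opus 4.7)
The plan is to derive this proposition as the formal dual of Proposition~\ref{prop:module-adjunctions}, and the cleanest route is via opposite categories. The adjunction $F \dashv U$ between left $\mathcal{C}$-module categories $\mathcal{M}$ and $\mathcal{N}$ induces an opposite adjunction $U^{\mathrm{op}} \dashv F^{\mathrm{op}}$ between $\mathcal{N}^{\mathrm{op}}$ and $\mathcal{M}^{\mathrm{op}}$, which carry canonical left $\mathcal{C}^{\mathrm{op}}$-module structures. Reversing arrows interchanges lax with oplax---both for module functor structures and for module transformations---while preserving strongness. Under this duality, strong $\mathcal{C}$-module structures on the left adjoint $F$ correspond bijectively to strong $\mathcal{C}^{\mathrm{op}}$-module structures on the right adjoint $F^{\mathrm{op}}$, and lifts of $F \dashv U$ to a lax $\mathcal{C}$-module adjunction correspond to lifts of $U^{\mathrm{op}} \dashv F^{\mathrm{op}}$ to an oplax $\mathcal{C}^{\mathrm{op}}$-module adjunction. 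Applying Proposition~\ref{prop:module-adjunctions} to this opposite adjunction then yields the desired bijection.

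Alternatively, one can give an explicit proof via the calculus of mates, mirroring~\cite[Theorem~4.13]{HZ2}. Given a strong $F$, the inverse $F_{\mathsf{a}}^{-1}$ is an oplax $\mathcal{C}$-module structure on $F$, and Porism~\ref{porism:doctrinal-module-adjunctions} produces a lax structure on $U$ as its mate, namely
\[
  V \triangleright U(N)
  \xrightarrow{\eta_{V \triangleright U(N)}} UF(V \triangleright U(N))
  \xrightarrow{U F_{\mathsf{a}}^{-1}} U(V \triangleright FU(N))
  \xrightarrow{U(V \triangleright \varepsilon_N)} U(V \triangleright N).
\]
In the other direction, given a lax $\mathcal{C}$-module adjunction $(F_{\mathsf{a}}, U_{\mathsf{a}})$, the candidate inverse of $F_{\mathsf{a}}$ is the mate of $U_{\mathsf{a}}$,
\[
  F(V \triangleright M)
  \xrightarrow{F(V \triangleright \eta_M)} F(V \triangleright UF(M))
  \xrightarrow{F U_{\mathsf{a};V,F(M)}} FU(V \triangleright F(M))
  \xrightarrow{\varepsilon_{V \triangleright F(M)}} V \triangleright F(M).
\]

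The main obstacle is showing that these two assignments are mutually inverse. That mates of mates recover the original data is standard, so the nontrivial point is to verify that the displayed composite is a genuine two-sided inverse to $F_{\mathsf{a}}$. One composite reduces to the identity using the triangle identities together with the associativity axiom for $U_{\mathsf{a}}$, while the other composite crucially uses the lax $\mathcal{C}$-module transformation condition on the unit $\eta$ (and, for the counit triangle, on $\varepsilon$). This is precisely the extra input beyond Porism~\ref{porism:doctrinal-module-adjunctions} that distinguishes a lax module adjunction from a mere pair of mate-related lax and oplax structures, and is therefore exactly what is needed to invert $F_{\mathsf{a}}$.
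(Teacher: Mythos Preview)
Your proposal is correct. The explicit mate construction in your second approach is essentially what the paper does: it writes down the same candidate inverse for $F_{\mathsf{a}}$ and the same induced lax structure on $U$, and then defers the verification of all coherence conditions and invertibility by remarking that one may ``read the string diagrams in the proof of~\cite[Theorem~4.13]{HZ2} upside down''. Your sketch is slightly more informative in that it identifies precisely which axioms (the module-transformation conditions on $\eta$ and $\varepsilon$, beyond the bare mate correspondence of Porism~\ref{porism:doctrinal-module-adjunctions}) are needed to close the argument.

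Your first approach, reducing to Proposition~\ref{prop:module-adjunctions} via opposite categories, is a genuinely different and cleaner route that the paper does not take. It trades the explicit diagram-chasing for a single formal observation: passing to $(\blank)^{\mathrm{op}}$ swaps left and right adjoints, swaps lax and oplax module structures, and preserves strongness, so the statement becomes an instance of the already-proven Proposition~\ref{prop:module-adjunctions} applied to $U^{\mathrm{op}} \dashv F^{\mathrm{op}}$. This buys you a proof with no new computations at all, at the modest cost of checking once that the $\mathcal{C}^{\mathrm{op}}$-module structure on $\mathcal{M}^{\mathrm{op}}$ and the behaviour of module transformations under $(\blank)^{\mathrm{op}}$ are as claimed. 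The paper's approach, by contrast, makes the explicit formulas visible, which is useful downstream (e.g.\ in Corollary~\ref{laxKlCorrespondence} and Lemma~\ref{delusion}) where one wants to compute with the induced structures directly.
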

\begin{proof}
  Suppose that \(\adj{F}{U}{\cat{M}}{\cat{N}}\) is a lax \(\cat{C}\)-module adjunction.
  Define the inverse of the action \(F_{\mathrm{a}} \from \blank \lact F(\bblank) \nt F(\blank \lact \bblank)\)
  by
  \[
    F(\blank \lact \bblank)
    \xrightarrow{F(\eta \,\lact\, \bblank)} F(UF(\blank) \lact \bblank)
    \xrightarrow{F U_{\mathrm{a}}} FU(F(\blank) \lact \bblank)
    \xrightarrow{\;\;\varepsilon\;\;} F(\blank) \lact \bblank.
  \]
  Conversely, given an adjunction \(\adj{F}{U}{\cat{M}}{\cat{N}}\)
  such that \(F\) is a strong \(\cat{C}\)-module functor,
  define
  \[
    U(\blank) \lact \bblank
    \xrightarrow{\;\;\eta\;\;} UF(U(\blank) \lact \bblank)
    \xrightarrow{U F_{\mathrm{a}}^{-1}} U(FU(\blank) \lact \bblank)
    \xrightarrow{U(\varepsilon \,\lact\, \bblank)} U(\blank \lact \bblank).
  \]

  Reading the string diagrams in the proof of~\cite[Theorem~4.13]{HZ2} upside down
  verifies the necessary the coherence conditions,
  and that these two constructions are inverses of each other.
\end{proof}

We immediately obtain a version of Corollary~\ref{oplaxEMcorrespondence}
for the Kleisli category of a monad.

\begin{corollary}\label{laxKlCorrespondence}
  Let \(\mathcal{M}\) be a left \(\mathcal{C}\)-module category,
  and suppose \(T\) to be a monad on \(\mathcal{M}\).
  There is a bijection between
  lax \(\cat{C}\)-module monad structures on \(T\),
  and
  \(\cat{C}\)-module category structures on \(\mathbf{Kl}(T)\),
  such that \(L_{\mathbf{Kl}(T)}\) is a strict \(\cat{C}\)-module functor.
\end{corollary}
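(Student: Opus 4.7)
The plan is to prove this as the exact dual of Corollary~\ref{oplaxEMcorrespondence}, swapping the Eilenberg--Moore adjunction for the Kleisli adjunction and using Proposition~\ref{prop:dual-module-adjunctions} in place of Proposition~\ref{prop:module-adjunctions}. The underlying principle is that the Kleisli adjunction $L \dashv R$ (where $L \defeq L_{\mathbf{Kl}(T)}$ and $R \defeq R_{\mathbf{Kl}(T)}$) is initial among adjunctions whose associated monad is $T$, so a $\cat{C}$-module structure on $\mathbf{Kl}(T)$ making $L$ strict is equivalent to a lift of the Kleisli adjunction to a lax $\cat{C}$-module adjunction.

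For the forward direction, given a $\cat{C}$-module structure on $\mathbf{Kl}(T)$ with $L$ strict (hence strong), Proposition~\ref{prop:dual-module-adjunctions} lifts $L \dashv R$ to a lax $\cat{C}$-module adjunction; the strong structure on $L$ and the induced lax structure on $R$ compose to a lax $\cat{C}$-module structure on $T = RL$. Since the lift of Proposition~\ref{prop:dual-module-adjunctions} makes the unit and counit lax module transformations, the resulting unit $\eta = \eta^{L \dashv R}$ and multiplication $\mu = R \varepsilon^{L \dashv R} L$ of $T$ are lax $\cat{C}$-module transformations, yielding a lax $\cat{C}$-module monad.

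For the backward direction, given a lax $\cat{C}$-module monad with coherence $(T_{\mathsf{a}})_{V,M}\from V \lact T(M) \to T(V \lact M)$, I would define the $\cat{C}$-action on $\mathbf{Kl}(T)$ by $V \lact_{\mathbf{Kl}(T)} X \defeq V \lact_{\mathcal{M}} X$ on objects, and on morphisms $g\from V \to W$ in $\cat{C}$ and $f \in \mathbf{Kl}(T)(X,Y) = \mathcal{M}(X, T(Y))$ by
\[
 g \lact_{\mathbf{Kl}(T)} f \;\defeq\; (T_{\mathsf{a}})_{W, Y} \circ (g \lact_{\mathcal{M}} f).
\]
The associator and unitor of $\cat{M}$ descend along $L$ to those of $\mathbf{Kl}(T)$. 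The fact that $L$ is strict follows from the lax-module transformation axiom for $\eta$, which unpacks to $\eta_{W \lact Y} = (T_{\mathsf{a}})_{W,Y} \circ (W \lact \eta_{Y})$, giving exactly $L(g \lact_{\mathcal{M}} f) = g \lact_{\mathbf{Kl}(T)} L(f)$.

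The main technical obstacle, as in the oplax Eilenberg--Moore variant, is verifying functoriality of the above assignment on $\mathbf{Kl}(T)$. Unpacking Kleisli composition $f' \circ_{\mathbf{Kl}(T)} f = \mu_{Z} \circ T(f') \circ f$ and expanding $(g' \lact f') \circ_{\mathbf{Kl}(T)} (g \lact f)$ reduces the claim to the compatibility of $T_{\mathsf{a}}$ with $\mu$ (the associativity axiom for the lax $\cat{C}$-module monad structure) together with the naturality of $T_{\mathsf{a}}$. Once functoriality is established, mutual inverseness of the two constructions is a routine check: starting from a lax module monad, the backward construction yields an action on $\mathbf{Kl}(T)$ whose lax-module structure on $T = RL$ recovered by the forward step coincides with the original by direct computation using $R(f) = \mu_{Y} \circ T(f)$ and the formulas in Proposition~\ref{prop:dual-module-adjunctions}; starting from a $\cat{C}$-structure on $\mathbf{Kl}(T)$, strictness of $L$ forces the action on objects and morphisms to match the formula above.
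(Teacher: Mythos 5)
Your proposal is correct and follows essentially the same route as the paper: the forward direction invokes Proposition~\ref{prop:dual-module-adjunctions} to lift the Kleisli adjunction and obtain the lax module monad structure on $T = RL$, and the backward direction defines the action on $\mathbf{Kl}(T)$ by postcomposing with $T_{\mathsf{a}}$ exactly as in the paper's proof. Your additional remarks on where functoriality uses the associativity axiom for $T_{\mathsf{a}}$, and on strictness of $L$ following from the unit being a module transformation, are accurate elaborations of steps the paper leaves as ``easy to check.''
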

\begin{proof}
  Let \(\mathbf{Kl}(T)\) be a \(\cat{C}\)-module category
  such that \(L_{\mathbf{Kl}(T)}\) is a strict \(\cat{C}\)-module functor.
  Then the functor
  \(T \defeq R_{\mathbf{Kl}(T)} \circ L_{\mathbf{Kl}(T)} \from \cat{M} \to \cat{M}\)
  is a lax \(\cat{C}\)-module monad by Proposition~\ref{prop:dual-module-adjunctions}.

  Conversely, if \(T\) is a lax \(\cat{C}\)-module monad,
  \(\mathbf{Kl}(T)\) becomes a \(\cat{C}\)-module category as follows: for \(V \in \cat{C}\) and \(M,N \in \cat{M}\), we set \(V \lact_{\mathbf{Kl}(T)} M \defeq V \lact_{\mathcal{M}} M\) on the level of objects, and on the level of morphisms, we define ${(\blank \lact_{\mathbf{Kl}(T)} \bblank)}_{(V,M),N}$ as follows:
  \[
    \mathscale{0.88}{%
      \mathbf{Kl}(T)(M,N) = \mathcal{M}(M,T(N))
      \xrightarrow{V \lact \blank} \mathcal{M}(V \lact M, V \lact T(N))
      \xrightarrow{T_{\mathsf{a}} \circ \blank} \mathcal{M}(V \lact M, T(V \lact N)) = \mathbf{Kl}(T)(V\lact M, V \lact N)
    }.
  \]
  It is easy to check that these assignments define a \(\cat{C}\)-module structure for which \(L_{\mathbf{Kl}(T)}\) is a strict \(\cat{C}\)-module functor.

  Since the \(\cat{C}\)-module structure of \(\mathbf{Kl}(T)\)
  and the lax module monad structure on \(T\) uniquely determine the \(\cat{C}\)-module structure of \(L_{\mathbf{Kl}(T)}\), the inverse
  \({(L_{\mathbf{Kl}(T)})}_{\mathsf{a}}^{-1}\) is given as in Proposition~\ref{prop:dual-module-adjunctions}.
  Thus, these two constructions are again inverse to each other.
\end{proof}

Proposition~\ref{strongcomparisons} below is a simplified version of~\cite[Proposition~4.14]{HZ2} in the case of the Eilenberg--Moore category;
similarly to Corollary~\ref{oplaxEMcorrespondence}, the case of the Kleisli category is analogous.

\begin{proposition}\label{strongcomparisons}
  Let \(\cat{M}\) and \(\cat{N}\) be left \(\cat{C}\)-module categories.
  \begin{enumerate}
    \item Let \(\adj{F}{U}{\cat{M}}{\cat{N}}\) be an oplax\, \(\mathcal{C}\)-module adjunction
    and consider the corresponding \(\mathcal{C}\)-module category structure on \(\mathbf{EM}(T)\) from Proposition~\ref{oplaxEMcorrespondence}.
    Then the functor \(K_{\mathbf{EM}}(U F)\) is a strong \(\mathcal{C}\)-module functor.
    \item Let \(\adj{F}{U}{\cat{M}}{\cat{N}}\) be
    a lax\, \(\mathcal{C}\)-module adjunctions
    and consider the corresponding \(\mathcal{C}\)-module category structure on \(\mathbf{Kl}(T)\) from Proposition~\ref{laxKlCorrespondence}.
    Then the functor \(K_{\mathbf{Kl}}(U F)\) is a strong \(\mathcal{C}\)-module functor.
  \end{enumerate}
\end{proposition}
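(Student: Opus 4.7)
The plan is to exploit the two correspondences set up in Propositions~\ref{prop:module-adjunctions} and~\ref{prop:dual-module-adjunctions}, together with the characterizations of the module structures on $\mathbf{EM}(T)$ and $\mathbf{Kl}(T)$ given in Corollaries~\ref{oplaxEMcorrespondence} and~\ref{laxKlCorrespondence}. In both parts the comparison functor is determined by the factorization $R_{\mathbf{EM}(T)} \circ K_{\mathbf{EM}(T)} = U$ respectively $K_{\mathbf{Kl}(T)} \circ L_{\mathbf{Kl}(T)} = F$, and in each case one of the two functors appearing here carries a \emph{strong} $\mathcal{C}$-module structure (namely $U$ in part (1) by Proposition~\ref{prop:module-adjunctions}, and $F$ in part (2) by Proposition~\ref{prop:dual-module-adjunctions}); the strong module structure on $K$ is to be read off from that strong one.

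For part (1), I would define the candidate coherence isomorphism ${(K_{\mathbf{EM}(T)})}_{\mathsf{a}}$ at $(V,N) \in \mathcal{C} \times \mathcal{N}$ to be the inverse $U_{\mathsf{a}}^{-1}\colon U(V \triangleright N) \to V \triangleright U(N)$, now viewed as an arrow in $\mathbf{EM}(T)$. To justify this, the main point to verify is that $U_{\mathsf{a}}^{-1}$ is a morphism of $T$-modules, where the codomain $V \triangleright K_{\mathbf{EM}(T)}(N)$ carries the action supplied by Corollary~\ref{oplaxEMcorrespondence}, namely
\[
  T(V \triangleright U(N)) \xrightarrow{\sigma_{V,U(N)}} V \triangleright T(U(N)) \xrightarrow{V \,\triangleright\, U\varepsilon_N} V \triangleright U(N),
\]
with $\sigma = (V \triangleright U\varepsilon) \circ U F_{\mathsf{a}} \circ U_{\mathsf{a}}^{-1}$ the canonical oplax module monad structure on $T = UF$ induced from $F$ oplax and $U$ strong. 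This compatibility unfolds into a diagram in $\mathcal{M}$ whose commutativity follows from naturality of $U_{\mathsf{a}}$, the triangle identities for $F \dashv U$, and the formula for $\sigma$; it amounts to one short string-diagram computation. Once this is done, the associativity and unit axioms for ${(K_{\mathbf{EM}(T)})}_{\mathsf{a}}$ follow from those for $U_{\mathsf{a}}$, because $R_{\mathbf{EM}(T)}$ is strict module and conservative on equalities of module morphisms.

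For part (2), the argument is formally dual. Since $K_{\mathbf{Kl}(T)}(X) = F(X)$ on objects and $F$ is strong module, the natural isomorphism $F_{\mathsf{a}}\colon V \triangleright F(X) \xIso F(V \triangleright X)$ is the obvious candidate for ${(K_{\mathbf{Kl}(T)})}_{\mathsf{a}}$. The nontrivial step is to check that this assignment is natural with respect to Kleisli morphisms under the action $V \triangleright_{\mathbf{Kl}(T)} \bblank$ defined in the proof of Corollary~\ref{laxKlCorrespondence}; writing out a morphism $f \in \mathcal{M}(M, T(N))$ and using the formula $\sigma_{V,N} = T_{\mathsf{a}}$ for the lax module structure on $T = UF$, this reduces to the triangle identities and naturality of $F_{\mathsf{a}}$.

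I expect the only real obstacle to be the bookkeeping in the $T$-module compatibility in part (1). Both parts reduce to essentially the same diagrammatic identity in mirror-dual form, so I would carry out part (1) in detail (ideally via a short string diagram) and obtain part (2) by formally reversing all arrows, invoking the duality between Propositions~\ref{prop:module-adjunctions} and~\ref{prop:dual-module-adjunctions} and between Corollaries~\ref{oplaxEMcorrespondence} and~\ref{laxKlCorrespondence}.
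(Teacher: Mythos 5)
Your approach is correct and is essentially the intended one: the paper itself does not prove this proposition but defers to \cite[Proposition~4.14]{HZ2}, and the argument there is exactly the transport-of-structure argument you describe, namely that $K_{\mathbf{EM}(T)}$ inherits its strong coherence from the strong module structure on $U$ (via $R_{\mathbf{EM}(T)}\circ K_{\mathbf{EM}(T)}=U$ with $R_{\mathbf{EM}(T)}$ strict and faithful), and dually for the Kleisli comparison via $K_{\mathbf{Kl}(T)}\circ L_{\mathbf{Kl}(T)}=F$. Your identification of the one nontrivial check --- that $U_{\mathsf{a}}^{\pm1}$ is a morphism of $T$-modules --- is the right crux, and the verification does go through: after cancelling $U_{\mathsf{a}}$ against $U_{\mathsf{a}}^{-1}$ using naturality, the required identity reduces to $\varepsilon_{V\lact N}\circ F(U_{\mathsf{a}}) = (V\lact\varepsilon_N)\circ F_{\mathsf{a}}$, which is precisely the statement that the counit is a module transformation (part of Definition~\ref{OplaxModuleAdjunctions}); in the setting where $F_{\mathsf{a}}$ is the mate of $U_{\mathsf{a}}$ this indeed unwinds to the triangle identities, as you say.

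Two small corrections. First, your formula for the canonical oplax module monad structure on $T=UF$ should be $\sigma_{V,M}={(U_{\mathsf{a}})}^{-1}_{V,F(M)}\circ U\big({(F_{\mathsf{a}})}_{V,M}\big)$; the factor $V\lact U\varepsilon$ does not belong in $\sigma$ itself (it already appears separately as the action $\nabla_{U(N)}=U\varepsilon_N$ in the composite you display, so including it in $\sigma$ double-counts), and as written your composite does not typecheck. Second, be careful to attribute the key identity to the hypothesis that $\eta$ and $\varepsilon$ are module transformations rather than to the bare triangle identities: the paper explicitly warns (citing~\cite{HZ1}) that omitting this hypothesis produces counterexamples, so it is the module-adjunction axiom, not the mere adjunction, that makes the diagram commute. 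With these adjustments your plan is a complete and correct proof.
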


\subsection{Extending module category structures from Kleisli to Eilenberg--Moore categories}

In this section, we investigate how and under which conditions one can extend the $\mathcal{C}$-module structure of the Kleisli category of a lax $\mathcal{C}$-module monad to its category of modules.

From now on,
we implicitly assume all categories and functors to be \(\Bbbk\)-linear.

\begin{proposition}[{\cite[Proposition~3.2]{BZBJ}}]\label{prop:abelian-EM-cat}
  Suppose that \(\cat{A}\) is abelian,
  let \(T\) be a right exact monad,
  and \(S\) a left exact comonad on \(\cat{A}\).
  Then \(\mathbf{EM}(T)\) and \(\mathbf{EM}(S)\) are abelian.
\end{proposition}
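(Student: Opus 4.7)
The plan is to prove the two claims in parallel, since they are dual to each other. I will focus on $\mathbf{EM}(T)$ for a right exact monad $T$; the case of $\mathbf{EM}(S)$ follows by reading the same argument in $\cat{A}^{\op}$. The central tool is the forgetful functor $R \defeq R_{\mathbf{EM}(T)} \from \mathbf{EM}(T) \to \cat{A}$, which by construction is faithful, and which is also conservative (any inverse in $\cat{A}$ of an underlying map of a module morphism is automatically a module morphism, by a direct diagram chase). The strategy is to verify, in order: additivity; existence of kernels; existence of cokernels; the identification of monomorphisms with kernels; and the identification of epimorphisms with cokernels.

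First I would note that $R$ creates \emph{all} limits that exist in $\cat{A}$, without any assumption on $T$: given a diagram of modules $(X_i, \nabla_{X_i})$, the limit $L$ in $\cat{A}$ acquires an action $T(L) \to L$ induced by the universal property from the maps $T(L) \to T(X_i) \xrightarrow{\nabla_{X_i}} X_i$, and the axioms are verified by uniqueness of induced maps into the limit. In particular $\mathbf{EM}(T)$ has finite biproducts and kernels, and is $\Bbbk$-linear since $T$ is additive (as a right exact $\Bbbk$-linear endofunctor on an additive category). This gives Steps 1 and 2.

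For Step 3, right exactness of $T$ is used essentially. Given $f\from (X, \nabla_X) \to (Y, \nabla_Y)$, take the cokernel $q\from Y \to C$ in $\cat{A}$. Since $T$ preserves cokernels, $T(q)\from T(Y) \to T(C)$ is the cokernel of $T(f)$; the composite $q \circ \nabla_Y$ coequalizes $T(f)$ with the zero map, using $f \circ \nabla_X = \nabla_Y \circ T(f)$ and $q \circ f = 0$, hence factors uniquely through a morphism $\nabla_C \from T(C) \to C$. Associativity and unitality of $\nabla_C$ follow from the fact that $T^2 = T \circ T$ is also right exact, so $T^2(q)$ is epi, and one can cancel it after composing the relevant axioms with it. This simultaneously shows that $R$ creates finite colimits and hence preserves them; in particular $R$ preserves epimorphisms.

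For Steps 4 and 5, I would observe that $R$ preserves both monos (it preserves limits) and epis (it preserves cokernels), and reflects both (being faithful). Given a mono $m \from (X, \nabla_X) \to (Y, \nabla_Y)$ in $\mathbf{EM}(T)$, form its cokernel $q \from (Y, \nabla_Y) \to (C, \nabla_C)$ in $\mathbf{EM}(T)$ from Step 3, and its kernel $k \from (K, \nabla_K) \to (Y, \nabla_Y)$ in $\mathbf{EM}(T)$ from Step 2. Since $\cat{A}$ is abelian and $R$ preserves both constructions, $R(k)$ is the kernel of $R(q)$ and $R(m)$ is the kernel of $R(q)$ as well, so the canonical comparison in $\mathbf{EM}(T)$ between $X$ and $K$ has an underlying isomorphism, hence is an isomorphism by conservativity of $R$. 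The dual argument, using the kernel–cokernel factorization of an epimorphism, handles Step 5. The main obstacle is Step 3: everything else is largely formal and depends only on $R$ creating the relevant (co)limits, so the burden of the proof is to verify that the lifted cokernel genuinely satisfies the module axioms and represents the cokernel in $\mathbf{EM}(T)$—it is precisely here that right exactness of $T$ (and the resulting right exactness of $T^2$) enters.
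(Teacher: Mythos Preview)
Your argument is correct and is essentially the standard proof that a right exact monad on an abelian category has an abelian Eilenberg--Moore category. The paper does not supply its own proof of this statement; it is quoted directly from \cite[Proposition~3.2]{BZBJ}, so there is nothing further to compare.
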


Conversely, of course, the monad associated to an exact monadic functor of abelian categories is right exact, and similarly for left exact comonads.

\begin{proposition}\label{locomonads}
  Let $\mathcal{A}$ be a locally finite abelian category, and let $K$ be a left exact, finitary comonad on $\mathbf{Ind}(\mathcal{A})$. Then $\mathbf{EM}(K)$ also is of the form $\mathbf{Ind}(\mathcal{E})$ for a locally finite abelian category $\mathcal{E}$. Further, $\mathcal{E}$ can be chosen to be the category of compact objects in $\mathbf{EM}(K)$, and it can be characterized as the objects sent to compact objects under the forgetful functor $L_{\mathbf{EM}(K)}\from \mathbf{EM}(K) \to \mathcal{A}$.

  In particular, if $S$ is a left exact comonad on $\mathcal{A}$, then $\mathbf{EM}(\mathbf{Ind}(S)) \simeq \mathbf{Ind}(\mathbf{EM}(S))$, and $\mathbf{EM}(S)$ is locally finite abelian.
\end{proposition}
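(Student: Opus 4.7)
The plan is to define $\mathcal{E}$ as the full subcategory of $\mathbf{EM}(K)$ spanned by comodules $(X, \alpha)$ with $X \in \mathcal{A}$, to verify that $\mathcal{E}$ is locally finite abelian, to identify $\mathcal{E}$ with the compact objects of $\mathbf{EM}(K)$, to establish $\mathbf{Ind}(\mathcal{E}) \xiso \mathbf{EM}(K)$, and then to deduce the ``in particular'' statement by specialisation.

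First I would observe that by Proposition~\ref{prop:abelian-EM-cat}, $\mathbf{EM}(K)$ is abelian; moreover, $L_{\mathbf{EM}(K)}$ preserves all colimits (being left adjoint to $R_{\mathbf{EM}(K)}$) and creates finite limits (since $K$ is left exact), while filtered colimits in $\mathbf{EM}(K)$ exist and are computed underlying by finitariness of $K$. Hence finite limits, finite colimits, and filtered colimits in $\mathbf{EM}(K)$ are all computed in $\mathbf{Ind}(\mathcal{A})$. Since $\mathcal{A}$ is closed in $\mathbf{Ind}(\mathcal{A})$ under finite limits and colimits, $\mathcal{E}$ is an abelian subcategory of $\mathbf{EM}(K)$. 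Hom-finiteness of $\mathcal{E}$ is clear from the embedding $\on{Hom}_{\mathcal{E}}((X,\alpha),(Y,\beta)) \hookrightarrow \on{Hom}_{\mathcal{A}}(X, Y)$, and finite length of each object of $\mathcal{E}$ follows since a strict filtration by sub-comodules descends to a strict filtration by subobjects in $\mathcal{A}$.

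Next I would identify $\mathcal{E}$ with the compact objects of $\mathbf{EM}(K)$. In one direction, $\on{Hom}_{\mathbf{EM}(K)}((X, \alpha), -)$ is the equalizer of two natural transformations from $\on{Hom}_{\mathbf{Ind}(\mathcal{A})}(X, -)$ to $\on{Hom}_{\mathbf{Ind}(\mathcal{A})}(X, K(-))$; when $X$ is compact and $K$ is finitary, both preserve filtered colimits, so the equalizer does too, and hence $(X, \alpha)$ is compact in $\mathbf{EM}(K)$. Conversely, finitariness of $K$ implies that $R_{\mathbf{EM}(K)}$ preserves filtered colimits, since they are computed underlying and $K$ preserves them. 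By adjointness, $\on{Hom}_{\mathbf{Ind}(\mathcal{A})}(X, -) \simeq \on{Hom}_{\mathbf{EM}(K)}((X, \alpha), R_{\mathbf{EM}(K)}(-))$ preserves filtered colimits whenever $(X, \alpha)$ is compact, forcing $X$ to be compact, i.e., $X \in \mathcal{A}$.

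The main step is the equivalence $\mathbf{Ind}(\mathcal{E}) \xiso \mathbf{EM}(K)$. By the universal property of $\mathbf{Ind}$, the inclusion $\mathcal{E} \hookrightarrow \mathbf{EM}(K)$ extends uniquely to a filtered-colimit-preserving functor $\mathbf{Ind}(\mathcal{E}) \to \mathbf{EM}(K)$, and this extension is fully faithful since the objects of $\mathcal{E}$ are compact in $\mathbf{EM}(K)$. The main obstacle is essential surjectivity: every $K$-comodule must be exhibited as a filtered colimit of sub-comodules belonging to $\mathcal{E}$. The poset $\mathcal{P}(X, \alpha)$ of sub-comodules $(Y, \beta) \subseteq (X, \alpha)$ with $Y \in \mathcal{A}$ is directed, since sums of sub-comodules are sub-comodules and sums of objects in $\mathcal{A}$ remain in $\mathcal{A}$. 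What requires work is the cofinality claim that every compact subobject of $X$ is contained in some member of $\mathcal{P}(X, \alpha)$. Here I would invoke Proposition~\ref{TakeuchiEilenbergWatts}: choosing $D$ with $\mathcal{A} \simeq \tetramodfd[D]{}$ identifies $K$ with a coalgebra object in the monoidal category $(\biComod{D}{D}, \cotens_D)$, realising $\mathbf{EM}(K)$ as the category of comodules over a coring-like structure over $D$, where the desired cofinality becomes the classical fundamental theorem of comodules transposed to this setting.

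Finally, for the ``in particular'' assertion: given a left exact comonad $S$ on $\mathcal{A}$, the extension $\mathbf{Ind}(S)$ is automatically finitary, inherits left exactness (filtered colimits being exact in $\mathbf{Ind}(\mathcal{A})$), and inherits the comonad structure by functoriality of $\mathbf{Ind}$. Applying the main result to $\mathbf{Ind}(S)$ yields $\mathbf{EM}(\mathbf{Ind}(S)) \simeq \mathbf{Ind}(\mathcal{E})$, where $\mathcal{E}$ consists of $\mathbf{Ind}(S)$-comodules $(X, \alpha)$ with $X \in \mathcal{A}$; since $\mathbf{Ind}(S)|_{\mathcal{A}} = S$, these are exactly the $S$-comodules on $\mathcal{A}$, giving $\mathcal{E} \simeq \mathbf{EM}(S)$, which is therefore locally finite abelian.
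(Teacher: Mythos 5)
Your proposal is correct in outline, and for most of the argument it takes a more intrinsic route than the paper. The paper's proof is a single reduction: writing $\mathcal{A} \simeq \tetramodfd[D]{}$ and invoking Proposition~\ref{TakeuchiEilenbergWatts} to get $K \cong C \cotens \blank$ for a $D$-$D$-bicomodule $C$, it observes (following Takeuchi) that the comonad structure maps $C \to C \cotens_{D} C$ and $C \to D$ make $C$ an honest $\Bbbk$-coalgebra equipped with a coalgebra morphism to $D$, whence $\mathbf{EM}(K) \simeq \tetramod[C]{}$ and everything (compact objects, $\mathbf{Ind}$-form, local finiteness of $\mathcal{E} = \tetramodfd[C]{}$) follows from the standard theory of comodule categories. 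You instead build $\mathcal{E}$ abstractly as the comodules with compact underlying object, verify directly that these are the compact objects of $\mathbf{EM}(K)$ via the equalizer presentation of $\on{Hom}_{\mathbf{EM}(K)}((X,\alpha),\blank)$ and the adjunction with the cofree functor, and reduce the whole statement to a single cofinality claim. That part of your argument is sound and arguably more illuminating, since it isolates exactly where coalgebra-specific input is needed; the paper's route is shorter but makes the structural content less visible.

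The one point to tighten is the cofinality step itself, which is precisely where you fall back on the same identification the paper starts from. A coalgebra object in $(\biComod{D}{D}, \cotens_{D})$ is a priori a coring-type structure, and the fundamental theorem of comodules does \emph{not} transpose to arbitrary corings (it can fail without flatness hypotheses). What rescues the argument — and what the paper extracts from~\cite[Remark~2.4]{Tak} — is that composing the comultiplication $C \to C \cotens_{D} C$ with the inclusion $C \cotens_{D} C \hookrightarrow C \kotimes C$ exhibits $C$ as a genuine $\Bbbk$-coalgebra, so that $K$-comodules are literally $C$-comodules whose restriction along $C \to D$ recovers the given $D$-comodule structure, and the classical fundamental theorem applies verbatim. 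With that sentence added, your cofinality claim is justified and the proof is complete.
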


\begin{proof}
  Let $D$ be the $\Bbbk$-coalgebra such that $\mathcal{A} \simeq \tetramodfd[D]{}$, so that $\mathbf{Ind}(\mathcal{A}) \simeq \tetramod[D]{}$. By Example~\ref{TakeuchiEilenbergWatts}, there is a $D$-$D$-bicomodule $C$ such that $K \simeq C \cotens \blank$.

  Similarly to~\cite[Remark~2.4]{Tak},
  under this isomorphism the comonad structure on \(K\) corresponds
  to maps \(C \to C \square_{D} C\) and \(C \to D\),
  which endow \(C\) with the structure of a \(\Bbbk\)-coalgebra,
  together with a coalgebra morphism \(C \to D\).
  Formally, a \(K\)-comodule in \(\tetramod[D]{}\) is a \(D\)-comodule
  together with a \(C\)-comodule structure
  that restricts to the given \(D\)-comodule along the coalgebra morphism \(C \to D\).
  Morphisms of \(K\)-comodules in \(\tetramod[D]{}\) are precisely \(C\)-comodule morphisms,
  and so we have \(\mathbf{EM}(K) \simeq \tetramod[C]{}\). Thus we may set $\mathcal{E} = \tetramodfd[C]{}$. The characterization of compact objects in $\mathbf{EM}(K)$ in terms of the images of the functor $L_{\mathbf{EM}(K)}$ follows immediately from observing that under the equivalence \(\mathbf{EM}(K) \simeq \tetramod[C]{}\), the functor $L_{\mathbf{EM}(K)}$ is simply the restriction functor $\tetramod[C]{} \to \tetramod[D]{}$.

  The second part of the statement follows by observing that $\mathbf{EM}(S)$ is the category of compact objects in $\mathbf{EM}(\mathbf{Ind}(S))$, by the first part of the statement, and by observing that $\mathbf{EM}(S)$ is also the category of compact objects in $\mathbf{Ind}(\mathbf{EM}(S))$. Since both $\mathbf{Ind}(\mathbf{EM}(S))$ and $\mathbf{EM}(\mathbf{Ind}(S))$ are locally finitely presentable, the equivalence between their respective categories of compact objects establishes an equivalence between $\mathbf{Ind}(\mathbf{EM}(S))$ and $\mathbf{EM}(\mathbf{Ind}(S))$.
\end{proof}

The analogue of Proposition~\ref{locomonads} for finite abelian categories is significantly simpler:

\begin{proposition}\label{finitemonads}
  Let \(T\) be a right exact monad on a finite abelian category \(\mathcal{A}\).
  Then \(\mathbf{EM}(T)\) is a finite abelian category.

  Further, any projective object in \(\mathbf{EM}(T)\) is a direct summand of one of the form $\iota(P)$,
  where $\iota\from \mathbf{Kl}(T) \to \mathbf{EM}(T)$ is the canonical embedding of Equation~\eqref{denotedbyiota} and $P \in \proj{\mathcal{A}}$.
  Denoting the category of objects of this form by $\mathbf{Kl}_{p}(T)$,
  we obtain an equivalence $\proj{\mathbf{EM}(T)} \simeq {\mathbf{Kl}_{p}(T)}^{\mathrm{Cy}}$.
\end{proposition}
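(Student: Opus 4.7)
\emph{Proof plan.} The plan is to verify the five defining conditions of a finite abelian category one by one, and then to read off the characterisation of projectives from the free-forget adjunction $L \dashv R$, with $L \defeq L_{\mathbf{EM}(T)}$ and $R \defeq R_{\mathbf{EM}(T)}$. The abelian structure is immediate from Proposition~\ref{prop:abelian-EM-cat}; what drives the rest of the argument is that right exactness of $T$ forces $R$ to be exact---colimits in $\mathbf{EM}(T)$ are created by $R$---so $L$ preserves projectives.

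For hom-finiteness: since $R$ is faithful, $\mathbf{EM}(T)(M,N)$ embeds into the finite-dimensional space $\mathcal{A}(RM, RN)$. For finite length: $R$ is faithful and exact, hence reflects strict chains of subobjects, so finite length of $RM$ in $\mathcal{A}$ forces finite length of $M$ in $\mathbf{EM}(T)$. For enough projectives: given $M \in \mathbf{EM}(T)$, choose a projective cover $P \twoheadrightarrow RM$ in $\mathcal{A}$ and compose with the counit surjection $LRM \twoheadrightarrow M$ (which is the action morphism $\nabla_M$); since $L$ is right exact, this yields a surjection $\iota(P) = L(P) \twoheadrightarrow M$, and $\iota(P)$ is projective because $L$ preserves projectives. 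For finitely many simples: each simple $S \in \mathbf{EM}(T)$ admits a nonzero morphism from $L(S_i)$ for some simple $S_i \in \mathcal{A}$ (take a simple subobject $S_i \hookrightarrow RS$, which exists by finite length in $\mathcal{A}$, and use the adjunction $\mathbf{EM}(T)(L(S_i), S) \cong \mathcal{A}(S_i, RS)$), hence is a composition factor of one of the finitely many finite-length objects $L(S_1), \ldots, L(S_n)$, where $S_1, \ldots, S_n$ enumerate the simples of $\mathcal{A}$.

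For the characterisation of projectives, given $Q \in \proj{\mathbf{EM}(T)}$ the surjection $\iota(P) \twoheadrightarrow Q$ produced above splits by projectivity of $Q$, so $Q$ is a direct summand of an object of $\mathbf{Kl}_p(T)$. Conversely, $\mathbf{Kl}_p(T)$ is closed under finite direct sums (since $L$ is a left adjoint and $\proj{\mathcal{A}}$ is closed under direct sums) and consists entirely of projectives; since $\mathbf{EM}(T)$ is idempotent complete (being abelian), the resulting inclusion $\mathbf{Kl}_p(T) \hookrightarrow \proj{\mathbf{EM}(T)}$ extends to the claimed equivalence $\mathbf{Kl}_p(T)^{\mathrm{Cy}} \xiso \proj{\mathbf{EM}(T)}$ from the Cauchy (Karoubi) envelope.

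The main technical point is the exactness of $R$ in the presence of right exactness of $T$, which should be invoked on the same footing as Proposition~\ref{prop:abelian-EM-cat}; once this is in place, every other step is a clean consequence of faithfulness of $R$, right exactness of $L$, and the fact that the finiteness data of $\mathcal{A}$ transfer across the adjunction.
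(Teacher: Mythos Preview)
Your proof is correct, but the paper takes a different and shorter route: rather than verifying the five axioms directly, it invokes the characterisation of finite abelian categories as $\lmod{A}$ for a finite-dimensional algebra $A$, applies the Eilenberg--Watts theorem to write $T \cong B \otimes_A \blank$ for a finite-dimensional $A$-$A$-bimodule $B$, and then observes that the monad structure on $T$ amounts to a $\Bbbk$-algebra structure on $B$ together with an algebra map $A \to B$, so that $\mathbf{EM}(T) \simeq \lmod{B}$ is manifestly finite abelian. This mirrors the strategy used for Proposition~\ref{locomonads} in the comodule setting. Your argument, by contrast, is purely categorical and does not appeal to the concrete realisation via bimodules; it makes the role of right exactness of $T$ (hence exactness of $R$) more transparent, and would transport to settings where an Eilenberg--Watts description is unavailable. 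For the characterisation of projectives, the two proofs coincide: both produce the surjection $\iota(P) \twoheadrightarrow M$ by composing $L$ applied to a projective cover with the counit $\nabla_M$, and both conclude by splitting.
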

\begin{proof}
  Let \(A\) be a finite-dimensional \(\Bbbk\)-algebra such that \(\mathcal{A} \simeq \lmod{A}\).
  Then there is an \(A\)-\(A\)-bimodule \(B\) such that \(T \cong B \otimes_{A} \blank\).
  Similarly to the proof of Proposition~\ref{locomonads},
  monad structures on \(T\) correspond to pairs consisting of a finite-dimensional \(\Bbbk\)-algebra structure on \(B\) and an algebra homomorphism \(A \to B\).
  Under this correspondence we have \(\mathbf{EM}(T) \simeq \lmod{B}\).
  The latter category is clearly a finite abelian category.

  Let $P \in \proj{\mathcal{A}}$.
  Then $\mathbf{EM}(T)(\iota(P), \blank) \cong \mathcal{A}(P, \blank)$, proving the exactness of the left-hand side, and thus projectivity of $\iota(P)$.
  For $X \in \mathbf{EM}(T)$, using the fact that $\mathcal{A}$ has enough projectives,
  we may fix an epimorphism $Q \xtwoheadrightarrow{\ q\ } X$ in $\mathcal{A}$,
  where $Q \in \proj{\mathcal{A}}$.
  We obtain a composite epimorphism
  \[
    \iota(P) \xtwoheadrightarrow{\iota(q)} \iota(X) = TX \xtwoheadrightarrow{\nabla_{X}} X,
  \]
  the first part of which is epic by right exactness of $\iota$.
  This proves the latter claim.
\end{proof}

\begin{lemma}\label{finitecorestriction}
  The embedding
  \(\mathbf{EM}(T) \hookrightarrow [{\mathbf{Kl}(T)}^{\on{op}}, \mathbf{Vec}]\)
  can be corestricted to an embedding
  \(\mathbf{EM}(T) \hookrightarrow \fincolimit{\mathbf{Kl}(T)}\) to the finite cocompletion of\, \(\mathbf{Kl}(T)\).
\end{lemma}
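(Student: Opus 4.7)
The plan is to recognize the embedding $\mathbf{EM}(T) \hookrightarrow [{\mathbf{Kl}(T)}^{\op}, \mathbf{Vec}]$ as the restricted Yoneda embedding associated to the inclusion $\iota\from \mathbf{Kl}(T) \hookrightarrow \mathbf{EM}(T)$ of~\eqref{denotedbyiota}: it sends $(X, \nabla_X)$ to the presheaf $Y \mapsto \mathbf{EM}(T)(\iota(Y), (X, \nabla_X))$, which by the free-forgetful adjunction is naturally isomorphic to $\mathcal{M}(Y, X)$. Since $\fincolimit{\mathbf{Kl}(T)}$ can be realized as the closure of representables under finite colimits inside the presheaf category, it suffices to exhibit every $y(X, \nabla_X)$ as a finite colimit of representables there.

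The main ingredient is the canonical reflexive coequalizer presentation of $(X, \nabla_X)$ in $\mathbf{EM}(T)$:
\[
  \iota(T(X)) \rightrightarrows \iota(X) \xrightarrow{\nabla_X} (X, \nabla_X),
\]
where the parallel arrows come from $\mu_X$ and $T(\nabla_X)$, which under the fully faithful embedding $\iota$ correspond, respectively, to $\id_{T(X)}\from T(X) \to X$ and $\eta_X \circ \nabla_X\from T(X) \to X$ in $\mathbf{Kl}(T)$.

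To promote this to a statement about presheaves, I would apply, for each $Y \in \mathbf{Kl}(T)$, the corepresentable functor $\mathbf{EM}(T)(\iota(Y), \blank)\from \mathbf{EM}(T) \to \mathbf{Vec}$. Being corepresentable, it preserves the above coequalizer, giving that $y(X, \nabla_X)(Y)$ is the coequalizer in $\mathbf{Vec}$ of $\mathbf{Kl}(T)(Y, T(X)) \rightrightarrows \mathbf{Kl}(T)(Y, X)$. Because coequalizers in $[{\mathbf{Kl}(T)}^{\op}, \mathbf{Vec}]$ are computed pointwise, this assembles to an identification
\[
  y(X, \nabla_X) \cong \mathrm{coeq}\bigl(y_{\mathbf{Kl}(T)}(T(X)) \rightrightarrows y_{\mathbf{Kl}(T)}(X)\bigr)
\]
in the presheaf category, where $y_{\mathbf{Kl}(T)}$ denotes the ordinary Yoneda embedding of $\mathbf{Kl}(T)$.

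Since $y_{\mathbf{Kl}(T)}(T(X))$ and $y_{\mathbf{Kl}(T)}(X)$ are representable and $\fincolimit{\mathbf{Kl}(T)}$ is closed under finite colimits in the presheaf category, the coequalizer $y(X, \nabla_X)$ lies in $\fincolimit{\mathbf{Kl}(T)}$, which yields the desired corestriction. I do not expect any serious obstacle: the only point worth double-checking is that the two parallel arrows of the canonical presentation genuinely lie in the image of $\iota$, which is the identification of $\mu_X$ and $T(\nabla_X)$ with morphisms in $\mathbf{Kl}(T)$ via the fully faithful $\iota$, but this is a direct monad-axiom calculation.
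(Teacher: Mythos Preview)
Your overall strategy matches the paper's: identify the restricted Yoneda embedding along $\iota$, and then exhibit each $y(X,\nabla_X)$ as the coequalizer in presheaves of $y_{\mathbf{Kl}(T)}(T(X)) \rightrightarrows y_{\mathbf{Kl}(T)}(X)$ coming from the canonical presentation of $(X,\nabla_X)$.

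There is, however, one genuine gap in your justification. You write that $\mathbf{EM}(T)(\iota(Y),\blank)$ preserves the coequalizer $\iota(T(X)) \rightrightarrows \iota(X) \to (X,\nabla_X)$ ``being corepresentable''. That reasoning is backwards: corepresentable functors preserve \emph{limits}, not colimits, and there is no a priori reason a $\mathrm{Hom}$-functor should preserve an arbitrary (or even reflexive) coequalizer. What actually makes the argument go through---and what the paper invokes explicitly---is that this particular coequalizer is \emph{absolute}: under the forgetful functor $R_{\mathbf{EM}(T)}$ it becomes split in $\cat{C}$ (with section $\eta_X$), so every functor preserves it. In particular $\mathbf{EM}(T)(\iota(Y),\blank) \cong \cat{C}(Y, R_{\mathbf{EM}(T)}(\blank))$ sends it to a split, hence a genuine, coequalizer in $\mathbf{Vec}$. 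Once you replace ``being corepresentable'' with ``because the coequalizer is absolute (split in $\cat{C}$)'', the remainder of your argument is correct and essentially identical to the paper's.
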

\begin{proof}
  For \(T\)-modules \(X\) and \(Y\),
  notice that there is a bijection
  \begin{equation}\label{eq:fincorestr:iso}
    \begin{aligned}
      \mathbf{EM}(T)(T(X), Y) &\,\ \cong\,\ \cat{C}(X, Y) \\
      f\from T(X) \to Y &\mapsto f \circ \eta_X \\
      \nabla_Y \circ Tg &\longmapsfrom g\from X \to Y,
    \end{aligned}
  \end{equation}
  which induces an isomorphism \(\mathbf{EM}(T)(T(\blank), \bblank) \cong \cat{C}(\blank, \bblank)\),
  the left-hand side also being obtained from the functor
  \[
    \mathbf{EM}(T)
    \xrightarrow{\ \yo\ } [{\mathbf{EM}(T)}^{\on{op}}, \mathbf{Vec}]
    \xrightarrow{\ \iota^{\ast}\ } [{\mathbf{Kl}(T)}^{\on{op}},\mathbf{Vec}].
  \]

  Now, recall that the coequalizer
  \[
    \begin{mytikzcd}[ampersand replacement=\&]
      {T^{2}(X)} \& {T(X)} \& X
      \arrow["{\mu_{X}}", shift left=1, from=1-1, to=1-2]
      \arrow["{T \nabla_X}"', shift right=1, from=1-1, to=1-2]
      \arrow["{\nabla_X}", from=1-2, to=1-3]
    \end{mytikzcd}
  \]
  in \(\mathbf{EM}(T)\) is an absolute coequalizer\footnote{%
    \,A coequalizer is called \emph{absolute} if it is preserved by any functor;
    see e.g.~\cite{Pa} for details.%
  }
  in \(\cat{C}\);
  thus, its image under the Yoneda embedding $\yo$ yields a coequalizer
  \[
    \begin{mytikzcd}[ampersand replacement=\&]
      {\cat{C}(\blank, T^2(X))} \& {\cat{C}(\blank, T(X))} \& {\cat{C}(\blank, X).}
      \arrow["{{(\mu_X)}_{*}}", shift left=1, from=1-1, to=1-2]
      \arrow["{{(T \nabla_X)}_{*}}"', shift right=1, from=1-1, to=1-2]
      \arrow["{\nabla_{X;*}}", from=1-2, to=1-3]
    \end{mytikzcd}
  \]
  Passing under the isomorphism of Equation~\eqref{eq:fincorestr:iso},
  one obtains a coequaliser
  \[
    \begin{mytikzcd}[ampersand replacement=\&]
      {\mathbf{EM}(T)(T(\blank), T^2(X))} \& {\mathbf{EM}(T)(T(\blank), T(X))} \& {\mathbf{EM}(T)(T(\blank), X),}
      \arrow["{{(\mu_X)}_{*}}", shift left=1, from=1-1, to=1-2]
      \arrow["{{(T\nabla_X)}_{*}}"', shift right=1, from=1-1, to=1-2]
      \arrow["{\nabla_{X;*}}", from=1-2, to=1-3]
    \end{mytikzcd}
  \]
  which, in turn, is isomorphic to
  \[
    \begin{mytikzcd}[ampersand replacement=\&]
      {\mathbf{Kl}(T)(\blank, T(X))} \& {\mathbf{Kl}(T)(\blank, X)} \& {\mathbf{EM}(T)(T(\blank), X).}
      \arrow["{{(\mu_X)}_{*}}", shift left=1, from=1-1, to=1-2]
      \arrow["{{(T\nabla_X)}_{*}}"', shift right=1, from=1-1, to=1-2]
      \arrow["{\nabla_{X;*}}", from=1-2, to=1-3]
    \end{mytikzcd}
  \]
  This proves the result.
\end{proof}

\begin{proposition}
  Let \(T\) be a right exact monad on an abelian category \(\mathcal{A}\).
  The inclusion functor \(\mathbf{EM}(T) \hookrightarrow \fincolimit{\mathbf{Kl}(T)}\) has a left adjoint, and the counit of the adjunction is a natural isomorphism.
  In other words, \(\mathbf{EM}(T)\) is a reflective subcategory of\, \(\fincolimit{\mathbf{Kl}(T)}\).
  Further, the left adjoint \(\fincolimit{\mathbf{Kl}(T)} \to \mathbf{EM}(T)\) is the right exact extension of the inclusion \(\iota \from \mathbf{Kl}(T) \hookrightarrow \mathbf{EM}(T)\).
\end{proposition}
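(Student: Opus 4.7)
The candidate left adjoint is obtained by the universal property of finite cocompletion, and the two remaining claims (adjunction and counit isomorphism) are then deduced from the coequalizer presentation of Lemma~\ref{finitecorestriction}.

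\textbf{Step 1: constructing the candidate.} Since $T$ is right exact, Proposition~\ref{prop:abelian-EM-cat} gives that $\mathbf{EM}(T)$ is abelian, hence in particular finitely cocomplete. Writing $K\from\mathbf{Kl}(T)\hookrightarrow\fincolimit{\mathbf{Kl}(T)}$ for the canonical embedding, the universal property of the finite cocompletion yields a right exact functor $\widetilde{\iota}\from \fincolimit{\mathbf{Kl}(T)}\to\mathbf{EM}(T)$, unique up to isomorphism, together with a natural isomorphism $\widetilde{\iota}\circ K\cong \iota$. This already accounts for the last sentence of the statement.

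\textbf{Step 2: establishing the adjunction.} Write $i\from \mathbf{EM}(T)\hookrightarrow \fincolimit{\mathbf{Kl}(T)}$ for the embedding of Lemma~\ref{finitecorestriction}; concretely $i(X)=\mathbf{EM}(T)(\iota(\blank),X)$. For $Z\in\mathbf{Kl}(T)$ and $X\in\mathbf{EM}(T)$, the Yoneda lemma and the defining isomorphism of Lemma~\ref{finitecorestriction} give
\[
\mathbf{EM}(T)\bigl(\widetilde{\iota}(K(Z)),X\bigr)\,\cong\,\mathbf{EM}(T)(\iota(Z),X)\,=\,i(X)(Z)\,\cong\,\fincolimit{\mathbf{Kl}(T)}(K(Z),i(X)),
\]
natural in $Z$ and $X$. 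Since every object of $\fincolimit{\mathbf{Kl}(T)}$ is a finite colimit of representables, and since both functors $\mathbf{EM}(T)(\widetilde{\iota}(\blank),X)$ and $\fincolimit{\mathbf{Kl}(T)}(\blank,i(X))$ send finite colimits in the first variable to finite limits in $\mathbf{Vec}$ (the former because $\widetilde{\iota}$ is right exact, the latter tautologically), the isomorphism extends uniquely along these colimit presentations. This gives the sought-for natural bijection $\mathbf{EM}(T)(\widetilde{\iota}(\blank),\bblank)\cong\fincolimit{\mathbf{Kl}(T)}(\blank,i(\bblank))$, so $\widetilde{\iota}\dashv i$.

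\textbf{Step 3: invertibility of the counit.} By Lemma~\ref{finitecorestriction}, for each $X\in\mathbf{EM}(T)$ there is a coequalizer
\[
\begin{tikzcd}[ampersand replacement=\&]
K(T(X)) \& K(X) \& i(X)
\arrow["K(\mu_X)", shift left=1, from=1-1, to=1-2]
\arrow["K(T\nabla_X)"', shift right=1, from=1-1, to=1-2]
\arrow[from=1-2, to=1-3]
\end{tikzcd}
\]
in $\fincolimit{\mathbf{Kl}(T)}$. Applying the right exact functor $\widetilde{\iota}$ and using $\widetilde{\iota}\circ K\cong\iota$ produces the coequalizer of $\mu_X,T\nabla_X\from T^2(X)\rightrightarrows T(X)$ in $\mathbf{EM}(T)$, which is precisely $(X,\nabla_X)$. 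Hence $\widetilde{\iota}\circ i\cong\id_{\mathbf{EM}(T)}$, equivalently the counit of $\widetilde{\iota}\dashv i$ is a natural isomorphism; equivalently, $i$ is fully faithful, exhibiting $\mathbf{EM}(T)$ as a reflective subcategory.

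\textbf{Expected obstacle.} No step is genuinely hard; the main care goes into Step~2, where one must justify that the adjunction isomorphism on the generating representables extends coherently along the finite colimit presentations. Once the preservation properties of both sides are isolated, the argument is a formal density/co-Yoneda consequence of Step~1 and Lemma~\ref{finitecorestriction}, with no delicate exactness computations left.
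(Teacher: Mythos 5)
Your proof is correct, but it is organized differently from the paper's. The paper obtains the left adjoint indirectly: it factors the embedding $\mathbf{EM}(T) \hookrightarrow \fincolimit{\mathbf{Kl}(T)}$ as the composite of the inclusion $\mathbf{EM}(T) \hookrightarrow \fincolimit{\mathbf{EM}(T)}$ (whose left adjoint is the reflection extending $\on{Id}_{\mathbf{EM}(T)}$) with $\iota^{\ast}\from \fincolimit{\mathbf{EM}(T)} \to \fincolimit{\mathbf{Kl}(T)}$ (whose left adjoint is the restricted Kan extension $\iota_{!}$), so the adjunction comes for free from composing adjunctions, and only afterwards identifies the composite left adjoint with the right exact extension of $\iota$. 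You instead define the candidate $\widetilde{\iota}$ directly as the right exact extension of $\iota$ via the universal property of $\fincolimit{\blank}$, and then verify the hom-set bijection by hand on representables and propagate it along finite colimit presentations; this buys you a proof in which the left adjoint has the advertised form from the outset, and it forces you to actually check the counit condition via the coequalizer presentation of Lemma~\ref{finitecorestriction}, a point the paper's proof leaves implicit (it follows there from full faithfulness of the embedding, i.e.\ density of $\mathbf{Kl}(T)$ in $\mathbf{EM}(T)$). Two small presentational remarks: in Step~3 the labels $K(\mu_X)$, $K(T\nabla_X)$ are a mild abuse, since $\mu_X$ and $T\nabla_X$ are morphisms of $\mathcal{A}$ rather than of $\mathbf{Kl}(T)$ — you mean the Kleisli morphisms they correspond to under the isomorphisms of Lemma~\ref{finitecorestriction}; and the inference from ``$\widetilde{\iota}\circ i \cong \id$ for some natural isomorphism'' to ``the counit is an isomorphism'' is true but not completely formal — it is cleaner either to observe that the isomorphism you construct \emph{is} the counit (trace $\id_{i(X)}$ through your Step~2 bijection: it corresponds to $\nabla_X\from \iota(X)\to X$, which induces the canonical isomorphism out of the coequalizer), or to invoke full faithfulness of $i$ directly. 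Neither point is a gap.
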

\begin{proof}
  By Proposition~\ref{prop:abelian-EM-cat},
  the Eilenberg--Moore category of \(T\) is finitely cocomplete.
  Similarly to~\cite[Proposition~6.3.1]{KS},
  the functor \(\fincolimit{\mathbf{EM}(T)} \to \mathbf{EM}(T)\)
  that extends the functor \(\on{Id}_{\mathbf{EM}(T)}\from \mathbf{EM}(T) \to \mathbf{EM}(T)\)
  to the freely adjoined finite colimits
  is left adjoint to the inclusion \(\mathbf{EM}(T) \hookrightarrow \fincolimit{\mathbf{EM}(T)}\).

  On the other hand, there is an adjunction
  \begin{equation}\label{eq:kan-adjunction}
    \begin{mytikzcd}[ampersand replacement=\&]
      {[{\mathbf{Kl}(T)}^{\on{op}},\mathbf{Vec}]} \& {[{\mathbf{EM}(T)}^{\on{op}},\mathbf{Vec}],}
      \arrow[""{name=0, anchor=center, inner sep=0}, "{\iota_{!}}", shift left=2, from=1-1, to=1-2]
      \arrow[""{name=1, anchor=center, inner sep=0}, "{\iota^{\ast}}", shift left=2, from=1-2, to=1-1]
      \arrow["\dashv"{anchor=center, rotate=-90}, draw=none, from=0, to=1]
    \end{mytikzcd}
  \end{equation}
  where \(\iota_{{!}}\) is the left Kan extension of \(\yo_{\mathbf{EM}(T)} \circ \iota\) along \(\yo_{\mathbf{Kl}(T)}\), see for example~\cite[Corollary~3.3]{Str3}.
  Equation~\eqref{eq:kan-adjunction} restricts to an adjunction
  \[
    \begin{mytikzcd}[ampersand replacement=\&]
      {\fincolimit{\mathbf{Kl}(T)}} \& {\fincolimit{\mathbf{EM}(T)},}
      \arrow[""{name=0, anchor=center, inner sep=0}, "{\iota_{!}}", shift left=2, from=1-1, to=1-2]
      \arrow[""{name=1, anchor=center, inner sep=0}, "{\iota^{\ast}}", shift left=2, from=1-2, to=1-1]
      \arrow["\dashv"{anchor=center, rotate=-90}, draw=none, from=0, to=1]
    \end{mytikzcd}
  \]
  since, by definition, \(\iota_{!}\) sends finite colimits of representables to finite colimits of representables,
  and \(\iota^{\ast}\) has the same property, since it preserves colimits, and, by Lemma~\ref{finitecorestriction}, sends representables to finite colimits of representables.
  Thus we have the following commutative diagram
  \[
    \begin{mytikzcd}[ampersand replacement=\&]
      {\mathbf{EM}(T)} \& {\fincolimit{\mathbf{EM}(T)}} \& {\mathbf{EM}(T)} \\
      \& {\fincolimit{\mathbf{Kl}(T)}} \& {\mathbf{Kl}(T)}
      \arrow[""{name=0, anchor=center, inner sep=0}, shift right=2, from=1-1, to=1-2]
      \arrow[curve={height=13pt}, hook', from=1-1, to=2-2]
      \arrow[""{name=1, anchor=center, inner sep=0}, shift right=2, from=1-2, to=1-1]
      \arrow[""{name=2, anchor=center, inner sep=0}, "{\iota^{\ast}}", shift left=2, from=1-2, to=2-2]
      \arrow[hook', from=1-3, to=1-2]
      \arrow[""{name=3, anchor=center, inner sep=0}, "{\iota_{!}}", shift left=2, from=2-2, to=1-2]
      \arrow["\iota"', from=2-3, to=1-3]
      \arrow[hook', from=2-3, to=2-2]
      \arrow["\dashv"{anchor=center, rotate=-90}, draw=none, from=1, to=0]
      \arrow["\dashv"{anchor=center}, draw=none, from=3, to=2]
    \end{mytikzcd}
  \]
  This realizes the embedding \(\mathbf{EM}(T) \hookrightarrow \fincolimit{\mathbf{Kl}(T)}\) as the composition of two right adjoints,
  and thus a right adjoint.

  Lastly, the composite
  \[
    \mathbf{Kl}(T) \to \mathbf{EM}(T) \to \fincolimit{\mathbf{EM}(T)} \to \mathbf{EM}(T)
  \]
  is naturally isomorphic to \(\iota \from \mathbf{Kl}(T) \hookrightarrow \mathbf{EM}(T)\),
  which proves the latter statement.
\end{proof}

The next result proves the essential uniqueness of the module structure
on the Eilenberg--Moore category,
under the assumption that it is `induced' from the Kleisli category.

\begin{theorem}\label{thm:one-module-structure-on-EM}
  Let \({\mathbf{Kl}(T)}_{0}\) denote the Kleisli category for a monad \(T\) on \(\cat{C}\),
  equipped with a fixed left \(\mathcal{C}\)-module structure.
  Equip \(\mathbf{EM}(T)\) with two left \(\mathcal{C}\)-module category structures%
  ---denoted by \({\mathbf{EM}(T)}_{1}\) and \({\mathbf{EM}(T)}_{2}\)---%
  such that the inclusion
  \(\iota \from \mathbf{Kl}(T) \hookrightarrow \mathbf{EM}(T)\)
  gives strong \(\mathcal{C}\)-module functors
  \({\mathbf{Kl}(T)}_{0} \to {\mathbf{EM}(T)}_{1}\)
  and \({\mathbf{Kl}(T)}_{0} \to {\mathbf{EM}(T)}_{2}\).

  Then there is an equivalence \({\mathbf{EM}(T)}_{1} \simeq {\mathbf{EM}(T)}_{2}\) of left \(\mathcal{C}\)-module categories.
\end{theorem}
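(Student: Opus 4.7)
The plan is to exhibit the identity functor on $\mathbf{EM}(T)$ as the underlying functor of a strong $\cat{C}$-module equivalence $\mathbf{EM}(T)_{1} \simeq \mathbf{EM}(T)_{2}$. First I would observe that both module structures determine the \emph{same} auxiliary data. By Corollary~\ref{laxKlCorrespondence} the fixed module structure on $\mathbf{Kl}(T)_{0}$ corresponds to a lax $\cat{C}$-module monad structure on $T$. The hypothesis that $\iota\from \mathbf{Kl}(T)_{0} \to \mathbf{EM}(T)_{i}$ is strong $\cat{C}$-module, together with strictness of $L_{\mathbf{Kl}(T)}$, forces $L_{\mathbf{EM}(T)} = \iota \circ L_{\mathbf{Kl}(T)}$ to carry a strong $\cat{C}$-module structure that is independent of $i$. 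Applying Proposition~\ref{prop:dual-module-adjunctions} to the adjunction $L_{\mathbf{EM}(T)} \dashv R_{\mathbf{EM}(T)}$ endows $R_{\mathbf{EM}(T)}$ with a canonical lax $\cat{C}$-module structure, again independent of $i$, and realizes both Eilenberg--Moore adjunctions as lax $\cat{C}$-module adjunctions whose associated lax module monad is the one recovered from $\mathbf{Kl}(T)_{0}$.

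Next I would construct the comparison. For each $M \in \mathbf{EM}(T)$ the canonical reflexive coequalizer
\[
  L_{\mathbf{EM}(T)} T R_{\mathbf{EM}(T)} M \rightrightarrows L_{\mathbf{EM}(T)} R_{\mathbf{EM}(T)} M \xrightarrow{\;\nabla_{M}\;} M
\]
in $\mathbf{EM}(T)$ becomes a split---hence absolute---coequalizer in $\cat{C}$ upon application of $R_{\mathbf{EM}(T)}$. Under the shared strong module structure, each term $V \triangleright_{i} L_{\mathbf{EM}(T)}(X)$ is canonically isomorphic to $L_{\mathbf{EM}(T)}(V \triangleright X)$, with the parallel arrows matching between $i = 1$ and $i = 2$. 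The composite
\[
  V \triangleright_{1} L_{\mathbf{EM}(T)}(R_{\mathbf{EM}(T)} M)
  \xiso V \triangleright_{2} L_{\mathbf{EM}(T)}(R_{\mathbf{EM}(T)} M)
  \xrightarrow{V \,\triangleright_{2}\, \nabla_{M}} V \triangleright_{2} M
\]
coequalizes the lifted pair, and thus---once we know $V \triangleright_{1} \blank$ preserves the bar coequalizer---defines a comparison $\phi_{V, M}\from V \triangleright_{1} M \to V \triangleright_{2} M$.

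To see that $\phi_{V, M}$ is an isomorphism, apply $R_{\mathbf{EM}(T)}$, which is conservative. The shared lax module structure on $R_{\mathbf{EM}(T)}$ exhibits both $R(V \triangleright_{1} M)$ and $R(V \triangleright_{2} M)$ as coequalizers of the \emph{same} pair $V \otimes T^{2} R_{\mathbf{EM}(T)} M \rightrightarrows V \otimes T R_{\mathbf{EM}(T)} M$, obtained by tensoring the absolute coequalizer from the bar resolution with $V$. Absoluteness forces $R(\phi_{V, M})$ to be an iso, and conservativity then yields invertibility of $\phi_{V, M}$ itself. A routine check of the pentagon and triangle coherences for the associators and unitors of $\triangleright_{1}$ and $\triangleright_{2}$---guaranteed by the fact that $\phi$ restricts on free modules to the common coherence data of $L_{\mathbf{EM}(T)}$---promotes $\phi$ to a strong $\cat{C}$-module structure on $\mathrm{Id}_{\mathbf{EM}(T)}$, delivering the equivalence.

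The main obstacle is simultaneous control of two technical points: firstly, preservation of the bar coequalizer by $V \triangleright_{1} \blank$ on $\mathbf{EM}(T)_{1}$, without any a priori cocontinuity assumption; and secondly, naturality of the resulting $\phi$ in both $V$ and $M$ rather than merely objectwise existence. Both are handled by pushing everything down to $\cat{C}$ via the conservative lax module functor $R_{\mathbf{EM}(T)}$, where the key diagrams become absolute-coequalizer manipulations in the style of Beck's monadicity theorem; the subtlety is to perform these uniformly in $V$ so that the shared lax structure on $R_{\mathbf{EM}(T)}$ promotes the pointwise isomorphism to a module transformation.
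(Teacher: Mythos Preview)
Your approach via the bar coequalizer is more elementary than the paper's, but it has a genuine gap at the step you flag as the ``main obstacle''. The claim that the lax module structure on $R_{\mathbf{EM}(T)}$ exhibits $R(V \triangleright_i M)$ as the coequalizer of $V \otimes T^2 R M \rightrightarrows V \otimes T R M$ is not justified. The lax structure only gives a comparison map $R_a^{(i)}\colon V \otimes R M \to R(V \triangleright_i M)$, not an isomorphism; since the coequalizer of that pair in $\cat{C}$ is exactly $V \otimes R M$, your claim amounts to asserting that $R_a^{(i)}$ is invertible, i.e.\ that $R$ is a \emph{strong} module functor. By Corollary~\ref{oplaxEMcorrespondence} this would force $T$ to be an oplax module monad, which is not among the hypotheses---and is in fact false in the situations the theorem is designed for (e.g.\ the Linton-coequalizer module structures of Theorem~\ref{extendingalways} when $T$ is genuinely lax). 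The same gap blocks the construction of $\phi_{V,M}$ in the first place: without knowing that $V \triangleright_1 \blank$ preserves the bar coequalizer, you cannot invoke its universal property to produce the map.

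The paper circumvents this by passing to the finite cocompletion $\fincolimit{\mathbf{Kl}(T)_0}$, which carries a canonical $\cat{C}$-module structure by Proposition~\ref{cocompletingnonsense}. Both $\mathbf{EM}(T)_i$ embed there as reflective subcategories via $\iota_! \dashv G_i$ (using the preceding Proposition, which needs $T$ right exact on an abelian category). These reflections are lax $\cat{C}$-module adjunctions by Proposition~\ref{prop:dual-module-adjunctions}, and since each $G_i$ is fully faithful the counits $\varepsilon_i$ are invertible; composing $\iota_! G_2 \iota_! G_1$ then gives a strong $\cat{C}$-module equivalence $\mathbf{EM}(T)_1 \simeq \mathbf{EM}(T)_2$. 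The point is that the cocompletion supplies exactly the colimit-preservation you were trying to get by hand: $\iota_!$ is cocontinuous by construction, so no separate argument about $V \triangleright_i \blank$ preserving bar coequalizers is needed.
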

\begin{proof}
  Consider the following diagram:
  \[
    \begin{mytikzcd}[ampersand replacement=\&]
      \& {{\mathbf{Kl}(T)}_{0}} \\
      \& {\fincolimit{{\mathbf{Kl}(T)}_{0}}} \\
      {{\mathbf{EM}(T)}_{1}} \&\& {{\mathbf{EM}(T)}_{2}}
      \arrow[from=1-2, to=2-2]
      \arrow["\iota"', curve={height=18pt}, from=1-2, to=3-1]
      \arrow["\iota", curve={height=-18pt}, from=1-2, to=3-3]
      \arrow[""{name=0, anchor=center, inner sep=0}, "{{\iota_{!}}}", shorten <=6pt, shift left=2, from=2-2, to=3-1]
      \arrow[""{name=1, anchor=center, inner sep=0}, "{{\iota_{!}}}"', shorten <=6pt, shift right=2, from=2-2, to=3-3]
      \arrow[""{name=2, anchor=center, inner sep=0}, "{{G_{1}}}", shift left=2, shorten >=6pt, from=3-1, to=2-2]
      \arrow[""{name=3, anchor=center, inner sep=0}, "{{G_{2}}}"', shift right=2, shorten >=6pt, from=3-3, to=2-2]
      \arrow["\dashv"{anchor=center, rotate=62}, draw=none, from=1, to=3]
      \arrow["\dashv"{anchor=center, rotate=118}, draw=none, from=0, to=2]
    \end{mytikzcd}
  \]
  In particular,
  by Proposition~\ref{prop:dual-module-adjunctions},
  in this diagram every functor is a lax \(\mathcal{C}\)-module functor,
  and every adjunction is a \(\mathcal{C}\)-module adjunction.
  We claim that \(\iota_{!}\circ G_{1}\) and \(\iota_{!} \circ G_{2}\) define mutually quasi-inverse equivalences of \(\mathcal{C}\)-module categories.
  Indeed,
  \[
    \iota_{!} \circ G_{2} \circ \iota_{!} \circ G_{1} \xrightarrow{\varepsilon_{2} \hcomp \varepsilon_{1}} \on{Id}_{\mathbf{EM}_{2}}
  \]
  is a \(\mathcal{C}\)-module isomorphism.
  Similarly, there is a \(\mathcal{C}\)-module isomorphism \(\on{Id}_{\mathbf{EM}_{1}} \simeq \iota_{!} \circ G_{1} \circ \iota_{!} \circ G_{2}\).
\end{proof}

\subsubsection{Extendable monads}

We shall now complement the uniqueness result of
Theorem~\ref{thm:one-module-structure-on-EM}
with an existence result.

\begin{definition}\label{extendable}
  We call a lax \(\mathcal{C}\)-module monad \(T\from \mathcal{M} \to \mathcal{M}\) \emph{extendable}
  if the category \(\mathbf{EM}(T)\) is a \(\mathcal{C}\)-module category,
  such that the embedding \(\iota\from \mathbf{Kl}(T) \hookrightarrow \mathbf{EM}(T)\) is a strong \(\mathcal{C}\)-module functor.

  If \(T\) is extendable, we refer to the essentially unique \(\mathcal{C}\)-module structure on \(\mathbf{EM}(T)\) coming from \(\mathbf{Kl}(T)\)
  as the \emph{extended} \(\mathcal{C}\)-module structure on \(\mathbf{EM}(T)\).
\end{definition}

Analogously to Definition~\ref{extendable},
one defines extendable oplax \(\mathcal{C}\)-module comonads
and extended module structures on their Eilenberg--Moore categories.

\begin{proposition}\label{prop:strong-module-monads-are-extendable}
  Any strong \(\cat{C}\)-module monad \((T, \mu, \eta)\from \mathcal{M} \to \mathcal{M}\) is extendable.
\end{proposition}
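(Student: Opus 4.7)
The plan is to invoke the existence result of Corollary~\ref{oplaxEMcorrespondence}, noting that a strong module monad is in particular an oplax module monad, with oplax coherence $T_{\mathsf{a}}^{-1}$ obtained by inverting the (invertible) lax coherence $T_{\mathsf{a}}$. This immediately produces a $\mathcal{C}$-module structure on $\mathbf{EM}(T)$ for which $R_{\mathbf{EM}(T)}$ is strict; explicitly, $V \lact (X,\nabla_X) \defeq (V \lact X,\ (V \lact \nabla_X) \circ T_{\mathsf{a};V,X}^{-1})$. Dually, Corollary~\ref{laxKlCorrespondence} equips $\mathbf{Kl}(T)$ with a $\mathcal{C}$-module structure for which $L_{\mathbf{Kl}(T)}$ is strict. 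Extendability is therefore reduced to showing that the canonical inclusion $\iota\from \mathbf{Kl}(T) \hookrightarrow \mathbf{EM}(T)$ of Equation~\eqref{denotedbyiota} is a strong $\mathcal{C}$-module functor with respect to these two structures.

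The candidate for the coherence $\iota_{\mathsf{a};V,X} \from V \lact_{\mathbf{EM}(T)} \iota(X) \to \iota(V \lact_{\mathbf{Kl}(T)} X)$ is the strong coherence $T_{\mathsf{a};V,X} \from V \lact T(X) \to T(V \lact X)$ of $T$ itself; it is invertible by hypothesis, so strongness of $\iota$ reduces to checking the lax module functor axioms for this choice of $\iota_{\mathsf{a}}$. The only substantive point is to verify that $T_{\mathsf{a};V,X}$ actually lifts to a morphism of $T$-modules between $V \lact_{\mathbf{EM}(T)} \iota(X) = (V \lact T(X),\ (V \lact \mu_X) \circ T_{\mathsf{a};V,T(X)}^{-1})$ and $\iota(V \lact X) = (T(V \lact X),\ \mu_{V \lact X})$.

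Unpacking the relevant square, compatibility with the $T$-actions on the two modules amounts to the identity
\[
T_{\mathsf{a};V,X} \circ (V \lact \mu_X) = \mu_{V \lact X} \circ T(T_{\mathsf{a};V,X}) \circ T_{\mathsf{a};V,T(X)},
\]
which is precisely the axiom stating that $\mu\from T^{2} \nt T$ is a $\mathcal{C}$-module transformation, i.e.\ part of the data of a $\mathcal{C}$-module monad. Naturality of $\iota_{\mathsf{a}}$ in $V$ and $X$ is inherited from that of $T_{\mathsf{a}}$, and the associativity and unitality coherences required of $\iota$ as a module functor translate directly into the associativity and unitality coherences of $T$ as a module functor (together with the fact that $\eta\from \Id \nt T$ is a module transformation). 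Essential uniqueness of the resulting module structure on $\mathbf{EM}(T)$ is then guaranteed by Theorem~\ref{thm:one-module-structure-on-EM}. I do not anticipate any real obstacle: the argument is essentially bookkeeping around the three module-monad axioms, each of which is used exactly once.
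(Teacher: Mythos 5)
Your proof is correct, and it rests on the same two module structures the paper uses (the one on $\mathbf{EM}(T)$ from Corollary~\ref{oplaxEMcorrespondence}, viewing the strong monad as oplax, and the one on $\mathbf{Kl}(T)$ from Corollary~\ref{laxKlCorrespondence}, viewing it as lax), but the route to strongness of $\iota$ differs from the paper's. You verify directly that $T_{\mathsf{a};V,X}\from V \lact T(X) \to T(V\lact X)$ is an invertible morphism of $T$-modules and that the resulting coherence data satisfies the module-functor axioms; the identity you isolate, $T_{\mathsf{a};V,X}\circ(V\lact\mu_X)=\mu_{V\lact X}\circ T(T_{\mathsf{a};V,X})\circ T_{\mathsf{a};V,T(X)}$, is indeed exactly the statement that $\mu$ is a $\cat{C}$-module transformation, so the check goes through. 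The paper instead factors $\iota = K_{\mathbf{EM}(T)}\circ K_{\mathbf{Kl}(T)}$ through $\cat{M}$ and invokes Proposition~\ref{strongcomparisons} twice: since $T$ is simultaneously a lax and an oplax $\cat{C}$-module monad, both comparison functors $K_{\mathbf{Kl}(T)}\from\mathbf{Kl}(T)\to\cat{M}$ and $K_{\mathbf{EM}(T)}\from\cat{M}\to\mathbf{EM}(T)$ are strong, hence so is their composite. That factorization buys you the result with no diagram chase at all, whereas your argument is self-contained and makes explicit which monad axiom does the work at each step. One small bookkeeping remark: your closing claim that each module-monad axiom is used ``exactly once'' undersells the $\mu$-axiom slightly, since it is needed both to show $T_{\mathsf{a};V,X}$ is a $T$-module morphism and again to check naturality of $\iota_{\mathsf{a}}$ against Kleisli morphisms (where $\iota(f)=\mu_Y\circ T(f)$); this is harmless but worth noting if you write the verification out in full.
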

\begin{proof}
  Since $T$ is in particular a lax $\mathcal{C}$-module monad, by Proposition~\ref{strongcomparisons}, the comparison functor $K_{\mathbf{Kl}}(T)\from \mathbf{Kl}(T) \to \mathcal{M}$ is a strong $\mathcal{C}$-module functor.
  Similarly, since $T$ is in particular an oplax $\mathcal{C}$-module monad, again by Proposition~\ref{strongcomparisons} we conclude that also the comparison functor $K_{\mathbf{EM}}(T)\from \mathcal{M} \to \mathbf{EM}(T)$ is a strong $\mathcal{C}$-module functor.
  Thus, $\iota = K_{\mathbf{EM}}(T) \circ K_{\mathbf{Kl}}(T)$ is a strong $\mathcal{C}$-module functor.
\end{proof}

\begin{corollary}
  If\, \(\mathcal{C}\) is left rigid, then any lax \(\mathcal{C}\)-module monad is extendable.
\end{corollary}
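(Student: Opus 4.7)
The plan is to combine the two preceding results in a single short argument. Concretely, Proposition~\ref{prop:lax-is-strong-when-rigid} asserts that over a left rigid $\mathcal{C}$, every lax $\mathcal{C}$-module functor is automatically strong (that is, the coherence cells ${(F_{\mathsf{a}})}_{V,M}$ are invertible). Proposition~\ref{prop:strong-module-monads-are-extendable}, in turn, guarantees extendability for strong $\mathcal{C}$-module monads. So the task reduces to promoting a lax $\mathcal{C}$-module monad to a strong one using rigidity, and then invoking the latter proposition.

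First, I would take a lax $\mathcal{C}$-module monad $(T,\mu,\eta)$ on $\mathcal{M}$. Applying Proposition~\ref{prop:lax-is-strong-when-rigid} to the underlying lax $\mathcal{C}$-module functor of $T$, we obtain that ${(T_{\mathsf{a}})}_{V,M}\from V \lact T(M) \to T(V \lact M)$ is invertible for all $V \in \mathcal{C}$, $M \in \mathcal{M}$. This upgrades the underlying functor to a strong $\mathcal{C}$-module functor.

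Next, I would verify that, with the \emph{same} $\mu$ and $\eta$, we obtain a strong $\mathcal{C}$-module monad structure on $T$. The only thing to check is that $\mu\from T^{2} \nt T$ and $\eta\from \Id_{\mathcal{M}} \nt T$ remain module transformations when $T$ is viewed as strong rather than lax; but the coherence axioms for a module transformation (Definition~\ref{def:module-transformation}) only involve the components of the module structure, not their invertibility, and they are satisfied by assumption. Hence $(T,\mu,\eta)$ is a strong $\mathcal{C}$-module monad.

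Finally, applying Proposition~\ref{prop:strong-module-monads-are-extendable} yields that $T$ is extendable, completing the proof. There is no real obstacle here; the argument is essentially a one-line composition of the two cited propositions, and the only subtlety worth articulating explicitly is that passing from the lax to the strong interpretation of $T$ does not affect the monad axioms or the module-transformation conditions on $\mu$ and $\eta$.
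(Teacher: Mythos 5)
Your proof is correct and follows exactly the paper's argument: apply Proposition~\ref{prop:lax-is-strong-when-rigid} to upgrade the lax module monad to a strong one, then invoke Proposition~\ref{prop:strong-module-monads-are-extendable}. The extra remark that $\mu$ and $\eta$ remain module transformations is a reasonable (if routine) observation that the paper leaves implicit.
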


\begin{proof}
  By Proposition~\ref{prop:lax-is-strong-when-rigid}, a lax \(\mathcal{C}\)-module monad is automatically a strong \(\mathcal{C}\)-module monad. The result follows by Proposition~\ref{prop:strong-module-monads-are-extendable}.
\end{proof}

\subsection{Extending module structures via Linton coequalizers}\label{LintinTime}

Our aim is to establish Theorem~\ref{extendingalways} and Theorem~\ref{strongembeddingalways}, which prove extendability of lax $\mathcal{C}$-module monads under the following assumptions.
We state these now and maintain them for the remainder of Section~\ref{sec:module-monads}:
\begin{enumerate}
 \item $\mathcal{C}$ is an abelian monoidal category and $\mathcal{M}$ is an abelian $\mathcal{C}$-module category;
 \item the action functor $\lact \defeq \lact_{\cat{M}} \from \mathcal{C} \otimes \mathcal{M} \to \mathcal{M}$ is right exact in both variables;
 \item $T$ is a right exact lax $\mathcal{C}$-module monad.
\end{enumerate}

\begin{definition}
 For $V \in \mathcal{C}$ and $M \in \mathbf{EM}(T)$, we define the \emph{Linton coequalizer} $V \blacktriangleright M$ as
 \[
   V \blacktriangleright M \defeq \on{coeq}
   \Big(
   \tikzexternaldisable
   \begin{tikzcd}[ampersand replacement=\&]
     {T(V \triangleright_{\mathcal{M}} T(M))} \& {T(V\triangleright_{\mathcal{M}} M)}
     \arrow["{T(V \triangleright_{\mathcal{M}} \nabla_{M})}", shift left=2, from=1-1, to=1-2]
     \arrow["{\mu_{V \triangleright_{\mathcal{M}} M}\circ T({(T_{\mathsf{a}})}_{V,M})}"', shift right=2, from=1-1, to=1-2]
   \end{tikzcd}
   \tikzexternalenable
   \Big)
 \]
\end{definition}

As $\mathbf{EM}(T)$ is abelian, by functoriality of colimits we get a functor $\blank \blacktriangleright \bblank\from \mathcal{C} \kotimes \mathbf{EM}(T) \to \mathbf{EM}(T)$.

\begin{lemma}\label{rightexactaction}
 For $V \in \mathcal{C}$, the functor $V \blacktriangleright \blank$ is right exact.
\end{lemma}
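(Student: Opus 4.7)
The plan is to exhibit $V \blacktriangleright \blank\from \mathbf{EM}(T) \to \mathbf{EM}(T)$ as the pointwise coequalizer of two right exact endofunctors of $\mathbf{EM}(T)$, and then invoke the fact that finite colimits commute with finite colimits to conclude that such a pointwise coequalizer is itself right exact.

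I would first record two preliminaries. The forgetful functor $R_{\mathbf{EM}(T)}\from \mathbf{EM}(T) \to \mathcal{M}$ is exact: it is left exact as a right adjoint to $L_{\mathbf{EM}(T)}$, and right exact by the hypothesis that $T$ is right exact together with Proposition~\ref{prop:abelian-EM-cat}; equivalently, cokernels in $\mathbf{EM}(T)$ are computed in $\mathcal{M}$. Moreover, both parallel maps in the defining coequalizer are morphisms in $\mathbf{EM}(T)$ between the free modules $T(V \triangleright T(M))$ and $T(V \triangleright M)$: the first is $L_{\mathbf{EM}(T)}$ applied to a morphism in $\mathcal{M}$, while for the second one uses monad associativity to see that $\mu$ is itself a morphism of free $T$-modules. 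Hence the Linton coequalizer is genuinely formed in $\mathbf{EM}(T)$.

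Next, I would package the construction functorially by defining
\[
F_1 \defeq L_{\mathbf{EM}(T)} \circ (V \triangleright \blank) \circ T \circ R_{\mathbf{EM}(T)}, \qquad F_2 \defeq L_{\mathbf{EM}(T)} \circ (V \triangleright \blank) \circ R_{\mathbf{EM}(T)},
\]
so that on objects $F_1(M) = T(V \triangleright T(M))$ and $F_2(M) = T(V \triangleright M)$, each viewed as a free $T$-module. Each factor of these composites is right exact: $L_{\mathbf{EM}(T)}$ as a left adjoint, $R_{\mathbf{EM}(T)}$ by the previous paragraph, $T$ by hypothesis, and $V \triangleright \blank$ by the assumed separate right exactness of the action. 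Hence $F_1$ and $F_2$ are right exact; the two defining arrows of the Linton coequalizer are natural in $M$ and assemble into natural transformations $F_1 \rightrightarrows F_2$ whose pointwise coequalizer is precisely $V \blacktriangleright \blank$.

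To conclude, I would invoke the principle that a pointwise coequalizer of right exact functors between abelian categories is right exact. Given a right exact sequence $M_1 \to M_2 \to M_3 \to 0$ in $\mathbf{EM}(T)$, Fubini for finite colimits yields
\[
V \blacktriangleright M_3 \;=\; \on{coeq}\!\big(F_1(M_3) \rightrightarrows F_2(M_3)\big) \;=\; \on{coker}\!\big(V \blacktriangleright M_1 \to V \blacktriangleright M_2\big),
\]
by pushing the cokernel presentation of each $F_i(M_3)$ past the coequalizer. I do not anticipate any real obstacle here; the only mildly delicate point is the preliminary verification that the two parallel arrows lift to $\mathbf{EM}(T)$ and that the coequalizer there agrees with its underlying computation in $\mathcal{M}$, both of which are immediate from the exactness of $R_{\mathbf{EM}(T)}$ and monad associativity.
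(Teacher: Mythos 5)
Your proof is correct and takes essentially the same route as the paper, which simply observes that $V \blacktriangleright \blank$ is a pointwise coequalizer of the right exact functors $T(V \triangleright T(\blank))$ and $T(V \triangleright \blank)$ and that such a coequalizer of right exact functors is again right exact. You have merely spelled out the implicit verifications (that the parallel pair lives in $\mathbf{EM}(T)$, that its coequalizer there is computed as in $\mathcal{M}$, and the commutation of finite colimits) in full detail.
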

\begin{proof}
 This immediately follows from the right exactness of $T(V \triangleright_{\mathcal{M}} T(\blank))$ and $T(V\triangleright_{\mathcal{M}} \blank)$.
\end{proof}

\begin{lemma}\label{isounitality}
 There is a natural isomorphism $\mathbb{1} \blacktriangleright \blank \cong \on{Id}_{\mathbf{EM}(T)}$.
\end{lemma}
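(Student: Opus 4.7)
The plan is to identify the defining coequalizer of $\mathbb{1} \blacktriangleright M$ with the canonical absolute coequalizer exhibiting every $T$-module $M$ as a quotient of $T(M)$ by $\mu_M$ and $T(\nabla_M)$; the isomorphism will then come from the universal property.

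The main tool is the unit $\mathcal{M}_{\mathsf{u}} \from \mathbb{1} \lact \blank \xIso \on{Id}_{\cat{M}}$ of the module structure together with the unitality axiom of the lax \(\cat{C}\)-module monad structure on \(T\), which says exactly that for every \(N \in \cat{M}\),
\[
    T(\mathcal{M}_{\mathsf{u};N}) \circ (T_{\mathsf{a}})_{\mathbb{1}, N} = \mathcal{M}_{\mathsf{u};T(N)}\from \mathbb{1} \lact T(N) \to T(N).
\]
Combined with naturality of $\mathcal{M}_{\mathsf{u}}$, this gives a commutative ladder whose rungs are the isomorphisms $\mathcal{M}_{\mathsf{u}}$:
\[
\begin{mytikzcd}[ampersand replacement=\&]
T(\mathbb{1}\lact T(M)) \ar[r,shift left,"T(\mathbb{1}\lact \nabla_M)"] \ar[r,shift right,"\mu_{\mathbb{1}\lact M}\circ T((T_{\mathsf{a}})_{\mathbb{1},M})"'] \ar[d,"T(\mathcal{M}_{\mathsf{u};T(M)})"', "\wr"] \& T(\mathbb{1}\lact M) \ar[d,"T(\mathcal{M}_{\mathsf{u};M})", "\wr"'] \\
T^2(M) \ar[r,shift left,"T(\nabla_M)"] \ar[r,shift right,"\mu_M"'] \& T(M)
\end{mytikzcd}
\]
Commutativity of the upper-left square uses naturality of $\mathcal{M}_{\mathsf{u}}$ applied to $\nabla_M$; commutativity of the lower-right square uses the unitality axiom above, together with naturality of $\mu$ with respect to $\mathcal{M}_{\mathsf{u};M}$.

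Thus $\mathbb{1} \blacktriangleright M$ is canonically isomorphic to the coequalizer of $(T(\nabla_M), \mu_M)$ in $\mathbf{EM}(T)$. But this is precisely the standard (absolute, hence split) coequalizer diagram
\[
T^2(M) \rightrightarrows T(M) \xrightarrow{\nabla_M} M
\]
which, as recalled in the proof of Lemma~\ref{finitecorestriction}, presents every $T$-module as the coequalizer of its free resolution; in particular it is preserved by any functor, so its value in $\mathbf{EM}(T)$ is $M$ with action map $\nabla_M$. We therefore obtain an isomorphism $\mathbb{1} \blacktriangleright M \cong M$ in $\mathbf{EM}(T)$.

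Finally, naturality in $M$ is automatic: all four vertices of the ladder above are natural in $M \in \mathbf{EM}(T)$ (the vertical arrows being values of $\mathcal{M}_{\mathsf{u}}$ on $T$-module morphisms, and the horizontal arrows being either $T$ of module-natural transformations or structure morphisms of $T$), hence so is the induced isomorphism between the coequalizers. No step here is delicate; the only thing to be careful about is correctly unpacking the unitality axiom of a lax module monad so that the second leg $\mu_{\mathbb{1}\lact M}\circ T((T_{\mathsf{a}})_{\mathbb{1},M})$ is shown to correspond to $\mu_M$ and not merely to $T(\mathcal{M}_{\mathsf{u};M})\circ \mu_{\mathbb{1}\lact M}$---this is what the unitality axiom, applied inside $T$ and combined with naturality of $\mu$, ensures.
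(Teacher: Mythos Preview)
Your proof is correct and follows essentially the same approach as the paper: both identify the defining coequalizer of $\mathbb{1}\blacktriangleright M$ with the canonical split coequalizer $T^2(M)\rightrightarrows T(M)\to M$ via the isomorphism of parallel pairs induced by $T(\mathcal{M}_{\mathsf{u}})$. Your justification of the second square---using the unitality axiom of $T$ as a lax module functor together with naturality of $\mu$---is in fact slightly more precise than the paper's, which attributes it to ``coherence for $\blank\triangleright\bblank$''.
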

\begin{proof}
  Recall that
  \[\begin{mytikzcd}[ampersand replacement=\&]
      {T^{2}(X)} \& {T(X)} \& X
      \arrow["{\mu_{X}}", shift left=1, from=1-1, to=1-2]
      \arrow["{T(\nabla_{X})}"', shift right=1, from=1-1, to=1-2]
      \arrow["{\nabla_{X}}"', from=1-2, to=1-3]
    \end{mytikzcd}\]
  is a coequalizer in $\mathbf{EM}(T)$, created by the underlying split coequalizer in $\mathcal{M}$. Observe that
  \[\begin{mytikzcd}[ampersand replacement=\&]
      {T(\mathbb{1} \triangleright T(X))} \& {T(\mathbb{1} \triangleright X)} \\
      {T^{2}(X)} \& {T(X)}
      \arrow["{T(\mathbb{1}\triangleright \nabla_{X})}", shift left=1, from=1-1, to=1-2]
      \arrow["{\mu_{\mathbb{1}\triangleright X} \circ T({(T_{\mathsf{a}})}_{\mathbb{1},X})}"', shift right=1, from=1-1, to=1-2]
      \arrow["{T(l_{T(X)})}"', from=1-1, to=2-1]
      \arrow["\simeq", from=1-1, to=2-1]
      \arrow["{T(l_{X})}", from=1-2, to=2-2]
      \arrow["\simeq"', from=1-2, to=2-2]
      \arrow["{T(\nabla_{X})}", shift left=1, from=2-1, to=2-2]
      \arrow["{\mu_{X}}"', shift right=1, from=2-1, to=2-2]
    \end{mytikzcd}\]
  defines an isomorphism in the category of parallel pairs of morphisms in $\mathbf{EM}(T)$---the upper square commutes due to the naturality of $l$, and the lower by coherence for $\blank \triangleright \bblank$.
  This isomorphism induces an isomorphism on the level of coequalizers, $X \cong \mathbb{1} \blacktriangleright X$.
  Further, this construction of morphisms of parallel pairs is clearly natural in $X$, establishing the naturality of the isomorphism of coequalizers.
\end{proof}

\begin{lemma}\label{actiononfree}
 For any $X \in \mathcal{M}$, the coequalizer of
\[\begin{mytikzcd}[ampersand replacement=\&]
	{T(V \triangleright T^{2}(X))} \& {T(V\triangleright T(X))}
	\arrow["{T(V \triangleright \mu_{X})}", shift left=1, from=1-1, to=1-2]
	\arrow["{\mu_{V\triangleright T(X)}\circ T({(T_{\mathsf{a}})}_{V,T(X)})}"', shift right=1, from=1-1, to=1-2]
\end{mytikzcd}\]
in $\mathbf{EM}(T)$ is split and isomorphic to $T(V \triangleright X)$. Thus, $V \blacktriangleright T(X) \simeq T(V \triangleright X)$. This isomorphism is natural in $X$.
\end{lemma}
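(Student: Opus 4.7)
The plan is to realize the diagram as a $U$-split coequalizer (in fact, as a split coequalizer already in $\mathbf{EM}(T)$, since the splittings will turn out to be $T$-module morphisms), which simultaneously establishes both claims: the $U$-splitting (whence absoluteness of the coequalizer) and the isomorphism of the coequalizer with $T(V \triangleright X)$.

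Write $f \defeq T(V \triangleright \mu_X)$ and $g \defeq \mu_{V \triangleright T(X)} \circ T((T_{\mathsf{a}})_{V, T(X)})$ for the two parallel arrows, and take the putative coequalizing map to be $q \defeq \mu_{V \triangleright X} \circ T((T_{\mathsf{a}})_{V, X}) \from T(V \triangleright T(X)) \to T(V \triangleright X)$. That $q \circ f = q \circ g$ is a short computation combining associativity of $\mu$, naturality of $\mu$ at $(T_{\mathsf{a}})_{V, X}$, and the multiplicativity coherence of the lax $\cat{C}$-module monad $T$, which reads $\mu_{V \triangleright X} \circ T((T_{\mathsf{a}})_{V, X}) \circ (T_{\mathsf{a}})_{V, T(X)} = (T_{\mathsf{a}})_{V, X} \circ (V \triangleright \mu_X)$.

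For the splittings I would take $s \defeq T(V \triangleright \eta_X)$ and $t \defeq T(V \triangleright T(\eta_X))$, and then verify the three remaining split-coequalizer identities. The identity $q \circ s = \mathrm{id}$ reduces via the unit coherence $(T_{\mathsf{a}})_{V, X} \circ (V \triangleright \eta_X) = \eta_{V \triangleright X}$ to the monad unit $\mu \circ T\eta = \mathrm{id}$. The identity $f \circ t = \mathrm{id}$ collapses to $T(V \triangleright (\mu_X \circ T\eta_X)) = T(V \triangleright \mathrm{id}_{T(X)})$. The compatibility $g \circ t = s \circ q$ unfolds by applying naturality of $(T_{\mathsf{a}})$ to $\eta_X \from X \to T(X)$, followed by naturality of $\mu$ at $V \triangleright \eta_X$. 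Both $s$ and $t$ are of the form $T(h)$, which by naturality of $\mu$ makes them $T$-module morphisms, so the diagram is split already in $\mathbf{EM}(T)$. Naturality of the resulting isomorphism $V \blacktriangleright T(X) \simeq T(V \triangleright X)$ in $X$ is immediate, as every morphism involved is natural in $X$.

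The point I expect to require care is the choice of $t$. The more obvious candidate $T(V \triangleright \eta_{T(X)})$ would also satisfy $f \circ t = \mathrm{id}$ and even $g \circ t = \mathrm{id}$, yet it fails the compatibility $g \circ t = s \circ q$, since $T(V \triangleright \eta_X) \circ (T_{\mathsf{a}})_{V, X}$ does not in general equal $\eta_{V \triangleright T(X)}$. The choice $T(V \triangleright T(\eta_X))$ is forced by the need to invoke naturality --- rather than unit coherence --- of $(T_{\mathsf{a}})$ to rewrite $g \circ t$ into the form of $s \circ q$.
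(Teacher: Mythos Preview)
Your proof is correct and follows exactly the same approach as the paper: the paper's proof consists solely of displaying the splitting data $q = \mu_{V \triangleright X}\circ T((T_{\mathsf{a}})_{V,X})$, $s = T(V \triangleright \eta_X)$, and $t = T(V \triangleright T(\eta_X))$, which are precisely your choices. You have supplied the verifications of the split-coequalizer identities that the paper omits, and your observation that the sections are free maps (hence the splitting lives in $\mathbf{EM}(T)$) is correct and slightly strengthens the statement.
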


\begin{proof}
 The splitting data is given by
\[\begin{mytikzcd}[ampersand replacement=\&]
	{T(V \triangleright T^{2}(X))} \& {T(V\triangleright T(X))} \& {T(V\triangleright X)}
	\arrow["{T(V \triangleright \mu_{X})}", from=1-1, to=1-2]
	\arrow["{\mu_{V\triangleright T(X)}\circ T({(T_{\mathsf{a}})}_{V,T(X)})}"', shift right=2, from=1-1, to=1-2]
	\arrow["{T(V \triangleright T(\eta_{X}))}"', curve={height=18pt}, from=1-2, to=1-1]
	\arrow["{\mu_{V \triangleright X}\circ T({(T_{\mathsf{a}})}_{V,X})}"', from=1-2, to=1-3]
	\arrow["{T(V \triangleright \eta_{X})}"', curve={height=18pt}, from=1-3, to=1-2]
\end{mytikzcd}\]
\end{proof}

\begin{lemma}\label{isoassociativity}
 For $V,W \in \mathcal{C}$, there is a natural isomorphism $(V \otimes W) \blacktriangleright \blank \simeq V \blacktriangleright (W \blacktriangleright \blank)$.
\end{lemma}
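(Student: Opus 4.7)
The plan is to first establish the isomorphism on free $T$-modules, and then transfer it to arbitrary $T$-modules by right-exactness. More specifically, I would fix $V,W \in \mathcal{C}$ and $X \in \mathcal{M}$, and apply Lemma~\ref{actiononfree} twice to obtain a chain of natural isomorphisms
\[
V \blacktriangleright (W \blacktriangleright T(X)) \simeq V \blacktriangleright T(W \triangleright X) \simeq T(V \triangleright (W \triangleright X)),
\]
while on the other side Lemma~\ref{actiononfree} directly gives $(V \otimes W) \blacktriangleright T(X) \simeq T((V\otimes W)\triangleright X)$. Composing with $T$ applied to the $\cat{C}$-module associator $\mathcal{M}_{\mathsf{a}}$ yields an isomorphism $\phi^{\mathrm{free}}_{V,W,X}\from (V \otimes W) \blacktriangleright T(X) \xiso V \blacktriangleright (W \blacktriangleright T(X))$, natural in $X \in \mathcal{M}$.

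For an arbitrary $M \in \mathbf{EM}(T)$ I would then use the canonical free presentation
\[
\begin{mytikzcd}[ampersand replacement=\&]
T(T(M)) \& T(M) \& M,
\arrow["\mu_{M}", shift left=1, from=1-1, to=1-2]
\arrow["T(\nabla_{M})"', shift right=1, from=1-1, to=1-2]
\arrow["\nabla_{M}", from=1-2, to=1-3]
\end{mytikzcd}
\]
which is a (split, hence absolute) coequalizer in $\mathbf{EM}(T)$. By Lemma~\ref{rightexactaction}, the functor $(V \otimes W) \blacktriangleright \blank$ is right exact, and $V \blacktriangleright (W \blacktriangleright \blank)$ is a composition of right exact functors and hence also right exact. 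Applying each to the free presentation above gives coequalizer diagrams whose middle and left-hand terms are free, and for which $\phi^{\mathrm{free}}_{V,W,-}$ provides a vertical comparison isomorphism. The remaining task is to verify that this comparison intertwines the two parallel arrows, which then induces a canonical isomorphism $\phi_{V,W,M}\from (V \otimes W) \blacktriangleright M \xiso V \blacktriangleright (W \blacktriangleright M)$ on coequalizers; naturality in $M$ follows from functoriality of coequalizers together with the naturality of $\phi^{\mathrm{free}}_{V,W,-}$ in $X$.

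The main obstacle will be the diagram chase in the intertwining step of the third point: the two parallel arrows in the Linton coequalizer are $T(V \triangleright \nabla)$ on one hand and $\mu \circ T(T_{\mathsf{a}})$ on the other, and after the double application one must reconcile iterated lax-module coherence data $(T_{\mathsf{a}})_{V,W\triangleright(-)}$ and $(T_{\mathsf{a}})_{V\otimes W,-}$ with the associator $\mathcal{M}_{\mathsf{a}}$. This reduces to the hexagon axiom for the lax $\mathcal{C}$-module structure on $T$ combined with the pentagon axiom for $\mathcal{M}$, so the verification is essentially coherence bookkeeping rather than a genuinely new calculation. Naturality in $V$ and $W$ then follows formally from the naturality of $T_{\mathsf{a}}$, $\mathcal{M}_{\mathsf{a}}$, and the Lemma~\ref{actiononfree} isomorphisms in these variables.
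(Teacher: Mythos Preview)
Your proposal is correct and takes essentially the same approach as the paper: both reduce to the free case via right exactness (Lemma~\ref{rightexactaction}), compute there using Lemma~\ref{actiononfree}, and close with a coherence check for the lax $\cat{C}$-module structure of $T$. The paper organizes this slightly more directly by applying $V \blacktriangleright -$ to the \emph{defining} Linton coequalizer of $W \blacktriangleright X$ and identifying the resulting parallel pair with that of $(V\otimes W)\blacktriangleright X$, whereas you apply both functors to the bar resolution $T^2M \rightrightarrows TM \to M$; the coherence verification you anticipate (compatibility with the $\mu$-arrow) is exactly the step the paper absorbs into ``the third isomorphism follows by coherence of $\triangleright$.''
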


\begin{proof}
 We have
 \[
   \begin{aligned}
     V \blacktriangleright (W \blacktriangleright X)
     &= V \blacktriangleright \on{coeq}(T(W \triangleright \nabla_{X}),\, \mu_{V \triangleright X} \circ  T{(T_{\mathsf{a}})}_{W,M})\\
     &\simeq \on{coeq}(V \blacktriangleright T(W \lact \nabla_{X}),\, V \blacktriangleright \mu_{V \triangleright X} \circ  T({(T_{\mathsf{a}})}_{W,M}))\\
     &\simeq \on{coeq}(T(V\triangleright (W \triangleright \nabla_{X})),\, \mu_{W \triangleright V \triangleright X}\circ T({(T_{\mathsf{a}})}_{V,W\triangleright X} \circ W\triangleright {(T_{\mathsf{a}})}_{V,X}))\\
     &\simeq \on{coeq}(T((V \otimes W) \triangleright \nabla_{X}),\, \mu_{(V \otimes W)\triangleright T(X)} \circ T({(T_{\mathsf{a}})}_{V \otimes W, T(X)})) \\
     &= (V \otimes W) \blacktriangleright X,
   \end{aligned}
 \]
 where the first isomorphism follows from Lemma~\ref{rightexactaction}, the second from Lemma~\ref{actiononfree} and the third follows by coherence of $\blank \triangleright \blank$.
\end{proof}

In view of Lemma~\ref{isoassociativity} and Lemma~\ref{isounitality}, all that remains in order to establish that $\blank \blacktriangleright \bblank$ defines a $\mathcal{C}$-module category structure on $\mathbf{EM}(T)$ is the verification of coherence axioms. While this can be accomplished by diagram chasing—see~\cite[Theorem~2.6.4]{seal13:tensor}—we choose to give a more formal argument, mimicking the multicategorical approach used by~\cite{aguiar18:monad} in the analogous case of monoidal monads. Since this proof requires very different techniques than the rest of this article, we present it separately in Section~\ref{multicategorical}.

\begin{theorem}\label{extendingalways}
 The functor $\blank \blacktriangleright \bblank$ defines a $\mathcal{C}$-module category structure on $\mathbf{EM}(T)$.
\end{theorem}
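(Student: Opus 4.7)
The plan is to verify the remaining coherence axioms (pentagon and triangle) for the action $\blank \blacktriangleright \bblank$ via a multicategorical argument, rather than by direct diagram chases against the universal property of the Linton coequalizers. Following the strategy used by~\cite{aguiar18:monad} for monoidal monads, the idea is to encode the whole family of iterated Linton coequalizers in a single multiactegory $\mathbb{M}$, whose representability will automatically supply coherent associators and unitors.

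First, I would construct a multiactegory over $\cat{C}$, in the sense of~\cite{arkor24}, whose underlying category is $\mathbf{EM}(T)$ and whose multimorphisms $(V_1, \ldots, V_n; M) \to N$ are those morphisms $V_1 \triangleright_{\cat{M}} \cdots \triangleright_{\cat{M}} V_n \triangleright_{\cat{M}} M \to N$ in $\cat{M}$ which satisfy the compatibilities needed to descend through each of the $n$ consecutive Linton coequalizers built out of the data $(V_i, \ldots, V_n, M)$ and the lax module structure $T_{\mathsf{a}}$ of $T$. Substitution in this structure is inherited from composition in $\mathbf{EM}(T)$ and the monoidal product in $\cat{C}$, and the multiactegory axioms reduce to identities implicit in the lax $\cat{C}$-module monad structure on $T$ together with the naturality of $T_{\mathsf{a}}$.

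Next, I would show that $\mathbb{M}$ is a representable multiactegory, with the representing object of $(V_1, \ldots, V_n; M)$ given by the iterated Linton coequalizer $V_1 \blacktriangleright (V_2 \blacktriangleright \cdots (V_n \blacktriangleright M))$. The case $n = 1$ is the defining universal property of $\blacktriangleright$, and the inductive step uses the right exactness of $V_1 \blacktriangleright \blank$ from Lemma~\ref{rightexactaction} to commute $V_1 \blacktriangleright \blank$ past the remaining coequalizers; this is the content of Proposition~\ref{hillbility} from the introduction. Given representability, the isomorphisms of Lemmas~\ref{isounitality} and~\ref{isoassociativity} arise canonically from comparing representing objects of manifestly isomorphic multihoms, and both the pentagon and the triangle axioms reduce to the observation that any two such comparisons must coincide on the universal multimorphism. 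This is an instance of the general principle, analogous to that of~\cite{hermida00:repres,leinster04:higher} for multicategories, that a representable multiactegory canonically defines a module category.

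The main obstacle I anticipate is not the formal principle but the setup of $\mathbb{M}$: the universal property defining its multihoms must be strict enough that multicomposition is well-defined (which requires checking that composites of descended morphisms still descend through the appropriate Linton coequalizers), yet loose enough that the inductive proof of representability goes through. Lemma~\ref{actiononfree} is expected to play the key role here, by allowing one to identify the Linton coequalizer of a free $T$-module with a free $T$-module and thereby reduce compatibility at arity $n+1$ to compatibility at arity $n$. Once this bookkeeping is in place, the pentagon and triangle follow mechanically, completing the proof of Theorem~\ref{extendingalways}.
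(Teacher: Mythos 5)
Your proposal is correct and follows essentially the same route as the paper: Section~\ref{multicategorical} constructs exactly the multiactegory you describe (the multilinear morphisms $\mathbf{M}^{\varphi}$, with Lemma~\ref{multiuni} identifying your ``descent through Linton coequalizers'' condition with multilinearity), proves its representability in Proposition~\ref{hillbility}, and deduces coherence from the general principle that representable multiactegories yield module categories. The bookkeeping you flag as the main obstacle---well-definedness of multicomposition and the reduction of arity $n+1$ to arity $n$ via the behaviour of Linton coequalizers on free modules---is precisely what the paper's closure lemma and the diagram comparison in the proof of Proposition~\ref{hillbility} supply.
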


Observe that assuming Theorem~\ref{extendingalways}, we find that $V \blacktriangleright T(X) = T(V \triangleright X)$, and so the image of $\iota\from \mathbf{Kl}(T) \to \mathbf{EM}(T)$ is a $\mathcal{C}$-module subcategory, where $\mathbf{Kl}(T)$ is endowed with the action given in the proof of Corollary~\ref{laxKlCorrespondence}.
Thus, we additionally obtain the following important property.

\begin{theorem}\label{strongembeddingalways}
 The functor $\iota\from (\mathbf{Kl}(T), \triangleright_{\mathbf{Kl}(T)}) \to (\mathbf{EM}(T),\blacktriangleright)$ is a strong $\mathcal{C}$-module functor.
\end{theorem}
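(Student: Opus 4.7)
The plan is to construct the strong module coherence $\iota_{\mathsf{a}}$ directly from the split coequalizer description of $\blacktriangleright$ on free modules, and then to verify the module functor axioms by observing that on the image of $\iota$ every Linton coequalizer in sight collapses, reducing the axioms to their counterparts in $\mathcal{M}$.

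First, since $\iota(X) = (T(X),\mu_X)$, Lemma~\ref{actiononfree} provides, for each $V \in \mathcal{C}$ and $X \in \mathcal{M}$, a canonical isomorphism
\[
  \iota_{\mathsf{a};V,X} \from V \blacktriangleright \iota(X) \xiso \iota\bigl(V \triangleright_{\mathbf{Kl}(T)} X\bigr)
\]
induced by the split coequalizer map $\mu_{V \triangleright X} \circ T({(T_{\mathsf{a}})}_{V,X})$. Naturality in $V$ follows from the naturality of $T_{\mathsf{a}}$ and functoriality of $\blacktriangleright$. For naturality in $X$ regarded as an object of $\mathbf{Kl}(T)$, I would take $f \in \mathbf{Kl}(T)(X,Y) = \mathcal{M}(X,T(Y))$, expand $\iota(f) = \mu_Y \circ T(f)$ and $V \triangleright_{\mathbf{Kl}(T)} f = {(T_{\mathsf{a}})}_{V,Y} \circ (V \triangleright f)$ (from the proof of Corollary~\ref{laxKlCorrespondence}), and chase the resulting square; both routes to $T(V \triangleright Y)$ factor through the same split coequalizer of Lemma~\ref{actiononfree}, so the monad axioms force commutativity.

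Second, I would verify that $\iota_{\mathsf{a}}$ is compatible with the associators and unitors of the two module structures. The associator on $(\mathbf{EM}(T),\blacktriangleright)$ was constructed in Lemma~\ref{isoassociativity} by a successive application of Lemma~\ref{actiononfree} and right exactness of $\blacktriangleright$; on a free module $Z = T(X)$, every coequalizer appearing in that chain is split, so the resulting isomorphism $(V \otimes W) \blacktriangleright T(X) \xiso V \blacktriangleright (W \blacktriangleright T(X))$ corresponds under $\iota_{\mathsf{a}}$ to $T(\mathcal{M}_{\mathsf{a};V,W,X})$, which is exactly $\iota\bigl(\mathbf{Kl}(T)_{\mathsf{a};V,W,X}\bigr)$ by definition of the $\mathcal{C}$-action on $\mathbf{Kl}(T)$. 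The unitor coherence is handled analogously via Lemma~\ref{isounitality}.

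The main substantive step is to trace the construction of $\mathbf{EM}(T)_{\mathsf{a};V,W,T(X)}$ through the splittings of Lemma~\ref{actiononfree} and confirm that it agrees, under $\iota_{\mathsf{a}}$, with $T$ applied to the associator of $\mathcal{M}$. Since those splittings are canonical and the associator on $\mathbf{EM}(T)$ was built precisely by inducing through them, I do not anticipate a genuine obstacle here: the result is essentially forced once $\iota_{\mathsf{a}}$ is identified with the map on coequalizers provided by Lemma~\ref{actiononfree}.
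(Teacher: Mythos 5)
Your proposal is correct, and its starting point is the same as the paper's: Lemma~\ref{actiononfree} identifies $V \blacktriangleright \iota(X)$ with $T(V\triangleright X)=\iota(V\triangleright_{\mathbf{Kl}(T)}X)$ via the split coequalizer, which is exactly the observation the paper makes immediately before the theorem. Where you diverge is in how the coherence compatibilities are established. The paper does not trace the isomorphism chains of Lemmas~\ref{isounitality} and~\ref{isoassociativity} at all; instead it sets up the Linton coequalizers as universal arrows in a representable multiactegory $\mathbf{M}^{\varphi}$ (Section~\ref{multicategorical}, Proposition~\ref{hillbility}), so that the associator and unitor of $(\mathbf{EM}(T),\blacktriangleright)$ are by construction the \emph{unique} comparison maps between universal arrows; since on free modules the universal arrows are the split coequalizers, i.e.\ the images under $\iota$ of the Kleisli universal arrows, the compatibility of $\iota_{\mathsf{a}}$ with associators and unitors is forced by uniqueness, with no diagram to chase. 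Your route is the explicit alternative the paper itself acknowledges (the reference to Seal's Theorem~2.6.4): you construct $\iota_{\mathsf{a}}$ from the splitting and then verify naturality and the two coherence axioms by hand, using that every Linton coequalizer in sight is split on the image of $\iota$ and that the splittings are natural. This is more elementary and self-contained, at the cost of several nontrivial chases — in particular the verification that the composite of right-exactness, Lemma~\ref{actiononfree}, and the $\triangleright$-coherence isomorphisms in Lemma~\ref{isoassociativity} reduces on $T(X)$ to $T(\mathcal{M}_{\mathsf{a};V,W,X})$ is a real (if routine) computation, not a formality. Both approaches are sound; the paper's buys the coherence for free from representability, yours avoids the multicategorical machinery entirely.
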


\subsection{Coherence for Linton coequalizers via multiactegories}\label{multicategorical}

In this section, we give a proof of Theorem~\ref{extendingalways}. Since our approach is a rather minor modification of the proof of an analogous statement for lax \emph{monoidal} monads (rather than lax \emph{module} monads), given in~\cite{aguiar18:monad}, we will only describe the modifications to that proof necessary to establish Theorem~\ref{extendingalways}.

Recall that a (locally small) multicategory $\mathbf{C}$ consists of
\begin{itemize}
 \item a class $\on{Ob}\mathbf{C}$ of objects of $\mathbf{C}$, where we write \(V \in \mathbf{C}\) for \(V \in \on{Ob}\mathbf{C}\);
 \item for any finite sequence $(V_{1},\ldots, V_{n},W)$ of objects in $\mathbf{C}$, a set $\mathbf{C}(V_{1},\ldots, V_{n}, W)$ of \emph{multimorphisms} from $(V_{1},\ldots,V_{n})$ to $W$;
 \item for every $V \in \mathbf{C}$, an identity multimorphism $\on{id}_{V} \in \mathbf{C}(V;V)$; and
 \item for any $W \in \on{Ob}(\mathbf{C})$, ${(V_{i})}_{i=1}^{n} \in {\on{Ob}(\mathbf{C})}^{n}$ and ${({(U_{i}^{j})}_{j=1}^{k_{i}})}_{i=1}^{n} \in {\on{Ob}(\mathbf{C})}^{k_{1}+\cdots + k_{n}}$, a composition operation
 \begin{equation}\label{multicompose}
  \mathbf{C}(V_{1},\ldots,V_{n};W) \times \mathbf{C}(U_{1}^{1},\ldots, U_{1}^{k_{1}};V_{1}) \times \cdots \times \mathbf{C}(U_{n}^{1},\ldots, U_{n}^{k_{n}};V_{n}) \to \mathbf{C}(U_{1}^{1},\ldots,U_{n}^{k_{n}}; W),
 \end{equation}
\end{itemize}
 subject to natural associativity and unitality conditions; see for example~\cite[Section~2.1]{leinster04:higher} or~\cite[Definition~2.1]{hermida00:repres} for a full definition. We remark that, since we want our multicategories and multiactegories to correspond to $\Bbbk$-linear monoidal and module categories, we should replace sets of multimorphisms with vector spaces, and the Cartesian products with tensor products. However, since our aim is to show Theorem~\ref{extendingalways}, which can be verified on the level of the underlying (ordinary) categories, this is not essential to us.

As a special (non-skew) case of~\cite[Definition~4.4]{arkor24}, a (locally small) \emph{left multiactegory $\mathbf{M}$ over a multicategory $\mathbf{C}$} consists of
\begin{itemize}
 \item a class $\on{Ob}\mathbf{M}$ of objects of $\mathbf{M}$, where we write \(M \in \mathbf{M}\) for \(M \in \on{Ob}\mathbf{M}\);
 \item for any finite (possibly empty) sequence $(V_{1},\ldots, V_{n})$ of objects in $\mathbf{C}$ and a pair of objects $(M,N)$ of $\mathbf{M}$, a set $\mathbf{M}(V_{1},\ldots, V_{n};M; N)$ of \emph{multimorphisms} from $(V_{1},\ldots,V_{n};M)$ to $N$;
 \item for every $M \in \mathbf{M}$, an identity multimorphism $\on{id}_{M} \in \mathbf{M}(M;M)$; and
 \item for any $W \in \on{Ob}(\mathbf{C})$, ${(V_{i})}_{i=1}^{n} \in {\on{Ob}(\mathbf{C})}^{n}$ and ${({(U_{i}^{j})}_{j=1}^{k_{i}})}_{i=1}^{n+1} \in {\on{Ob}(\mathbf{C})}^{k_{1}+\cdots + k_{n}}$, a composition operation
 \[
 \begin{aligned}
  &\mathbf{M}(V_{1},\ldots,V_{n};M,N) \times \mathbf{C}(U_{1}^{1},\ldots, U_{1}^{k_{1}};V_{1}) \times \cdots \times \mathbf{C}(U_{n}^{1},\ldots, U_{n}^{k_{n}};V_{n}) \times \mathbf{M}(U_{n+1}^{1},\ldots,U_{n+1}^{k_{n+1}};L;M)\\
  &\to \mathbf{M}(U_{1}^{1},\ldots,U_{n+1}^{k_{n+1}};L; N),
 \end{aligned}
 \]
\end{itemize}
 subject to the (non-marked) associativity and unitality axioms of~\cite[Definition~4.4]{arkor24}.

 From now on, we will often abbreviate sequences of objects using the vector notation similar to~\cite{hermida00:repres};
 e.g., writing $\overrightarrow{V}$ for $(V_{1},\ldots, V_{m})$ and $(\overrightarrow{V}_{1}, \overrightarrow{V}_{2})$ for $(V_{1}^{1},\ldots, V_{1}^{m_{1}}, V_{2}^{1}, \ldots, V_{2}^{m_{2}})$.
 We will use this notation for clarity and brevity, when keeping track of the length of the tuples is not required.
 We also fix the notation for the remainder of Section~\ref{multicategorical}, letting $\mathbf{C}$ be a multicategory and letting $\mathbf{M}$ be a $\mathbf{C}$-multiactegory.

 Recall from~\cite[Chapter~8]{hermida00:repres} that a \emph{pre-universal arrow} for a tuple $\overrightarrow{V}$ of objects of $\mathbf{C}$ consists of an object $\otimes(\overrightarrow{V})$ of $\mathbf{C}$ together with a multimorphism $\pi \in \mathbf{C}(\overrightarrow{V};\otimes(\overrightarrow{V}))$, inducing isomorphisms $\mathbf{C}(\otimes(\overrightarrow{V});W) \xiso \mathbf{C}(\overrightarrow{V};W)$. If $\pi$ further induces isomorphisms
 \[
 \mathbf{C}(\overrightarrow{U}, \otimes(\overrightarrow{V}), \overrightarrow{U}'; W) \xiso \mathbf{C}(\overrightarrow{U}, \overrightarrow{V}, \overrightarrow{U}'; W),
 \]
  we say that $\pi$ is \emph{universal}. If any sequence of objects of $\mathbf{C}$ is the domain of a universal arrow, we say that $\mathbf{C}$ is \emph{representable}. A \emph{representation} of $\mathbf{C}$ is a choice of a universal arrow from every sequence of objects in $\mathbf{C}$.

 \begin{definition}
  Let $\mathbf{M}$ be a $\mathbf{C}$-multiactegory. A \emph{pre-universal arrow} for a tuple $\overrightarrow{V}$ of objects of $\mathbf{C}$ and a single object $X \in \mathbf{M}$, consists of an object $\triangleright(\overrightarrow{V};X)$ together with a multimorphism $\pi \in \mathbf{M}(\overrightarrow{V};X; \triangleright(\overrightarrow{V};X))$, inducing isomorphisms
  \[
   \mathbf{M}(\triangleright(\overrightarrow{V};X);Y) \xiso \mathbf{M}(\overrightarrow{V};X;Y).
  \]
   If $\pi$ further induces isomorphisms
  \[
   \mathbf{M}(\overrightarrow{U},\triangleright(\overrightarrow{V};X);Y) \xiso \mathbf{M}(\overrightarrow{U},\overrightarrow{V};X;Y),
  \]
  then $\pi$ is said to be \emph{universal.}
 \end{definition}

 \begin{definition}

  We say that a multimorphism $\rho \in \mathbf{C}(\overrightarrow{V};W)$ is \emph{pre-universal in $\mathbf{M}$} if it induces isomorphisms
  \[
   \mathbf{M}(W;X;Y) \xiso \mathbf{M}(\overrightarrow{V};X;Y).
  \]
 If it also induces isomorphisms
 \[
  \mathbf{M}(\overrightarrow{U}, \overrightarrow{V}, \overrightarrow{U'}; X; Y) \xiso \mathbf{M}(\overrightarrow{U}, W, \overrightarrow{U'}; X; Y)
 \]
 we say that it is \emph{universal in $\mathbf{M}$}.
 \end{definition}

 \begin{definition}\label{representablemultiactegory}
  For $\mathbf{C}$ representable and a representation $\mathsf{R}$ of $\mathbf{C}$,
  we say that $\mathbf{M}$ is a \emph{representable with respect to $\mathsf{R}$}, if for every sequence $\overrightarrow{V}$ of objects of $\mathbf{C}$ and every object $Y$, there is a universal arrow $\pi_{(\overrightarrow{V};X)} \in \mathbf{M}(\overrightarrow{V};X;\lact(\overrightarrow{V},X))$, and all morphisms of $\mathsf{R}$ are universal in $\mathbf{M}$.
 \end{definition}

 \begin{remark}
  While it is clear that non-universal multimorphisms in $\mathbf{C}$ can be universal in a $\mathbf{C}$-multiactegory, it is not clear to the authors whether universal arrows in $\mathbf{C}$ necessarily act universally in any $\mathbf{C}$-multiactegory, which would make the latter part of Definition~\ref{representablemultiactegory} superfluous.
 \end{remark}

 Recall from~\cite[Proposition~8.5]{hermida00:repres} that universal arrows in a multicategory $\mathbf{C}$ are closed under composition; further, if every sequence of objects in $\mathbf{C}$ is the domain of a pre-universal arrow and pre-universal arrows are closed under composition, then a pre-universal arrow in $\mathbf{C}$ is universal. A similar result holds for multiactegories:

 \begin{lemma}
  Let $\mathbf{M}$ be a left multiactegory over a multicategory $\mathbf{C}$.
  \begin{enumerate}
   \item Composition of universal arrows of $\mathbf{C}$ in $\mathbf{M}$ and universal arrows of $\mathbf{M}$ yields universal arrows in $\mathbf{M}$.
   \item If composition of pre-universal arrows of $\mathbf{C}$ in $\mathbf{M}$ and pre-universal arrows of $\mathbf{M}$ yields pre-universal arrows in $\mathbf{M}$, then a pre-universal morphism in $\mathbf{M}$ is universal.
   \item If pre-universal arrows in $\mathbf{C}$ act pre-universally in $\mathbf{M}$, and pre-universal arrows of $\mathbf{C}$ and $\mathbf{M}$ are closed under composition in $\mathbf{M}$, then $\mathbf{M}$ is representable.
  \end{enumerate}
 \end{lemma}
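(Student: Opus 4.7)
The plan is to adapt Hermida's argument for representable multicategories~\cite[Proposition~8.5]{hermida00:repres} to the two-sorted setting of multiactegories, where multimorphisms carry both $\mathbf{C}$-inputs and a single $\mathbf{M}$-input. Throughout, I intend to track carefully which slot of a multimorphism is being composed along, since the $\mathbf{M}$-input must be distinguished from the $\mathbf{C}$-inputs.

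For (i), the approach is to compose the universality bijections of the factors. A universal $\pi \in \mathbf{M}(W_{1},\dots,W_{n};X;Y)$ induces, for any context $\overrightarrow{P}$ and output $Q$, a bijection $\mathbf{M}(\overrightarrow{P};Y;Q) \xiso \mathbf{M}(\overrightarrow{P},W_{1},\dots,W_{n};X;Q)$, while an arrow $\rho \in \mathbf{C}(\overrightarrow{V};W_{i})$ that acts universally in $\mathbf{M}$ induces a bijection replacing $W_{i}$ by $\overrightarrow{V}$ in any slot. Composing these in succession yields precisely the bijection witnessing universality of the composite. The case of two universal $\mathbf{M}$-arrows glued along the $\mathbf{M}$-input is treated identically, with the inner arrow's universality bijection applied in the context left free by the outer one.

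For (ii), fix a pre-universal $\pi \in \mathbf{M}(\overrightarrow{V};X;Y)$ and exhibit, for every context $\overrightarrow{U}$ and object $Q$, an inverse to the composition map $\mathbf{M}(\overrightarrow{U};Y;Q) \to \mathbf{M}(\overrightarrow{U},\overrightarrow{V};X;Q)$. Given $\beta$ in the codomain, the idea is to produce the preimage by viewing $\pi$ as one factor of a longer composite in which the $\overrightarrow{U}$-slots are handled by further pre-universal arrows; the closure hypothesis guarantees this longer arrow is itself pre-universal, and applying its pre-universality bijection to $\beta$ yields the required preimage in $\mathbf{M}(\overrightarrow{U};Y;Q)$. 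Uniqueness is forced by running the same argument on two candidate preimages and invoking the uniqueness half of the pre-universality bijection for $\pi$ at the inner step.

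For (iii), I combine (i) and (ii). Pre-universal arrows exist on every sequence by assumption and are closed under composition, so by (ii) they are universal; by (i) the chosen representation of $\mathbf{C}$ then acts universally in $\mathbf{M}$, supplying the data required by Definition~\ref{representablemultiactegory}. The main obstacle I foresee is (ii), where the bookkeeping separating inputs governed by the pre-universality of $\pi$ from those governed by the auxiliary pre-universal arrows must be matched precisely; the distinguished status of the $\mathbf{M}$-input, in addition to the $\mathbf{C}$-inputs, adds a further layer of care in choosing at each step whether the composition is being performed along a $\mathbf{C}$-slot or along the $\mathbf{M}$-slot.
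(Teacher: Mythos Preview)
Your approach is correct and matches the paper's adaptation of Hermida's argument. Two small execution issues to fix. In (ii), applying the pre-universality bijection of the longer composite $\sigma\circ(\mathrm{id},\pi)$ (with $\sigma$ pre-universal for $(\overrightarrow{U};Y)$) lands you in $\mathbf{M}(Z;Q)$, not in $\mathbf{M}(\overrightarrow{U};Y;Q)$; you still need to compose back with $\sigma$. The paper sidesteps this by phrasing (ii) as a two-out-of-three argument on the chain $\mathbf{M}(Z;Q)\to\mathbf{M}(\overrightarrow{U};Y;Q)\to\mathbf{M}(\overrightarrow{U},\overrightarrow{V};X;Q)$, where the first map and the composite are bijections by pre-universality and closure, forcing the second to be one as well---this also handles uniqueness without the separate bookkeeping you describe. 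In (iii), showing that the representation of $\mathbf{C}$ acts \emph{universally} (not just pre-universally) in $\mathbf{M}$ is not a consequence of (i); it requires re-running the argument of (ii) for $\mathbf{C}$-arrows in $\mathbf{M}$, now with context on both sides of the slot being contracted.
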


 \begin{proof}
  For the first part, let $(\pi_{0}, \rho_{1},\ldots, \rho_{n}, \pi_{n+1})$ be a sequence composable in $\mathbf{M}$, with $\pi_{0},\pi_{n+1}$ in $\mathbf{M}$ and $\rho_{1},\ldots, \rho_{n} \in \mathbf{C}$, as indicated in Equation~\ref{multicompose}, all of whose elements are universal in $\mathbf{M}$. Then the composite $\pi_{0} \circ (\rho_{1},\ldots, \rho_{n}, \pi_{n+1})$ gives rise to the sequence of isomorphisms
  \[
  \begin{aligned}
   \mathbf{M}(\overrightarrow{W}, \triangleright(\otimes(\overrightarrow{U}_{1}),\ldots, \otimes(\overrightarrow{U}_{n});\triangleright(\overrightarrow{U}_{n+1},X)),\blank)
   &\simeq \mathbf{M}(\overrightarrow{W}, \otimes(\overrightarrow{U}_{1}), \ldots, \otimes(\overrightarrow{U}_{n});\triangleright(\overrightarrow{U}_{n+1},X);\blank) \\
   &\simeq \mathbf{M}(\overrightarrow{W},\ldots, \overrightarrow{U}_{1},\ldots,\overrightarrow{U}_{n+1};X;\blank),
  \end{aligned}
  \]
 proving the universality of the composite.

 For the second part, consider the following chain of maps:
 \begin{align*}
   \mathbf{M}(\overrightarrow{U},\overrightarrow{V},\overrightarrow{U'};X;-)
   & \rightarrow \mathbf{M}(\overrightarrow{U},\otimes(\overrightarrow{V}),\overrightarrow{U'};X;-)
     \rightarrow \mathbf{M}(\otimes(\overrightarrow{U}),\otimes(\overrightarrow{V}),\otimes(\overrightarrow{U'});X;-) \\
   & \rightarrow \mathbf{M}(\lact (\otimes(\overrightarrow{U}),\otimes(\overrightarrow{V}),\otimes(\overrightarrow{U'});X);-),
 \end{align*}
 where the first morphism is induced by the pre-universal arrow $\rho_{\overrightarrow{V}}$ of $\mathbf{C}$, the latter by the pre-universal arrows $\rho_{\overrightarrow{U}}$ and $\rho_{\overrightarrow{U'}}$ of $\mathbf{C}$, and the third is induced by the universal arrow $\pi_{(\overrightarrow{U},\overrightarrow{V},\overrightarrow{U'}; X)}$. The composite of all three maps is an isomorphism, being induced by a composite of pre-universal arrows, and hence a pre-universal arrow. The same holds for the composite of the latter two maps. These two observations establish the invertibility of the first map.

 The third part is established by applies the proof of the second part above to the case of pre-universal arrows of $\mathbf{C}$.
 \end{proof}

Assume $\mathbf{C}$ is representable, $\mathsf{R}$ is a representation of $\mathbf{C}$, and that $\mathbf{M}$ is representable with respect to $\mathsf{R}$.
Given a sequence $V_{1},\ldots,V_{n}$ of objects in $\mathbf{C}$ and an object $X$ in $\mathbf{M}$, the set of ways in which we may compose the universal arrows in the given representations to a universal arrow with domain $(V_{1},\ldots,V_{n};X)$ is canonically in bijection with the set of parenthesizations of the word $V_{1}\cdots V_{n}X$ into subwords of length at least two. Let $\diamond$ and $\diamond'$ be two such parenthesizations. We denote the codomains of the corresponding universal arrows by $\diamond(V_{1},\ldots,V_{n};X)$ and $\diamond'(V_{1},\ldots,V_{n};X)$, and the universal arrows themselves by $\pi_{\diamond(V_{1},\ldots,V_{n};X)}$ and $\pi_{\diamond'(V_{1},\ldots,V_{n};X)}$. The universality of $\pi_{\diamond(V_{1},\ldots,V_{n};X)}$ entails the existence of a unique arrow $\alpha_{\overrightarrow{V},X}^{\diamond,\diamond'}\from \diamond(V_{1},\ldots,V_{n};X)\to \diamond'(V_{1},\ldots,V_{n};X)$ satisfying $\alpha_{\overrightarrow{V},X}^{\diamond,\diamond'} \circ \pi_{\diamond(\overrightarrow{V},X)} = \pi_{\diamond'(\overrightarrow{V},X)}$, which is invertible, with inverse $\alpha_{\overrightarrow{V},X}^{\diamond',\diamond}$. Further, $\alpha_{\overrightarrow{V},X}^{\diamond,\diamond''} = \alpha_{\overrightarrow{V},X}^{\diamond',\diamond''} \circ \alpha_{\overrightarrow{V},X}^{\diamond,\diamond'}$. Thus, for any sequence $V_{1},\ldots,V_{n},X$ we obtain an indiscrete category whose objects are parenthesizations of $V_{1}\cdots V_{n}X$ and $\on{Hom}(\diamond,\diamond') = \setj{\alpha_{\overrightarrow{V},X}^{\diamond,\diamond'}}$. Further, $\alpha_{\overrightarrow{V},X}^{\diamond,\diamond'}$ is natural in $X$ and $V_{i}$ for all $i$.

Recall that given a representable multicategory $\mathbf{C}$ with a fixed representation, there is a monoidal category $\mathcal{C}$ defined by $\on{Ob}\mathcal{C} = \on{Ob}\mathbf{C}$, $\mathcal{C}(X,Y) = \mathbf{C}(X;Y)$, $X \otimes Y \defeq \otimes(X,Y)$, the codomain of the universal arrow $\pi_{\otimes(X,Y)}$ from $(X,Y)$ in the representation of $\mathbf{C}$, and whose associators and unitors are defined by the morphisms $\alpha_{U,V,W}^{(UV)W,U(VW)}$ described above, and similarly obtained morphisms for unitality. The indiscreteness of the category of morphisms $\alpha^{\diamond,\diamond'}$ associated to the parenthesizations of words in $\mathbf{C}$ entails the commutativity of the pentagon and triangle diagrams, establishing the coherence axioms and making $\mathcal{C}$ well-defined. Similarly, the indiscrete category described in the previous paragraph establishes the well-definedness of the $\mathcal{C}$-module category $\mathcal{M}$ of Definition~\ref{associatedrepresented}.

\begin{definition}\label{associatedrepresented}
 Let $\mathbf{M}$ be a representable multiactegory over a representable multicategory $\mathbf{C}$, with fixed representations for both. Let $\mathcal{C}$ be the monoidal category associated to the representation of $\mathbf{C}$. The $\mathcal{C}$-module category $\mathcal{M}$ associated to the fixed representation of $\mathbf{M}$ is defined by setting $\on{Ob}\mathbf{M} = \on{Ob}\mathcal{M}$, $\mathcal{M}(X,Y) = \mathbf{M}(X;Y)$, $V \triangleright X \defeq \triangleright(V,X)$, the codomain of the universal arrow $\pi_{\triangleright(V,X)}$, and setting the associator $(V \otimes W) \triangleright X \xiso V \triangleright (W \triangleright X)$ to be given by the arrow $\alpha_{V,W,X}^{(VW)X, V(WX)}$, and similarly for the unitors.
\end{definition}

Having established sufficient analogues of the results of~\cite{hermida00:repres,leinster04:higher}, we continue with analogues of~\cite[Section~3]{aguiar18:monad}. Similarly to that setting, we will denote the morphism
\[
  V_{1}\triangleright \cdots \triangleright V_{n} \triangleright T(X) \to T(V_{1}\triangleright \cdots \triangleright V_{n} \triangleright X)
\]
obtained by repeated application of the lax structure of $T$ simply by $\varphi$. We also will consider the coequalizer diagram
\begin{equation}\label{unicharacterize}
\begin{mytikzcd}[ampersand replacement=\&]
	{T(V_{1} \triangleright \cdots \triangleright V_{n} \triangleright T(X))} \&\& {T^{2}(V_{1} \triangleright \cdots \triangleright V_{n} \triangleright X)} \&\& {T(V_{1} \triangleright \cdots \triangleright V_{n} \triangleright X)}
	\arrow["{T\varphi}", from=1-1, to=1-3]
	\arrow["{T(V_{1}\triangleright\cdots \triangleright V_{n}\triangleright \nabla_{X})}", curve={height=-18pt}, from=1-1, to=1-5]
	\arrow["\mu", from=1-3, to=1-5]
\end{mytikzcd}
\end{equation}
which is reflexive, with $T(V_{1} \triangleright \cdots \triangleright V_{n} \triangleright X) \xrightarrow{T(V_{1} \triangleright \cdots \triangleright V_{n} \triangleright \eta_{X})} T(V_{1} \triangleright \cdots \triangleright V_{n} \triangleright T(X))$ a common section.

\begin{definition}
 We say that a morphism $f\from V_{1}\triangleright \cdots \triangleright V_{n} \triangleright X \to W$ is \emph{$n$-multilinear} if the following diagram commutes:
\[\begin{mytikzcd}[ampersand replacement=\&]
	{V_{1}\triangleright \cdots \triangleright V_{n} \triangleright T(X)} \&\& {T(V_{1}\triangleright \cdots \triangleright V_{n}\triangleright X)} \\
	\&\& {T(Y)} \\
	{V_{1}\triangleright \cdots \triangleright V_{n} \triangleright X} \&\& Y
	\arrow["\varphi", from=1-1, to=1-3]
	\arrow["{V_{1}\triangleright \cdots \triangleright V_{n} \triangleright \nabla_{X}}", from=1-1, to=3-1]
	\arrow["{T(f)}"', from=1-3, to=2-3]
	\arrow["{\nabla_{Y}}"', from=2-3, to=3-3]
	\arrow["f", from=3-1, to=3-3]
\end{mytikzcd}\]
\end{definition}

For a monoidal category $\mathcal{C}$, there is a representable multicategory $\mathbf{C}$ such that $\mathbf{C}(V_{1},\ldots,V_{n},W)$ is given by $\mathcal{C}(V_{1}\otimes (\cdots (V_{n-1}\otimes V_{n})\ldots),W)$, the domain being endowed with the rightmost parenthesization. An arrow $\pi \in \mathbf{C}(V_{1},\ldots,V_{n}; W)$ is universal if and only if it is an isomorphism $\pi\from V_{1} \otimes \cdots \otimes V_{n} \xiso W$.
Similarly, given a $\mathcal{C}$-module category, one obtains a representable multiactegory $\mathbf{M}$ over $\mathbf{C}$ satisfying $\mathbf{M}(V_{1},\ldots,V_{n};X;Y) = \mathcal{M}(V_{1}\triangleright \cdots \triangleright V_{n} \triangleright X, Y)$. A universal arrow $\pi \in \mathbf{M}(V_{1},\ldots,V_{n}; X; Y)$ is an isomorphism $\pi\from V_{1} \triangleright \cdots \triangleright X \xiso Y$.

\begin{lemma}
  For a $\mathcal{C}$-module category $\mathcal{M}$, the collection of multilinear morphisms of\, $\mathbf{M}$ is stable under multicomposition with morphisms of\, $\mathbf{C}$,
  forming a $\mathcal{C}$-multiactegory that we denote by $\mathbf{M}^{\varphi}$.
\end{lemma}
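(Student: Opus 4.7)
The plan is to adapt the multicategorical argument of~\cite[Section~3]{aguiar18:monad} for lax monoidal monads to the lax module setting. The statement splits into two independent closure claims: first, multilinearity is preserved under multicomposition with morphisms of $\mathbf{C}$ (i.e.~substitution into a $\mathcal{C}$-slot); second, it is preserved under multicomposition with another multilinear morphism in the innermost $\mathcal{M}$-slot. Once both are established, the $\mathbf{C}$-multiactegory axioms for $\mathbf{M}^{\varphi}$ follow from those of $\mathbf{M}$, and each identity multimorphism $\on{id}_{X} \in \mathbf{M}(X; X)$ (with $X$ a $T$-module) is $0$-multilinear since the defining square collapses to the tautology $\nabla_{X} = \nabla_{X}$.

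For the first claim, fix an $n$-multilinear $f \from V_{1} \triangleright \cdots \triangleright V_{n} \triangleright X \to Y$ and multimorphisms $g_{i} \from U_{i}^{1} \otimes \cdots \otimes U_{i}^{k_{i}} \to V_{i}$ of $\mathbf{C}$. The multicomposite is represented in $\mathcal{M}$ by $f$ pre-composed with $g_{1} \triangleright \cdots \triangleright g_{n} \triangleright X$, together with the associators needed to re-parenthesize its domain. To verify the multilinearity condition at arity $k_{1} + \cdots + k_{n}$, one proceeds in three steps: apply the lax module coherence axiom
\[
  T(\mathcal{M}_{\mathsf{a}; V, W, X}) \circ \varphi^{(1)}_{V \otimes W, X} = \varphi^{(2)}_{V, W, X} \circ \mathcal{M}_{\mathsf{a}; V, W, T(X)}
\]
iteratively to collapse the higher-arity $\varphi$ down to an $n$-ary $\varphi$; use naturality of $\varphi^{(1)}$ in its $\mathcal{C}$-slot to commute each $g_{i}$ past $\varphi$; and finally invoke the multilinearity of $f$ together with naturality of the associators and of $g_{i} \triangleright \blank$ with respect to $\nabla_{X}$.

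For the second claim, given a $k$-multilinear $h \from U^{1} \triangleright \cdots \triangleright U^{k} \triangleright L \to X$ and $f$ as above, the key ingredient is the factorization
\[
  \varphi^{(n+k)}_{V_{1}, \ldots, V_{n}, U^{1}, \ldots, U^{k}; L}
  \ =\ \varphi^{(n)}_{V_{1}, \ldots, V_{n};\, U^{1} \triangleright \cdots \triangleright U^{k} \triangleright L}
  \circ \bigl(V_{1} \triangleright \cdots \triangleright V_{n} \triangleright \varphi^{(k)}_{U^{1}, \ldots, U^{k}; L}\bigr),
\]
which is immediate from the definition of $\varphi^{(m)}$ as the iterated application of $T_{\mathsf{a}}$. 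One then uses naturality of $\varphi^{(n)}$ in its $\mathcal{M}$-slot, applied to $h$, to bring $T(V_{1} \triangleright \cdots \triangleright V_{n} \triangleright h)$ to the left of $\varphi^{(n)}$; applies the multilinearity of $f$ at the outer $\varphi^{(n)}$; and finally applies the multilinearity of $h$ inside the resulting expression $V_{1} \triangleright \cdots \triangleright V_{n} \triangleright \bigl(\nabla_{X} \circ T(h) \circ \varphi^{(k)}\bigr)$, yielding exactly $f \circ (V_{1} \triangleright \cdots \triangleright V_{n} \triangleright h) \circ (V_{1} \triangleright \cdots \triangleright U^{k} \triangleright \nabla_{L})$.

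The main obstacle is the bookkeeping of associators and iterated lax coherence cells when expanding $\varphi^{(m)}$ at varying arities; in practice these manipulations are cleanest when carried out as string-diagram calculations in the bicategory of lax $\mathcal{C}$-module endofunctors, where the argument is a direct translation of~\cite[Lemmas~3.3 and~3.4]{aguiar18:monad} with lax monoidal coherence systematically replaced by lax module coherence.
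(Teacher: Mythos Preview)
Your proof is correct and follows the same overall strategy as the paper (adapt \cite{aguiar18:monad} from the monoidal to the module setting), but it is actually more thorough than the paper's own argument. The paper only verifies stability under substitution into the $\mathbf{C}$-slots: it draws the naturality square showing that $f \circ (g_{1} \triangleright \cdots \triangleright g_{n} \triangleright X)$ satisfies the multilinearity condition with respect to $\varphi^{(n)}$ evaluated at the objects $\otimes(\overrightarrow{U}_{i})$, and stops there. In particular, the paper leaves implicit the coherence step relating $\varphi^{(k_{1}+\cdots+k_{n})}$ at the individual $U_{i}^{j}$ to $\varphi^{(n)}$ at the $\otimes(\overrightarrow{U}_{i})$ via the associators, which you correctly identify as requiring the associativity axiom for the lax module structure $T_{\mathsf{a}}$. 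The paper also does not address closure under composition in the $\mathbf{M}$-slot at all, even though this is needed for the ``forming a $\mathbf{C}$-multiactegory'' part of the claim; your factorisation $\varphi^{(n+k)} = \varphi^{(n)} \circ (V_{1} \triangleright \cdots \triangleright V_{n} \triangleright \varphi^{(k)})$ and the subsequent chase are exactly the right way to handle this, and mirror \cite[Lemma~3.4]{aguiar18:monad}. So your argument fills real gaps that the paper's brief proof leaves to the reader.
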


\begin{proof}
 Consider sequences $\overrightarrow{U}_{i}$ for $i=1,\ldots, n$, and morphisms $\otimes(U_{i}) \xrightarrow{g_{i}} V_{i}$, where $\otimes(U_{i})$ again is given the rightmost parenthesization. Let $f$ be a multilinear morphism of $\mathbf{M}$.

 The claim follows immediately from the commutativity of
\[\begin{mytikzcd}[ampersand replacement=\&]
	{\otimes(\overrightarrow{U}_{1})\triangleright \cdots \triangleright \otimes(\overrightarrow{U}_{n}) \triangleright T(X)} \&\& {T(\otimes(\overrightarrow{U}_{1})\triangleright \cdots \triangleright \otimes(\overrightarrow{U}_{n})\triangleright X)} \\
	{V_{1}\triangleright \cdots \triangleright V_{n} \triangleright T(X)} \&\& {T(V_{1}\triangleright \cdots \triangleright V_{n}\triangleright X)} \\
	\&\& {T(Y)} \\
	{V_{1}\triangleright \cdots \triangleright V_{n} \triangleright X} \&\& Y
	\arrow["\varphi", from=1-1, to=1-3]
	\arrow["{g_{1}\triangleright\cdots \triangleright g_{n} \triangleright T(X)}", from=1-1, to=2-1]
	\arrow["{T(g_{1}\triangleright\cdots \triangleright g_{n} \triangleright X)}", from=1-3, to=2-3]
	\arrow["\varphi", from=2-1, to=2-3]
	\arrow["{V_{1}\triangleright \cdots \triangleright V_{n} \triangleright \nabla_{X}}", from=2-1, to=4-1]
	\arrow["{T(f)}"', from=2-3, to=3-3]
	\arrow["{\nabla_{Y}}"', from=3-3, to=4-3]
	\arrow["f", from=4-1, to=4-3]
\end{mytikzcd}\]
where the top face commutes by naturality of the action of $\mathcal{C}$ in $\mathcal{M}$.
\end{proof}

\begin{lemma}\label{multiuni}
  A map \(f\) is \(n\)-multilinear if and only if the map \(\overline{f} \defeq \nabla_{Y} \circ T(f)\) coequalizes Diagram~\eqref{unicharacterize}.
  Further, \(f\) is a pre-universal arrow in \(\mathbf{M}^{\varphi}\) if and only if\, \(\overline{f}\) is the coequalizer of~\eqref{unicharacterize}.
\end{lemma}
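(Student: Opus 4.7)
For the first claim, my plan is to unfold both sides and observe that they differ only by a leading application of $\nabla_Y \circ T(\blank)$, which becomes injective after precomposition with the unit $\eta$. Concretely, I would compute the two composites of $\overline{f} = \nabla_Y \circ T(f)$ with the parallel pair in~\eqref{unicharacterize}. Along the arrow $T(V_1 \triangleright \cdots \triangleright V_n \triangleright \nabla_X)$ this gives $\nabla_Y \circ T(f \circ V_1 \triangleright \cdots \triangleright V_n \triangleright \nabla_X)$ directly from functoriality of $T$. Along the other leg $\mu \circ T\varphi$, naturality of $\mu$ in $f$ yields $T(f) \circ \mu = \mu_Y \circ T^2(f)$, and the $T$-module axiom $\nabla_Y \circ \mu_Y = \nabla_Y \circ T\nabla_Y$ rewrites the full composite as $\nabla_Y \circ T(\nabla_Y \circ T(f) \circ \varphi) = \nabla_Y \circ T(\overline{f} \circ \varphi)$. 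The coequalizing condition for $\overline{f}$ is therefore equivalent to $\nabla_Y \circ T(f \circ V_1 \triangleright \cdots \triangleright V_n \triangleright \nabla_X) = \nabla_Y \circ T(\overline{f} \circ \varphi)$, which is immediate from $n$-multilinearity. For the converse, I would precompose with $\eta$ at the source, use naturality of $\eta$ to commute it past $T$, and invoke the unit axiom $\nabla_Y \circ \eta_Y = \mathrm{id}_Y$ to peel off the outer $\nabla_Y \circ T(\blank)$; the remaining equation is exactly multilinearity.

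For the second claim, I will reduce the coequalizer universal property of $\overline{f}$ to the pre-universality of $f$ via the free-forgetful adjunction. For any $T$-module $Z$, the standard bijection $\mathcal{M}(V_1 \triangleright \cdots \triangleright V_n \triangleright X, Z) \xiso \mathbf{EM}(T)(T(V_1 \triangleright \cdots \triangleright V_n \triangleright X), Z)$ sending $g$ to $\overline{g}$, with inverse $h \mapsto h \circ \eta$, restricts---by the first part of the lemma---to a bijection between $n$-multilinear maps $V_1 \triangleright \cdots \triangleright V_n \triangleright X \to Z$ and $T$-module maps out of $T(V_1 \triangleright \cdots \triangleright V_n \triangleright X)$ that coequalize~\eqref{unicharacterize}. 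The identity $k \circ \overline{f} = \nabla_Z \circ T(k) \circ T(f) = \overline{k \circ f}$, valid for every $T$-module morphism $k \from Y \to Z$, shows that this bijection is natural in $Z$ and intertwines precomposition with $f$ with precomposition with $\overline{f}$. Hence $f$ is pre-universal in $\mathbf{M}^\varphi$ precisely when $\overline{f}$ is the coequalizer of~\eqref{unicharacterize} in $\mathbf{EM}(T)$, and since $T$ is right exact the forgetful functor $\mathbf{EM}(T) \to \mathcal{M}$ creates this coequalizer, so the statement coincides with being the coequalizer in $\mathcal{M}$.

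The main care point will be keeping track of which category the coequalizer lives in: I need to ensure that restricting to $T$-module maps matches the coequalizer universal property computed in $\mathbf{EM}(T)$, and that the bijection inherited from the free-forgetful adjunction really does preserve and reflect the coequalizing condition. Once this bookkeeping is pinned down, the arguments reduce to short diagram chases using only the naturality of $\mu$ and $\eta$ and the monad and module axioms, so no deeper input is required.
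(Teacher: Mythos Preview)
Your proposal is correct and follows essentially the same line as the paper: both arguments reduce the coequalizing condition for $\overline{f}$ to the multilinearity condition for $f$ via the adjunction bijection $\mathbf{EM}(T)(T(\blank),\bblank)\cong\mathcal{M}(\blank,\bblank)$, using naturality of $\mu$ and the module axiom $\nabla_Y\circ\mu_Y=\nabla_Y\circ T\nabla_Y$, and the paper packages these identities into a single commuting diagram where you unfold them computationally. For the second claim the paper simply points to the analogous result in \cite{aguiar18:monad}, and what you have written is precisely that argument: the adjunction bijection restricts, by the first part, to a natural bijection between $n$-multilinear maps and coequalizing $T$-module maps, and it intertwines precomposition with $f$ and with $\overline{f}$ because $k\circ\overline{f}=\overline{k\circ f}$; this immediately identifies pre-universality of $f$ with the coequalizer property of $\overline{f}$ in $\mathbf{EM}(T)$. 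Your final remark about creation of the coequalizer under the forgetful functor is a reasonable clarification, though for the statement as used later only the universal property in $\mathbf{EM}(T)$ is needed.
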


\begin{proof}
Consider the following diagram:
\[\begin{mytikzcd}[ampersand replacement=\&]
	{T(V_{1}\triangleright \cdots \triangleright V_{n}\triangleright T(X))} \&\& {T^{2}(V_{1}\triangleright \cdots \triangleright V_{n}\triangleright X)} \\
	\&\& {T^{2}(Y)} \\
	{T(V_{1}\triangleright \cdots \triangleright V_{n}\triangleright X)} \&\& {T(Y)} \& {T(Y)} \\
	\&\& Y
	\arrow["{T\varphi}", from=1-1, to=1-3]
	\arrow["{T(V_{1}\triangleright \cdots\triangleright \nabla_{X})}"', from=1-1, to=3-1]
	\arrow["{T^{2}(f)}", from=1-3, to=2-3]
	\arrow[""{name=0, anchor=center, inner sep=0}, "\mu", from=1-3, to=3-1]
	\arrow["{\mu_{Y}}", from=2-3, to=3-3]
	\arrow[""{name=1, anchor=center, inner sep=0}, "{T(\nabla_{Y})}", from=2-3, to=3-4]
	\arrow["{T(f)}", from=3-1, to=3-3]
	\arrow["{\overline{f}}"', from=3-1, to=4-3]
	\arrow["{\nabla_{Y}}", from=3-3, to=4-3]
	\arrow["{\nabla_{Y}}", from=3-4, to=4-3]
	\arrow["{(1)}"{description}, draw=none, from=1-1, to=0]
	\arrow["{(2)}"{description}, shift left=5, draw=none, from=0, to=3-3]
	\arrow["{(3)}"{description, pos=0.4}, draw=none, from=1, to=4-3]
\end{mytikzcd}\]
Coequalizing the pair~\eqref{unicharacterize} is precisely coequalizing face $(1)$ in the above diagram. Thus, since face $(2)$ commutes, it is equivalent to $\nabla_{Y}$ coequalizing the square formed jointly by faces $(1)$ and $(2)$. Observe that, using commutativity of face $(3)$, this square postcomposed by $\nabla_{Y}$ gives precisely the pair of morphisms in $\mathbf{EM}(T)(T(V_{1}\triangleright \cdots \triangleright T(X)), Y)$ corresponding to the pair of morphisms in $\mathcal{M}(V_{1}\triangleright \cdots \triangleright T(X), Y)$, defining multilinearity of $f$, under $\mathbf{EM}(T)(T(\blank),\bblank) \cong \mathcal{M}(\blank,\bblank)$. Thus, both conditions in the first statement are equivalent to the commutativity of the diagram. The second statement is shown analogously to~\cite[Lemma~3.5]{aguiar18:monad}
\end{proof}

Finally, in order to establish Theorem~\ref{extendingalways} and Theorem~\ref{strongembeddingalways}, we need an analogue of~\cite[Proposition~3.13]{aguiar18:monad}:
\begin{proposition}\label{hillbility}
 The multicategory $\mathbf{M}^{\varphi}$ is representable.
\end{proposition}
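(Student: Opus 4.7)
The plan is to verify the hypotheses of the third part of the lemma preceding Definition~\ref{representablemultiactegory}, which then yields representability of $\mathbf{M}^{\varphi}$: namely, one needs (a) a pre-universal arrow from every sequence $(\overrightarrow{V}; X)$, (b) that pre-universal arrows in $\mathbf{C}$ act pre-universally in $\mathbf{M}^{\varphi}$, and (c) closure of pre-universal arrows under composition in $\mathbf{M}^{\varphi}$.

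For the base case $n = 1$ of condition (a), I would apply Lemma~\ref{multiuni}: the quotient map $\pi\from V \triangleright X \to V \blacktriangleright X$ defining the Linton coequalizer is pre-universal in $\mathbf{M}^{\varphi}$, because $\overline{\pi} = \nabla_{V \blacktriangleright X} \circ T(\pi)$ is exactly the coequalizer of the parallel pair appearing in diagram~\eqref{unicharacterize} (this pair coincides with the one defining $V \blacktriangleright X$). For general $n$, iterate this construction by chaining together pre-universal arrows via condition (c); the resulting composite lands in the iterated Linton coequalizer $V_{1} \blacktriangleright (\cdots \blacktriangleright (V_{n} \blacktriangleright X)\cdots)$.

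Condition (b) is relatively direct: a pre-universal arrow in $\mathbf{C}$ is an isomorphism of the form $V_{1} \otimes \cdots \otimes V_{k} \xiso W$, and precomposing the Linton coequalizer diagram for $(W; X)$ with such an isomorphism (extended through the action functor using the naturality of the lax coherence $\varphi$ of $T$) yields the Linton coequalizer diagram for $(V_{1}, \ldots, V_{k}; X)$. Condition (c) is the main technical content: given pre-universal arrows $\pi_{1} \in \mathbf{M}^{\varphi}(\overrightarrow{V}; X; Y)$ and $\pi_{2} \in \mathbf{M}^{\varphi}(\overrightarrow{U}; Y; Z)$, their composite in $\mathbf{M}^{\varphi}$ must again be pre-universal. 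Following the strategy of the analogous \cite[Proposition~3.13]{aguiar18:monad}, I would pass to the coequalizer characterization of Lemma~\ref{multiuni} and use the right exactness of $T$ and of $\blank \triangleright \bblank$ (in both variables) to interchange the outer and inner coequalizers, then identify the resulting iterated coequalizer with a single coequalizer of the form~\eqref{unicharacterize} for the concatenated sequence $(\overrightarrow{U}, \overrightarrow{V}; X)$.

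The hard part is this final identification, which is a diagram chase involving the lax coherence $\varphi$ iterated along the sequence, the monad multiplication $\mu$, and the module action maps $\nabla$, whose bookkeeping grows with the length of the sequences. My plan for taming it is an induction on the length of $\overrightarrow{U}$, where at each step Lemma~\ref{actiononfree} reduces the Linton coequalizer of a free $T$-module to a split coequalizer computed by $T(\blank \triangleright \bblank)$; the induction step then reduces to a single-variable diagram chase built from the coherence axioms for the lax $\cat{C}$-module monad structure. With conditions (a), (b), and (c) verified, the cited lemma delivers representability of $\mathbf{M}^{\varphi}$.
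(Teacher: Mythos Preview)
Your overall strategy is correct and matches the paper's: both verify that pre-universal arrows exist for every sequence and are closed under multicategorical composition, invoking the criterion in the lemma preceding Definition~\ref{representablemultiactegory}. However, the paper's execution differs from yours in two ways that simplify the argument considerably.

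First, for condition (a), the paper does not iterate from the case $n=1$. Lemma~\ref{multiuni} is already stated for arbitrary $n$: the coequalizer of diagram~\eqref{unicharacterize} directly furnishes a pre-universal arrow with domain $(V_1,\ldots,V_n;X)$ for any $n$. Your detour through iterated binary compositions is unnecessary.

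Second, for the closure step, the paper does not attempt to ``identify the iterated coequalizer with a single coequalizer of the form~\eqref{unicharacterize}'', and there is no induction on the length of $\overrightarrow{U}$. Instead, the paper exhibits a single commutative square of parallel pairs (diagram~\eqref{BigLinton}): the top is the Linton pair for $(\overrightarrow{W};X)$ with $\overrightarrow{U}_1,\ldots,\overrightarrow{U}_n$ acting on the left, the bottom is the Linton pair for the concatenated sequence $(\overrightarrow{U}_1,\ldots,\overrightarrow{U}_n,\overrightarrow{W};X)$, and the vertical maps are given by $\varphi$. Given a multilinear $g$ with the concatenated domain, $\overline{g}$ coequalizes the bottom pair by Lemma~\ref{multiuni}; the morphism of pairs then yields a unique factorization $\widetilde{g}$ through the coequalizer of the top pair, which is $\overrightarrow{U}_1\triangleright\cdots\triangleright\overline{x}$. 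One then checks (citing the analogue of \cite[Lemma~3.11]{aguiar18:monad}) that $\widetilde{g}$ is multilinear, composes with the inverses of the isomorphisms $u_i$ (here one uses that pre-universal arrows in $\mathbf{C}$, coming from a monoidal category, are isomorphisms), and finally invokes pre-universality of $y$ to obtain the factorization $\widehat{g}$. This handles the full composite $y\circ(u_1,\ldots,u_n,x)$ in one step, uniformly in $n$, and avoids the bookkeeping you anticipate.

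Your plan would likely work, but diagram~\eqref{BigLinton} is the key shortcut you are missing.
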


\begin{proof}
  Let $(V_{1},\ldots,V_{n},Y)$ be a sequence of objects, with $V_{i} \in \mathcal{C}$ and $Y \in \mathcal{M}$.
  The coequalizer of Diagram~\eqref{unicharacterize} defines a pre-universal map in $\mathbf{M}^{\varphi}$ with domain $(V_{1},\ldots,V_{n};Y)$, by Lemma~\ref{multiuni}.
  It remains to show that a composition of pre-universal maps is pre-universal.
  Thus, consider sequences $\overrightarrow{U}_{1},\ldots,\overrightarrow{U}_{n}$ and $\overrightarrow{W}$, universal multimorphisms $u_{i}\from \overrightarrow{U}_{i} \to V_{i}$ of $\mathbf{C}$, and universal multimorphisms $x\from (\overrightarrow{W},X) \to Y$ and $y\from (\overrightarrow{V},Y) \to Z$.

  Behold the following diagram:
  \begin{equation}\label{BigLinton}
    \scalebox{0.9}{\begin{mytikzcd}[ampersand replacement=\&]
        \&\& {\overrightarrow{U}_{1} \triangleright \cdots \overrightarrow{U}_{n}\triangleright T^{2}(\overrightarrow{W} \triangleright X)} \\
        {\overrightarrow{U}_{1} \triangleright \cdots \overrightarrow{U}_{n}\triangleright T(\overrightarrow{W} \triangleright T(X))} \&\&\&\& {\overrightarrow{U}_{1} \triangleright \cdots \overrightarrow{U}_{n}\triangleright T(\overrightarrow{W} \triangleright X)} \\
        \&\& {T^{2}(\overrightarrow{U}_{1} \triangleright \cdots \overrightarrow{U}_{n}\triangleright \overrightarrow{W} \triangleright X)} \\
        {T(\overrightarrow{U}_{1} \triangleright \cdots \overrightarrow{U}_{n}\triangleright \overrightarrow{W} \triangleright T(X))} \&\&\&\& {T(\overrightarrow{U}_{1} \triangleright \cdots \overrightarrow{U}_{n}\triangleright \overrightarrow{W} \triangleright X)}
        \arrow["{U_{1} \triangleright \cdots U_{n}\triangleright \mu_{W\triangleright X}}"{pos=0.6}, from=1-3, to=2-5]
        \arrow["{\overrightarrow{U}_{1} \triangleright \cdots \overrightarrow{U}_{n}\triangleright T(\varphi)}"{pos=0.4}, from=2-1, to=1-3]
        \arrow["{\overrightarrow{U}_{1} \triangleright \cdots \overrightarrow{U}_{n}\triangleright T(\overrightarrow{W} \triangleright \nabla_{X})}", from=2-1, to=2-5]
        \arrow["\varphi"', from=2-1, to=4-1]
        \arrow["\varphi"', from=2-5, to=4-5]
        \arrow["{\mu_{U_{1}\triangleright \cdots \triangleright X}}", from=3-3, to=4-5]
        \arrow["{T\varphi}", from=4-1, to=3-3]
        \arrow["{T(\overrightarrow{U}_{1} \triangleright \cdots \overrightarrow{U}_{n}\triangleright \overrightarrow{W} \triangleright \nabla_{X})}"', from=4-1, to=4-5]
      \end{mytikzcd}}
  \end{equation}
  Similarly to the proof of~\cite[Proposition~3.13]{aguiar18:monad}, this is a morphism from the top triangle-shaped diagram to the bottom one, where the top is the action of $(\overrightarrow{U_{1}},\ldots,\overrightarrow{U_{n}})$ on the Linton pair for $(\overrightarrow{W},X)$, while the bottom is the Linton pair for $(\overrightarrow{U_{1}},\ldots,\overrightarrow{U_{n}},\overrightarrow{W};X)$.

  Now, let $g \in \mathbf{M}^{\varphi}(\overrightarrow{U}_{1},\ldots,\overrightarrow{U}_{n},\overrightarrow{W};X;M)$.
  Consider the diagram
\[\begin{mytikzcd}[ampersand replacement=\&]
	{\overrightarrow{U}_{1} \triangleright \cdots \overrightarrow{U}_{n}\triangleright T(\overrightarrow{W} \triangleright X)} \&\& {\overrightarrow{U}_{1} \triangleright \cdots \overrightarrow{U}_{n}\triangleright Y} \&\& {V_{1} \triangleright \cdots V_{n}\triangleright Y} \\
	\\
	{T(\overrightarrow{U}_{1} \triangleright \cdots \overrightarrow{U}_{n}\triangleright \overrightarrow{W} \triangleright X)} \&\& { M} \&\& Z
	\arrow["{U_{1}\triangleright\cdots \triangleright \overline{x}}", from=1-1, to=1-3]
	\arrow["\varphi"', from=1-1, to=3-1]
	\arrow["{u_{1} \triangleright \cdots u_{n}\triangleright Y}", from=1-3, to=1-5]
	\arrow["\simeq"', from=1-3, to=1-5]
	\arrow["{\exists! \widetilde{g}}", dashed, from=1-3, to=3-3]
	\arrow["y", from=1-5, to=3-5]
	\arrow["{\overline{g}}", from=3-1, to=3-3]
	\arrow["{\exists! \widehat{g}}", dotted, from=3-5, to=3-3]
\end{mytikzcd}\]
  where $\widetilde{g}$ exists since, by Lemma~\ref{multiuni}, $\overline{g}$ coequalizes the bottom Linton pair in Diagram~\ref{BigLinton} above, and ${u_{1}\triangleright\cdots \triangleright \overline{x}}$ is the coequalizer of the top Linton pair in the same diagram. Similarly to~\cite[Lemma~3.11]{aguiar18:monad} one shows that $\widetilde{g}$ is multilinear, and thus so is $u_{1}^{-1} \triangleright \cdots \triangleright u_{n}^{-1} \triangleright Y$, since $u_{i}$ is invertible for all $i$, being a pre-universal, and hence universal, arrow of $\mathbf{C}$. This proves the existence of $\widehat{g}$. Thus we may factorize the given multilinear morphism $g$ through $y \circ (u_{1},\ldots, u_{n},x)$, and this factorization is unique by uniqueness of $\widetilde{g}$ and of $\widehat{g}$, which establishes the pre-universality of the composition $y \circ (u_{1},\ldots, u_{n},x)$.
\end{proof}

\subsection{Adjoint module monads}

\begin{proposition}\label{EMMonadCorrespondence}
  Let \(\mathcal{M}\) be a \(\mathcal{C}\)-module category,
  \(T\from \mathcal{M} \to \mathcal{M}\) an oplax \(\mathcal{C}\)-module monad,
  and \(G\from \mathcal{M} \to \mathcal{M}\) a right adjoint to \(T\).
  Then the isomorphism \(\mathbf{EM}(T) \cong \mathbf{EM}(G)\) of Proposition~\ref{pyramidscheme} is a strict \(\mathcal{C}\)-module isomorphism,
  where \(\mathbf{EM}(T)\) is endowed with the \(\mathcal{C}\)-module structure of Proposition~\ref{oplaxEMcorrespondence},
  and similarly for \(\mathbf{EM}(G)\).
\end{proposition}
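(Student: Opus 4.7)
The plan is to combine Porism~\ref{porism:doctrinal-module-adjunctions} with Proposition~\ref{pyramidscheme} to endow \(G\) with the structure of a lax \(\mathcal{C}\)-module comonad, then use the comonadic dual of Proposition~\ref{oplaxEMcorrespondence} to equip \(\mathbf{EM}(G)\) with a canonical \(\mathcal{C}\)-module structure, and finally identify the two resulting structures on the common underlying category by invoking the uniqueness half of Proposition~\ref{oplaxEMcorrespondence} after a short mate computation. Concretely, I will first produce the lax \(\mathcal{C}\)-module structure on \(G\) as the mate of \(\sigma\) under \(T \dashv G\), which by the doctrinal formula from Proposition~\ref{prop:doctrinaladjunction} is
\[
  G_{\mathsf{a};V,X} \;=\; G(V \lact \varepsilon_X) \circ G(\sigma_{V,G(X)}) \circ \eta_{V \lact G(X)},
\]
where \(\eta,\varepsilon\) denote the unit and counit of \(T \dashv G\); since mate correspondence is compatible with (co)monad structures, the comonad structure on \(G\) provided by Proposition~\ref{pyramidscheme} is compatible with this lax module structure, making \(G\) a lax \(\mathcal{C}\)-module comonad. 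The dual of Proposition~\ref{oplaxEMcorrespondence} then equips \(\mathbf{EM}(G)\) with a unique \(\mathcal{C}\)-module structure making its forgetful functor \(L_{\mathbf{EM}(G)}\) strict.

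Next, the canonical isomorphism \(\Phi \from \mathbf{EM}(T) \xiso \mathbf{EM}(G)\) of Proposition~\ref{pyramidscheme} sends \((X,\alpha)\) to \((X,\, G\alpha \circ \eta_X)\), so in particular \(L_{\mathbf{EM}(G)} \circ \Phi = R_{\mathbf{EM}(T)}\). Transporting the \(\mathcal{C}\)-module structure on \(\mathbf{EM}(G)\) along \(\Phi\) therefore yields a second \(\mathcal{C}\)-module structure on \(\mathbf{EM}(T)\) for which \(R_{\mathbf{EM}(T)}\) remains strict, and by the bijection of Proposition~\ref{oplaxEMcorrespondence} this transported structure is recorded by some oplax \(\mathcal{C}\)-module monad structure \(\sigma'\) on \(T\). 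Showing that \(\Phi\) is a strict \(\mathcal{C}\)-module isomorphism thus reduces to showing \(\sigma' = \sigma\), which in turn follows from a short diagram chase: on a \(T\)-module \((X,\alpha)\), the transported action on \(V \lact X\) has structure map \(\varepsilon_{V \lact X} \circ T(\tilde{\gamma})\) with \(\tilde{\gamma} = G_{\mathsf{a};V,X} \circ \bigl(V \lact (G\alpha \circ \eta_X)\bigr)\); substituting the explicit formula for \(G_{\mathsf{a};V,X}\) and repeatedly applying naturality of \(\varepsilon\) and of \(\sigma_{V,-}\) together with the triangle identity \(\varepsilon T \circ T\eta = \mathrm{id}_T\), the expression collapses to \((V \lact \alpha) \circ \sigma_{V,X}\), which is precisely the action induced from \(\sigma\) via Proposition~\ref{oplaxEMcorrespondence}. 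Injectivity of the bijection in that proposition then forces \(\sigma' = \sigma\).

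The main obstacle will be this mate diagram chase: while each individual move is elementary (naturality of a (co)unit, naturality of \(\sigma\), or a triangle identity), several of them must be composed in the right order to collapse the nested expression \(\varepsilon \circ T\, G_{\mathsf{a}} \circ \cdots\) without direction errors. An alternative, more symmetric but lengthier route would be to verify the associator and unitor coherences of \(\Phi\) directly; the uniqueness reduction above bypasses them entirely by reducing the equality of two \(\mathcal{C}\)-module structures on \(\mathbf{EM}(T)\) to the equality of two oplax \(\mathcal{C}\)-module monad structures on \(T\).
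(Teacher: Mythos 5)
Your proposal is correct and follows essentially the same route as the paper: equip \(G\) with the mate lax \(\mathcal{C}\)-module structure \(G(V \lact \varepsilon) \circ G(T_{\mathsf{a}}) \circ \eta\) via doctrinal adjunction, and then reduce everything to a single diagram chase (naturality of the (co)unit and of \(T_{\mathsf{a}}\) plus a triangle identity) showing that the two actions on \(V \lact X\) coincide, which is exactly the commutative diagram in the paper's proof. The only difference is your detour through the bijection of Corollary~\ref{oplaxEMcorrespondence} (``injectivity forces \(\sigma' = \sigma\)''), which is harmless but redundant: the chase already establishes the equality of structure maps directly, and that is all strictness requires.
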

\begin{proof}
  Note that,
  by Proposition~\ref{prop:module-adjunctions},
  the comonad \(G\from \cat{M}\to \cat{M}\) comes equipped with the following lax \(\cat{C}\)-module structure,
  for all \(V \in \cat{C}\) and \(M \in \cat{M}\):
  \[
    {(G_{\mathsf{a}})}_{V, M} \from
    V \lact G(M)
    \xrightarrow{\eta_{V \lact G(M)}} GT(V \lact G(M))
    \xrightarrow{G{(T_{\mathsf{a}})}_{V, G(M)}} G(V \lact TG(M))
    \xrightarrow{G(V \lact \varepsilon_M)} G(V \lact M).
  \]
  Further, the isomorphism \(L\from \mathbf{EM}(T) \xiso \mathbf{EM}(G)\) of Proposition~\ref{pyramidscheme} is given by
  \[
    \big(A,\ \nabla_A\from T(A) \to A\big) \mapsto \big(A,\ A \xrightarrow{\ \eta_A\ } GT(A) \xrightarrow{\,G\nabla_A\,} G(A)\big).
  \]
  We have to prove that,
  for all \(V \in \cat{C}\) and \((A, \nabla_A) \in \mathbf{EM}(T)\),
  one has \(L(V \lact (A, \nabla_A)) = V \lact L((A, \nabla_A))\).
  This is equivalent to the equality of the following two morphisms:
  \[
    \big(
    V \lact A,\
    V \lact A
    \xrightarrow{\eta_{V \lact A}} GT(V \lact A)
    \xrightarrow{G {(T_{\mathsf{a}})}_{V, A}} G(V \lact T(A))
    \xrightarrow{G(V \lact \nabla_A)} G(V \lact A)
    \big);
  \]
  \begin{align*}
    \big(
    V \lact A,\
    V \lact A
    &\xrightarrow{V \lact \eta_A} V \lact GT(A)
      \xrightarrow{V \lact G \nabla_A} V \lact G(A)
      \xrightarrow{\eta_{V \lact G(A)}} GT(V \lact G(A)) \\
    &\xrightarrow{G{(T_{\mathsf{a}})}_{V,G(A)}} G(V \lact TG(A))
      \xrightarrow{G(V \lact \varepsilon_A)} G(V \lact A)
      \big).
  \end{align*}
  This is evidenced by the following commutative diagram:
  \[
    \begin{mytikzcd}[ampersand replacement=\&]
      {V \lact A} \& {GT(V\lact A)} \&\& {G(V\lact T(A))} \\
      {V\lact GT(A)} \& {GT(V \lact GT(A))} \& {G(V \lact TGT(A))} \& {G(V\lact T(A))} \\
      {V\lact G(A)} \& {GT(V \lact G(A))} \& {G(V\lact TG(A))} \& {G(V \lact A)}
      \arrow["{{\eta_{V\lact A}}}", from=1-1, to=1-2]
      \arrow["{{V\lact \eta_A}}"', from=1-1, to=2-1]
      \arrow["{{G {(T_{\mathsf{a}})}_{V,A}}}", from=1-2, to=1-4]
      \arrow["{{GT(V \lact \eta_A)}}", from=1-2, to=2-2]
      \arrow[""{name=0, anchor=center, inner sep=0}, "{{G(V \lact T\eta_A)}}"', from=1-4, to=2-3]
      \arrow[Rightarrow, no head, from=1-4, to=2-4]
      \arrow["{{\eta_{V \lact GT(A)}}}"', from=2-1, to=2-2]
      \arrow["{{V \lact G \nabla_A}}"', from=2-1, to=3-1]
      \arrow["{{GT_{\mathsf{a}}}}"', from=2-2, to=2-3]
      \arrow["{{GT(V \lact G \nabla_A)}}", from=2-2, to=3-2]
      \arrow["{{G(V \lact \varepsilon_{T(A)})}}"', from=2-3, to=2-4]
      \arrow["{{G(V \lact TG \nabla_A)}}", from=2-3, to=3-3]
      \arrow["{{G(V \lact \nabla_A)}}", from=2-4, to=3-4]
      \arrow["{{\eta_{V \lact G(A)}}}"', from=3-1, to=3-2]
      \arrow["{{G{(T_{\mathsf{a}})}_{V,G(A)}}}"', from=3-2, to=3-3]
      \arrow["{{G(V \lact \varepsilon_{A})}}"', from=3-3, to=3-4]
      \arrow["{\mathsf{adj}}"{description}, draw=none, from=0, to=2-4]
    \end{mytikzcd}
  \]
\end{proof}

In an analogous way to Proposition~\ref{EMMonadCorrespondence},
one obtains the following result.

\begin{proposition}\label{KleisliMonadCorrespondence}
  Let \(\mathcal{M}\) be a \(\mathcal{C}\)-module category,
  \(T\from \mathcal{M}\to \mathcal{M}\) a lax \(\mathcal{C}\)-module monad,
  and \(K\from \mathcal{M} \to \mathcal{M}\) a left adjoint to \(T\).
  Then the equivalence \(\mathbf{Kl}(T) \simeq \mathbf{Kl}(G)\) of Proposition~\ref{einerKleinerKleisliÄquivalenz} is a \(\mathcal{C}\)-module equivalence,
  where \(\mathbf{Kl}(T)\) is endowed with the \(\mathcal{C}\)-module structure of Proposition~\ref{laxKlCorrespondence},
  and similarly for \(\mathbf{Kl}(K)\).
\end{proposition}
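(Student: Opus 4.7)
The plan is to adapt the argument of Proposition~\ref{EMMonadCorrespondence}, replacing Eilenberg--Moore categories with their Kleisli counterparts. By doctrinal adjunction in the module setting (Porism~\ref{porism:doctrinal-module-adjunctions}), the lax \(\mathcal{C}\)-module structure \({(T_{\mathsf{a}})}_{V,M}\from V \lact T(M) \to T(V \lact M)\) on \(T\) induces an oplax \(\mathcal{C}\)-module structure on the left adjoint \(K\); in terms of the unit \(\eta\from \on{Id}\nt TK\) and counit \(\varepsilon\from KT\nt \on{Id}\) of the adjunction \(K \dashv T\), this is given by
\[
  {(K_{\mathsf{a}})}_{V, M}\from K(V \lact M) \xrightarrow{K(V \lact \eta_M)} K(V \lact TK(M)) \xrightarrow{K{(T_{\mathsf{a}})}_{V, K(M)}} KT(V \lact K(M)) \xrightarrow{\varepsilon_{V \lact K(M)}} V \lact K(M).
\]
A short diagram chase, formally identical to the standard mate argument showing that adjoints of monads are comonads, then verifies that \(K_{\mathsf{a}}\) is compatible with the comonad structure on \(K\) supplied by Proposition~\ref{einerKleinerKleisliÄquivalenz}, making \(K\) into an oplax \(\mathcal{C}\)-module comonad.

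Next, I would invoke the co-Kleisli analogue of Corollary~\ref{laxKlCorrespondence} to endow \(\mathbf{Kl}(K)\) with the \(\mathcal{C}\)-module structure whose action on a morphism \(g \in \mathbf{Kl}(K)(M,N) = \mathcal{M}(K(M),N)\) is given by \((V \lact g) \circ {(K_{\mathsf{a}})}_{V,M}\); this is the essentially unique \(\mathcal{C}\)-module structure making the identity-on-objects functor \(R_{\mathbf{Kl}(K)}\from \mathcal{M} \to \mathbf{Kl}(K)\) a strict \(\mathcal{C}\)-module functor.

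Recall that the equivalence \(\Phi\from \mathbf{Kl}(K) \xiso \mathbf{Kl}(T)\) of Proposition~\ref{einerKleinerKleisliÄquivalenz} is the identity on objects and, on morphisms, is the \(K \dashv T\) bijection sending \(g\from K(M) \to N\) to its adjunct \(T(g) \circ \eta_M\from M \to T(N)\). What remains is to verify that \(\Phi\) commutes strictly with the \(\mathcal{C}\)-actions on morphisms, i.e., that
\[
  {(T_{\mathsf{a}})}_{V, N} \circ \big(V \lact (T(g) \circ \eta_M)\big) = T\big((V \lact g) \circ {(K_{\mathsf{a}})}_{V, M}\big) \circ \eta_{V \lact M}.
\]
Expanding \(K_{\mathsf{a}}\), using the naturality of \(\eta\), and applying one triangle identity for \(K \dashv T\), this reduces to the identity \(T{(K_{\mathsf{a}})}_{V, M} \circ \eta_{V \lact M} = {(T_{\mathsf{a}})}_{V, K(M)} \circ (V \lact \eta_M)\), which expresses that \(T_{\mathsf{a}}\) and \(K_{\mathsf{a}}\) are mates of each other under \(K \dashv T\)---precisely the defining property of the doctrinal correspondence. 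The only real difficulty is managing this bookkeeping cleanly; alternatively, I can avoid the explicit computation by appealing to a uniqueness argument in the spirit of Theorem~\ref{thm:one-module-structure-on-EM}, first checking that \(\Phi \circ R_{\mathbf{Kl}(K)} = L_{\mathbf{Kl}(T)}\) (an easy consequence of the triangle identities) and then noting that \(\Phi\) therefore transports the canonical \(\mathcal{C}\)-module structure on \(\mathbf{Kl}(T)\) to a structure on \(\mathbf{Kl}(K)\) with respect to which \(R_{\mathbf{Kl}(K)}\) is strictly module---and such a structure is unique.
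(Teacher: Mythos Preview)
Your proposal is correct and follows exactly the approach the paper intends: the paper's entire proof is the remark that the result is obtained ``in an analogous way to Proposition~\ref{EMMonadCorrespondence}'', and your explicit computation (reducing the compatibility of \(\Phi\) with the actions to the mate identity \(T(K_{\mathsf{a}})\circ\eta = T_{\mathsf{a}}\circ(V\lact\eta)\), then verifying it via naturality of \(\eta\) and a triangle identity) is precisely what that analogy unpacks to. Your alternative uniqueness argument via \(\Phi\circ R_{\mathbf{Kl}(K)} = L_{\mathbf{Kl}(T)}\) is also valid and arguably cleaner.
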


\subsection{Beck's monadicity theorems}

We recall Beck's monadicity theorem,
as well as specialisations of it to abelian categories,
which will be useful in our further investigations into the module structure of the Eilenberg--Moore category of a (co)monad.

\begin{theorem}[Beck's monadicity theorem]\label{BigBeck}
  Let \(\stdadj\) be an adjunction.
  Then the functor \(\Omega\from \mathcal{B} \to \mathbf{EM}(UF)\) is an equivalence if and only if\, \(U\) creates coequalizers of\, \(U\)-split pairs.
\end{theorem}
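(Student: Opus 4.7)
The plan is to prove both implications by leveraging the key property that the forgetful functor \(R_{\mathbf{EM}(T)}\from \mathbf{EM}(T) \to \mathcal{C}\) creates coequalizers of \(R_{\mathbf{EM}(T)}\)-split pairs---a statement that follows from the absoluteness of split coequalizers (so they are preserved by every functor, in particular by \(T\) itself) together with the uniqueness clause in the universal property of coequalizers, which forces a unique module structure on the lift.

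For the \((\Leftarrow)\) direction, suppose that \(U\) creates coequalizers of \(U\)-split pairs. I construct a quasi-inverse \(\Psi\from \mathbf{EM}(UF) \to \mathcal{D}\) to \(\Omega\). Given a \(T\)-module \((A, \nabla_A) \in \mathbf{EM}(T)\) with \(T \defeq UF\), consider the parallel pair \(\varepsilon_{F(A)},\, F(\nabla_A)\from FUF(A) \rightrightarrows F(A)\) in \(\mathcal{D}\). After applying \(U\), the resulting pair \(T^{2}(A) \rightrightarrows T(A)\) admits \(\nabla_A\) as a split coequalizer, with splittings provided by \(\eta_A\from A \to T(A)\) and \(\eta_{T(A)}\from T(A) \to T^{2}(A)\); the split-coequalizer identities follow immediately from the \(T\)-module axioms on \(\nabla_A\) together with monad unitality and naturality of \(\eta\). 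Hence the original pair in \(\mathcal{D}\) is \(U\)-split, so by hypothesis it admits a coequalizer \(q\from F(A) \to \Psi(A, \nabla_A)\) in \(\mathcal{D}\) whose image under \(U\) is precisely \(\nabla_A\). Functoriality of \(\Psi\) on morphisms is forced by the uniqueness in the creation property applied to the evident morphism of parallel pairs induced by any \(T\)-module homomorphism.

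It remains to show that \(\Omega\) and \(\Psi\) are mutually quasi-inverse. The isomorphism \(\Omega \circ \Psi \cong \mathrm{id}_{\mathbf{EM}(T)}\) is immediate from the construction: one has \(U\Psi(A, \nabla_A) = A\), and the coequalizer morphism \(q = \varepsilon_{\Psi(A,\nabla_A)}\) (forced by the naturality of \(\varepsilon\) and uniqueness of the coequalizer) has \(U\)-image \(U\varepsilon_{\Psi(A,\nabla_A)} = \nabla_A\), recovering the prescribed action. For \(\Psi \circ \Omega \cong \mathrm{id}_{\mathcal{D}}\), given \(D \in \mathcal{D}\), the object \(\Psi\Omega(D)\) is computed as a coequalizer in \(\mathcal{D}\) of the pair \(\varepsilon_{FU(D)},\, FU(\varepsilon_D)\from FUFU(D) \rightrightarrows FU(D)\). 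The counit \(\varepsilon_D\from FU(D) \to D\) coequalizes this pair by naturality of \(\varepsilon\), and its image under \(U\) is exactly the canonical split coequalizer associated to the module \((UD, U\varepsilon_D)\); by the uniqueness intrinsic to creation of coequalizers, \(\varepsilon_D\) exhibits \(D\) as the desired coequalizer, yielding \(\Psi\Omega(D) \cong D\) naturally in \(D\).

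For the \((\Rightarrow)\) direction, I would observe that \(U \cong R_{\mathbf{EM}(T)} \circ \Omega\) up to natural isomorphism. Since \(R_{\mathbf{EM}(T)}\) creates coequalizers of \(R_{\mathbf{EM}(T)}\)-split pairs by the initial observation, and since \(\Omega\), being an equivalence, both preserves and reflects all colimits as well as the property of a pair becoming a split coequalizer after forgetting, the creation property transports along \(\Omega\) to give the corresponding creation property for \(U\). The main obstacle in the whole argument is not conceptual but technical: one must carefully verify the functoriality and naturality of the lift \(\Psi\), which ultimately reduces to repeated applications of the uniqueness clause in the universal property of coequalizers together with the absolute nature of the split coequalizers that appear in the bar resolution.
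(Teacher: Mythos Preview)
Your proof is essentially the standard textbook argument (as in Mac~Lane, \S~VI.7), and it is correct. Note, however, that the paper does not actually prove this theorem: it states Beck's monadicity theorem as a known result, refers to \cite[\S~VI.7]{MacL} for the definition of $U$-split pairs, and immediately passes to the simplified abelian version in Theorem~\ref{thm:SmallBeck}. So there is nothing in the paper to compare your argument against; you have supplied a proof where the authors chose to cite one.

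One minor point worth tightening in your write-up: the identification $q = \varepsilon_{\Psi(A,\nabla_A)}$ that you assert is ``forced by naturality of $\varepsilon$ and uniqueness of the coequalizer'' is more cleanly obtained by transposing $q$ across the adjunction---its transpose is $U(q) \circ \eta_A = \nabla_A \circ \eta_A = \mathrm{id}_A$, whence $q = \varepsilon_{\Psi(A,\nabla_A)}$ directly. This is implicit in what you wrote, but making it explicit removes any appearance of circularity.
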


We refer to~\cite[\S~VI.7]{MacL} for a definition of the notion of \(U\)-split pairs.
Theorem~\ref{BigBeck} is a very precise result---we are essentially only interested in sufficient conditions for the comparison functor to be an equivalence.
Thus, we state a simpler set of sufficient conditions.

We say that a functor $U$ \emph{reflects zero objects} if $U(X) \cong 0$ implies that $X \cong 0$.

\begin{theorem}\label{thm:SmallBeck}
  Let \(\adj{F}{U}{\cat{A}}{\cat{B}} \) be an adjunction between abelian categories.
  If\, \(U\) is right exact and it reflects zero objects,
  then \(K \from \cat{B} \to \mathbf{EM}(UF)\) is an equivalence.
  If\, \(F\) is left exact and it reflects zero objects,
  then \(K\from \cat{A} \to \mathbf{EM}(FU)\) is an equivalence.
\end{theorem}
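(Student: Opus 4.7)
The plan is to deduce Theorem~\ref{thm:SmallBeck} from the precise Beck's monadicity theorem (Theorem~\ref{BigBeck}) by verifying its creation-of-coequalizers hypothesis under the simpler assumptions given. I focus on the first statement; the second is handled by dualization.

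First I would observe that $U$, being a right adjoint, preserves all limits and hence is left exact. Together with the right-exactness hypothesis this makes $U$ an exact functor. The next step is to upgrade ``reflects zero objects'' to full conservativity: given a morphism $f$ in $\cat{B}$ with $U(f)$ invertible, the exactness of $U$ yields $U(\ker f) \cong \ker U(f) = 0$ and $U(\coker f) \cong \coker U(f) = 0$, so by the reflection hypothesis both $\ker f$ and $\coker f$ vanish, and hence $f$ is an isomorphism in the abelian category $\cat{B}$.

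With $U$ exact and conservative in hand, I would verify the creation condition as follows. Given a $U$-split pair $(d_0, d_1)$ in $\cat{B}$, form its coequalizer $c\from Y \to Z$ in the cocomplete abelian category $\cat{B}$. Right exactness ensures that $Uc$ coequalizes $(Ud_0, Ud_1)$ in $\cat{A}$, while a split coequalizer is absolute and hence also provides a coequalizer of $(Ud_0, Ud_1)$; the two therefore agree up to canonical isomorphism, showing existence of the desired lift. Uniqueness of the lift then follows from conservativity of $U$, so $U$ creates coequalizers of $U$-split pairs. Theorem~\ref{BigBeck} now applies and yields the comparison equivalence $\cat{B} \simeq \mathbf{EM}(UF)$.

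For the second statement, the left adjoint $F$ is automatically right exact as a left adjoint; the added left-exactness hypothesis makes it exact, and the same $\ker/\coker$ argument shows $F$ is conservative. The dual of Theorem~\ref{BigBeck} applied to the comonad $FU$ on $\cat{A}$ then yields the equivalence $\cat{A} \simeq \mathbf{EM}(FU)$, with the roles of coequalizers and $U$-split pairs replaced by equalizers of $F$-split pairs. I expect the only non-routine point in the whole argument to be the passage from ``reflects zero objects'' to ``reflects isomorphisms'', which genuinely requires both left and right exactness of the functor in question; everything else is a direct translation of Beck's hypotheses into the abelian setting.
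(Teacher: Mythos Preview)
Your proposal is correct and follows exactly the approach the paper indicates: the paper does not spell out a proof but simply states that the result follows from Beck's monadicity theorem as explained in \cite[Section~4.1]{BZBJ}, and your argument supplies precisely those details---upgrading exactness plus reflection of zero objects to conservativity via the $\ker/\coker$ trick, then invoking the crude form of Beck's criterion.
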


That Theorem~\ref{thm:SmallBeck} follows from Beck's monadicity theorem is explained in~\cite[Section~4.1]{BZBJ}.

\subsection{Semisimple monads}\label{sec:semisimple-monads}

\begin{definition}\label{def:cauchy-completion}
  The \emph{Cauchy completion} \(\cat{C}^{\mathrm{Cy}}\) of a (small) category \(\cat{C}\)
  is the full subcategory of \([\cat{C}^{\op}, \kVect]\) comprising all \emph{tiny objects}%
  —functors \(F\from \cat{C}^{\op} \to \kVect\) such that \([\cat{C}^{\op}, \kVect](F, \blank)\) preserves small colimits.
\end{definition}

There are many equivalent formulations of Definition~\ref{def:cauchy-completion} in the case of\, \(\Bbbk\)-linear categories.
For example, a functor is in the Cauchy completion
if and only if it is a retract of a representable functor,
if and only if it is a direct summand of a direct sum of representable functors;
see~\cite{BoDe,prest09:purit}.
Further, this coincides with the representation theoretic definition of the Cauchy completion as the Karoubi envelope of the additive envelope of a category,
see~\cite[Corollary~4.22]{lack22:flat}.

\begin{lemma}
  The Eilenberg--Moore category of a monad on a Cauchy complete category is Cauchy complete.
\end{lemma}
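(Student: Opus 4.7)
The plan is to use the characterization of Cauchy completeness for $\Bbbk$-linear categories, quoted just before the lemma, as the closure under finite biproducts and splittings of idempotents (i.e.\ the Karoubi envelope of the additive envelope). Thus it suffices to verify these two closure properties for $\mathbf{EM}(T)$, given that both hold in $\cat{C}$. Throughout, $T$ is automatically additive because the ambient setting is $\Bbbk$-linear.

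First I would handle finite biproducts. Given $T$-modules $(M, \nabla_M)$ and $(N, \nabla_N)$, the biproduct $M \oplus N$ exists in $\cat{C}$ since $\cat{C}$ is Cauchy complete, and the isomorphism $T(M \oplus N) \cong T(M) \oplus T(N)$ provided by additivity of $T$ lets us equip $M \oplus N$ with the action $\nabla_{M \oplus N} \defeq \nabla_M \oplus \nabla_N$. A routine check shows the structural maps $M \to M \oplus N \to M$ and $N \to M \oplus N \to N$ commute with the $T$-actions, so they promote to $\mathbf{EM}(T)$ and exhibit $(M \oplus N, \nabla_M \oplus \nabla_N)$ as a biproduct there.

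Next I would split idempotents. Let $e \from (M, \nabla_M) \to (M, \nabla_M)$ be an idempotent in $\mathbf{EM}(T)$, so $e^2 = e$ in $\cat{C}$ and $e \circ \nabla_M = \nabla_M \circ T(e)$. Since $\cat{C}$ is Cauchy complete, $e$ splits in $\cat{C}$ as $N \xrightarrow{i} M \xrightarrow{p} N$ with $p \circ i = \id_N$ and $i \circ p = e$. I would then endow $N$ with the candidate action
\[
  \nabla_N \defeq p \circ \nabla_M \circ T(i)\from T(N) \to N,
\]
and verify the module axioms. Unitality is immediate from $p \circ i = \id_N$ and naturality of $\eta$. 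For associativity, the key observation is that $e$ being a $T$-module endomorphism gives $\nabla_M \circ T(e) = e \circ \nabla_M$, whence one may commute $T(i)\circ T(p)$ past $\nabla_M$ to reduce the associativity square for $\nabla_N$ to that for $\nabla_M$. The same identity then shows that $i$ and $p$ are $T$-module morphisms, so $e$ splits as an idempotent in $\mathbf{EM}(T)$.

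The main obstacle is simply the diagrammatic bookkeeping in the last step: one needs the compatibility of $e$ with $\nabla_M$ to commute the expressions $T(i \circ p) = T(e)$ through the structure maps, together with the naturality of $\mu$, in order to reduce everything to the $T$-module axioms on $(M,\nabla_M)$. None of these steps is deep, but they must be carried out with care to produce genuine $T$-module morphisms $i$ and $p$ splitting $e$.
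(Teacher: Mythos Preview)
Your proof is correct, but the paper dispatches the lemma in a single line by invoking the general fact that the forgetful functor \(R_{\mathbf{EM}(T)}\from \mathbf{EM}(T) \to \cat{C}\) creates limits. Since finite biproducts are products and a splitting of an idempotent \(e\) is the equalizer of \(e\) and \(\id\), both pieces of Cauchy completeness are instances of limits created from \(\cat{C}\). Your argument is essentially an explicit unpacking of this creation-of-limits statement in the two cases at hand: the action \(\nabla_M \oplus \nabla_N\) and the action \(p \circ \nabla_M \circ T(i)\) are exactly what the created limit furnishes. The paper's approach is shorter and conceptually cleaner; yours has the virtue of being self-contained and not relying on the reader knowing the monadicity folklore.
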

\begin{proof}
  For a monad \(T\),
  this follows immediately from the fact that the forgetful functor
  \(R_{\mathbf{EM}(T)}\) creates limits in \(\mathbf{EM}(T)\).
\end{proof}

\begin{proposition}\label{prop:SemisimpleEM}
  Let \(T\from \mathcal{A} \to \mathcal{A}\) be a monad on an abelian category \(\mathcal{A}\).
  The category \(\mathbf{EM}(T)\) is semisimple if and only if\,
  \({\mathbf{Kl}(T)}^{\mathrm{Cy}}\) is so.
  In that case, the extension \(\iota^{\mathrm{Cy}}\from {\mathbf{Kl}(T)}^{\mathrm{Cy}} \to \mathbf{EM}(T)\) of the canonical embedding \(\iota\from \mathbf{Kl}(T) \to \mathbf{EM}(T)\)
  to the Cauchy completion is an equivalence of categories.
\end{proposition}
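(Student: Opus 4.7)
The plan is to prove that $\iota^{\mathrm{Cy}}$ is an equivalence under either of the equivalent semisimplicity hypotheses; the biconditional then follows. Since $\iota \from \mathbf{Kl}(T) \hookrightarrow \mathbf{EM}(T)$ is fully faithful with essential image the full subcategory of free $T$-modules, and $\mathbf{EM}(T)$ is Cauchy complete by the preceding lemma, the universal property of Cauchy completion extends $\iota$ uniquely (up to isomorphism) to a fully faithful functor $\iota^{\mathrm{Cy}}\from {\mathbf{Kl}(T)}^{\mathrm{Cy}} \to \mathbf{EM}(T)$ whose essential image is the closure of the free $T$-modules under retracts in $\mathbf{EM}(T)$. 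Thus, only essential surjectivity must be verified.

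For the direction ``$\mathbf{EM}(T)$ semisimple implies equivalence'', Proposition~\ref{toomuchTV} ensures every epimorphism in $\mathbf{EM}(T)$ splits. Applied to the canonical epimorphism $\nabla_X\from \iota(X) = (T(X),\mu_X) \to (X,\nabla_X)$, this exhibits every $T$-module as a retract of a free one, placing it in the essential image of $\iota^{\mathrm{Cy}}$. Hence $\iota^{\mathrm{Cy}}$ is essentially surjective, yielding the equivalence, and ${\mathbf{Kl}(T)}^{\mathrm{Cy}}$ inherits semisimplicity from $\mathbf{EM}(T)$.

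For the converse, suppose ${\mathbf{Kl}(T)}^{\mathrm{Cy}}$ is semisimple, hence abelian by Proposition~\ref{toomuchTV}. For any $(X,\nabla_X) \in \mathbf{EM}(T)$, the parallel pair $(\mu_X, T(\nabla_X))$ between free modules corresponds under $\iota$ to a parallel pair in $\mathbf{Kl}(T)$, which has a coequalizer $q\from T(X) \to C$ in ${\mathbf{Kl}(T)}^{\mathrm{Cy}}$; semisimplicity forces $q$ to be a split epimorphism, providing a direct sum decomposition $T(X) \cong C \oplus \ker q$ in ${\mathbf{Kl}(T)}^{\mathrm{Cy}}$. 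The additive fully faithful $\iota^{\mathrm{Cy}}$ preserves this decomposition, and so does the right adjoint forgetful functor $R_{\mathbf{EM}(T)}\from \mathbf{EM}(T) \to \mathcal{A}$, yielding a decomposition $T(X) \cong R_{\mathbf{EM}(T)}\iota^{\mathrm{Cy}}(C) \oplus R_{\mathbf{EM}(T)}\iota^{\mathrm{Cy}}(\ker q)$ in $\mathcal{A}$. Since the image factorizations of the associated split idempotents are absolute, the second summand coincides with the image of $\mu_X - T(\nabla_X)$ in $\mathcal{A}$, which equals $\ker \nabla_X$ by the canonical split coequalizer $T^2 X \rightrightarrows TX \to X$ in $\mathcal{A}$; this identifies $R_{\mathbf{EM}(T)}\iota^{\mathrm{Cy}}(C) \cong X$, and by conservativity of $R_{\mathbf{EM}(T)}$ the comparison $(X,\nabla_X) \to \iota^{\mathrm{Cy}}(C)$ is an isomorphism in $\mathbf{EM}(T)$. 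Hence $\iota^{\mathrm{Cy}}$ is essentially surjective, and $\mathbf{EM}(T)$ inherits semisimplicity from ${\mathbf{Kl}(T)}^{\mathrm{Cy}}$.

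The main obstacle is the converse direction: matching the kernel of the abstract coequalizer $q$ computed inside the semisimple ${\mathbf{Kl}(T)}^{\mathrm{Cy}}$ with the kernel of the concrete coequalizer $\nabla_X$ in $\mathbf{EM}(T)$. The key leverage is the absoluteness of split coequalizers in semisimple categories, together with the fact that the canonical presentation $T^2 X \rightrightarrows TX \to X$ is already a split coequalizer in $\mathcal{A}$, ensuring both sides agree after forgetting.
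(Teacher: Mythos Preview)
Your argument is essentially correct but takes a different route from the paper, which applies Beck's monadicity theorem (Theorem~\ref{thm:SmallBeck}) uniformly for both directions: it extends the Kleisli adjunction to an adjunction between $\mathcal{A}$ and ${\mathbf{Kl}(T)}^{\mathrm{Cy}}$, identifies the resulting comparison functor with $\iota^{\mathrm{Cy}}$, and verifies under either semisimplicity hypothesis that the right adjoint $R_{\mathbf{Kl}(T)}^{\mathrm{Cy}}$ is exact and reflects zero objects. Your forward direction---splitting $\nabla_X$ to exhibit every module as a retract of a free one---is more elementary than the paper's. In the converse your core idea is sound (the coequalizer $q$ in the semisimple ${\mathbf{Kl}(T)}^{\mathrm{Cy}}$ splits and the relevant factorizations are absolute), but the passage through image/kernel matching in $\mathcal{A}$ and conservativity of $R_{\mathbf{EM}(T)}$ is heavier than necessary: since the epi--mono factorization of $\mu_X - T(\nabla_X)$ in ${\mathbf{Kl}(T)}^{\mathrm{Cy}}$ consists of split maps, one checks directly in the additive, Cauchy-complete $\mathbf{EM}(T)$ that $\iota^{\mathrm{Cy}}(q)$ is already the coequalizer of the pair there, giving $\iota^{\mathrm{Cy}}(C)\cong(X,\nabla_X)$ by uniqueness without descending to $\mathcal{A}$. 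One expository gap: the ``comparison $(X,\nabla_X)\to\iota^{\mathrm{Cy}}(C)$'' you invoke at the end should be constructed explicitly from the universal property of $\nabla_X$ as a coequalizer in $\mathbf{EM}(T)$, and you should say why $R_{\mathbf{EM}(T)}$ carries it to the isomorphism you identified.
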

\begin{proof}
  Since \(\mathcal{A}\) is Cauchy complete, the adjunction
  \[
    \begin{mytikzcd}[ampersand replacement=\&,sep=small]
      {\mathcal{A}} \&\& {\mathbf{Kl}(T)}
      \arrow[""{name=0, anchor=center, inner sep=0}, "{L_{\mathbf{Kl}(T)}}", shift left=2, from=1-1, to=1-3]
      \arrow[""{name=1, anchor=center, inner sep=0}, "{R_{\mathbf{Kl}(T)}}", shift left, from=1-3, to=1-1]
      \arrow["\dashv"{anchor=center, rotate=-90}, draw=none, from=0, to=1]
    \end{mytikzcd}
  \]
  extends to an adjunction
  \[
    \begin{mytikzcd}[ampersand replacement=\&,sep=small]
      {\mathcal{A}} \&\& {{\mathbf{Kl}(T)}^{\mathrm{Cy}},}
      \arrow[""{name=0, anchor=center, inner sep=0}, "{L_{\mathbf{Kl}(T)}^{\mathrm{Cy}}}", shift left=2, from=1-1, to=1-3]
      \arrow[""{name=1, anchor=center, inner sep=0}, "{R_{\mathbf{Kl}(T)}^{\mathrm{Cy}}}", shift left, from=1-3, to=1-1]
      \arrow["\dashv"{anchor=center, rotate=-90}, draw=none, from=0, to=1]
    \end{mytikzcd}
  \]
  such that \(R_{\mathbf{Kl}(T)}^{\mathrm{Cy}} \circ L_{\mathbf{Kl}(T)}^{\mathrm{Cy}} = T\).

  It is easy to see that the resulting comparison functor \(K_{{\mathbf{Kl}(T)}^{\mathrm{Cy}}}\from {\mathbf{Kl}(T)}^{\mathrm{Cy}} \to \mathbf{EM}(T)\) is naturally isomorphic to the extension \(\iota^{\mathrm{Cy}}\) of \(\iota\).
  Further, since \(\iota\) is full and faithful, so is \(\iota^{\mathrm{Cy}}\).
  Thus, \(\iota^{\mathrm{Cy}}\) reflects zero objects, and hence so does \(R_{\mathbf{Kl}(T)} = R_{\mathbf{EM}(T)} \circ \iota\), and by extension so does \(R_{\mathbf{Kl}(T)}^{\mathrm{Cy}}\).

  Assume that \(\mathbf{EM}(T)\) is semisimple.
  The monomorphisms and epimorphisms in \(\mathbf{EM}(T)\) are split by Proposition~\ref{toomuchTV}, and thus they are reflected under the full, faithful functor \(\iota^{\mathrm{Cy}}\).
  Thus, \({\mathbf{Kl}(T)}^{\mathrm{Cy}}\) is an abelian category, and \(\iota^{\mathrm{Cy}}\) is an exact functor.
  In particular \(R_{\mathbf{Kl}(T)}^{\mathrm{Cy}} \simeq R_{\mathbf{EM}(T)} \circ \iota^{\mathrm{Cy}}\) is exact and faithful; thus, it reflects zero objects.
  Thus, by Theorem~\ref{thm:SmallBeck}, \(K_{{\mathbf{Kl}(T)}^{\mathrm{Cy}}}\) is an equivalence---%
  in particular, \({\mathbf{Kl}(T)}^{\mathrm{Cy}}\) is semisimple.

  Assume now that \({\mathbf{Kl}(T)}^{\mathrm{Cy}}\) is semisimple.
  Then \(R_{\mathbf{Kl}(T)}^{\mathrm{Cy}}\) is exact by Proposition~\ref{toomuchTV},
  and so it again satisfies the assumptions of Theorem~\ref{thm:SmallBeck}.
  Thus, \(K_{{\mathbf{Kl}(T)}^{\mathrm{Cy}}}\from {\mathbf{Kl}(T)}^{\mathrm{Cy}} \xiso \mathbf{EM}(T)\) is an equivalence;
  in particular, $\mathbf{EM}(T)$ is semisimple.
\end{proof}

\begin{definition}\label{def:semisimple-monad}
  We say that a monad \(T\from \mathcal{A} \to \mathcal{A}\) on an abelian category \(\mathcal{A}\) is \emph{semisimple}
  if it satisfies the equivalent conditions of Proposition~\ref{prop:SemisimpleEM}. Since the opposite of an abelian category is abelian and the opposite of a semisimple category is semisimple, an analogous result to Proposition~\ref{prop:SemisimpleEM} holds for comonads, and we define semisimple comonads analogously.
\end{definition}

\begin{proposition}\label{extendingsemisimple}
  Let \(T\from \mathcal{M} \to \mathcal{M}\) be a lax \(\mathcal{C}\)-module semisimple monad on an abelian \(\mathcal{C}\)-module category.
  Then \(T\) is extendable.
\end{proposition}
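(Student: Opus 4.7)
The plan is to reduce the proposition to an invocation of Theorems~\ref{extendingalways} and~\ref{strongembeddingalways}, which together establish extendability of any right exact lax $\mathcal{C}$-module monad under the standing assumptions of Section~\ref{sec:module-monads}. The only nontrivial content, then, is to verify that a semisimple lax $\mathcal{C}$-module monad is automatically right exact; once this is done, the conclusion is immediate.

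First, I would unwind the definition. By Definition~\ref{def:semisimple-monad} together with Proposition~\ref{prop:SemisimpleEM}, semisimplicity of $T$ means that the category $\mathbf{EM}(T)$ is semisimple. Proposition~\ref{toomuchTV} then guarantees that any additive functor out of a semisimple abelian category is exact; in particular the forgetful functor $R_{\mathbf{EM}(T)}\from \mathbf{EM}(T) \to \mathcal{M}$, which is additive by virtue of being a right adjoint between additive categories, is exact. Since its left adjoint $L_{\mathbf{EM}(T)}\from \mathcal{M} \to \mathbf{EM}(T)$ is cocontinuous, the composite $T \cong R_{\mathbf{EM}(T)} \circ L_{\mathbf{EM}(T)}$ is right exact.

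Having established right exactness, all the hypotheses of Theorem~\ref{extendingalways} are in place: by the standing assumptions of Section~\ref{sec:module-monads}, $\mathcal{C}$ is an abelian monoidal category, $\mathcal{M}$ is an abelian $\mathcal{C}$-module category with action right exact in both variables, and now $T$ is a right exact lax $\mathcal{C}$-module monad. Applying Theorem~\ref{extendingalways} produces the Linton-coequalizer $\mathcal{C}$-module structure on $\mathbf{EM}(T)$, and Theorem~\ref{strongembeddingalways} asserts that under this structure the canonical embedding $\iota\from \mathbf{Kl}(T) \hookrightarrow \mathbf{EM}(T)$ is a strong $\mathcal{C}$-module functor. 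By Definition~\ref{extendable}, this is exactly what it means for $T$ to be extendable.

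There is no substantial obstacle left to overcome at this point: the hard technical work has already been carried out in Section~\ref{LintinTime}, via the multiactegorical coherence argument of Proposition~\ref{hillbility}, and in the proof of Proposition~\ref{prop:SemisimpleEM} itself, which is what allowed us to extract right exactness from the semisimplicity hypothesis. An alternative, but essentially equivalent, route would have been to transfer the $\mathcal{C}$-module structure on $\mathbf{Kl}(T)$ (supplied by Corollary~\ref{laxKlCorrespondence}) across the equivalence $\iota^{\mathrm{Cy}}\from \mathbf{Kl}(T)^{\mathrm{Cy}}\xiso \mathbf{EM}(T)$ of Proposition~\ref{prop:SemisimpleEM}, using the fact that Cauchy completion is built from absolute colimits and therefore carries module structures along with it; however, the Linton-coequalizer route is more economical given what is already available in the excerpt.
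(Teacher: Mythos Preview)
Your proof is correct, but the paper takes precisely the route you describe as the ``alternative''. The paper's argument is: by Proposition~\ref{prop:SemisimpleEM} the extension $\iota^{\mathrm{Cy}}\from \mathbf{Kl}(T)^{\mathrm{Cy}} \to \mathbf{EM}(T)$ is an equivalence; by Proposition~\ref{cocompletingnonsense} the Cauchy completion $\mathbf{Kl}(T)^{\mathrm{Cy}}$ inherits a $\mathcal{C}$-module structure for which the inclusion $\mathbf{Kl}(T) \hookrightarrow \mathbf{Kl}(T)^{\mathrm{Cy}}$ is a strong $\mathcal{C}$-module functor; transporting this structure along $\iota^{\mathrm{Cy}}$ makes $\iota$ strong.

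Your route---deducing right exactness of $T$ from semisimplicity of $\mathbf{EM}(T)$ via Proposition~\ref{toomuchTV} and then invoking Theorems~\ref{extendingalways} and~\ref{strongembeddingalways}---relies on the full Linton-coequalizer machinery, and hence implicitly on the standing hypothesis that the action $\triangleright$ is right exact in each variable. The paper's route avoids this entirely: Cauchy completion is a cocompletion under absolute colimits, so it carries module structures along for free regardless of any exactness properties of the action, and the equivalence $\iota^{\mathrm{Cy}}$ is supplied directly by Proposition~\ref{prop:SemisimpleEM} without appeal to right exactness of $T$. In this sense the paper's argument is both more self-contained and marginally more general; under the standing assumptions of Section~\ref{LintinTime}, however, your approach is equally valid.
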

\begin{proof}
  By Proposition~\ref{prop:SemisimpleEM}, the functor \(\iota^{\mathrm{Cy}}\from {\mathbf{Kl}(T)}^{\mathrm{Cy}} \to \mathbf{EM}(T)\) is an equivalence.
  Further, following Proposition~\ref{cocompletingnonsense}, \({\mathbf{Kl}(T)}^{\mathrm{Cy}}\) has a canonical structure of a \(\mathcal{C}\)-module category such that the inclusion \(\mathbf{Kl}(T) \hookrightarrow {\mathbf{Kl}(T)}^{\mathrm{Cy}}\) is a strong \(\mathcal{C}\)-module functor.
  Transporting the \(\mathcal{C}\)-module structure along \(\iota^{\mathrm{Cy}}\) endows \(\iota\) with the structure of a strong \(\mathcal{C}\)-module functor, proving the claim.
\end{proof}

\begin{lemma}\label{semisimpleleftandright}
  Let \(T\from \mathcal{A} \to \mathcal{A}\) be a monad on an abelian category, and let \(K\from \mathcal{M} \to \mathcal{M}\) be a left adjoint to \(T\).
  Then \(T\) is semisimple if and only if \(K\) is semisimple.
  In that case, there is an equivalence \(\mathbf{EM}(T) \simeq \mathbf{EM}(K)\).
\end{lemma}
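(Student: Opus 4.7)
The plan is to reduce the statement to the Cauchy-completed Kleisli category, using the isomorphism between the Kleisli categories of an adjoint monad/comonad pair recalled in Proposition~\ref{einerKleinerKleisliÄquivalenz}, and then invoke the characterization of semisimplicity furnished by Proposition~\ref{prop:SemisimpleEM} (and its comonadic counterpart from Definition~\ref{def:semisimple-monad}).

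First, I would invoke Proposition~\ref{einerKleinerKleisliÄquivalenz} to obtain a canonical isomorphism $\mathbf{Kl}(T) \cong \mathbf{Kl}(K)$. Cauchy completion is a $2$-functorial operation on $\Bbbk$-linear categories, so this induces an equivalence ${\mathbf{Kl}(T)}^{\mathrm{Cy}} \simeq {\mathbf{Kl}(K)}^{\mathrm{Cy}}$ of Cauchy completions. Semisimplicity is a categorical (in particular, Morita-invariant) property, and hence passes through this equivalence.

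Second, I would combine this with the characterization of semisimple (co)monads: by Proposition~\ref{prop:SemisimpleEM}, the monad $T$ is semisimple if and only if ${\mathbf{Kl}(T)}^{\mathrm{Cy}}$ is semisimple, and by the analogous statement for comonads invoked in Definition~\ref{def:semisimple-monad}, the comonad $K$ is semisimple if and only if ${\mathbf{Kl}(K)}^{\mathrm{Cy}}$ is semisimple. Chaining these with the equivalence from the first step yields $T$ semisimple $\iff$ $K$ semisimple.

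For the final equivalence of Eilenberg--Moore categories, assume the equivalent semisimplicity conditions hold. Then Proposition~\ref{prop:SemisimpleEM} provides an equivalence $\iota^{\mathrm{Cy}}\from {\mathbf{Kl}(T)}^{\mathrm{Cy}} \xiso \mathbf{EM}(T)$, and the analogous comonadic statement provides ${\mathbf{Kl}(K)}^{\mathrm{Cy}} \xiso \mathbf{EM}(K)$. Composing these with the Kleisli equivalence from step one yields the desired chain
\[
  \mathbf{EM}(T) \;\simeq\; {\mathbf{Kl}(T)}^{\mathrm{Cy}} \;\simeq\; {\mathbf{Kl}(K)}^{\mathrm{Cy}} \;\simeq\; \mathbf{EM}(K).
\]
Since all input results are already in place in the excerpt, I do not anticipate a genuine obstacle; the only small care point is verifying that the comonadic analogue of Proposition~\ref{prop:SemisimpleEM}, explicitly invoked in Definition~\ref{def:semisimple-monad} by passing to the opposite category, is indeed applicable to $K$ (which requires $\mathcal{A}^{\op}$ to be abelian, which it is). No further construction is needed beyond assembling these equivalences.
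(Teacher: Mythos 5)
Your proposal is correct and follows essentially the same route as the paper: both pass through the equivalence $\mathbf{Kl}(T) \simeq \mathbf{Kl}(K)$ of Proposition~\ref{einerKleinerKleisliÄquivalenz}, extend it to Cauchy completions, apply Proposition~\ref{prop:SemisimpleEM} (and its comonadic analogue from Definition~\ref{def:semisimple-monad}) to transfer semisimplicity, and then assemble the chain $\mathbf{EM}(T) \simeq {\mathbf{Kl}(T)}^{\mathrm{Cy}} \simeq {\mathbf{Kl}(K)}^{\mathrm{Cy}} \simeq \mathbf{EM}(K)$. Your explicit remark that the comonadic version applies because $\mathcal{A}^{\op}$ is abelian is a point the paper leaves implicit, but otherwise the arguments coincide.
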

\begin{proof}
  By Proposition~\ref{prop:SemisimpleEM}, \(T\) is semisimple if and only if so is \({\mathbf{Kl}(T)}^{\mathrm{Cy}}\).
  Moreover, the equivalence \(\mathbf{Kl}(T) \simeq \mathbf{Kl}(K)\) of Proposition~\ref{einerKleinerKleisliÄquivalenz} extends to an equivalence \({\mathbf{Kl}(T)}^{\mathrm{Cy}} \simeq {\mathbf{Kl}(K)}^{\mathrm{Cy}}\).
  Thus, \({\mathbf{Kl}(T)}^{\mathrm{Cy}}\) is semisimple if and only if \({\mathbf{Kl}(K)}^{\mathrm{Cy}}\) is so,
  which is the case if and only if \(K\) is semisimple.
  This establishes the first claim.

  The latter claim follows from the equivalences
  \[
    \mathbf{EM}(T) \simeq {\mathbf{Kl}(T)}^{\mathrm{Cy}} \simeq {\mathbf{Kl}(K)}^{\mathrm{Cy}} \simeq \mathbf{EM}(K).
  \]
\end{proof}

\begin{proposition}\label{ModuleEMSemisimple}
  Let \(T\from \mathcal{M} \to \mathcal{M}\) be a semisimple lax \(\mathcal{C}\)-module monad on an abelian \(\mathcal{C}\)-module category \(\mathcal{M}\),
  and let \(K\from \mathcal{M} \to \mathcal{M}\) be a left adjoint to \(T\).
  The equivalence \(\mathbf{EM}(T) \simeq \mathbf{EM}(K)\) of Lemma~\ref{semisimpleleftandright} is a \(\mathcal{C}\)-module equivalence,
  where \(\mathbf{EM}(T), \mathbf{EM}(K)\) are endowed with \(\mathcal{C}\)-module structures extended from \(\mathbf{Kl}(T),\mathbf{Kl}(K)\), respectively.
\end{proposition}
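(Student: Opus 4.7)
The plan is to factor the equivalence $\mathbf{EM}(T) \simeq \mathbf{EM}(K)$ of Lemma~\ref{semisimpleleftandright} through the Cauchy completions of the two Kleisli categories and verify that each factor is a $\mathcal{C}$-module equivalence.

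First, I would use Porism~\ref{porism:doctrinal-module-adjunctions}, applied to the adjunction $K \dashv T$ of endofunctors of $\mathcal{M}$ with $F = G = \Id_{\mathcal{C}}$, to endow $K$ with a canonical oplax $\mathcal{C}$-module structure. Combined with the comonad structure from Proposition~\ref{einerKleinerKleisliÄquivalenz}, which is obtained from the monad structure on $T$ via mate correspondence, this makes $K$ an oplax $\mathcal{C}$-module comonad. Proposition~\ref{KleisliMonadCorrespondence} then upgrades the equivalence $\mathbf{Kl}(T) \simeq \mathbf{Kl}(K)$ to a $\mathcal{C}$-module equivalence.

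Since Cauchy completion is a pseudofunctorial operation that respects $\mathcal{C}$-module structures in the sense of Proposition~\ref{cocompletingnonsense}, this extends to a $\mathcal{C}$-module equivalence ${\mathbf{Kl}(T)}^{\mathrm{Cy}} \simeq {\mathbf{Kl}(K)}^{\mathrm{Cy}}$. By the proof of Proposition~\ref{extendingsemisimple}, the equivalences $\iota^{\mathrm{Cy}}\from {\mathbf{Kl}(T)}^{\mathrm{Cy}} \xiso \mathbf{EM}(T)$ and its $K$-analogue supplied by Proposition~\ref{prop:SemisimpleEM} are strong $\mathcal{C}$-module functors with respect to the extended $\mathcal{C}$-module structures on the Eilenberg--Moore categories---these extended structures were in fact defined by transport along $\iota^{\mathrm{Cy}}$. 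Composing these three $\mathcal{C}$-module equivalences yields a $\mathcal{C}$-module equivalence $\mathbf{EM}(T) \simeq \mathbf{EM}(K)$; tracing through the construction of Lemma~\ref{semisimpleleftandright}, this composite is precisely the equivalence in the statement.

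The main obstacle is the bookkeeping required to identify the three-fold composite above with the equivalence of Lemma~\ref{semisimpleleftandright}, along with verifying that the oplax $\mathcal{C}$-module comonad structure on $K$ produced by doctrinal adjunction is indeed the one intended in the application of Proposition~\ref{KleisliMonadCorrespondence}. A cleaner alternative bypasses this bookkeeping via Theorem~\ref{thm:one-module-structure-on-EM}: transporting the extended $\mathcal{C}$-module structure from $\mathbf{EM}(K)$ to $\mathbf{EM}(T)$ along the equivalence from Lemma~\ref{semisimpleleftandright} yields a $\mathcal{C}$-module structure on $\mathbf{EM}(T)$ under which the embedding $\mathbf{Kl}(T) \hookrightarrow \mathbf{EM}(T)$ is strong---because this holds on the $K$-side and because the Kleisli equivalence of the first step is a strong $\mathcal{C}$-module equivalence. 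The essential uniqueness guaranteed by Theorem~\ref{thm:one-module-structure-on-EM} then forces this transported structure to coincide with the extended one, which is precisely the claim that the original equivalence is a $\mathcal{C}$-module equivalence.
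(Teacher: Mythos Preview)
Your proposal is correct and follows essentially the same approach as the paper: factor the equivalence as $\mathbf{EM}(T) \simeq {\mathbf{Kl}(T)}^{\mathrm{Cy}} \simeq {\mathbf{Kl}(K)}^{\mathrm{Cy}} \simeq \mathbf{EM}(K)$, using Proposition~\ref{extendingsemisimple} for the outer equivalences, Proposition~\ref{KleisliMonadCorrespondence} for the Kleisli step, and Proposition~\ref{cocompletingnonsense} to pass to Cauchy completions. The paper does not concern itself with the bookkeeping you raise about identifying the composite with the specific equivalence of Lemma~\ref{semisimpleleftandright}, nor does it invoke the uniqueness alternative via Theorem~\ref{thm:one-module-structure-on-EM}; your alternative route is a legitimate way to sidestep that identification, but the paper simply takes the composite at face value.
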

\begin{proof}
  Following the proof of Proposition~\ref{extendingsemisimple},
  the functor \(\iota^{\mathrm{Cy}}\from {\mathbf{Kl}(T)}^{\mathrm{Cy}} \xiso \mathbf{EM}(T)\) is a \(\mathcal{C}\)-module equivalence,
  and similarly for \(\mathbf{EM}(K)\) and \({\mathbf{Kl}(K)}^{\mathrm{Cy}}\).
  Further, from Proposition~\ref{cocompletingnonsense} we obtain a \(\mathcal{C}\)-module equivalence
  \({\mathbf{Kl}(T)}^{\mathrm{Cy}} \simeq {\mathbf{Kl}(K)}^{\mathrm{Cy}}\)
  from the \(\mathcal{C}\)-module equivalence \(\mathbf{Kl}(T) \simeq \mathbf{Kl}(K)\) of Proposition~\ref{KleisliMonadCorrespondence}.

  The result follows by composing the above-established \(\mathcal{C}\)-module equivalences as follows:
  \[
    \mathbf{EM}(T) \simeq {\mathbf{Kl}(T)}^{\mathrm{Cy}} \simeq {\mathbf{Kl}(K)}^{\mathrm{Cy}} \simeq \mathbf{EM}(K).
  \]
\end{proof}

\section{Internal projective and injective objects}\label{sec:internal-projectives-and-injectives}

\begin{definition}
  Let $\mathcal{C}$ be an abelian monoidal category and let $\mathcal{M}$ be an abelian $\mathcal{C}$-module category. An object $M \in \mathcal{M}$ is said to be \emph{$\mathcal{C}$-projective} if, for any projective object $P \in \mathcal{C}$, the object $P \triangleright M \in \mathcal{M}$ is projective.

  Analogously, $M$ is \emph{$\mathcal{C}$-injective} if, for any injective object $I \in \mathcal{C}$, the object $I \triangleright M \in \mathcal{M}$ is injective.
\end{definition}

\begin{definition}
  Let $\mathcal{C}$ be an abelian monoidal category with enough projectives and let $\mathcal{M}$ be an abelian $\mathcal{C}$-module category with enough projectives. We say that a \(\mathcal{C}\)-projective object $X$ is a \emph{$\mathcal{C}$-projective $\mathcal{C}$-generator} if any projective object $Q$ of $\mathcal{M}$ is a direct summand of an object of the form $P \triangleright X$, for a projective object $P$ of $\mathcal{C}$.

  Analogously, if\, $\mathcal{C}$ and \(\cat{M}\) are instead assumed to have enough injectives, we say that a $\mathcal{C}$-injective object $X$ is a \emph{\(\mathcal{C}\)-injective \(\mathcal{C}\)-cogenerator} if any injective object $J$ of $\mathcal{M}$ is a direct summand of an object of the form $I \triangleright X$, for an injective object $I$ of $\mathcal{C}$.
\end{definition}

\begin{proposition}\label{Cprojectives}
  Assume that $\mathcal{C}$ and \(\cat{M}\) have enough projectives. Let $X$ be a closed object of $\mathcal{M}$.
  Then $\hom{X,\blank}$ is right exact if and only if\, $X$ is $\mathcal{C}$-projective.
\end{proposition}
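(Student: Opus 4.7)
The plan is to use the defining adjunction
\[
  \mathcal{M}(V \triangleright X, M) \cong \mathcal{C}(V, \hom{X,M})
\]
to translate exactness properties back and forth, together with the fact that $\hom{X,\blank}$ is automatically left exact as a right adjoint, so that in both directions the only issue is preservation of cokernels (equivalently, right exactness).

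For the ``only if'' direction, I would begin by fixing a projective $P \in \mathcal{C}$ and noting that
\[
  \mathcal{M}(P \triangleright X, \blank) \ \cong\ \mathcal{C}(P, \blank) \circ \hom{X,\blank}.
\]
Since $P$ is projective, $\mathcal{C}(P, \blank)$ is exact. If we assume $\hom{X,\blank}$ is right exact, then it is exact (left exactness comes for free from the adjunction), so the composite is exact. Thus $\mathcal{M}(P \triangleright X, \blank)$ is exact, which by definition means $P \triangleright X$ is projective. This gives $\mathcal{C}$-projectivity of $X$.

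For the ``if'' direction, assume $X$ is $\mathcal{C}$-projective. Given a short exact sequence $0 \to A \to B \to C \to 0$ in $\mathcal{M}$, left exactness of $\hom{X,\blank}$ produces a left exact sequence $0 \to \hom{X,A} \to \hom{X,B} \to \hom{X,C}$ in $\mathcal{C}$; let $K$ denote the cokernel of the rightmost map, so that $\hom{X,B} \to \hom{X,C} \to K \to 0$ is exact. The goal is to show $K = 0$. For any projective $P \in \mathcal{C}$, the object $P \triangleright X$ is projective in $\mathcal{M}$, hence $\mathcal{M}(P \triangleright X, \blank)$ is exact. Via the adjunction this identifies with $\mathcal{C}(P, \hom{X,\blank})$, which is therefore exact, giving that
\[
  \mathcal{C}(P, \hom{X,B}) \longrightarrow \mathcal{C}(P, \hom{X,C})
\]
is surjective. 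Combined with the right exactness of $\mathcal{C}(P,\blank)$, this forces $\mathcal{C}(P,K) = 0$ for every projective $P \in \mathcal{C}$.

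The final step is to conclude $K = 0$ using that $\mathcal{C}$ has enough projectives: any epimorphism $P \twoheadrightarrow K$ with $P$ projective is itself an element of $\mathcal{C}(P,K)$, and the vanishing of this space forces $K = 0$. I do not expect any serious obstacle here; the only point that needs care is ensuring that ``exactness of $\mathcal{C}(P,\hom{X,\blank})$ for all projective $P$'' really does suffice to detect that the cokernel $K$ vanishes, which uses precisely the enough-projectives hypothesis on $\mathcal{C}$.
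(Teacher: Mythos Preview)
Your proof is correct. The ``only if'' direction is identical to the paper's. For the ``if'' direction, both arguments rest on the same observation that $\mathcal{C}(P,\hom{X,\blank}) \cong \mathcal{M}(P \triangleright X,\blank)$ is exact for every projective $P$, but they deploy it differently. The paper shows $\hom{X,\blank}$ preserves an arbitrary finite colimit $\colim_j Y_j$ by testing against an arbitrary $V \in \mathcal{C}$: writing $V$ as a cokernel of projectives $Q_i$ and using exactness of each $\mathcal{C}(Q_i,\hom{X,\blank})$, it computes $\mathcal{C}(V,\hom{X,\colim_j Y_j}) \cong \mathcal{C}(V,\colim_j\hom{X,Y_j})$ and concludes by Yoneda. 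Your route is more elementary: you directly form the cokernel $K$ of $\hom{X,B} \to \hom{X,C}$ and kill it by noting $\mathcal{C}(P,K)=0$ for all projective $P$, then using an epimorphism from a projective. Your version avoids the Yoneda computation and the need to resolve $V$; the paper's version makes explicit that all finite colimits are preserved, not just cokernels (though in the additive setting these are equivalent).
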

\begin{proof}
  If $\hom{X, \blank}$ is right exact and $P$ is a projective object of\, $\mathcal{C}$,
  then $\mathcal{M}(P \lact X, \blank) \cong \mathcal{C}(P,\hom{X, \blank})$,
  which is a composite of right exact functors, hence a right exact functor;
  thus, $P \lact X$ is projective.

  Assume that $X$ is $\mathcal{C}$-projective.
  Let \(\colim_{j} Y_{j}\) be a finite colimit in \(\mathcal{M}\),
  and $V \in \mathcal{C}$.
  Since $\mathcal{C}$ has enough projectives, one has $V \cong \colim_{i} Q_{i}$, realizing $V$ as the colimit of a finite diagram of projectives.
  We then have
  \begin{align*}
    \mathcal{C}(V,\, &\hom{X,\colim\displaylimits_{j}Y_{j}})
          \cong \mathcal{C}(\colim\displaylimits_{i}Q_{i},\hom{X,\colim\displaylimits_{j}Y_{j}})
          \cong \lim_{i}\mathcal{C}(Q_{i},\hom{X,\colim\displaylimits_{j}Y_{j}}) \\
        & \cong \lim_{i} \mathcal{M}(Q_{i} \triangleright X, \colim\displaylimits_{j} Y_{j})
          \cong \lim_{i}\colim\displaylimits_{j} \mathcal{M}(Q_{i} \triangleright X, Y_{j})
          \cong \lim_{i}\colim\displaylimits_{j} \mathcal{C}(Q_{i}, \hom{X,Y_{j}}) \\
        & \cong \lim_{i} \mathcal{C}(Q_{i}, \colim\displaylimits_{j} \hom{X,Y_{j}})
          \cong \mathcal{C}(\colim\displaylimits_{i} Q_{i}, \colim\displaylimits_{j} \hom{X,Y_{j}})
          \cong \mathcal{C}(V,\colim\displaylimits_{j} \hom{X,Y_{j}}).
  \end{align*}
\end{proof}

By oppositizing Proposition~\ref{Cprojectives}, we obtain the following result.
\begin{proposition}\label{Cinjectives}
  Assume that $\mathcal{C}$ and \(\cat{M}\) have enough injectives.
  Let $X$ be a coclosed object of $\mathcal{M}$. Then $\cohom{X,\blank}$ is left exact if and only if\, $X$ is $\mathcal{C}$-injective.
\end{proposition}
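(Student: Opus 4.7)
The plan is to derive Proposition~\ref{Cinjectives} from Proposition~\ref{Cprojectives} by passing to opposite categories, as the preamble \emph{``By oppositizing...''} already suggests. The key observation is that every notion appearing in the hypothesis of Proposition~\ref{Cinjectives} is Eckmann--Hilton dual to the corresponding notion in Proposition~\ref{Cprojectives}: injectives dualize to projectives, left exactness to right exactness, coclosedness to closedness, and $\mathcal{C}$-injectivity of an object to $\mathcal{C}^{\op}$-projectivity.

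More concretely, I would first endow $\mathcal{M}^{\op}$ with the structure of an abelian $\mathcal{C}^{\op}$-module category in the natural way, using the same action functor with morphisms reversed. Under this identification, an object $P$ is projective in $\mathcal{C}^{\op}$ (resp.\ $\mathcal{M}^{\op}$) if and only if it is injective in $\mathcal{C}$ (resp.\ $\mathcal{M}$), so both $\mathcal{C}^{\op}$ and $\mathcal{M}^{\op}$ have enough projectives. Coclosedness of $X \in \mathcal{M}$ gives the adjunction $\cohom{X,\blank} \dashv \blank \triangleright X$ in $\mathcal{M}$, which upon reversing all arrows becomes an adjunction $\blank \triangleright X \dashv \cohom{X,\blank}^{\op}$ between $\mathcal{C}^{\op}$ and $\mathcal{M}^{\op}$; hence $X$ is closed in $\mathcal{M}^{\op}$, with internal hom given by the opposite functor $\cohom{X,\blank}^{\op}$. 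Furthermore, $\cohom{X,\blank}$ is left exact in $\mathcal{M}$ precisely when $\cohom{X,\blank}^{\op}$ is right exact in $\mathcal{M}^{\op}$, and $X$ is $\mathcal{C}$-injective in $\mathcal{M}$ precisely when $X$ is $\mathcal{C}^{\op}$-projective in $\mathcal{M}^{\op}$.

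With this dictionary in place, Proposition~\ref{Cprojectives} applied to the $\mathcal{C}^{\op}$-module category $\mathcal{M}^{\op}$ yields exactly the statement of Proposition~\ref{Cinjectives}. I do not anticipate any genuine obstacle: the duality argument is purely formal, since reversing arrows interchanges limits and colimits, swaps left and right adjoints, and exchanges the two forms of exactness. The only care required is in setting up the dictionary, which, while routine, is the one place where a calculation-light proof could go wrong if one were to confuse the direction of any adjunction or exactness condition. (As a sanity check, one could instead argue directly, using enough injectives in $\mathcal{C}$ to test the exactness of $0 \to \cohom{X,M'} \to \cohom{X,M}$ against $\mathcal{C}(\blank,I) \simeq \mathcal{M}(\blank, I \triangleright X)$ via the adjunction; this reproduces the same argument internal to $\mathcal{M}$.)
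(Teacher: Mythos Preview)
Your proposal is correct and matches the paper's approach exactly: the paper simply states ``By oppositizing Proposition~\ref{Cprojectives}, we obtain the following result'' without further argument, and your dictionary spells out precisely what this oppositization entails.
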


\begin{proposition}\label{ProjReflect}
  Assume that $\mathcal{C}$ and \(\cat{M}\) have enough projectives.
  Let $X$ be a closed $\mathcal{C}$-projective object of $\mathcal{M}$.
  If\, $X$ is a $\mathcal{C}$-generator, then $\hom{X, \blank}$ reflects zero objects.

  Additionally, if\, $\proj{\mathcal{C}}$ is Krull--Schmidt and every indecomposable projective object of\, $\mathcal{C}$ is the projective cover of a simple object, then $\hom{X, \blank}$ reflecting zero objects implies that $X$ is a $\mathcal{C}$-generator.
\end{proposition}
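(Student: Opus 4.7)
\medskip

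\noindent\textbf{Proof plan.}
The plan is to prove the two implications separately. The principal tool for both is the adjunction $\blank \triangleright X \dashv \hom{X, \blank}$, together with the observation that $\hom{X, \blank}$ is exact: right exact by Proposition~\ref{Cprojectives} (since $X$ is closed and $\mathcal{C}$-projective) and left exact as a right adjoint.

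For the forward direction, I will take $Y \in \mathcal{M}$ with $\hom{X, Y} = 0$ and show $Y = 0$. Since $\mathcal{M}$ has enough projectives, it suffices to prove $\mathcal{M}(Q, Y) = 0$ for every projective $Q \in \mathcal{M}$. By hypothesis, each such $Q$ is a direct summand of some $P \triangleright X$ with $P \in \proj{\mathcal{C}}$, and the adjunction yields
\[
  \mathcal{M}(P \triangleright X, Y) \cong \mathcal{C}(P, \hom{X, Y}) = 0;
\]
the desired vanishing then follows by passing to summands. This direction is straightforward.

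The heart of the converse is to show that the counit $\epsilon_Q \from \hom{X, Q} \triangleright X \to Q$ is an epimorphism for every $Q \in \mathcal{M}$. To that end, I will let $C$ be the cokernel of $\epsilon_Q$ and apply the exact functor $\hom{X, \blank}$ to the right-exact sequence $\hom{X, Q} \triangleright X \to Q \to C \to 0$, obtaining a right-exact sequence whose leading map $\hom{X, \epsilon_Q}$ is split epi by the triangle identity of the adjunction, with section the unit $\eta_{\hom{X, Q}}$. This forces $\hom{X, C} = 0$, and the reflection hypothesis then yields $C = 0$, so $\epsilon_Q$ is surjective. I expect this step to be the main obstacle, as it relies on coordinating the exactness of $\hom{X, \blank}$ with a careful triangle-identity argument, and it is precisely here that the $\mathcal{C}$-projectivity of $X$ is used.

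Having secured surjectivity of $\epsilon_Q$, I will conclude as follows. For projective $Q \in \mathcal{M}$, the Krull--Schmidt assumption on $\proj{\mathcal{C}}$ combined with the cover-of-simples condition guarantees a projective cover $p \from P \twoheadrightarrow \hom{X, Q}$ with $P \in \proj{\mathcal{C}}$. Right exactness of the left adjoint $\blank \triangleright X$ makes $p \triangleright X$ an epimorphism, and composing with the surjective $\epsilon_Q$ produces an epimorphism $P \triangleright X \twoheadrightarrow Q$ that splits by projectivity of $Q$. This exhibits $Q$ as a direct summand of $P \triangleright X$ with $P$ projective in $\mathcal{C}$, as required.
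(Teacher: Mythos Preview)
Your forward direction is the contrapositive of the paper's argument and is correct.

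Your converse takes a genuinely different route. The paper argues via simple tops: for an indecomposable projective $Q'$ in $\mathcal{M}$ with simple top $S$, reflection of zeros gives $\hom{X,S}\neq 0$, hence some $\mathcal{C}(P',\hom{X,S})\cong\mathcal{M}(P'\triangleright X,S)$ is nonzero with $P'$ projective; since $P'\triangleright X$ is projective, $Q'$ must occur as a summand. Your approach instead shows the counit $\epsilon_Q$ is epi for every $Q$ via the triangle identity and right exactness of $\hom{X,\blank}$, then covers $\hom{X,Q}$ by a projective $P$ and splits the resulting epimorphism $P\triangleright X\twoheadrightarrow Q$. This is clean and correct.

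One remark: your invocation of the Krull--Schmidt and cover-of-simples hypotheses to obtain ``a projective cover $p\from P\twoheadrightarrow\hom{X,Q}$'' is both unjustified and unnecessary. Those hypotheses concern indecomposable projectives and do not by themselves guarantee that an arbitrary object like $\hom{X,Q}$ admits a projective cover. But you do not need a cover---an epimorphism from a projective suffices, and that comes for free from $\mathcal{C}$ having enough projectives. So your argument actually establishes the converse \emph{without} the additional Krull--Schmidt and cover-of-simples assumptions, which is stronger than what the paper claims. You should drop the spurious appeal to those hypotheses and note that your method yields an unconditional converse.
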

\begin{proof}
  Assume that $X$ is a $\mathcal{C}$-generator. Let $N$ be a non-zero object of $\mathcal{M}$ and let $Q \twoheadrightarrow N$ be an epimorphism from a projective object in $\mathcal{M}$. Let $P$ be a projective object of\, $\mathcal{C}$ such that $Q$ is a direct summand of $P \triangleright X$. Then $\mathcal{M}(P \triangleright X, N)$ is non-zero, since $\mathcal{M}(Q,N)$ is a direct summand thereof. Thus
  \[
    0 \neq \mathcal{M}(P \triangleright X, N) \cong \mathcal{C}(P,\hom{X,N}),
  \]
  showing that $\hom{X,N}$ is not zero.

  For the latter statement, let $Q'$ be an indecomposable projective object of $\mathcal{M}$, and let $S$ be its simple top. Since $\hom{X,S} \neq 0$, there is some $V \in \mathcal{M}$ such that $\mathcal{C}(V,\hom{X,S}) \neq 0$. Let $P' \twoheadrightarrow V$ be an epimorphism from a projective object in $\mathcal{C}$. Then
  \[
    0\neq \mathcal{C}(V, \hom{X,S}) \simeq \mathcal{C}(P',\hom{X,S}) \simeq \mathcal{C}(P' \triangleright X, S).
  \]
  Since $X$ is $\mathcal{C}$-projective, the object $P' \triangleright X$ is projective. Thus, $\mathcal{C}(P' \triangleright X, S)$ being non-zero implies that $Q'$ is a direct summand of $P' \triangleright X$.
\end{proof}

Again, oppositization yields a similar variant in terms of coclosed and $\mathcal{C}$-injective objects:

\begin{proposition}\label{InjReflect}
  Assume that $\mathcal{C,M}$ have enough injectives. Let $X$ be a coclosed $\mathcal{C}$-injective object of $\mathcal{M}$.
  If\, $X$ is a $\mathcal{C}$-cogenerator, then $\cohom{X, \blank}$ reflects zero objects.

  Additionally, if\, $\inj{\mathcal{C}}$ is Krull--Schmidt and every indecomposable projective object of\, $\mathcal{C}$ is the injective hull of a simple object, then $\cohom{X, \blank}$ reflecting zero objects implies that $X$ is a $\mathcal{C}$-cogenerator.
\end{proposition}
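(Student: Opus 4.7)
The plan is to dualise the proof of Proposition~\ref{ProjReflect}, substituting injectives for projectives, monomorphisms for epimorphisms, and simple socles for simple tops, and using the adjunction $\cohom{X,\blank} \dashv \blank \triangleright X$ in place of $\blank \triangleright X \dashv \hom{X,\blank}$. I read the second hypothesis in its natural dual form: $\inj{\mathcal{C}}$ is Krull--Schmidt and every indecomposable \emph{injective} object of $\mathcal{C}$ is the injective hull of a simple object.

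For the first implication, suppose $X$ is a $\mathcal{C}$-cogenerator and $N \in \mathcal{M}$ is non-zero. Using enough injectives in $\mathcal{M}$, I would pick a monomorphism $N \hookrightarrow J$ with $J$ injective. By the cogenerator hypothesis, $J$ is a direct summand of $I \triangleright X$ for some injective $I \in \mathcal{C}$, yielding a non-zero morphism $N \to I \triangleright X$. The adjunction then gives
\[
0 \neq \mathcal{M}(N, I \triangleright X) \cong \mathcal{C}(\cohom{X,N}, I),
\]
so $\cohom{X,N} \neq 0$.

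For the converse, let $J'$ be an indecomposable injective object of $\mathcal{M}$ with simple socle $S$. Since $\cohom{X,\blank}$ reflects zero objects, $\cohom{X,S} \neq 0$, so there exist $V \in \mathcal{C}$ and a non-zero morphism $\cohom{X,S} \to V$; composing with an embedding $V \hookrightarrow I'$ into an injective of $\mathcal{C}$ produces a non-zero element of $\mathcal{C}(\cohom{X,S}, I')$ and therefore, by adjunction, a non-zero map $S \to I' \triangleright X$. Since $X$ is $\mathcal{C}$-injective, $I' \triangleright X$ is injective in $\mathcal{M}$; and any non-zero morphism from a simple object is a monomorphism, so $S$ embeds into $I' \triangleright X$. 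As $J'$ is the injective hull of $S$, Krull--Schmidt then exhibits $J'$ as a direct summand of $I' \triangleright X$. The only real subtlety is directional bookkeeping: the adjunction now has $\cohom{X,\blank}$ on the left, so I must flip each arrow used in the original proof and take care that the socle condition on indecomposable injectives in $\mathcal{M}$ is the correct counterpart of the ``simple top'' condition implicit in Proposition~\ref{ProjReflect}.
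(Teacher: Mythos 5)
Your proposal is correct and is exactly what the paper does: Proposition~\ref{InjReflect} is given there with only the remark that it follows by oppositizing Proposition~\ref{ProjReflect}, and your explicit dualization (including reading ``indecomposable projective object of $\mathcal{C}$'' as the evident typo for ``indecomposable injective object'') carries that out faithfully. The socle/injective-hull bookkeeping and the use of the adjunction $\cohom{X,\blank} \dashv \blank \triangleright X$ are all as intended, so there is nothing to add.
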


Finally, we give a brief account of internally projective and injective objects in a semisimple module category.

\begin{proposition}
  Let \(\mathcal{C}\) be a monoidal category with enough projectives, and let \(\mathcal{M}\) be a semisimple \(\mathcal{C}\)-module category.
  Since \(\mathcal{M}\) is semisimple, any object of \(\mathcal{M}\) is \(\mathcal{C}\)-projective and \(\mathcal{C}\)-injective.
  An object \(X \in \mathcal{M}\) is a \(\mathcal{C}\)-generator if and only if any simple object \(S \in \mathcal{M}\) is a direct summand of an object of the form \(P \triangleright X\), for some \(P \in \proj{\mathcal{C}}\).
\end{proposition}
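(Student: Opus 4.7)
The plan is to verify the two claims in the proposition—the opening statement about \(\mathcal{C}\)-projectivity and \(\mathcal{C}\)-injectivity, and the characterization of \(\mathcal{C}\)-generators—both of which follow directly from the definitions at the start of Section~\ref{sec:internal-projectives-and-injectives} combined with semisimplicity.

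For the opening sentence, I would invoke Proposition~\ref{toomuchTV}: in a semisimple category every monomorphism and every epimorphism splits, so every object of \(\mathcal{M}\) is simultaneously projective and injective. It follows that for any \(X \in \mathcal{M}\) and any \(P \in \proj{\mathcal{C}}\), the object \(P \triangleright X\) lies in \(\mathcal{M}\) and is therefore automatically projective; the same reasoning applied to \(I \in \inj{\mathcal{C}}\) yields the injective case. Hence every object of \(\mathcal{M}\) is \(\mathcal{C}\)-projective and \(\mathcal{C}\)-injective.

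For the forward direction of the equivalence, if \(X\) is a \(\mathcal{C}\)-generator and \(S \in \mathcal{M}\) is simple, then \(S\) is projective by semisimplicity, and the defining property of a \(\mathcal{C}\)-generator exhibits \(S\) as a direct summand of some \(P \triangleright X\) with \(P \in \proj{\mathcal{C}}\). For the converse, suppose every simple \(S \in \mathcal{M}\) is a direct summand of \(P_{S} \triangleright X\) for some \(P_{S} \in \proj{\mathcal{C}}\), and let \(Q \in \mathcal{M}\) be projective (which in the semisimple case is any object). By semisimplicity decompose \(Q \cong \bigoplus_{i \in I} S_{i}\) into simples. Setting \(P \defeq \bigoplus_{i \in I} P_{S_{i}} \in \proj{\mathcal{C}}\) and using additivity of \(\blank \triangleright X\), we obtain \(P \triangleright X \cong \bigoplus_{i}(P_{S_{i}} \triangleright X)\), in which \(Q\) splits off as a direct summand.

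The main—and essentially only—subtlety is the treatment of infinite \(I\): the sum \(\bigoplus_{i \in I} P_{S_{i}}\) must exist in \(\mathcal{C}\) and be preserved by \(\blank \triangleright X\). In the locally finite abelian context of the paper, projective objects of \(\mathcal{M}\) have finite length, so \(I\) is finite and this point causes no trouble. Beyond that, the proof is a direct unpacking of the definitions, and I expect no deeper obstacle.
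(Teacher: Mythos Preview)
Your proposal is correct and matches the paper's proof essentially step for step: semisimplicity makes every object projective so the \(\mathcal{C}\)-projectivity claim is immediate, the forward direction is the definition applied to a simple (hence projective) object, and the converse decomposes \(Q\) into simples and takes the direct sum of the corresponding \(P_{S_i}\). Your remark on the finiteness of the index set is a welcome clarification that the paper leaves implicit.
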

\begin{proof}
  Recall that, in a semisimple abelian category, every object is projective.
  In particular, \(P \lact M\) is always projective, for \(M \in \cat{M}\) and \(P \in \proj{\cat{C}}\),
  hence every object in \(\cat{M}\) is \(\cat{C}\)-projective.

  Now, let \(X \in \cat{M}\) be a \(\cat{C}\)-generator.
  Since a simple object \(S \in \cat{M}\) is projective,
  it immediately follows
  from \(X\) being a \(\cat{C}\)-generator
  that \(S\) is a direct summand of \(P \lact X\), for some \(P \in \proj{\cat{C}}\).

  Lastly, assume that every simple object is a direct summand of \(P \lact X\),
  for some \(P \in \cat{C}\),
  and let \(Q \in \proj{\cat{M}}\).
  Since \(\cat{M}\) is semisimple, we have \(Q \cong \oplus_i S_i\) for simples \(S_i\) in \(\cat{M}\).
  Since \(S_i\) is a direct summand of \(P_i \lact X\) for some $P_{i} \in \mathcal{C}$, one calculates
  \[
    Q \,\cong\, \oplus_i S_i \,\subseteq_{\oplus}\, \oplus_i (P_i \lact X) \,\cong\, (\oplus_i P_i) \lact X \,\eqdef\, P \lact X.
  \]
\end{proof}

\section{Reconstruction in terms of lax module endofunctors}\label{sec:lax-module-reconstruction}

\begin{theorem}\label{mainnonsenseprojective}
  Let $\mathcal{C}$ be a monoidal abelian category with enough projectives, $\mathcal{M}$ an abelian $\mathcal{C}$-module category with enough projectives, and assume that $X \in \mathcal{M}$ is a closed $\mathcal{C}$-projective $\mathcal{C}$-generator.

  Then there is an equivalence of\, $\mathcal{C}$-module categories
  \[
    \mathcal{M} \simeq \mathbf{EM}(\hom{X, \blank \lact X}),
  \]
  where $\mathbf{EM}(\hom{X, \blank \lact X})$ is endowed with the extended $\mathcal{C}$-module structure of Theorem~\ref{extendingalways}.
\end{theorem}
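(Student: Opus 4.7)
The plan is to apply Beck's monadicity theorem to the adjunction $\blank \triangleright X \dashv \hom{X,\blank}$ and then upgrade the resulting equivalence of plain categories to one of $\mathcal{C}$-module categories via the uniqueness of the extended module structure.

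First I would note that $F \defeq \blank \triangleright X \colon {}_{\mathcal{C}}\mathcal{C} \to \mathcal{M}$ is canonically a strong $\mathcal{C}$-module functor, with coherence given by the action associator of $\mathcal{M}$ (as in Example~\ref{ex:regularend}). Since $X$ is closed, the right adjoint $U \defeq \hom{X, \blank}$ exists, and Proposition~\ref{prop:dual-module-adjunctions} lifts $F \dashv U$ to a lax $\mathcal{C}$-module adjunction in which $F$ stays strong and $U$ becomes lax. In particular, the composite monad $T \defeq UF = \hom{X, \blank \triangleright X}$ acquires the structure of a lax $\mathcal{C}$-module monad on $\mathcal{C}$.

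Next I would apply Beck's monadicity in the simplified abelian form of Theorem~\ref{thm:SmallBeck}. By Proposition~\ref{Cprojectives}, the functor $U$ is right exact, because $X$ is closed and $\mathcal{C}$-projective; and by the first half of Proposition~\ref{ProjReflect}, $U$ reflects zero objects, because $X$ is a $\mathcal{C}$-generator. It follows that the comparison functor $K_{\mathbf{EM}} \colon \mathcal{M} \to \mathbf{EM}(T)$ is an equivalence of categories.

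It remains to promote this to a $\mathcal{C}$-module equivalence. Right exactness of $U$ makes $T$ right exact, so Theorem~\ref{extendingalways} guarantees that $T$ is extendable: the $\mathcal{C}$-module structure on $\mathbf{Kl}(T)$ coming from Corollary~\ref{laxKlCorrespondence} extends to a $\mathcal{C}$-module structure on $\mathbf{EM}(T)$ for which $\iota \colon \mathbf{Kl}(T) \hookrightarrow \mathbf{EM}(T)$ is strong (Theorem~\ref{strongembeddingalways}). Transporting the $\mathcal{C}$-module structure of $\mathcal{M}$ along the equivalence $K_{\mathbf{EM}}$ yields a second $\mathcal{C}$-module structure on $\mathbf{EM}(T)$, with respect to which $K_{\mathbf{EM}}$ is tautologically a strong $\mathcal{C}$-module equivalence. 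Since $K_{\mathbf{Kl}} \colon \mathbf{Kl}(T) \to \mathcal{M}$ is strong by Proposition~\ref{strongcomparisons}(2), the factorization $\iota = K_{\mathbf{EM}} \circ K_{\mathbf{Kl}}$ is strong for the transported structure as well. Theorem~\ref{thm:one-module-structure-on-EM} then forces the two $\mathcal{C}$-module structures on $\mathbf{EM}(T)$ to coincide up to $\mathcal{C}$-module equivalence, yielding the desired equivalence $\mathcal{M} \simeq \mathbf{EM}(\hom{X, \blank \triangleright X})$ of $\mathcal{C}$-module categories.

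The main obstacle is more bookkeeping than conceptual: one has to check that the strong structure on $F$, the lax structure on $U$, the resulting lax $\mathcal{C}$-module monad structure on $T$, the induced module structure on $\mathbf{Kl}(T)$, and the extended structure on $\mathbf{EM}(T)$ are all mutually compatible in exactly the way that makes Theorem~\ref{thm:one-module-structure-on-EM} directly applicable. Once these coherences are in place, the argument is a clean synthesis of Beck's monadicity, doctrinal adjunction, and the Linton-coequalizer construction established earlier in the paper.
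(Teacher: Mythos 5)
Your proposal is correct and follows essentially the same route as the paper's proof: establish the lax $\mathcal{C}$-module monad structure on $\hom{X,\blank\lact X}$ via doctrinal adjunction, apply the abelian form of Beck's monadicity theorem using Propositions~\ref{Cprojectives} and~\ref{ProjReflect}, and then identify the transported module structure on $\mathbf{EM}(T)$ with the extended one via Theorem~\ref{thm:one-module-structure-on-EM}. The only cosmetic difference is which doctrinal-adjunction statement you cite for the lax structure on the right adjoint, which does not affect the argument.
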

\begin{proof}
  Following Example~\ref{ex:regularend}, the functor $\blank \triangleright X$ is a strong $\mathcal{C}$-module functor.
  By Proposition~\ref{prop:special-doctrinal-module-adjunctions}, its right adjoint $\hom{X,\blank}$ is a lax $\mathcal{C}$-module functor.
  Thus, the resulting monad $\hom{X, \blank \triangleright X}$ is a right exact lax $\mathcal{C}$-module monad, and, by Proposition~\ref{strongcomparisons}, the comparison functor $\mathbf{Kl}(\hom{X, \blank \triangleright X}) \to \mathcal{M}$ is a strong $\mathcal{C}$-module functor.
  Furthermore, due to Propositions~\ref{Cprojectives} and~\ref{ProjReflect}, the functor $\hom{X, \blank}$ is right exact and reflects zero objects.
  Applying Theorem~\ref{thm:SmallBeck}, we find that the comparison functor $\mathcal{M} \to \mathbf{EM}(\hom{X, \blank \triangleright X})$ is an equivalence.
  Transporting the $\mathcal{C}$-module structure along this equivalence, we obtain an extended $\mathcal{C}$-module structure on $\mathbf{EM}(T)$, which is necessarily that of Theorem~\ref{extendingalways}, by Theorem~\ref{thm:one-module-structure-on-EM}, establishing the result.
\end{proof}

Using Theorem~\ref{extendingalways}, we can also formulate a converse:

\begin{theorem}\label{projcorrespondence}
  Let $\mathcal{C}$ be a monoidal abelian category with enough projectives, and let $T$ be a right exact lax $\mathcal{C}$-module monad on $\mathcal{C}$. Then $T(\mathbb{1}_{\mathcal{C}})$ is a closed $\mathcal{C}$-projective $\mathcal{C}$-generator in $\mathbf{EM}(T)$, with the $\mathcal{C}$-module category structure on the latter given by Theorem~\ref{extendingalways}.
  There is a bijection
 \[
 \begin{aligned}
  \setj{\,(\mathcal{M},X) \text{ as in Theorem~\ref{mainnonsenseprojective}}\,}/(\mathcal{M} \simeq \mathcal{N}) &\xleftrightarrow{\ \simeq\,} \setj{\,\text{Lax }\mathcal{C}\text{-module monads on } \mathcal{C}\,}/(\mathbf{EM}(T) \simeq \mathbf{EM}(S)) \\
  (\mathcal{M},X) &\longmapsto \hom{X,-\triangleright X} \\
  (\mathbf{EM}(T), T(\mathbb{1})) &\longmapsfrom T
 \end{aligned}
 \]
\end{theorem}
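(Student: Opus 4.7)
The plan is to establish the theorem in two stages: first verifying that $T(\mathbb{1}_{\mathcal{C}})$ has the claimed properties (so that the right-to-left assignment is well-defined), and then exhibiting the two maps as mutually inverse on equivalence classes.

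First I would analyse $T(\mathbb{1})$ inside $\mathbf{EM}(T)$ with its extended $\mathcal{C}$-module structure. By Lemma~\ref{actiononfree}, for any $V \in \mathcal{C}$ one has $V \blacktriangleright T(\mathbb{1}) \cong T(V \triangleright \mathbb{1}) \cong T(V)$, naturally in $V$. Combined with Theorem~\ref{strongembeddingalways}, this identifies the functor $\blank \blacktriangleright T(\mathbb{1}) \from \mathcal{C} \to \mathbf{EM}(T)$ with the free-module functor $L_{\mathbf{EM}(T)}$. In particular it admits $R_{\mathbf{EM}(T)}$ as a right adjoint, so $T(\mathbb{1})$ is closed with $\hom{T(\mathbb{1}), \blank} \cong R_{\mathbf{EM}(T)}$, and this right adjoint is precisely (isomorphic to) $T$ as a functor $\mathcal{C}\to\mathcal{C}$, with its original lax $\mathcal{C}$-module monad structure. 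For $\mathcal{C}$-projectivity, if $P \in \proj{\mathcal{C}}$ then $\mathbf{EM}(T)(T(P), \blank) \cong \mathcal{C}(P, R_{\mathbf{EM}(T)}(\blank))$, which is right exact because $R_{\mathbf{EM}(T)}$ creates colimits of $T$-split pairs and, since $T$ is right exact, is itself right exact (Proposition~\ref{prop:abelian-EM-cat}); thus $P \blacktriangleright T(\mathbb{1}) \cong T(P)$ is projective. The $\mathcal{C}$-generator property is obtained as in Proposition~\ref{finitemonads}: given a projective $M \in \mathbf{EM}(T)$, pick an epimorphism $P \twoheadrightarrow R_{\mathbf{EM}(T)}(M)$ from a projective in $\mathcal{C}$; then the composite $T(P) \twoheadrightarrow T R_{\mathbf{EM}(T)}(M) \xrightarrow{\nabla_M} M$ is an epi (using right exactness of $T$), so $M$ splits off $T(P) = P \blacktriangleright T(\mathbb{1})$.

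Next I would check that the two assignments are inverse on equivalence classes. Starting from $(\mathcal{M}, X)$, Theorem~\ref{mainnonsenseprojective} produces a $\mathcal{C}$-module equivalence $\mathcal{M} \simeq \mathbf{EM}(T)$ for $T = \hom{X, \blank \triangleright X}$. The comparison functor sends $X$ to the pair $(\hom{X, X}, \nabla) = (T(\mathbb{1}), \mu_{\mathbb{1}})$, showing that $(\mathcal{M}, X)$ and $(\mathbf{EM}(T), T(\mathbb{1}))$ represent the same equivalence class on the left. Conversely, starting from a lax $\mathcal{C}$-module monad $T$, the identification above gives $\hom{T(\mathbb{1}), \blank \blacktriangleright T(\mathbb{1})} \cong R_{\mathbf{EM}(T)} \circ L_{\mathbf{EM}(T)} = T$ as lax $\mathcal{C}$-module monads; so this round-trip returns $T$ itself, which is in particular in the same equivalence class.

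Finally I would verify that both maps descend to the stated equivalence relations. The forward map is well-defined on the left-hand equivalence classes because if $\mathcal{M} \simeq \mathcal{N}$ as $\mathcal{C}$-module categories (transporting the generator along this equivalence), the two resulting monads have equivalent $\mathbf{EM}$-categories via the zig-zag $\mathbf{EM}(T_{\cat{M}}) \simeq \mathcal{M} \simeq \mathcal{N} \simeq \mathbf{EM}(T_{\cat{N}})$ furnished by Theorem~\ref{mainnonsenseprojective}. The backward map is trivially well-defined on $\mathbf{EM}$-equivalence classes by construction. The main obstacle I anticipate is bookkeeping: one must be careful that the identification $\blank \blacktriangleright T(\mathbb{1}) \cong L_{\mathbf{EM}(T)}$ is an isomorphism of strong $\mathcal{C}$-module functors (not merely of plain functors), so that the induced monad structure on $\hom{T(\mathbb{1}), \blank \blacktriangleright T(\mathbb{1})}$ really agrees with the original lax module monad structure on $T$ rather than merely some twist of it; this traces back to Theorem~\ref{thm:one-module-structure-on-EM} and to the explicit form of the extended action via Linton coequalizers in Section~\ref{LintinTime}.
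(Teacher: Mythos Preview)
Your proposal is correct and follows essentially the same approach as the paper: both identify $\blank \blacktriangleright T(\mathbb{1}) \cong L_{\mathbf{EM}(T)}$ via Theorem~\ref{strongembeddingalways} to obtain closedness with $\hom{T(\mathbb{1}),\blank}\cong R_{\mathbf{EM}(T)}$, invoke the argument of Proposition~\ref{finitemonads} for $\mathcal{C}$-projectivity and the generator property, and then use Theorem~\ref{mainnonsenseprojective} together with the identification $\hom{T(\mathbb{1}),\blank\blacktriangleright T(\mathbb{1})}\cong R_{\mathbf{EM}(T)}L_{\mathbf{EM}(T)}=T$ for the two directions of the bijection. Your treatment is slightly more explicit about well-definedness on equivalence classes and about the compatibility of module-functor structures, but the logical skeleton is the same.
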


\begin{proof}
 The fact that $T(\mathbb{1})$ is closed follows immediately from the isomorphisms
 \begin{equation}\label{someconverse}
   \mathbf{EM}(T)(\blank \triangleright T(\mathbb{1}_{\mathcal{C}}), \bblank)
   = \mathbf{EM}(T)(T(\blank \otimes \mathbb{1}_{\mathcal{C}}), \bblank)
   \simeq \mathcal{C}(\blank \otimes \mathbb{1}_{\mathcal{C}}, R_{\mathbf{EM}(T)}(\bblank))
   \simeq \mathcal{C}(\blank, R_{\mathbf{EM}(T)}(\bblank))
 \end{equation}
 showing that $\hom{T(\mathbb{1}_{\mathcal{C}}),-} \simeq R_{\mathbf{EM}(T)}$. The first equality uses the fact that $T(\mathbb{1}_{\mathcal{C}}) \in \mathbf{Kl}(T)$, and thus $\blank \triangleright T(\mathbb{1}_{\mathcal{C}}) = T(\blank \otimes \mathbb{1}_{\mathcal{C}})$, since $\mathbf{Kl}(T)$ endowed with the $\mathcal{C}$-module structure described in Proposition~\ref{laxKlCorrespondence} is a $\mathcal{C}$-module subcategory of $\mathbf{EM}(T)$, by Theorem~\ref{strongembeddingalways}. For $P \in \proj{\cat{C}}$ projective, $P \triangleright T(\mathbb{1}) \simeq T(P)$ is projective by Proposition~\ref{finitemonads}, and, by the same result, $T(\mathbb{1})$ is a $\mathcal{C}$-projective generator, and $\mathbf{EM}(T)$ has enough projectives. To verify the latter claim, it suffices to show that $\mathcal{M} \simeq \mathbf{EM}(\hom{X,-\triangleright X})$, which is the claim of Theorem~\ref{mainnonsenseprojective}, and that $\mathbf{EM}(T) \simeq \mathbf{EM}(\hom{T(\mathbb{1}),-\triangleright T(\mathbb{1})})$. But since $\hom{T(\mathbb{1}_{\mathcal{C}}),-} \simeq R_{\mathbf{EM}(T)}$, we in fact have $-\triangleright T(\mathbb{1}) \simeq L_{\mathbf{EM}(T)}$ and hence $T \simeq \hom{T(\mathbb{1}),-\triangleright T(\mathbb{1})}$, which in particular implies equivalence of Eilenberg--Moore categories.
\end{proof}

Using Proposition~\ref{Cinjectives} and Proposition~\ref{InjReflect} analogously to the proof above, we find the dual statements:

\begin{theorem}\label{mainnonsenseinjective}
  Let $\mathcal{C}$ be a monoidal abelian category with enough injectives, $\mathcal{M}$ an abelian $\mathcal{C}$-module category with enough injectives, and assume that $X \in \mathcal{M}$ be a coclosed $\mathcal{C}$-injective $\mathcal{C}$-cogenerator.

  Then there is an equivalence of\, $\mathcal{C}$-module categories
  \[
    \mathcal{M} \simeq \mathbf{EM}(\cohom{M, \blank \lact M}),
  \]
  where $\mathbf{EM}(\cohom{M, \blank \lact M})$ is endowed with the extended $\mathcal{C}$-module structure of Theorem~\ref{extendingalways}.
\end{theorem}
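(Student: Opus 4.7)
The plan is to mirror the proof of Theorem~\ref{mainnonsenseprojective}, systematically dualising each step by exchanging monads/right adjoints/right exactness/projectivity for comonads/left adjoints/left exactness/injectivity. First I would observe that since $X$ is coclosed there is an adjunction $\cohom{X, \blank} \dashv (\blank \triangleright X)$. By Example~\ref{ex:regularend} the right adjoint $\blank \triangleright X$ is a strong $\mathcal{C}$-module functor, and then Proposition~\ref{prop:module-adjunctions} (or equivalently Porism~\ref{porism:doctrinal-module-adjunctions}) supplies a canonical oplax $\mathcal{C}$-module functor structure on $\cohom{X, \blank}$ which turns the adjunction into an oplax $\mathcal{C}$-module adjunction. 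Composing, the induced comonad $\cohom{X, \blank \triangleright X}$ on $\mathcal{C}$ is an oplax $\mathcal{C}$-module comonad, and the comonadic version of Proposition~\ref{strongcomparisons} makes the comparison functor out of the Kleisli category into $\mathcal{M}$ strong $\mathcal{C}$-module.

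Next I would invoke the internal-cohom results of Section~\ref{sec:internal-projectives-and-injectives}: Proposition~\ref{Cinjectives} ensures $\cohom{X, \blank}$ is left exact (since $X$ is $\mathcal{C}$-injective), and Proposition~\ref{InjReflect} ensures it reflects zero objects (since $X$ is a $\mathcal{C}$-cogenerator). These are exactly the hypotheses of the second half of Theorem~\ref{thm:SmallBeck} applied to our adjunction, so the comparison functor
\[
  K \from \mathcal{M} \to \mathbf{EM}\big(\cohom{X, \blank \triangleright X}\big)
\]
is an equivalence of abelian categories.

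It remains to promote $K$ to an equivalence of $\mathcal{C}$-module categories with the prescribed structure on the right. I would transport the $\mathcal{C}$-module structure of $\mathcal{M}$ along $K$; since $K$ agrees on the Kleisli subcategory with the comparison $\mathbf{Kl}(\cohom{X, \blank \triangleright X}) \to \mathcal{M}$ already shown to be strong module, the transported structure makes the canonical inclusion $\mathbf{Kl} \hookrightarrow \mathbf{EM}$ a strong $\mathcal{C}$-module functor. Applying the uniqueness statement of Theorem~\ref{thm:one-module-structure-on-EM} (used in its comonadic guise via passage to opposite categories, where the left exact oplax $\mathcal{C}$-module comonad becomes a right exact lax $\mathcal{C}^{\op}$-module monad so that Theorems~\ref{extendingalways} and~\ref{thm:one-module-structure-on-EM} apply verbatim) identifies this with the extended $\mathcal{C}$-module structure of Theorem~\ref{extendingalways}, yielding the claim.

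The main subtlety — rather than any analytic obstacle — is tracking the duality direction at each step: Theorems~\ref{extendingalways} and~\ref{thm:one-module-structure-on-EM} and Corollaries~\ref{oplaxEMcorrespondence} and~\ref{laxKlCorrespondence} are all stated for lax module \emph{monads}, whereas the adjunction here naturally produces an oplax module \emph{comonad}, so each invocation must be passed through opposites. Once this bookkeeping is done, everything analytic has been pre-packaged in Sections~\ref{sec:module-monads} and~\ref{sec:internal-projectives-and-injectives}, and no further calculation is required.
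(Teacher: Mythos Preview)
Your proposal is correct and follows essentially the same approach as the paper, which simply states that the result follows by dualising the proof of Theorem~\ref{mainnonsenseprojective} using Propositions~\ref{Cinjectives} and~\ref{InjReflect} in place of Propositions~\ref{Cprojectives} and~\ref{ProjReflect}. Your write-up is in fact more explicit than the paper's about the bookkeeping involved in passing from lax module monads to oplax module comonads via opposites, which the paper handles only tacitly through the remark after Definition~\ref{extendable}.
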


\begin{theorem}\label{injcorrespondence}
  Let $\mathcal{C}$ be a monoidal abelian category with enough injectives,
  and $C$ a left exact oplax $\mathcal{C}$-module comonad on $\mathcal{C}$.
  Then $\mathbf{EM}(C)$, endowed with the $\mathcal{C}$-module category structure of Theorem~\ref{extendingalways}, is a $\mathcal{C}$-module category,
  and $C(\mathbb{1}_{\mathcal{C}})$ is a coclosed $\mathcal{C}$-injective $\mathcal{C}$-generator.
  There is a bijection
  \begin{align*}
    \{\,(\mathcal{M},X)\ \text{as in Theorem~\ref{mainnonsenseinjective}}\,\}
    / (\mathcal{M} \simeq \mathcal{N})
    &\xleftrightarrow{\ \simeq\,}
    \left\{
      \begin{aligned}
        &\text{Left exact oplax}\ \cat{C}\text{-module}\\
        &\text{comonads on}\ \mathcal{C}
      \end{aligned}
      \right\}
      \!\Big/ (\mathbf{EM}(C) \simeq \mathbf{EM}(D)) \\
    (\mathcal{M},X) &\longmapsto \cohom{X, \blank \triangleright X} \\
    \big(\mathbf{EM}(C), C(\mathbb{1})\big) &\longmapsfrom C
  \end{align*}
\end{theorem}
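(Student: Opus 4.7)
The strategy is to dualize the proof of Theorem~\ref{projcorrespondence}, exchanging monads for comonads, lax for oplax, right-exactness for left-exactness, projectives for injectives, and the forgetful $R_{\mathbf{EM}(T)}$ for $L_{\mathbf{EM}(C)}$. The dual of Theorem~\ref{extendingalways}, obtained by replacing Linton coequalizers with Linton equalizers and invoking left exactness of $C$ and of the action, equips $\mathbf{EM}(C)$ with a $\mathcal{C}$-module structure; by the dual of Theorem~\ref{strongembeddingalways}, the embedding $\iota\from \mathbf{Kl}(C) \hookrightarrow \mathbf{EM}(C)$ is then a strong $\mathcal{C}$-module functor. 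By Proposition~\ref{prop:abelian-EM-cat}, $\mathbf{EM}(C)$ is abelian, and left exactness of $C$ ensures that $L_{\mathbf{EM}(C)}$ creates finite limits, so it is exact.

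Since $\iota$ is strong module, one has $V \triangleright C(\mathbb{1}) \simeq \iota(V) \simeq R_{\mathbf{EM}(C)}(V)$ as objects of $\mathbf{EM}(C)$, and the adjunction $L_{\mathbf{EM}(C)} \dashv R_{\mathbf{EM}(C)}$ yields
\[
\mathbf{EM}(C)(M, V \triangleright C(\mathbb{1})) \,\simeq\, \mathcal{C}(L_{\mathbf{EM}(C)}(M), V),
\]
identifying $\cohom{C(\mathbb{1}), \blank} \simeq L_{\mathbf{EM}(C)}$; in particular $C(\mathbb{1})$ is coclosed. For $I \in \inj{\mathcal{C}}$, $\mathbf{EM}(C)(\blank, I \triangleright C(\mathbb{1})) \simeq \mathcal{C}(L_{\mathbf{EM}(C)}(\blank), I)$ is a composite of exact functors and hence exact, showing that $I \triangleright C(\mathbb{1})$ is injective and $C(\mathbb{1})$ is $\mathcal{C}$-injective. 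For the cogenerator property, given any $X \in \mathbf{EM}(C)$, take a monomorphism $L_{\mathbf{EM}(C)}(X) \hookrightarrow I$ into an injective $I \in \mathcal{C}$; by adjunction, this corresponds to a morphism $X \to R_{\mathbf{EM}(C)}(I) = \iota(I)$ factoring as the coaction $\rho_X\from X \to R_{\mathbf{EM}(C)}(L_{\mathbf{EM}(C)}(X))$ followed by $R_{\mathbf{EM}(C)}$ of the chosen mono. Both factors are monomorphisms---$\rho_X$ since its underlying map in $\mathcal{C}$ is split monic with retraction $\varepsilon^C_X$ and $L_{\mathbf{EM}(C)}$ is faithful, and $R_{\mathbf{EM}(C)}$ preserves monomorphisms as a right adjoint---so $X$ embeds into $\iota(I)$. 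In particular $\mathbf{EM}(C)$ has enough injectives, and any injective $X \in \mathbf{EM}(C)$ is a direct summand of $\iota(I) = I \triangleright C(\mathbb{1})$.

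For the bijection, one direction is Theorem~\ref{mainnonsenseinjective}: given $(\mathcal{M}, X)$, setting $C \defeq \cohom{X, \blank \triangleright X}$---a left exact oplax $\mathcal{C}$-module comonad, since $\blank \triangleright X$ is strong module (Example~\ref{ex:regularend}) and $\cohom{X, \blank}$ is thus oplax by Proposition~\ref{prop:module-adjunctions}---we obtain $\mathcal{M} \simeq \mathbf{EM}(C)$ as $\mathcal{C}$-module categories, with $X$ mapping to $\cohom{X, \mathbb{1} \triangleright X} \simeq C(\mathbb{1})$. Conversely, applying this construction to $(\mathbf{EM}(C), C(\mathbb{1}))$ yields $\cohom{C(\mathbb{1}), \blank \triangleright C(\mathbb{1})} \simeq L_{\mathbf{EM}(C)} \circ R_{\mathbf{EM}(C)} = C$ as endofunctors of $\mathcal{C}$; the comonad and oplax $\mathcal{C}$-module structures match those of the original $C$ by the uniqueness clauses of Proposition~\ref{prop:module-adjunctions} and Porism~\ref{porism:doctrinal-module-adjunctions} applied to the adjunction $L_{\mathbf{EM}(C)} \dashv R_{\mathbf{EM}(C)}$, whose right adjoint is strong $\mathcal{C}$-module by the first paragraph.

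The principal obstacle is ensuring that the module structure on $\mathbf{EM}(C)$ afforded by the dual of Theorem~\ref{extendingalways} coincides with the one transported along the equivalence of Theorem~\ref{mainnonsenseinjective}; this is secured by Theorem~\ref{thm:one-module-structure-on-EM}, which determines the module structure on $\mathbf{EM}(C)$ essentially uniquely once one requires $\iota$ to be strong. A secondary subtlety is that the dual Linton equalizer construction requires $\mathcal{C}$'s tensor product to be left exact in both variables, which must be assumed tacitly alongside the stated hypotheses.
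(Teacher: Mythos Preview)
Your proof is correct and follows essentially the same approach as the paper, which simply states that Theorem~\ref{injcorrespondence} is obtained by dualizing the proof of Theorem~\ref{projcorrespondence} (using Propositions~\ref{Cinjectives} and~\ref{InjReflect} in place of their projective analogues); you have written out this dualization in detail, including the identification $\cohom{C(\mathbb{1}),\blank} \simeq L_{\mathbf{EM}(C)}$ and the cogenerator argument, exactly as the paper's projective proof does with $\hom{T(\mathbb{1}),\blank} \simeq R_{\mathbf{EM}(T)}$. Your closing remark that the dual Linton construction tacitly requires left exactness of the tensor product is a valid observation about an implicit hypothesis that the paper also leaves unstated.
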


Observe that Theorems~\ref{mainnonsenseprojective} and~\ref{mainnonsenseinjective} do not make any finiteness assumptions on $\mathcal{M}$ relative to $\mathcal{C}$---such finiteness conditions are often \emph{imposed} on $\mathcal{M}$ by the existence of a closed $\mathcal{C}$-projective $\mathcal{C}$-generator, respectively a coclosed $\mathcal{C}$-injective $\mathcal{C}$-cogenerator.

\begin{proposition}
  Let $\mathcal{C}$ be a finite abelian monoidal category, and let $\mathcal{M}$ be an abelian $\mathcal{C}$-module category such that there exists a closed $\mathcal{C}$-projective $\mathcal{C}$-generator $X \in \mathcal{M}$. Then $\mathcal{M}$ is finite abelian.
\end{proposition}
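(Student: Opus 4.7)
The plan is to reduce the claim to a direct application of two results already established in the paper: Theorem~\ref{mainnonsenseprojective} and Proposition~\ref{finitemonads}. Note first that the existence of a $\cat{C}$-projective $\cat{C}$-generator $X \in \cat{M}$ presupposes, by definition, that both $\cat{C}$ and $\cat{M}$ have enough projectives (the former is automatic since $\cat{C}$ is finite abelian). Thus the hypotheses of Theorem~\ref{mainnonsenseprojective} are all met once we know that $X$ is closed, which is given.

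I would then consider the lax $\cat{C}$-module monad $T \defeq \hom{X, \blank \triangleright X}$ on $\cat{C}$. Since $X$ is $\cat{C}$-projective, Proposition~\ref{Cprojectives} gives that $\hom{X, \blank}$ is right exact; the functor $\blank \triangleright X$ is right exact as a left adjoint. Hence $T$ is a right exact monad on the finite abelian monoidal category $\cat{C}$. Theorem~\ref{mainnonsenseprojective} then yields an equivalence $\cat{M} \simeq \mathbf{EM}(T)$ of $\cat{C}$-module categories, and in particular of abelian categories.

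To conclude, I would invoke Proposition~\ref{finitemonads}, which states precisely that $\mathbf{EM}(T)$ is a finite abelian category whenever $T$ is a right exact monad on a finite abelian category. Composing the equivalence $\cat{M} \simeq \mathbf{EM}(T)$ with this structural fact shows that $\cat{M}$ is finite abelian, completing the proof.

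The argument encounters no real obstacle: the bulk of the work has been done in the earlier sections, and the only things to verify are that the hypotheses of Theorem~\ref{mainnonsenseprojective} and Proposition~\ref{finitemonads} are indeed satisfied. The mildest subtlety is checking right exactness of $T$, which is handled by Proposition~\ref{Cprojectives} applied to the $\cat{C}$-projectivity of~$X$.
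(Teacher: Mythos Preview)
Your proof is correct and follows essentially the same route as the paper: apply Theorem~\ref{mainnonsenseprojective} to identify $\cat{M}$ with $\mathbf{EM}(\hom{X,\blank\triangleright X})$, verify that the monad is right exact using Proposition~\ref{Cprojectives} and the adjunction, and conclude by Proposition~\ref{finitemonads}. The only cosmetic difference is the order in which you check the hypotheses versus invoke the equivalence.
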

\begin{proof}
  By Theorem~\ref{mainnonsenseprojective}, we have a $\mathcal{C}$-module equivalence $\mathcal{M} \simeq \mathbf{EM}(\hom{X,\blank \triangleright X})$.
  Observe that $\blank \triangleright X$ is right exact, since it admits a right adjoint.
  Since $X$ is $\mathcal{C}$-projective, the functor $\hom{X, \blank}$ also is right exact.
  Thus $\hom{X, \blank \triangleright X}$ is a right exact monad on the finite abelian category $\mathcal{C}$, and hence---by Proposition~\ref{finitemonads}---its Eilenberg--Moore category $\mathbf{EM}(\hom{X, \blank \triangleright X})$ is finite abelian.
\end{proof}

Analogously, using Proposition~\ref{locomonads} rather than Proposition~\ref{finitemonads},
one obtains the following.

\begin{proposition}
  Let $\mathcal{C}$ be a locally finite abelian monoidal category, and let $\mathcal{M}$ be an abelian $\mathcal{C}$-module category with enough injectives such that there exists a coclosed $\mathbf{Ind}(\mathcal{C})$-injective $\mathbf{Ind}(\mathcal{C})$-generator $X \in \mathcal{M}$. Then the category $\mathbf{cpt}(\mathcal{M})$ of compact objects in $\mathcal{M}$ is locally finite abelian.
\end{proposition}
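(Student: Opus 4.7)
The plan is to mirror the preceding proof, substituting Proposition~\ref{locomonads} for Proposition~\ref{finitemonads}. First I would pass to the Ind-completions: by Proposition~\ref{cocompletingnonsense}, the $\mathcal{C}$-module structure on $\mathcal{M}$ extends canonically to an $\mathbf{Ind}(\mathcal{C})$-module structure on $\mathbf{Ind}(\mathcal{M})$ via a strong $\mathcal{C}$-module inclusion $\mathcal{M} \hookrightarrow \mathbf{Ind}(\mathcal{M})$, and the target has enough injectives by Proposition~\ref{locoinjectives}. Interpreting the hypothesis on $X$ inside $\mathbf{Ind}(\mathcal{M})$, Theorem~\ref{mainnonsenseinjective} then yields an equivalence of $\mathbf{Ind}(\mathcal{C})$-module categories
\[
  \mathbf{Ind}(\mathcal{M}) \simeq \mathbf{EM}(C), \qquad C \defeq \cohom{X, \blank \triangleright X},
\]
in which $C$ is a left exact oplax $\mathbf{Ind}(\mathcal{C})$-module comonad on $\mathbf{Ind}(\mathcal{C})$.

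Next I would check that $C$ is finitary. The extended action $\blank \triangleright X \from \mathbf{Ind}(\mathcal{C}) \to \mathbf{Ind}(\mathcal{M})$ preserves filtered colimits by construction of the extension, and its left adjoint $\cohom{X, \blank}$ preserves all colimits, so the composite $C$ preserves filtered colimits. With $C$ now a left exact, finitary comonad on $\mathbf{Ind}(\mathcal{C})$, Proposition~\ref{locomonads} produces a locally finite abelian category $\mathcal{E}$ together with an equivalence $\mathbf{EM}(C) \simeq \mathbf{Ind}(\mathcal{E})$, realising $\mathcal{E}$ as the compact objects of $\mathbf{EM}(C)$. Transporting along $\mathbf{Ind}(\mathcal{M}) \simeq \mathbf{EM}(C)$ then identifies $\mathcal{E} \simeq \mathbf{cpt}(\mathbf{Ind}(\mathcal{M}))$, which, by standard Ind-completion theory, coincides with $\mathbf{cpt}(\mathcal{M})$ (up to the idempotent completion guaranteed by abelianness), closing the argument.

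The hardest part will be the careful transfer of the hypotheses of Theorem~\ref{mainnonsenseinjective} through the Ind-completion, namely confirming that $X$ remains a coclosed $\mathbf{Ind}(\mathcal{C})$-injective $\mathbf{Ind}(\mathcal{C})$-cogenerator once viewed inside $\mathbf{Ind}(\mathcal{M})$. Coclosedness is essentially the hypothesised existence of $\cohom{X,\blank}$; $\mathbf{Ind}(\mathcal{C})$-injectivity transfers because injectives in $\mathbf{Ind}(\mathcal{C})$ arise as filtered colimits of injectives in $\mathcal{C}$ and $\blank \triangleright X$ is finitary; and the $\mathbf{Ind}(\mathcal{C})$-cogenerator property in $\mathbf{Ind}(\mathcal{M})$ should follow from the corresponding property in $\mathcal{M}$ combined with the description of injectives in $\mathbf{Ind}(\mathcal{M})$ in terms of those in $\mathcal{M}$. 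A secondary subtlety, the identification $\mathbf{cpt}(\mathbf{Ind}(\mathcal{M})) \simeq \mathbf{cpt}(\mathcal{M})$, is routine given the abelianness assumption.
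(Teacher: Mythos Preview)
Your passage to $\mathbf{Ind}(\mathcal{M})$ is both unnecessary and fatal to the argument. The hypothesis already supplies $\mathcal{M}$ with enough injectives, and $\mathbf{Ind}(\mathcal{C})$ has enough injectives by Proposition~\ref{locoinjectives}; so Theorem~\ref{mainnonsenseinjective} applies \emph{directly} to $\mathcal{M}$, viewed as an $\mathbf{Ind}(\mathcal{C})$-module category, yielding $\mathcal{M} \simeq \mathbf{EM}(C)$ with $C = \cohom{X,\blank \triangleright X}$ a left exact finitary comonad on $\mathbf{Ind}(\mathcal{C})$. Proposition~\ref{locomonads} then gives $\mathcal{M} \simeq \mathbf{Ind}(\mathcal{E})$ for a locally finite abelian $\mathcal{E}$, whence $\mathbf{cpt}(\mathcal{M}) \simeq \mathcal{E}$. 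This is exactly the analogue the paper intends.

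Your route instead produces $\mathbf{Ind}(\mathcal{M}) \simeq \mathbf{Ind}(\mathcal{E})$, hence $\mathbf{cpt}(\mathbf{Ind}(\mathcal{M})) \simeq \mathcal{E}$. But $\mathbf{cpt}(\mathbf{Ind}(\mathcal{M})) \simeq \mathcal{M}$, not $\mathbf{cpt}(\mathcal{M})$: compact objects in an Ind-completion recover the original category, not its compact objects. Your claimed identification $\mathbf{cpt}(\mathbf{Ind}(\mathcal{M})) \simeq \mathbf{cpt}(\mathcal{M})$ is simply false whenever $\mathcal{M}$ has non-compact objects (e.g.\ $\mathcal{M} = \mathbf{Ind}(\mathcal{A})$ for any non-trivial $\mathcal{A}$). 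Consequently the ``hardest part'' you identify---transferring the cogenerator hypotheses through the Ind-completion---is a difficulty you have created for yourself, and one that cannot be resolved without abandoning the detour. The hypotheses on $X$ are already phrased relative to the $\mathbf{Ind}(\mathcal{C})$-action on $\mathcal{M}$; nothing needs to be transferred.
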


On the other hand, in the presence of suitable finiteness conditions, the characterizations of adjoint functors between finite and locally finite abelian categories give sufficient conditions for an object in a module category to be closed, respectively coclosed.

\begin{proposition}
  Let $\mathcal{C}$ be a finite abelian monoidal category and let $\mathcal{M}$ be a finite abelian $\mathcal{C}$-module category. An object $X \in \mathcal{M}$ is closed if and only if the functor $\blank \triangleright X$ is right exact.
\end{proposition}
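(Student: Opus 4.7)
The plan is to reduce the statement to Proposition~\ref{finiteadjoints}, which characterizes the existence of adjoints between finite abelian categories in terms of exactness of the underlying functor. Since both \(\cat{C}\) and \(\cat{M}\) are finite abelian, so is the product category \(\cat{C} \times \cat{M}\), and the tensor action restricted to the argument \(X \in \cat{M}\) yields a \(\Bbbk\)-linear functor \(\blank \triangleright X \from \cat{C} \to \cat{M}\) between finite abelian categories, which is exactly the setup of Proposition~\ref{finiteadjoints}.

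For the forward direction, I would note that closedness of \(X\) is by definition the existence of a right adjoint \(\hom{X, \blank}\) to \(\blank \triangleright X\). Any left adjoint preserves all colimits that exist; in particular, it preserves finite colimits, hence \(\blank \triangleright X\) is right exact. This direction requires no finiteness hypothesis on \(\cat{C}\) or \(\cat{M}\).

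For the converse, suppose that \(\blank \triangleright X \from \cat{C} \to \cat{M}\) is right exact. By Proposition~\ref{finiteadjoints} applied to the finite abelian categories \(\cat{C}\) and \(\cat{M}\), a right exact \(\Bbbk\)-linear functor between such categories automatically admits a right adjoint. This right adjoint is, by definition, the internal hom \(\hom{X, \blank}\), so \(X\) is closed.

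The argument is essentially a one-line invocation of Proposition~\ref{finiteadjoints}, so there is no real obstacle; the only thing to check is that the notion of \emph{closed} used here (existence of a right adjoint to \(\blank \triangleright X\)) matches the hypothesis of the cited proposition, which it does by definition.
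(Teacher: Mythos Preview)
Your proposal is correct and matches the paper's proof exactly: the paper simply states that the result is an immediate consequence of Proposition~\ref{finiteadjoints}. Your elaboration of both directions is accurate, though the remark about the product category \(\cat{C} \times \cat{M}\) being finite abelian is not needed, since you only ever apply Proposition~\ref{finiteadjoints} to the functor \(\blank \triangleright X \from \cat{C} \to \cat{M}\).
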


\begin{proof}
  This is an immediate consequence of Proposition~\ref{finiteadjoints}.
\end{proof}

Similarly, using Proposition~\ref{locallyfiniteadjoints}, one shows the following.

\begin{proposition}
  Let $\mathcal{C}$ be a locally finite abelian monoidal category and let $\mathcal{M}$ be a locally finite abelian $\mathcal{C}$-module category. An object $X \in \mathbf{Ind}(\mathcal{M})$ is coclosed with respect to the induced $\mathbf{Ind}(\mathcal{C})$-module structure on $\mathbf{Ind}(\mathcal{M})$ if and only if\, $\blank \triangleright X\from \mathcal{C} \to \mathbf{Ind}(\mathcal{M})$ is left exact and the induced functor $\blank \triangleright X\from \mathbf{Ind}(\mathcal{C}) \to \mathbf{Ind}(\mathcal{M})$ is quasi-finite.
\end{proposition}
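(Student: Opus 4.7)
The plan is to recognise that coclosedness of $X$ amounts to the extended functor $F \defeq \blank \triangleright X \from \mathbf{Ind}(\mathcal{C}) \to \mathbf{Ind}(\mathcal{M})$ admitting a left adjoint. Since the action of $\mathbf{Ind}(\mathcal{C})$ on $\mathbf{Ind}(\mathcal{M})$ is separately finitary (by Proposition~\ref{cocompletingnonsense} applied to the $\mathbf{Ind}$-cocompletion of the original action), $F$ automatically preserves filtered colimits. Hence, using Proposition~\ref{saftlex}, $F$ has a left adjoint if and only if it is continuous, which reduces the question to: left exactness of $F$ on the nose, and preservation of arbitrary products.

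For the forward direction, I will suppose that $F$ admits a left adjoint $\cohom{X,\blank}$. Then $F$ preserves all limits; in particular its restriction to $\mathcal{C}$ is left exact. Combining the fact that $F$ preserves filtered colimits with the second half of Proposition~\ref{saftlex} (equivalently Lemma~\ref{leftadjointssmallobjects}), the adjoint $\cohom{X,\blank}$ preserves compact objects, and so $\cohom{X,M} \in \mathcal{C}$ for every $M \in \mathcal{M}$. For any finitely cogenerated injective $I \in \mathbf{Ind}(\mathcal{C})$ and any $M \in \mathcal{M}$ compact, the adjunction yields
\[
  \mathbf{Ind}(\mathcal{M})(M, I \triangleright X) \;\cong\; \mathbf{Ind}(\mathcal{C})(\cohom{X,M}, I).
\]
Writing $\mathbf{Ind}(\mathcal{C}) \simeq \lComod{D}$ via Lemma~\ref{lem:campus-bar}, the object $I$ embeds into $D^{\oplus n}$, and hom-spaces from a compact object into $D^{\oplus n}$ are finite-dimensional. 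Therefore $F$ is quasi-finite.

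For the backward direction, assume left exactness of $F$ on $\mathcal{C}$ and quasi-finiteness of its Ind-extension. Left exactness extends from $\mathcal{C}$ to $\mathbf{Ind}(\mathcal{C})$ because filtered colimits are exact in the locally finitely presentable category $\mathbf{Ind}(\mathcal{M})$, as already noted in the paragraph preceding this proposition. The remaining and principal task is to establish that $F$ preserves arbitrary products. I plan to recover this via the representability criterion: since $\mathcal{M}$ compactly generates $\mathbf{Ind}(\mathcal{M})$, existence of a left adjoint to $F$ reduces to representability, for each $M \in \mathcal{M}$, of the functor $N \mapsto \mathbf{Ind}(\mathcal{M})(M, N \triangleright X) \from \mathbf{Ind}(\mathcal{C}) \to \kVect$. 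This functor is finitary (because $F$ is, and $M$ is compact) and left exact, and quasi-finiteness says exactly that it takes finite-dimensional values on finitely cogenerated injectives; these are precisely the ingredients of Proposition~\ref{locallyfiniteadjoints}, i.e., Takeuchi's Proposition~1.3 in~\cite{Tak}, applied to the left exact functor $\mathbf{Ind}(\mathcal{M})(M, (\blank) \triangleright X)\from \mathcal{C} \to \kvect$. The resulting representing objects assemble---by filtered-colimit preservation on the source side and compact generation on the target side---into the desired left adjoint $\cohom{X,\blank}$.

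The hard part will be the backward direction, and specifically the invocation of Takeuchi's representability criterion to obtain product preservation; every other step (left exactness passing to the Ind-extension, finitariness of the action, compact-object preservation by a left adjoint, and the identification of finitely cogenerated injectives with subobjects of $D^{\oplus n}$) is routine and essentially formal once the special adjoint functor theorem (Proposition~\ref{saftlex}) is in hand.
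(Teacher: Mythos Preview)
Your proposal is correct and takes essentially the same approach as the paper: both directions hinge on Proposition~\ref{locallyfiniteadjoints} (Takeuchi's criterion). The paper's proof is the one-liner ``similarly, using Proposition~\ref{locallyfiniteadjoints}'', applied directly to $F = (\blank) \triangleright X \from \mathbf{Ind}(\mathcal{C}) \to \mathbf{Ind}(\mathcal{M})$; your pointwise-representability detour for each compact $M$ is a legitimate way to accommodate the fact that the restriction of $F$ to $\mathcal{C}$ need not land in $\mathcal{M}$ (so that Proposition~\ref{locallyfiniteadjoints} as literally stated does not apply), whereas the paper implicitly leans on Takeuchi's result in its native generality for left exact finitary functors between full comodule categories. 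One small point to tighten: writing $G_M \from \mathcal{C} \to \kvect$ presupposes that $\mathbf{Ind}(\mathcal{M})(M, N \triangleright X)$ is finite-dimensional for every $N \in \mathcal{C}$, which needs a sentence of justification---each such $N$ embeds via its coaction into a finitely cogenerated injective, and then left exactness of $F$ together with the quasi-finiteness hypothesis give the bound.
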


\subsection{The rigid case}

\begin{proposition}\label{prop:strong-module-monad-and-algebra-objects}
  Let \((\mathcal{C}, \otimes, \mathbb{1})\) be a monoidal category.
  There is a bijection between strong \(\mathcal{C}\)-module monads on \(\cat{C}\) and algebra objects in \(\mathcal{C}\).
\end{proposition}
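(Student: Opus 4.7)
The plan is to reduce the statement to the bicategorical Yoneda correspondence of Proposition~\ref{prop:bicategorical-yoneda-correspondence-monads-and-algebras}, and then to transport the notion of a monoid across the induced monoidal equivalence of endofunctor categories with $\mathcal{C}^{\otimes\on{rev}}$.

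First, I would unpack definitions. By Definition~\ref{def:monad} together with the remarks preceding Corollary~\ref{oplaxEMcorrespondence}, a strong $\mathcal{C}$-module monad on $\mathcal{C}$ is precisely an algebra object in the monoidal category $\bigl(\lMod{\mathcal{C}}(\mathcal{C},\mathcal{C}),\circ,\Id_{\mathcal{C}}\bigr)$ of strong $\mathcal{C}$-module endofunctors of $\mathcal{C}$, with monoidal product given by composition. Proposition~\ref{prop:bicategorical-yoneda-correspondence-monads-and-algebras} provides a monoidal equivalence $\Psi\colon \lMod{\mathcal{C}}(\mathcal{C},\mathcal{C}) \xiso \mathcal{C}^{\otimes\on{rev}}$, sending $\Phi \mapsto \Phi(\mathbb{1})$ with quasi-inverse $A \mapsto \blank \otimes A$; the appearance of $\mathcal{C}^{\otimes\on{rev}}$ is forced by the computation $(\blank \otimes A) \circ (\blank \otimes B) \cong \blank \otimes (B \otimes A)$.

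Next, since $\Psi$ is strong monoidal, it induces a bijection between algebra objects on both sides, so the result reduces to identifying algebra objects in $\mathcal{C}^{\otimes\on{rev}}$ with algebra objects in $\mathcal{C}$. For a fixed object $A$, an algebra structure on $A$ in either of $\mathcal{C}$ or $\mathcal{C}^{\otimes\on{rev}}$ consists of the same underlying data, namely a unit morphism $\eta\colon \mathbb{1} \to A$ and a multiplication $\mu\colon A \otimes A \to A$. The associativity square in $\mathcal{C}^{\otimes\on{rev}}$ uses the inverse associator of $\mathcal{C}$, which can be rearranged to recover exactly the original associativity axiom in $\mathcal{C}$; similarly, left and right unitality get interchanged but remain jointly equivalent to the usual axioms. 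Thus the two notions of algebra object coincide tautologically.

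Tracing the composite bijection, an algebra object $(A,\mu,\eta)$ in $\mathcal{C}$ corresponds to the monad $\blank \otimes A$ on $\mathcal{C}$, with multiplication $\blank \otimes A \otimes A \xrightarrow{\blank \otimes \mu} \blank \otimes A$ and unit $\Id_{\mathcal{C}} \xrightarrow{\blank \otimes \eta} \blank \otimes A$, equipped with its canonical strong $\mathcal{C}$-module structure coming from the associator of $\mathcal{C}$ as in Example~\ref{ex:regularend}. The only real point to verify is that the monoidal equivalence $\Psi$ matches the monoidal structure on $\lMod{\mathcal{C}}(\mathcal{C},\mathcal{C})$ given by composition with the reversed tensor product on $\mathcal{C}$, and this is built into Proposition~\ref{prop:bicategorical-yoneda-correspondence-monads-and-algebras}. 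The main (and rather mild) obstacle is the bookkeeping around the reversal of the tensor product, which is inessential since the monoid axioms are palindromic.
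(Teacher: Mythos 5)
Your proposal is correct and follows essentially the same route as the paper: both reduce the statement to the monoidal equivalence $\lMod{\cat{C}}(\cat{C},\cat{C}) \simeq \cat{C}^{\otimes\on{rev}}$ of Proposition~\ref{prop:bicategorical-yoneda-correspondence-monads-and-algebras} and then transport algebra objects across it, with the correspondence realized by $A \mapsto \blank\otimes A$ and $T \mapsto (T(\mathbb{1}),\mu_{\mathbb{1}},\eta_{\mathbb{1}})$. Your extra remark that algebra objects in $\cat{C}^{\otimes\on{rev}}$ coincide with those in $\cat{C}$ is a point the paper leaves implicit, and it is handled correctly.
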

\begin{proof}
  By Proposition~\ref{prop:bicategorical-yoneda-correspondence-monads-and-algebras},
  we have that there is an equivalence of left module categories
  \begin{align*}
    \lMod{\cat{C}}(\cat{C}, \cat{C}) \xiso \cat{C}^{\otimes\on{rev}}, \qquad
    T \mapsto T(\mathbb{1}), \qquad
    \blank \otimes A \longmapsfrom A.
  \end{align*}

  In particular, if \((A, m, u)\) is an algebra object in \(\cat{C}\),
  then \((\blank \otimes A,\, \blank \otimes m,\, \blank \otimes u)\) becomes a strong $\mathcal{C}$-module monad on \(\cat{C}\).
  The strong $\mathcal{C}$-module structure is given by the associator of \(\cat{C}\):
  \[
    {({(\blank \otimes A)}_{\mathsf{a}})}_V \defeq V \otimes (\blank \otimes A) \xiso (V \otimes \blank) \otimes A.
  \]

  On the other hand,
  if \((T, \mu, \eta)\) is a strong \(\cat{C}\)-module monad,
  then evaluating at the monoidal unit yields an algebra object in \(\cat{C}\),
  with multiplication \(\mu_{\mathbb{1}}\) and unit \(\eta_{\mathbb{1}}\).
\end{proof}

\begin{example}\label{ex:EM-is-a-modules}
  Let \(\cat{C}\) be a monoidal category,
  and \(A \in \cat{C}\) and algebra object.
  Then the Eilenberg--Moore category for \(A \otimes \blank\) is the category of left \(A\)-module objects,
  see e.g.~\cite[Example~1.2]{BV}.
  Likewise, \(\mathbf{EM}(\blank \otimes A)\) yields the category of right \(A\)-module objects.
\end{example}

\begin{theorem}\label{thm:main-result-rigid-projective-case}
  Let \(\mathcal{C}\) be a rigid monoidal abelian category with enough projectives,
  \(\mathcal{M}\) an abelian \(\mathcal{C}\)-module category,
  and assume that \(X \in \mathcal{M}\) is a closed \(\mathcal{C}\)-projective \(\mathcal{C}\)-generator.
  Then there is an algebra object \(A \in \mathcal{C}\) such that
  there is an equivalence of\, \(\mathcal{C}\)-module categories
  \[
    \mathrm{mod}_{\mathcal{C}}A \simeq \mathcal{M}.
  \]
\end{theorem}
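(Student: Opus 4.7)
The plan is to reduce the theorem to the general reconstruction result Theorem~\ref{mainnonsenseprojective} by using rigidity to upgrade the lax $\mathcal{C}$-module monad produced there to a strong one, and then invoking the bijection between strong $\mathcal{C}$-module monads on $\mathcal{C}$ and algebra objects in $\mathcal{C}$.

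First, I apply Theorem~\ref{mainnonsenseprojective}, whose hypotheses on $\mathcal{M}$ and $X$ are met verbatim, to obtain an equivalence of $\mathcal{C}$-module categories $\mathcal{M} \simeq \mathbf{EM}(T)$, where $T := \hom{X, \blank \triangleright X}$ is a right exact lax $\mathcal{C}$-module monad on $\mathcal{C}$ --- its lax structure arising, via doctrinal adjunction (Proposition~\ref{prop:special-doctrinal-module-adjunctions}), from the strong module functor $\blank \triangleright X \from \mathcal{C} \to \mathcal{M}$ --- and $\mathbf{EM}(T)$ is given the extended $\mathcal{C}$-module structure of Theorem~\ref{extendingalways}. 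Since $\mathcal{C}$ is rigid, Proposition~\ref{prop:lax-is-strong-when-rigid} upgrades the lax $\mathcal{C}$-module structure on $T$ to a strong one, and Proposition~\ref{prop:strong-module-monad-and-algebra-objects} then identifies $T$ with $\blank \otimes A$ for the algebra object $A := T(\mathbb{1}) = \hom{X, X}$ in $\mathcal{C}$. By Example~\ref{ex:EM-is-a-modules}, the Eilenberg--Moore category $\mathbf{EM}(\blank \otimes A)$ is the category $\modc{A}$ of module objects over $A$.

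It remains to check that the resulting equivalence $\modc{A} \simeq \mathcal{M}$ is one of $\mathcal{C}$-module categories, with $\modc{A}$ carrying its standard $\mathcal{C}$-module structure given by $V \triangleright (M, \alpha) := (V \otimes M, (V \otimes \alpha) \circ \mathcal{C}_{\mathsf{a}}^{-1})$. This standard structure makes the free-module functor a strict $\mathcal{C}$-module functor and, in particular, extends the Kleisli module structure of Corollary~\ref{laxKlCorrespondence}. Since $T$ is strong, Proposition~\ref{prop:strong-module-monads-are-extendable} shows it is extendable; the essential uniqueness result Theorem~\ref{thm:one-module-structure-on-EM} then implies that the standard $\mathcal{C}$-module structure on $\modc{A}$ agrees, up to $\mathcal{C}$-module equivalence, with the extended structure on $\mathbf{EM}(T)$ provided by Theorem~\ref{extendingalways}. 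Composing with the equivalence from the first step delivers the required $\modc{A} \simeq \mathcal{M}$. The argument is essentially an assembly of earlier results; the only mildly delicate point is reconciling the two routes to a $\mathcal{C}$-module structure on the Eilenberg--Moore category, for which Theorem~\ref{thm:one-module-structure-on-EM} is the key tool.
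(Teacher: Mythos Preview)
Your proposal is correct and follows essentially the same route as the paper: invoke Theorem~\ref{mainnonsenseprojective}, use rigidity via Proposition~\ref{prop:lax-is-strong-when-rigid} to make the monad strong, then apply Proposition~\ref{prop:strong-module-monad-and-algebra-objects} and Example~\ref{ex:EM-is-a-modules}. The only difference is that you are more explicit about reconciling the $\mathcal{C}$-module structures on $\modc{A}$ and $\mathbf{EM}(T)$ via Theorem~\ref{thm:one-module-structure-on-EM}, a point the paper leaves implicit.
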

\begin{proof}
  By Theorem~\ref{mainnonsenseprojective},
  we have \(\mathbf{EM}(\hom{X, \blank \lact X}) \simeq \cat{M}\)
  as \(\cat{C}\)-module categories.
  In particular, applying Proposition~\ref{prop:lax-is-strong-when-rigid} yields that
  the monad \(\hom{X, \blank \lact X}\from \cat{C}\to \cat{C}\) is a strong $\mathcal{C}$-module functor,
  as it is lax and \(\cat{C}\) is rigid.
  The claim follows by Proposition~\ref{prop:strong-module-monad-and-algebra-objects}
  and Example~\ref{ex:EM-is-a-modules}.
\end{proof}

\begin{theorem}\label{thm:main-result-rigid-injective-case}
  Let $\mathcal{C}$ be a rigid monoidal abelian category with enough injectives, $\mathcal{M}$ an abelian $\mathcal{C}$-module category, and assume that $X \in \mathcal{M}$ is a coclosed $\mathcal{C}$-injective $\mathcal{C}$-cogenerator.

  Then there is a coalgebra object $K \in \mathcal{C}$ such that there is an equivalence of $\mathcal{C}$-module categories $\on{comod}_{\mathcal{C}}A \simeq \mathcal{M}$.
\end{theorem}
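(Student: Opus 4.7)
The plan is to proceed in complete analogy with the proof of Theorem~\ref{thm:main-result-rigid-projective-case}, replacing the lax monad/algebra correspondence with its oplax comonad/coalgebra counterpart, and replacing Theorem~\ref{mainnonsenseprojective} with Theorem~\ref{mainnonsenseinjective}.

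First I would invoke Theorem~\ref{mainnonsenseinjective}: since $X$ is a coclosed $\mathcal{C}$-injective $\mathcal{C}$-cogenerator, we obtain an equivalence of $\mathcal{C}$-module categories
\[
  \mathcal{M} \;\simeq\; \mathbf{EM}\bigl(\cohom{X,\blank \lact X}\bigr),
\]
where the $\mathcal{C}$-module structure on the right-hand side is the extended one coming from Theorem~\ref{extendingalways}. Next I would observe that the comonad $C \defeq \cohom{X,\blank \lact X}$ is canonically an oplax $\mathcal{C}$-module comonad: indeed, $\blank \lact X$ is a strong $\mathcal{C}$-module functor by Example~\ref{ex:regularend}, and by Proposition~\ref{prop:module-adjunctions} (applied to the adjunction $\cohom{X,\blank} \dashv \blank \lact X$), its left adjoint $\cohom{X,\blank}$ is an oplax $\mathcal{C}$-module functor. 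The composite $C$ is thus oplax, and comultiplication and counit are module transformations.

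Now I would use the rigidity hypothesis. Since $\mathcal{C}$ is in particular right rigid, the dual part of Proposition~\ref{prop:lax-is-strong-when-rigid} ensures that every oplax $\mathcal{C}$-module functor on $\mathcal{C}$ is in fact strong; hence $C$ is a strong $\mathcal{C}$-module comonad. Applying the evident dual of Proposition~\ref{prop:strong-module-monad-and-algebra-objects}, namely that the bicategorical Yoneda equivalence $\lMod{\mathcal{C}}(\mathcal{C},\mathcal{C}) \simeq \mathcal{C}^{\otimes \on{rev}}$ matches strong $\mathcal{C}$-module comonads on $\mathcal{C}$ with coalgebra objects in $\mathcal{C}$, we see that $C$ is isomorphic, as a strong module comonad, to $\blank \otimes K$ for the coalgebra object
\[
  K \;\defeq\; C(\mathbb{1}) \;=\; \cohom{X,X} \;\in\; \mathcal{C}.
\]

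Finally, I would apply the dual of Example~\ref{ex:EM-is-a-modules} to identify $\mathbf{EM}(\blank \otimes K)$ with the category $\on{comod}_{\mathcal{C}} K$ of right $K$-comodule objects in $\mathcal{C}$; combining with the equivalence above yields $\mathcal{M} \simeq \on{comod}_{\mathcal{C}} K$ as $\mathcal{C}$-module categories. I do not anticipate any genuine obstacle here: every step is either the dualization of an argument already given for Theorem~\ref{thm:main-result-rigid-projective-case} or a direct citation of an earlier result. The only point meriting care is verifying that the $\mathcal{C}$-module structure transported from $\mathcal{M}$ through Theorem~\ref{mainnonsenseinjective}, and then through the Yoneda identification $C \cong \blank \otimes K$, coincides with the standard $\mathcal{C}$-module structure on $\on{comod}_{\mathcal{C}} K$; this is guaranteed by the essential uniqueness of extended module structures given in Theorem~\ref{thm:one-module-structure-on-EM}, together with the fact that on the Kleisli subcategory of cofree comodules both structures restrict to $V \lact (W \otimes K) = (V \otimes W) \otimes K$.
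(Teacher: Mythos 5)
Your proposal is correct and follows exactly the route the paper intends: the paper states this theorem without proof as the evident dual of Theorem~\ref{thm:main-result-rigid-projective-case}, and your dualization (Theorem~\ref{mainnonsenseinjective} in place of Theorem~\ref{mainnonsenseprojective}, right rigidity plus the oplax half of Proposition~\ref{prop:lax-is-strong-when-rigid}, and the comonad/coalgebra versions of Proposition~\ref{prop:strong-module-monad-and-algebra-objects} and Example~\ref{ex:EM-is-a-modules}) is precisely the intended argument. Your closing remark on the compatibility of module structures via Theorem~\ref{thm:one-module-structure-on-EM} is a careful and welcome addition.
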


\subsection{The case of tensor categories and finite tensor categories}

\begin{theorem}[{\cite[Theorem~7.10.1]{EGNO}}]
  Let \(\mathcal{C}\) be a finite multitensor category, and let \(\mathcal{M}\) be an abelian \(\mathcal{C}\)-module category such that
  \begin{enumerate}
    \item \(\blank \triangleright \bblank\) is exact in the first variable (as \(\mathcal{C}\) is rigid, it is always exact in the second variable);
    \item it admits a \(\mathcal{C}\)-projective \(\mathcal{C}\)-generator.
  \end{enumerate}
  Then there is an algebra object \(A\) in \(\mathcal{C}\) such that \(\mathcal{M} \simeq \mathrm{mod}_{\mathcal{C}}A\).
\end{theorem}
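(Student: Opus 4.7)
My plan is to deduce this from Theorem \ref{thm:main-result-rigid-projective-case}. A finite multitensor category $\mathcal{C}$ is rigid monoidal abelian with enough projectives (in fact, finite abelian), so that theorem applies whenever $X$ is a \emph{closed} $\mathcal{C}$-projective $\mathcal{C}$-generator. The $\mathcal{C}$-projective $\mathcal{C}$-generator conditions are part of the hypothesis, so the only non-trivial verification is closedness of $X$: one must show that $\blank \lact X \from \mathcal{C} \to \mathcal{M}$ admits a right adjoint $\hom{X, \blank}$.

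To this end, I would first argue that $\mathcal{M}$ is itself finite abelian. The $\mathcal{C}$-projective $\mathcal{C}$-generator hypothesis forces every projective of $\mathcal{M}$ to be a summand of some $P \lact X$ with $P \in \proj{\mathcal{C}}$; in particular, if $P_{\mathcal{C}}$ is a projective generator of the finite abelian category $\mathcal{C}$, then $G \defeq P_{\mathcal{C}} \lact X$ is a projective generator of $\mathcal{M}$. Rigidity of $\mathcal{C}$ gives the natural isomorphism $\mathcal{M}(V \lact M, N) \cong \mathcal{M}(M, \rd{V} \lact N)$, under which $\mathrm{End}_{\mathcal{M}}(G) \cong \mathcal{M}(X, (\rd{P_{\mathcal{C}}} \otimes P_{\mathcal{C}}) \lact X)$; a bootstrapping argument comparing this with the finite-dimensional hom-spaces of $\mathcal{C}$ yields that $\mathrm{End}_{\mathcal{M}}(G)$ is a finite-dimensional $\Bbbk$-algebra, whence $\mathcal{M} \simeq \lmod{\mathrm{End}_{\mathcal{M}}(G)^{\op}}$ is finite abelian.

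With $\mathcal{M}$ now finite abelian and $\blank \lact X$ right exact by hypothesis (1), Proposition \ref{finiteadjoints} supplies the right adjoint $\hom{X, \blank}$, confirming that $X$ is closed. Theorem \ref{thm:main-result-rigid-projective-case} then delivers an algebra object $A \in \mathcal{C}$ together with an equivalence of $\mathcal{C}$-module categories $\modc{A} \simeq \mathcal{M}$, as required.

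The main obstacle is the finite-dimensionality of $\mathrm{End}_{\mathcal{M}}(G)$: rigidity alone leaves us with hom-spaces in $\mathcal{M}$ of the form $\mathcal{M}(X, V \lact X)$, which cannot be bounded a priori without the internal hom we are trying to construct. The cleanest way to circumvent this circularity is to extend $\blank \lact X$ to a cocontinuous functor $\mathbf{Ind}(\mathcal{C}) \to \mathbf{Ind}(\mathcal{M})$ and invoke Proposition \ref{saftrex} to obtain a right adjoint on the level of $\mathbf{Ind}$-completions, then use the $\mathcal{C}$-projectivity of $X$ together with Propositions \ref{Cprojectives} and \ref{prop:right exact-is-enough-on-projectives} to check that this right adjoint restricts to a functor $\mathcal{M} \to \mathcal{C}$.
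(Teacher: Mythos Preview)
The paper does not give its own proof of this statement; it is cited from EGNO as a known result which the paper's framework recovers. Within that framework, the intended deduction is exactly your plan: apply Theorem~\ref{thm:main-result-rigid-projective-case} once closedness of $X$ is checked. However, you are overcomplicating the closedness step. The original EGNO theorem, and the paper's reformulation of it in the introduction (Theorem~\ref{finiterecon}), take $\mathcal{M}$ to be a \emph{finite} abelian category; the absence of the word ``finite'' in the Section~5.2 restatement is an artifact of the reformulation, not a strengthening. With $\mathcal{M}$ finite abelian and $\blank \lact X$ right exact by hypothesis~(1), closedness of $X$ is immediate from Proposition~\ref{finiteadjoints} (the proposition the paper states right after Theorem~\ref{thm:main-result-rigid-injective-case} for precisely this purpose). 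There is no circularity and no need to pass to $\mathbf{Ind}$-completions.

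If one insists on reading the statement literally with $\mathcal{M}$ merely abelian, your proposed fix via $\mathbf{Ind}$-completions does not close the gap: Proposition~\ref{saftrex} requires both source and target to be locally finite abelian, and you have not established this for $\mathcal{M}$. Nor does your restriction argument go through, since Propositions~\ref{Cprojectives} and~\ref{prop:right exact-is-enough-on-projectives} control exactness but not preservation of compact objects. Without a finiteness hypothesis on $\mathcal{M}$, the statement is stronger than what either the paper's tools or EGNO's theorem actually deliver.
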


We generalize this statement to the locally finite setting:

\begin{theorem}
  Let \(\mathcal{C}\) be a multitensor category,
  and let \(\mathcal{M}\) be an abelian \(\mathcal{C}\)-module category
  such that the \(\mathbf{Ind}(\mathcal{C})\)-module category \(\mathbf{Ind}(\mathcal{M})\) admits a coclosed \(\mathbf{Ind}(\mathcal{C})\)-injective \(\mathbf{Ind}(\mathcal{C})\)-cogenerator $X$.
  Then there is a coalgebra object \(C\) in \(\mathbf{Ind}(\mathcal{C})\)
  such that \(\mathbf{Ind}(\mathcal{M}) \simeq \mathrm{Comod}_{\mathbf{Ind}(\mathcal{C})}C\).
  Thus, \(\mathcal{M}\) is the category of compact \(C\)-comodule objects.
\end{theorem}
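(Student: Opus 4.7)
The plan is to apply Theorem~\ref{mainnonsenseinjective} to the $\mathbf{Ind}(\mathcal{C})$-module category $\mathbf{Ind}(\mathcal{M})$, obtaining an equivalence $\mathbf{Ind}(\mathcal{M}) \simeq \mathbf{EM}(T)$ of $\mathbf{Ind}(\mathcal{C})$-module categories, where $T \defeq \cohom{X,\blank \triangleright X}$; the requisite enough-injectives hypotheses are supplied by Proposition~\ref{locoinjectives}. The remaining task is to upgrade $T$ from an oplax to a strong $\mathbf{Ind}(\mathcal{C})$-module comonad, whereupon the bicategorical Yoneda lemma, Proposition~\ref{prop:bicategorical-yoneda-correspondence-monads-and-algebras}, furnishes a coalgebra object $C \defeq T(\mathbb{1}) = \cohom{X,X} \in \mathbf{Ind}(\mathcal{C})$ with $T \cong \blank \otimes C$ as strong module comonads, giving $\mathbf{EM}(T) \simeq \mathrm{Comod}_{\mathbf{Ind}(\mathcal{C})}C$ by the comonadic analogue of Example~\ref{ex:EM-is-a-modules}.

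The main point is to show that the oplax coherence cell $T(V \triangleright N) \to V \otimes T(N)$ is invertible for every $V \in \mathbf{Ind}(\mathcal{C})$ and $N \in \mathbf{Ind}(\mathcal{M})$. For $V \in \mathcal{C}$ the rigidity of $\mathcal{C}$ provides a right dual $\rd{V}$, and chaining the adjunctions $V \triangleright \blank \dashv \rd{V} \triangleright \blank$ in $\mathbf{Ind}(\mathcal{M})$ with the defining adjunction $\cohom{X,\blank} \dashv \blank \triangleright X$ produces, via Yoneda, a natural isomorphism $\cohom{X,V \triangleright N \triangleright X} \cong V \otimes \cohom{X, N \triangleright X}$. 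A mate computation, using the description of the oplax $\mathbf{Ind}(\mathcal{C})$-module structure on $\cohom{X,\blank}$ from Porism~\ref{porism:doctrinal-module-adjunctions}, identifies this isomorphism with the coherence cell of $T$ at $(V,N)$, thereby establishing invertibility for compact $V$. For arbitrary $V \in \mathbf{Ind}(\mathcal{C})$, write $V \simeq \colim_{i} V_{i}$ as a filtered colimit of objects of $\mathcal{C}$: both sides of the coherence cell are cocontinuous in $V$, since the tensor product and action of $\mathbf{Ind}(\mathcal{C})$ are separately cocontinuous by Proposition~\ref{cocompletingnonsense} and $T$ is a left adjoint, so invertibility extends from compact $V$ to all of $\mathbf{Ind}(\mathcal{C})$.

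The final sentence of the theorem then follows because $\mathcal{M}$ is exactly the full subcategory of compact objects in $\mathbf{Ind}(\mathcal{M})$, while the compact objects of $\mathrm{Comod}_{\mathbf{Ind}(\mathcal{C})}C$ are precisely the $C$-comodules whose underlying $\mathbf{Ind}(\mathcal{C})$-object lies in $\mathcal{C}$; the constructed equivalence is cocontinuous in both directions between locally finitely presentable categories, so it preserves compact objects and restricts to $\mathcal{M} \simeq \mathrm{comod}_{\mathcal{C}}C$. The chief obstacle is the mate identification in the strongness argument: one must match the triple Hom-adjunction isomorphism with the oplax coherence cell of $T$ explicitly, and then check that the passage to filtered colimits respects this identification coherently rather than merely on underlying arrows.
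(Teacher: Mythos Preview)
Your proposal is correct and takes essentially the same approach as the paper. The only organizational difference is that the paper establishes strongness by first restricting $\cohom{X,\blank}$ to an oplax $\mathcal{C}$-module functor $\mathcal{M} \to \mathcal{C}$ (using Lemma~\ref{leftadjointssmallobjects} to see that compact objects are preserved) and then invoking Proposition~\ref{prop:lax-is-strong-when-rigid} as a black box, whereas you verify invertibility of $T$'s coherence cell directly for compact $V$ via the dual and extend by cocontinuity; both routes use the same underlying mechanism and your variant avoids having to check that $\cohom{X,\blank}$ lands in $\mathcal{C}$ on compacts.
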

\begin{proof}
  By Theorem~\ref{mainnonsenseinjective}, we have \( \mathbf{Ind}(\mathcal{M}) \simeq \mathbf{EM}(\cohom{X, \blank \lact X}) \)
  as $\mathbf{Ind}(\mathcal{C})$-module categories.
  Observe that the $\mathbf{Ind}(\mathcal{C})$-module category structure on both $\mathbf{Ind}(\mathcal{C})$ and on $\mathbf{Ind}(\mathcal{M})$ is the finitary extension of the respective $\mathcal{C}$-module category structures,
  following Proposition~\ref{cocompletingnonsense}.
  Similarly, $\blank \triangleright X\from \mathbf{Ind}(\mathcal{C}) \to \mathbf{Ind}(\mathcal{M})$ is the extension of $\blank \triangleright X\from \mathcal{C} \to \mathbf{Ind}(\mathcal{M})$.
  The left adjoint $\cohom{X,\blank}\from \mathbf{Ind}(\mathcal{M}) \to \mathbf{Ind}(\mathcal{C})$, being finitary and preserving compact objects by Lemma~\ref{leftadjointssmallobjects}, restricts to an oplax $\mathcal{C}$-module functor $\cohom{X,\blank}\from \mathcal{M} \to \mathcal{C}$.
  Since $\mathcal{C}$ is rigid, the restricted $\mathcal{C}$-module functor $\cohom{X,\blank}\from \mathcal{M} \to \mathcal{C}$ is in fact a strong $\mathcal{C}$-module functor.
  Thus, its finitary extension $\cohom{X,\blank}\from \mathbf{Ind}(\mathcal{M}) \to \mathbf{Ind}(\mathcal{C})$ is a strong $\mathbf{Ind}(\mathcal{C})$-module functor.
  Thus the comonad $\cohom{X, \blank \lact X}$ is a strong $\mathbf{Ind}(\mathcal{C})$-module monad, and, by Proposition~\ref{prop:strong-module-monad-and-algebra-objects}, it is of the form $\blank \otimes C$ for a coalgebra object in $\mathbf{Ind}(\mathcal{C})$.
  Following Example~\ref{ex:EM-is-a-modules}, one obtains $\mathbf{Ind}(\mathcal{M}) \simeq \mathbf{EM}(\cohom{X, \blank \lact X}) \simeq \mathrm{Comod}_{\mathbf{Ind}(\mathcal{C})}C$.

  From Proposition~\ref{locomonads}, we conclude that $\mathcal{M}$ consists of compact \(C\)-comodule objects of $\mathcal{C}$.
\end{proof}

\begin{corollary}\label{infinitecomodulecoalgebras}
  Let $H$ be a Hopf algebra, and let $\mathcal{C} = \tetramodfd[H]{}$, so that in particular we have a monoidal equivalence $\mathbf{Ind}(\mathcal{C}) \simeq \tetramod[H]{}$.
  Let \(\mathcal{M}\) be an abelian \(\mathcal{C}\)-module category such that the \(\mathbf{Ind}(\mathcal{C})\)-module category \(\mathbf{Ind}(\mathcal{M})\) admits a coclosed \(\mathbf{Ind}(\mathcal{C})\)-injective \(\mathbf{Ind}(\mathcal{C})\)-cogenerator.
  Then there is an $H$-comodule coalgebra $K$ such that there is an equivalence $\mathbf{Ind}(\mathcal{M}) \simeq \on{Comod}_{H}K$ of $\mathbf{Ind}(\mathcal{C})$-module categories, restricting to an equivalence $\mathcal{M} \simeq \on{comod}_{H}K$ of $\mathcal{C}$-module categories.
\end{corollary}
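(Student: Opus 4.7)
The plan is to realise this corollary as a direct specialisation of the preceding theorem to the particular multitensor category $\mathcal{C} = \tetramodfd[H]$. Since $H$ is a Hopf algebra, the category $\tetramodfd[H]$ of finite-dimensional left $H$-comodules is a tensor category (in particular a multitensor category, being rigid via the antipode), so its $\mathbf{Ind}$-completion is monoidally equivalent to $\tetramod[H]$ by the identification Lemma~\ref{lem:campus-bar} together with the separately finitary monoidal extension of Proposition~\ref{cocompletingnonsense}. Thus the hypotheses of the preceding theorem are satisfied verbatim.

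First, I would invoke the theorem just above to obtain a coalgebra object $C$ in $\mathbf{Ind}(\mathcal{C}) \simeq \tetramod[H]$ and a $\mathbf{Ind}(\mathcal{C})$-module equivalence $\mathbf{Ind}(\mathcal{M}) \simeq \mathrm{Comod}_{\mathbf{Ind}(\mathcal{C})}C$, whose compact part recovers $\mathcal{M}$. Second, I would unpack what ``coalgebra object in $\tetramod[H]$'' means: under the monoidal equivalence $\mathbf{Ind}(\mathcal{C}) \simeq \tetramod[H]$, a coalgebra object is precisely a $\Bbbk$-coalgebra $K$ equipped with a left $H$-comodule structure whose comultiplication and counit are $H$-comodule morphisms, i.e.\ an $H$-comodule coalgebra in the classical Hopf-algebraic sense. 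Third, I would translate the category $\mathrm{Comod}_{\mathbf{Ind}(\mathcal{C})}C$ into the notation $\on{Comod}_{H}K$: a $C$-comodule object in $\tetramod[H]$ is, by Definition~\ref{ModuleInModule} applied in $\cat{C}^{\op}$, a left $H$-comodule equipped with a left $K$-comodule structure for which the coaction is an $H$-comodule morphism, which is exactly the standard data of an object of $\on{Comod}_{H}K$.

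Fourth, to get the restriction to $\mathcal{M} \simeq \on{comod}_{H}K$, I would invoke Proposition~\ref{locomonads}: the underlying comonad $C \cotens \blank$ on the locally finitely presentable category $\tetramod[H]$ is left exact and finitary, so its Eilenberg--Moore category is again of the form $\mathbf{Ind}(\mathcal{E})$ with $\mathcal{E}$ the category of compact objects, and these compact objects are precisely those $K$-comodules whose underlying $H$-comodule is finite-dimensional; this is the category $\on{comod}_{H}K$. The conclusion then follows by identifying this with the compact part of $\mathbf{Ind}(\mathcal{M})$, which is $\mathcal{M}$ itself.

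I do not expect any serious obstacle here, as all the categorical content is done in the preceding theorem; the only point requiring care is the dictionary between coalgebra objects in $\tetramod[H]$ and $H$-comodule coalgebras, and between their comodule objects and the category $\on{Comod}_{H}K$. This is a routine translation analogous to the computation in the proof of Proposition~\ref{locomonads}, where a similar dictionary between bicomodule structures and coalgebra maps $C \to D$ is spelled out; no new categorical input beyond that is needed.
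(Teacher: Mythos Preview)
Your proposal is correct and matches the paper's intent: the corollary is stated without proof in the paper, precisely because it is the direct specialisation of the preceding theorem to $\mathcal{C} = \tetramodfd[H]$, together with the standard dictionary identifying coalgebra objects in $\tetramod[H]$ with $H$-comodule coalgebras and their internal comodules with $\on{Comod}_{H}K$. Your invocation of Proposition~\ref{locomonads} for the compact-object restriction is exactly the mechanism the paper has set up for this purpose.
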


\section{An Eilenberg--Watts theorem for lax module monads}\label{Takyon}

\begin{definition}
  Let \(\mathcal{A}\) be an abelian category and suppose that \(S_{1},S_{2}\) are right exact monads on \(\mathcal{A}\).
  Viewing \(S_{1},S_{2}\) as algebra objects in \(\mathbf{Rex}(\mathcal{A,A})\),
  an \emph{\(S_{1}\)-\(S_{2}\)-biact functor} is an \(S_{1}\)-\(S_{2}\)-bimodule object in \(\mathbf{Rex}(\mathcal{A,A})\).
\end{definition}

In other words, a biact functor \(F\from \mathcal{A} \to \mathcal{A}\) is a triple \((F,F_{\mathsf{la}},F_{\mathsf{ra}})\),
consisting of a right exact endofunctor on \(\mathcal{A}\) together with transformations
\(F_{\mathsf{la}}\from S_{1}\circ F \nt F\)
and
\(F_{\mathsf{ra}}\from F \circ S_{2} \nt F\), satisfying the natural unitality and associativity axioms, and commuting with each other in the sense that
\begin{equation}\label{eq:comm-biact}
  F_{\mathsf{la}} \circ (S_{1}\circ F_{\mathsf{ra}}) = F_{\mathsf{ra}} \circ (F_{\mathsf{la}} \circ S_{2}) \from S_1 \circ F \circ S_2 \nt F,
\end{equation}
where, for example, \(S_1 \circ F_{\mathsf{ra}}\from S_1 \circ F \circ S_2 \to S_1 \circ F\) denotes the \emph{whiskering} of \(S_1\) and \(F_{\mathsf{ra}}\).

The category of \(S_{1}\)-\(S_{2}\)-biact functors shall be denoted by \(\biact{S_1}{S_2}\).

\begin{remark}\label{factorthrough}
  For an $S_{1}$-$S_{2}$-biact functor $F$, the left $S_{1}$-action $F_{\mathsf{la}}$ endows any object of the form $F(A)$, for $A \in \mathcal{A}$, with the structure of an $S_{1}$-module, via the action$\nabla_{F(A)} \defeq {(F_{\mathsf{la}})}_{A}$.
  Any morphism $f\from A \to B$ lifts to a map \(Ff\from F(A) \to F(B)\) of \(S_1\)-modules
  by naturality of $F_{\mathsf{la}}$.
  Thus, the functor $F$ factors through $R_{\mathbf{EM}(S_{1})}$; we write $F = R_{\mathbf{EM}(S_{1})} \circ \overline{F}$.
\end{remark}

\begin{definition}
  Given an $S_{1}$-$S_{2}$-biact functor $F$, define the functor $\overline{F} \circ_{S_2} \blank\from \mathbf{EM}(S_{2}) \to \mathbf{EM}(S_{1})$ as
  \[
    \overline{F} \circ_{S_2} \blank \defeq \on{coeq}\Big(\!\!\!
    \tikzexternaldisable
    \begin{tikzcd}[ampersand replacement=\&]
      {FS_{2}(\blank)} \&\& {F(\blank)}
      \arrow["{F(\nabla_{\blank})}", shift left=2, from=1-1, to=1-3]
      \arrow["{{(F_{\mathrm{ra}})}_{\blank}}"', shift right=2, from=1-1, to=1-3]
    \end{tikzcd}
    \tikzexternalenable
    \!\!\!\Big),
  \]
  where, for $X \in \mathbf{EM}(S_{2})$ we endow $F(X)$ and $FS_{2}(X)$ with $S_{1}$-module structures described in Remark~\ref{factorthrough}.
  By naturality of $F_{\mathsf{la}}$ and the commutativity condition of Equation~\ref{eq:comm-biact},
  both morphisms in the above coequalizer are morphisms in $\mathbf{EM}(S_{1})$, and so, since the coequalizers in $\mathbf{EM}(S_{1})$ are created by $R_{\mathbf{EM}(S_{1})}$, this coequalizer is a coequalizer in $\mathbf{EM}(S_{1})$.
  Functoriality follows from functoriality of colimits.
\end{definition}

\begin{proposition}\label{EilenbergWatts}
  Let $S_{1}$ and \(S_{2}\) be right exact monads on an abelian category $\mathcal{A}$.
  The following assignments extend to an equivalence of categories:
  \begin{align*}
    \mathbf{Rex}(\mathbf{EM}(S_{2}),\mathbf{EM}(S_{1})) &\longleftrightarrow \biact{S_{1}}{S_{2}} \\
    \Phi &\longmapsto \big(
        R_{\mathbf{EM}(S_{1})} \Phi L_{\mathbf{EM}(S_{2})},\ %
        R_{\mathbf{EM}(S_1)} \varepsilon_{\mathbf{EM}(S_{1})} \Phi L_{\mathbf{EM}(S_2)},\ %
        R_{\mathbf{EM}(S_{1})} \Phi \varepsilon_{\mathbf{EM}(S_{2})} L_{\mathbf{EM}(S_{2})}
    \big)\\
    \overline{F} \circ_{S_2} \blank &\longmapsfrom F,
  \end{align*}
  where \(\varepsilon_{\mathbf{EM}(S_1)}\) and \(\varepsilon_{\mathbf{EM}(S_2)}\) are the counit of the respective Eilenberg–Moore adjunctions.
\end{proposition}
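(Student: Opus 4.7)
My plan is to mirror the classical Eilenberg--Watts argument, replacing tensor products of bimodules by the coend-like coequalizer $\overline{F}\circ_{S_2}\blank$, and then to check that both compositions of the two assignments are naturally isomorphic to the identity. Throughout, the key structural facts I shall exploit are that $L_{\mathbf{EM}(S_i)}$ is cocontinuous (left adjoint) and that, by Proposition~\ref{prop:abelian-EM-cat}, the forgetful functor $R_{\mathbf{EM}(S_i)}$ is right exact whenever $S_i$ is, so that $R_{\mathbf{EM}(S_1)}\,\Phi\,L_{\mathbf{EM}(S_2)}$ is right exact for any right exact $\Phi$.

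First I would check that both assignments land in the asserted categories. For the forward map, the composite $R\Phi L$ carries a canonical $S_1$-action on the left (from the counit of $L_{\mathbf{EM}(S_1)}\dashv R_{\mathbf{EM}(S_1)}$ evaluated at $\Phi L$) and an $S_2$-action on the right (from the counit of $L_{\mathbf{EM}(S_2)}\dashv R_{\mathbf{EM}(S_2)}$ evaluated at $L$), and these commute by a routine middle-interchange argument using that $\Phi$ is merely a plain functor. For the backward map, the coequalizer definition combined with right exactness of $F$ and $S_2$ shows that $\overline{F}\circ_{S_2}\blank$ is right exact: reflexive coequalizers of right exact functors stay right exact, and any colimit in $\mathbf{EM}(S_2)$ is built from a reflexive coequalizer of free modules together with a colimit in $\mathcal{A}$.

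Next I would prove the two round-trips are naturally isomorphic to the identity. Starting from a biact functor $F$, one computes $R_{\mathbf{EM}(S_1)}(\overline{F}\circ_{S_2}\blank)L_{\mathbf{EM}(S_2)}(A)$ as the coequalizer in $\mathcal{A}$ of
\[
  F(S_2 S_2 A) \rightrightarrows F(S_2 A),
\]
with maps $F\mu_A$ and $(F_{\mathsf{ra}})_{S_2 A}$; this pair is split by $F(\eta_{S_2 A})$ and so its coequalizer is $F_{\mathsf{ra},A}\colon F(S_2 A)\to F(A)$, recovering $F(A)$ canonically, and one checks this isomorphism intertwines the biact structures. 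In the other direction, starting from $\Phi\in\mathbf{Rex}(\mathbf{EM}(S_2),\mathbf{EM}(S_1))$, every $M\in\mathbf{EM}(S_2)$ admits the canonical reflexive (in fact $U$-split) presentation
\[
  L_{\mathbf{EM}(S_2)}R_{\mathbf{EM}(S_2)}L_{\mathbf{EM}(S_2)}R_{\mathbf{EM}(S_2)}M \rightrightarrows L_{\mathbf{EM}(S_2)}R_{\mathbf{EM}(S_2)}M \to M.
\]
Applying the right exact functor $\Phi$ preserves this coequalizer, and comparing with the defining coequalizer of $(R_{\mathbf{EM}(S_1)}\Phi L_{\mathbf{EM}(S_2)})\circ_{S_2}M$ in $\mathbf{EM}(S_1)$ produces the desired natural isomorphism $\Phi\xiso\overline{(R_{\mathbf{EM}(S_1)}\Phi L_{\mathbf{EM}(S_2)})}\circ_{S_2}\blank$.

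The main obstacle I anticipate is bookkeeping rather than conceptual: checking that both the right action by $S_2$ and the left action by $S_1$ transport correctly through all of these coequalizers, and in particular that the middle interchange~\eqref{eq:comm-biact} matches exactly with the mate of $\Phi$-naturality under the two Eilenberg--Moore adjunctions. I would handle this by writing a single $2$-cell diagram relating $R_{\mathbf{EM}(S_1)}\,\varepsilon_{\mathbf{EM}(S_1)}\Phi L_{\mathbf{EM}(S_2)}$ to $R_{\mathbf{EM}(S_1)}\Phi\,\varepsilon_{\mathbf{EM}(S_2)}L_{\mathbf{EM}(S_2)}$, arguing via the triangle identities; everything else is then a direct consequence of the universal property of the coequalizer and the standard fact that $U$-split coequalizers are preserved by any functor. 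Naturality and functoriality in $\Phi$ respectively $F$ follow formally from functoriality of colimits and of whiskering, and the two round-trip isomorphisms are themselves natural by the same formal argument.
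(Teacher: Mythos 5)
Your proposal is correct and follows essentially the same route as the paper: both directions are checked by the same two coequalizer computations, namely the split coequalizer $F(S_2^2 A)\rightrightarrows F(S_2A)\to F(A)$ recovering $F$ via $F_{\mathsf{ra}}$, and the canonical presentation of a module as a reflexive coequalizer of free modules, preserved by the right exact $\Phi$, recovering $\Phi$ up to natural isomorphism. The remaining functoriality and compatibility checks you defer to the universal property of coequalizers are exactly the ones the paper carries out.
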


\begin{proof}
  Since $L_{\mathbf{EM}(S_{2})}$ is right exact and $R_{\mathbf{EM}(S_{1})}$ is exact, the composite $R_{\mathbf{EM}(S_{1})}\circ \Phi \circ L_{\mathbf{EM}(S_{2})}$ is right exact. It is easy to verify that the transformations $R_{s} \circ \varepsilon_{\mathbf{EM}(S_{1})} \circ \Phi L_{S_{2}}$ and $R_{\mathbf{EM}(S_{1})}\Phi \circ \varepsilon_{\mathbf{EM}(S_{2})} \circ L_{\mathbf{EM}(S_{2})}$ endow $R_{\mathbf{EM}(S_{1})}\circ \Phi \circ L_{\mathbf{EM}(S_{2})}$ with the structure of a biact functor, and that for a natural transformation $t\from \Phi \nt \Phi'$, the resulting transformation $R_{\mathbf{EM}(S_{1})} \circ t \circ L_{\mathbf{EM}(S_{2})}$ is a morphism of biact functors.
  This proves the functoriality of the assignment $\mathbf{Rex}(\mathbf{EM}(S_{2}),\mathbf{EM}(S_{1})) \to \biact{S_{1}}{S_{2}}$.
  To see that the converse assignment extends to a functor, observe that given a morphism $f\from F \nt G$ of biact functors,
  we obtain a morphism of forks in $\mathbf{EM}(S_{1})$:
\[\begin{mytikzcd}[ampersand replacement=\&]
	{FS_{2}(X)} \&\& {F(X)} \\
	\\
	{GS_{2}(X)} \&\& {G(X)}
	\arrow["{F(\nabla_{X})}", shift left=2, from=1-1, to=1-3]
	\arrow["{{(F_{\mathrm{ra}})}_{X}}"', shift right=2, from=1-1, to=1-3]
	\arrow["{f_{S_{2}(X)}}"', from=1-1, to=3-1]
	\arrow["{f_{X}}", from=1-3, to=3-3]
	\arrow["{G(\nabla_{X})}", shift left=2, from=3-1, to=3-3]
	\arrow["{{(G_{\mathrm{ra}})}_{X}}"', shift right=2, from=3-1, to=3-3]
\end{mytikzcd}\]
This induces a morphism of coequalizers $\overline{F} \circ_{S_2} \blank \to \overline{G} \circ_{S_2} \blank$.

We now prove that these functors define mutually quasi-inverse equivalences. First, observe that
\(\overline{R_{\mathbf{EM}(S_{1})}\circ \Phi \circ L_{\mathbf{EM}(S_{2})}} = \Phi \circ L_{\mathbf{EM}(S_{2})} \from \cat{C} \to \mathbf{EM}(S_1)\),
since \(R_{\mathbf{EM}(S_{1})}\) is faithful and injective on objects, so it is a monomorphism in the \(1\)-category \(\mathbf{Cat}_{\Bbbk}\).
For \(X \in \mathbf{EM}(S_2)\), we have
\[
  (\Phi \circ L_{\mathbf{EM}(S_{2})}) \circ_{S_2} X = \on{coeq}\Big(
  \tikzexternaldisable
  \begin{tikzcd}[ampersand replacement=\&]
    {\Phi S_{2}^{2}(X)} \&\& {\Phi S_{2}(X)}
    \arrow["{\Phi\mu^{S_{2}}_{X}}", shift left=2, from=1-1, to=1-3]
    \arrow["{\Phi S_{2}(\nabla_{X})}"', shift right=2, from=1-1, to=1-3]
  \end{tikzcd}
  \tikzexternalenable
  \Big),
\]
which, since $\Phi$ is right exact, is isomorphic to
\[
  \Phi\Big(\on{coeq}\Big(
  \tikzexternaldisable
  \begin{tikzcd}[ampersand replacement=\&]
    {S_{2}^{2}(X)} \&\& {S_{2}(X)}
    \arrow["{\mu^{S_{2}}_{X}}", shift left=2, from=1-1, to=1-3]
    \arrow["{S_{2}(\nabla_{X})}"', shift right=2, from=1-1, to=1-3]
  \end{tikzcd}
  \tikzexternalenable
  \Big)\Big),
\]
and, via the action \(\nabla_X\from T(X) \to X\), the latter coequalizer is canonically isomorphic to $X$.
Thus, we obtain the following isomorphism, which is clearly natural in \(\Phi\):
\[
  (\Phi \circ L_{\mathbf{EM}(S_{2})}) \circ_{S_2} X \xrightarrow[\sim]{\Phi(\widetilde{\nabla}_{X})} \Phi(X).
\]

In the other direction, let $F$ be a biact funtcor and $X \in \mathcal{A}$.
We have
\[
  \Big(R_{\mathbf{EM}(S_{1})} \circ (\overline{F} \circ_{S_2} \blank) \circ L_{\mathbf{EM}(S_{2})}\Big)(X) =
  \on{coeq}\Big(
  \tikzexternaldisable
  \begin{tikzcd}[ampersand replacement=\&]
    {FS_{2}^{2}(X)} \&\& {FS_{2}(X)}
    \arrow["{{(F_{\mathsf{ra}})}_{S_{2}(X)}}", shift left=2, from=1-1, to=1-3]
    \arrow["{F(\nabla_{S_{2}(X)})=F(\mu_{X})}"', shift right=2, from=1-1, to=1-3]
  \end{tikzcd}
  \tikzexternalenable
  \Big)
\]
Observe that $F$, being a right $S_{2}$-module object, is presented as a coequalizer in the category of such objects, which additionally splits in the underlying monoidal category $\mathbf{Rex}(\mathcal{A,A})$, as indicated in the following diagram:
\[\begin{mytikzcd}[ampersand replacement=\&]
	{FS_{2}^{2}} \&\& {FS_{2}} \& {\on{coeq}(F_{\mathsf{ra}}\circ S_{2}, F\circ \mu_{X})} \\
	\&\&\& F
	\arrow["{F_{\mathsf{ra}}\circ S_{2}}", shift left=2, from=1-1, to=1-3]
	\arrow["{F\circ \mu_{X}}"', shift right=2, from=1-1, to=1-3]
	\arrow[two heads, from=1-3, to=1-4]
	\arrow["{F_{\mathsf{ra}}}"{description}, from=1-3, to=2-4]
	\arrow["{\exists! \widetilde{F_{\mathsf{ra}}}}", dashed, from=1-4, to=2-4]
	\arrow["\simeq"', dashed, from=1-4, to=2-4]
	\arrow["{F\circ \eta^{S_{2}}}", curve={height=-12pt}, dotted, from=2-4, to=1-3]
\end{mytikzcd}\]
Since $F_{\mathsf{ra}}$ is a morphism of left $S_{1}$-module objects, we find that $\widetilde{F}_{\mathsf{ra}}$ defines an isomorphism of biact functors $R_{\mathbf{EM}(S_{1})} \circ (\overline{F} \circ_{S_2} \blank) \circ L_{\mathbf{EM}(S_{2})} \cong F$ that is natural in $F$.
\end{proof}

Similarly to Subsection~\ref{LintinTime}, let us now assume $\mathcal{C}$ and $\mathcal{M}$ to be abelian, $\blank \triangleright \bblank$ to be right exact in both variables and $S,T$ to be right exact lax $\mathcal{C}$-module monads on $\mathcal{M}$.

\begin{definition}
  Viewing $S$ and \(T\) as algebra objects in the category $\mathbf{RexLax}\mathcal{C}\text{-}\mathbf{Mod}(\mathcal{M,M})$ of right exact lax $\mathcal{C}$-module endofunctors of $\mathcal{M}$, we refer to $S$-$T$-bimodule objects in that category as \emph{lax $\mathcal{C}$-module $S$-$T$-biact functors}.
  In other words, a lax $\mathcal{C}$-module biact functor is a triple $(F,F_{\mathsf{la}},F_{\mathsf{ra}})$, consisting of a right exact lax $\mathcal{C}$-module endofunctor $F$ of $\mathcal{M}$ together with $\mathcal{C}$-module transformations $F_{\mathsf{la}}\from S\circ F \nt F$ and $F_{\mathsf{ra}}\from F \circ T \nt F$,
  satisfying the natural unitality and associativity axioms,
  and commuting with each other in the sense that
  $F_{\mathsf{la}} \circ (S \circ F_{\mathsf{ra}}) = F_{\mathsf{ra}} \circ (F_{\mathsf{la}} \circ T)$.
  We denote the category of $S$-$T$-biact functors by $\biactC[\cat{C}]{S}{T}$.
\end{definition}

\begin{theorem}\label{ultraEilenbergWatts}
  The equivalence \(\mathbf{Rex}(\mathbf{EM}(T),\mathbf{EM}(S)) \simeq \biact{S}{T}\)
  of Proposition~\ref{EilenbergWatts} restricts to a faithful functor
  \[
    \mathcal{C}\mathsf{EW}\from \mathbf{LaxRex}(\mathbf{EM}(T), \mathbf{EM}(S)) \to \biactC[\cat{C}]{S}{T},
  \]
  such that the following diagram commutes:
  \begin{equation}\label{mikey}
    \tikzexternaldisable
    \begin{mytikzcd}[ampersand replacement=\&]
      {\mathbf{LaxRex}(\mathbf{EM}(T), \mathbf{EM}(S))} \&\& {\biactC[\cat{C}]{S}{T}} \\
      {\mathbf{Rex}(\mathbf{EM}(T), \mathbf{EM}(S))} \&\& {\biact{S}{T}}
      \arrow["\mathcal{C}\mathsf{EW}", from=1-1, to=1-3]
      \arrow[from=1-1, to=2-1]
      \arrow[from=1-3, to=2-3]
      \arrow["\simeq \text{ by Proposition~\ref{EilenbergWatts}}", from=2-1, to=2-3]
    \end{mytikzcd}
    \tikzexternalenable
  \end{equation}
  The vertical arrows are the respective forgetful functors, and
  the categories $\mathbf{EM}(T)$ and $\mathbf{EM}(S)$ are endowed with the $\mathcal{C}$-module structures of Theorem~\ref{extendingalways}.
\end{theorem}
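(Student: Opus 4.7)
The plan is to define \(\mathcal{C}\mathsf{EW}\) by equipping the image of the bottom Eilenberg--Watts equivalence of Diagram~\eqref{mikey} with a canonical lax \(\mathcal{C}\)-module structure, and then to reduce faithfulness to faithfulness of the forgetful functor on the left.

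The starting point is that in the extended \(\mathcal{C}\)-module structure on \(\mathbf{EM}(T)\) coming from Theorem~\ref{extendingalways}, the free functor factors as \(L_{\mathbf{EM}(T)} = \iota \circ L_{\mathbf{Kl}(T)}\), where \(L_{\mathbf{Kl}(T)}\) is strict by Corollary~\ref{laxKlCorrespondence} and \(\iota\) is strong by Theorem~\ref{strongembeddingalways}; so \(L_{\mathbf{EM}(T)}\) is a strong \(\mathcal{C}\)-module functor. Applying Proposition~\ref{prop:dual-module-adjunctions} to the adjunction \(L_{\mathbf{EM}(T)} \dashv R_{\mathbf{EM}(T)}\) lifts it to a lax \(\mathcal{C}\)-module adjunction, canonically endowing \(R_{\mathbf{EM}(T)}\) with a lax structure and making both the unit and counit \(\mathcal{C}\)-module transformations. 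The same holds for \(L_{\mathbf{EM}(S)} \dashv R_{\mathbf{EM}(S)}\).

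Given \(\Phi \in \mathbf{LaxRex}(\mathbf{EM}(T), \mathbf{EM}(S))\), the underlying biact functor \(R_{\mathbf{EM}(S)}\,\Phi\,L_{\mathbf{EM}(T)}\) (the image under the bottom equivalence) acquires a lax \(\mathcal{C}\)-module structure by composing the strong structure of \(L_{\mathbf{EM}(T)}\), the lax structure of \(\Phi\), and the lax structure of \(R_{\mathbf{EM}(S)}\). The biact action maps \(F_{\mathsf{la}} = R_{\mathbf{EM}(S)}\,\varepsilon_{\mathbf{EM}(S)}\,\Phi\,L_{\mathbf{EM}(T)}\) and \(F_{\mathsf{ra}} = R_{\mathbf{EM}(S)}\,\Phi\,\varepsilon_{\mathbf{EM}(T)}\,L_{\mathbf{EM}(T)}\) are then obtained by whiskering the \(\mathcal{C}\)-module transformations \(\varepsilon_{\mathbf{EM}(S)}\) and \(\varepsilon_{\mathbf{EM}(T)}\) with module functors, hence are themselves \(\mathcal{C}\)-module transformations. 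This promotes \(R_{\mathbf{EM}(S)}\,\Phi\,L_{\mathbf{EM}(T)}\) to an object of \(\biactC[\cat{C}]{S}{T}\), defining \(\mathcal{C}\mathsf{EW}\) on objects; on a morphism \(\alpha\from \Phi \nt \Phi'\), set \(\mathcal{C}\mathsf{EW}(\alpha) \defeq R_{\mathbf{EM}(S)}\,\alpha\,L_{\mathbf{EM}(T)}\), which is again a \(\mathcal{C}\)-module transformation by the same whiskering argument, and whose compatibility with \(F_{\mathsf{la}}\) and \(F_{\mathsf{ra}}\) is immediate from the interchange law.

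Functoriality of \(\mathcal{C}\mathsf{EW}\) and commutativity of Diagram~\eqref{mikey} are then tautological from the construction. For faithfulness, note that the left vertical forgetful functor is faithful because a lax \(\mathcal{C}\)-module transformation is determined by its underlying natural transformation, and the bottom horizontal arrow is an equivalence by Proposition~\ref{EilenbergWatts}; combining these with the commutativity of the diagram forces \(\mathcal{C}\mathsf{EW}\) to be faithful. The main technical point is the invocation of Proposition~\ref{prop:dual-module-adjunctions} to transport the strong \(\mathcal{C}\)-module structure on \(L_{\mathbf{EM}(T)}\)---which itself relied on the Linton-coequalizer structure of Theorem~\ref{extendingalways} and the fact that \(\iota\) is strong---into a lax structure on \(R_{\mathbf{EM}(T)}\) for which the counit is a module transformation; once this is in place, the rest is a mechanical composition of module transformations.
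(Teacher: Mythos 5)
Your proposal is correct and follows essentially the same route as the paper: both establish that \(L_{\mathbf{EM}(T)}\) is a strong \(\mathcal{C}\)-module functor via Theorem~\ref{strongembeddingalways}, transport this across the adjunction (the paper cites Porism~\ref{porism:doctrinal-module-adjunctions}, you cite the equivalent Proposition~\ref{prop:dual-module-adjunctions}) to make \(R_{\mathbf{EM}(S)}\) lax, compose to equip \(R_{\mathbf{EM}(S)}\,\Phi\,L_{\mathbf{EM}(T)}\) with its lax \(\mathcal{C}\)-module biact structure, and deduce faithfulness from the faithfulness of the other three functors in the square. Your whiskering argument for why \(F_{\mathsf{la}}\) and \(F_{\mathsf{ra}}\) are \(\mathcal{C}\)-module transformations just spells out what the paper dismisses as ``clearly assembles.''
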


\begin{proof}
  By Theorem~\ref{strongembeddingalways}, similarly to~\cite[Proposition~3.10]{aguiar18:monad}, $L_{\mathbf{EM}(T)}\from \mathcal{M} \to \mathbf{EM}(T)$ is a strong $\mathcal{C}$-module functor, and so is $L_{\mathbf{EM}(S)}$.
  Thus, by Porism~\ref{porism:doctrinal-module-adjunctions}, the functor $R_{\mathbf{EM}(S)}$ is a lax $\mathcal{C}$-module functor,
  and hence for a lax $\mathcal{C}$-module functor $\Phi\from \mathbf{EM}(T) \to \mathbf{EM}(S)$, the composite $R_{\mathbf{EM}(S)} \circ \Phi \circ L_{\mathbf{EM}(T)}$ is a lax $\mathcal{C}$-module functor.
  Further, the $S$-$T$-biact structure on $R_{\mathbf{EM}(S)} \circ \Phi \circ L_{\mathbf{EM}(T)}$ given in Proposition~\ref{EilenbergWatts} clearly assembles to a lax $\mathcal{C}$-module biact functor, and this extends also to $\mathcal{C}$-module biact transformations arising from $\mathcal{C}$-module transformations.

 This defines a functor $\mathcal{C}\mathsf{EW}$, such that Diagram~\eqref{mikey} commutes. It is faithful, since so are the remaining functors in that diagram.
\end{proof}

\begin{lemma}\label{phigter}
  Let $\Phi \in \mathbf{Rex}(\mathbf{EM}(T),\mathbf{EM}(S))$ and for all \(X \in \cat{M}\) suppose that
 \[
   {({(\Phi T)}_{\mathsf{a}})}_{X}\from V \triangleright_{\mathcal{M}} \Phi T(X) \to \Phi T(V \triangleright_{\mathcal{M}} X)
 \]
 is a lax $\mathcal{C}$-module biact functor structure on the biact functor $R_{\mathbf{EM}(S)} \circ \Phi \circ L_{\mathbf{EM}(T)}$.
 Then we have ${(\Phi T)}_{\mathsf{a}} = \mathcal{C}\mathsf{EW}({(\Phi_{\mathsf{a}})}_{T(\blank)})$, where ${(\Phi_{\mathsf{a}})}_{T(\blank)}\from V \triangleright_{\mathbf{EM}(S)} \Phi(T(\blank)) \to \Phi(V \triangleright_{\mathbf{EM}(T)} T(\blank)) = \Phi(T(V\triangleright_{\mathcal{M}}X))$ is the unique transformation making the diagram
\begin{equation}\label{phiaeio}
 \begin{mytikzcd}[ampersand replacement=\&]
	{V \lact_{\mathbf{EM}(S)} \Phi T-} \\
	{S(V\triangleright_{\mathcal{M}}\Phi T-)} \& {S\Phi T V\triangleright_{\mathcal{M}} -} \& {\Phi T V\triangleright_{\mathcal{M}} -}
	\arrow["{\exists ! \Phi_{\mathsf{a}}}", from=1-1, to=2-3]
	\arrow[two heads, from=2-1, to=1-1]
	\arrow["{S{(\Phi T)}_{\mathsf{a}}}"', from=2-1, to=2-2]
	\arrow["{\Phi_{\mathsf{la}} TV\triangleright-}"', from=2-2, to=2-3]
\end{mytikzcd}
\end{equation}
commute.
\end{lemma}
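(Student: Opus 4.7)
The proof splits into verifying that $\Phi_{\mathsf{a}}$ is well-defined by Diagram~\eqref{phiaeio}, and then unfolding the $\mathcal{C}\mathsf{EW}$ construction to identify it with ${(\Phi T)}_{\mathsf{a}}$. Uniqueness of $\Phi_{\mathsf{a}}$ is immediate: the arrow $S(V \triangleright_{\mathcal{M}} \Phi T-) \twoheadrightarrow V \lact_{\mathbf{EM}(S)} \Phi T-$ is by construction the coequalizer quotient, and hence epic, so at most one candidate can satisfy the defining relation. For existence, I would check that the composite $\Phi_{\mathsf{la}} T(V \lact -) \circ S{(\Phi T)}_{\mathsf{a}}$ coequalizes the Linton pair defining $V \lact_{\mathbf{EM}(S)} \Phi T(X)$, which in the present situation consists of $S(V \lact \Phi_{\mathsf{la};TX})$---since the $S$-action on $\Phi T(X) \in \mathbf{EM}(S)$ is $\Phi_{\mathsf{la};TX}$---and $\mu_{V \lact \Phi T X} \circ S(S_{\mathsf{a}; V, \Phi TX})$. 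A direct diagram chase reduces the required coequalization to a combination of the naturality of $\mu$, the $\mathcal{C}$-module transformation axiom for $\Phi_{\mathsf{la}}\from S \circ \Phi T \nt \Phi T$, and the associativity of $\Phi_{\mathsf{la}}$ as an $S$-action; all three are encoded by the hypothesis that ${(\Phi T)}_{\mathsf{a}}$ assembles into a lax $\mathcal{C}$-module biact functor structure on $R_{\mathbf{EM}(S)} \Phi L_{\mathbf{EM}(T)}$.

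For the second part, I would invoke Porism~\ref{porism:doctrinal-module-adjunctions} together with the strongness of $L_{\mathbf{EM}(S)}$ and $L_{\mathbf{EM}(T)}$ supplied by Theorem~\ref{strongembeddingalways}, in order to identify the induced lax $\mathcal{C}$-module structure on $R_{\mathbf{EM}(S)}$ at an object $Y \in \mathbf{EM}(S)$ with $q \circ \eta_{V \lact R Y}$, where $\eta$ denotes the unit of the adjunction $L_{\mathbf{EM}(S)} \dashv R_{\mathbf{EM}(S)}$ (equivalently, the unit of the monad $S$) and $q$ is the quotient map of the Linton coequalizer. Specialising to $Y = \Phi T(X)$ and composing with $R_{\mathbf{EM}(S)} \Phi_{\mathsf{a}; V, TX}$ writes the $\mathcal{C}\mathsf{EW}$-image on $R \Phi L$ in the form $R \Phi_{\mathsf{a}; V, TX} \circ q \circ \eta_{V \lact \Phi T X}$. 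The defining identity of Diagram~\eqref{phiaeio} replaces the first two factors by $\Phi_{\mathsf{la}} T(V \lact -) \circ S{(\Phi T)}_{\mathsf{a}}$; naturality of $\eta$ then slides it past $S{(\Phi T)}_{\mathsf{a}}$, producing $\Phi_{\mathsf{la}} T(V \lact -) \circ \eta_{\Phi T(V \lact X)} \circ {(\Phi T)}_{\mathsf{a}}$. The unit axiom for the $S$-action $\Phi_{\mathsf{la}}$ collapses the first two arrows to the identity, leaving precisely ${(\Phi T)}_{\mathsf{a}}$.

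The main technical hurdle is the existence half of the first step, where the biact functor axioms must be carefully combined with the explicit description of the Linton pair---essentially a transcription of the argument used in Lemma~\ref{isoassociativity} and Section~\ref{multicategorical}, adapted to the biact context. The second step is then a formal manipulation of unit--counit identities and is essentially bookkeeping once the defining property of $\Phi_{\mathsf{a}}$ is established.
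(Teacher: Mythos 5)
Your proposal is correct and follows essentially the same route as the paper's proof: well-definedness of $\Phi_{\mathsf{a}}$ is established by checking that $\Phi_{\mathsf{la}}T(V\triangleright\blank)\circ S{(\Phi T)}_{\mathsf{a}}$ coequalizes the Linton pair using exactly the biact axioms you name, and the identification $\mathcal{C}\mathsf{EW}({(\Phi_{\mathsf{a}})}_{T(\blank)}) = {(\Phi T)}_{\mathsf{a}}$ is obtained by unfolding the doctrinal lax structure on $R_{\mathbf{EM}(S)}$ as $q\circ\eta$ and then applying naturality of $\eta$ and unitality of the $S$-action $\Phi_{\mathsf{la}}$. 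The only difference is cosmetic: you make the uniqueness observation and the naturality-of-$\eta$ step explicit where the paper leaves them implicit.
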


\begin{proof}
  First, in order to verify that \(\Phi_{\mathsf{a}}\) is well-defined,
  we observe that \((\Phi_{\mathsf{la}}TV\triangleright_{\mathcal{M}}\blank)\circ S{(\Phi T)}_{\mathsf{a}}\) coequalizes \(S({(S_{\mathsf{a}})}_{V})\Phi T\) and \(SV\triangleright_{\mathcal{M}} \Phi_{\mathsf{la}} T\) by Figure~\ref{fig:eilenberg-watts-coequaliser-1},
  where \((1)\) follows by \({(\Phi T)}_{\mathsf{la}}\) being a left \(S\)-act structure,
  and \((2)\) by it being a \(\cat{C}\)-module transformation.
  \begin{figure}[htbp]
    \tikzsetnextfilename{fig-eilenberg-watts-coequaliser-1}

    \caption{}%
    \label{fig:eilenberg-watts-coequaliser-1}
  \end{figure}

  By the definition of the functor \(\mathcal{C}\mathsf{EW}\), we have \(\mathcal{C}\mathsf{EW}({(\Phi_{\mathsf{a}})}_{T(\blank)}) = {(\Phi_{\mathsf{a}})}_{T(\blank)} \circ {({(R_{\mathbf{EM}(S)})}_{\mathsf{a}})}_{V,\Phi T(\blank)}\),
  where \({(R_{\mathbf{EM}(S)})}_{\mathsf{a}}\) is the lax \(\mathcal{C}\)-module functor structure on \(R_{\mathbf{EM}(S)}\), coming from Porism~\ref{porism:doctrinal-module-adjunctions}.
  More generally, the map \({({(R_{\mathbf{EM}(S)})}_{\mathsf{a}})}_{V,X}\) is given by the composite
  \[
    V \triangleright_{\mathcal{M}} X \xrightarrow{\eta_{V \triangleright_{\mathcal{M}} X}} S(V \triangleright_{\mathcal{M}} X) \twoheadrightarrow V \triangleright_{\mathbf{EM}(S)} X,
  \]
  with the latter map being the projection onto the coequalizer:
  \[
    \begin{mytikzcd}[ampersand replacement=\&]
      {S(V \lact X)} \& {\mathrm{coeq}\Big( T(V \lact T^2(X))} \& {T(V \lact  T(X))\Big)} \\
      {V \lact X} \& {\mathrm{coeq}\Big( T(V \lact T^2(X))} \& {T(V) \lact  T(X)\Big)}
      \arrow[Rightarrow, no head, from=1-1, to=1-2]
      \arrow[two heads, from=1-1, to=2-1]
      \arrow[shift left=2, from=1-2, to=1-3]
      \arrow[shift right=2, from=1-2, to=1-3]
      \arrow["{T(V \lact \nabla_X)}", from=1-2, to=2-2]
      \arrow["{T(V  \lact \nabla_X)}", from=1-3, to=2-3]
      \arrow[Rightarrow, no head, from=2-1, to=2-2]
      \arrow[shift left=2, from=2-2, to=2-3]
      \arrow[shift right=2, from=2-2, to=2-3]
    \end{mytikzcd}
  \]
  Thus, we find that \(\mathcal{C}\mathsf{EW}({(\Phi_{\mathsf{a}})}_{T(\blank)})\) is given by the outer path of the commutative diagram
  \[\begin{mytikzcd}[ampersand replacement=\&]
      \& {V \triangleright_{\mathbf{EM}(S)} \Phi T\blank} \\
      {S(V\triangleright_{\mathcal{M}}\Phi T\blank)} \& {S(V\triangleright_{\mathcal{M}}\Phi T\blank)} \& {S\Phi T V\triangleright_{\mathcal{M}} \blank} \& {\Phi T V\triangleright_{\mathcal{M}} \blank}
      \arrow["{\exists! \Phi_{\mathsf{a}}}", from=1-2, to=2-4]
      \arrow["{\eta_{V\triangleright_{\mathcal{M}}\Phi T\blank}}", from=2-1, to=2-2]
      \arrow[two heads, from=2-2, to=1-2]
      \arrow["{S{(\Phi T)}_{\mathsf{a}}}"', from=2-2, to=2-3]
      \arrow["{\Phi_{\mathsf{la}} TV\triangleright\blank}"', from=2-3, to=2-4]
    \end{mytikzcd}\]
  which, by the unitality of the \(S\)-act structure \({(\Phi T)}_{\mathsf{la}}\), equals ${(\Phi T)}_{\mathsf{a}}$.
\end{proof}

For $X \in \mathbf{EM}(T)$, let ${(\Phi_{\mathsf{a}})}_{X}$ be the unique map making the following diagram commute:
\begin{equation}\label{tekken}
\begin{mytikzcd}[ampersand replacement=\&]
	{V \triangleright_{\mathbf{EM}(S)} \Phi T(X)} \&\& {V \triangleright_{\mathbf{EM}(S)} \Phi(X)} \\
	{\Phi T(V \triangleright X)} \\
	{\Phi(V \triangleright_{\mathbf{EM}(T)} T(X))} \&\& {\Phi(V \triangleright_{\mathbf{EM}(T)} X)}
	\arrow["{V \triangleright_{\mathbf{EM}(S)}\Phi\nabla_{X}}"', two heads, from=1-1, to=1-3]
	\arrow["{{(\Phi_{\mathsf{a}})}_{T(X)}}", from=1-1, to=2-1]
	\arrow["{\exists! {(\Phi_{\mathsf{a}})}_{X}}"', dashed, from=1-3, to=3-3]
	\arrow["{=}", from=2-1, to=3-1]
	\arrow["{\Phi(V \triangleright_{\mathbf{EM}(T)} \nabla_{X})}", two heads, from=3-1, to=3-3]
\end{mytikzcd}
\end{equation}
By the uniqueness in the defining property of ${(\Phi_{\mathsf{a}})}_{X}$, this defines a natural transformation
\[
  \Phi_{\mathsf{a}}\from \blank \triangleright_{\mathbf{EM}(S)} \Phi(\bblank) \nt \Phi(\blank \triangleright_{\mathbf{EM}(T)} \bblank).
\]

\begin{lemma}\label{bigdiagrams}
  For\, $\Phi \in \mathbf{Rex}(\mathbf{EM}(T),\mathbf{EM}(S))$, the map $\Phi_{\mathsf{a}}\from \blank \triangleright_{\mathbf{EM}(S)} \Phi(\bblank) \nt \Phi(\blank \triangleright_{\mathbf{EM}(S)} \bblank)$, extended from the morphisms ${(\Phi_{\mathsf{a}})}_{T(X)}$ via Diagram~\eqref{tekken}, defines a lax $\mathcal{C}$-module structure on $\Phi$.
\end{lemma}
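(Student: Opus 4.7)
The plan is to verify that the family ${(\Phi_{\mathsf{a}})}_X$ extended by Diagram~\eqref{tekken} is well-defined and that it satisfies naturality together with the unitality and associativity axioms for a lax $\mathcal{C}$-module functor structure on $\Phi$. In every case the argument reduces an identity at a general $X \in \mathbf{EM}(T)$ to the corresponding identity at a free module $T(X')$, where it is known from the lax $\mathcal{C}$-module biact structure on $R_{\mathbf{EM}(S)} \circ \Phi \circ L_{\mathbf{EM}(T)}$ via Lemma~\ref{phigter}.

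First I would verify well-definedness. Since $\Phi$ is right exact and $V \triangleright_{\mathbf{EM}(S)} \blank$ is right exact by Lemma~\ref{rightexactaction}, both horizontal arrows in Diagram~\eqref{tekken} are coequalizers of the parallel pairs obtained by applying $V \triangleright_{\mathbf{EM}(S)} \Phi(\blank)$, respectively $\Phi(V \triangleright_{\mathbf{EM}(T)} \blank)$, to the canonical Linton pair $(\mu_X, T\nabla_X)\from T^2(X) \rightrightarrows T(X)$ presenting $X$. It therefore suffices to check that $\Phi(V \triangleright_{\mathbf{EM}(T)} \nabla_X) \circ {(\Phi_{\mathsf{a}})}_{T(X)}$ coequalizes this pair, which follows by combining the naturality of ${(\Phi T)}_{\mathsf{a}}$ in its $\mathcal{M}$-variable with the right $T$-act axiom for the biact component $\Phi_{\mathsf{ra}} = R_{\mathbf{EM}(S)} \Phi \varepsilon_{\mathbf{EM}(T)} L_{\mathbf{EM}(T)}$.

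Once well-definedness is in hand, naturality of ${(\Phi_{\mathsf{a}})}_X$ in both $V$ and $X$ is obtained by precomposing either side of the relevant naturality square with the coequalizer map $V \triangleright_{\mathbf{EM}(S)} \Phi\nabla_X$, reducing the claim to naturality of ${(\Phi_{\mathsf{a}})}_{T(\blank)}$, and then invoking uniqueness of the mediator. Unitality at $V = \mathbb{1}_{\mathcal{C}}$ is similar: Lemma~\ref{isounitality} identifies the natural isomorphism $\mathbb{1} \triangleright_{\mathbf{EM}(T)} \blank \cong \on{Id}$ with its counterpart on $\mathbf{EM}(S)$, and the unitality of the biact structure forces ${(\Phi_{\mathsf{a}})}_{\mathbb{1}, T(X)}$ to equal the identity, which transports to ${(\Phi_{\mathsf{a}})}_{\mathbb{1}, X}$ by the universal property.

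The main obstacle will be the associativity pentagon, which compares ${(\Phi_{\mathsf{a}})}_{V \otimes W, X}$ with the composite ${(\Phi_{\mathsf{a}})}_{V, W \triangleright_{\mathbf{EM}(T)} X} \circ (V \triangleright_{\mathbf{EM}(S)} {(\Phi_{\mathsf{a}})}_{W, X})$, mediated by the associators of Lemma~\ref{isoassociativity}. Here I would again reduce to the free case $X = T(X')$ by iterating right exactness of $\Phi$ and of $V \triangleright_{\mathbf{EM}(S)} \blank$, which keeps the iterated coequalizer presentations of $V \triangleright_{\mathbf{EM}(S)} (W \triangleright_{\mathbf{EM}(S)} \Phi(\blank))$ and $(V \otimes W) \triangleright_{\mathbf{EM}(S)} \Phi(\blank)$ jointly epic. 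On free modules, Theorem~\ref{strongembeddingalways} guarantees that both associators agree with that of $\mathcal{M}$, and the pentagon then unwinds to the associativity axiom for ${(\Phi T)}_{\mathsf{a}}$ as a lax $\mathcal{C}$-module biact functor structure. The delicate bookkeeping lies in tracking the Linton coequalizers through a double action, for which the explicit form of the associator constructed in Lemma~\ref{isoassociativity}---itself obtained from the universal property of nested Linton coequalizers---is essential.
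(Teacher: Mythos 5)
Your proposal is correct and follows essentially the same strategy as the paper: reduce each axiom to the free modules $T(X')$ via the (jointly epic) Linton coequalizer presentations, where Lemma~\ref{phigter} identifies $(\Phi_{\mathsf{a}})_{T(-)}$ with the biact structure on $R_{\mathbf{EM}(S)}\Phi L_{\mathbf{EM}(T)}$, and then unwind the associativity pentagon to the biact axioms (the lax module structure of $\Phi T$, the $\mathcal{C}$-module-transformation property and associativity of $\Phi_{\mathsf{la}}$). The paper's written proof only carries out the associativity diagram chase in detail, so your explicit treatment of well-definedness, naturality, and unitality is a welcome (and correct) supplement rather than a deviation.
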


\begin{proof}
  In order to simplify the notation, we write $\triangleright$ for $\triangleright_{\mathcal{M}}$ and $\blacktriangleright$ for $\triangleright_{\mathbf{EM}(T)}$ and $\triangleright_{\mathbf{EM}(S)}$, since each occurrence of either of these symbols is unambiguous with respect to which is used. We also write \(V \lact\) for \(V \lact \blank\), and suppress the horizontal composition symbols in $\mathbf{Cat}_{\Bbbk}$, replacing them simply by concatenation. Hence, every occurrence of $\circ$ in the diagrams that follow is a vertical composition of natural transformations. For example,
\[
  S(W\lact)S(V\lact)\Phi T
  \xrightarrow{S(W\lact)(\Phi_{\mathsf{la}}T(V\lact)\circ S{(\Phi T)}_{\mathsf{a}, V})}
  S(W\lact)\Phi T(V \lact)
\]
represents the natural transformation
\[
  S(W \lact S(V \lact \Phi T(\blank)))
  \xrightarrow{S(W \lact S{(\Phi T)}_{\mathsf{a}; W, V})}
  S(W \lact S \Phi T (V \lact \blank))
  \xrightarrow{S(W \lact \Phi_{\mathsf{la}}T(V \lact \blank))}
  S(W \lact \Phi T (V \lact \blank)).
\]

Since $\Phi_{\mathsf{a}}$ is uniquely determined by ${(\Phi_{\mathsf{a}})}_{T(-)}$, it suffices to show that the diagram
\[
  \begin{mytikzcd}[ampersand replacement=\&,cramped]
    {W\blacktriangleright \Phi(V\blacktriangleright T)} \&\&\& {\Phi(W\blacktriangleright V\blacktriangleright T)} \\
    {W\blacktriangleright V \blacktriangleright \Phi T} \& {W\blacktriangleright\Phi T (V\triangleright )} \& {\Phi T(W\triangleright)(V\triangleright)} \\
    {W\otimes V\blacktriangleright\Phi T} \& {\Phi T (W\otimes V)\triangleright} \&\& {\Phi((W\otimes V)\blacktriangleright T)}
    \arrow["{\Phi_{\mathsf{a}; W, V \blact T}}", from=1-1, to=1-4]
    \arrow["{\Phi(\alpha_2)}", from=1-4, to=3-4]
    \arrow["{W \blact \Phi_{\mathsf{a};V,T}}", from=2-1, to=1-1]
    \arrow["{{\alpha_{1}}}", from=2-1, to=2-2]
    \arrow["{{\alpha_{2}}}"', from=2-1, to=3-1]
    \arrow[Rightarrow, no head, from=2-2, to=1-1]
    \arrow["{{\alpha_{3}}}", from=2-2, to=2-3]
    \arrow[Rightarrow, no head, from=2-3, to=1-4]
    \arrow["{{\alpha_{5}}}"{description}, from=2-3, to=3-2]
    \arrow["{{\alpha_{4}}}", from=3-1, to=3-2]
    \arrow[Rightarrow, no head, from=3-2, to=3-4]
  \end{mytikzcd}
\]
commutes, where
$\alpha_{1} \defeq W \blacktriangleright \Phi_{\mathsf{a},V}$, $\alpha_{3} \defeq \Phi_{\mathsf{a},W}(V\triangleright)$, $\alpha_{4} \defeq \Phi_{\mathsf{a},W\otimes V}$ and $\alpha_{5} \defeq \Phi T(\triangleright_{\mathsf{a},W,V})$. The morphism $\alpha_{2}$ is obtained from the following diagram, in which every marked epimorphism is the coequalizer of the pair preceding it, every dashed arrow is an induced morphism between coequalizers coming from a morphism of diagrams, and the dotted arrows come from the universal property of coequalizers:
\[
  \begin{mytikzcd}[ampersand replacement=\&,cramped]
    {S(W\triangleright)SS(V\triangleright)S\Phi T} \&\&\& {S(W\triangleright )SS(V\triangleright)\Phi T} \& {S(W\triangleright)S(V\blacktriangleright \Phi T)} \\
    \\
    {S(W\triangleright )S(V\triangleright )S\Phi T} \&\&\& {S(W\triangleright )S(V\triangleright )\Phi T} \& {S(W\triangleright)(V \blacktriangleright \Phi T)} \\
    \&\&\&\& {W\blacktriangleright V \blacktriangleright \Phi T} \\
    {S(W\triangleright )(V\triangleright )S\Phi T} \&\&\& {S(W\triangleright )(V\triangleright )\Phi T} \& {(W,V)\blacktriangleright \Phi T} \\
    \\
    {S(W\otimes V\triangleright)S\Phi T} \&\&\& {(S(W\otimes V)\triangleright)\Phi T} \& {(W \otimes V) \blacktriangleright \Phi T}
    \arrow["{{S(W\triangleright)S(\mu \circ SS_{\mathsf{a},V})\Phi T}}", shift left=2, from=1-1, to=1-4]
    \arrow["{{S(W\triangleright)SS(V\triangleright)\Phi_{\mathsf{la}} T}}"', shift right=2, from=1-1, to=1-4]
    \arrow["{{(\mu \circ S_{\mathsf{a},W})S(V\triangleright )S\Phi T}}"', shift right=3, from=1-1, to=3-1]
    \arrow["{{S(W\triangleright)\mu (V\triangleright )S\Phi T}}", shift left=3, from=1-1, to=3-1]
    \arrow[two heads, from=1-4, to=1-5]
    \arrow["{{(\mu\circ S_{\mathsf{a,W}})S(V\triangleright )\Phi T}}"', shift right=3, from=1-4, to=3-4]
    \arrow["{{S(W\triangleright)\mu (V\triangleright)\Phi T}}", shift left=3, from=1-4, to=3-4]
    \arrow[shift right=3, dashed, from=1-5, to=3-5]
    \arrow[shift left=3, dashed, from=1-5, to=3-5]
    \arrow["{{S(W\triangleright)(\mu \circ SS_{\mathsf{a,V}})\Phi T}}", shift left=2, from=3-1, to=3-4]
    \arrow["{{S(W\triangleright)S(V\triangleright)\Phi_{\mathsf{la}}T}}"', shift right=2, from=3-1, to=3-4]
    \arrow["{{(\mu \circ S_{\mathsf{a},W})(V\lact)S\Phi T}}"', two heads, from=3-1, to=5-1]
    \arrow[two heads, from=3-4, to=3-5]
    \arrow["{{(\mu\circ S_{\mathsf{a},W})(V\lact)\Phi T}}"', two heads, from=3-4, to=5-4]
    \arrow[two heads, from=3-5, to=4-5]
    \arrow["{{\exists! \beta_{0}}}"', dotted, from=4-5, to=5-5]
    \arrow["\simeq", dotted, from=4-5, to=5-5]
    \arrow["{{\alpha_{2}}}", shift left=2, curve={height=-30pt}, dotted, from=4-5, to=7-5]
    \arrow["{{S(\mu \circ (S_{\mathsf{a},W}(V\triangleright)) \circ ((W\triangleright)S_{\mathsf{a},V}))\Phi T}}", shift left=2, from=5-1, to=5-4]
    \arrow["{{S(W\triangleright )(V\triangleright )\Phi_{\mathsf{la}} T}}"', shift right=2, from=5-1, to=5-4]
    \arrow["{{S(\triangleright_{\mathsf{a},W,V})S\Phi T}}"', from=5-1, to=7-1]
    \arrow[two heads, from=5-4, to=5-5]
    \arrow["{{S(\triangleright_{\mathsf{a},W,V})\Phi T}}", from=5-4, to=7-4]
    \arrow["{{\exists! \beta_{1}}}"', dotted, from=5-5, to=7-5]
    \arrow["\simeq", dotted, from=5-5, to=7-5]
    \arrow["{{S(\mu\circ S_{\mathsf{a},W\otimes V})\Phi T}}", shift left=2, from=7-1, to=7-4]
    \arrow["{{S(W\otimes V\triangleright)\Phi_{\mathsf{la}} T}}"', shift right=2, from=7-1, to=7-4]
    \arrow[two heads, from=7-4, to=7-5]
  \end{mytikzcd}
\]
The notation for the coequalizer $(W,V) \blacktriangleright \Phi T$ is to emphasize the connection with the multiactegorical approach of Section~\ref{multicategorical}.

We now have the diagram
\[\scalebox{0.89}{\begin{mytikzcd}[ampersand replacement=\&]
	{S(W\triangleright )S(V\triangleright )\Phi T} \&\&\& {S(W\triangleright)\Phi T(V\triangleright)} \\
	\& {S(W\triangleright)(V\blacktriangleright\Phi T)} \\
	{S(W\triangleright )(V\triangleright )\Phi T} \& {W\blacktriangleright V \blacktriangleright\Phi T} \&\& {W\blacktriangleright\Phi T(V\triangleright)} \& {\Phi T (W\triangleright)(V\triangleright)} \\
	\& {(W \otimes V)\blacktriangleright \Phi T} \\
	{S((W\otimes V)\triangleright )\Phi T} \&\&\&\& {\Phi T ((W\otimes V)\triangleright)}
	\arrow[""{name=0, anchor=center, inner sep=0}, "{S(W\triangleright)(\Phi_{\mathsf{la}}T(V\triangleright)\circ S{(\Phi T)}_{\mathsf{a},V})}", from=1-1, to=1-4]
	\arrow[two heads, from=1-1, to=2-2]
	\arrow[""{name=1, anchor=center, inner sep=0}, "{(\mu(W\lact) \circ S_{\mathsf{a},W}) (V\triangleright )\Phi T}"', from=1-1, to=3-1]
	\arrow[two heads, from=1-4, to=3-4]
	\arrow[""{name=2, anchor=center, inner sep=0}, "{((\Phi_{\mathsf{la}}(W\triangleright))\circ (S{(\Phi T)}_{\mathsf{a},W}))(V\triangleright)}", from=1-4, to=3-5]
	\arrow[""{name=3, anchor=center, inner sep=0}, "{\alpha'_{1}}"{description}, dashed, from=2-2, to=1-4]
	\arrow[two heads, from=2-2, to=3-2]
	\arrow["{S(\triangleright_{\mathsf{a},W,V})\Phi T}"', from=3-1, to=5-1]
	\arrow[""{name=4, anchor=center, inner sep=0}, "{\alpha_{1}}", from=3-2, to=3-4]
	\arrow[""{name=5, anchor=center, inner sep=0}, "{\alpha_{2}}", from=3-2, to=4-2]
	\arrow["{\alpha_{3}}", from=3-4, to=3-5]
	\arrow["{\alpha_{5}}"{description}, from=3-5, to=5-5]
	\arrow["{\alpha_{4}}"{description}, from=4-2, to=5-5]
	\arrow[two heads, from=5-1, to=4-2]
	\arrow[""{name=6, anchor=center, inner sep=0}, "{(\Phi_{\mathsf{la}}((W\otimes V)\triangleright))\circ (S{(\Phi T)}_{\mathsf{a},W\otimes V})}"', from=5-1, to=5-5]
	\arrow["{(2)}"{description}, draw=none, from=1, to=5]
	\arrow["{(1')}"{description}, draw=none, from=2-2, to=0]
	\arrow["{(1)}"{description}, draw=none, from=4, to=3]
	\arrow["{(3)}"{description}, draw=none, from=3-4, to=2]
	\arrow["{(4)}"{description}, draw=none, from=4-2, to=6]
\end{mytikzcd}}\]
whose labelled faces commute, and where the morphism decorated by the label of the face it is part of is defined as that making said face commute, via the universal property of coequalizers.
Our aim is to show the commutativity of the inner unlabelled face.
Since all of its morphisms are defined by the remaining inner (commutative) faces,
and we can reach \(W \blact V \blact \Phi T\) from \(S(W \lact)S(V \lact)\Phi T\) with two epimorphisms,
this is implied by the commutativity of the outer face,
which follows by the commutativity of the inner faces of the following diagram:
\[\scalebox{0.81}{\begin{mytikzcd}[ampersand replacement=\&]
	{S(W\triangleright )S(V\triangleright )\Phi T} \&\& {S(W\triangleright )S\Phi T(V\triangleright)} \&\&\&\& {S(W\triangleright)\Phi T(V\triangleright)} \\
	\\
	{SS(W\triangleright )(V\triangleright )\Phi T} \&\& {SS(W\triangleright )\Phi T(V\triangleright)} \&\& {SS\Phi T (W\triangleright)(V\triangleright)} \&\& {S\Phi T (W\triangleright)(V\triangleright)} \\
	\\
	{S(W\triangleright )(V\triangleright )\Phi T} \&\& {S(W\triangleright )\Phi T(V\triangleright)} \&\& {S\Phi T (W\triangleright)(V\triangleright)} \&\& {\Phi T (W\triangleright)(V\triangleright)} \\
	\\
	{S((W\otimes V)\triangleright )\Phi T} \&\&\&\& {S\Phi T ((W\otimes V)\triangleright)} \&\& {\Phi T ((W\otimes V)\triangleright)}
	\arrow[""{name=0, anchor=center, inner sep=0}, "{S(W\triangleright )S{(\Phi T)}_{\mathsf{a},V}}", from=1-1, to=1-3]
	\arrow["{SS_{\mathsf{a},W} (V\triangleright )\Phi T}"', from=1-1, to=3-1]
	\arrow[""{name=1, anchor=center, inner sep=0}, "{S(W\triangleright)\Phi_{\mathsf{la}}T(V\triangleright)}", from=1-3, to=1-7]
	\arrow["{SS_{\mathsf{a},W}\Phi T(V\triangleright)}"', from=1-3, to=3-3]
	\arrow["{S{(\Phi T)}_{\mathsf{a},W}(V\triangleright)}", from=1-7, to=3-7]
	\arrow[""{name=2, anchor=center, inner sep=0}, "{SS(W\triangleright){(\Phi T)}_{\mathsf{a},V}}", from=3-1, to=3-3]
	\arrow["{\mu (W \triangleright)(V\triangleright)\Phi T}"', from=3-1, to=5-1]
	\arrow[""{name=3, anchor=center, inner sep=0}, "{SS{(\Phi T)}_{\mathsf{a},W}(V \triangleright )}"', from=3-3, to=3-5]
	\arrow["{\mu (W\triangleright)\Phi T(V\triangleright)}"', from=3-3, to=5-3]
	\arrow[""{name=4, anchor=center, inner sep=0}, "{S\Phi_{\mathsf{la}}T(W\triangleright)(V\triangleright)}"', from=3-5, to=3-7]
	\arrow["{\mu\Phi T (W\triangleright)(V\triangleright)}"', from=3-5, to=5-5]
	\arrow["{\Phi_{\mathsf{la}}T(W\triangleright)(V\triangleright)}", from=3-7, to=5-7]
	\arrow[""{name=5, anchor=center, inner sep=0}, "{S(W\triangleright){(\Phi T)}_{\mathsf{a},V}}"', from=5-1, to=5-3]
	\arrow["{S(\triangleright_{\mathsf{a},W,V})\Phi T}"', from=5-1, to=7-1]
	\arrow[""{name=6, anchor=center, inner sep=0}, "{S{(\Phi T)}_{\mathsf{a},W}(V\triangleright)}"', from=5-3, to=5-5]
	\arrow[""{name=7, anchor=center, inner sep=0}, "{\Phi_{\mathsf{la}} T (W\triangleright)(V\triangleright)}"', from=5-5, to=5-7]
	\arrow["{S\Phi T(\triangleright_{\mathsf{a},W,V})}"', from=5-5, to=7-5]
	\arrow["{\Phi T(\triangleright_{\mathsf{a},W,V})}", from=5-7, to=7-7]
	\arrow[""{name=8, anchor=center, inner sep=0}, "{S{(\Phi T)}_{\mathsf{a},W\otimes V}}"', from=7-1, to=7-5]
	\arrow[""{name=9, anchor=center, inner sep=0}, "{\Phi_{\mathsf{la}}T((W\otimes V)\triangleright)}"', from=7-5, to=7-7]
	\arrow["{(1)}"{description}, shift right=5, draw=none, from=0, to=2]
	\arrow["{(2)}"{description}, draw=none, from=1, to=3-5]
	\arrow["{(3)}"{description}, shift right=5, draw=none, from=2, to=5]
	\arrow["{(4)}"{description}, shift right=5, draw=none, from=3, to=6]
	\arrow["{(5)}"{description}, draw=none, from=4, to=7]
	\arrow["{(6)}"{description}, draw=none, from=5-3, to=8]
	\arrow["{(7)}"{description}, draw=none, from=7, to=9]
\end{mytikzcd}}\]
where
\begin{itemize}
 \item faces (1), (3), and (4) commute by the interchange law in the $2$-category $\mathbf{Cat}_{\Bbbk}$;
 \item face (2) commutes by $\Phi_{\mathsf{la}}\from S \Phi T \nt \Phi T$ being a $\mathcal{C}$-module transformation;
 \item face (5) commutes by the associativity of action $\Phi_{\mathsf{la}}$;
 \item face (6) commutes by $\Phi T$ being a lax $\mathcal{C}$-module functor; and
 \item face (7) commutes by naturality of $\Phi_{\mathsf{la}}$.
\end{itemize}
\end{proof}

\begin{lemma}\label{jupiter}
 Let\, $\Phi, \Phi' \in \mathbf{LaxRex}(\mathbf{EM}(T),\mathbf{EM}(S))$ and $\phi\from\! R_{\mathbf{EM}(S)} \circ \Phi \circ L_{\mathbf{EM}(T)} \nt R_{\mathbf{EM}(S)} \circ \Phi' \circ L_{\mathbf{EM}(T)}$ be a lax $\mathcal{C}$-module biact transformation. Then $\varphi \defeq \phi \circ_{T} \blank \from \Phi \nt \Phi'$ is a $\mathcal{C}$-module transformation.
\end{lemma}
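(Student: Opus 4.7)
Our approach will be to reduce the $\mathcal{C}$-module transformation condition for $\varphi$ to the analogous condition for $\phi$ at the level of biact functors, invoking the fact that both lax module structures and the relevant biact maps are controlled by their restriction to free modules. By Lemma~\ref{bigdiagrams}, the coherence cells $\Phi_{\mathsf{a}}$ and $\Phi'_{\mathsf{a}}$ are each uniquely determined by their values on free modules $T(X)$ via the epimorphism $\Phi(V \blact \nabla_X)$ in Diagram~\eqref{tekken}; combined with naturality of $\varphi$, the plan is therefore to verify the $\mathcal{C}$-module transformation condition only on objects of the form $T(X)$.

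Next, I would trace through the definition of $\varphi = \phi \circ_T \blank$. Following the proof of Proposition~\ref{EilenbergWatts}, evaluated at the free module $T(X)$, the canonical isomorphism $(\Phi \circ L_{\mathbf{EM}(T)}) \circ_T T(X) \xiso \Phi(T(X))$ identifies $\varphi_{T(X)}$ with the map induced on coequalizers by $\phi$; under the identification $V \blact T(X) = T(V \triangleright X)$ in $\mathbf{EM}(T)$, it likewise identifies $\varphi_{V \blact T(X)}$ with the map induced by $\phi_{V \triangleright X}$. Moreover, by Lemma~\ref{phigter}, the defining equations of $(\Phi_{\mathsf{a}})_{V,T(X)}$ and $(\Phi'_{\mathsf{a}})_{V,T(X)}$ in Diagram~\eqref{phiaeio} express these coherence cells in terms of ${(\Phi T)}_{\mathsf{a}}$ and ${(\Phi' T)}_{\mathsf{a}}$, together with the left $S$-actions $\Phi_{\mathsf{la}}$ and $\Phi'_{\mathsf{la}}$, precomposed with the canonical epimorphism $S(V \triangleright \Phi T(\blank)) \twoheadrightarrow V \blact \Phi T(\blank)$.

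Combining these observations, the $\mathcal{C}$-module transformation square at $T(X)$ composed with the epimorphism from Diagram~\eqref{phiaeio} decomposes into the outer perimeter of
\[
\begin{mytikzcd}[ampersand replacement=\&]
S(V \triangleright \Phi T(X)) \& S \Phi T(V \triangleright X) \& \Phi T(V \triangleright X) \\
S(V \triangleright \Phi' T(X)) \& S \Phi' T(V \triangleright X) \& \Phi' T(V \triangleright X)
\arrow["{S{(\Phi T)}_{\mathsf{a}}}", from=1-1, to=1-2]
\arrow["{S(V \,\triangleright\, \phi_X)}"', from=1-1, to=2-1]
\arrow["{\Phi_{\mathsf{la}}}", from=1-2, to=1-3]
\arrow["{S \phi_{V \,\triangleright\, X}}"{description}, from=1-2, to=2-2]
\arrow["{\phi_{V \,\triangleright\, X}}", from=1-3, to=2-3]
\arrow["{S{(\Phi' T)}_{\mathsf{a}}}"', from=2-1, to=2-2]
\arrow["{\Phi'_{\mathsf{la}}}"', from=2-2, to=2-3]
\end{mytikzcd}
\]
whose left-hand square commutes precisely because $\phi$ is a $\mathcal{C}$-module transformation between the lax $\mathcal{C}$-module functors $R_{\mathbf{EM}(S)}\Phi L_{\mathbf{EM}(T)}$ and $R_{\mathbf{EM}(S)}\Phi' L_{\mathbf{EM}(T)}$, and whose right-hand square commutes because $\phi$ intertwines the left $S$-act structures. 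Since the projection onto the coequalizer is epic, this yields commutativity of the desired square in $\mathbf{EM}(S)$.

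The main obstacle will be the bookkeeping, namely matching the identification of $\varphi$ on free modules with $\phi$ coming from Proposition~\ref{EilenbergWatts} with the factorisation of the coherence cells $\Phi_{\mathsf{a}}$ through ${(\Phi T)}_{\mathsf{a}}$ and $\Phi_{\mathsf{la}}$ in Diagram~\eqref{phiaeio}; once both are spelled out explicitly the compatibility is immediate. I expect no further complication: the epimorphism between $S(V \triangleright \Phi T(\blank))$ and $V \blact \Phi T(\blank)$ arising from the Linton coequalizer presentation ensures that the full $\mathcal{C}$-module square follows from the free-module square, concluding the proof.
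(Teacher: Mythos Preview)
Your proposal is correct and follows essentially the same approach as the paper: both reduce to free modules, precompose with the Linton-coequalizer epimorphism from Diagram~\eqref{phiaeio}, and then verify commutativity using that $\phi$ is simultaneously a $\mathcal{C}$-module transformation and a biact transformation. The only organizational difference is that the paper packages the argument as a cube whose front face is the desired square and whose remaining faces are your two ladder squares together with the functoriality of $V \blacktriangleright \blank$ and the two instances of~\eqref{phiaeio}; your ladder is precisely the back and right faces of that cube, with the left face (compatibility of the projection with $S(V \triangleright \phi_X)$ and $V \blacktriangleright \phi_X$) used implicitly in your claimed decomposition.
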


\begin{proof}
 First, observe that $\varphi$ is determined by $\varphi_{T(\blank)}$, as indicated by the diagram
\[\begin{mytikzcd}[ampersand replacement=\&]
	{\Phi T^{2}X} \& {\Phi TX} \& {\Phi X} \\
	{\Phi' T^{2}X} \& {\Phi' TX} \& {\Phi' X}
	\arrow["{\Phi\mu_{X}}"', shift right, from=1-1, to=1-2]
	\arrow["{\Phi T\nabla_{X}}", shift left, from=1-1, to=1-2]
	\arrow["{\varphi_{T(T(X))}}"', from=1-1, to=2-1]
	\arrow[two heads, from=1-2, to=1-3]
	\arrow["{\varphi_{T(X)}}", from=1-2, to=2-2]
	\arrow["{\exists! \varphi_{X}}", dashed, from=1-3, to=2-3]
	\arrow["{\Phi' T\nabla_{X}}", shift left, from=2-1, to=2-2]
	\arrow["{\Phi'\mu_{X}}"', shift right, from=2-1, to=2-2]
	\arrow[two heads, from=2-2, to=2-3]
\end{mytikzcd}\]
Thus, it suffices to show that for any $X \in \mathcal{M}$ and $V \in \mathcal{C}$, we have that
\begin{equation}\label{laxplax}
\Phi'_{\mathsf{a},V,T(X)} \circ (V \blacktriangleright\phi_{X}) = \phi_{V\triangleright X} \circ \Phi_{\mathsf{a},V,T(X)}.
\end{equation}
Consider the following diagram:
\[\begin{mytikzcd}[ampersand replacement=\&]
	{S(V\triangleright \Phi T)} \&\&\& {S\Phi TV\triangleright \blank} \\
	\& {V \blacktriangleright \Phi T} \&\&\& {\Phi TV\triangleright \blank} \\
	{S(V\triangleright \Phi' T)} \&\&\& {S\Phi' TV\triangleright \blank} \\
	\& {V \blacktriangleright \Phi' T} \&\&\& {\Phi' TV\triangleright \blank}
	\arrow["{S{(\Phi T)}_{\mathsf{a},V}}", from=1-1, to=1-4]
	\arrow[two heads, from=1-1, to=2-2]
	\arrow["{S(V\triangleright \phi)}"', from=1-1, to=3-1]
	\arrow["{\Phi_{\mathsf{la}}TV\triangleright \blank}", from=1-4, to=2-5]
	\arrow["{S\phi'V\triangleright\blank}"{pos=0.8}, from=1-4, to=3-4]
	\arrow[crossing over, "{\Phi_{\mathsf{a},V,T(\blank)}}"{pos=0.4}, from=2-2, to=2-5]
	\arrow["{\phi_{V\triangleright \blank}}", from=2-5, to=4-5]
	\arrow["{S{(\Phi' T)}_{\mathsf{a},V}}"{pos=0.7}, from=3-1, to=3-4]
	\arrow[crossing over, "{V \blacktriangleright\phi_{\blank}}"'{pos=0.3}, from=2-2, to=4-2]
	\arrow[two heads, from=3-1, to=4-2]
	\arrow["{\Phi'_{\mathsf{la}}TV\triangleright \blank}", from=3-4, to=4-5]
	\arrow["{\Phi'_{\mathsf{a},V,T(\blank)}}"', from=4-2, to=4-5]
\end{mytikzcd}\]
Its top and bottom faces commute by the definition of $\Phi_{\mathsf{a}}$, as seen from Diagram~\ref{phiaeio}; its left face commutes by the definition of $V \blacktriangleright \blank$; and its right face by $\phi$ being a biact transformation. The back face commutes by $\phi$ being a $\mathcal{C}$-module transformation.
The commutativity of the front face is precisely Equation~\eqref{laxplax},
and since the top-left projection map is an epimorphism, it suffices to verify its commutativity after precomposing with it.
This latter commutativity follows easily from the commutativity of the remaining faces.
\end{proof}

\begin{theorem}\label{TurboEilenbergWatts}
  The faithful functor
  \[
    \mathcal{C}\mathsf{EW}\from \mathbf{LaxRex}(\mathbf{EM}(T), \mathbf{EM}(S)) \to \biactC[\cat{C}]{S}{T}
  \]
  of Theorem~\ref{ultraEilenbergWatts} is an equivalence of categories.
\end{theorem}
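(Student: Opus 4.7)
The plan is to establish essential surjectivity and fullness of $\mathcal{C}\mathsf{EW}$; faithfulness was already proven in Theorem~\ref{ultraEilenbergWatts}. Both properties will reduce, via Proposition~\ref{EilenbergWatts}, to the three preceding lemmas: Lemmas~\ref{phigter} and~\ref{bigdiagrams} handle essential surjectivity, while Lemma~\ref{jupiter} handles fullness. The key point is that the non-lax Eilenberg--Watts equivalence of Proposition~\ref{EilenbergWatts} already sets up a bijection on underlying data, and the lemmas precisely articulate how the additional $\mathcal{C}$-module structure translates in each direction.

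For essential surjectivity, I would start with a lax $\mathcal{C}$-module $S$-$T$-biact functor $(F, F_{\mathsf{la}}, F_{\mathsf{ra}})$ and apply Proposition~\ref{EilenbergWatts} to its underlying biact functor to obtain $\Phi \defeq \overline{F} \circ_{T} \blank \from \mathbf{EM}(T) \to \mathbf{EM}(S)$ together with an isomorphism $R_{\mathbf{EM}(S)} \circ \Phi \circ L_{\mathbf{EM}(T)} \cong F$ in $\biact{S}{T}$. Transporting the lax $\mathcal{C}$-module structure on $F$ along this isomorphism yields a transformation ${((\Phi T)_{\mathsf{a}})}_{V,X} \from V \lact_{\mathcal{M}} \Phi T(X) \to \Phi T(V \lact_{\mathcal{M}} X)$ promoting $R_{\mathbf{EM}(S)} \circ \Phi \circ L_{\mathbf{EM}(T)}$ to a lax $\mathcal{C}$-module biact functor. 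Lemma~\ref{phigter} then exhibits this transformation as $\mathcal{C}\mathsf{EW}({(\Phi_{\mathsf{a}})}_{T(\blank)})$ for a uniquely determined ${(\Phi_{\mathsf{a}})}_{T(\blank)}$, and Lemma~\ref{bigdiagrams} upgrades this restriction to free $T$-modules to a genuine lax $\mathcal{C}$-module functor structure $\Phi_{\mathsf{a}}$ on all of $\Phi$, via the coequalizer-based Diagram~\eqref{tekken}. I would then verify that $\mathcal{C}\mathsf{EW}(\Phi, \Phi_{\mathsf{a}}) \cong (F, F_{\mathsf{la}}, F_{\mathsf{ra}})$ in $\biactC[\cat{C}]{S}{T}$, so that $(F, F_{\mathsf{la}}, F_{\mathsf{ra}})$ lies in the essential image.

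For fullness, take $\Phi, \Phi' \in \mathbf{LaxRex}(\mathbf{EM}(T), \mathbf{EM}(S))$ and a lax $\mathcal{C}$-module biact transformation $\phi \from \mathcal{C}\mathsf{EW}(\Phi) \nt \mathcal{C}\mathsf{EW}(\Phi')$. The equivalence of Proposition~\ref{EilenbergWatts} produces a unique natural transformation $\varphi \defeq \phi \circ_{T} \blank \from \Phi \nt \Phi'$ satisfying $R_{\mathbf{EM}(S)} \varphi L_{\mathbf{EM}(T)} = \phi$; Lemma~\ref{jupiter} then asserts that $\varphi$ is in fact a $\mathcal{C}$-module transformation. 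Since $\mathcal{C}\mathsf{EW}(\varphi) = \phi$ by construction of the underlying Eilenberg--Watts equivalence, this delivers the required preimage in $\mathbf{LaxRex}(\mathbf{EM}(T), \mathbf{EM}(S))$, establishing fullness.

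The principal obstacle I anticipate is the verification step at the end of essential surjectivity, namely that the composite procedure---transport along Proposition~\ref{EilenbergWatts}, extract $(\Phi T)_{\mathsf{a}}$, extend via Lemma~\ref{bigdiagrams}, and then reapply $\mathcal{C}\mathsf{EW}$---recovers the original biact datum $F_{\mathsf{la}}$ up to the chosen isomorphism. This amounts to unwinding the universal properties involved: on free modules the matching is forced by Lemma~\ref{phigter}, while on a general $X \in \mathbf{EM}(T)$ one uses that $\mathcal{C}\mathsf{EW}$ is given by whiskering with $R_{\mathbf{EM}(S)}$ and $L_{\mathbf{EM}(T)}$ and composing with $({R_{\mathbf{EM}(S)}})_{\mathsf{a}}$ from Porism~\ref{porism:doctrinal-module-adjunctions}, together with the split coequalizer presentation of $X$ arising from $\nabla_{X} \from T(X) \twoheadrightarrow X$ and the right exactness of all functors in play. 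Once this round-trip identification is in hand, both halves of the equivalence are complete.
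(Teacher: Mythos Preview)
Your proposal is correct and follows essentially the same approach as the paper: invoke Proposition~\ref{EilenbergWatts} to obtain $\Phi$, use Lemmas~\ref{phigter} and~\ref{bigdiagrams} for essential surjectivity, and Lemma~\ref{jupiter} for fullness. Your anticipated obstacle dissolves more quickly than you suggest: since $\mathcal{C}\mathsf{EW}$ is defined by whiskering with $L_{\mathbf{EM}(T)}$ and $R_{\mathbf{EM}(S)}$, it only sees the restriction of $\Phi_{\mathsf{a}}$ to free $T$-modules, so Lemma~\ref{phigter} alone already gives the round-trip identification $\mathcal{C}\mathsf{EW}(\Phi,\Phi_{\mathsf{a}}) \cong F$ without any appeal to the coequalizer presentation of a general $X$.
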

\begin{proof}
  It is left to prove that \(\cat{C}\mathsf{EW}\) is full and essentially surjective.
  For the latter case, suppose that \(F \in \biactC[\cat{C}]{T}{S}\).
  Since the functor of Proposition~\ref{EilenbergWatts} is essentially surjective,
  we may assume that \(F \cong R_{\mathbf{EM}(S)} \circ \Phi \circ L_{\mathbf{EM}(T)}\),
  for some right exact \(\Phi\from \mathbf{EM}(T) \to \mathbf{EM}(S)\).
  By Lemma~\ref{bigdiagrams}, \((\Phi, \Phi_{\mathsf{a}}) \in  \mathbf{LaxRex}(\mathbf{EM}(T),\mathbf{EM}(S))\),
  and \(F \cong \mathcal{C}\mathsf{EW}((\Phi, \Phi_{\mathsf{a}}))\) due to Lemma~\ref{phigter},
  establishing essential surjectivity.
  Fullness follows from Lemma~\ref{jupiter}.
\end{proof}

\begin{definition}
  We say that two right exact lax \(\mathcal{C}\)-module monads \(T\) and \(T'\) on \(\mathcal{M}\) are \emph{Morita equivalent} if there are
  \(F \in \biactC[\cat{C}]{T}{T'}\) and
  \(G \in \biactC[\cat{C}]{T'}{T}\)
  such that \(G \circ_{T} F \cong \on{Id}_{\mathbf{EM}(T)}\)
  and \(F \circ_{T'} G \cong \on{Id}_{\mathbf{EM}(T')}\)
  as lax \(\mathcal{C}\)-module biact functors.
\end{definition}

\begin{proposition}\label{Moritabybimodules}
  Two right exact lax $\mathcal{C}$-module monads $T$ and \(T'\) on $\mathcal{M}$ are Morita equivalent if and only if there is a $\mathcal{C}$-module equivalence $\mathbf{EM}(T) \simeq \mathbf{EM}(T')$,
  where the Eilenberg--Moore categories are endowed with the extended Linton coequalizer structure of Theorem~\ref{extendingalways}.
\end{proposition}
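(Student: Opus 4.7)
The plan is to reduce the proposition to two compatibility properties of the $\mathcal{C}$-enriched Eilenberg--Watts functor $\mathcal{C}\mathsf{EW}$ of Theorem~\ref{TurboEilenbergWatts}. First, I would record that the image of the identity under $\mathcal{C}\mathsf{EW}$ is the regular biact functor: indeed, $\mathcal{C}\mathsf{EW}(\mathrm{Id}_{\mathbf{EM}(T)}) = R_{\mathbf{EM}(T)}L_{\mathbf{EM}(T)} = T$, equipped on both sides with the action given by $\mu$, which is the canonical $T$-$T$-biact structure. Second---and this is the key technical step---I would establish that $\mathcal{C}\mathsf{EW}$ is pseudofunctorial with respect to composition: for biact functors $F \in \biactC[\cat{C}]{T''}{T'}$ and $G \in \biactC[\cat{C}]{T'}{T}$ there is a natural isomorphism
\[
 \mathcal{C}\mathsf{EW}^{-1}(F \circ_{T'} G) \;\cong\; \mathcal{C}\mathsf{EW}^{-1}(F) \circ \mathcal{C}\mathsf{EW}^{-1}(G)
\]
of lax $\mathcal{C}$-module functors $\mathbf{EM}(T) \to \mathbf{EM}(T'')$.

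With these in hand, the forward implication is formal. Given $F, G$ witnessing Morita equivalence, set $\Phi \defeq \mathcal{C}\mathsf{EW}^{-1}(F)$ and $\Psi \defeq \mathcal{C}\mathsf{EW}^{-1}(G)$. Reading the biact isomorphisms $F \circ_{T'} G \cong T$ and $G \circ_{T} F \cong T'$ through the two compatibilities above yields mutually inverse natural isomorphisms $\Phi \circ \Psi \cong \mathrm{Id}_{\mathbf{EM}(T)}$ and $\Psi \circ \Phi \cong \mathrm{Id}_{\mathbf{EM}(T')}$ as lax $\mathcal{C}$-module functors. A lax $\mathcal{C}$-module functor whose underlying functor is an equivalence is automatically strong, since its lax coherence cells acquire two-sided inverses assembled from those of the quasi-inverse; hence $\Phi$ and $\Psi$ exhibit a $\mathcal{C}$-module equivalence $\mathbf{EM}(T) \simeq \mathbf{EM}(T')$.

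For the converse, suppose a $\mathcal{C}$-module equivalence $\Phi\from \mathbf{EM}(T) \to \mathbf{EM}(T')$ is given, with strong quasi-inverse $\Psi$ (the existence of a strong quasi-inverse is a standard adjoint-equivalence argument, and both $\Phi, \Psi$ are right exact as equivalences of abelian categories). Applying $\mathcal{C}\mathsf{EW}$ to both functors and translating the witnessing natural isomorphisms through the two compatibilities above produces biact isomorphisms $\mathcal{C}\mathsf{EW}(\Psi) \circ_{T'} \mathcal{C}\mathsf{EW}(\Phi) \cong T$ and $\mathcal{C}\mathsf{EW}(\Phi) \circ_{T} \mathcal{C}\mathsf{EW}(\Psi) \cong T'$, exhibiting $T$ and $T'$ as Morita equivalent.

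The main obstacle is thus the compositionality of $\mathcal{C}\mathsf{EW}$. Writing $\mathcal{C}\mathsf{EW}^{-1}(F) = \bar F \circ_{T'} \blank$ and unpacking the definitions, this reduces to identifying, for $X \in \mathbf{EM}(T)$, the object $\bar F \circ_{T'} (\bar G \circ_{T} X)$ with $\overline{F \circ_{T'} G} \circ_{T} X$, naturally in $X$ and compatibly with the lax $\mathcal{C}$-module coherence. This is an iterated coequalizer argument in the spirit of the essential surjectivity step of Proposition~\ref{EilenbergWatts}: one presents $\bar G \circ_{T} X$ via its defining reflexive coequalizer of free $T$-modules, applies the right exact functor $\bar F \circ_{T'} \blank$, and uses the Fubini-type commutation of the two coequalizer constructions, together with the compatibility of the $\mathcal{C}$-module structures recorded in Lemma~\ref{bigdiagrams}. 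The verification is notationally involved but formally dictated by the coherence of Linton coequalizers already established in Theorem~\ref{extendingalways}; once it is in place the remainder is a purely formal consequence.
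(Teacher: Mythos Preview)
Your approach is correct and essentially the same as the paper's, whose proof is the one-liner ``This is a direct consequence of Theorem~\ref{TurboEilenbergWatts}: an equivalence of categories is right exact.'' You are more careful than the paper in isolating the pseudofunctoriality of $\mathcal{C}\mathsf{EW}$ (compatibility with balanced composition $\circ_{T'}$ and with identities) as the technical core---the paper leaves this implicit in the construction of Proposition~\ref{EilenbergWatts} and Theorem~\ref{TurboEilenbergWatts}---but the route is the same.
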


\begin{proof}
  This is a direct consequence of Theorem~\ref{TurboEilenbergWatts}:
  an equivalence of categories is right exact.
\end{proof}

Combining Proposition~\ref{Moritabybimodules} with Theorem~\ref{injcorrespondence}, we find the following result:

\begin{theorem}\label{injectivebijection}
  Let $\mathcal{C}$ be a monoidal abelian category with enough injectives, and let $K$ be a left exact lax $\mathcal{C}$-module comonad on $\mathcal{C}$. There is a bijection
  \begin{align*}
    \{\,(\mathcal{M},X) \text{ as in Theorem~\ref{mainnonsenseinjective}}\,\}/(\mathcal{M} \simeq \mathcal{N})
    &\xleftrightarrow{\ \simeq\,}
      \{\,\text{Left exact oplax }\mathcal{C}\text{-module comonads on } \mathcal{C}\,\}/\simeq_{\text{Morita}} \\
    (\mathcal{M},X) &\longmapsto \cohom{X,-\triangleright X} \\
    (\mathbf{EM}(K), K(\mathbb{1})) &\longmapsfrom K
  \end{align*}
\end{theorem}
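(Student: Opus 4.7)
The plan is to derive Theorem~\ref{injectivebijection} from Theorem~\ref{injcorrespondence} by refining the equivalence relation on the right-hand side. The forward and backward assignments in the two theorems are literally the same, so it suffices to show that for left exact oplax $\mathcal{C}$-module comonads $K, K'$ on $\mathcal{C}$, the relation $\mathbf{EM}(K) \simeq \mathbf{EM}(K')$ as $\mathcal{C}$-module categories agrees with Morita equivalence, once the latter notion is appropriately defined for comonads. Thus, the proof reduces to establishing the comonad analogue of Proposition~\ref{Moritabybimodules}.

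First I would carry out the comonad analogue of the setup from Section~\ref{Takyon}: define $K_1$-$K_2$-bicoact cofunctors as $K_1$-$K_2$-bicomodule objects in $\mathbf{Lex}(\mathcal{C},\mathcal{C})$, replace the balanced composition $\circ_{T}$ by a cotensor construction $\cotens_{K_2}$ built from equalizers (well-behaved because $\mathbf{EM}(K)$ is abelian by Proposition~\ref{prop:abelian-EM-cat} and $R_{\mathbf{EM}(K)}$ creates limits), and define Morita equivalence of comonads via invertibility under $\cotens$. Then I would prove the comonad analogue of Theorem~\ref{TurboEilenbergWatts}: the category of left exact oplax $\mathcal{C}$-module cofunctors $\mathbf{EM}(K_2) \to \mathbf{EM}(K_1)$ is equivalent to the category of oplax $\mathcal{C}$-module $K_1$-$K_2$-bicoact cofunctors. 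The cleanest route is by formal duality: $\mathcal{C}^{\op}$ has enough projectives when $\mathcal{C}$ has enough injectives, a left exact oplax $\mathcal{C}$-module comonad on $\mathcal{C}$ becomes a right exact lax $\mathcal{C}^{\op,\mathrm{rev}}$-module monad on $\mathcal{C}^{\op}$, and the corresponding Eilenberg--Moore categories are mutually opposite, so Theorem~\ref{TurboEilenbergWatts} transports directly. Alternatively, one may mirror Section~\ref{Takyon} verbatim, replacing every Linton coequalizer by the corresponding Linton equalizer (using the dual of Theorem~\ref{extendingalways}) and reversing every coherence cell.

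With the comonad Eilenberg--Watts equivalence in hand, the analogue of Proposition~\ref{Moritabybimodules} is immediate: any $\mathcal{C}$-module equivalence $\mathbf{EM}(K_1) \simeq \mathbf{EM}(K_2)$ is in particular left exact and so corresponds to a $\cotens$-invertible oplax $\mathcal{C}$-module bicoact cofunctor, exhibiting a Morita equivalence; the converse is clear from the definition of $\cotens$-invertibility. Plugging this identification into Theorem~\ref{injcorrespondence} produces the claimed bijection, with the same assignments. The main obstacle lies in the dualization step---specifically, in confirming that the extended $\mathcal{C}$-module structure on $\mathbf{EM}(K)$ coming from Linton equalizers coincides with the one used in Theorem~\ref{injcorrespondence}, which should follow from the uniqueness result of Theorem~\ref{thm:one-module-structure-on-EM} applied in the comonad case, and that the lengthy diagram chase in the analogue of Lemma~\ref{bigdiagrams} remains coherent after reversing every lax coherence cell into an oplax one. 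These are formal but laborious verifications.
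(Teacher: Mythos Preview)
Your proposal is correct and follows exactly the paper's approach: the paper's proof is the single sentence ``Combining Proposition~\ref{Moritabybimodules} with Theorem~\ref{injcorrespondence}, we find the following result,'' and your plan is precisely this combination, spelled out in more detail. The dualization of Section~\ref{Takyon} that you describe is implicit in the paper's invocation of Proposition~\ref{Moritabybimodules} (which is stated only for monads), so your elaboration of that step is apt and does not deviate from the intended argument.
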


\section{Hopf trimodules}\label{sec:Hopf-trimodules}

Fix a field \(\Bbbk\) and a bialgebra \(B\) over it.
Write \(\cat{V} \defeq \bbcomod\) for the category of left \(B\)-comodules.
After introducing all of the relevant concepts and notation,
our goal in this section is to prove the following theorem.

\begin{theorem}\label{thm:hopf-trimodules-are-lax-monoidal-functors}
  There is a monoidal equivalence
  \[
    \Trimod \simeq \mathbf{LexfLax}\cat{V}\text{-}\mathbf{Mod}(\cat{V}, \cat{V})
  \]
  between the category \(\Trimod\) of Hopf trimodules, and
  the category \(\mathbf{LexfLax}\cat{V}\text{-}\mathbf{Mod}(\cat{V}, \cat{V})\)
  of left exact, finitary lax \(\cat{V}\)-module functors from \(\cat{V}\) to \(\cat{V}\).
\end{theorem}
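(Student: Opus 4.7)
The plan is to upgrade Takeuchi's Eilenberg--Watts equivalence (Proposition~\ref{TakeuchiEilenbergWatts}) from the category of left exact finitary endofunctors of $\cat{V}$ to the setting of such functors equipped with a lax $\cat{V}$-module structure. As a first step, applied with $C = B$ and $\mathcal{D} = \cat{V}$, Takeuchi's theorem yields an equivalence of categories
\[
  \Psi\from \mathbf{Lexf}(\cat{V}, \cat{V}) \xiso \biComod{B}{B}, \qquad F \longmapsto F(B),
\]
where the left $B$-coaction on $F(B)$ is inherited from the codomain and the right $B$-coaction from the right $B$-coaction on $B$ via $\Delta$. The canonical natural isomorphism $(M \cotens_B \blank) \circ (N \cotens_B \blank) \simeq (M \cotens_B N) \cotens_B \blank$ upgrades $\Psi$ to a monoidal equivalence between $\mathbf{Lexf}(\cat{V}, \cat{V})$ under composition and $\biComod{B}{B}$ under cotensor, with unit $B$.

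Next, at the level of fibres of the forgetful functor $\mathbf{LexfLax}\cat{V}\text{-}\mathbf{Mod}(\cat{V}, \cat{V}) \to \mathbf{Lexf}(\cat{V}, \cat{V})$, I will build a bijection between lax $\cat{V}$-module structures on $F_M \defeq M \cotens_B \blank$ and left $B$-actions on the bicomodule $M$ turning it into a Hopf trimodule. In one direction, given such an action $\triangleright\from B \otimes M \to M$, I define
\[
  \lambda_{V,W}\from V \otimes (M \cotens_B W) \to M \cotens_B (V \otimes W), \qquad v \otimes (m \cotens w) \longmapsto (v_{[-1]} \triangleright m) \cotens (v_{[0]} \otimes w),
\]
using Sweedler notation $v \mapsto v_{[-1]} \otimes v_{[0]}$ for the left $B$-coaction. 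Compatibility of $\triangleright$ with the right $B$-coaction on $M$ ensures $\lambda_{V,W}$ lands in the cotensor, while associativity of $\triangleright$ together with coassociativity of the left coaction on $V$ yields the associativity axiom for $\lambda$; unitality is immediate. In the other direction, given $(F, \lambda)$ with $M \defeq F(B)$, I define a left $B$-action as the composite
\[
  B \otimes M \;=\; B \otimes F(B) \xrightarrow{\;\lambda_{B, B}\;} F(B \otimes B) \xrightarrow{\;F(\mathsf{m})\;} F(B) \;=\; M,
\]
where $\mathsf{m}\from B \otimes B \to B$ denotes the multiplication of $B$, which is left $B$-colinear by the bialgebra axioms. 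Compatibility of this action with the two $B$-coactions on $M$ follows from naturality of $\lambda$ in the first slot, applied to $\Delta$ and to the right $B$-coaction on $B$ respectively. Mutual inverseness of these two constructions is verified by a Sweedler-calculus computation, using that a lax $\cat{V}$-module structure on $F_M$ is determined by its value on the cogenerator $B$ by Yoneda.

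Finally, to promote the resulting fibrewise bijection to a monoidal equivalence, I equip $\Trimod$ with the monoidal structure given by $(A, A') \mapsto A \cotens_B A'$, with left $B$-action $b \triangleright (a \cotens a') \defeq (b_{(1)} \triangleright a) \cotens (b_{(2)} \triangleright a')$; the compatibility of each factor's action with both coactions is precisely what ensures this is well-defined on the cotensor and again satisfies the Hopf trimodule axioms. The required coherence reduces to checking that, under the identification $F_A \circ F_{A'} \simeq F_{A \cotens_B A'}$, the horizontal composite of the lax $\cat{V}$-module structures on $F_A$ and $F_{A'}$ coincides with the lax structure assigned to $A \cotens_B A'$ by the previous step---a direct comparison of Sweedler expansions, using the coassociativity of the left $B$-coaction on the test object $V$.

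The main technical obstacle lies in this last coherence. While the underlying Takeuchi equivalence is readily monoidal, tracking how the lax data transforms under composition is delicate, and the matching of the two candidate lax structures on $F_{A \cotens_B A'}$ hinges crucially on the compatibility of the left $B$-action with the right $B$-coaction---one of the defining trimodule axioms---without which the composed lax structure would not descend to the cotensor in the first place.
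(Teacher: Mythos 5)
Your proposal is correct and follows essentially the same route as the paper: the interchange morphism $\lambda_{V,W}\colon v\otimes(m\cotens w)\mapsto (v_{[-1]}\triangleright m)\cotens(v_{[0]}\otimes w)$ is exactly the paper's $\chi$, the recovered action $F(\mathsf{m})\circ\lambda_{B,B}$ is the paper's $\alpha$, and monoidality via the diagonal action on $A\cotens_B A'$ matches the paper's Lemma~\ref{lem:cotens-of-hopf-modules-is-hopf-module} and Proposition~\ref{prop:braiding-composes-to-cotens}. The only cosmetic caveat is that your appeal to ``Yoneda'' for determining a lax structure by its component at $B$ should really be the fact that natural transformations between left exact finitary functors are determined at the injective cogenerator (Propositions~\ref{prop:left exact-is-enough-on-injectives} and~\ref{TakeuchiEilenbergWatts}), which is what the paper uses.
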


\subsection{Bicomodules and the graphical calculus}\label{sec:bicomodules}

Before introducing Hopf trimodules,
let us first talk about bicomodules over \(B\)
in the sense of Section~\ref{sec:monoids}
as well as their string diagrammatics.
The latter will be our main tool to prove
Theorem~\ref{thm:hopf-trimodules-are-lax-monoidal-functors}.

Explicitly, a bicomodule over \(B\) comprises a vector space \(X\),
a left coaction \(\lambda \from X \to B \kotimes X\),
and a right coaction \(\rho \from X \to X \kotimes B\), such that
\[
  (B \kotimes \rho) \circ \lambda = (\lambda \kotimes B) \circ \rho.
\]

Our convention is to read string diagrams from bottom to top and left to right.
Vertical gluing of strings represents the tensor product,
while horizontal gluing amounts to composition of morphisms.
For a comprehensive account, we refer to~\cite{Sel}.
As a first example,
let us depict the compatibility condition of a bicomodule graphically:\\[-0.5em]
\begin{equation}\label{eq:bicomodule-condition}
  \begin{tikzpicture}[style=tikzfig]
    \begin{pgfonlayer}{nodelayer}
      \node [style=none] (0) at (-1.75, -1.25) {};
      \node [style=none] (1) at (-1.75, 1.5) {};
      \node [style=none] (2) at (-2.75, 1.5) {};
      \node [style=none] (3) at (-0.75, 1.5) {};
      \node [style=none] (4) at (-1.75, 0.5) {};
      \node [style=none] (5) at (-1.75, -0.5) {};
      \node [style=none] (6) at (1.75, -1.25) {};
      \node [style=none] (7) at (1.75, 1.5) {};
      \node [style=none] (8) at (2.75, 1.5) {};
      \node [style=none] (9) at (0.75, 1.5) {};
      \node [style=none] (10) at (1.75, 0.5) {};
      \node [style=none] (11) at (1.75, -0.5) {};
      \node [style=none] (12) at (0, 0) {\scriptsize $=$};
      \node [style=none] (13) at (-2.75, 1.75) {\scriptsize $B$};
      \node [style=none] (14) at (-1.75, 1.75) {\scriptsize $X$};
      \node [style=none] (15) at (-0.75, 1.75) {\scriptsize $B$};
      \node [style=none] (16) at (2.75, 1.75) {\scriptsize $B$};
      \node [style=none] (17) at (0.75, 1.75) {\scriptsize $B$};
      \node [style=none] (18) at (1.75, 1.75) {\scriptsize $X$};
      \node [style=none] (19) at (1.75, -1.5) {\scriptsize $X$};
      \node [style=none] (20) at (-1.75, -1.5) {\scriptsize $X$};
    \end{pgfonlayer}
    \begin{pgfonlayer}{edgelayer}
      \draw [style=morphism-edge](0.center) to (1.center);
      \draw [style=morphism-edge, in=270, out=180] (5.center) to (2.center);
      \draw [style=morphism-edge, in=0, out=-90] (3.center) to (4.center);
      \draw [style=morphism-edge](6.center) to (7.center);
      \draw [style=morphism-edge, in=-90, out=0] (11.center) to (8.center);
      \draw [style=morphism-edge, in=180, out=-90] (9.center) to (10.center);
    \end{pgfonlayer}
  \end{tikzpicture}
\end{equation}\\[-0.7em]

\begin{example}\label{ex:cotensor-product-of-bicomodules}
  Given two bicomodules \((X, \lambda^X, \rho^X)\) and \((Y, \lambda^Y, \rho^Y)\) over \(B\),
  one may form their \emph{cotensor product};
  this is the bicomodule \(X \cotens Y\) given by the equalizer
  \[
    \begin{mytikzcd}[ampersand replacement=\&]
      {X \cotens Y} \& {X \kotimes Y} \& {X \kotimes B \kotimes Y.}
      \arrow[hook, from=1-1, to=1-2]
      \arrow["{\rho^X \kotimes Y}", shift left=1, from=1-2, to=1-3]
      \arrow["{{\raisebox{-0.8em}{\(X \kotimes \lambda^Y\)}}}"', shift right=1, from=1-2, to=1-3]
    \end{mytikzcd}
  \]
\end{example}

Our graphical calculus has to differentiate between
the tensor product of two bicomodules over the ground field
and their cotensor product.
In particular, we have to indicate which additional transformations are possible with the latter.
For this, we annotate equalised actions in the cotensor product in grey.\\
\[

\]\\[-0.7em]

\subsection{From Hopf trimodules to lax module functors}\label{sec:hopf-trimodules-lax-mod-funs}

\begin{definition}\label{def:hopf-tri-module}
  A \emph{Hopf trimodule} over \(B\) is a bicomodule \(X\)
  together with an action \(\alpha\),
  such that \(\alpha\) is a left and right \(B\)-comodule morphism.
  We write \(X \in \Trimod\).
\end{definition}

Alternatively, Definition~\ref{def:hopf-tri-module} could impose the conditions
that \(\lambda\) and \(\rho\) are left and right \(B\)-module morphisms, respectively.
This equivalence is easily seen in a string diagrammatic reformulation,
as given in Figure~\ref{fig:hopf-trimodule}.
\begin{figure}[htbp]
  \tikzsetnextfilename{fig-hopf-trimodule}

  \]\\
  because the image of the coaction \(M \to B \kotimes M\) is contained in the cotensor product \( B \cotens M\).
\end{remark}

The interchange morphism in Definition~\ref{def:bicomodule-interchange}
is akin to the \emph{Yetter--Drinfeld braiding},
which endows the category of Yetter--Drinfeld modules with the structure of a braided monoidal category;
see~\cite[Theorem~7.2]{Yet}.

\begin{lemma}\label{lem:interchange-properties}
  The interchange morphism
  is well-defined and a left \(B\)-comodule morphism.
\end{lemma}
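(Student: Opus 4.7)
The verification of both claims is a matter of manipulating string diagrams using the Hopf trimodule axioms of Figure~\ref{fig:hopf-trimodule} together with the bicomodule compatibility~\eqref{eq:bicomodule-condition}. The argument has three natural stages: well-definedness on the domain, landing in the codomain, and compatibility with the left $B$-coaction.

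Since tensoring with the vector space $M$ is exact, the equalizer $X \cotens N$ remains an equalizer after tensoring, so that $M \kotimes (X \cotens N)$ is realised as the equalizer of $M \kotimes \rho^X \kotimes N$ and $M \kotimes X \kotimes \lambda^N$ inside $M \kotimes X \kotimes N$. I therefore plan to first consider the auxiliary morphism $\tilde{\chi}\from M \kotimes X \kotimes N \to X \kotimes M \kotimes N$ encoded by the defining string diagram, and then verify that its restriction to $M \kotimes (X \cotens N)$ factors through $X \cotens (M \kotimes N)$. Concretely, this amounts to sliding the right coaction $\rho^X$ past the left action $\alpha$ by means of the Hopf trimodule axiom asserting that $\alpha$ is a right $B$-comodule morphism, then invoking the cotensor condition to replace $\rho^X \kotimes N$ with $X \kotimes \lambda^N$; the resulting equality reduces to the bicomodule compatibility of $X$ together with coassociativity of $\lambda^M$. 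Remark~\ref{rmk:interchange-unnessary-cotens} already encodes the basic form of this manipulation and should help streamline the diagrammatic bookkeeping.

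For the left $B$-comodule property, the source $M \kotimes (X \cotens N)$ carries the diagonal left coaction built from $\lambda^M$, $\lambda^X$ and the multiplication of $B$, while the target $X \cotens (M \kotimes N)$ inherits its coaction from $\lambda^X$ alone. Applying $\chi_{M,N}$ and commuting the left coaction to the outside reduces, via pulling $\lambda^X$ across the action $\alpha$, to the Hopf trimodule axiom that $\alpha$ is a left $B$-comodule morphism (the top-left equation of Figure~\ref{fig:hopf-trimodule}); after combining this with coassociativity of $\lambda^M$ and the standard manipulations of the braiding, both composites collapse to the same diagram.

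The main difficulty is not conceptual but bookkeeping, namely the careful tracking of over/undercrossings together with the grey cotensor annotations introduced in Section~\ref{sec:bicomodules}. Carrying out the argument entirely in the graphical calculus makes the cancellations transparent and keeps the proof short; one diagram per item should suffice.
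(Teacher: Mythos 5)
Your plan matches the paper's proof: both parts are verified in the graphical calculus, with well-definedness reducing (via the two cotensor annotations and coassociativity) to the axiom that the action is a right \(B\)-comodule morphism, and the left-colinearity of \(\chi\) reducing to the axiom that the action is a left \(B\)-comodule morphism together with coassociativity of the coactions. The only remaining work is the diagrammatic bookkeeping you describe, which is exactly what the paper's figures carry out.
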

\begin{proof}
  To show well-definedness,
  we need to show that the image of \(\chi_{M, N}\) is contained in \(X \cotens (M \kotimes N)\),
  for all \(M, N \in \bbcomod \).
  This follows by the following calculation\\
  \[
    \tikzexternaldisable

    \tikzexternalenable
  \]\\
\end{proof}

\begin{lemma}\label{lem:interchange-is-braiding}
  The interchange morphism is a braiding; i.e.,
  it is natural in both variables, and the following diagrams commute,
  for all \(M, N, P \in \bbcomod \):
  \begin{equation} \label{eq:interchange-yb}
    \begin{mytikzcd}[ampersand replacement=\&]
      {M \kotimes (N \kotimes (X \cotens P))} \&\& {M \kotimes (X \cotens (N \kotimes P))} \\
      {(M \kotimes N) \kotimes (X \cotens P)} \& {X \cotens ((M \kotimes N) \kotimes P)} \& {X \cotens (M \kotimes (N \kotimes P))}
      \arrow["{M \kotimes \chi_{N, P}}", from=1-1, to=1-3]
      \arrow["{\chi_{M, N \kotimes P}}", from=1-3, to=2-3]
      \arrow["\alpha"', from=1-1, to=2-1]
      \arrow["{\chi_{M \kotimes N, P}}"', from=2-1, to=2-2]
      \arrow["\alpha"', from=2-2, to=2-3]
    \end{mytikzcd}
  \end{equation}
  \begin{equation} \label{eq:interchange-unital}
    \begin{mytikzcd}[ampersand replacement=\&]
      {\Bbbk \kotimes (X \cotens M)} \&\& {X \cotens (\Bbbk \kotimes M)} \\
      \& {X \cotens N}
      \arrow["{\chi_{\Bbbk,N}}", from=1-1, to=1-3]
      \arrow["{X \cotens l_N}", from=1-3, to=2-2]
      \arrow["{l_{X \cotens M}}"', from=1-1, to=2-2]
    \end{mytikzcd}
  \end{equation}
\end{lemma}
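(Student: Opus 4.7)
The plan is to establish each of the three claims—naturality in $M$ and $N$, the hexagon identity~\eqref{eq:interchange-yb}, and the unital identity~\eqref{eq:interchange-unital}—using the string-diagrammatic calculus of Section~\ref{sec:bicomodules}, exploiting the axioms of a Hopf trimodule together with the coassociativity of $\lambda$ and the associativity and unitality of $\alpha$. Each claim reduces to a manipulation of the diagram in Definition~\ref{def:bicomodule-interchange}.

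First, for naturality, let $f\from M \to M'$ and $g\from N \to N'$ be morphisms of left $B$-comodules. The defining diagram for $\chi$ uses only the left coaction on $M$, the left action $\alpha$ of $B$ on $X$, and the swap map. Naturality in $M$ then follows immediately from $f$ being a left comodule morphism (so $f$ commutes with $\lambda_M$), and naturality in $N$ follows because $g$ appears only on an inert strand to the right of the swap. This is a direct diagrammatic verification, in the spirit of Lemma~\ref{lem:interchange-properties}.

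For the hexagon~\eqref{eq:interchange-yb}, I would compute both sides starting from $M \kotimes N \kotimes (X \cotens P)$ and show they agree strand-by-strand. Going around by $\chi_{M, N \kotimes P} \circ (M \kotimes \chi_{N, P})$ produces two successive coactions: first on $N$, whose $B$-factor acts on $X$, then on $M$, whose $B$-factor acts on the result. By associativity of $\alpha$, this is equal to letting the product (in $B$) of the two $B$-factors act once on $X$. On the other side, $\chi_{M \kotimes N, P}$ uses the left coaction on $M \kotimes N$, which is (modulo the swap of $B$ past $N$) precisely the multiplication in $B$ applied to $\lambda_M$ and $\lambda_N$; after this single action, the diagrams coincide. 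Well-definedness at each intermediate stage is automatic by Lemma~\ref{lem:interchange-properties}, using that $\alpha$ is a bicomodule morphism.

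For the unital identity~\eqref{eq:interchange-unital}, the left coaction on $\Bbbk$ is $1 \mapsto 1_B \otimes 1$, and by unitality of $\alpha$ the element $1_B$ acts as the identity on $X$; after the swap the composite collapses to $l_{X \cotens N}$, as required. Remark~\ref{rmk:interchange-unnessary-cotens} may be useful here to eliminate a redundant cotensor action from the diagrammatic computation.

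The main obstacle will be the hexagon~\eqref{eq:interchange-yb}, specifically the bookkeeping of the two coactions and the single combined action: one must use coassociativity of $\lambda$ on $M \kotimes N$ to split the tensor coaction into individual $\lambda_M$ and $\lambda_N$, use associativity of $\alpha$ to combine two successive $B$-actions into one, and carefully track the order of the resulting braidings of $B$-strands past $N$. This manipulation is analogous to the proof that the Yetter--Drinfeld braiding satisfies the braid relation, and proceeds entirely by standard diagrammatic moves.
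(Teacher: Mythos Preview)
Your proposal is correct and takes essentially the same approach as the paper: naturality in $N$ is immediate, naturality in $M$ follows from $f$ being a left comodule morphism together with naturality of the symmetry, the hexagon~\eqref{eq:interchange-yb} reduces to associativity of the action $\alpha$ on $X$, and~\eqref{eq:interchange-unital} to its unitality. The paper's argument is even more terse than yours---it simply exhibits the two string diagrams for the hexagon and notes they agree by associativity of the action---so your mention of coassociativity and the bicomodule property of $\alpha$ is not needed here, though it does no harm.
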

\begin{proof}
  It is immediate that \(\chi\) is natural in its second variable.
  To prove naturality in the first variable,
  let \(f \from M \to M'\) be a left comodule morphism.
  Then\\
  \[

  \]\\
  This is immediately seen to be true by associativity of the action on \(X\).
  Diagram~\eqref{eq:interchange-unital} follows by the unitality of the action on \(X\).
\end{proof}

In particular, Lemmas~\ref{lem:interchange-properties} and~\ref{lem:interchange-is-braiding}
taken together say that the well-defined natural transformation
\[
  \chi\from \blank \kotimes (X \cotens \bblank) \nt X \cotens (\blank \kotimes \bblank)
\]
satisfies Diagrams~\eqref{eq:interchange-yb} and~\eqref{eq:interchange-unital}.
This, in turn, yields the following result.

\begin{proposition}\label{prop:interchange-comodule-functor}
  The pair \((X \cotens \blank, \chi)\) defines a lax \( \bbcomod \)-module functor.
\end{proposition}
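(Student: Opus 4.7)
The proof is essentially an assembly of the work done in Lemmas~\ref{lem:interchange-properties} and~\ref{lem:interchange-is-braiding}, once we match each of the data and axioms of a lax module functor against something already verified. The plan is therefore to identify each ingredient and comment on what (if anything) still needs to be checked.

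First I would verify that the assignment $N \mapsto X \cotens N$ genuinely lands in $\bbcomod$ and is functorial. The cotensor product $X \cotens N$ inherits a left $B$-comodule structure from the left coaction of $X$ (the right coaction of $X$ and left coaction of $N$ are equalized in the very definition of $\cotens$, see Example~\ref{ex:cotensor-product-of-bicomodules}), and functoriality in $N$ comes from functoriality of equalizers applied to left $B$-comodule morphisms. This is routine and graphically transparent in the calculus set up in Section~\ref{sec:bicomodules}.

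Next I would collect the coherence data. The candidate lax structure morphism is $\chi_{M,N}\from M \kotimes (X \cotens N) \to X \cotens (M \kotimes N)$ of Definition~\ref{def:bicomodule-interchange}. By Lemma~\ref{lem:interchange-properties}, each $\chi_{M,N}$ factors through the equalizer defining the codomain, and is itself a morphism of left $B$-comodules, so $\chi$ is a morphism in $\bbcomod$. By Lemma~\ref{lem:interchange-is-braiding} it is natural in both variables. Thus $\chi$ gives a natural transformation $\blank \lact (X \cotens \bblank) \nt X \cotens (\blank \lact \bblank)$ in the required category.

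Finally, I would match the two coherence axioms of Definition~\ref{def:lax-module-functor} against the two diagrams in Lemma~\ref{lem:interchange-is-braiding}. Diagram~\eqref{eq:interchange-yb} is precisely the associativity pentagon for the lax module functor structure, with the associator of $\bbcomod$ appearing on the lower path; Diagram~\eqref{eq:interchange-unital} is exactly the unit axiom relating $\chi_{\Bbbk, N}$ to the left unitor of the action of $\kVect$ on $\bbcomod$. No further string-diagrammatic computation is needed: the only part of the argument that required real work was already done in the two preceding lemmas, most notably the nontrivial passage through the Hopf trimodule axioms of Figure~\ref{fig:hopf-trimodule} used to prove well-definedness and naturality of $\chi$. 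I therefore do not expect any serious obstacle; the main point is only to ensure that the conventions and directions of the unitors and associators in $\bbcomod$ match those implicit in Lemma~\ref{lem:interchange-is-braiding}, which is immediate.
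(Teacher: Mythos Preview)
Your proposal is correct and matches the paper's approach exactly: the paper simply observes, in the sentence immediately preceding the proposition, that Lemmas~\ref{lem:interchange-properties} and~\ref{lem:interchange-is-braiding} together establish that $\chi$ is a well-defined natural transformation satisfying Diagrams~\eqref{eq:interchange-yb} and~\eqref{eq:interchange-unital}, and that this is precisely the data of a lax $\bbcomod$-module functor. There is no additional proof given in the paper beyond this assembly, so your reading is spot on.
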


Building on the above proposition,
we may extend this correspondence to morphisms.

\begin{lemma}\label{lem:morphism-of-hopf-modules->module-transformation}
  Let \(f \in \Trimod(X, Y)\) be a morphism of Hopf trimodules.
  Then
  \[
    f \cotens \blank \from X \cotens \blank \nt Y \cotens \blank
  \]
  is a \( \bbcomod \)-module transformation.
\end{lemma}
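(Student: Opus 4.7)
The plan is to unpack what it means for $f \cotens \blank$ to be a $\bbcomod$-module transformation in the sense of Definition~\ref{def:module-transformation}, and verify the two needed conditions: that $f \cotens \blank$ is a well-defined natural transformation, and that it satisfies the compatibility axiom with the lax module structures $\chi^{X}$ and $\chi^{Y}$ coming from Proposition~\ref{prop:interchange-comodule-functor}. Explicitly, for all $M,N \in \bbcomod$, we must show
\[
\chi^{Y}_{M,N} \circ \bigl(M \kotimes (f \cotens N)\bigr) \;=\; (f \cotens (M \kotimes N)) \circ \chi^{X}_{M,N}.
\]

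First I would address well-definedness: the map $f \kotimes N \from X \kotimes N \to Y \kotimes N$ restricts to a morphism $f \cotens N \from X \cotens N \to Y \cotens N$ precisely because $f$ is a right $B$-comodule morphism, so $\rho^{Y} \circ f = (f \kotimes B)\circ \rho^{X}$, and hence the image of $X \cotens N$ under $f \kotimes N$ equalises $\rho^{Y}\kotimes N$ and $Y \kotimes \lambda^{N}$. Naturality in $N$ is then automatic from functoriality of the cotensor product. To establish the module transformation axiom, I would translate the statement into the string-diagrammatic calculus of Section~\ref{sec:bicomodules} and deform the diagram for $\chi^{X}$ followed by $f$ on the $X$-strand: using that $f$ is a left $B$-comodule morphism, we can slide $f$ upward past the coaction on the $M$-strand used to produce the $B$-label, and using that $f$ is a left $B$-module morphism, we can slide $f$ downward past the action node, converting the instance of the action $\alpha^{X}$ into $\alpha^{Y}$. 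What results is precisely $\chi^{Y}$ composed with $M \kotimes (f \cotens N)$.

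The main obstacle is bookkeeping rather than conceptual: the diagrams for $\chi^X$ and $\chi^Y$ carry both the (grey-labelled) cotensor constraints and the interchange braiding, so one must check that every intermediate rearrangement remains well-defined on the cotensor products involved. By Remark~\ref{rmk:interchange-unnessary-cotens}, the identifications needed on the $B$-strand emerging from $M$ are already built into the bicomodule structure, so the only non-formal step is precisely the invocation of the three Hopf trimodule morphism properties of $f$ in the order described above. Once these are applied, the equality of the two composites is immediate, yielding the claim.
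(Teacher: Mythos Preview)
Your approach is essentially the same as the paper's, and the key step---sliding $f$ past the action node because $f$ is a left $B$-module morphism---is exactly what the paper uses. The paper's proof is in fact a single line: after writing the equation $(f \cotens (M \kotimes N)) \circ \chi^X_{M,N} = \chi^Y_{M,N} \circ (M \kotimes (f \cotens N))$ in string diagrams, it simply says this ``follows immediately from $f$ being a module morphism.''

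One small piece of bookkeeping in your write-up is off. You invoke the left $B$-comodule property of $f$ to ``slide $f$ upward past the coaction on the $M$-strand,'' but that coaction is applied to $M$, not to $X$; since $f$ lives on the $X$-strand and the coaction lives on the $M$-strand, their commutation is the interchange law in a monoidal category and uses nothing about $f$. The left $B$-comodule property of $f$ is genuinely needed, but for a different reason: it ensures that each component $f \cotens N$ is a morphism in $\bbcomod$, so that $f \cotens \blank$ really is a natural transformation between functors landing in $\cat{V}$. The compatibility square itself only needs the left $B$-module property.
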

\begin{proof}
  We have to prove that
  \[
    \big(f \cotens (M \kotimes N)\big) \circ \chi^X_{M, N}
    =
    \chi^Y_{M, N} \circ \big(M \kotimes (f \cotens N)\big).
  \]
  In our graphical language, this means that\\
  \[

  \]\\
  which follows immediately from \(f\) being a module morphism.
\end{proof}

Recall that \(\cat{V} \defeq \bbcomod \).
As a result of the previous considerations,
there is a well-defined functor
\begin{equation} \label{eq:hopf-trimod-to-lax-module-functors--functor-definition}
  \Sigma \from \Trimod \to \mathbf{LexfLax}\cat{V}\text{-}\mathbf{Mod}(\cat{V}, \cat{V}), \qquad X \mapsto (X \cotens \blank, \chi)
\end{equation}
To finish the proof of Theorem~\ref{thm:hopf-trimodules-are-lax-monoidal-functors},
we have to show that \(\Sigma\) is monoidal, as well as an equivalence of categories.

\begin{lemma}\label{lem:cotens-of-hopf-modules-is-hopf-module}
  Let \(X\) and \(Y\) be two Hopf trimodules.
  Their cotensor product \(X \cotens Y\) is again a Hopf trimodule,
  where the left action is given diagonally by \(b(x \otimes y) \defeq b_{(1)}x \otimes b_{(2)}y\).\footnote{\,%
    Here, we use \emph{Sweedler notation} and write \(\Delta(b)\) as \(b_{(1)} \otimes b_{(2)}\).%
  }
\end{lemma}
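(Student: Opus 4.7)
The plan is to verify that the diagonal formula $\alpha^{X \cotens Y}(b \otimes (x \otimes y)) \defeq b_{(1)} x \otimes b_{(2)} y$ gives a well-defined action on the equalizer $X \cotens Y \hookrightarrow X \kotimes Y$, and that the resulting data---together with the induced bicomodule structure $(\lambda^X \cotens Y,\, X \cotens \rho^Y)$---satisfies the axioms of a Hopf trimodule from Definition~\ref{def:hopf-tri-module}.

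The substantive step is well-definedness, that is, that $b_{(1)} x \otimes b_{(2)} y$ again lies in $X \cotens Y$. Applying $\rho^X \kotimes Y$ and using the right-$B$-comodule morphism axiom for $\alpha^X$ (the corresponding identity of Figure~\ref{fig:hopf-trimodule}) yields
\[
  (\rho^X \kotimes Y)(b_{(1)} x \otimes b_{(2)} y) = b_{(1)(1)} x_{(0)} \otimes b_{(1)(2)} x_{(1)} \otimes b_{(2)} y,
\]
while the left-$B$-comodule morphism axiom for $\alpha^Y$ gives
\[
  (X \kotimes \lambda^Y)(b_{(1)} x \otimes b_{(2)} y) = b_{(1)} x \otimes b_{(2)(1)} y_{(-1)} \otimes b_{(2)(2)} y_{(0)}.
\]
By coassociativity of $\Delta$, both expressions can be rewritten with a threefold Sweedler symbol $b_{(1)} \otimes b_{(2)} \otimes b_{(3)}$, reducing the equality to
\[
  b_{(1)} x_{(0)} \otimes b_{(2)} x_{(1)} \otimes b_{(3)} y = b_{(1)} x \otimes b_{(2)} y_{(-1)} \otimes b_{(3)} y_{(0)},
\]
which follows by acting by $b$ on each side of the defining equalizer relation $x_{(0)} \otimes x_{(1)} \otimes y = x \otimes y_{(-1)} \otimes y_{(0)}$ via the diagonal left $B$-action on $X \kotimes B \kotimes Y$, with the middle $B$-factor acting on itself by left multiplication.

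Given well-definedness, the remaining axioms are routine. Associativity and unitality of $\alpha^{X \cotens Y}$ follow from $\Delta$ and $\varepsilon$ being algebra maps, together with the corresponding properties of $\alpha^X$ and $\alpha^Y$. The left-$B$-comodule morphism axiom for $\alpha^{X \cotens Y}$ reduces, after applying the axiom for $\alpha^X$ and a single use of coassociativity, to matching the diagonal left $B$-action on $B \kotimes (X \cotens Y)$; the right-$B$-comodule morphism axiom is verified symmetrically on the $Y$-factor. The only real obstacle is Sweedler index bookkeeping: in the graphical calculus of Section~\ref{sec:bicomodules}, each of the four checks becomes the one-step manipulation of sliding a comultiplication node through a braiding and invoking the corresponding identity of Figure~\ref{fig:hopf-trimodule} for $X$ or $Y$.
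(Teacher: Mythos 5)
Your proof is correct and follows essentially the same route as the paper: well-definedness of the diagonal action on the equalizer via the trimodule axioms for $\alpha^X$, $\alpha^Y$, coassociativity, and the defining cotensor relation, followed by the two comodule-morphism checks reducing to the corresponding axioms for $X$ and $Y$. The only difference is that you work in Sweedler notation where the paper carries out the identical manipulations in the string-diagram calculus of Section~\ref{sec:bicomodules}.
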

\begin{proof}
  First, we show that the action is well-defined as a map from \(B \kotimes (X \cotens Y) \to X \cotens Y\); i.e.,\\
  \[

    \tikzexternalenable
  \]\\
\end{proof}

\begin{proposition}\label{prop:braiding-composes-to-cotens}
  Let \(\chi^X\) and \(\chi^{Y}\) be as in Definition~\ref{def:bicomodule-interchange}.
  Then
  \[
    \chi^X \diamond \chi^Y \cong \chi^{X \cotens Y},
  \]
  where \(\diamond\) is the multiplicative cell for lax module morphisms; i.e.,
  \[
    \big(X \cotens \blank, \chi^X\big) \diamond \big(Y \cotens \blank, \chi^Y\big) = \big(X \cotens Y \cotens \blank, \chi^X \diamond \chi^Y\big).
  \]
  In other words, the functor \(\Sigma\)
  from Equation~\eqref{eq:hopf-trimod-to-lax-module-functors--functor-definition}
  is monoidal.
\end{proposition}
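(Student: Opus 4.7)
The plan is to compute both lax \(\cat{V}\)-module functor structures explicitly---using the string diagrammatic conventions set up in Section~\ref{sec:bicomodules}---and show that they coincide by a single application of coassociativity of the left \(B\)-coaction on~\(M\). Recall that, given two lax module functors \((F, F_{\mathsf{a}})\) and \((G, G_{\mathsf{a}})\) from \(\cat{V}\) to \(\cat{V}\), the lax structure \((FG)_{\mathsf{a}}\) on the composite is, for \(V \in \cat{V}\) and \(N \in \cat{V}\), given by
\[
  V \kotimes FG(N)
  \xrightarrow{F_{\mathsf{a};V,G(N)}} F(V \kotimes G(N))
  \xrightarrow{F G_{\mathsf{a};V,N}} FG(V \kotimes N).
\]
Applying this with \(F = X \cotens \blank\) and \(G = Y \cotens \blank\), the composite structure \((\chi^X \diamond \chi^Y)_{M, N}\) is the composition
\[
  M \kotimes \bigl(X \cotens (Y \cotens N)\bigr)
  \xrightarrow{\chi^X_{M, Y \cotens N}} X \cotens \bigl(M \kotimes (Y \cotens N)\bigr)
  \xrightarrow{X \cotens \chi^Y_{M, N}} X \cotens \bigl(Y \cotens (M \kotimes N)\bigr).
\]

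First I will unfold this composite graphically: the first step uses the left \(B\)-coaction on \(M\) and then lets the resulting copy of \(B\) act on \(X\) (braiding \(M\) past \(X\) in the process), while the second step repeats the procedure on the \emph{residual} factor of \(M\)---namely, coact again on \(m_{(0)}\), and use the output to act on \(Y\), braiding \(M\) past \(Y\). The resulting string diagram has two coactions stacked on \(M\), one feeding into \(X\) and the other into \(Y\), separated by the coassociativity box applied to \(M\).

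Next I will compute \(\chi^{X \cotens Y}_{M, N}\) directly from Definition~\ref{def:bicomodule-interchange}, using the diagonal left \(B\)-action on \(X \cotens Y\) given by Lemma~\ref{lem:cotens-of-hopf-modules-is-hopf-module}. By that lemma, this action is the composite
\[
  B \kotimes (X \cotens Y)
  \xrightarrow{\Delta \otimes X \otimes Y}
  B \kotimes B \kotimes (X \cotens Y)
  \xrightarrow{\alpha^X \otimes \alpha^Y}
  X \cotens Y,
\]
so the string diagram for \(\chi^{X \cotens Y}_{M, N}\) begins with a single coaction on \(M\), then comultiplication on the resulting \(B\), with the two copies of \(B\) acting on \(X\) and \(Y\) respectively; meanwhile the residual \(m_{(0)}\) braids past both factors.

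The two string diagrams now differ only in the order of ``coact twice on \(M\)'' versus ``coact once on \(M\) and comultiply on \(B\)'', so they agree by coassociativity of the left \(B\)-coaction \(\lambda^M\); this is precisely the same coassociativity step that appears in Lemma~\ref{lem:interchange-is-braiding} and in Figure~\ref{fig:cotens-of-hopf-modules-is-hopf-module-proof}. Well-definedness of both morphisms as landing in the appropriate cotensor product (i.e.\ compatibility with the equalised actions annotated in grey) follows from Lemma~\ref{lem:interchange-properties} applied to \(X\), to \(Y\), and to \(X \cotens Y\). I expect the main bookkeeping challenge to be not the computation itself but merely verifying that the intermediate morphism \(X \cotens (M \kotimes (Y \cotens N))\) correctly factors through the cotensor product so that the composite is well-defined; this is a direct consequence of Remark~\ref{rmk:interchange-unnessary-cotens} together with Lemma~\ref{lem:interchange-properties}. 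Finally, naturality in \(M\) and \(N\) of the resulting identification is automatic, so the composite isomorphism \(\chi^X \diamond \chi^Y \cong \chi^{X \cotens Y}\) furnishes the required multiplicative coherence making \(\Sigma\) monoidal; the unit coherence is the canonical identification \(B \cotens \blank \cong \Id_{\cat{V}}\) as lax \(\cat{V}\)-module functors.
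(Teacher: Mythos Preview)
Your proposal is correct and follows essentially the same route as the paper: both arguments reduce the multiplicative coherence \(\chi^X \diamond \chi^Y = \chi^{X \cotens Y}\) to a single application of coassociativity of the left \(B\)-coaction on \(M\) (together with naturality of the braiding), and both dispose of the unit coherence as immediate. The paper's proof is simply the terse string-diagram version of the computation you spell out in words; your additional remarks on well-definedness of the intermediate morphisms are sound but are already covered by the earlier lemmas the paper has established.
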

\begin{proof}
  Associativity follows from coassociativity of the coaction and naturality of the braiding.\\
  \[

  \]\\[-0.7em]

  Unitality is immediate.
\end{proof}

\subsubsection{Proof of Theorem~\ref{thm:hopf-trimodules-are-lax-monoidal-functors}}

It is left to show that \(\Sigma\)
from Equation~\eqref{eq:hopf-trimod-to-lax-module-functors--functor-definition}
is an equivalence.

\begin{proposition}\label{prop:sigma-fully-faithful}
  The functor \(\Sigma\) is fully-faithful.
\end{proposition}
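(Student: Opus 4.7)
The plan is to deduce the claim from Takeuchi's Eilenberg--Watts theorem (Proposition~\ref{TakeuchiEilenbergWatts}), which---applied to $\lComod{B}$ by left/right symmetry---yields a bijection between $B$-$B$-bicomodule morphisms $X \to Y$ and natural transformations $X \cotens \blank \nt Y \cotens \blank$ of left exact finitary functors $\cat{V} \to \cat{V}$, both given by $f \mapsto f \cotens \blank$. The only real content will be to pick out, among these, the lax $\cat{V}$-module transformations.

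First, observe that $\Sigma$ on morphisms fits into a commutative square
\[
\begin{tikzcd}[ampersand replacement=\&]
  \Trimod(X, Y) \arrow[r, "\Sigma"] \arrow[d, hook] \& \mathbf{LexfLax}\cat{V}\text{-}\mathbf{Mod}(\cat{V}, \cat{V})(\Sigma X, \Sigma Y) \arrow[d, hook] \\
  \biComod{B}{B}(X, Y) \arrow[r, "\cong"'] \& \mathbf{Lexf}(\cat{V}, \cat{V})(X \cotens \blank, Y \cotens \blank)
\end{tikzcd}
\]
whose vertical inclusions forget the compatibility with the left $B$-action (respectively with the interchange $\chi$) and whose horizontal arrows both send $f$ to $f \cotens \blank$; commutativity is tautological, and Lemma~\ref{lem:morphism-of-hopf-modules->module-transformation} ensures the top arrow is well defined. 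Since the bottom arrow is a bijection and both verticals are injective, $\Sigma$ is automatically faithful.

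For fullness, fix a lax $\cat{V}$-module transformation $\phi$ and let $f \from X \to Y$ be the unique bicomodule morphism with $\phi = f \cotens \blank$ furnished by the bottom bijection---concretely, $f = \tau_Y \circ \phi_B \circ \tau_X^{-1}$, where $\tau_Z \from Z \cotens B \xrightarrow{\sim} Z$, $z \otimes b \mapsto \epsilon(b) z$, is the canonical bicomodule isomorphism with inverse $z \mapsto z_{(0)} \otimes z_{(1)}$. By commutativity of the square it then suffices to verify that this bicomodule morphism additionally commutes with the left $B$-action, so that it lies in the top-left corner.

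To extract the $B$-action compatibility, I will evaluate the naturality condition $\chi^Y_{M, N} \circ (M \kotimes \phi_N) = \phi_{M \kotimes N} \circ \chi^X_{M, N}$ at $M = N = B$, each regarded as a left $B$-comodule via $\Delta$, then precompose with $B \kotimes \rho^X$ and postcompose with $\tau_Y \circ (Y \cotens (B \kotimes \epsilon))$. Using the explicit formula $\chi^X_{B, B}(b \otimes x \otimes b') = b_{(1)} \cdot x \otimes b_{(2)} \otimes b'$ derived from Definition~\ref{def:bicomodule-interchange}, together with the counit axioms $(\id \otimes \epsilon) \circ \rho^X = \id$ and $(\id \otimes \epsilon) \circ \Delta = \id$, the left side telescopes to $b \mapsto b \cdot f(x)$ while the right side telescopes to $b \mapsto f(b \cdot x)$; equality of the two yields the Hopf trimodule condition on $f$. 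The main obstacle is keeping the Sweedler bookkeeping straight in this trace, ensuring that the left action genuinely reappears on each side after the counit maps are applied; once carried out, fullness---and hence the proposition---follows.
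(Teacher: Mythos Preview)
Your argument is correct and essentially the same as the paper's: both invoke Proposition~\ref{TakeuchiEilenbergWatts} to reduce fullness to checking that the induced map $f = \overline{\varphi_B}$ is a Hopf trimodule morphism, then extract the left $B$-module compatibility from the module-transformation axiom evaluated at $M=N=B$. The one difference is that you use Takeuchi to get the bicomodule conditions on $f$ for free, whereas the paper re-verifies them by separate string-diagram computations (Figure~\ref{fig:sigma-phi-left-comodule-morphism}); your route is slightly more economical in this respect, but otherwise the two proofs coincide.
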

\begin{proof}
  To show that \(\Sigma\) is faithful,
  suppose that \(f, g \from X \to Y\) are morphisms of Hopf trimodules,
  such that \(f \cotens \blank = g \cotens \blank\).
  Then
  \[
    \begin{mytikzcd}[ampersand replacement=\&]
      {X \cotens B} \& {Y \cotens B} \\
      X \& Y
      \arrow["{f \cotens B}", shift left=1, from=1-1, to=1-2]
      \arrow["g\cotens B"', shift right=1, from=1-1, to=1-2]
      \arrow["\cong"', from=1-1, to=2-1]
      \arrow["\cong", from=1-2, to=2-2]
      \arrow["f", shift left=1, from=2-1, to=2-2]
      \arrow["g"', shift right=1, from=2-1, to=2-2]
    \end{mytikzcd}
  \]
  commutes, so the result follows.

  For the claim that \(\Sigma\) is full,
  suppose that \(\varphi \from X \cotens \blank \nt Y \cotens \blank\) is a \(\cat{V}\)-module transformation.
  By Proposition~\ref{TakeuchiEilenbergWatts},
  we need only show that the induced arrow
  \begin{equation}\label{eq:sigma-full-induced-morphism}
    \overline{\varphi_B} \from X \cong X \cotens B \xrightarrow{\varphi_B} Y \cotens B \cong Y
  \end{equation}
  is a morphism of Hopf trimodules.
  For \(M \in \Trimod \),
  the morphism \(\varphi_M \from X \cotens M \to Y \cotens M\)
  and the induced morphism~\eqref{eq:sigma-full-induced-morphism}
  can, in string diagrams,
  be depicted like this:\\
  \[

    \tikzexternalenable
  \]
\end{proof}

\begin{proposition}\label{prop:sigma-essentially-surjective}
  The functor \(\Sigma\) is essentially surjective.
\end{proposition}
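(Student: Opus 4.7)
The plan is as follows. Given a left exact, finitary lax \(\cat{V}\)-module endofunctor \((F, F_{\mathsf{a}})\) of \(\cat{V}\), I would first apply Proposition~\ref{TakeuchiEilenbergWatts} (after extending \(F\) to a finitary left exact functor on \(\tetramod[B]\)) to produce a \(B\)-\(B\)-bicomodule \(X \defeq F(B)\), together with a natural isomorphism \(\psi\from F(\blank) \xiso X \cotens \blank\) of the underlying \(\kVect\)-module functors. The right \(B\)-comodule structure on \(X\) is induced from the right \(B\)-comodule structure on the regular bicomodule \(B\) by functoriality, while the left coaction is the one \(F\) automatically produces, since it lands in \(\cat{V}\).

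Next, I would produce a left \(B\)-action on \(X\) using \(F_{\mathsf{a}}\). Viewing \(\mu_B\from B \otimes B \to B\) as a morphism of left \(B\)-comodules (using the bialgebra axiom to compare the diagonal coaction on \(B \otimes B\) with the standard coaction on \(B\)), define
\[
  \alpha \defeq \psi_B \circ F(\mu_B) \circ F_{\mathsf{a}; B, B} \circ (B \otimes \psi_B^{-1})\from B \otimes X \to X.
\]
Associativity of \(\alpha\) follows from the associativity axiom of \(F_{\mathsf{a}}\) combined with the associativity of \(\mu_B\) (applied after \(F\)); unitality reduces, via the unitality axiom of \(F_{\mathsf{a}}\), to \(\mu_B \circ (\eta_B \otimes B) = \id_B\). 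Compatibility of \(\alpha\) with the right coaction is automatic, because the right coaction on everything in sight is functorial in the \(B\)-on-the-right coming from \(B \in \Trimod\), and both \(F_{\mathsf{a}; B, B}\) and \(F(\mu_B)\) are \(\cat{V}\)-morphisms. Compatibility of \(\alpha\) with the left coaction comes from naturality of \(F_{\mathsf{a}}\) in its first argument (applied to \(\Delta_B\from B \to B \otimes B\), which is a left comodule morphism into the diagonally coacted target) together with the fact that the components \(F_{\mathsf{a}; V, M}\) are themselves left \(B\)-comodule morphisms. Putting these together endows \((X, \lambda_X, \rho_X, \alpha)\) with the structure of a Hopf trimodule in the sense of Definition~\ref{def:hopf-tri-module} and Figure~\ref{fig:hopf-trimodule}.

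Finally, I would upgrade \(\psi\) to an isomorphism of lax \(\cat{V}\)-module functors \(\Sigma(X) \cong (F, F_{\mathsf{a}})\). Concretely, this amounts to the equation
\[
  \chi^X_{V, M} \circ (V \otimes \psi_M) = \psi_{V \otimes M} \circ F_{\mathsf{a}; V, M},
\]
for all \(V, M \in \cat{V}\), where \(\chi^X\) is the lax structure of Definition~\ref{def:bicomodule-interchange}. By the same argument that underlies Proposition~\ref{prop:sigma-fully-faithful} (every natural transformation between finitary left exact \(\cat{V}\)-valued functors on \(\cat{V}\) is determined by its value at \(B\)), it suffices to check this identity at \(M = B\). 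At \(M = B\), both sides unwind, via the string-diagrammatic description of \(\chi^X_{V, B}\) and the very definition of \(\alpha\), to the composite \(\psi_{V \otimes B} \circ F(V \otimes \mu_B) \circ F_{\mathsf{a}; V, B \otimes B} \circ (V \otimes B \otimes \psi_B^{-1})\) applied suitably; this is a tautology.

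The main obstacle will be the two coaction compatibilities in the second step: one must be careful that \(F_{\mathsf{a}; V, M}\) factors through the equalizer defining \(X \cotens (V \otimes M)\) and that the naturality of \(F_{\mathsf{a}}\) under \(\Delta_B\) and \(\eta_B\) translates into precisely the identities in Figure~\ref{fig:hopf-trimodule}. The remaining diagrammatic verifications, while long, are structural consequences of the axioms and identifications already in place.
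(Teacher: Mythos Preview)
Your overall approach matches the paper's: realise the underlying functor via Takeuchi as \(X \cotens \blank\), manufacture a left \(B\)-action on \(X\) from \(F_{\mathsf{a}; B, B}\) (the paper writes this as the composite of \(\chi_{B,B}\) with multiplication and counit, which is your \(\psi_B \circ F(\mu_B) \circ F_{\mathsf{a}; B, B} \circ (B \otimes \psi_B^{-1})\) after unwinding), check that this makes \(X\) a Hopf trimodule, and then verify the lax structures agree at the generator.

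Two points deserve tightening. First, your treatment of the coaction compatibilities is inverted and contains an error. Compatibility of \(\alpha\) with the \emph{left} coaction is the one that is genuinely automatic: \(\alpha\) is a composite of \(\cat{V}\)-morphisms (including \(F(\mu_B)\), which is well-defined because \(\mu_B\) is a comodule map by the bialgebra axiom). Your auxiliary claim that \(\Delta_B\from B \to B \otimes B\) is a left comodule morphism into the diagonally-coacted target is false in general (compute both sides for a non-cocommutative \(B\)), but fortunately unnecessary. Compatibility with the \emph{right} coaction is the one requiring an argument; your phrase ``functorial in the \(B\)-on-the-right'' is the correct intuition, but to make it precise you need exactly the paper's observation: \(F_{\mathsf{a}; B, B}\) is the value at the injective cogenerator \(B\) of the natural transformation \(F_{\mathsf{a}; B, \blank}\) between left exact finitary functors, hence (via Proposition~\ref{TakeuchiEilenbergWatts}) a right \(B\)-comodule morphism.

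Second, in the final step you reduce to \(M = B\) but leave \(V\) arbitrary, and then declare the resulting identity a tautology. It is not, for general \(V\): \(\chi^X_{V,B}\) is built from \(\alpha\), which in turn involves \(F_{\mathsf{a}; B, B}\), whereas the right-hand side involves \(F_{\mathsf{a}; V, B}\); relating them via the coaction of \(V\) is precisely where the false \(\Delta_B\) claim would have been needed. The clean fix (and what the paper does) is to apply the same Takeuchi reduction in the \(V\)-variable as well, so that it suffices to check the identity at \(V = M = B\), where it really does unwind to the definition of \(\alpha\).
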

\begin{proof}
  Suppose that \(\Phi \in \mathbf{LexfLax}\cat{V}\text{-}\mathbf{Mod}(\cat{V},\cat{V})\).
  Since \(\Phi\) is left exact and finitary,
  by Proposition~\ref{TakeuchiEilenbergWatts},
  there exists a \(B\)-\(B\)-bicomodule \(X\),
  such that \(\Phi \cong X \cotens \blank\) as functors.
  We will show that if \(X \cotens \blank \) is a lax module morphism,
  then \(X\) comes equipped with a left module structure,
  making it into a Hopf trimodule.

  Define a \(B\)-action on \(X\) in the following way:
  \[

  \]
  Above, we have chosen to represent \(\chi_{B,B}\)%
  —see Definition~\ref{def:bicomodule-interchange}—%
  by a single box.
  The claim is that this action turns \(X\) into a Hopf trimodule.

  For ease of notation, we will omit the grey action indicators in our string diagrammatic proofs,
  save when transforming the diagrams using the respective relations.

  To show that \(\alpha\) is a bicomodule morphism,
  it suffices to note that the left comodule morphism \(\chi_{B,B}\) is also a right comodule morphism,
  as \(\alpha\) is then composed of bicomodule morphisms.

  Notice that \(\chi\) is a natural transformation of functors which are left exact finitary in each variable, from
  \(\bbcomod \kotimes \bbcomod\) to \(\bbcomod\).
  Since \(\chi_{B,B}\) is the component of the transformation
  \[
    \chi_{B,\blank}\from B \kotimes (X \cotens \blank) \nt X \cotens (B \kotimes \blank)
  \]
  taken at the injective generator of \( \bbcomod \),
  it follows that it is a right comodule morphism;
  see Propositions~\ref{prop:left exact-is-enough-on-injectives}
  and~\ref{TakeuchiEilenbergWatts}.
  Similarly, \(\chi_{B,B}\) is the component of \( \chi_{\blank,B}\from \blank \kotimes (X \cotens B) \nt X \cotens (\blank \kotimes B) \) taken at the injective generator,
  so \(\chi_{B,B}\) also intertwines this additional comodule structure:
  \[

    \caption{}%
    \label{fig:sigma-essentially-surjective-hit}
  \end{figure}
\end{proof}

\subsection{Reconstruction for bialgebras via contramodules over Hopf trimodule algebras}\label{triconstruction}

\begin{definition}
  A \emph{Hopf trimodule algebra} comprises an algebra object in the monoidal category $\big(\Trimod, \cotens, B\big)$.
  As such it consists of a Hopf trimodule $A \in \Trimod$,
  together with trimodule morphisms $\mu\from A \cotens A \to A$ and $\eta\from B \to A$,
  satisfying associativity and unitality conditions.

  A \emph{left module over a Hopf trimodule algebra} is a left $A$-module object in the $\Trimod$-module category $\tetramod[B]{}$ in the sense of Definition~\ref{ModuleInModule}.
  As such, it comprises a left $B$-comodule $M$ together with a $B$-comodule homomorphism $A \cotens M \to M$, satisfying associativity and unitality conditions with respect to $\mu$ and $\eta$.
  Similarly we define right modules over $A$.
  A module over $A$ is \emph{free} if it is of the form $(A \cotens N, \mu \cotens N)$.
\end{definition}

\begin{definition}
  We refer to objects of the category $\Termod$ as \emph{Hopf termodules}. A \emph{Hopf termodule coalgebra} is a coalgebra object $C$ in the monoidal category $\big(\Termod, \otimes_{B}, B\big)$,
  and a \emph{comodule over $C$} is a left $B$-module $N$ together with a $B$-module homomorphism $N \to C \otimes_{B} N$, satisfying usual coaction axioms.
  Right comodules over $C$ are defined analogously.
\end{definition}

\begin{remark}
  Under the equivalence~\eqref{eq:hopf-trimod-to-lax-module-functors--functor-definition} between $\Trimod$ and $\mathbf{LexfLax}\cat{V}\text{-}\mathbf{Mod}(\cat{V}, \cat{V})$,
  the $\Trimod$-module category $\tetramod[B]{}$ correponds to the evaluation action of $\mathbf{LexfLax}\cat{V}\text{-}\mathbf{Mod}(\cat{V}, \cat{V})$ on $\cat{V}$.
  Thus, the category of modules over a Hopf trimodule algebra $A$ is precisely the Eilenberg--Moore category for the monad $A \cotens \blank$.
  The subcategory of free modules over $A$ is the image of the Kleisli category for this monad under the canonical embedding $\mathbf{Kl}(A \cotens \blank) \xrightarrow{\ \iota\ } \mathbf{EM}(A \cotens \blank)$.
\end{remark}

Let $A \in \Trimod$ be a Hopf trimodule algebra such that ${}^{B}A$ is quasi-finite, so that the corresponding lax $\tetramod[B]{}$-module functor $A \cotens \blank$ has a left adjoint $\on{cohom}_{B\blank}(A,\blank)$.
This left adjoint is then canon\-ically a comonad, by Proposition~\ref{einerKleinerKleisliÄquivalenz}.

\begin{definition}\label{contramodules}
  A \emph{left contramodule over $A$} is an object in \(\mathbf{EM}(\on{cohom}_{B\blank}(A,\blank))\).

  Explicitly, this means that a left contramodule is a left $B$-comodule $M$ together with a left $B$-comodule homomorphism $M \to \on{cohom}_{B\blank}(A,M)$, endowing $M$ with the structure of a comodule in $\tetramod[B]{}$ over the comonad $\on{cohom}_{B\blank}(A,\blank)$.
\end{definition}

We say that a contramodule is \emph{free} if it lies in the image of the canonical embedding
\[
  \mathbf{Kl}(\on{cohom}_{B\blank}(A,\blank)) \xrightarrow{\ \iota\ } \mathbf{EM}(\on{cohom}_{B\blank}(A,\blank)).
\]
In other words, it is of the form $\on{cohom}_{B\blank}(A,M)$, for a left $B$-comodule $M$, and its coaction is of the form
\[
  \on{cohom}_{B\blank}(A,M) \xrightarrow{\on{cohom}_{B\blank}(\mu, M)} \on{cohom}_{B\blank}(A\cotens A,M) \cong \on{cohom}_{B\blank}(A,\on{cohom}_{B\blank})(A,M).
\]
We denote the category of free contramodules by $A\text{\rm-ContramodFree}$.

Similarly, the category $C\text{\rm-Contramod}$ of left contramodules over a Hopf termodule coalgebra $C$ is
the Eilenberg--Moore category $\mathbf{EM}(\on{Hom}_{B\blank}(C,\blank))$ for the monad $\on{Hom}_{B\blank}(C,\blank)$ right adjoint to the comonad $C \otimes_{B} \blank$.
The free contramodules are defined analogously to the case of Hopf trimodules.

\begin{remark}
  Contramodules similar to those of Definition~\ref{contramodules} were studied extensively in more homological settings, see in particular~\cite{Pos}.
  Indeed, forgetting the left $B$-module structure, a Hopf trimodule algebra yields a so-called \emph{semialgebra}, and by forgetting the left $B$-comodule structure, a Hopf termodule coalgebra yields a so-called \emph{bocs}.
\end{remark}

\begin{lemma}\label{freecontramodmodmodmod}
  Given a Hopf trimodule algebra $A$,
  the action $V \triangleright (A \cotens M) \defeq A \cotens (V \otimes M)$
  endows the category $A\text{\rm-ModFree}$ of free $A$-modules with a canonical module category structure over $\tetramod[B]{}$.

  In a similar fashion, $A\text{\rm-ContramodFree}$ is a module category over $\bbcomod$ by means of the action
  $V \triangleright \on{cohom}_{B\blank}(A,N) \defeq \on{cohom}_{B\blank}(A,V \otimes N)$.
  These two $\tetramod[B]{}$-module categories are canonically equivalent.
\end{lemma}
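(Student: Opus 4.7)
The plan is to realize both $A\text{\rm-ModFree}$ and $A\text{\rm-ContramodFree}$ as Kleisli categories equipped with their canonical module structures from Section~\ref{sec:module-monads}, and then to invoke Propositions~\ref{einerKleinerKleisliÄquivalenz} and~\ref{KleisliMonadCorrespondence} to obtain the equivalence. By Theorem~\ref{thm:hopf-trimodules-are-lax-monoidal-functors}, the Hopf trimodule $A$ corresponds under $\Sigma$ to the lax $\tetramod[B]{}$-module functor $T \defeq A \cotens \blank$, and the trimodule algebra structure on $A$ endows $T$ with the structure of a lax $\tetramod[B]{}$-module monad on $\tetramod[B]{}$. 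Under the quasi-finiteness assumption, Proposition~\ref{locallyfiniteadjoints} supplies a left adjoint $K \defeq \on{cohom}_{B\blank}(A, \blank)$, which by Proposition~\ref{einerKleinerKleisliÄquivalenz} is a comonad; by Proposition~\ref{prop:dual-module-adjunctions} it is moreover canonically a strong $\tetramod[B]{}$-module functor, and hence an oplax $\tetramod[B]{}$-module comonad.

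First I would identify $A\text{\rm-ModFree}$ with $\mathbf{Kl}(T)$ via the fully faithful embedding $\iota\from\mathbf{Kl}(T) \hookrightarrow \mathbf{EM}(T)$ of Equation~\eqref{denotedbyiota}, whose essential image is by definition the category of free $A$-modules; symmetrically, the dual embedding identifies $A\text{\rm-ContramodFree}$ with $\mathbf{Kl}(K)$. Corollary~\ref{laxKlCorrespondence} then endows $\mathbf{Kl}(T)$ with its canonical $\tetramod[B]{}$-module structure for which $V \triangleright_{\mathbf{Kl}(T)} M = V \otimes M$ on objects. Transporting along $\iota$, where $\iota(V \triangleright_{\mathbf{Kl}(T)} M) = T(V \otimes M) = A \cotens (V \otimes M)$, yields exactly the formula $V \triangleright (A \cotens M) \defeq A \cotens (V \otimes M)$ asserted in the statement. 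The analogous argument in the comonadic setting---replacing lax by oplax and using the Kleisli category of a comonad---produces the $\tetramod[B]{}$-module structure on $A\text{\rm-ContramodFree}$ given by $V \triangleright \on{cohom}_{B\blank}(A, N) = \on{cohom}_{B\blank}(A, V \otimes N)$.

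For the equivalence, Proposition~\ref{einerKleinerKleisliÄquivalenz} provides a canonical isomorphism $\mathbf{Kl}(T) \cong \mathbf{Kl}(K)$, and Proposition~\ref{KleisliMonadCorrespondence} upgrades this to an equivalence of $\tetramod[B]{}$-module categories when both sides carry the Kleisli module structures described above. Composing with the identifications of the previous step yields the desired $\tetramod[B]{}$-module equivalence $A\text{\rm-ModFree} \simeq A\text{\rm-ContramodFree}$.

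The only genuine subtlety is the coherence check hidden in the claim that the canonical module structures on the two Kleisli categories transport to the stated explicit formulas; however, this reduces to the observation that the left adjoints $L_{\mathbf{Kl}(T)}$ and $L_{\mathbf{Kl}(K)}$ are strict $\tetramod[B]{}$-module functors (by Corollary~\ref{laxKlCorrespondence} and its dual), so that the actions on objects are inherited unchanged from $\tetramod[B]{}$ and are preserved by the identification of the two Kleisli categories. No further computation is required beyond invoking the general machinery already developed.
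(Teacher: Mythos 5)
Your argument is correct and follows essentially the same route as the paper: identify $A\text{\rm-ModFree}$ and $A\text{\rm-ContramodFree}$ with $\mathbf{Kl}(A\cotens\blank)$ and $\mathbf{Kl}(\on{cohom}_{B\blank}(A,\blank))$, endow them with the Kleisli module structures of Corollary~\ref{laxKlCorrespondence} and its dual, check the explicit formulas, and conclude via Propositions~\ref{einerKleinerKleisliÄquivalenz} and~\ref{KleisliMonadCorrespondence}. The only cosmetic slip is attributing the oplax $\tetramod[B]{}$-module structure on the left adjoint to Proposition~\ref{prop:dual-module-adjunctions} rather than to Porism~\ref{porism:doctrinal-module-adjunctions}, which is the statement actually transporting the lax structure on the right adjoint to an oplax one on the left.
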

\begin{proof}
  By definition, we have $A\text{\rm-ModFree} = \mathbf{Kl}(A \cotens \blank)$;
  since $A$ is a Hopf trimodule algebra,
  the monad $A \cotens \blank$ is a lax $\tetramod[B]{}$-module monad,
  so $\mathbf{Kl}(A \cotens \blank)$ is a $\tetramod[B]{}$-module category as described in Proposition~\ref{laxKlCorrespondence}.
  It is easy to check that the assignments described in the lemma correspond precisely to those of Proposition~\ref{laxKlCorrespondence}.
  Similar considerations apply to $A\text{\rm-ContramodFree}$, using the equality $A\text{\rm-ContramodFree} = \mathbf{Kl}(\on{cohom}_{B\blank}(A,\blank))$ and the fact that, since $\on{cohom}_{B\blank}(A,\blank)$ is the left adjoint of the lax $\tetramod[B]{}$-module monad $A \cotens \blank$, it is an oplax $\tetramod[B]{}$-module comonad, by Porism~\ref{porism:doctrinal-module-adjunctions}.
  The asserted equivalence between these two $\tetramod[B]{}$-module categories follows directly from Proposition~\ref{KleisliMonadCorrespondence}.
\end{proof}

Recall from~\cite[1.12]{Tak} that $\on{Cohom}_{B-}(A,-)$ is exact if and only if $A$ is an injective left $B$-comodule.

\begin{proposition}
 If\, $\leftidx{^B}{A}{}$ is injective, the $\bbcomod$-module category structure on $A\text{\rm-ContramodFree}$ of Lemma~\ref{freecontramodmodmodmod} extends to a $\bbcomod$-module category structure on $A\text{\rm-Contramod}$, in the sense of Theorem~\ref{extendingalways}.
\end{proposition}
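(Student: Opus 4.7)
The plan is to reduce the statement to the oplax--comonadic dual of Theorem~\ref{extendingalways}, applied to the comonad $K \defeq \on{cohom}_{B\blank}(A,\blank)$. Since $A \cotens \blank$ is a lax $\bbcomod$-module monad with left adjoint $K$, Porism~\ref{porism:doctrinal-module-adjunctions} equips $K$ with a canonical oplax $\bbcomod$-module comonad structure, and the associated Kleisli category $\mathbf{Kl}(K) = A\text{\rm-ContramodFree}$ carries precisely the $\bbcomod$-module structure described in Lemma~\ref{freecontramodmodmodmod}, by the oplax--comonadic analogue of Corollary~\ref{laxKlCorrespondence}.

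The key use of the hypothesis is the following: by~\cite[1.12]{Tak}, injectivity of ${}^B\!A$ makes $K$ an exact---in particular left exact---functor. Since $\bbcomod$ is abelian and its monoidal product is exact in each variable, all hypotheses are in place to dualise the construction of Subsection~\ref{LintinTime}. Concretely, I would define $V \blacktriangleright M$ on $\mathbf{EM}(K)$ via the Linton \emph{equalizer} dual to the coequalizer used there; Lemmas~\ref{rightexactaction}, \ref{isounitality}, \ref{actiononfree}, and~\ref{isoassociativity} then dualise verbatim, with left exactness of $K$ replacing the role of right exactness of $T$.

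The coherence of the resulting action---the content of Theorem~\ref{extendingalways} itself---follows from the opposite of the multiactegorical argument of Section~\ref{multicategorical}, in particular from the dual of Proposition~\ref{hillbility}. I anticipate the main work to be bookkeeping rather than substance: verifying that the module structure produced on $\mathbf{EM}(K) = A\text{\rm-Contramod}$ really does extend the one on $\mathbf{Kl}(K) = A\text{\rm-ContramodFree}$ described by Lemma~\ref{freecontramodmodmodmod}, and that it is essentially unique by the comonadic analogue of Theorem~\ref{thm:one-module-structure-on-EM}. This shows that $K$ is extendable in the sense dual to Definition~\ref{extendable}, yielding the required $\bbcomod$-module structure on $A\text{\rm-Contramod}$.
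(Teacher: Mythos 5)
Your proposal is correct and follows essentially the same route as the paper: the paper's proof is simply the observation that the comonadic dual of Theorem~\ref{extendingalways} applies once the comonad $\on{cohom}_{B\blank}(A,\blank)$ is left exact, which by~\cite[1.12]{Tak} holds precisely when $\leftidx{^B}{A}{}$ is injective. Your additional bookkeeping (the oplax comonad structure via Porism~\ref{porism:doctrinal-module-adjunctions}, the identification of the Kleisli category, and the dualisation of the Linton construction) is exactly the content the paper leaves implicit in invoking that dual.
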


\begin{proof}
 For the comonadic dual of Theorem~\ref{extendingalways}, it suffices that the comonad $\on{Cohom}_{B-}(A,-)$ is left exact, which is the case if and only if ${}^{B}A$ is injective, by~\cite[1.12]{Tak}.
\end{proof}

\begin{theorem}\label{infinitetrihopfreconstruction}
  Let $B$ be a bialgebra and let $\mathcal{M}$ be a locally finite abelian module category over $\bcomod$ which admits an object $X \in \mathbf{Ind}(\mathcal{M})$ such that:
  \begin{itemize}
    \item $X$ is a \(\bbcomod\)-injective \(\bbcomod\)-cogenerator;
    \item $\blank \lact X\from \bcomod \to \mathbf{Ind}(\mathcal{M})$ is exact; and
    \item $\blank \lact X\from \bbcomod \to \mathbf{Ind}(\mathcal{M})$ is quasi-finite.
  \end{itemize}
  Then there is a Hopf trimodule algebra $(A, \mu\from A \cotens A \to A, \eta\from B \to A)$, such that ${}^{B}A$ is quasi-finite and injective, hence finitely cogenerated injective, such that there is an equivalence of $\tetramod[B]{}$-module categories $\mathbf{Ind}(\mathcal{M}) \simeq A\text{\rm-Contramod}$, between $\mathbf{Ind}(\mathcal{M})$ and the category of left $A$-contramodules.
  The $\bbcomod$-module structure on $A\text{\rm-Contramod}$ is extended, in the sense of Definition~\ref{extendable}, from the category $A\text{\rm-ContramodFree}$ of free left $A$-contramodules.

  Further, this equivalence restricts to an equivalence $\mathcal{M} \simeq A\text{\rm-contramod}_{\on{f.d.}}$, between $\mathcal{M}$ and finite-dimensional $A$-contramodules.
\end{theorem}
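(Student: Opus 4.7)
My plan is to combine the reconstruction result Theorem~\ref{mainnonsenseinjective} with the identification of lax module functors and Hopf trimodules via Theorem~\ref{thm:hopf-trimodules-are-lax-monoidal-functors}, and to identify the resulting Eilenberg--Moore category of the comonad with the contramodule category via Definition~\ref{contramodules}.

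First, I apply Theorem~\ref{mainnonsenseinjective} to $\mathbf{Ind}(\mathcal{M})$ viewed as a $\tetramod[B]{}$-module category. The categories $\mathbf{Ind}(\mathcal{M})$ and $\tetramod[B]{}=\mathbf{Ind}(\bcomod)$ both have enough injectives by Proposition~\ref{locoinjectives}. The quasi-finiteness of $\blank \lact X$, together with left exactness (which follows from the exactness hypothesis), yields the existence of the left adjoint $\cohom{X, \blank}$ via Proposition~\ref{locallyfiniteadjoints}, so $X$ is coclosed. Combined with the $\tetramod[B]{}$-injectivity and $\tetramod[B]{}$-cogenerator hypotheses, Theorem~\ref{mainnonsenseinjective} gives a $\tetramod[B]{}$-module equivalence $\mathbf{Ind}(\mathcal{M}) \simeq \mathbf{EM}(\cohom{X, \blank \lact X})$ whose module structure on the Eilenberg--Moore side is extended from the Kleisli category.

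Second, I construct the Hopf trimodule algebra. Since $\blank \lact X$ is exact, Proposition~\ref{saftrex} provides a right adjoint $\hom{X,\blank}$, yielding a lax $\tetramod[B]{}$-module monad $T \defeq \hom{X, \blank \lact X}$ on $\tetramod[B]{}$ (Porism~\ref{porism:doctrinal-module-adjunctions}), with $\cohom{X, \blank \lact X}$ as its left adjoint. The monad $T$ is left exact as a composite of right adjoints; I verify it is also finitary using the quasi-finiteness hypothesis together with Proposition~\ref{saftlex}. Theorem~\ref{thm:hopf-trimodules-are-lax-monoidal-functors} then identifies $T$ with $A \cotens \blank$ for a Hopf trimodule $A \defeq T(B)=\hom{X,X}$, and the monad structure equips $A$ with the multiplication $\mu$ and unit $\eta$ of a Hopf trimodule algebra. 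The left $B$-comodule ${}^B A$ is quasi-finite because $T$ admits a left adjoint, and injective because $\cohom{X, \blank}$ is exact---left exact by Proposition~\ref{Cinjectives} (applied to the $\tetramod[B]{}$-injective $X$) and right exact as a left adjoint---so by \cite[1.12]{Tak}, ${}^B A$ is injective.

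Third, Definition~\ref{contramodules} gives $A\text{-Contramod}=\mathbf{EM}(\cohom{X, \blank \lact X})$ directly, so combining with step one yields $\mathbf{Ind}(\mathcal{M}) \simeq A\text{-Contramod}$ as $\tetramod[B]{}$-module categories; the module structure is extended from $\mathbf{Kl}(\cohom{X, \blank \lact X}) = A\text{-ContramodFree}$, with uniqueness guaranteed by Theorem~\ref{thm:one-module-structure-on-EM}. For the restriction, the compact objects of $\mathbf{Ind}(\mathcal{M})$ recover $\mathcal{M}$, and Proposition~\ref{locomonads} applied to the exact comonad $\cohom{X, \blank \lact X}$ identifies the compact objects of its Eilenberg--Moore category with a locally finite abelian category, which is precisely $A\text{-contramod}_{\on{f.d.}}$. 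The main obstacle is establishing the finitary property of $T$---equivalently, by Proposition~\ref{saftlex}, showing that $\blank \lact X$ sends compact objects of $\tetramod[B]{}$ to compact objects of $\mathbf{Ind}(\mathcal{M})$---which goes beyond the mere existence of the left adjoint supplied by quasi-finiteness and must be extracted by a careful interplay between the $\tetramod[B]{}$-injective cogenerator hypothesis and the Ind-extension.
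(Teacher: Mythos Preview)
Your proposal follows essentially the same route as the paper's proof: apply Theorem~\ref{mainnonsenseinjective} to obtain $\mathbf{Ind}(\mathcal{M})\simeq\mathbf{EM}(\cohom{X,\blank\lact X})$, use Theorem~\ref{thm:hopf-trimodules-are-lax-monoidal-functors} to identify the right adjoint monad $T=\hom{X,\blank\lact X}$ with $A\cotens\blank$ for a Hopf trimodule algebra $A$, deduce $\cohom{X,\blank\lact X}\cong\on{cohom}_{B\blank}(A,\blank)$ by uniqueness of adjoints, and read off the contramodule description and its compact restriction. The finitary obstacle you flag for $T$ is in fact also glossed over in the paper, which simply invokes Theorem~\ref{thm:hopf-trimodules-are-lax-monoidal-functors} without checking that hypothesis; note that your stated equivalence is slightly imprecise---Proposition~\ref{saftlex} applied to the adjunction $K\dashv T$ says $T$ is finitary iff $K=\cohom{X,\blank\lact X}$ preserves compacts, whereas ``$\blank\lact X$ preserves compacts'' is the (sufficient, stronger) condition equivalent to $\hom{X,\blank}$ itself being finitary.
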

\begin{proof}
  Since \(\blank \triangleright X\from \mathcal{C} \to \mathbf{Ind}(\mathcal{M})\) is right exact, its extension to \(\mathbf{Ind}(\mathcal{C})\) admits a right adjoint \(\hom{X,\blank}\) by Proposition~\ref{saftrex}.
  Similarly, since \(\blank \triangleright X\from \mathcal{C} \to \mathbf{Ind}(\mathcal{M})\) is left exact, so is its extension to \(\mathbf{Ind}(\mathcal{C})\), which is also quasi-finite by assumption.
  Thus, by Proposition~\ref{saftlex}, the functor \(\blank \triangleright X\from \mathbf{Ind}(\mathcal{C}) \to \mathbf{Ind}(\mathcal{M})\) has a left adjoint, \(\cohom{X, \blank}\).
  We thus obtain an oplax \(\tetramod[B]{}\)-module comonad \(\cohom{X, \blank \triangleright X}\) and and a lax \(\tetramod[B]{}\)-module monad \(\hom{X, \blank \triangleright X}\), right adjoint to \(\cohom{X, \blank \triangleright X}\).

  By Theorem~\ref{thm:hopf-trimodules-are-lax-monoidal-functors},
  we have an isomorphism \(\hom{X, \blank \triangleright X} \simeq \hom{X, B \triangleright X} \cotens \blank\) of lax \(\tetramod[B]{}\)-module monads,
  where \(\hom{X, B \triangleright X}\) is a Hopf trimodule algebra.
  Let us denote \(\hom{X, B \triangleright X}\) by \(A\). Since $A \cotens \blank$ admits a left adjoint, it follows that ${}^{B}A$ is quasi-finite.

  By uniqueness of adjoints, an isomorphism \(\cohom{X, \blank \triangleright X} \cong \on{cohom}_{B-}(A,\blank)\) follows.
  It is moreover an isomorphism of \(\tetramod[B]{}\)-module comonads, since the \(\tetramod[B]{}\)-module comonad structure is lifted from the isomorphic oplax \(\tetramod[B]{}\)-module monad structures on the respective right adjoints.

  By Theorem~\ref{mainnonsenseinjective}, we have an equivalence \(\mathbf{Ind}(\mathcal{M}) \simeq \mathbf{EM}(\cohom{X, \blank \triangleright X})\) of \(\tetramod[B]{}\)-module categories; in particular, the functor $\cohom{X, \blank}$ is comonadic and hence exact, and since by assumption so is its right adjoint $- \triangleright X$, the comonad $\cohom{X, \blank \triangleright X} \simeq \on{cohom}_{B-}(A,\blank)$ also is exact, which implies the injectivity of ${}^{B}A$, by~\cite[1.12]{Tak}.

  The \(\tetramod[B]{}\)-module structure on \(\mathbf{EM}(\cohom{X,\blank \triangleright X})\) is extended from \(\mathbf{Kl}(\cohom{X,\blank \triangleright X})\).
  By the isomorphism of the previous paragraph, we have an equivalence
  \[
    \mathbf{EM}(\cohom{X, \blank \triangleright X}) \simeq \mathbf{EM}(\on{cohom}(A,\blank))
  \]
  of \(\tetramod[B]{}\)-module categories,
  and an equivalence \(\mathbf{Kl}(\cohom{X, \blank \triangleright X}) \simeq \mathbf{Kl}(\on{cohom}(A,\blank))\).
  By definition we have \(\mathbf{EM}(\on{cohom}(A,\blank)) = A\text{\rm-Contramod}\),
  and \(\mathbf{Kl}(\on{cohom}(A,\blank)) = A\text{\rm-ContramodFree}\).
  Composing the established equivalences, we find the one asserted in the theorem.

  The restricted equivalence \(\mathcal{M} \simeq A\text{\rm-contramod}_{\on{f.d.}}\) follows from Proposition~\ref{locomonads}.
  In particular, we use the observation that a contramodule is compact if and only if its underlying $B$-comodule is such, if and only if it is finite-dimensional.
\end{proof}

In the finite-dimensional setting, we find an alternative result using projective objects.

\begin{theorem}\label{finitetrihopfreconstruction}
  Let $B$ be a finite-dimensional bialgebra and let $\mathcal{M}$ be a finite abelian module category over $\bmodule$ which admits an object $X \in \mathcal{M}$ such that:
  \begin{itemize}
    \item $X$ is a $\bmodule$-projective $\bmodule$-generator; and
    \item $\blank \triangleright X\from \bmodule \to \mathcal{M}$ is exact.
  \end{itemize}
  Then there is a finite-dimensional Hopf termodule coalgebra $C \in \termod$, such that ${}_{B}C$ is projective and such that there is an equivalence of $\bmodule$-module categories $\mathcal{M} \simeq C\text{\rm-contramod}_{\on{f.d.}}$,
  between $\mathcal{M}$ and the category of finite-dimensional left $C$-contramodules.
  The $\bmodule$-module structure on $C\text{\rm-contramod}_{\on{f.d.}}$ is extended from the category $C\text{\rm-ContramodFree}$ of free left $A$-contramodules.
\end{theorem}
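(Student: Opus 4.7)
The plan is to combine Theorem~\ref{mainnonsenseprojective} with a module-theoretic, finite-dimensional counterpart of Theorem~\ref{thm:hopf-trimodules-are-lax-monoidal-functors}. Since $\bmodule$ is finite abelian and $\blank \triangleright X$ is exact by hypothesis, Proposition~\ref{finiteadjoints} supplies a right adjoint $\hom{X,\blank}$, so that $X$ is a closed $\bmodule$-projective $\bmodule$-generator. Theorem~\ref{mainnonsenseprojective} then yields a $\bmodule$-module equivalence $\mathcal{M} \simeq \mathbf{EM}(T)$, where $T \defeq \hom{X,\blank \triangleright X}$ is a right exact lax $\bmodule$-module monad and the module structure on the right-hand side is extended from $\mathbf{Kl}(T)$ via Theorem~\ref{extendingalways}.

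Next, I would observe that $T$ is in fact exact: Proposition~\ref{Cprojectives} applied to the $\bmodule$-projective object $X$ makes $\hom{X,\blank}$ right exact, and as a right adjoint it is automatically left exact. Proposition~\ref{finiteadjoints} therefore supplies a left adjoint $K \dashv T$; by Proposition~\ref{einerKleinerKleisliÄquivalenz} this $K$ is canonically a comonad with $\mathbf{Kl}(T) \simeq \mathbf{Kl}(K)$, and by Porism~\ref{porism:doctrinal-module-adjunctions} it carries an oplax $\bmodule$-module structure.

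The core of the argument, and what I expect to be the main obstacle, is to establish a monoidal equivalence
\[
  \Termod \;\simeq\; \mathbf{RexOplax}\bmodule\text{-}\mathbf{Mod}(\bmodule,\bmodule), \qquad C \longmapsto C \otimes_B \blank,
\]
under which coalgebra objects in $\Termod$ correspond to comonads on the right-hand side; this is the natural module-theoretic, finite-dimensional dual of Theorem~\ref{thm:hopf-trimodules-are-lax-monoidal-functors}. I would prove it either by dualising the string-diagrammatic arguments of Section~\ref{sec:Hopf-trimodules} throughout---with the interchange of Definition~\ref{def:bicomodule-interchange} replaced by its dual $V \otimes (C \otimes_B M) \to C \otimes_B (V \otimes M)$ and tensors over $B$ in place of cotensors over $B^*$---or by transferring Theorem~\ref{thm:hopf-trimodules-are-lax-monoidal-functors} applied to the dual bialgebra $B^*$ along the monoidal equivalence $\bmodule \simeq \bcomod[B^*]$ afforded by the finite-dimensionality of $B$. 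Applying this equivalence to the oplax comonad $K$ produces $K \simeq C \otimes_B \blank$ for a Hopf termodule coalgebra $C \defeq K(B) \in \Termod$.

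By uniqueness of adjoints, $T \simeq \on{Hom}_{B\blank}(C, \blank)$, so $\mathbf{EM}(T) \simeq C\text{-contramod}_{\on{f.d.}}$ by the very definition of contramodules. The projectivity of ${}_B C$ follows because $\on{Hom}_{B\blank}(C,\blank) \simeq T$ is exact: on the finite abelian category $\bmodule$, exactness of this functor is equivalent, via the classical Eilenberg--Watts theorem, to ${}_B C$ being finitely generated projective, which in the finite-dimensional setting just means ${}_B C$ projective and equivalently makes $K = C \otimes_B \blank$ exact. Finally, $\mathbf{Kl}(T)$ coincides under the canonical embedding $\iota$ with the category $C\text{-ContramodFree}$ of free contramodules, so Theorem~\ref{extendingalways} identifies the $\bmodule$-module structure on $\mathbf{EM}(T)$ with the one extended from $C\text{-ContramodFree}$, as asserted.
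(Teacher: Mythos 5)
Your proposal is correct and follows essentially the same route as the paper, whose own proof consists of the single remark that the argument is ``analogous to Theorem~\ref{infinitetrihopfreconstruction}, using Theorem~\ref{mainnonsenseprojective} instead of Theorem~\ref{mainnonsenseinjective} and Proposition~\ref{finitemonads} instead of Proposition~\ref{locomonads}''; you have correctly identified that the one ingredient this one-liner leaves implicit is the module-theoretic counterpart of Theorem~\ref{thm:hopf-trimodules-are-lax-monoidal-functors} identifying $\Termod$ with right exact oplax $\bmodule$-module endofunctors, and your two suggested ways of obtaining it (dualising the string-diagram arguments, or transporting the comodule version along $\bmodule \simeq {}^{B^{*}}\mathbf{vecfd}$) are both viable. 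One small slip: the canonical interchange for a Hopf termodule $C$ goes in the oplax direction $C \otimes_{B} (V \otimes M) \to V \otimes (C \otimes_{B} M)$, using the left $B$-coaction on $C$, rather than the direction you wrote in the parenthetical; the lax direction would require an antipode, and your subsequent use of $\mathbf{RexOplax}$ shows you intended the former.
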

\begin{proof}
  This follows analogously to Theorem~\ref{infinitetrihopfreconstruction}, using Theorem~\ref{mainnonsenseprojective} instead of Theorem~\ref{mainnonsenseinjective} and Proposition~\ref{finitemonads} instead of Proposition~\ref{locomonads}.
\end{proof}

In the semisimple case, contramodules become equivalent to modules.

\begin{definition}
  We say that a Hopf trimodule algebra $A$ is \emph{semisimple}
  if the monad $A \cotens \blank$ is semisimple in the sense of Definition~\ref{def:semisimple-monad}.
\end{definition}

\begin{proposition}\label{semisimpletrimodulealgebra}
  For a semisimple Hopf trimodule algebra $A \in \Trimod$, there is an equivalence $\lMod{A} \simeq A\text{\rm-Contramod}$ of module categories over $\tetramod[B]{}$.
\end{proposition}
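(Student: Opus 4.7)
The strategy is to recognize this as an immediate application of Proposition~\ref{ModuleEMSemisimple}. First, I identify the relevant Eilenberg--Moore categories: by Theorem~\ref{thm:hopf-trimodules-are-lax-monoidal-functors}, the Hopf trimodule algebra $A$ corresponds to a lax $\tetramod[B]{}$-module monad $T \defeq A \cotens \blank$ on $\tetramod[B]{}$, and by construction $\lMod{A} = \mathbf{EM}(T)$. Similarly, the left adjoint $K \defeq \on{cohom}_{B\blank}(A, \blank)$ of $T$ (whose existence is implicit in the meaningfulness of the right-hand side, requiring that ${}^{B}\!A$ be quasi-finite) is a comonad by Proposition~\ref{einerKleinerKleisliÄquivalenz}, and by Definition~\ref{contramodules} we have $A\text{\rm-Contramod} = \mathbf{EM}(K)$.

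Next, I would invoke Porism~\ref{porism:doctrinal-module-adjunctions} to transfer the lax $\tetramod[B]{}$-module structure on $T$ to an oplax $\tetramod[B]{}$-module structure on $K$, making $K$ an oplax $\tetramod[B]{}$-module comonad in the sense required by Proposition~\ref{ModuleEMSemisimple}. By hypothesis, $A$ being a semisimple Hopf trimodule algebra is precisely the statement that $T$ is a semisimple lax $\tetramod[B]{}$-module monad in the sense of Definition~\ref{def:semisimple-monad}.

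With these ingredients in place, Proposition~\ref{ModuleEMSemisimple} applies verbatim and produces a $\tetramod[B]{}$-module equivalence $\mathbf{EM}(T) \simeq \mathbf{EM}(K)$, where both sides carry the module structure extended from their respective Kleisli categories via Theorem~\ref{extendingalways} and its oplax counterpart. Translating along the identifications of the first paragraph yields the desired $\tetramod[B]{}$-module equivalence $\lMod{A} \simeq A\text{\rm-Contramod}$. The only point requiring a brief check—and the closest thing to an obstacle—is that the extended module structures on $\mathbf{EM}(T)$ and $\mathbf{EM}(K)$ agree with the ones one naturally puts on $\lMod{A}$ and $A\text{\rm-Contramod}$; but both are induced from the $\tetramod[B]{}$-module structures on the free-module and free-contramodule subcategories as described in Lemma~\ref{freecontramodmodmodmod}, so this matching is immediate from the Kleisli-level equivalence of Proposition~\ref{KleisliMonadCorrespondence}.
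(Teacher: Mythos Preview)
Your proposal is correct and takes essentially the same approach as the paper: the paper's proof is the single sentence ``This is Proposition~\ref{ModuleEMSemisimple} applied to the monad $A \cotens \blank$,'' and your argument is precisely the unpacking of that sentence, supplying the identifications $\lMod{A} = \mathbf{EM}(A \cotens \blank)$ and $A\text{\rm-Contramod} = \mathbf{EM}(\on{cohom}_{B\blank}(A,\blank))$ that the paper leaves implicit.
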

\begin{proof}
  This is Proposition~\ref{ModuleEMSemisimple} applied to the monad $A \cotens \blank$.
\end{proof}

\subsection{Morita equivalence for contramodules over Hopf trimodule algebras}

\begin{definition}~\label{contramoritadef}
 We say that two Hopf trimodule algebras $A, A' \in \trimod $, such that ${}^{B}A, {}^{B}A'$ are quasi-finite, are \emph{contraMorita equivalent} if the comonads $\on{cohom}_{B\blank}(A,\blank)$ and $\on{cohom}_{B\blank}(A',\blank)$ are Morita equivalent. In other words, if there are $M, N \in \tetramod[B][B][][B]$ with coaction homomorphisms
 \[
  \begin{aligned}
   &\mathrm{la}_{M,A}\colon M \to \on{cohom}_{B\blank}(A,M), \qquad\qquad \mathrm{ra}_{N,A}\colon N \to N\cotens_{B} \on{cohom}_{B\blank}(A,B) \\
   &\mathrm{la}_{N,A'}\colon N \to \on{cohom}_{B\blank}(A',N), \qquad\qquad \mathrm{ra}_{M,A'}\colon M \to M\cotens_{B} \on{cohom}_{B\blank}(A',B) \\
  \end{aligned}
 \]
 in $\tetramod[B][B][][B]$, such that

 \begin{enumerate}
  \item
there is an isomorphism in $\tetramod[B][B][][B]$ between the equalizer of
\[\begin{mytikzcd}[ampersand replacement=\&]
	\& {M \cotens_{B}\on{cohom}_{B\blank}(A',B)\cotens_{B} N} \\
	{M \cotens_{B} N} \&\& {M \cotens_{B} \on{cohom}_{B\blank}(A',N)}
	\arrow["{M \cotens_{B} \partial_{A',B,N}}"{pos=0.7}, from=1-2, to=2-3]
	\arrow["\simeq"', from=1-2, to=2-3]
	\arrow["{\mathrm{ra}_{M,A'} \cotens_{B} N}"{pos=0.3}, from=2-1, to=1-2]
	\arrow[from=2-1, to=2-3]
\end{mytikzcd}\]
and $A$, intertwining the coactions $A \to \on{cohom}_{B-}(A,A)$ and $A \to A \cotens_{B} \on{cohom}_{B-}(A,B)$ coming from the algebra structure on $A$, with the coactions on the equalizer induced by $\mathrm{ra}_{N,A}$ and $\mathrm{la}_{M,A}$. Here, $\partial_{A',B,N}$ is the isomorphism of~\cite[1.13]{Tak}, whose invertibility is implied by injectivity of ${}^{B}A'$.
\item a similar isomorphism in $\tetramod[B][B][][B]$ between a similarly defined equalizer reversing the roles of $M$ and $N$, and $A'$ exists.
 \end{enumerate}
\end{definition}

\begin{proposition}\label{contraspberry}
 For $A,A'$ as in Definition~\ref{contramoritadef}, the following are equivalent:
 \begin{enumerate}
  \item\label{contrabanda} $A$ and $A'$ are contraMorita equivalent;
  \item\label{bigequivatti} there is an equivalence $A\text{\rm-Contramod} \simeq A'\text{\rm-Contramod}$ of\, $\tetramod[B]{}$-module categories;
  \item\label{piccoloequivatti} there is an equivalence $A\text{\rm-contramod}_{\on{f.d.}} \simeq A'\text{\rm-contramod}_{\on{f.d.}}$ of\, $\tetramodfd[B]{}$-module categories.
 \end{enumerate}
\end{proposition}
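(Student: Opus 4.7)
The strategy rests on combining the general Morita theorem (Theorem~\ref{injectivebijection}) and its underlying Eilenberg--Watts result (Theorem~\ref{TurboEilenbergWatts}) with the identification of Hopf trimodules as left exact finitary lax \(\tetramod[B]{}\)-module endofunctors provided by Theorem~\ref{thm:hopf-trimodules-are-lax-monoidal-functors}, all applied in the comonadic/contramodule setting. By construction, \(A\text{-Contramod} = \mathbf{EM}(\on{cohom}_{B-}(A,\blank))\), and since \({}^{B}A\) is injective, the comonad \(\on{cohom}_{B-}(A,\blank)\) is left exact and thus extendable to a \(\tetramod[B]{}\)-module structure on its Eilenberg--Moore category via the comonadic dual of Theorem~\ref{extendingalways}; the same applies to \(A'\).

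For \((1)\Leftrightarrow(2)\), I would dualize Theorem~\ref{TurboEilenbergWatts} to exhibit a monoidal-like correspondence between left exact lax \(\tetramod[B]{}\)-module biact bi-cocomodule functors in \(\mathbf{LexfLax}\tetramod[B]{}\text{-}\mathbf{Mod}(\tetramod[B]{},\tetramod[B]{})\) for the pair \((\on{cohom}_{B-}(A,\blank),\, \on{cohom}_{B-}(A',\blank))\) and left exact oplax \(\tetramod[B]{}\)-module functors between the corresponding Eilenberg--Moore categories \(A\text{-Contramod}\) and \(A'\text{-Contramod}\). Under Theorem~\ref{thm:hopf-trimodules-are-lax-monoidal-functors}, such biact functors are encoded by Hopf trimodules \(M,N\) equipped with the biact coaction data of Definition~\ref{contramoritadef}, since the left and right \(\on{cohom}_{B-}(A,\blank)\)-coactions correspond to Hopf-trimodule bimodule structure. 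The equalizer conditions in the definition then realize the cotensor product of biact functors (dual to the \(\circ_{T}\)-composition of Theorem~\ref{TurboEilenbergWatts}) and recognize that this composite is the identity biact functor precisely when the counit of the equivalence between Eilenberg--Moore categories is invertible. By Proposition~\ref{Moritabybimodules}, Morita equivalence of the comonads is equivalent to a \(\tetramod[B]{}\)-module equivalence of their Eilenberg--Moore categories, giving (1)\(\Leftrightarrow\)(2).

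For \((2)\Leftrightarrow(3)\), I would invoke Proposition~\ref{locomonads} applied to the left exact comonads \(\on{cohom}_{B-}(A,\blank)\) and \(\on{cohom}_{B-}(A',\blank)\) on \(\tetramod[B]{}\): this yields that \(A\text{-Contramod} \simeq \mathbf{Ind}(A\text{-contramod}_{\on{f.d.}})\) and similarly for \(A'\), and that the compact objects of each are precisely the finite-dimensional contramodules. Any \(\tetramod[B]{}\)-module equivalence preserves compact objects and thus restricts to a \(\tetramodfd[B]{}\)-module equivalence of the compact subcategories, giving (2)\(\Rightarrow\)(3); the converse follows by taking \(\mathbf{Ind}\)-completions, invoking Proposition~\ref{cocompletingnonsense} and the essential uniqueness of the induced module structure to promote a \(\tetramodfd[B]{}\)-module equivalence to a \(\tetramod[B]{}\)-module one.

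The main obstacle is the first equivalence, specifically the precise bookkeeping needed to match the Hopf-trimodule biact datum of Definition~\ref{contramoritadef}---including the use of the Takeuchi isomorphism \(\partial_{A',B,N}\) from \cite[1.13]{Tak} to identify \(N \cotens_B \on{cohom}_{B-}(A',B)\) with \(\on{cohom}_{B-}(A',N)\)---with the biact-comonad data arising from the comonadic dual of Theorem~\ref{TurboEilenbergWatts}. The injectivity of \({}^{B}A\) and \({}^{B}A'\) is essential here, both to dualize the Eilenberg--Watts argument (it guarantees the requisite left exactness of the cohom comonads and of the cotensor product appearing in the equalizer) and to ensure invertibility of the relevant Takeuchi morphism.
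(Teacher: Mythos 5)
Your proposal is correct and follows essentially the same route as the paper: the equivalence of (1) and (2) is obtained from the Morita/Eilenberg--Watts machinery of Proposition~\ref{Moritabybimodules} (dualized to the comonadic setting), and the equivalence of (2) and (3) follows because module equivalences preserve compact objects, with the converse given by passing to $\mathbf{Ind}$-completions. The paper's own proof is just a terser version of this argument, and your extra care in spelling out the dualization and the identification of compacts with finite-dimensional contramodules via Proposition~\ref{locomonads} is consistent with what the paper leaves implicit.
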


\begin{proof}
 The equivalence of~\eqref{contrabanda} and~\eqref{bigequivatti} is an instance of Proposition~\ref{Moritabybimodules}, for $\mathcal{C} = \tetramod[B]{}$. Since an equivalence of categories preserves compact objects,~\eqref{piccoloequivatti} follows from~\eqref{bigequivatti}. Finally, if $F$ is an equivalence as in~\eqref{piccoloequivatti}, then $\mathbf{Ind}(F)$ is an equivalence as in~\eqref{bigequivatti}.
\end{proof}

Combining Proposition~\ref{contraspberry} with Theorem~\ref{infinitetrihopfreconstruction}, we find the following algebraic realization of Theorem~\ref{injectivebijection}:
\begin{theorem}\label{contramodulereuters}
There is a bijection
\[
  \setj{\,(\mathcal{M},X) \text{ as in Theorem~\ref{infinitetrihopfreconstruction}}\,}/(\mathcal{M} \simeq \mathcal{N})
  \xleftrightarrow{1:1}
  \left\{
    \begin{aligned}
      &\text{Left finitely cogenerated injective} \\
      &\text{Hopf trimodule algebras over $B$}
    \end{aligned}
  \right\}
  \Big/\!\simeq_{\text{ContraMorita}}.
\]
\end{theorem}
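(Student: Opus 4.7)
The plan is to obtain this bijection as an algebraic refinement of Theorem~\ref{injectivebijection} applied to $\mathcal{C} = \tetramod[B]{}$, using the monoidal equivalence of Theorem~\ref{thm:hopf-trimodules-are-lax-monoidal-functors} to translate left exact oplax $\tetramod[B]{}$-module comonads into Hopf trimodule algebras, and Proposition~\ref{contraspberry} to translate Morita equivalence of comonads into ContraMorita equivalence of those algebras. First I would invoke Theorem~\ref{injectivebijection} to produce a bijection between equivalence classes of pairs $(\mathcal{N}, X)$ satisfying the assumptions of Theorem~\ref{mainnonsenseinjective} for $\mathcal{C} = \tetramod[B]{}$ and Morita equivalence classes of left exact oplax $\tetramod[B]{}$-module comonads on $\tetramod[B]{}$. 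The restriction to the subclass of pairs as in Theorem~\ref{infinitetrihopfreconstruction} (namely $\mathcal{N} = \mathbf{Ind}(\mathcal{M})$ for $\mathcal{M}$ locally finite abelian with the listed exactness and quasi-finiteness conditions on $\blank \triangleright X$) is obtained by checking that it corresponds, on the comonad side, to restricting to finitary left exact oplax module comonads whose right adjoint monad is also finitary---these are the comonads arising as $\cohom{X, \blank \triangleright X}$ in the proof of Theorem~\ref{infinitetrihopfreconstruction}.

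Next I would combine Theorem~\ref{thm:hopf-trimodules-are-lax-monoidal-functors} with doctrinal adjunction (Porism~\ref{porism:doctrinal-module-adjunctions}) to identify the restricted class of comonads with the class of Hopf trimodule algebras $A$ such that ${}^B A$ is finitely cogenerated injective. Concretely, a finitary left exact oplax $\tetramod[B]{}$-module comonad $K$ admitting a right adjoint $T$ has, by uniqueness of adjoints, its right adjoint $T$ canonically a lax $\tetramod[B]{}$-module monad; by Theorem~\ref{thm:hopf-trimodules-are-lax-monoidal-functors}, $T \cong A \cotens \blank$ for a Hopf trimodule algebra $A$, whence $K \cong \cohom{B\blank}(A, \blank)$. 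The existence of this left adjoint is equivalent, via Proposition~\ref{locallyfiniteadjoints}, to ${}^B A$ being quasi-finite, and left exactness of $K$ is equivalent to injectivity of ${}^B A$ by~\cite[1.12]{Tak}; together these are precisely the condition that ${}^B A$ be finitely cogenerated injective. In the reverse direction, starting from $(\mathcal{M}, X)$, the assignment is $(\mathcal{M}, X) \mapsto \hom{X, B \triangleright X}$, as in the proof of Theorem~\ref{infinitetrihopfreconstruction}.

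Finally I would upgrade the Morita correspondence. By definition $A\text{\rm-Contramod} = \mathbf{EM}(\cohom{B\blank}(A, \blank))$ and its $\tetramod[B]{}$-module structure is extended from the Kleisli category (Lemma~\ref{freecontramodmodmodmod}), which matches the canonical extended structure used in Theorem~\ref{injectivebijection}. Proposition~\ref{contraspberry} then identifies ContraMorita equivalence of $A$ and $A'$ with equivalence of their contramodule categories as $\tetramod[B]{}$-module categories, which is exactly the equivalence relation on comonads used in Theorem~\ref{injectivebijection}. Composing the two bijections yields the claim. The main obstacle I anticipate is bookkeeping: namely, verifying that the notion of ``finitely cogenerated injective'' in the statement matches exactly what comes out of the equivalence---in particular, checking that quasi-finiteness of ${}^B A$ is equivalent to finite cogeneration under the simultaneous assumption of injectivity---and confirming that the module category structures produced by Theorem~\ref{extendingalways} on both sides of the comonad/trimodule correspondence are identified under the monoidal equivalence of Theorem~\ref{thm:hopf-trimodules-are-lax-monoidal-functors}.
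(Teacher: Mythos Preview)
Your proposal is correct and follows essentially the same approach as the paper, which states the theorem with no proof beyond the sentence ``Combining Proposition~\ref{contraspberry} with Theorem~\ref{infinitetrihopfreconstruction}, we find the following result.'' You have spelled out what that combination actually entails---routing through the abstract bijection of Theorem~\ref{injectivebijection} and then specializing via Theorem~\ref{thm:hopf-trimodules-are-lax-monoidal-functors}---whereas the paper relies on the reader to assemble Theorem~\ref{infinitetrihopfreconstruction} (which already performs that specialization) with Proposition~\ref{contraspberry} directly; your bookkeeping concern about quasi-finite plus injective coinciding with finitely cogenerated injective is handled in the paper by the parenthetical ``hence finitely cogenerated injective'' in the statement of Theorem~\ref{infinitetrihopfreconstruction}.
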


\subsection{A semisimple example of non-rigid reconstruction}

We now give a concrete applications of Theorem~\ref{mainnonsenseinjective} and Theorem~\ref{infinitetrihopfreconstruction}, describing module categories for non-rigid categories, where the ordinary reconstruction procedure for rigid categories would not yield the correct result.

\begin{example}\label{coreps}
  Let $S = \setj{e,s}$ be the commutative two-element monoid which is not a group, with identity element $e$. In particular, $s^{2} = s$. The category $\mathbf{Rep}(S)$ of finite-dimensional modules over the monoid algebra $\Bbbk[S]$ is semisimple, with two isoclasses of simple objects, given by $1$-dimensional representations $\Bbbk_{\on{triv}}$ and $\Bbbk_{\on{grp}}$. In $\Bbbk_{\on{triv}}$, the element $s$ acts by $1$, while in $\Bbbk_{\on{grp}}$ it acts by $0$. The standard comultiplication on $\Bbbk[S]$, in which $e,s$ are grouplike, makes it into a bialgebra. The category $\mathbf{CoRep}(S)$ of finite-dimensional comodules over $\Bbbk[S]$ is equivalent to that of $S$-graded finite-dimensional spaces, $\mathbf{vec}^{S}$. Thus, it too is semisimple of rank two---we denote the simple objects by $\delta_{e}, \delta_{s}$.

  The monoidal structure on $\mathbf{Rep}(S)$ satisfies $\Bbbk_{\on{grp}} \otimes \Bbbk_{\on{grp}} \cong \Bbbk_{\on{grp}}$.
  In fact, this presentation of the Grothendieck ring $[\mathbf{Rep}(S)]$ of $\mathbf{Rep}(S)$ determines $\mathbf{Rep}(S)$ uniquely as a semisimple monoidal category, see~\cite[Proposition~3.5]{CSZ}.
  Thus, we find monoidal equivalences
  \[
    \mathbf{Rep}(S) \simeq \mathbf{CoRep}(S) \simeq \mathbf{vec}^{S},
  \]
  since in the latter we have $\delta_{s} \otimes \delta_{s} \cong \delta_{s}$.
  In what follows we focus on the coalgebraic setup of Theorem~\ref{mainnonsenseinjective}, and thus study the module categories over $\mathbf{CoRep}(S)$.

  A semisimple $\mathbf{CoRep}(S)$-module category $(\mathcal{M}, \triangleright)$ is indecomposable (i.e.\ does not split as a direct sum of $\mathbf{CoRep}(S)$-module subcategories) if and only if the matrix ${[\delta_{s}]}_{\mathcal{M}}$ describing the action of $\delta_{s}$ on the Grothendieck ring $[\mathcal{M}]$ in the basis of simple objects for $\mathcal{M}$ is indecomposable.
  But $[\delta_{s}]$ is idempotent, so it must be the $1\times 1$-matrix $(1)$, and, as a category, we must have $\mathcal{M} \simeq \kvect$. We then either have $\delta_{s} \triangleright \blank \cong \on{Id}_{\mathcal{M}}$, or $\delta_{s} \triangleright \blank \cong 0$. One can show that any two module categories satisfying one of these conditions are equivalent, by showing that the second cohomology group $H^{2}(S, \Bbbk^{\times})$ is trivial,
  similarly to~\cite[Example~7.4.10]{EGNO}. Indeed, $\psi \in Z^{2}(S, \Bbbk^{\times})$ must satisfy $\psi(e,s) = \psi(e,e) = \psi(s,e)$, and thus can be written as $\psi(x,y) = \sigma(x) - \sigma(xy) + \sigma(y)$, by setting $\sigma(e) \defeq \psi(e,e)$ and $\sigma(s) \defeq \psi(s,s)$.

  The condition $\delta_{s} \triangleright \blank \cong \on{Id}_{\mathcal{M}}$ is satisfied by the fiber functor $\mathrm{Fr}\from \mathbf{CoRep}(S) \to \mathbf{vec}$.

  Assume instead that we have an indecomposable module category $\mathcal{M}$ satisfying $\delta_{s} \triangleright - \cong 0$. Let $X \in \mathcal{M}$ be a simple object. It is not difficult to verify that $X$ satisfies the assumptions of Theorem~\ref{mainnonsenseinjective}. Given $V \in \mathbf{CoRep}(S)$, we have
  \[
    \lceil X,V \triangleright X \rceil_{s} = \on{Hom}_{\mathbf{CoRep}(S)}(\lceil X, V\triangleright X \rceil, \delta_{s}) \cong \on{Hom}_{\mathbf{CoRep}(S)}(V \triangleright X, \delta_{s} \triangleright X),
  \]
  and similarly for $\delta_{e}$. Using $\delta_{s} \triangleright X \cong 0$ and $\delta_{e} \triangleright X \cong X$ we find that $\lceil X, \delta_{e} \triangleright X \rceil \cong \delta_{e}$ and $\lceil X, \delta_{s} \triangleright X \rceil = 0$. To describe the bicomodule corresponding to the functor $\lceil X, - \triangleright X \rceil$ under the correspondence of Proposition~\ref{TakeuchiEilenbergWatts}, we observe that a bicomodule can be identified with an $S \times S$-graded space, and the cotensor product is given by ${(V \cotens W)}_{x,y} = \bigoplus_{z \in S} V_{x,z} \otimes W_{z,y}$. Using this, we find that $\lceil X, - \triangleright X \rceil \cong \delta_{e,e} \cotens -$, where $\delta_{e,e}$ is the one-dimensional space concentrated in degree $(e,e)$. The unit Hopf trimodule homomorphism $\Bbbk[S] \to \lceil X, \Bbbk[S] \triangleright X \rceil$ must thus have the submodule $\Bbbk\setj{s}$ as its kernel, so the reconstructing Hopf trimodule $\lceil X, \Bbbk[S] \triangleright X \rceil$ is given by $\Bbbk[S]/\Bbbk\setj{s}$. As a bicomodule, it is indeed concentrated in degree $(e,e)$, and as a module it is isomorphic to $\Bbbk_{\on{grp}}$. The composition $\lceil X, \delta_{e} \triangleright X\rceil \otimes \lceil X, \delta_{e} \triangleright X\rceil \to \lceil X, \delta_{e} \triangleright X\rceil$ sending $\on{id}_{X} \otimes \on{id}_{X}$ to $\on{id}_{X}$ corresponds to the algebra structure
  \[
    \begin{aligned}
      \Bbbk[S]/\Bbbk\setj{s} \cotens \Bbbk[S]/\Bbbk\setj{s} \simeq \Bbbk[S]/\Bbbk\setj{s} \otimes \Bbbk[S]/\Bbbk\setj{s} \to \Bbbk[S]/\Bbbk\setj{s} \\
      (e + \Bbbk\setj{s}) \otimes (e + \Bbbk\setj{s}) \mapsto (e + \Bbbk\setj{s}).
    \end{aligned}
  \]
  It is not difficult to see that this multiplication defines a Hopf trimodule algebra. To check that the category $\Bbbk[S]/\Bbbk\setj{s}\!\on{-Contramod}$ satisfies the properties we have assumed of $\mathcal{M}$ previously, note that the free module $\Bbbk[S]/\Bbbk\setj{s} \cotens B$ is one-dimensional, hence semisimple, showing that the category of modules over $\Bbbk[S]/\Bbbk\setj{s}$ is semisimple of rank one. Thus, by~\ref{semisimpletrimodulealgebra} we have
  \[
    \Bbbk[S]/\Bbbk\setj{s}\text{\rm-Contramod} \simeq \Bbbk[S]/\Bbbk\setj{s}\text{\rm-Mod}.
  \]
  Further, $\delta_{s} \triangleright \Bbbk[S]/\Bbbk\setj{s} = \Bbbk[S]/\Bbbk\setj{s} \cotens \delta_{s} = 0$, which corresponds to $\delta_{s} \triangleright X = 0$.

  If we instead follow the reconstruction procedured described in~\cite[Chapter~7]{EGNO}, the reconstructing algebra object would be the object $A$ of $\mathbf{CoRep}(S)$ representing the presheaf $\on{Hom}_{\mathcal{M}}(-\triangleright X,X)$. We see that we must have $A \cong \delta_{e}$, and the algebra structure on $\delta_{e}$ is unique. However, $\delta_{e} \cong \mathbb{1}_{\mathbf{CoRep}(S)}$ and so $\on{Mod}(\delta_{e}) \cong \mathbf{CoRep}(S) \not\simeq \mathcal{M}$, showing that the reconstruction procedure for module categories over rigid monoidal categories fails in this non-rigid case. It also illustrates that this failure can be observed already in the semisimple case.

\end{example}

In Section~\ref{fiberfunctorthings}, we give a general construction of a Hopf trimodule algebra reconstructing the fiber functor as a module category, which covers the remaining indecomposable semisimple module category over the category $\mathbf{CoRep}(S)$ considered in Example~\ref{coreps}.

\section{A Hopf trimodule algebra reconstructing the fiber functor}\label{fiberfunctorthings}

\begin{proposition}\label{prop:B-dot-B-trimodule}
  The \(\Bbbk\)-vector space \(B \kotimes B\) forms a Hopf trimodule
  with coactions
  \[
    b \otimes c \mapsto b_{(1)} \otimes b_{(2)} \otimes c,\qquad\qquad
    b \otimes c \mapsto b \otimes c_{(1)} \otimes c_{(2)},
  \]
  and action
  \[
    x \otimes b \otimes c \mapsto x_{(1)}b \otimes x_{(2)}b.
  \]
  We shall denote it by \(B \bullet B\).
\end{proposition}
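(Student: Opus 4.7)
The plan is to verify directly the three types of axioms required by Definition~\ref{def:hopf-tri-module}: that the two proposed coactions assemble into a bicomodule structure, that the proposed action defines a left $B$-module structure, and that this action is a morphism of bicomodules—where the bicomodule structure on $B \kotimes (B \bullet B)$ comes from the diagonal convention (i.e., $B$ carries its regular left and right comodule structures via $\Delta$, tensored codiagonally with those of $B \bullet B$). All verifications are routine Sweedler-notation calculations, using only coassociativity and the bialgebra axiom $\Delta(xy) = \Delta(x)\Delta(y)$ together with $\Delta(1) = 1 \otimes 1$.

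First I would check that the left and right coactions commute in the sense required by Equation~\eqref{eq:bicomodule-condition}; this is immediate, as the left coaction acts only on the first tensor factor while the right coaction acts only on the second, so the two trivially commute. Next, associativity of the action is the computation
\[
  (xy) \cdot (b \otimes c)
  = (xy)_{(1)} b \otimes (xy)_{(2)} c
  = x_{(1)} y_{(1)} b \otimes x_{(2)} y_{(2)} c
  = x \cdot (y \cdot (b \otimes c)),
\]
which uses precisely the multiplicativity of $\Delta$, and unitality follows from $1_{(1)} \otimes 1_{(2)} = 1 \otimes 1$.

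The remaining (and main) content is to verify that the action is a bicomodule morphism. For the left comodule condition one computes, on the one hand,
\[
  \lambda(x_{(1)} b \otimes x_{(2)} c) = x_{(1)(1)} b_{(1)} \otimes x_{(1)(2)} b_{(2)} \otimes x_{(2)} c,
\]
and on the other hand, applying the codiagonal left coaction on $B \kotimes (B \bullet B)$ followed by $\text{id} \otimes \alpha$ gives
\[
  x_{(1)} b_{(1)} \otimes x_{(2)(1)} b_{(2)} \otimes x_{(2)(2)} c,
\]
and these agree by coassociativity applied to $x$. The right comodule condition is analogous: both sides reduce to $x_{(1)} b \otimes x_{(2)(1)} c_{(1)} \otimes x_{(2)(2)} c_{(2)}$ (respectively $x_{(1)(1)} b \otimes x_{(1)(2)} c_{(1)} \otimes x_{(2)} c_{(2)}$) after applying the action and right coaction in either order, and these match by a second application of coassociativity.

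There is no real obstacle here—the proof is a sequence of elementary Sweedler manipulations—so the only care needed is to clearly fix the bicomodule structure on $B \kotimes (B \bullet B)$ (diagonal on both sides, with $B$ acting as the regular bicomodule on itself) so that the string-diagrammatic conditions of Figure~\ref{fig:hopf-trimodule} are being checked in the correct form. One could alternatively present all four verifications in the graphical calculus of Section~\ref{sec:bicomodules}, where each identity becomes a single application of coassociativity or of the bialgebra axiom, but in this case the Sweedler approach is equally short and more transparent.
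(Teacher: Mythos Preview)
Your proof is correct and follows essentially the same approach as the paper: verify the bicomodule condition is trivial (coactions on separate tensor factors), then check that the action is compatible with each coaction separately. The only difference is presentational—the paper carries out the two compatibility checks via string diagrams (Figures~\ref{fig:b-bullet-b-left-mod-right-comod} and~\ref{fig:b-bullet-b-left-mod-left-comod}), whereas you use Sweedler notation, which you yourself note is an equivalent option.
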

\begin{proof}
  It is clear that \(B \bullet B\) is a bicomodule,
  as both of the coactions are given by that of \(B\) on the respective Let.

  Thus, we first verify that the action is a morphism of left and right comodules;
  i.e., that \(B \bullet B\) is in \(\tetramod[][B][][B]\) and \(\tetramod[B][B]\).
  The former follows by Figure~\ref{fig:b-bullet-b-left-mod-right-comod},
  while the latter is due to the calculation in Figure~\ref{fig:b-bullet-b-left-mod-left-comod}.
  \begin{figure}[htbp]
    \tikzsetnextfilename{fig-b-bullet-b-left-mod-right-comod}

    \caption{}%
    \label{fig:b-bullet-b-left-mod-left-comod}
  \end{figure}
\end{proof}

\begin{proposition}\label{prop:b-dot-b-is-algebra-object}
  The Hopf trimodule \(B \bullet B\) from Proposition~\ref{prop:B-dot-B-trimodule} is an algebra object in \(\tetramod[B][B][][B]\).
\end{proposition}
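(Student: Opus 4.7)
The plan is to construct the algebra object structure $(\mu,\eta)$ on $B \bullet B$ and then verify the defining axioms: that $\mu$ and $\eta$ are morphisms of Hopf trimodules (each requiring compatibility with the left coaction, right coaction, and left action) together with associativity and the two unitalities. I would take $\eta \defeq \Delta \colon B \to B \bullet B$; its being a bicomodule morphism is coassociativity, and its being a left $B$-module morphism is exactly the bialgebra axiom that $\Delta$ is multiplicative. For the multiplication, I would first note that the natural map
\[
  \phi_M\colon B \kotimes M \xrightarrow{\ \sim\ } (B \bullet B) \cotens M, \qquad b \otimes m \mapsto b \otimes m_{(-1)} \otimes m_{(0)},
\]
is an isomorphism for every left $B$-comodule $M$ (a direct consequence of the equalizer description of the cotensor product, together with counitality), and in particular identifies $(B \bullet B) \cotens (B \bullet B)$ with $B^{\otimes 3}$. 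Under this identification I would transfer a natural multiplication built from the algebra structure of $B$ to a candidate $\mu\colon (B \bullet B) \cotens (B \bullet B) \to B \bullet B$.

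A cleaner approach, and the one I would prefer, is to pass through the monoidal equivalence $\Sigma$ of Theorem~\ref{thm:hopf-trimodules-are-lax-monoidal-functors}, which is monoidal by Proposition~\ref{prop:braiding-composes-to-cotens}. Under $\Sigma$, algebra objects in $\Trimod$ correspond to monoid objects in the category of lax $\cat{V}$-module endofunctors of $\cat{V}$, i.e., to lax $\cat{V}$-module monads. The image $\Sigma(B \bullet B)$ has underlying functor $B \otimes -\colon \cat{V} \to \cat{V}$ (via $\phi$), equipped with the lax $\cat{V}$-module structure encoded by Definition~\ref{def:bicomodule-interchange}. Constructing a lax $\cat{V}$-module monad structure on this functor from the bialgebra data of $B$---and applying $\Sigma^{-1}$---then yields the desired algebra object on $B \bullet B$.

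In either approach, the main obstacle is checking compatibility of the multiplication with all three Hopf trimodule structures at once. The left action on the cotensor product is diagonal, $b \cdot (x \otimes y) = b_{(1)} x \otimes b_{(2)} y$ by Lemma~\ref{lem:cotens-of-hopf-modules-is-hopf-module}, so verifying that $\mu$ is a module morphism requires careful manipulation of iterated coproducts together with the bialgebra axiom $\Delta(bc) = \Delta(b)\Delta(c)$. The bicomodule compatibilities, associativity, and unitality should reduce to coassociativity, counitality, and associativity of multiplication in $B$. I expect all of this to be dispatched by string-diagrammatic manipulations in the style of Figure~\ref{fig:hopf-trimodule}, with the module compatibility of $\mu$ being the most combinatorially involved step.
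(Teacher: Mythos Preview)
Your proposal is correct, and your preferred route through $\Sigma$ differs genuinely from the paper's direct approach. The paper defines $\mu$ explicitly---apply $\varepsilon$ to the product of the two middle tensor factors and pass the outer two through unchanged---takes $\eta = \Delta$ as you do, and verifies every axiom by string-diagram manipulation, with the left-module compatibility of $\mu$ the longest check. Your abstract approach is cleaner in principle, but you are vague at the decisive step: the monad structure on $B \otimes \blank$ is \emph{not} built from the multiplication of $B$ (that map is not even a comodule morphism for the relevant coactions). The identification you are missing is that, under $\phi$, the functor $(B \bullet B)\cotens\blank$ is precisely the monad $RU$ for the adjunction between the forgetful fibre functor $U\colon \cat{V} \to \kVect$ and the cofree functor $R = B \otimes \blank$; its unit is the coaction and its multiplication is $b \otimes b' \otimes m \mapsto \varepsilon(b')\, b \otimes m$. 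Since $U$ is a strong $\cat{V}$-module functor, Proposition~\ref{prop:dual-module-adjunctions} makes $RU$ a lax $\cat{V}$-module monad automatically, and a short computation confirms its lax structure agrees with $\chi$ under $\phi$. This buys you the algebra structure essentially for free and makes its conceptual role---reconstructing the fibre functor---transparent, whereas the paper's explicit formulas are what feed directly into the subsequent equivalence $J$ with $\kVect$.
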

\begin{proof}
  Figure~\ref{fig:b-bullet-b-algebra-structure-def} defines the multiplication and unit of \(B \bullet B\),
  where we have denote the counit \(\varepsilon\) of \(B\) with a small white dot,
  and have employed the same numbering system for representing the cotensor product as in Section~\ref{sec:bicomodules}.
  \begin{figure}[htbp]
    \tikzsetnextfilename{fig-b-bullet-b-algebra-structure-def}

    \caption{}%
    \label{fig:b-bullet-b-module-morphisms-2}
  \end{figure}
\end{proof}

\begin{definition}\label{jequivalence}
  Let $J\from (B \bullet B)\text{\rm-ModFree} \to \kVect$ be the functor given by $J(B \bullet B \cotens M) = M$ and by local maps
  \[
    \begin{aligned}
      J_{M,N}\from (B\bullet B)\text{\rm-ModFree}\big((B\bullet B)\cotens M,  (B\bullet B)\cotens N \big) &\to \on{Hom}_{\Bbbk}(M,N) \\
      \sigma &\mapsto (\varepsilon \otimes \varepsilon \cotens \on{id}_{N})\circ(\sigma(1 \otimes 1 \otimes \blank))
    \end{aligned}
  \]
\end{definition}

\begin{lemma}
  The assignments of Definition~\ref{jequivalence} yield an equivalence of ($\bbcomod$)-module categories.
\end{lemma}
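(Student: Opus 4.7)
My plan is to exhibit $J$ as a composite of canonical equivalences of $\bbcomod$-module categories. The key observation is that the lax $\bbcomod$-module monad $(B \bullet B) \cotens \blank$ on $\bbcomod$ is naturally isomorphic to the composite $\on{Cof} \circ \on{Fr}$, where $\on{Fr}\from \bbcomod \to \kVect$ is the fiber functor and $\on{Cof}\from \kVect \to \bbcomod$ is its right adjoint sending $V$ to $B \otimes V$ (with coaction $\Delta \otimes V$). The underlying natural isomorphism is $(B \bullet B) \cotens M \to B \otimes M$, $b \otimes c \otimes m \mapsto b\,\varepsilon(c) \otimes m$, with inverse $b \otimes m \mapsto b \otimes m_{(-1)} \otimes m_{(0)}$; this follows since the right coaction of $B \bullet B$ sees only its second tensor factor, reducing the cotensor to $B \otimes (B \cotens M) \cong B \otimes M$ via the classical isomorphism $B \cotens M \cong M$.

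The proof then proceeds in three steps. First, verify that the above isomorphism intertwines the monad and lax $\bbcomod$-module structures: the multiplication and unit of $B \bullet B$ from Proposition~\ref{prop:b-dot-b-is-algebra-object} correspond under the iso to those on $\on{Cof} \circ \on{Fr}$ induced by the unit and counit of $\on{Fr} \dashv \on{Cof}$. Second, apply Proposition~\ref{KleisliMonadCorrespondence} to obtain $\mathbf{Kl}((B \bullet B) \cotens \blank) \simeq \mathbf{Kl}(\on{Cof} \circ \on{Fr})$ as $\bbcomod$-module categories. Third, note that $\on{Fr}$ is essentially surjective (every vector space carries the trivial $B$-coaction $v \mapsto 1 \otimes v$), so the Kleisli comparison functor $K_{\mathbf{Kl}}\from \mathbf{Kl}(\on{Cof} \circ \on{Fr}) \to \kVect$ is an equivalence; by Proposition~\ref{strongcomparisons} it is even a strong $\bbcomod$-module equivalence, because $\on{Fr} \dashv \on{Cof}$ lifts to a lax $\bbcomod$-module adjunction via Porism~\ref{porism:doctrinal-module-adjunctions} (using that $\on{Fr}$ is strong monoidal, hence a strong $\bbcomod$-module functor with respect to the regular action on $\bbcomod$). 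Unpacking the composite of these equivalences recovers $J$: on objects it sends $(B \bullet B) \cotens M$ to $M$; on morphisms, the formula $(\varepsilon \otimes \varepsilon \cotens \on{id}_N) \circ \sigma(1 \otimes 1 \otimes \blank)$ amounts to taking the Kleisli adjunction mate $\widetilde{\sigma}\from M \to (B \bullet B) \cotens N$ and then applying the counit-based isomorphism $(B \bullet B) \cotens N \cong B \otimes N \cong N$.

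The main subtlety will be verifying carefully that the isomorphism $(B \bullet B) \cotens \blank \cong \on{Cof} \circ \on{Fr}$ really is one of lax $\bbcomod$-module monads---in particular, matching the multiplication on $B \bullet B$ from Proposition~\ref{prop:b-dot-b-is-algebra-object} (which uses two applications of $\varepsilon$ on the ``middle'' tensor factors) with the adjunction-induced multiplication on $\on{Cof} \circ \on{Fr}$ coming from the counit of $\on{Fr} \dashv \on{Cof}$. Once this identification and the interpretation of ``$\sigma(1 \otimes 1 \otimes \blank)$'' as the Kleisli mate have been settled, the remainder of the proof is a direct verification in Sweedler notation.
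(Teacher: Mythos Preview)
Your proposal is correct and takes a genuinely different, more conceptual route than the paper. The paper proceeds in four steps: essential surjectivity, full faithfulness (by decomposing $J_{M,N}$ as a chain of three explicit isomorphisms), the module-functor property, and---most laboriously---functoriality of $J$, established via a five-step string-diagram calculation. Your approach sidesteps the functoriality verification entirely: by exhibiting $J$ as a composite of the equivalence $\mathbf{Kl}((B\bullet B)\cotens\blank)\simeq\mathbf{Kl}(\on{Cof}\circ\on{Fr})$ induced by a monad isomorphism with the Kleisli comparison functor $\mathbf{Kl}(\on{Cof}\circ\on{Fr})\to\kVect$, functoriality comes for free. In fact the paper's three-term chain of isomorphisms is precisely your decomposition in disguise: its second map is your isomorphism $(B\bullet B)\cotens N\cong B\otimes N$, and its third map is the $\on{Fr}\dashv\on{Cof}$ adjunction. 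The difference is that the paper treats these as ad hoc bijections on hom-sets and must then argue separately that the resulting assignment is functorial, whereas you package them as equivalences of categories from the outset.

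One correction: your citation of Proposition~\ref{KleisliMonadCorrespondence} is misplaced---that proposition relates a monad to its left-adjoint \emph{comonad}, not two isomorphic monads. What you need is the elementary fact that an isomorphism of lax $\mathcal{C}$-module monads induces a strict equivalence of their Kleisli categories as $\mathcal{C}$-module categories; this is immediate from Corollary~\ref{laxKlCorrespondence}, since the $\mathcal{C}$-module structure on the Kleisli category is determined functorially by the lax module monad structure.
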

\begin{proof}
  First, assume that \(J\) is a functor;
  we will show it is an equivalence of ($\bbcomod$)-module categories.
  Let $V \in \kVect$.
  Essential surjectivity follows by $V \cong \Bbbk^{\dim{V}} \cong J((B\bullet B)\cotens \Bbbk_{\on{triv}}^{\dim{V}})$.

  Second, $J$ is fully faithful: we have that $J_{M,N}$ is defined as the composite of the isomorphisms
  \begin{align*}
    (B \bullet B)&\text{\rm-ModFree}\big((B\bullet B)\cotens M,  (B\bullet B)\cotens N\big)
             \xrightarrow{\blank \circ \eta} \bbcomod(M,  (B\bullet B)\cotens N) \\
           & \xrightarrow{(B \otimes \varepsilon \cotens N) \circ \blank} \bbcomod(M,  B\otimes N)
             \xrightarrow{(\varepsilon \otimes N) \circ \blank} \kvect(M,N).
  \end{align*}
  The first morphism is an isomorphism of the form $\mathbf{EM}(T)(TX, TY) \xiso \mathcal{A}(X,TY)$, for the monad $T$ on a category $\mathcal{A}$, where $T = (B \bullet B) \cotens \blank$ and $\mathcal{A} = \bbcomod$.
  The second isomorphism is simply postcomposition with the isomorphism $(B \bullet B) \cotens N = B \otimes  B \cotens N \xrightarrow{B \otimes \varepsilon \cotens N} B \otimes N$, since $B \cotens N \xrightarrow{\varepsilon \cotens N} N$ is an isomorphism.
  The third is an isomorphism $\mathbf{EM}(K)(KX,KY) \xiso \mathcal{A}(KX, Y)$, for the comonad $K$ on a category $\mathcal{A}$, where $K = B \otimes \blank$ and $\mathcal{A} = \kVect$.

  Third, $J$ is a ($\bbcomod$)-module functor, since for $W \in \bbcomod$, we have
  \[
    W \triangleright J(B\bullet B \cotens M) = W \otimes M = J(B\bullet B \cotens (W \otimes M)) = J(W \triangleright B \bullet B \cotens M).
  \]
  Thus, if $J$ defines a functor, then it is an equivalence of ($\bbcomod$)-module categories.

  It is left to verify the functoriality of $J$.
  Let \(\sigma \in (B \bullet B)\text{-ModFree}((B \bullet B) \cotens M, (B \bullet B) \cotens N)\)
  and \(\tau \in (B \bullet B)\text{-ModFree}((B \bullet B) \cotens N, (B \bullet B) \cotens W)\).
  Figure~\ref{fig:jequivalence-sigma} shows our graphical representation of these morphisms.
  \begin{figure}[htbp]
    \tikzsetnextfilename{fig-jequivalence-sigma}

    \caption{}%
    \label{fig:jequivalence-functoriality}
  \end{figure}
\end{proof}

\begin{corollary}
  The monad $(B \bullet B) \cotens \blank$ is semisimple, and there is an equivalence of $\bbcomod$-module categories $(B \bullet B)\text{\rm-Mod} \simeq \kVect$.
\end{corollary}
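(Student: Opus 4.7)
My plan is to combine the equivalence $J\from (B \bullet B)\text{-ModFree} \xiso \kVect$ of $\bbcomod$-module categories with the machinery of semisimple monads developed in Section~\ref{sec:semisimple-monads}. First, I would identify the Kleisli category $\mathbf{Kl}((B \bullet B) \cotens \blank)$ with $(B \bullet B)\text{-ModFree}$ as a $\bbcomod$-module category, via Proposition~\ref{laxKlCorrespondence} and Lemma~\ref{freecontramodmodmodmod}, so that the preceding lemma may be read as an equivalence $\mathbf{Kl}((B \bullet B) \cotens \blank) \simeq \kVect$ of $\bbcomod$-module categories.

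Next, I would observe that $\kVect$ is Cauchy complete: being a cocomplete abelian category, it admits finite biproducts and splits all idempotents, so by the characterization of Cauchy completion as the Karoubi envelope of the additive envelope recalled before Definition~\ref{def:cauchy-completion}, the inclusion $\kVect \hookrightarrow \kVect^{\mathrm{Cy}}$ is an equivalence. Applying the Cauchy completion pseudofunctor then yields an equivalence $\mathbf{Kl}((B \bullet B) \cotens \blank)^{\mathrm{Cy}} \simeq \kVect^{\mathrm{Cy}} \simeq \kVect$, and since $\kVect$ is semisimple, so too is $\mathbf{Kl}((B \bullet B) \cotens \blank)^{\mathrm{Cy}}$. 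By Proposition~\ref{prop:SemisimpleEM} and Definition~\ref{def:semisimple-monad}, this establishes both that the monad $(B \bullet B) \cotens \blank$ is semisimple and that the canonical functor $\iota^{\mathrm{Cy}}\from \mathbf{Kl}((B \bullet B) \cotens \blank)^{\mathrm{Cy}} \to (B \bullet B)\text{-Mod}$ is an equivalence of categories.

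Finally, to promote this last equivalence to one of $\bbcomod$-module categories, I would invoke Proposition~\ref{extendingsemisimple}: semisimplicity of the lax $\bbcomod$-module monad $(B \bullet B) \cotens \blank$ ensures it is extendable, and the argument in the proof of that proposition further gives that $\iota^{\mathrm{Cy}}$ is a $\bbcomod$-module equivalence, where the $\bbcomod$-module structure on the Cauchy completion is transported from $\mathbf{Kl}((B \bullet B) \cotens \blank)$ via the universal property of Proposition~\ref{cocompletingnonsense}. Composing the resulting chain of $\bbcomod$-module equivalences gives the desired $(B \bullet B)\text{-Mod} \simeq \kVect$.

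I do not anticipate any serious obstacle; the entire argument is a matter of threading the preceding lemma through the semisimple-monad machinery, and the only nontrivial bookkeeping concerns the compatibility of Cauchy completion with the $\bbcomod$-module structure, which is supplied by Proposition~\ref{cocompletingnonsense}.
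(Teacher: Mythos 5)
Your proposal is correct and follows exactly the route the paper intends: identify $(B \bullet B)\text{\rm-ModFree}$ with the Kleisli category, use that $\kVect$ is Cauchy complete and semisimple to conclude semisimplicity of $\mathbf{Kl}((B\bullet B)\cotens\blank)^{\mathrm{Cy}}$, and then apply Proposition~\ref{prop:SemisimpleEM} together with Proposition~\ref{extendingsemisimple} to obtain both claims. The bookkeeping about transporting the $\bbcomod$-module structure along the Cauchy completion via Proposition~\ref{cocompletingnonsense} is handled just as in the proof of Proposition~\ref{ModuleEMSemisimple}, so there is nothing to add.
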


\section{The fundamental theorem of Hopf modules}
\subsection{Hopf trimodules and twisted antipodes}\label{sec:Hausser-Nill}

\begin{definition}
  Let $B$ be a bialgebra.
  A \emph{twisted antipode} for $B$ is an antipode for the bialgebra $B^{\on{cop}}$.
  Equivalently, it is one for the bialgebra $B^{\on{op}}$.
\end{definition}

Note that a Hopf algebra admits a twisted antipode \(\overline{S}\) if and only if its own antipode \(S\) is invertible,
in which case we have \(S^{-1} = \overline{S}\).
Notably, a finite-dimensional Hopf algebra always admits a twisted antipode.

Lemma~\ref{antipodeleftrigid} below is a well-known result, see e.g.~\cite[Theorem~5.4.1.(2)]{EGNO}.
We emphasize that while its former part is seemingly opposite to the claim of~\cite[Theorem~5.4.1.(2)]{EGNO},
identifying the presence of an antipode with right, rather than left, rigidity, this is because we consider left, rather than right, comodules.
The latter claim of Lemma~\ref{antipodeleftrigid} follows from the former by using the equivalence ${(\tetramod[B])}^{\on{rev}} \simeq \tetramod[B^{\on{op}}]$.

\begin{lemma}\label{antipodeleftrigid}
  Let $B$ be a bialgebra.
  Then $\tetramodfd[B]$ is right rigid if and only if\, $B$ admits an antipode.
  Similarly, $\tetramodfd[B]$ is left rigid if and only if\, $B$ admits a twisted antipode.
\end{lemma}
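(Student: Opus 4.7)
The plan is to observe that the first assertion is the main content, with the second reduced to it by the final remark of the excerpt, and to prove the first via the Tannaka--Krein reconstruction philosophy combined with an explicit construction of duals from an antipode. Because the paper has tagged this as well-known and cited \cite[Theorem~5.4.1.(2)]{EGNO}, I will give a short self-contained argument and then pass through the monoidal reversal to handle the twisted case.

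For the direction ``antipode $\Rightarrow$ right rigidity'' of the first claim, I would fix a finite-dimensional left $B$-comodule $(V, \rho)$ with $\rho(v) = v_{(-1)} \otimes v_{(0)}$, and propose as right dual the linear dual $V^{*}$ endowed with the coaction \(v^{*} \mapsto S(v_{(-1)}) \otimes v^{*}(v_{(0)}\,\blank)\), where $S$ is the antipode. The evaluation $V \otimes V^{*} \to \mathbb{1}$ and coevaluation $\mathbb{1} \to V^{*} \otimes V$ are the usual ones from $\kvect$; checking that they are $B$-comodule morphisms reduces, by a short computation in Sweedler notation, to the identity $S(b_{(1)})b_{(2)} = \varepsilon(b)$ (the defining property of an antipode). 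The snake equations are inherited from $\kvect$, so $V^{\vee} \defeq V^{*}$ is a genuine right dual.

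For the converse direction, I would invoke the Tannaka--Krein reconstruction recalled in the excerpt just before the section on monads: the forgetful functor $U_B\from \tetramodfd[B] \to \kvect$ is a fiber functor, and reconstructs $B$ up to isomorphism as a bialgebra. Assuming $\tetramodfd[B]$ right rigid, we obtain the right dualization endofunctor $(\blank)^{\vee}$. Applying $U_B$ to the evaluation and coevaluation of $B$, viewed as the regular left comodule (or using the \emph{ind}-version to circumvent infinite-dimensionality), and transferring along the reconstruction equivalence, produces a $\Bbbk$-linear map $S\from B \to B$. The snake equations for the duality pair, read off via $U_B$, translate precisely into the two antipode axioms $S(b_{(1)})b_{(2)} = \varepsilon(b)\,\mathbb{1} = b_{(1)}S(b_{(2)})$. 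The main technical obstacle here is setting up the reconstruction correctly in the \emph{possibly infinite-dimensional} case, since right duals in $\tetramodfd[B]$ only supply an antipode on finite-dimensional pieces a priori; the cleanest route is to pass to the monoidal equivalence $\mathbf{Ind}(\tetramodfd[B]) \simeq \tetramod[B]$ established in the preliminaries and define $S$ as a colimit of dualization maps on the coalgebra filtration of $B$.

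For the second assertion, I would use the standard monoidal equivalence $(\tetramodfd[B])^{\otimes\on{rev}} \simeq \tetramodfd[B^{\on{cop}}]$, which sends a comodule $(V,\rho)$ to itself with the same coaction but with the opposite coalgebra structure underlying the monoidal product. Under this equivalence, left rigidity of $\tetramodfd[B]$ corresponds exactly to right rigidity of $\tetramodfd[B^{\on{cop}}]$. Applying the first part to the bialgebra $B^{\on{cop}}$ yields that this is equivalent to $B^{\on{cop}}$ admitting an antipode, which is by definition a twisted antipode for $B$. I expect the main bookkeeping difficulty to be checking that the equivalence $(\tetramodfd[B])^{\otimes\on{rev}} \simeq \tetramodfd[B^{\on{cop}}]$ is strict monoidal on the nose with respect to our conventions for diagonal coactions, but once that is fixed the argument is formal.
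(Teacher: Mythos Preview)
The paper does not actually prove this lemma; it is recorded as well-known with a citation to \cite[Theorem~5.4.1.(2)]{EGNO}, together with the remark that the second claim follows from the first via the monoidal equivalence ${(\tetramod[B])}^{\on{rev}} \simeq \tetramod[B^{\on{op}}]$. Your proposal therefore goes further than the paper by supplying an argument, and your sketch for the first claim (explicit dual via the antipode in one direction, Tannaka--Krein reconstruction in the other) is the standard proof underlying the cited reference.

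For the second claim you use the same reduction-by-reversal strategy as the paper, but there is a slip: the equivalence you want is with $\tetramodfd[B^{\on{op}}]$, not $\tetramodfd[B^{\on{cop}}]$. Passing to the opposite \emph{multiplication} leaves the underlying category of left comodules literally unchanged (the comodule axioms involve only the coalgebra structure) while reversing the diagonal tensor product, which is exactly what you need. Passing to the opposite \emph{comultiplication} instead changes what a left comodule is, so ``the same coaction'' does not survive, contrary to your description. Since a twisted antipode is by definition an antipode for $B^{\on{op}}$ (equivalently for $B^{\on{cop}}$), the conclusion is unaffected once you correct the equivalence, but as written the functor you describe does not land where you claim.
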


The next theorem below was shown
in~\cite[Proposition~3.11]{hausser99:integ-theor-quasi-hopf-algeb}
and~\cite[Theorem~3.9]{saracco17:hopf}
in the setting of quasi-bialgebras,
as a generalization of the structure theorem for Hopf modules of Larson and Sweedler~\cite{larson69:hopf}.

\begin{theorem}\label{HausserNill}
  Let $B$ be a bialgebra.
  The functor $\blank \otimes B\from \tetramod[][B] \to \tetramod[][B][B][B]$ is an equivalence if and only if\, $B$ admits an antipode.
\end{theorem}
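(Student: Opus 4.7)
The approach is to parallel the proof strategy behind Theorem~\ref{illmatic}, interchanging the roles of left $B$-comodules and left $B$-modules, and correspondingly those of lax and oplax module functors.

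First, I would establish a dual analogue of Theorem~\ref{thm:hopf-trimodules-are-lax-monoidal-functors}: the category $\tetramod[][B][B][B]$ is monoidally equivalent to the category of right exact, finitary oplax right $\bbmodule$-module endofunctors of $\bbmodule$, via the assignment $X \mapsto X \otimes_{B} \blank$. The oplax coherence morphism
\[
  X \otimes_{B} (N_{1} \otimes N_{2}) \longrightarrow (X \otimes_{B} N_{1}) \otimes N_{2},
  \quad
  x \otimes_{B} (n_{1} \otimes n_{2}) \longmapsto (x_{(0)} \otimes_{B} n_{1}) \otimes (x_{(1)} \cdot n_{2}),
\]
is induced by the right $B$-coaction $\rho(x) = x_{(0)} \otimes x_{(1)}$ on $X$. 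Well-definedness on the balanced tensor product follows from the Hopf-termodule compatibility $\rho(xb) = x_{(0)} b_{(1)} \otimes x_{(1)} b_{(2)}$, by a check analogous to Lemma~\ref{lem:interchange-properties}. The remainder of the equivalence is obtained by mirroring the arguments of Section~\ref{sec:hopf-trimodules-lax-mod-funs} with modules replacing comodules throughout.

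Next, by the right-module variant of Proposition~\ref{prop:bicategorical-yoneda-correspondence-monads-and-algebras}, the category $\bbmodule$ is equivalent to that of strong right $\bbmodule$-module endofunctors of $\bbmodule$ via $M \mapsto M \otimes \blank$. A direct inspection shows that, under this identification and the equivalence from the first step, the functor $\blank \otimes B$ corresponds to the inclusion of strong right $\bbmodule$-module endofunctors into oplax ones: indeed $(M \otimes B) \otimes_{B} \blank \cong M \otimes \blank$ canonically, and the induced oplax structure on the latter reduces to the associator of $\bbmodule$. Since the equivalence above is through finitary, right exact functors, an oplax right $\bbmodule$-module structure on such a functor is determined by the corresponding oplax right $\bmodule$-module structure on its restriction to finite-dimensional modules. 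By the right-module version of Proposition~\ref{prop:lax-is-strong-when-rigid}, every oplax right $\bmodule$-module functor is strong whenever $\bmodule$ is left rigid. This reduces the theorem to the module analogue of Lemma~\ref{antipodeleftrigid}: $\bmodule$ is left rigid if and only if $B$ admits an antipode, as one sees from the standard computation that a left dual of $V \in \bmodule$ is necessarily $V^{*}$ with action $(b \cdot f)(v) = f(S(b) v)$, whose $B$-equivariance on evaluation and coevaluation forces exactly the antipode axioms $S(b_{(1)}) b_{(2)} = \varepsilon(b) = b_{(1)} S(b_{(2)})$.

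The main obstacle is the dual analogue of Theorem~\ref{thm:hopf-trimodules-are-lax-monoidal-functors} required in the first step. Although the strategy is a formal dualization, the detailed string-diagrammatic arguments of Section~\ref{sec:hopf-trimodules-lax-mod-funs} must be carefully adapted, with module and comodule roles as well as arrow directions swapped throughout. A potentially more economical route would be to deduce the dual equivalence directly from Theorem~\ref{thm:hopf-trimodules-are-lax-monoidal-functors} by passing to the opposite bialgebra $B^{\op}$, though transporting the module functor coherence data across such a duality requires some care.
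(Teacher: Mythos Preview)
The paper does not supply its own proof of Theorem~\ref{HausserNill}; it is quoted as a known result from the literature (Hausser--Nill, Saracco), and the paper instead proves a \emph{variant} for the functor $B \otimes \blank\from \tetramod[B] \to \tetramod[B][B][][B]$, characterised by a twisted antipode. Your proposal is to dualise that variant proof, and the outline---identifying $\blank \otimes B$ with the inclusion of strong into oplax right $\bbmodule$-module endofunctors, then reducing to a rigidity statement for $\bmodule$---is sound and parallels the paper's structure for the variant.

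There is, however, a genuine gap. You invoke only the right-module analogue of Proposition~\ref{prop:lax-is-strong-when-rigid}, which gives the implication ``$\bmodule$ left rigid $\Rightarrow$ every oplax right $\bmodule$-module functor is strong''. This yields ``$B$ has an antipode $\Rightarrow$ $\blank \otimes B$ is an equivalence'', but your phrase ``this reduces the theorem to the module analogue of Lemma~\ref{antipodeleftrigid}'' presumes the converse implication as well, which you have not supplied. What is missing is the analogue of Proposition~\ref{LaxIsStrongThenLeftRigid}: if every finitary right exact oplax right $\bbmodule$-module endofunctor is strong, then $\bmodule$ is left rigid. The paper's variant proof uses exactly this step (the Corollary following Proposition~\ref{EmperorDaibazaal}): for each finite-dimensional $V$, one argues that $V \otimes \blank$ admits a \emph{left} adjoint, which is automatically cocontinuous and hence a finitary right exact oplax right module endofunctor; by hypothesis it is strong, hence of the form ${}^{\vee}V \otimes \blank$, and the resulting module-functor adjunction $({}^{\vee}V \otimes \blank) \dashv (V \otimes \blank)$ produces the dual pair. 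You should verify that such a left adjoint exists for finite-dimensional $V$---it does, for instance via Proposition~\ref{saftlex}, since $V \otimes \blank$ is continuous and finitary---and that it lies in the class of functors covered by your step-one equivalence. Incidentally, the paper's variant handles the forward direction (twisted antipode $\Rightarrow$ equivalence) by a direct construction with the antipode rather than by the purely categorical route you sketch; either approach works once the gap above is closed.
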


The main aim of this section is to show that a variant of Theorem~\ref{HausserNill} can be deduced from the following general statement about module categories and module functors, which can thus be viewed as a weak Hausser--Nill theorem for general monoidal categories.

\begin{proposition}\label{LaxIsStrongThenLeftRigid}
  Let $\mathcal{C}$ be a left closed monoidal category such that every left $\mathcal{C}$-module endofunctor of $\mathcal{C}$ is strong%
  ---in other words, the monoidal embedding \(\mathcal{C}\text{\rm-StrMod}(\mathcal{C},\mathcal{C}) \hookrightarrow \mathcal{C}\text{\rm-LaxMod}(\mathcal{C},\mathcal{C})\) is an equivalence.
  Then $\mathcal{C}$ is left rigid.
\end{proposition}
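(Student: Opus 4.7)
The plan is to show that every object $X \in \mathcal{C}$ admits a left dual, by extracting evaluation and coevaluation maps from the unit and counit of an adjunction obtained via doctrinal adjunction. I fix $X \in \mathcal{C}$. Since $\mathcal{C}$ is left closed, the strong $\mathcal{C}$-module endofunctor $\blank \otimes X\from \mathcal{C} \to \mathcal{C}$ of Example~\ref{ex:regularend} admits a right adjoint $\hom{X,\blank}$. First, I would apply the lax variant of doctrinal adjunction, Proposition~\ref{prop:dual-module-adjunctions}, to endow $\hom{X,\blank}$ with its canonical lax $\mathcal{C}$-module structure and to promote the adjunction to a lax $\mathcal{C}$-module adjunction whose unit $\eta$ and counit $\varepsilon$ are $\mathcal{C}$-module natural transformations.

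By hypothesis, this lax module functor $\hom{X,\blank}$ is in fact strong. The bicategorical Yoneda equivalence $\lMod{\mathcal{C}}(\mathcal{C},\mathcal{C}) \simeq \mathcal{C}^{\otimes\on{rev}}$ of Proposition~\ref{prop:bicategorical-yoneda-correspondence-monads-and-algebras} then provides an object $V \defeq \hom{X,\mathbb{1}}$ together with an isomorphism $\phi\from \hom{X,\blank} \xiso \blank \otimes V$ of strong $\mathcal{C}$-module functors. Transporting the lax module adjunction along $\phi$ yields a lax module adjunction $\blank \otimes X \dashv \blank \otimes V$ in which both functors are strong, and whose unit and counit are again module natural. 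Setting $\coev \defeq \eta_{\mathbb{1}}\from \mathbb{1} \to X \otimes V$ and $\ev \defeq \varepsilon_{\mathbb{1}}\from V \otimes X \to \mathbb{1}$, I then plan to unpack the module-naturality constraint of Definition~\ref{def:module-transformation} applied to the strong source $\Id_{\mathcal{C}}$ and target $\blank \otimes X \otimes V$, obtaining (modulo the unitor and associator) $\eta_Y = Y \otimes \coev$, and analogously $\varepsilon_Y = Y \otimes \ev$. The two triangle identities for $\blank \otimes X \dashv \blank \otimes V$ will then specialize, after cancelling the common $Y \otimes \blank$ prefix, to exactly the two snake equations of Definition~\ref{def:left-rigid-category}, showing that $V$ is a left dual of $X$. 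Since $X \in \mathcal{C}$ was arbitrary, $\mathcal{C}$ will be left rigid.

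The step I expect to require the most care is verifying that the unit and counit of the transported adjunction $\blank \otimes X \dashv \blank \otimes V$ are still $\mathcal{C}$-module natural transformations, since it is this property, applied to the now-strong target $\blank \otimes X \otimes V$, that forces $\eta_Y = Y \otimes \coev$ and so forms the genuine bridge between the doctrinal/Yoneda input and the snake equations. I expect this to be handled by observing that $\phi$ is a strong module natural isomorphism, hence a fortiori a lax module transformation; that the original $\eta$ and $\varepsilon$ are lax module transformations by Proposition~\ref{prop:dual-module-adjunctions}; and that lax module transformations compose vertically, so pre- and post-composing with $\phi$ and $\phi^{-1}$ preserves module-naturality. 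Once this bookkeeping is in place, the remainder of the argument reduces to a direct inspection of the triangle identities.
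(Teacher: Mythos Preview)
Your proposal is correct and follows essentially the same approach as the paper: both use left closedness plus doctrinal adjunction to make $\hom{X,\blank}$ a lax (hence, by hypothesis, strong) module endofunctor, then invoke the Yoneda equivalence of Proposition~\ref{prop:bicategorical-yoneda-correspondence-monads-and-algebras} to write it as $\blank \otimes V$, obtaining a module adjunction $\blank \otimes X \dashv \blank \otimes V$. The only difference is that where the paper cites \cite[102--103]{kelly72:many} for the final implication ``module adjunction of this form $\Rightarrow$ dual pair'', you plan to unpack it by hand via module-naturality of the unit and counit and the triangle identities---which is exactly the content of that citation.
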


\begin{proof}
  Since $\mathcal{C}$ is left closed, for every $V \in \mathcal{C}$ there is a right adjoint $\hom{V, \blank}$ to the strong $\mathcal{C}$-module endofunctor $\blank \otimes V$.
  By Porism~\ref{porism:doctrinal-module-adjunctions}, the functor $\hom{V, \blank}$ is a lax $\mathcal{C}$-module endofunctor of $\mathcal{C}$, and thus, by assumption, it is a strong module endofunctor.
  Thus, by Proposition~\ref{prop:bicategorical-yoneda-correspondence-monads-and-algebras},
  there is an object $\ld{V}$ such that there is an isomorphism of $\mathcal{C}$-module functors $\hom{V, \blank} \cong \blank \otimes \ld{V}$.
  Thus we obtain an adjunction of $\mathcal{C}$-module functors $\blank \otimes V \dashv \blank \otimes \ld{V}$,
  which by~\cite[102-103]{kelly72:many} shows that $(\ld{V}, V)$ is a dual pair in $\mathcal{C}$.
  It follows that $\mathcal{C}$ is left rigid.
\end{proof}

We now define the functor $B \otimes \blank$ analogous to the functor $\blank \otimes B$ of Theorem~\ref{HausserNill}.

\begin{definition}\label{bestoftimes}
  Define the functor $B\otimes \blank\from  \tetramod[B][][][] \to \tetramod[B][B][][B]$ by sending a left $B$-comodule $M$ to the trimodule $B \otimes M$ whose left and right coaction, as well as left action, is given by
  \[
    \big(
    b \otimes m \mapsto b_{(1)}m_{(-1)} \otimes b_{(2)} \otimes m_{(0)}, \quad
    b \otimes m \mapsto b_{(1)}\otimes m \otimes b_{(2)}, \quad
    a \otimes b \otimes m \mapsto ab \otimes m
    \big),
  \]
  where we extends to morphisms in the natural way.
  It is easy to verify that $B \otimes M$ defines a Hopf trimodule and also that $B \otimes \blank$ defines a functor.
\end{definition}

Comparing Definition~\ref{bestoftimes} with Theorem~\ref{HausserNill}, we remark on two differences.
First, the domain of the functor we study is the category of comodules over $B$, rather than modules.
This is merely to maintain consistence with Section~\ref{sec:Hopf-trimodules}, and a variant for modules can be formulated without greater difficulty.
Second, the functor we study endows a left comodule with an additional comodule structure on the right, and a module structure on the left, rather than on the right.
We will see that it is related to the functor $\tetramod \to \tetramod[][B][][B]$, which is known to be an equivalence if $B$ admits a \emph{twisted} antipode;
see for example~\cite[Section~1.9.4]{montgomery93:hopf}.

We now give a categorical interpretation of the functor of Definition~\ref{bestoftimes}:

\begin{proposition}\label{EmperorDaibazaal}
  The diagram
  \[
    \begin{mytikzcd}[ampersand replacement=\&]
      {\tetramod[B]\text{\rm-StrMod}(\tetramod[B], \tetramod[B])} \&\& {\tetramod[B]\text{\rm-LexfLaxMod}(\tetramod[B], \tetramod[B])} \\
      {{\tetramod[B]}^{\otimes \on{rev}}} \&\& \tetramod[B][B][][B]
      \arrow[hook, from=1-1, to=1-3]
      \arrow["\simeq", from=1-1, to=2-1]
      \arrow["\simeq", from=1-3, to=2-3]
      \arrow[hook, from=2-1, to=2-3]
    \end{mytikzcd}
  \]
  commutes up to a monoidal natural isomorphism. Its upper horizontal arrow is the natural inclusion functor, its left vertical arrow is the equivalence of Proposition~\ref{prop:bicategorical-yoneda-correspondence-monads-and-algebras}, its right vertical arrows is the equivalence of Theorem~\ref{thm:hopf-trimodules-are-lax-monoidal-functors}, and its lower horizontal arrow is the functor of Definition~\ref{bestoftimes}.
\end{proposition}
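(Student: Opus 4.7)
The plan is to evaluate both composite functors on a strong $\tetramod[B]$-module endofunctor $\Phi$ and exhibit a natural isomorphism of Hopf trimodules between the two outputs, then verify that this isomorphism is monoidal. Set $A \defeq \Phi(\mathbb{1})$. Going down the left column sends $\Phi$ to $A \in \tetramod[B]^{\otimes\on{rev}}$ via the Yoneda equivalence of Proposition~\ref{prop:bicategorical-yoneda-correspondence-monads-and-algebras}, and the bottom arrow then produces the Hopf trimodule $B \otimes A$ of Definition~\ref{bestoftimes}. Going across the top and down the right includes $\Phi$ as a lax module endofunctor (it is automatically left exact and finitary, being isomorphic to $\blank \otimes A$) and applies the inverse of $\Sigma$ from Theorem~\ref{thm:hopf-trimodules-are-lax-monoidal-functors}, producing the Hopf trimodule $\Phi(B)$ endowed with the structures spelled out in the proofs of Proposition~\ref{TakeuchiEilenbergWatts} and Proposition~\ref{prop:sigma-essentially-surjective}.

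The Yoneda isomorphism $\Phi \cong \blank \otimes A$ gives, on evaluation at $B$, an isomorphism $\Phi(B) \cong B \otimes A$ of left $B$-comodules. I will verify that under this identification the remaining two structures on $\Phi(B)$ coincide with those of Definition~\ref{bestoftimes}. For the right $B$-coaction, the proof of Proposition~\ref{TakeuchiEilenbergWatts} induces it from the bicomodule structure on $B$ via applying $\Phi$ to $\Delta \from B \to B \otimes B$ in $\tetramod[B]{}$; for $\Phi = \blank \otimes A$ this yields the coaction $b \otimes a \mapsto b_{(1)} \otimes a \otimes b_{(2)}$. For the left $B$-action, Proposition~\ref{prop:sigma-essentially-surjective} defines it via $\chi_{B,B}$ composed with a counit-induced projection; since $\chi$ for the strong functor $\blank \otimes A$ is the associator of $\kVect$, a short diagram chase recovers $b \otimes c \otimes a \mapsto bc \otimes a$.

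Naturality in $\Phi$ is automatic from naturality of the Yoneda isomorphism. For the monoidal structure, both vertical equivalences are monoidal and the top inclusion is trivially monoidal, so the question reduces to exhibiting and verifying the coherence of an isomorphism
\[
  (B \otimes A) \cotens (B \otimes A') \;\cong\; B \otimes (A' \otimes A)
\]
of Hopf trimodules, together with the unit isomorphism $B \cong B \otimes \mathbb{1}$. The explicit formula is $(b \otimes a) \otimes (c \otimes a') \mapsto b c_{(1)} \otimes a'_{(-1)} c_{(2)} a \otimes \dots$; more conceptually, one can argue that both sides represent the composite module functor $\blank \otimes A' \otimes A$ evaluated at $B$, and invoke Proposition~\ref{prop:braiding-composes-to-cotens} (monoidality of $\Sigma$) together with the already-established pointwise isomorphism to deduce the coherence for free.

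The main obstacle is the monoidal coherence verification in the final paragraph. The pointwise computations described in the second paragraph are direct, once one unpacks the isomorphism $(B \otimes A) \cotens N \cong N \otimes A$ implicit in Proposition~\ref{TakeuchiEilenbergWatts}; the technical core lies in showing that the comparison isomorphism $(B \otimes A) \cotens (B \otimes A') \cong B \otimes (A' \otimes A)$ intertwines all three pieces of Hopf trimodule structure and satisfies the hexagon/pentagon axioms for a monoidal transformation. The cleanest route is to extract this coherence from monoidality of $\Sigma$ rather than verify it by hand.
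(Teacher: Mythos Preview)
Your approach is essentially the same as the paper's. The paper's proof is very terse: it chases a strong module functor $F$ around the square, observes that the two routes yield $B \otimes F(\Bbbk_{\on{triv}})$ and $F(B)$ respectively, and writes down the comparison isomorphism as the composite $B \otimes F(\Bbbk_{\on{triv}}) \xrightarrow{\sim} F(B \otimes \Bbbk_{\on{triv}}) \xrightarrow{\sim} F(B)$, the first arrow being the inverse of the strong module coherence $F_{\mathsf{a}}$. It then declares naturality and monoidality ``easy to verify'' and stops. Your Yoneda isomorphism $\Phi \cong \blank \otimes A$ evaluated at $B$ is exactly this same map, since the Yoneda equivalence of Proposition~\ref{prop:bicategorical-yoneda-correspondence-monads-and-algebras} is built from $F_{\mathsf{a}}^{-1}$; you simply go on to unpack the structure checks that the paper suppresses.
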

\begin{proof}
  Chasing a functor $F$ in the diagram, we find the following:
  \[
    \begin{mytikzcd}[ampersand replacement=\&]
      F \&\&\& F \\
      {F(\Bbbk_{\on{triv}})} \& {B \otimes F(\Bbbk_{\on{triv}})} \& {F(B \otimes \Bbbk_{\on{triv}})} \& {F(B)}
      \arrow[maps to, from=1-1, to=1-4]
      \arrow[maps to, from=1-1, to=2-1]
      \arrow[maps to, from=1-4, to=2-4]
      \arrow[maps to, from=2-1, to=2-2]
      \arrow["\simeq", from=2-2, to=2-3]
      \arrow["\simeq", from=2-3, to=2-4]
    \end{mytikzcd}
  \]
  It is easy to verify that the isomorphism indicated in the bottom row of this chase is natural in $F$, and also monoidal.
\end{proof}

\begin{corollary}
  If the functor $B \otimes \blank$ of Definition~\ref{bestoftimes} is an equivalence, $B$ admits a twisted antipode.
\end{corollary}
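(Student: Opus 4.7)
My plan is to combine Proposition~\ref{EmperorDaibazaal} with an adaptation of the argument in Proposition~\ref{LaxIsStrongThenLeftRigid}, and then conclude via Lemma~\ref{antipodeleftrigid}. First, I would note that since the two vertical arrows in the diagram of Proposition~\ref{EmperorDaibazaal} are equivalences and the diagram commutes, the assumption that the bottom arrow $B \otimes \blank$ is an equivalence forces the top arrow---the inclusion of strong $\tetramod[B]$-module endofunctors into left exact finitary lax $\tetramod[B]$-module endofunctors of $\tetramod[B]$---to be an equivalence as well. In particular, every left exact finitary lax $\tetramod[B]$-module endofunctor of $\tetramod[B]$ is isomorphic to a strong one.

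Next, I would run the strategy of Proposition~\ref{LaxIsStrongThenLeftRigid}, restricted to finite-dimensional objects. Fix $V \in \tetramodfd[B]$. The functor $\blank \otimes V \from \tetramod[B] \to \tetramod[B]$ is a strong $\tetramod[B]$-module endofunctor by Example~\ref{ex:regularend}, and since it is cocontinuous, Proposition~\ref{saftrex} provides a right adjoint $\hom{V, \blank}$. This right adjoint is automatically left exact, and because $\blank \otimes V$ preserves compact objects (as $V$ is finite-dimensional), Proposition~\ref{saftlex} ensures that $\hom{V, \blank}$ is also finitary. By Porism~\ref{porism:doctrinal-module-adjunctions}, it carries a canonical lax $\tetramod[B]$-module structure, which by the previous paragraph must in fact be strong. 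Applying Proposition~\ref{prop:bicategorical-yoneda-correspondence-monads-and-algebras} yields an object $\ld{V} \defeq \hom{V, \mathbb{1}} \in \tetramod[B]$ with $\hom{V, \blank} \cong \blank \otimes \ld{V}$ as $\tetramod[B]$-module functors, and the resulting module adjunction $\blank \otimes V \dashv \blank \otimes \ld{V}$ identifies $(\ld{V}, V)$ as a dual pair in $\tetramod[B]$, as in~\cite{kelly72:many}.

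To conclude, I would verify that $\ld{V}$ lies in $\tetramodfd[B]$: the adjunction gives an isomorphism $\on{Hom}_{\tetramod[B]}(\ld{V}, \blank) \cong \on{Hom}_{\tetramod[B]}(\mathbb{1}, V \otimes \blank)$, and since $V \otimes \blank$ preserves filtered colimits and $\mathbb{1}$ is compact in $\tetramod[B]$, the object $\ld{V}$ is compact, hence finite-dimensional. Thus every object of $\tetramodfd[B]$ admits a left dual in $\tetramodfd[B]$, so $\tetramodfd[B]$ is left rigid, and Lemma~\ref{antipodeleftrigid} then yields a twisted antipode for $B$. The main obstacle is to bridge the class gap---Proposition~\ref{LaxIsStrongThenLeftRigid} as stated would demand strictness for \emph{all} lax module endofunctors, while our assumption only controls the left exact, finitary ones---and to check that the internal hom $\hom{V, \blank}$ for $V \in \tetramodfd[B]$ genuinely falls in that subcategory and that its representing object $\ld{V}$ is again finite-dimensional; once these verifications are in place, the reconstruction proceeds essentially by the same Yoneda-style argument used in the rigid setting.
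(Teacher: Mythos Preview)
Your proposal is correct and follows essentially the same approach as the paper: both use Proposition~\ref{EmperorDaibazaal} to reduce to the statement that every left exact finitary lax $\tetramod[B]$-module endofunctor is strong, then apply the argument of Proposition~\ref{LaxIsStrongThenLeftRigid} to the internal hom $\hom{V,\blank}$ for $V \in \tetramodfd[B]$, and conclude via Lemma~\ref{antipodeleftrigid}. The only minor difference is in verifying that $\ld{V}$ is finite-dimensional---the paper observes directly via the fiber functor that $\ld{V} \simeq V^{\ast}$, whereas you argue via compactness---but both arguments are valid and the overall structure is the same.
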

\begin{proof}
  By Proposition~\ref{EmperorDaibazaal}, $B \otimes \blank$ is an equivalence if and only if the embedding
  \begin{equation}\label{strongisweak}
    {\tetramod[B]\text{\rm-StrMod}(\tetramod[B], \tetramod[B])} \hookrightarrow {\tetramod[B]\text{\rm-LexfLaxMod}(\tetramod[B], \tetramod[B])}
  \end{equation}
  is an equivalence.
  Thus, we may assume that the embedding of Equation~\ref{strongisweak} is an equivalence. For $M \in \tetramodfd[B]$, the right adjoint of $\blank \kotimes M$ is finitary by Proposition~\ref{saftlex}, since $\blank \kotimes M$ sends finite-dimensional comodules to finite-dimensional comodules. The right adjoint is then a finitary, left exact lax $\tetramod[B]$-module functor, which by assumption is strong, and thus is of the form $\blank \otimes \ld{M}$, for $\ld{M} \in \tetramod[B]$. Since the fiber functor $U\from \tetramod[B] \to \kVect$ is strong monoidal, we have $\ld{M} \simeq M^{\ast}$. This shows that $\tetramodfd[B]$ is left rigid, and hence by Lemma~\ref{antipodeleftrigid} the bialgebra $B$ admits a twisted antipode.
\end{proof}

\begin{proposition}
  If $B$ admits a twisted antipode, then the functor $B \otimes \blank$ of Definition~\ref{bestoftimes} is an equivalence.
\end{proposition}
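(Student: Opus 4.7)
The plan is to use Proposition~\ref{EmperorDaibazaal} to reduce the claim to showing that the embedding
\[
  \tetramod[B]\text{\rm-StrMod}(\tetramod[B], \tetramod[B]) \hookrightarrow \tetramod[B]\text{\rm-LexfLaxMod}(\tetramod[B], \tetramod[B])
\]
is an equivalence. Since both categories have the same underlying strong or lax module functors, this is just the statement that every lex finitary lax $\tetramod[B]$-module endofunctor $F$ of $\tetramod[B]$ has invertible lax structure maps ${(F_{\mathsf{a}})}_{V,M} \from V \otimes F(M) \to F(V \otimes M)$, for all $V, M \in \tetramod[B]$.

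I would then invoke Lemma~\ref{antipodeleftrigid}, which gives that the existence of a twisted antipode for $B$ is equivalent to $\tetramodfd[B]$ being left rigid. Observe that $\tetramodfd[B]$, being a monoidal subcategory of $\tetramod[B]$, acts on $\tetramod[B]$ by restriction, so every lax $\tetramod[B]$-module functor restricts to a lax $\tetramodfd[B]$-module functor. Proposition~\ref{prop:lax-is-strong-when-rigid} applied to the left rigid category $\tetramodfd[B]$ then yields that ${(F_{\mathsf{a}})}_{V,M}$ is invertible whenever $V \in \tetramodfd[B]$.

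To obtain invertibility for an arbitrary $V \in \tetramod[B]$, I would write $V \cong \colim_i V_i$ as a filtered colimit of finite-dimensional subcomodules, as in Lemma~\ref{lem:campus-bar}. Since the monoidal structure of $\tetramod[B]$ is separately cocontinuous and $F$ is finitary, both sides of $V \otimes F(M) \to F(V \otimes M)$ commute with filtered colimits in $V$; naturality of $F_{\mathsf{a}}$ and the previous step then show that ${(F_{\mathsf{a}})}_{V,M}$ is the filtered colimit of isomorphisms, hence itself an isomorphism. This proves that the inclusion displayed above is an equivalence, which via Proposition~\ref{EmperorDaibazaal} gives that $B \otimes \blank$ is an equivalence.

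The main technical point to watch is the compatibility of the restriction to finite-dimensional comodules with the lax module structure, but this is immediate from the construction and from the fact that restricting to a submonoidal category preserves being lax. No genuine obstacle is expected, because Proposition~\ref{EmperorDaibazaal} and Proposition~\ref{prop:lax-is-strong-when-rigid} together essentially package the entire content of the theorem.
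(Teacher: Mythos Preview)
Your argument is correct and takes a genuinely different route from the paper's proof.

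The paper proceeds concretely at the level of Hopf trimodules: it notes that $B \otimes \blank$ is fully faithful by Proposition~\ref{EmperorDaibazaal}, and then establishes essential surjectivity by exhibiting, for each $X \in \tetramod[B][B][][B]$, an explicit isomorphism $X \cong B \otimes X^{\mathrm{co}B}$ built from the twisted antipode (following the classical fundamental theorem of Hopf modules as in Montgomery's book). The left $B$-comodule structure on $X^{\mathrm{co}B}$ is checked directly.

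Your approach instead stays on the functor-category side of the square in Proposition~\ref{EmperorDaibazaal}: you reduce to showing that every left exact finitary lax $\tetramod[B]$-module endofunctor is strong, and you obtain this from Proposition~\ref{prop:lax-is-strong-when-rigid} applied over the left rigid subcategory $\tetramodfd[B]$, followed by a filtered colimit argument to pass to arbitrary $V$. This is cleaner given the categorical infrastructure already in place, and it avoids any explicit manipulation of the twisted antipode or coinvariants. The paper's approach, on the other hand, yields an explicit quasi-inverse (the coinvariants functor $X \mapsto X^{\mathrm{co}B}$) and makes the connection to the classical Hopf module theorem transparent; this is useful if one wants to compute with the equivalence rather than merely know it exists.
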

\begin{proof}
  Since the functor $\tetramod[B] \hookrightarrow {\tetramod[B]\text{\rm-LexfLaxMod}(\tetramod[B], \tetramod[B])}$ is always full and faithful,
  so is the functor $B \otimes \blank$ by Proposition~\ref{EmperorDaibazaal}.
  It remains to show that $B \otimes \blank$ is essentially surjective.

  Following~\cite[Section~1.9.4]{montgomery93:hopf}, we know that for $X \in \tetramod[][B][][B]$, the composite map
  \[
    X \xrightarrow{\Delta_{X}} X \otimes B \xrightarrow{\mathtt{t}\otimes B} X^{\on{co}B} \otimes B \xrightarrow[\sim]{b_{X^{\on{co}B},B}} B \otimes X^{\on{co}B}
  \]
  is an isomorphism in $\tetramod[][B][][B]$, where $X^{\on{co}B} = \{\,m \in X \; | \; \Delta^{r}_{X}(m) = m \otimes 1\,\}$ and $\mathtt{t}$ is the projection diagrammatically represented by
  \[
    \begin{tikzpicture}[style=tikzfig]
      \begin{pgfonlayer}{nodelayer}
        \node [style=none] (0) at (-0.5, -2) {};
        \node [style=none] (1) at (-0.5, -1.5) {};
        \node [style=none] (2) at (0.5, -0.5) {};
        \node [style=none] (3) at (-0.5, -0.5) {};
        \node [style=none] (4) at (-0.5, 0.5) {};
        \node [style=none] (5) at (0.5, 0.5) {};
        \node [style=box] (6) at (-0.5, 1) {\scriptsize$\overline{S}$};
        \node [style=none] (7) at (0.5, 2.5) {};
        \node [style=none] (9) at (0.5, 2) {};
        \node [style=none] (10) at (0.5, 2.75) {\scriptsize$B$};
        \node [style=none] (11) at (-0.5, -2.25) {\scriptsize$B$};
      \end{pgfonlayer}
      \begin{pgfonlayer}{edgelayer}
        \draw [style=morphism-edge] (0.center) to (3.center);
        \draw [style=morphism-edge, in=270, out=0] (1.center) to (2.center);
        \draw [style=morphism-edge, in=270, out=90] (2.center) to (4.center);
        \draw [style=morphism-edge] (4.center) to (6);
        \draw [style=morphism-edge, in=180, out=90] (6) to (9.center);
        \draw [style=morphism-edge] (9.center) to (5.center);
        \draw [style=morphism-edge] (9.center) to (7.center);
        \draw [style=braid-over, in=270, out=90] (3.center) to (5.center);
      \end{pgfonlayer}
    \end{tikzpicture}
  \]

  Its inverse is given by the restriction ${\nabla_{X}}_{|\on{co}B}\from B \otimes X^{\on{co}B} \to X$ of the left $B$-action on $X$.

  For $X \in \tetramod[B][B][][B]$, the space $X^{\on{co}B}$ of coinvariants is a left $B$-subcomodule of $X$, since $X$ is a bicomodule. It is easy to check that $B \otimes X^{\on{co}B} \xrightarrow{{\nabla_{X}}_{|\on{co}B}} X$ is a left $B$-comodule morphism, and thus a morphism in $\tetramod[B][B][][B]$, making $B \otimes \blank$ essentially surjective.
\end{proof}

\begin{corollary}
  A bialgebra $B$ admits a twisted antipode if and only if the functor $B \otimes \blank$ of Definition~\ref{bestoftimes} is an equivalence.
\end{corollary}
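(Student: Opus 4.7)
The plan is essentially immediate: the corollary is the combination of the two propositions that directly precede it in the excerpt. The forward direction ($B\otimes\blank$ equivalence $\Rightarrow$ twisted antipode) is exactly the content of the first of those propositions, and the reverse direction (twisted antipode $\Rightarrow$ $B\otimes\blank$ equivalence) is exactly the second. So I would write no more than two sentences, citing both results and observing that together they yield the biconditional.

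If I were asked to sketch the underlying \emph{conceptual} flow, it proceeds along the chain: via Proposition~\ref{EmperorDaibazaal}, the functor $B\otimes\blank$ corresponds, under the bicategorical Yoneda lemma and Theorem~\ref{thm:hopf-trimodules-are-lax-monoidal-functors}, to the inclusion of strong $\tetramod[B]$-module endofunctors of $\tetramod[B]$ into the lax ones. Hence $B\otimes\blank$ is an equivalence iff every finitary left exact lax $\tetramod[B]$-module endofunctor is strong. By doctrinal adjunction (Porism~\ref{porism:doctrinal-module-adjunctions}) applied to the strong module functors $\blank\otimes M$ for $M\in\tetramodfd[B]$, this in turn forces the existence of left duals, hence left rigidity of $\tetramodfd[B]$; Lemma~\ref{antipodeleftrigid} then identifies left rigidity with $B$ admitting a twisted antipode. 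Conversely, given a twisted antipode, the classical coinvariants computation reconstructs any Hopf trimodule as $B\otimes X^{\on{co}B}$ via the inverse ${\nabla_X}_{|\on{co}B}$, showing essential surjectivity, while full faithfulness comes for free from the full faithfulness of $\tetramod[B]\hookrightarrow\tetramod[B]\text{-}\mathbf{LexfLaxMod}(\tetramod[B],\tetramod[B])$ through Proposition~\ref{EmperorDaibazaal}.

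There is no real obstacle here, since both implications have just been established; the only thing to be careful about is to state the biconditional cleanly and to reference the two preceding propositions (and not, say, re-derive the coinvariants computation). Thus the written proof will simply read: ``This is the conjunction of the two propositions above.''
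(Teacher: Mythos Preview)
Your proposal is correct and matches the paper exactly: the paper does not even write a proof for this corollary, as it is the evident conjunction of the preceding Corollary (equivalence $\Rightarrow$ twisted antipode) and Proposition (twisted antipode $\Rightarrow$ equivalence). Your one-line summary ``This is the conjunction of the two propositions above'' is precisely what is intended, and your sketch of the underlying conceptual flow accurately reflects the content of those two results.
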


\begin{remark}
  Here we see that it is crucial
  that Section~\ref{sec:Hopf-trimodules} identifies Hopf trimodules with \emph{finitary} lax module endofunctors.
  Indeed, given an infinite-dimensional comodule $M$ over $B$,
  the functor $\on{Hom}_{\Bbbk}(M, \blank)\from \tetramod[B] \to \tetramod[B]$ is endowed with lax $\tetramod[B]$-structure by virtue of Proposition~\ref{prop:special-doctrinal-module-adjunctions},
  but is not finitary itself.
\end{remark}

\subsection{Fusion operators for Hopf monads}

Similarly to our study of the Hausser--Nill theorem in Section~\ref{sec:Hausser-Nill}, the results of~\cite{BLV} can be used to characterize rigidity of a given monoidal category $\mathcal{C}$.
In this case, we will require $\mathcal{C}$ to admit a fiber functor to a rigid category $\mathcal{W}$.
We begin by recalling the results of \emph{loc cit} that we will need.

\begin{definition}[{\cite[Section~2.6]{BLV}}]\label{fusioninclusion}
  Let $T$ be an opmonoidal monad on a monoidal category $\mathcal{W}$.
  The \emph{right fusion operator} $T_{\mathbf{f}}$ of $T$ is the composite natural transformation
  \[
    {(T_{\mathbf{f}})}_{V,W}\from T(T(V) \otimes W)
    \xrightarrow{{(T_{\mathsf{m}})}_{T(V),W}} T^{2}(V) \otimes T(W)
    \xrightarrow{\mu_{V} \otimes T(W)} T(V) \otimes T(W), \quad\text{ for } V,W \in \mathcal{W}.
  \]
  The monad $T$ is said to be a \emph{right Hopf monad} if its right fusion operator is invertible.
  Left fusion operators and left Hopf monads are defined similarly, and a monad is said to be simply a \emph{Hopf monad} if it is both left and right Hopf.
\end{definition}

\begin{theorem}[{\cite[Lemma~3.4 and Theorem~3.6]{BLV}}]
  Let $\mathcal{W}$ be a right rigid monoidal category and let $T$ be an opmonoidal monad on $\mathcal{W}$.
  The right fusion operators of\, $T$ are invertible if and only if\, $\mathbf{EM}(T)$ is right rigid.
\end{theorem}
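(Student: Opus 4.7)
The strategy is to use Proposition~\ref{diffusion} to recast the right fusion operators as coherence cells for a canonical oplax $\mathbf{EM}(T)$-module functor structure on $T$, and then play off the oplax variant of Proposition~\ref{prop:lax-is-strong-when-rigid} for the ``if'' direction against the classical construction of duals via inverse fusion for the ``only if'' direction.

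Since $T$ is opmonoidal, $\mathbf{EM}(T)$ is canonically monoidal with the forgetful functor $R \from \mathbf{EM}(T) \to \mathcal{W}$ strong monoidal; pulling back along $R$ makes $\mathcal{W}$ into a left $\mathbf{EM}(T)$-module category via $Y \lact X := R(Y) \otimes_{\mathcal{W}} X$. By Proposition~\ref{diffusion}, $T$ carries a canonical oplax $\mathbf{EM}(T)$-module functor structure $\mathcal{W} \to \mathcal{W}$ whose coherence cell
\[
  {(T_{\mathsf{a}})}_{Y, X}\from T\bigl(R(Y) \otimes X\bigr) \to R(Y) \otimes T(X),
\]
evaluated at a free module $Y = L_{\mathbf{EM}(T)}(W)$, is precisely the right fusion operator ${(T_{\mathbf{f}})}_{W,X}$. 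Since every object of $\mathbf{EM}(T)$ arises as a (canonical, $R$-split) coequalizer of free modules and split coequalizers are preserved by any functor, invertibility of ${(T_{\mathsf{a}})}_{Y,X}$ for all $Y \in \mathbf{EM}(T)$ is equivalent to invertibility of ${(T_{\mathbf{f}})}_{W,X}$ for all $W \in \mathcal{W}$.

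For the ``if'' direction I would argue that right rigidity of $\mathbf{EM}(T)$, together with the oplax analogue of Proposition~\ref{prop:lax-is-strong-when-rigid}, forces every oplax $\mathbf{EM}(T)$-module functor, and in particular $T$, to be strong; the coherence cell ${(T_{\mathsf{a}})}$ is therefore invertible, so the right fusion operators are invertible as the special case $Y = L(W)$. For the ``only if'' direction, suppose that $T_{\mathbf{f}}$ is invertible, so that by the reduction above the canonical oplax $\mathbf{EM}(T)$-module structure on $T$ is strong. To exhibit a right dual to a given $X \in \mathbf{EM}(T)$ with underlying object $V = R(X)$ and action $\alpha \from TV \to V$, I would use right rigidity of $\mathcal{W}$ and the inverse fusion to equip $V^{\vee}$ with a $T$-action built from $\alpha$, ${(T_{\mathbf{f}})}^{-1}$, and the evaluation $V^{\vee} \otimes V \to \mathbb{1}$, transferred through the adjunction $\blank \otimes V \dashv \blank \otimes V^{\vee}$; the resulting $T$-module together with the evaluation and coevaluation of $V$ lifted from $\mathcal{W}$ should then be a right dual to $X$ in $\mathbf{EM}(T)$.

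The main obstacle is this last step: verifying that the candidate action on $V^{\vee}$ is associative and unital, and that the lifted evaluation and coevaluation are $T$-linear and still satisfy the snake identities inside $\mathbf{EM}(T)$. All three checks reduce to diagram chases combining the opmonoidal monad axioms with the fusion inverse, and the most efficient path is to organise them through the now-strong $\mathbf{EM}(T)$-module structure on $T$ afforded by Proposition~\ref{diffusion} -- that is, to treat the relevant coherence as a \emph{single} piece of data naturally invertible on all of $\mathbf{EM}(T)$, rather than manipulating fusion inverses pointwise. This packaging is exactly what distinguishes our reinterpretation from the direct proof in~\cite{BLV}.
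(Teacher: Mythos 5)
First, note that the paper does not prove this statement at all: it is imported verbatim from~\cite[Lemma~3.4 and Theorem~3.6]{BLV}, and the surrounding Section on fusion operators (Lemma~\ref{delusion}, Lemma~\ref{sinfusion}, Proposition~\ref{diffusion}) builds a reinterpretation of the fusion operators \emph{on top of} this cited result rather than reproving it. So your proposal should be judged as a self-contained proof attempt, and there is no circularity in invoking Proposition~\ref{diffusion}, whose proof uses only Lemmas~\ref{delusion} and~\ref{sinfusion}.

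Your ``if'' direction is correct and is genuinely different from the direct computation in~\cite{BLV}: once Proposition~\ref{diffusion} identifies invertibility of the right fusion operators with strongness of the canonical oplax $\mathbf{EM}(T)$-module structure on $T$, right rigidity of $\mathbf{EM}(T)$ forces strongness via the oplax half of Proposition~\ref{prop:lax-is-strong-when-rigid}. This is exactly the ``strong converse'' reading the paper advertises for Proposition~\ref{diffusion}, and it is cleaner than chasing the fusion equation.

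The ``only if'' direction, however, has a genuine gap. Knowing that $T$ is a \emph{strong} $\mathbf{EM}(T)$-module endofunctor of $\mathcal{W}$ does not, by any result in the paper, produce right duals in $\mathbf{EM}(T)$: Proposition~\ref{prop:lax-is-strong-when-rigid} only runs from rigidity to strongness, and the Eilenberg--Moore/Kleisli machinery of Section~\ref{sec:module-monads} recovers module \emph{category} structures, not duals. What remains is precisely the content of~\cite[Theorem~3.6]{BLV}: you must write down the explicit $T$-action on $V^{\vee} = R(X)^{\vee}$ (in BLV this is the composite of $T(\mathrm{ev})$, the opmonoidal structure, the inverse fusion operator, and the (co)evaluation of $V$), prove it is associative and unital, prove that $\mathrm{ev}$ and $\mathrm{coev}$ are morphisms of $T$-modules --- which is where the defining property of the inverse fusion operator, not merely its invertibility, enters --- and check the snake identities. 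Your proposal names these verifications but does not carry any of them out, and the suggestion that packaging the fusion inverse as a single natural isomorphism ``organises'' the computation does not substitute for it: the candidate action is defined objectwise on $V^{\vee}$, which does not live in the image of $R$, so the strong module structure on $T$ over $\mathbf{EM}(T)$ gives you no leverage on it. As written, half of the equivalence is asserted rather than proved.
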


A variation of the next result, using \emph{Doi--Koppinen data}, appears in~\cite[Section~5.5]{kowalzig15:cyclic}.

\begin{lemma}\label{delusion}
  Let \(\adj{F}{U}{\cat{W}}{\cat{C}}\) be an opmonoidal adjunction.
  The strong monoidal structure of \(U\) turns \(\mathcal{W}\!\) into a \(\mathcal{C}\)-module category, by defining \(\blank \triangleright \bblank \;\defeq\; U(-) \otimes \bblank\).

  With respect to this $\mathcal{C}$-module structure,
  the opmonoidal monad $T \defeq UF$ becomes an oplax $\mathcal{C}$-module monad.
  For all \(W \in \cat{W}\) and \(X \in \cat{C}\), the coherence morphism is given by
  \begin{equation}\label{confusion}
    {(T_{\mathsf{a}})}_{X,W} \from T(X \triangleright W) \eqdef T(U(X) \otimes W)
    \xrightarrow{{(T_{\mathsf{m}})}_{U(X),W}} TU(X) \otimes T(W)
    \xrightarrow{U\varepsilon_{X} \otimes T(W)} U(X) \otimes T(W).
  \end{equation}
  In particular, for $V \in \mathcal{W}$, the coherence morphism ${(T_{\mathsf{a}})}_{F(V),W}$ is precisely the right fusion operator for $T$ at $(V,W)$.
  In other words, ${(T_{\mathsf{a}})}_{F(V),W} = {(T_{\mathbf{f}})}_{V,W}$.
\end{lemma}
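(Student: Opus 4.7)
The plan is to reduce the statement to the doctrinal results already recalled in Sections on monoidal/module adjunctions, and to trace the mate construction through explicitly to identify the coherence cell with the fusion operator.

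First, I would note that the $\cat{C}$-module structure on $\cat{W}$ is the standard one obtained by pulling back the regular $\cat{C}$-module structure on $\cat{C}$ along the strong monoidal functor $U$; that is, the associator $(X \otimes Y) \triangleright W \xIso X \triangleright (Y \triangleright W)$ is built from $U_{\mathsf{m}}^{-1}$ and the associator of $\cat{W}$, and similarly for the unitor. With this in hand, $U \from \cat{C} \to \cat{W}$ becomes a strong $\cat{C}$-module functor between the regular $\cat{C}$-module category and $\cat{W}$, with coherence $X \triangleright U(Y) = U(X) \otimes U(Y) \xIso U(X \otimes Y)$ provided by $U_{\mathsf{m}}^{-1}$. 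At this point Porism~\ref{porism:doctrinal-module-adjunctions} applies to the adjunction $F \dashv U$: since $U$ is strong $\cat{C}$-module and $F \dashv U$ lifts to a monoidal adjunction (the opmonoidal structure on $F$ corresponding to the strong monoidal structure on $U$), we obtain a canonical oplax $\cat{C}$-module structure on $F$. Composing the oplax $\cat{C}$-module functor $F$ with the strong $\cat{C}$-module functor $U$ yields an oplax $\cat{C}$-module structure on $T = UF$, and compatibility of the mate construction with units and counits implies that $\mu$ and $\eta$ are module transformations, so $T$ is an oplax $\cat{C}$-module monad.

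Next I would verify the explicit formula~\eqref{confusion} for $T_{\mathsf{a}}$. By the mate description recorded after Proposition~\ref{prop:module-adjunctions} (see also Equation~\eqref{rightadjointoplaxisstrong}), the oplax structure on $F$ at $(X, W)$ is
\[
  F(U(X) \otimes W) \xrightarrow{F(\eta)} FU(F(U(X)) \otimes W) \xrightarrow{F U_{\mathsf{m}}^{-1}} FU(FU(X) \otimes F(W)) \xrightarrow{\varepsilon} FU(X) \otimes F(W),
\]
and applying $U$ and simplifying with the triangle identity for $F \dashv U$ collapses this to $(U\varepsilon_X \otimes T(W)) \circ (U_{\mathsf{m}}^{-1} \circ UF_{\mathsf{m}})_{U(X), W}$, which, by the definition $T_{\mathsf{m}} = U_{\mathsf{m}}^{-1} \circ UF_{\mathsf{m}}$ of the opmonoidal structure of $T$ as the composite, is precisely the map displayed in~\eqref{confusion}.

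Finally, the identification ${(T_{\mathsf{a}})}_{F(V), W} = {(T_{\mathbf{f}})}_{V, W}$ is essentially definitional: specializing~\eqref{confusion} at $X = F(V)$ gives
\[
  (U \varepsilon_{F(V)} \otimes T(W)) \circ {(T_{\mathsf{m}})}_{T(V), W},
\]
and since $\mu_V = U \varepsilon_{F(V)}$ for the monad associated to the adjunction $F \dashv U$, this equals $(\mu_V \otimes T(W)) \circ {(T_{\mathsf{m}})}_{T(V), W}$, which is exactly the right fusion operator of Definition~\ref{fusioninclusion}.

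The main obstacle is purely bookkeeping: unpacking the mate correspondence and the doctrinal lift carefully enough to collapse the nested $F$'s, $U$'s and coherence morphisms to the concise formula~\eqref{confusion}. All conceptual content is already packaged in Porism~\ref{porism:doctrinal-module-adjunctions}, Proposition~\ref{prop:module-adjunctions}, and the definition of $T_{\mathsf{m}}$ as the composite opmonoidal structure.
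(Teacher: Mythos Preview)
Your proposal is correct and follows essentially the same approach as the paper: both invoke Porism~\ref{porism:doctrinal-module-adjunctions} to endow $F$ with an oplax $\cat{C}$-module structure from the strong structure on $U$, compose with $U$ to obtain the oplax structure on $T$, and then verify formula~\eqref{confusion} by tracing the mate explicitly, with the final identification following from $\mu = U\varepsilon F$. The only cosmetic difference is that the paper writes out the four-step composite for $T_{\mathsf{a}}$ and collapses it to~\eqref{confusion} via a naturality square for $U_{\mathsf{m}}$, whereas you appeal to the triangle identities; both are routine.
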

\begin{proof}
  The first statement is a well-known result on transport of structure.
  The $\mathcal{C}$-module structure on $U$ is given by the morphisms
  \[
    U_{\mathsf{m}}\from X \triangleright U(Y) \eqdef U(X) \otimes U(Y) \xiso U(X \otimes Y) \eqdef U(X \triangleright Y).
  \]
  For the second statement, observe that by Porism~\ref{porism:doctrinal-module-adjunctions}, the left adjoint $F$ of $U$ inherits an oplax $\mathcal{C}$-module functor structure from the strong $\mathcal{C}$-module structure on $U$. The resulting oplax $\mathcal{C}$-module functor on the composite $T = UF$ is given by
  \[
    \mathscale{0.83}{%
      \begin{mytikzcd}[ampersand replacement=\&]
        {UF(U(X)\otimes W)} \& {U(FU(X) \otimes F(W))} \& {U(X \otimes F(W))} \& {U(X) \otimes UF(W)} \& {U(X)\otimes T(W)} \\
        {T(X \triangleright W)} \& {UF(X \triangleright W)} \& {U(X \triangleright F(W))} \& {X \triangleright UF(W)} \& {X \triangleright T(W)}
        \arrow["{\raisebox{0.5em}{\(U({(F_{\mathsf{m}})}_{U(X),W})\)}}", from=1-1, to=1-2]
        \arrow["{=}"', from=1-1, to=2-1]
        \arrow["\raisebox{0.5em}{\(U(\varepsilon_{X} \otimes F(W))\)}", from=1-2, to=1-3]
        \arrow["\raisebox{0.5em}{\({(U_{\mathsf{m}})}_{X,F(W)}\)}", from=1-3, to=1-4]
        \arrow["{=}", from=1-3, to=2-3]
        \arrow["{=}", from=1-4, to=1-5]
        \arrow["{=}"{description}, from=1-5, to=2-5]
        \arrow["{=}", from=2-1, to=2-2]
        \arrow[from=2-2, to=2-3]
        \arrow[from=2-3, to=2-4]
        \arrow["{=}", from=2-4, to=2-5]
      \end{mytikzcd}%
    }
  \]
  We remark that the lower row in the above diagram is included only for clarity.

  To see that this morphism equals that defined in Equation~\ref{confusion}, observe that the square
  \[
    \begin{mytikzcd}[ampersand replacement=\&]
      {U(FU(X) \otimes F(W))} \&\& {UFU(X)\otimes UF(W)} \\
      {U(X\otimes F(W))} \&\& {U(X) \otimes UF(W)}
      \arrow["{{(U_{\mathsf{m}})}_{FU(X),F(W)}}", from=1-1, to=1-3]
      \arrow["{U(\varepsilon_{X} \otimes F(W))}", from=1-1, to=2-1]
      \arrow["{U(\varepsilon_{X})\otimes UF(W)}", from=1-3, to=2-3]
      \arrow["{{(U_{\mathsf{m}})}_{X,F(W)}}", from=2-1, to=2-3]
    \end{mytikzcd}
  \]
  commutes, since $U_{\mathsf{m}}$ is natural.

  To see that we have ${(T_{\mathsf{a}})}_{F(V),W} = {(T_{\mathbf{f}})}_{V,W}$,
  it suffices to substitute $X = F(V)$ in Equation~\ref{confusion}, and recall that $\mu = U \varepsilon F$.
\end{proof}

\begin{lemma}\label{sinfusion}
  Let $\mathcal{A}$ be a category, $f,g\from X \to Y$ a pair of parallel morphisms in $\mathcal{A}$ such that the coequalizer $\on{coeq}(f,g)$ exists, $G,H\from \mathcal{A} \to \mathcal{B}$ a pair of functors preserving that coequalizer, and $\tau\from G \nt H$ a natural transformation.
  If $\tau_{X}$ and $\tau_{Y}$ are invertible, then so is $\tau_{\on{coeq}(f,g)}$.
\end{lemma}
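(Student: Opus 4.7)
The plan is to exploit naturality of $\tau$ together with the fact that $G$ and $H$ preserve the specified coequalizer, reducing the claim to the elementary observation that an isomorphism of parallel pairs induces an isomorphism of coequalizers. Let $q\from Y \to \on{coeq}(f,g)$ denote the coequalizer map.

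First, I would apply naturality of $\tau$ to the morphisms $f$ and $g$ to obtain a commutative diagram
\[
\begin{mytikzcd}[ampersand replacement=\&]
G(X) \& G(Y) \& G(\on{coeq}(f,g)) \\
H(X) \& H(Y) \& H(\on{coeq}(f,g))
\arrow["G(f)", shift left, from=1-1, to=1-2]
\arrow["G(g)"', shift right, from=1-1, to=1-2]
\arrow["G(q)", from=1-2, to=1-3]
\arrow["\tau_X"', from=1-1, to=2-1]
\arrow["\tau_Y"{description}, from=1-2, to=2-2]
\arrow["\tau_{\on{coeq}(f,g)}", from=1-3, to=2-3]
\arrow["H(f)", shift left, from=2-1, to=2-2]
\arrow["H(g)"', shift right, from=2-1, to=2-2]
\arrow["H(q)"', from=2-2, to=2-3]
\end{mytikzcd}
\]
in $\cat{B}$. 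Since $G$ and $H$ preserve the coequalizer of $(f,g)$, both rows are coequalizer diagrams in $\cat{B}$.

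Next, I would use the universal property: the pair $(\tau_X,\tau_Y)$ is an isomorphism of parallel pairs in $\cat{B}$, so it induces an isomorphism between the respective coequalizers. Explicitly, since $\tau_X$ and $\tau_Y$ are invertible, the triple $(\tau_X^{-1}, \tau_Y^{-1}, \id)$ is also a morphism of parallel pairs (verified using naturality of $\tau^{-1}$ at $f$ and $g$, which follows from naturality of $\tau$), and the induced morphism on coequalizers provides a two-sided inverse to the morphism induced by $(\tau_X, \tau_Y, \id)$. By the uniqueness clause of the universal property of $G(\on{coeq}(f,g))$ as a coequalizer of $(G(f), G(g))$, the unique morphism making the right-hand square commute is $\tau_{\on{coeq}(f,g)}$ itself, and we conclude that $\tau_{\on{coeq}(f,g)}$ is invertible.

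There is no real obstacle here: the argument is entirely formal and reduces to the standard fact that coequalizers are functorial in the parallel pair being coequalized, together with the observation that isomorphisms of parallel pairs give rise to isomorphisms of their coequalizers. The only subtlety worth flagging is ensuring that one invokes preservation of the coequalizer by both $G$ and $H$ at the same time, so that both rows are genuine coequalizer diagrams and the uniqueness clause of the universal property applies.
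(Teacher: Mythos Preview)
Your argument is correct. The paper states this lemma without proof, treating it as elementary; your proof is exactly the standard argument one would supply, and there is nothing to compare against.
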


\begin{proposition}\label{diffusion}
  The right fusion operators of an opmonoidal monad $T$ on $\mathcal{W}$ are invertible
  if and only if
  the coherence morphisms for the oplax $\mathbf{EM}(T)$-module structure on $T$ of Lemma~\ref{delusion} are invertible.
  In other words, $T$ is a right Hopf monad
  if and only if
  it is a strong $\mathbf{EM}(T)$-module monad, when endowed with the structure of Lemma~\ref{delusion}.
\end{proposition}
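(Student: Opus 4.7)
The plan is to exploit Lemma~\ref{delusion}, which identifies ${(T_{\mathsf{a}})}_{F(V),W}$ with the right fusion operator ${(T_{\mathbf{f}})}_{V,W}$, together with the fact that every object of $\mathbf{EM}(T)$ is a canonical reflexive coequalizer of free modules. The reverse direction is immediate: if every coherence ${(T_{\mathsf{a}})}_{X,W}$ is invertible, then so is every one indexed by a free module, and by the lemma these are exactly the right fusion operators.

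For the forward direction, I would assume that all right fusion operators are invertible and fix $W \in \mathcal{W}$ together with an arbitrary $X \in \mathbf{EM}(T)$. The canonical bar presentation exhibits $X$ as the reflexive coequalizer in $\mathbf{EM}(T)$ of the parallel pair $\varepsilon_{FU(X)},\ FU\varepsilon_X \from FUFU(X) \rightrightarrows FU(X)$ with quotient $\varepsilon_X$. After applying $U$, this becomes the standard split---and therefore absolute---coequalizer presenting $U(X)$ as the colimit of $\mu_{U(X)},\ T\nabla_X \from T^{2}U(X) \rightrightarrows TU(X)$ via the action $\nabla_X$, with the splittings supplied by the unit $\eta$ of the adjunction.

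The two sides of the coherence---viewed as functors $\mathbf{EM}(T) \to \mathcal{W}$, namely $T(U(\blank) \otimes W)$ and $U(\blank) \otimes T(W)$---both factor through $U$ followed by endofunctors of $\mathcal{W}$, and hence both preserve the absolute coequalizer above. The natural transformation ${(T_{\mathsf{a}})}_{\blank, W}$ therefore assembles into a morphism of coequalizer diagrams, and its components at the free modules $FU(X)$ and $FUFU(X) \cong F(TU(X))$ are, by Lemma~\ref{delusion}, the fusion operators ${(T_{\mathbf{f}})}_{U(X), W}$ and ${(T_{\mathbf{f}})}_{TU(X), W}$---both invertible by hypothesis. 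Lemma~\ref{sinfusion} now forces ${(T_{\mathsf{a}})}_{X, W}$ to be invertible.

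The main bookkeeping task is identifying the bar presentation as a $U$-split coequalizer and recording the splittings explicitly; this is a standard instance of Beck's monadicity machinery, so I do not expect it to present any serious obstacle. Once that is in place, the argument becomes the routine pattern of extending an isomorphism of natural transformations from a generating subcategory to the whole category via a coequalizer presentation, and the identification of the two sides of the coherence as functors that factor through $U$ makes the preservation of the coequalizer automatic.
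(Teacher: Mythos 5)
Your proposal is correct and follows essentially the same route as the paper: the reverse direction by specializing the coherence cells to free modules via Lemma~\ref{delusion}, and the forward direction by presenting an arbitrary $T$-module as a $U$-split (hence, after applying $U$, absolute) coequalizer of free modules, observing that both sides of the coherence factor through $U$ and therefore preserve this coequalizer, and concluding with Lemma~\ref{sinfusion}. The only difference is cosmetic bookkeeping in how the bar presentation is written.
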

\begin{proof}
  Since, by Lemma~\ref{delusion}, the right fusion operators for $T$ are a special case of the coherence morphisms for the $\mathbf{EM}(T)$-module functor structure on $T$,
  it is immediate that $T$ being a strong $\mathbf{EM}(T)$-module monad implies invertibility of the right fusion operators.

  To show the converse,
  assume that the right fusion operators are invertible,
  in particular, by Lemma~\ref{delusion}, the morphisms ${(T_{\mathsf{a}})}_{F(V),W}$ are invertible.
  Recall that for any \(T\)-module $(X, \nabla_X\from T(X) \to X) \in \mathbf{EM}(T)$,
  we have the isomorphism $X \cong \on{coeq}(T\nabla_X, \mu_{X})$ in $\mathbf{EM}(T)$,
  so $X$ is a \(U\)-split coequalizer of morphisms from $F(T(X))$ to $F(X)$, since $F(X)$ is the free $T$-module on $X$.
  Consider the natural transformation ${(T_{\mathsf{a}})}_{\blank,W}\from T(\blank \triangleright W) \to \blank \triangleright T(W)$.
  We have
  \[
    T(\blank \triangleright W) = T(U(\blank) \otimes W) = (\blank \otimes W) \circ U.
  \]
  Thus, $T(\blank \triangleright W)$ preserves the coequalizer $\on{coeq}(T(\nabla_{X}), \mu_{X})$, by first sending it to a split coequalizer under $U$, and then preserving it, since split coequalizers are absolute.
  In a similar fashion, $\blank \triangleright T(W) = U(\blank) \otimes T(W) = (\blank \otimes T(W)) \circ U$ also preserves this coequalizer.
  The invertibility of ${(T_{\mathsf{a}})}_{X,W}$ now follows from
  the assumed invertibility of ${(T_{\mathsf{a}})}_{F(X),W}$ and ${(T_{\mathsf{a}})}_{F(T(X)),W}$, by Lemma~\ref{sinfusion}.
\end{proof}

\def\bibfont{\footnotesize}
\printbibliography%

\end{document}